\newcommand*\widefbox[1]{\fbox{\hspace{2em}#1\hspace{2em}}}
\newtheorem{thm}{Theorem}[chapter]
\newtheorem{cor}[thm]{Corollary}
\newtheorem{lem}[thm]{Lemma}
\newtheorem{prop}[thm]{Proposition}
\theoremstyle{definition}
\newtheorem{defn}[thm]{Definition}
\newtheorem{exmp}[thm]{Example}
\theoremstyle{remark}
\newtheorem*{rem*}{Remark}
\newtheorem{rem}[thm]{Remark}
\numberwithin{equation}{chapter}
\newtheorem{ex}{Exercise}[chapter]
\newcommand{\R}{\mathbb{R}} \newcommand{\mathR}{\mathbb{R}}
\newcommand{\Rn}{{\mathR^n}}
\newcommand{\N}{\mathbb{N}}
\newcommand{\scrD}{\mathscr{D}}
\newcommand{\calF}{\mathcal{F}} \newcommand{\calinF}{\mathcal{F}^{-1}}
\newcommand{\calR}{\mathcal{R}}
\newcommand{\scrS}{\mathscr{S}}
\newcommand{\scrE}{\mathscr{E}}
\newcommand{\supp}{\mathop{\rm supp}}
\newcommand{\ssupp}{\mathop{\rm sing\,supp}}
\newcommand{\Forall}{{~\forall\,}}
\newcommand{\Exists}{{~\exists\,}}
\newcommand{\st}{\textrm{~s.t.~}}
\DeclareMathOperator{\Smo}{Smo}
\DeclareMathOperator{\Char}{Char}
\DeclareMathOperator{\wf}{WF}
\newcommand{\df}{\mathrm{d}}
\newcommand{\dif}[1]{\,\mathrm{d}{#1}} 
\newcommand{\nrm}[2][]{ \| {#2} \|_{#1}}
\newcommand{\agl}[1][\cdot]{ \langle {#1} \rangle}
\newcommand{\Agl}[1][\cdot]{ \big\langle {#1} \big\rangle} 
\newcommand{\comment}[1]{} 		
\newcommand{\sq}[1]{{\color{red}#1}}
\newcommand{\sgn}{\mathop{\rm sgn}}
\title{Lecture notes for pseudodifferential operators and microlocal analysis}
\author{Ma, Shiqi}
\address{Department of Mathematics and Statistics, University of Jyv\"askyl\"a}
\email{mashiqi01@gmail.com, shiqi.s.ma@jyu.fi}
\begin{document}

\begin{abstract}
    This is a introductory course focusing some basic notions in pseudodifferential operators ($\Psi$DOs) and microlocal analysis. 
    We start this lecture notes with some notations and necessary preliminaries.
    Then the notion of symbols and $\Psi$DOs are introduced.
    In Chapter \ref{ch:OSInt-PM2021} we define the oscillatory integrals of different types.
    Chapter \ref{ch:SPL-PM2021} is devoted to the stationary phase lemmas.
    One of the features of the lecture is that the stationary phase lemmas are proved for not only compactly supported functions but also for more general functions with certain order of smoothness and certain order of growth at infinity.
    We build the results on the stationary phase lemmas.
    Chapters \ref{ch:SCP-PM2021}, \ref{ch:BddP-PM2021} and \ref{ch:SCla-PM2021} covers main results in $\Psi$DOs and the proofs are heavily built on the results in Chapter \ref{ch:SPL-PM2021}.
    Some aspects of the semi-classical analysis are similar to that of microlocal analysis.
    In Chapter \ref{ch:WF-PM2021} we finally introduce the notion of wavefront, and Chapter \ref{ch:ProSing-PM2021} focuses on the propagation of singularities of solution of partial differential equations.
    Important results are circulated by black boxes and some key steps are marked in red color.
    Exercises are provided at the end of each chapter.

    \vspace*{1em}
    
    \noindent Version: May, 2021.
\end{abstract}

\maketitle

\tableofcontents

\chapter{Preliminaries} \label{ch:pre-PM2021}

A good reference for is \cite[Chapters 1-5]{wong2014introduction}.

\section{Notations} \label{sec:nt-PM2021}

$\Rn$: the Euclidean space. For $x \in \Rn$, $|x| := \sqrt{x_1^2 + \cdots + x_n^2}$, and the inter produce $x \cdot y := \sum_{j = 1}^n x_j y_j$.
The notation $\agl[x] := (1+|x|^2)^{1/2}$ will be frequently used throughout the lecture.
For two quantities $\mathcal{A}$ and $\mathcal{B}$, we write $\mathcal{A}\lesssim \mathcal{B}$ to signify $\mathcal{A}\leq C \mathcal{B}$,
and write $\mathcal{A} \simeq \mathcal{B}$ to signify $C_1 \mathcal{B} \leq \mathcal{A} \leq C_2 \mathcal{B}$, for some generic positive constants $C$, $C_1$ and $C_2$.
It can be checked that $\agl[x] \simeq 1+|x|$.

\begin{lem} \label{lem:aX-PM2021}
	For any $s \in \R$ and any multi-index $\alpha$, there exists a constant $C$ independent of $x$ such that
	\[
	|\partial^\alpha (\agl[x]^s)| \leq C \agl[x]^{s-|\alpha|},
	\quad \text{when} \quad |x| \geq 1.
	\]
\end{lem}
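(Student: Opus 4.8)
The plan is to prove, by induction on the length $|\alpha|$, the stronger structural statement that
\[
\partial^\alpha\big(\agl[x]^s\big) = \agl[x]^{s-2|\alpha|}\, P_\alpha(x),
\]
where $P_\alpha$ is a polynomial in $x$ of degree at most $|\alpha|$, with coefficients depending only on $s$ and $\alpha$. Granting this, the claimed estimate is immediate: any polynomial $P$ of degree $d$ obeys $|P(x)| \le C\agl[x]^d$ for every $x$, since each monomial satisfies $|x^\beta| \le |x|^{|\beta|} \le \agl[x]^{|\beta|} \le \agl[x]^d$ (using $\agl[x]\ge 1$). Hence
\[
\big|\partial^\alpha(\agl[x]^s)\big| = \agl[x]^{s-2|\alpha|}\,|P_\alpha(x)| \le C\,\agl[x]^{s-2|\alpha|+|\alpha|} = C\,\agl[x]^{s-|\alpha|},
\]
which in fact holds for all $x$, in particular for $|x|\ge 1$.

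For the base case $\alpha=0$ we take $P_0\equiv 1$. For the inductive step, assume the claim for some $\alpha$ and differentiate in the $j$-th variable. From $\agl[x]^2=1+|x|^2$ and the chain rule one has $\partial_j\agl[x]^t = t\,\agl[x]^{t-2}x_j$, so Leibniz' rule gives
\[
\partial_j\!\left(\agl[x]^{s-2|\alpha|}P_\alpha(x)\right) = (s-2|\alpha|)\,\agl[x]^{s-2|\alpha|-2}\,x_jP_\alpha(x) + \agl[x]^{s-2|\alpha|}\,\partial_jP_\alpha(x).
\]
Writing $\agl[x]^{s-2|\alpha|} = \agl[x]^{s-2(|\alpha|+1)}(1+|x|^2)$ in the second term, both terms share the factor $\agl[x]^{s-2(|\alpha|+1)}$, and we may set
\[
P_{\alpha+e_j}(x) := (s-2|\alpha|)\,x_jP_\alpha(x) + (1+|x|^2)\,\partial_jP_\alpha(x),
\]
where $e_j$ is the $j$-th standard basis vector. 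Since $\deg P_\alpha \le |\alpha|$, the first summand has degree $\le |\alpha|+1$ and the second has degree $\le 2+(|\alpha|-1)=|\alpha|+1$, so $\deg P_{\alpha+e_j}\le |\alpha|+1 = |\alpha+e_j|$. This closes the induction.

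There is no genuine obstacle here; the only subtlety is picking the right inductive hypothesis. A naive induction directly on the inequality does not obviously close, because differentiating the bound $\agl[x]^{s-|\alpha|}$ produces a factor $x_j$ whose growth must be seen to be exactly absorbed by the two-power drop in $\agl[x]$. Making this precise is what the polynomial-times-power form accomplishes, and the bookkeeping amounts to matching $\deg P_\alpha \le |\alpha|$ against the exponent $s-2|\alpha|$ at each step.
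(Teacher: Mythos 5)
Your proof is correct. The paper does not actually prove this lemma — it is left as an exercise — so there is nothing to compare against; your induction on the stronger structural claim $\partial^\alpha\big(\langle x\rangle^{s}\big)=\langle x\rangle^{s-2|\alpha|}P_\alpha(x)$ with $\deg P_\alpha\le|\alpha|$ is the standard way to do it, the bookkeeping in your inductive step checks out, and as you note it even gives the bound for all $x\in\Rn$, not merely $|x|\ge 1$.
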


The proof is left as an exercise.

$C^m(\Rn; \mathbb C)$ is the set of complex-valued functions that has continuous derivative up to order $m$.
$C_c^\infty(\Rn)$ is comprised of $C^\infty$ functions with compact support.

The Fourier and inverse Fourier transforms of $f$ are denoted as $\calF f$ (also $\hat f$) and $\calF^{-1} f$ (also $\check{f}$):
\begin{align*}
\hat f(\xi) = \calF f(\xi) & := (2\pi)^{-n/2} \int_{\Rn} e^{-ix\cdot \xi} f(x) \dif x, \\
\check{f}(\xi) = \calF^{-1} f(x) & := (2\pi)^{-n/2} \int_{\Rn} e^{ix\cdot \xi} f(\xi) \dif \xi.
\end{align*}

$\agl[f,g] := \int_{\Rn} f(x) g(x) \dif x$, $(f,g) := \int_{\Rn} f(x) \overline{g(x)} \dif x$, where $\overline{g(x)}$ is the complex conjugation of $g(x)$.

$\partial_j := \frac {\partial} {\partial x_j}$, $\boxed{D_j := \frac 1 i \partial_j}$, where $i$ is the imaginary unit.

\smallskip

Multi-index:
in $\Rn$, a multi-index is $\alpha = (\alpha_1, \dots, \alpha_n)$ where $\alpha_j$ are non-negative integers.
$D^\alpha := D_1^{\alpha_1} \cdots D_n^{\alpha_n}$, $\partial^\alpha := \partial_1^{\alpha_1} \cdots \partial_n^{\alpha_n}$, and $x^\alpha := x_1^{\alpha_1} \cdots x_n^{\alpha_n}$, and the length of $\alpha$ is $|\alpha| := \alpha_1 + \cdots + \alpha_n$.

\begin{lem} \label{lem:xa-PM2021}
	Assume $x \in \Rn$ and $\alpha$ is a multi-index.
	Then
	\[
	|x^\alpha| \leq |x|^{|\alpha|}.
	\]
\end{lem}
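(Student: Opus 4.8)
The plan is to reduce the claim to the single-coordinate estimate $|x_j| \leq |x|$ and then multiply. First I would write out $|x^\alpha| = |x_1^{\alpha_1} \cdots x_n^{\alpha_n}| = |x_1|^{\alpha_1} \cdots |x_n|^{\alpha_n}$, using that the absolute value is multiplicative.

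Next I would observe that for each index $j$ we have $|x_j| = \sqrt{x_j^2} \leq \sqrt{x_1^2 + \cdots + x_n^2} = |x|$, since all the omitted terms $x_k^2$ are nonnegative. Raising this to the power $\alpha_j$ (a nonnegative integer, so the inequality is preserved) gives $|x_j|^{\alpha_j} \leq |x|^{\alpha_j}$.

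Finally I would multiply these $n$ inequalities together, obtaining
\[
|x^\alpha| = \prod_{j=1}^n |x_j|^{\alpha_j} \leq \prod_{j=1}^n |x|^{\alpha_j} = |x|^{\alpha_1 + \cdots + \alpha_n} = |x|^{|\alpha|},
\]
which is the desired bound. There is no real obstacle here; the only point requiring a word of care is that each exponent $\alpha_j$ is a nonnegative integer, so that $t \mapsto t^{\alpha_j}$ is monotone on $[0,\infty)$ and the coordinatewise inequalities may be raised to these powers and multiplied without reversing.
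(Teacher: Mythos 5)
Your proposal is correct and follows essentially the same route as the paper's proof: factor $|x^\alpha|$ into $|x_1|^{\alpha_1}\cdots|x_n|^{\alpha_n}$, bound each $|x_j|\leq |x|$, and multiply the resulting inequalities. The extra remark about monotonicity of $t\mapsto t^{\alpha_j}$ on $[0,\infty)$ is a fine (if implicit in the paper) justification and nothing is missing.
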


\begin{proof}
	We have
	\begin{align*}
	|x^\alpha|
	& = |x_1^{\alpha_1} \cdots x_n^{\alpha_n}|
	= |x_1^{\alpha_1}| \cdots |x_n^{\alpha_n}|
	= |x_1|^{\alpha_1} \cdots |x_n|^{\alpha_n} \\
	& \leq |x|^{\alpha_1} \cdots |x|^{\alpha_n}
	= |x|^{\alpha_1 + \cdots + \alpha_n}
	= |x|^{|\alpha|}.
	\end{align*}
\end{proof}

More on multi-index:
\begin{itemize}
	\item $\beta \leq \alpha$ means $\beta_j \leq \alpha_j$ for $j = 1, \dots,n$;
	
	\item the notion $\alpha - \beta$ is valid only when $\beta \leq \alpha$, and $\alpha - \beta := (\alpha_1 - \beta_1, \cdots, \alpha_n - \beta_n)$;
	
	\item $\alpha! := \alpha_1 ! \cdots \alpha_n!$;
	
	\item when $\beta \leq \alpha$, $\binom{\alpha}{\beta} := \frac{\alpha!}{\beta! (\alpha - \beta)!} = \binom{\alpha_1}{\beta_1} \cdots \binom{\alpha_n}{\beta_n}$, where $\binom{\alpha_j}{\beta_j} = \frac{\alpha_j!}{\beta_j! (\alpha_j - \beta_j)!}$;
\end{itemize}

\smallskip

A typical form of a linear differential operator is $\sum_{|\alpha| \leq m} a_\alpha(x) D^\alpha$.
If we denote a polynomial $p(x,\xi) := \sum_{|\alpha| \leq m} a_\alpha(x) \xi^\alpha$ where $\xi \in \Rn$, then
\[
\sum_{|\alpha| \leq m} a_\alpha(x) D^\alpha = p(x,D).
\]

\begin{lem} \label{lem:Dfg-PM2021}
	Assume $f, g \in C^\infty(\Rn)$ and $\alpha$ is a multi-index.
	Then
	\[
	D^\alpha(fg) = \sum_{\beta \leq \alpha} \binom{\alpha}{\beta} (D^{\alpha - \beta}f) (D^\beta g).
	\]
\end{lem}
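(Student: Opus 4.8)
The plan is to prove the Leibniz formula for $D^\alpha$ by induction on $|\alpha|$, reducing to the one-variable case and the corresponding elementary identity for binomial coefficients (the Pascal/Vandermonde-type relation $\binom{\alpha}{\beta} + \binom{\alpha}{\beta - e_j} = \binom{\alpha + e_j}{\beta}$, where $e_j$ is the $j$-th standard multi-index). Since $D_j = \frac{1}{i}\partial_j$ and each $D_j$ is a first-order operator, the scalar factors of $1/i$ distribute harmlessly and it suffices to work with a single derivative at a time.

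First I would establish the base case $|\alpha| = 0$, which is the trivial identity $fg = fg$. For the inductive step, suppose the formula holds for all multi-indices of length $m$, and let $\alpha$ be a multi-index with $|\alpha| = m$; I then consider $D^{\alpha + e_j}(fg) = D_j\big(D^\alpha(fg)\big)$ for some index $j \in \{1, \dots, n\}$. Applying the inductive hypothesis and then the ordinary product rule $D_j(uv) = (D_j u)v + u(D_j v)$ to each summand, I obtain
\[
D^{\alpha + e_j}(fg) = \sum_{\beta \leq \alpha} \binom{\alpha}{\beta} \Big( (D^{\alpha - \beta + e_j} f)(D^\beta g) + (D^{\alpha - \beta} f)(D^{\beta + e_j} g) \Big).
\]
The next step is a re-indexing of the second sum: replacing $\beta$ by $\beta - e_j$ shifts it so that both sums range over comparable multi-indices, and then collecting the coefficient of $(D^{\alpha + e_j - \beta} f)(D^\beta g)$ gives exactly $\binom{\alpha}{\beta} + \binom{\alpha}{\beta - e_j}$, which equals $\binom{\alpha + e_j}{\beta}$ by the Pascal identity applied in the $j$-th slot (the other slots being unaffected). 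One must also check the boundary terms $\beta = 0$ and $\beta = \alpha + e_j$ are accounted for correctly, but these match because $\binom{\alpha}{-e_j}$ and $\binom{\alpha}{\alpha + e_j}$ are naturally zero.

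The main obstacle — really the only point requiring care — is the bookkeeping in the re-indexing: making sure the ranges of summation agree after the shift $\beta \mapsto \beta - e_j$, and verifying the Pascal-type identity for multi-indices componentwise (which follows immediately from the one-dimensional case $\binom{a}{b} + \binom{a}{b-1} = \binom{a+1}{b}$ together with the product formula $\binom{\alpha}{\beta} = \prod_k \binom{\alpha_k}{\beta_k}$). An alternative, perhaps cleaner, route is to iterate the single-variable Leibniz rule coordinate by coordinate: write $D^\alpha = D_1^{\alpha_1} \cdots D_n^{\alpha_n}$, apply the classical one-variable Leibniz formula $D_j^{k}(fg) = \sum_{l \le k}\binom{k}{l}(D_j^{k-l}f)(D_j^l g)$ successively, and observe that the product of the resulting one-dimensional binomial coefficients is precisely $\binom{\alpha}{\beta}$; I would likely present the induction version as the primary argument since it is self-contained given only Lemma~\ref{lem:xa-PM2021}-level preliminaries.
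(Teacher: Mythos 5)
Your proof is correct: the induction on $|\alpha|$ with the componentwise Pascal identity $\binom{\alpha}{\beta}+\binom{\alpha}{\beta-e_j}=\binom{\alpha+e_j}{\beta}$ is the standard argument, the boundary cases $\beta_j=0$ and $\beta_j=\alpha_j+1$ are treated correctly, and the only implicit ingredient is that $D_j$ commutes with $D^\alpha$, which is justified since $f,g\in C^\infty(\Rn)$. The paper leaves this lemma as an exercise, so there is no proof to compare against; either of your two routes (the induction, or iterating the one-variable Leibniz rule coordinate by coordinate and using $\binom{\alpha}{\beta}=\prod_k\binom{\alpha_k}{\beta_k}$) fully settles it.
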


The proof is left as an exercise.

\section{Schwartz Space and tempered distributions} \label{sec:st-PM2021}

\begin{defn}[Schwartz Space\index{Schwartz Space}] \label{defn:SchwartzSpace1-PM2021}
	Let $\varphi \in C^{\infty}(\Rn)$.
	For multi-indices $\alpha$ and $\beta$, we define the semi-norm $|\cdot|_{\alpha, \beta}$ of $\varphi$ as
	\begin{equation} \label{eq:SchNrm-PM2021}
	|\varphi|_{\alpha, \beta} := \sup_{x \in \Rn} |x^{\alpha} D^{\beta} \varphi(x)| < +\infty.
	\end{equation}
	We call $\varphi$ a \emph{Schwartz function\index{Schwartz function}} when $|\varphi|_{\alpha, \beta} < +\infty$ for any $\alpha$ and $\beta$.
	The set
	\[
	\{\varphi \in C^{\infty}(\Rn) \,;\, |\varphi|_{\alpha, \beta} < +\infty, ~\forall \alpha, \beta\}
	\]
	together with the topology induced by the set of semi-norms $|\cdot|_{\alpha, \beta}$ is call the \emph{Schwartz space}, denoted as $\boxed{\scrS(\Rn)}$.
\end{defn}

The topology $\mathcal{T}$ is induced by $\{ |\cdot|_{\alpha, \beta} \}$ is defined as follows.
Choose
\[
N(\alpha, \beta;\epsilon) := \{ \varphi \in \scrS(\Rn) \,;\, |\varphi|_{\alpha, \beta} < \epsilon \}
\]
to be open neighborhoods of point $0 \in \scrS(\Rn)$.
Choose
\[
\mathcal{N} :=\{ N(\alpha, \beta;\epsilon) \,;\, \alpha, \beta \text{~are multi-index}, \epsilon > 0 \}
\]
to be a open neighborhood basis of $0$, and $\varphi + \mathcal{N}$ the open neighborhood basis of $\varphi \in \scrS(\Rn)$. 
Then the topology $\mathcal{T}$ is generated by these open neighborhood basis, see \cite[\S 1.8]{JiangFuncBook} for more details.

\begin{defn}[Convergence in Schwartz space] \label{defn:SchwartzConvergence-PM2021}
	A sequence of functions $\{\varphi_j\}_j \subset \scrS(\Rn)$ is said to \emph{converge to zero in $\scrS(\Rn)$} if
	\begin{equation} \label{eq:SchwartzConvergence-PM2021}
	\Forall \alpha, \beta, ~|\varphi_j|_{\alpha,\beta} \to 0 \quad j \to +\infty,
	\end{equation}
	denoted as \emph{$\varphi_j \to 0$ in $\scrS(\Rn)$}.
\end{defn}

\begin{lem}
	We have
	\[
	\calF \scrS(\Rn) = \scrS(\Rn), \quad
	\partial^{\alpha} \scrS(\Rn) \subset \scrS(\Rn).
	\]
\end{lem}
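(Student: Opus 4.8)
The plan is to prove the two assertions separately, both relying on the fundamental interchange rules between the Fourier transform and differentiation/multiplication by polynomials. First I would recall the two identities valid on $\scrS(\Rn)$: namely $\calF(D^\beta \varphi)(\xi) = \xi^\beta \hat\varphi(\xi)$ and $\calF(x^\alpha \varphi)(\xi) = (-D)^\alpha \hat\varphi(\xi)$, which follow by integration by parts and differentiation under the integral sign respectively (both manipulations are justified by the rapid decay and smoothness of $\varphi$). Combining these gives $\xi^\alpha D_\xi^\beta \hat\varphi(\xi) = (-1)^{|\beta|}\calF\big(D_x^\alpha(x^\beta \varphi)\big)(\xi)$ up to sign bookkeeping.

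For the inclusion $\partial^\alpha \scrS(\Rn) \subset \scrS(\Rn)$: given $\varphi \in \scrS(\Rn)$, the function $\partial^\alpha\varphi$ is clearly $C^\infty$, so I only need to check each semi-norm is finite. For fixed multi-indices $\gamma, \delta$ we have $x^\gamma D^\delta(\partial^\alpha \varphi) = c\, x^\gamma D^{\delta + \alpha'}\varphi$ for the multi-index $\alpha'$ corresponding to $\partial^\alpha$ (the factor $c$ is a power of $i$ coming from the conversion between $\partial$ and $D$), and hence $|\partial^\alpha\varphi|_{\gamma,\delta} = |\varphi|_{\gamma, \delta+\alpha'} < +\infty$. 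So the semi-norms of $\partial^\alpha\varphi$ are among those of $\varphi$, and the inclusion is immediate.

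For $\calF\scrS(\Rn) = \scrS(\Rn)$: the main work is showing $\calF\scrS(\Rn) \subset \scrS(\Rn)$. Take $\varphi \in \scrS(\Rn)$ and fix $\alpha,\beta$. Using the identities above, $\xi^\alpha D^\beta\hat\varphi(\xi)$ equals (up to a sign) $\calF\big(D^\alpha(x^\beta\varphi)\big)(\xi)$. Now $x^\beta\varphi \in \scrS(\Rn)$ and, by the inclusion just proved, $D^\alpha(x^\beta\varphi) \in \scrS(\Rn)$ as well; in particular it is integrable, so its Fourier transform is bounded by $(2\pi)^{-n/2}\|D^\alpha(x^\beta\varphi)\|_{L^1}$. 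Taking the supremum over $\xi$ yields $|\hat\varphi|_{\alpha,\beta} \leq (2\pi)^{-n/2}\|D^\alpha(x^\beta\varphi)\|_{L^1} < +\infty$. Hence $\hat\varphi \in \scrS(\Rn)$. The same argument applies verbatim to $\calF^{-1}$, since it differs from $\calF$ only by the sign in the exponent; and since $\calF\calF^{-1} = \calF^{-1}\calF = \mathrm{id}$ on $\scrS(\Rn)$ (the Fourier inversion theorem, which holds pointwise for Schwartz functions), $\calF$ is a bijection of $\scrS(\Rn)$ onto itself, giving the equality.

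The only mildly delicate point is to justify the differentiation under the integral sign and the integration by parts that produce the two basic identities — but both are standard and follow from dominated convergence using that $x^\gamma D^\delta\varphi$ is integrable for all $\gamma,\delta$ (which is itself a consequence of the Schwartz estimates, e.g. $|x^\gamma D^\delta\varphi(x)| \lesssim \agl[x]^{-n-1}$). I would state these as the key lemma-level facts and then the rest is the bookkeeping above.
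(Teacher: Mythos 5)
Your proposal is correct, and in fact the paper states this lemma without giving any proof, so there is nothing to contrast it with: your argument (exchanging $x^\alpha D^\beta$ through $\calF$ via $\xi^\alpha D^\beta\hat\varphi = \pm\calF\bigl(D^\alpha(x^\beta\varphi)\bigr)$, bounding the semi-norms by $L^1$ norms, and invoking Fourier inversion on $\scrS(\Rn)$ for the equality) is exactly the standard proof the lemma tacitly relies on. The only ingredient you cite without proof is the inversion theorem $\calF\calF^{-1}=\calF^{-1}\calF=\mathrm{id}$ on $\scrS(\Rn)$, which is acceptable here since the paper itself treats it as known (cf.\ Lemma \ref{lem:FtransRelat-PM2021s}).
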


The space $\scrS(\Rn)$ are often be used as test functions set.
There is also another commonly used test functions set: $C_c^\infty(\Rn)$.
In Fourier analysis the set $\scrS(\Rn)$ is more commonly used than $C_c^\infty(\Rn)$, and one of the reason is that $\scrS(\Rn)$ is closed for the Fourier transform $\calF$.
The uncertainty principle claims that the Fourier transform of any compactly supported function is impossible to be compactly supported,
namely, $\calF C_c^\infty(\Rn) \neq C_c^\infty(\Rn)$.

\begin{lem}
	$C_c^\infty(\Rn)$ is dense in $\scrS(\Rn)$.
\end{lem}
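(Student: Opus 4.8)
The plan is to prove that $C_c^\infty(\Rn)$ is dense in $\scrS(\Rn)$ by a standard cutoff argument: given $\varphi \in \scrS(\Rn)$, multiply it by a sequence of smooth cutoffs $\chi(x/j)$ that equal $1$ on larger and larger balls, and show the resulting sequence converges to $\varphi$ in the Schwartz topology, i.e.\ with respect to every semi-norm $|\cdot|_{\alpha,\beta}$. First I would fix a function $\chi \in C_c^\infty(\Rn)$ with $\chi(x) = 1$ for $|x| \leq 1$, $\chi(x) = 0$ for $|x| \geq 2$, and $0 \leq \chi \leq 1$; such a $\chi$ exists by the usual mollifier construction. Set $\varphi_j(x) := \chi(x/j)\,\varphi(x)$. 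Each $\varphi_j$ is smooth with support in $\{|x| \leq 2j\}$, hence lies in $C_c^\infty(\Rn)$. It remains to show $\varphi_j - \varphi \to 0$ in $\scrS(\Rn)$, that is, $|\varphi_j - \varphi|_{\alpha,\beta} \to 0$ for every pair of multi-indices $\alpha,\beta$.

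The key computation uses the Leibniz rule (Lemma \ref{lem:Dfg-PM2021}) applied to $D^\beta\big((\chi(\cdot/j) - 1)\varphi\big)$. Writing $\psi_j(x) := \chi(x/j) - 1$, we get
\[
D^\beta(\psi_j \varphi) = \sum_{\gamma \leq \beta} \binom{\beta}{\gamma} (D^{\beta - \gamma}\psi_j)(D^\gamma \varphi).
\]
For the term $\gamma = \beta$ we have $D^{\beta-\gamma}\psi_j = \psi_j$, and since $\psi_j(x) = 0$ for $|x| \leq j$, the product $x^\alpha \psi_j(x) D^\beta\varphi(x)$ is supported in $\{|x| \geq j\}$; there $|x^\alpha D^\beta\varphi(x)| \leq |x|^{|\alpha|}|D^\beta\varphi(x)|$ (using Lemma \ref{lem:xa-PM2021}), which tends to $0$ uniformly as $|x| \to \infty$ because $\varphi \in \scrS(\Rn)$ forces $|x|^{|\alpha|+1}|D^\beta\varphi(x)|$ to be bounded, so the sup over $|x| \geq j$ is $O(1/j)$. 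For the terms with $\gamma < \beta$, each derivative $D^{\beta-\gamma}\psi_j(x) = j^{-|\beta-\gamma|}(D^{\beta-\gamma}\chi)(x/j)$ carries a factor $j^{-|\beta-\gamma|} \leq j^{-1}$ and is supported in $\{j \leq |x| \leq 2j\}$, and on that region $|x^\alpha D^\gamma\varphi(x)|$ is again uniformly small; combining, these terms are $O(1/j)$ as well. Summing the finitely many terms gives $|\varphi_j - \varphi|_{\alpha,\beta} = |\psi_j\varphi|_{\alpha,\beta} \to 0$.

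The main obstacle — really the only point requiring care — is making the decay estimates uniform in $x$ over the relevant unbounded regions and tracking the scaling factors $j^{-|\beta-\gamma|}$ from differentiating $\chi(x/j)$; the chain rule produces exactly these negative powers of $j$, which is what drives the convergence for the lower-order terms, while the top-order term $\gamma = \beta$ relies instead on the shrinking support $\{|x| \geq j\}$ together with the rapid decay of $\varphi$ and its derivatives. Once one observes that $\varphi \in \scrS(\Rn)$ gives, for any $M$, a bound $\sup_x \agl[x]^M |D^\gamma\varphi(x)| < \infty$, the estimate $\sup_{|x| \geq j}|x|^{|\alpha|}|D^\gamma\varphi(x)| \lesssim j^{-1} \to 0$ follows immediately, and the proof is complete.
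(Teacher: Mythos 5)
Your proof is correct and follows essentially the same route as the paper: multiply $\varphi$ by a dilated cutoff ($\chi(x/j)$ here, $\phi(\epsilon x)$ with $\epsilon=1/j$ in the paper) and estimate each semi-norm $|\cdot|_{\alpha,\beta}$ of the difference using the rapid decay of $\varphi$ on $\{|x|\geq j\}$ together with the factors $j^{-|\beta-\gamma|}$ produced by differentiating the cutoff. Your explicit Leibniz bookkeeping is just a more detailed writing of the paper's estimate, so nothing further is needed.
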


\begin{proof}
	Fix a function $\phi \in C_c^\infty(\Rn)$ satisfying $\phi \equiv 1$ when $|x| \leq 1$, and $\phi \equiv 0$ when $|x| \geq 2$, and $0 \leq \phi(x) \leq 1$.
	
	For any $\varphi \in \scrS(\Rn)$, denote $\varphi_{\epsilon}(x) := \varphi(x) \phi(\epsilon x)$, then $\{\varphi_\epsilon\}_{\epsilon > 0}$ is a sequence in $C_c^\infty(\Rn)$.
	Then for any multi-index $\alpha$ and $\beta$, we have
	\begin{align*}
	|\phi - \phi_\epsilon|_{\alpha,\beta}
	& = \sup_{x \in \Rn} |x^\alpha \partial^\beta 
	\big( \varphi(x) [1 - \phi(\epsilon x)] \big)|
	= \sup_{|x| \geq 1/\epsilon} |x^\alpha \partial^\beta 
	\big( \varphi(x) [1 - \phi(\epsilon x)] \big)| \\
	& \leq \sup_{1/\epsilon \leq |x| \leq 2/\epsilon} |x^\alpha \partial^\beta 
	\big( \varphi(x) [1 - \phi(\epsilon x)] \big)| + \sup_{|x| \geq 2/\epsilon} |x^\alpha \partial^\beta 
	\varphi(x)| \\
	& = \sup_{1/\epsilon \leq |x| \leq 2/\epsilon} |x^\alpha \partial^\beta
	\varphi(x)| |1 - \phi(\epsilon x)| + \mathcal O(\epsilon) + \sup_{|x| \geq 2/\epsilon} |x^\alpha \partial^\beta 
	\varphi(x)| \\
	& \leq \sup_{1/\epsilon \leq |x| \leq 2/\epsilon} |x^\alpha \partial^\beta
	\varphi(x)| + \mathcal O(\epsilon) + \sup_{|x| \geq 2/\epsilon} |x^\alpha \partial^\beta 
	\varphi(x)| \\
	& \leq 2\sup_{|x| \geq 1/\epsilon} |x^\alpha \partial^\beta
	\varphi(x)| + \mathcal O(\epsilon).
	\end{align*}
	Because $\sup_{\Rn} |x_j^2 x^\alpha \partial^\beta
	\varphi(x)| < +\infty$, we have that $|x_j^2 x^\alpha \partial^\beta
	\varphi(x)|$ is bounded in $\Rn$, so $|x|^2 |x^\alpha \partial^\beta
	\varphi(x)|$ is bounded in $\Rn$, thus $|x^\alpha \partial^\beta
	\varphi(x)| \leq C_{\alpha, \beta} \agl[x]^{-2}$ for certain constant $C_{\alpha, \beta}$.
	Therefore,
	\begin{equation*}
	|\phi - \phi_\epsilon|_{\alpha,\beta}
	\leq 2C_{\alpha, \beta} \sup_{|x| \geq 1/\epsilon} \agl[x]^{-2} + \mathcal O(\epsilon) \to 0, \quad \epsilon \to 0.
	\end{equation*}
	The proof is complete.	
\end{proof}

\begin{lem}
	Let $f \in \scrS(\Rn)$. Then $\Forall s \in \R$, we have
	$$(1+|x|^2)^s f(x) \in \scrS(\Rn).$$
\end{lem}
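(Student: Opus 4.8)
The plan is to reduce the claim to the fact, already established in the proof of the density lemma, that every Schwartz seminorm decays faster than any fixed power of $\agl[x]$, and to combine it with the Leibniz rule (Lemma \ref{lem:Dfg-PM2021}) and the derivative estimate for powers of $\agl[x]$ (Lemma \ref{lem:aX-PM2021}). Write $g(x) := (1+|x|^2)^s = \agl[x]^{2s}$. Since $g \in C^\infty(\Rn)$, the product $g f$ is smooth, so the only thing to check is that $|gf|_{\alpha,\beta} < +\infty$ for every pair of multi-indices $\alpha, \beta$.

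First I would apply Lemma \ref{lem:Dfg-PM2021} to expand
\[
D^\beta (gf) = \sum_{\gamma \leq \beta} \binom{\beta}{\gamma} (D^{\beta-\gamma} g)(D^\gamma f),
\]
so that
\[
x^\alpha D^\beta (gf) = \sum_{\gamma \leq \beta} \binom{\beta}{\gamma} x^\alpha (D^{\beta-\gamma} g)(D^\gamma f).
\]
It therefore suffices to bound each term $x^\alpha (D^{\beta-\gamma} g)(D^\gamma f)$ uniformly in $x$. Next I would estimate $|D^{\beta-\gamma} g(x)| = |\partial^{\beta-\gamma} (\agl[x]^{2s})|$: by Lemma \ref{lem:aX-PM2021} this is $\lesssim \agl[x]^{2s - |\beta-\gamma|} \lesssim \agl[x]^{2|s|}$ for $|x| \geq 1$, while on the compact set $|x| \leq 1$ it is bounded by continuity; hence $|D^{\beta-\gamma} g(x)| \leq C \agl[x]^{2|s|}$ for all $x$, with $C$ depending on $s$ and $\beta - \gamma$. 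Using also $|x^\alpha| \leq |x|^{|\alpha|} \leq \agl[x]^{|\alpha|}$ (Lemma \ref{lem:xa-PM2021} and $\agl[x] \simeq 1 + |x|$), we get
\[
|x^\alpha (D^{\beta-\gamma}g)(x)(D^\gamma f)(x)| \leq C \agl[x]^{|\alpha| + 2|s|} |D^\gamma f(x)|.
\]

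Finally I would invoke the decay of Schwartz functions: exactly as in the density lemma's proof, since $f \in \scrS(\Rn)$, for any multi-index $\gamma$ and any integer $M \geq 0$ the quantity $\sup_{\Rn} |x|^{2M} |D^\gamma f(x)|$ is finite, which gives $|D^\gamma f(x)| \leq C_{\gamma, M} \agl[x]^{-2M}$ for all $x$. Choosing $M$ with $2M \geq |\alpha| + 2|s|$ makes $\agl[x]^{|\alpha| + 2|s|} |D^\gamma f(x)|$ bounded, so each term in the sum is bounded uniformly in $x$, and therefore $|gf|_{\alpha,\beta} < +\infty$. Since $\alpha, \beta$ were arbitrary, $gf \in \scrS(\Rn)$. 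There is no real obstacle here; the only mild subtlety is that Lemma \ref{lem:aX-PM2021} is stated only for $|x| \geq 1$, so one must separately note that $D^{\beta-\gamma}g$ is continuous, hence bounded, on the unit ball, in order to get a global bound on $|D^{\beta-\gamma}g|$ by a power of $\agl[x]$.
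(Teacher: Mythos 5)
Your proof is correct and follows essentially the same route as the paper: expand $D^\beta\big((1+|x|^2)^s f\big)$ by the Leibniz rule, bound each derivative of the weight $(1+|x|^2)^s$ by a fixed power of $\agl[x]$, and absorb the resulting polynomial growth using the rapid decay of the Schwartz function $f$. The only difference is cosmetic: you justify the bound on $D^{\beta-\gamma}\big(\agl[x]^{2s}\big)$ explicitly via Lemma \ref{lem:aX-PM2021} plus compactness on the unit ball (where the paper simply asserts it), and you check the seminorms $x^\alpha D^\beta$ directly instead of the equivalent weights $(1+|x|^2)^k$.
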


\begin{proof}
	Let $\alpha$ be a multi-index. Then
	$$D^\alpha \big[ (1+|x|^2)^s f(x) \big] = \sum_{\delta \leq \alpha} \binom{\alpha}{\delta} D^{\delta}\big( (1+|x|^2)^s\big) \cdot (D^{\alpha-\delta}f)(x).$$
	We should notice that $| D^{\delta}\big( (1+|x|^2)^s\big) |$ can be always controlled by $(1+|x|^2)^{t_{\delta}}$ for $t_{\delta} \in \R$ large enough:
	\[
	| D^{\delta}\big( (1+|x|^2)^s\big) | \leq (1+|x|^2)^{t_{\delta}}, \Forall x \in \Rn.
	\]
	So for any non-negative integer $k$ and multi-index $\alpha$, we have
	\begin{align*}
	|(1+|x|^2)^k \cdot D^{\alpha} \big[ (1+|x|^2)^s f(x) \big] | & \leq \sum_{\delta \leq \alpha} \binom{\alpha}{\delta} (1+|x|^2)^{t_{\delta}} \cdot |(D^{\alpha-\delta}f)(x)| \\
	& \leq \sum_{\delta \leq \alpha} \binom{\alpha}{\delta} | (1+|x|^2)^{t_{\delta}+k} (D^{\alpha-\delta}f)(x)| \\
	& \leq \sum_{\delta \leq \alpha} \binom{\alpha}{\delta} C_{k,\alpha,\delta} = C_{k,\alpha} < +\infty.
	\end{align*}
	We proved the conclusion.
\end{proof}

Schwartz functions are these who decay fast enough.
Now we introduce another type of functions which grow at infinity, but with a mild speed.
These functions are called tempered functions.

\begin{defn}[Tempered functions\index{tempered functions}] \label{defn:TeF-PM2021}
	Let $f$ be a measurable function defined on $\Rn$ such that
	$$\sup_{x \in \Rn} \big|(1+|x|)^{-m} f(x)\big| < +\infty$$
	for some positive integer $m$. Then we call $f$ a \emph{tempered function}. If $f$ is continuous, then we call it \emph{continuous tempered function}.
\end{defn}

\begin{lem} \label{lem:fSS-PM2021}
	Assume $f$ is a smooth function such that $\partial^\alpha f$ are tempered functions $\forall \alpha$, then we have	
	\(
	f \cdot \scrS(\Rn) \subset \scrS(\Rn).
	\)
\end{lem}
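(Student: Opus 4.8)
The plan is to fix an arbitrary $\varphi \in \scrS(\Rn)$ and verify directly that $f\varphi$ satisfies every Schwartz semi-norm bound \eqref{eq:SchNrm-PM2021}; since $f,\varphi \in C^\infty(\Rn)$ we already have $f\varphi \in C^\infty(\Rn)$, so only the estimates $|f\varphi|_{\alpha,\beta}<+\infty$ remain. The natural first step is the Leibniz rule of Lemma \ref{lem:Dfg-PM2021}:
\[
D^\beta(f\varphi) = \sum_{\gamma \le \beta} \binom{\beta}{\gamma} (D^{\beta-\gamma}f)\,(D^\gamma \varphi),
\]
which reduces the problem to bounding each term $x^\alpha (D^{\beta-\gamma}f)(D^\gamma\varphi)$ on $\Rn$, a finite sum indexed by $\gamma \le \beta$.

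Next I would feed in the hypothesis. Since $D^{\beta-\gamma}f$ agrees with $\partial^{\beta-\gamma}f$ up to the constant factor $i^{-|\beta-\gamma|}$, it is a tempered function, so there is a positive integer $m=m(\beta-\gamma)$ and a constant $C_1$ with $|D^{\beta-\gamma}f(x)| \le C_1(1+|x|)^{m}$ for all $x$. Combining this with Lemma \ref{lem:xa-PM2021} gives
\[
|x^\alpha (D^{\beta-\gamma}f)(x)\,(D^\gamma\varphi)(x)| \le C_1 (1+|x|)^{|\alpha|+m}\,|D^\gamma\varphi(x)|.
\]
Now I invoke that Schwartz functions decay faster than any polynomial: $\partial^\gamma\varphi \in \scrS(\Rn)$, so by the lemma on $(1+|x|^2)^s f$ proved just above (with $2s = |\alpha|+m$ rounded up), $\agl[x]^{|\alpha|+m}\,\partial^\gamma\varphi \in \scrS(\Rn)$ and hence is bounded; using $\agl[x] \simeq 1+|x|$, the right-hand side above is bounded on $\Rn$. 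Summing over the finitely many $\gamma \le \beta$ yields $|f\varphi|_{\alpha,\beta}<+\infty$, and since $\alpha,\beta$ were arbitrary, $f\varphi \in \scrS(\Rn)$.

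There is no serious obstacle here; the argument is a routine Leibniz-plus-bookkeeping estimate. The only points demanding a little care are the passage between $D^{\beta-\gamma}f$ and $\partial^{\beta-\gamma}f$ when applying the definition of tempered function, and making sure the polynomial weight $(1+|x|)^{|\alpha|+m}$ is genuinely absorbed by the rapid decay of $\partial^\gamma\varphi$ — which is exactly the content of the preceding lemma and of the equivalence $\agl[x]\simeq 1+|x|$ recorded in Section \ref{sec:nt-PM2021}.
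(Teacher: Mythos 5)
Your argument is correct and is exactly the routine estimate the paper intends here (the lemma is left as an exercise, with Lemma \ref{lem:Dfg-PM2021}, Lemma \ref{lem:xa-PM2021} and the preceding $(1+|x|^2)^s$-lemma supplied precisely for this purpose): Leibniz, a polynomial bound on each $D^{\beta-\gamma}f$ from the tempered hypothesis, and absorption of the weight $(1+|x|)^{|\alpha|+m}$ by the rapid decay of $\partial^\gamma\varphi$. The only cosmetic remark is that no ``rounding up'' is needed when citing the preceding lemma, since it holds for every real $s$, so you may take $s=(|\alpha|+m)/2$ directly.
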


The proof is left as an exercise.

\begin{defn}[Tempered Distributions\index{tempered Distributions}] \label{defn:TeD-PM2021}
	A linear functional $T$ is called a \emph{tempered distribution} if for any sequence $\{\varphi_j\}_j$ of functions in $\scrS(\Rn)$ converging to zero in $\scrS(\Rn)$, we have
	\[
	T(\varphi_j) \to 0, \quad (j \to +\infty).
	\]
\end{defn}

It can be checked that the set of tempered distribution, denoted as $\boxed{\scrS'(\Rn)}$, is the dual of Schwartz Space $\scrS(\Rn)$.

Recall the semi-norm $|\cdot|_{\alpha, \beta}$ defined in \eqref{eq:SchNrm-PM2021}.
We define a new norm $|\cdot|_m$ as
\[
|\varphi|_m := \sum_{|\alpha|,\, |\beta| \leq m} |\varphi|_{\alpha, \beta},
\]
It can be seen that $|\varphi|_m \leq |\varphi|_{m+1}$.

\begin{lem} \label{lem:TDEq-PM2021}
	``\,$T \in \scrS'(\Rn)$'' is equivalent to the following statement:
	$$\text{there exists a constant } C \text{ such that } \Exists m \in \{0\} \cup \N^+ \st \boxed{ |T(\varphi)| \leq C|\varphi|_m }, \Forall \varphi \in \scrS(\Rn).$$
\end{lem}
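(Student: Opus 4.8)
The plan is to prove the two implications separately, with the ``bound $\Rightarrow$ distribution'' direction being routine and the converse requiring a contrapositive/normalization argument.

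First I would handle the direction ``the bound implies $T \in \scrS'(\Rn)$''. Assume there are $C$ and $m$ with $|T(\varphi)| \leq C|\varphi|_m$ for all $\varphi \in \scrS(\Rn)$. Given $\varphi_j \to 0$ in $\scrS(\Rn)$, by Definition \ref{defn:SchwartzConvergence-PM2021} each semi-norm $|\varphi_j|_{\alpha,\beta} \to 0$; since $|\varphi_j|_m$ is a finite sum of such semi-norms (over $|\alpha|,|\beta| \leq m$), we get $|\varphi_j|_m \to 0$, hence $|T(\varphi_j)| \leq C|\varphi_j|_m \to 0$. As $T$ is linear by hypothesis, this is exactly the statement $T \in \scrS'(\Rn)$.

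For the converse I would argue by contraposition: suppose no such pair $(C,m)$ exists. Then for every $m \in \{0\}\cup\N^+$ the inequality $|T(\varphi)| \leq m\,|\varphi|_m$ fails for some $\varphi$, so there is $\varphi_m \in \scrS(\Rn)$ with $|T(\varphi_m)| > m\,|\varphi_m|_m$. Note $\varphi_m \neq 0$ and in fact $|\varphi_m|_m > 0$, since $|\varphi_m|_m \geq |\varphi_m|_{0,0} = \sup_{\Rn}|\varphi_m|$, and if this vanished then $\varphi_m \equiv 0$, contradicting $|T(\varphi_m)| > 0$. Now set $\psi_m := \varphi_m / \big(m\,|\varphi_m|_m\big) \in \scrS(\Rn)$. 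Then $|\psi_m|_m = 1/m$, and using $|\cdot|_m \leq |\cdot|_{m+1}$ together with the fact that for any fixed multi-indices $\alpha,\beta$ one has $|\psi|_{\alpha,\beta} \leq |\psi|_m$ whenever $m \geq \max(|\alpha|,|\beta|)$, we conclude $|\psi_m|_{\alpha,\beta} \to 0$ as $m \to \infty$ for every $\alpha,\beta$; that is, $\psi_m \to 0$ in $\scrS(\Rn)$. On the other hand $|T(\psi_m)| = |T(\varphi_m)|/\big(m\,|\varphi_m|_m\big) > 1$, so $T(\psi_m) \not\to 0$, and therefore $T \notin \scrS'(\Rn)$. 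This proves the contrapositive and completes the equivalence.

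The only delicate point — the ``hard part'' — is the normalization step: one must divide by the (a priori possibly zero) quantity $|\varphi_m|_m$, so the short argument that $|\varphi_m|_m > 0$ is essential, and one must check carefully that the rescaled sequence genuinely tends to zero in the Schwartz topology, which is where the monotonicity $|\varphi|_m \leq |\varphi|_{m+1}$ is used. Everything else is bookkeeping with finite sums of semi-norms.
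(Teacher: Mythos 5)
Your proposal is correct and follows essentially the same route as the paper's proof: the easy direction is identical, and your contrapositive argument with the choice $C=m$ is just the paper's contradiction argument with its double-indexed family $\varphi_{M,m}$ diagonalized at the outset, followed by the same normalization $\psi_m=\varphi_m/(m|\varphi_m|_m)$ and the same use of $|\psi_m|_{\alpha,\beta}\leq|\psi_m|_m=1/m$ for $m\geq\max(|\alpha|,|\beta|)$. Your explicit check that $|\varphi_m|_m>0$ before dividing is a small point the paper glosses over, but it does not change the argument.
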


\begin{proof}
	($\Leftarrow$) Assume $\Exists m \in \{0\} \cup \N^+ \st |T(\varphi)| \leq C|\varphi|_m, \Forall \varphi \in \scrS(\Rn)$. Then for every sequence $\{\varphi_k\}_k \subset \scrS(\Rn)$ satisfying $\varphi_k \to 0 ~(k \to +\infty)$, we have $|\varphi_k|_m \to 0 ~(k \to +\infty)$. So $|T(\varphi_k)| \leq C|\varphi_k|_m \to 0 ~(k \to +\infty)$. This means $T \in \scrS'(\Rn)$.
	
	($\Rightarrow$) Assume $T \in \scrS'(\Rn)$. Suppose that the claim is not true, then for every positive integer $M$, and every $m \in \{0\} \cup \N^+$, there exists $\varphi_{M, m} \in \scrS(\Rn)$ such that
	\[
	|T(\varphi_{M, m})| > M |\varphi_{M, m}|_m, \quad \text{so} \quad
	|T(\varphi_{M, m}/(M |\varphi_{M, m}|_m))| > 1.
	\]
	Let $\phi_{M,m} = \varphi_{M, m} / (M |\varphi_{M, m}|_m)$, then $|\phi_{M, m}|_m = \frac{1}{M}$ and $|T(\phi_{M,m})| \geq 1$. Further, we denote $\phi_M := \phi_{M,M}$, then $|\phi_M|_M = \frac{1}{M}$ and
	\begin{equation} \label{eq:TphM-PM2021}
	|T(\phi_M)| \geq 1, \Forall M \in N^+.
	\end{equation}
	Now for every $m \in \{0\} \cup \N^+$, when $j$ is large enough, we have
	$$|\phi_j|_m \leq |\phi_j|_j = \frac{1}{j} \to 0 ~( j \to +\infty).$$
	So according to the Definition \ref{defn:SchwartzConvergence-PM2021}, we have $\phi_j \to 0$ in $\scrS(\Rn)$, so according to the definition of tempered Distributions we shall have $|T(\phi_j)| \to 0$.
	But this is contradictory with \eqref{eq:TphM-PM2021}.
	
	The proof is complete.
\end{proof}

The notion of ``tempered function'' and ``tempered distribution'' are closely related.
Every tempered function $f$ defines a tempered distribution $T_f \in \scrS'(\Rn)$ by the following way:
$$T_f(\varphi) := \int_{\Omega} f(x) \varphi(x) \dif{x}, \Forall \varphi \in \scrS(\Rn).$$

At the first glance, the definition of tempered distribution is not a generalization of the definition of tempered function.
But the following theorem will characterize tempered distributions through tempered functions.

\begin{thm}[Schwartz representation Theorem] \label{thm:TeD-PM2021}
	Every $T \in \scrS'(\Rn)$ can be represented as a sum of certain order of derivative of continuous tempered functions in $\scrS(\Rn)$, i.e.~for every $T  \in \scrS'(\Rn)$, there exist a finite collection $T_{\alpha, \beta}$ of bounded continuous functions such that
	\[
	T = \sum_{|\alpha| + |\beta| \leq m} x^\alpha D^\beta T_{\alpha, \beta}
	\]
\end{thm}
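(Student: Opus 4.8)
The plan is to realise $T$ as a bounded linear functional on (the image in) a finite product of $L^{1}$-spaces, apply the Hahn--Banach theorem, use the duality $(L^{1})^{*}=L^{\infty}$ to obtain an integral representation of $T$, and then reshape that representation into the claimed form. By Lemma~\ref{lem:TDEq-PM2021} there are a constant $C$ and an integer $m$ with $|T(\varphi)|\le C|\varphi|_{m}$ for all $\varphi\in\scrS(\Rn)$, so $T$ is controlled by the finitely many sup-seminorms $\sup_{x}|x^{\alpha}D^{\beta}\varphi(x)|$ with $|\alpha|,|\beta|\le m$. To turn these into integrals I would use the multidimensional fundamental theorem of calculus: for $\psi\in\scrS(\Rn)$,
\[
|\psi(x)|=\Bigl|\int_{-\infty}^{x_{1}}\!\!\cdots\int_{-\infty}^{x_{n}}(\partial_{1}\cdots\partial_{n}\psi)(y)\dif y\Bigr|\le\|\partial_{1}\cdots\partial_{n}\psi\|_{L^{1}(\Rn)}.
\]
Taking $\psi=x^{\alpha}D^{\beta}\varphi$ and expanding by the Leibniz rule (Lemma~\ref{lem:Dfg-PM2021}, noting $\partial_{j}$ commutes with $D^{\beta}$ and lowers the degree of $x^{\alpha}$) yields a bound
\[
|T(\varphi)|\le C\sum_{|\alpha'|\le m,\ |\beta'|\le m+n}\|x^{\alpha'}\partial^{\beta'}\varphi\|_{L^{1}(\Rn)}.
\]

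Next I would let $K$ be the number of pairs $(\alpha',\beta')$ above and consider the linear map $J\colon\scrS(\Rn)\to L^{1}(\Rn)^{K}$, $J\varphi:=(x^{\alpha'}\partial^{\beta'}\varphi)_{\alpha',\beta'}$ (each component is Schwartz, hence in $L^{1}$). It is injective, since the pair $(0,0)$ occurs and $\varphi\mapsto\varphi$ is injective into $L^{1}$; so $\Lambda(J\varphi):=T(\varphi)$ is a well-defined linear functional on the subspace $J(\scrS(\Rn))$, bounded by $C$ times the $\ell^{1}$-sum-of-$L^{1}$ norm. Hahn--Banach extends $\Lambda$ to a bounded functional on all of $L^{1}(\Rn)^{K}$, and since the dual of $L^{1}(\Rn)^{K}$ is $L^{\infty}(\Rn)^{K}$ there exist bounded measurable functions $g_{\alpha',\beta'}$ with
\[
T(\varphi)=\sum_{\alpha',\beta'}\int_{\Rn}g_{\alpha',\beta'}(x)\,x^{\alpha'}\partial^{\beta'}\varphi(x)\dif x,\qquad\varphi\in\scrS(\Rn).
\]

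It then remains to promote each $g_{\alpha',\beta'}\in L^{\infty}$ to a bounded continuous function while peeling off the polynomial growth explicitly. Fix a positive integer $N$ with $2N>n$, so that $\agl[y]^{-2N}g_{\alpha',\beta'}\in L^{1}(\Rn)$, and set
\[
G_{\alpha',\beta'}(x):=\int_{-\infty}^{x_{1}}\!\!\cdots\int_{-\infty}^{x_{n}}\agl[y]^{-2N}g_{\alpha',\beta'}(y)\dif y .
\]
Dominated convergence shows $G_{\alpha',\beta'}$ is bounded and continuous, and a Fubini computation tested against $\varphi$ gives $\partial_{1}\cdots\partial_{n}G_{\alpha',\beta'}=\agl[x]^{-2N}g_{\alpha',\beta'}$ in $\scrS'(\Rn)$; hence $g_{\alpha',\beta'}=(1+|x|^{2})^{N}\partial_{1}\cdots\partial_{n}G_{\alpha',\beta'}$, where $(1+|x|^{2})^{N}$ is a genuine polynomial. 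Substituting this back, integrating by parts to move $\partial_{1}\cdots\partial_{n}$ off $G_{\alpha',\beta'}$ and onto the (polynomial times $\varphi$)-factor, expanding by Leibniz, and rewriting each resulting integral $\int G_{\alpha',\beta'}\,x^{\rho}\partial^{\sigma}\varphi\dif x$ as $(-1)^{|\sigma|}\langle\partial^{\sigma}(x^{\rho}G_{\alpha',\beta'}),\varphi\rangle$, one arrives at a finite sum $T=\sum\partial^{\sigma}(x^{\rho}G_{\alpha',\beta'})$ with constant coefficients; a last Leibniz expansion of $\partial^{\sigma}(x^{\rho}G_{\alpha',\beta'})$ together with $\partial^{\sigma}=i^{|\sigma|}D^{\sigma}$ rearranges this into $T=\sum_{|\alpha|+|\beta|\le M}x^{\alpha}D^{\beta}T_{\alpha,\beta}$ for a suitable integer $M$, where each $T_{\alpha,\beta}$ is a finite constant-coefficient combination of the $G_{\alpha',\beta'}$, hence bounded and continuous. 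All the integrations by parts are legitimate because each $G_{\alpha',\beta'}$ is a continuous tempered function and the remaining factors are Schwartz.

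The real content sits in the two ``primitive'' manoeuvres: the fundamental theorem of calculus, used first to trade the sup-seminorms for $L^{1}$-seminorms so that Hahn--Banach and $L^{1}$-duality become available, and then, with the weight $\agl[x]^{-2N}$, to trade the resulting $L^{\infty}$ coefficients for bounded continuous ones; Hahn--Banach in the middle is the standard, non-constructive engine. The main obstacle is the bookkeeping: keeping the repeated Leibniz expansions and integrations by parts under control and checking that, after all the derivatives have been shuffled around, $T$ really has the precise shape $\sum x^{\alpha}D^{\beta}T_{\alpha,\beta}$ with the $T_{\alpha,\beta}$ genuinely continuous rather than merely in $L^{\infty}$. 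No single step is deep, but assembling them cleanly is where the care is required.
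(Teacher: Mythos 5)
Your proof is correct: the seminorm bound from Lemma \ref{lem:TDEq-PM2021}, the fundamental-theorem-of-calculus trick to replace sup-seminorms by $L^1$-seminorms, the Hahn--Banach extension together with $(L^1)^*=L^\infty$, and the second primitive-taking step with the weight $\agl[y]^{-2N}$, $2N>n$, all go through, and the final Leibniz/integration-by-parts bookkeeping does land on the form $\sum x^\alpha D^\beta T_{\alpha,\beta}$ with each $T_{\alpha,\beta}$ a constant-coefficient combination of the bounded continuous primitives $G_{\alpha',\beta'}$. Note, however, that the paper gives no proof of Theorem \ref{thm:TeD-PM2021} at all: it simply refers to \S 1.4 of Melrose's notes, where the representation is obtained by a genuinely different route, namely through the Fourier transform and weighted Sobolev spaces --- one shows that every $T\in\scrS'(\Rn)$ lies in some space $\agl[x]^{m}H^{-m}(\Rn)$ and then uses the Sobolev embedding to convert negative-order regularity into derivatives of bounded continuous functions, with the powers of $x$ and of $D$ appearing explicitly as powers of $\agl[x]$ and of $(1-\Delta)$. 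Your argument is the classical Schwartz structure-theorem proof: it is self-contained at the level of this chapter (it needs neither the Fourier transform nor Sobolev spaces), but it pays for that with the non-constructive Hahn--Banach step and with heavier combinatorial bookkeeping; the Fourier/Sobolev route in the cited notes is constructive, gives cleaner control of which $\alpha,\beta$ occur, and fits the machinery these lecture notes develop anyway. Both are legitimate; yours is simply a different, and perfectly valid, proof.
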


See \S 1.4 in  \url{https://math.mit.edu/~rbm/iml/Chapter1.pdf}

\begin{thm} \label{thm:TFTD-PM2021}
	For $1 \leq p \leq +\infty$, there holds $\boxed{\scrS(\Rn) \subset L^p(\Rn) \subset \scrS'(\Rn).}$
\end{thm}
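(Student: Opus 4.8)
The plan is to establish the two inclusions in turn, both resting on the elementary integrability fact that $\int_{\Rn}\agl[x]^{-M}\dif x < +\infty$ whenever $M > n$, together with the quantitative decay satisfied by Schwartz functions. The first inclusion is purely a decay statement; the second is an application of Hölder's inequality combined with the criterion in Lemma \ref{lem:TDEq-PM2021}.

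\emph{Step 1: $\scrS(\Rn)\subset L^q(\Rn)$ for every $1\le q\le+\infty$, with $\|\varphi\|_{L^q}\le C_{n,q}\,|\varphi|_m$ for a suitable $m=m(n,q)$.} For $\varphi\in\scrS(\Rn)$ and a nonnegative integer $k$, the finiteness of the seminorms $|\varphi|_{2ke_j,0}=\sup_x|x_j^{2k}\varphi(x)|$ together with $|x|^{2k}=\big(\sum_j x_j^2\big)^k\lesssim\sum_j|x_j|^{2k}$ gives $(1+|x|^2)^k|\varphi(x)|\le C_{n,k}\,|\varphi|_{2k}$, i.e.
\[
|\varphi(x)|\le C_{n,k}\,|\varphi|_{2k}\,\agl[x]^{-2k},\qquad x\in\Rn .
\]
For $q=+\infty$ this is simply $\|\varphi\|_{L^\infty}=|\varphi|_{0,0}\le|\varphi|_0$. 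For $1\le q<+\infty$, choose $k$ with $2kq>n$; then $\|\varphi\|_{L^q}^q\le (C_{n,k}|\varphi|_{2k})^q\int_{\Rn}\agl[x]^{-2kq}\dif x<+\infty$, which proves the claim with $m=2k$. Carrying this bound by a single seminorm is exactly what Step 2 needs.

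\emph{Step 2: $L^p(\Rn)\subset\scrS'(\Rn)$.} Given $f\in L^p(\Rn)$, define $T_f(\varphi):=\int_{\Rn}f(x)\varphi(x)\dif x$ for $\varphi\in\scrS(\Rn)$, and let $p'$ be the conjugate exponent, $1/p+1/p'=1$. By Step 1 applied with exponent $p'$ we have $\varphi\in L^{p'}(\Rn)$ with $\|\varphi\|_{L^{p'}}\le C\,|\varphi|_m$ for some $m=m(n,p)$ (when $p=1$, so $p'=\infty$, this is just $\|\varphi\|_{L^\infty}=|\varphi|_{0,0}\le|\varphi|_0$). Hölder's inequality then shows the integral converges absolutely and
\[
|T_f(\varphi)|\le\|f\|_{L^p}\,\|\varphi\|_{L^{p'}}\le C\|f\|_{L^p}\,|\varphi|_m,\qquad\forall\,\varphi\in\scrS(\Rn).
\]
By Lemma \ref{lem:TDEq-PM2021} this estimate is precisely the criterion for $T_f\in\scrS'(\Rn)$. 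Finally, the map $f\mapsto T_f$ is injective: since $C_c^\infty(\Rn)\subset\scrS(\Rn)$, vanishing of $\int_{\Rn}f\varphi\dif x$ on all such test functions forces $f=0$ almost everywhere; identifying $f$ with $T_f$ yields the asserted set inclusion.

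There is no essential obstacle in this argument. The only points requiring a little care are the separate treatment of the endpoints $p=+\infty$ and $p'=+\infty$, where one uses the sup-norm seminorm directly, and the bookkeeping in Step 1 that turns the pointwise decay $|\varphi(x)|\lesssim\agl[x]^{-2k}$ into a bound by a single Schwartz seminorm $|\varphi|_m$ — this is what allows one to invoke Lemma \ref{lem:TDEq-PM2021} rather than argue with sequences via Definition \ref{defn:TeD-PM2021}.
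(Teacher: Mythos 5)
Your proof is correct, and the paper itself states Theorem \ref{thm:TFTD-PM2021} without giving any proof, so there is nothing to compare against; your argument is the standard one and fits naturally with the paper's toolkit. Both steps check out: the pointwise bound $|\varphi(x)|\leq C_{n,k}\,|\varphi|_{2k}\,\agl[x]^{-2k}$ follows from the seminorms with $\alpha=2ke_j$, $\beta=0$, and combined with integrability of $\agl[x]^{-2kq}$ for $2kq>n$ it gives $\scrS(\Rn)\subset L^q(\Rn)$ with a bound by a single seminorm; then H\"older plus Lemma \ref{lem:TDEq-PM2021} yields $T_f\in\scrS'(\Rn)$, and injectivity of $f\mapsto T_f$ (via density of $C_c^\infty$ and the fact that $L^p\subset L^1_{loc}$) justifies reading this as a set inclusion. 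The endpoint cases $p=1,\,+\infty$ are handled correctly.
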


\section{Fourier transforms} \label{sec:FT-PM2021}

\begin{defn}[Fourier Transform\index{Fourier Transform} on $\scrS(\Rn)$] \label{defn:FourierTransform-1}
	Let $f \in \scrS(\Rn)$, then the \emph{Fourier transform} of $f$ is defined as
	$$\boxed{ (\calF f)(\xi) := (2\pi)^{-n/2} \int_{\Rn} e^{-i x \cdot \xi} f(x) \dif{x}, \Forall \xi \in \Rn, }$$
	where $x \cdot \xi = \sum_{i=1}^{n} x_i \xi_i$. We also denote the Fourier transform of $f$ as $\hat{f}$.
\end{defn}

\begin{defn}[Inverse Fourier Transform] \label{defn:FourierTransform-aul}
	Let $f \in \scrS(\Rn)$, then the \emph{inverse Fourier transform} of $f$ is defined as
	$$\boxed{ (\calinF f)(x) := (2\pi)^{-n/2} \int_{\Rn} e^{i x \cdot \xi} f(\xi) \dif{\xi}, \Forall \xi \in \Rn. }$$
	We also denote the inverse Fourier transform of $f$ as $\check{f}$.
\end{defn}

\begin{lem} \label{lem:FtransRelat-PM2021s}
	For every $f,g \in \scrS(\Rn)$, we have:
	
	\begin{enumerate}
	\item $\calF, \calinF \colon \scrS(\Rn) \to \scrS(\Rn)$ are linear bijection;
	
	\item $\langle \hat{f} , g \rangle = \langle f , \hat{g} \rangle$;
	
	\item $( f , g ) = ( \hat{f} , \hat{g} )$. (Parseval's Relation\index{Parseval's Relation});
	
	\item $\calF(f * g) = (2\pi)^{n/2} \hat{f} \cdot \hat{g}$;
	
	\item $\calF(f \cdot g) = (2\pi)^{-n/2} \hat{f} * \hat{g}$.
	\end{enumerate}
\end{lem}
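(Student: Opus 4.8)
The plan is to reduce everything to two elementary ingredients — the commutation relations of $\calF$ with the operators $D^\alpha$ and with multiplication by $x^\alpha$, and Fubini's theorem — together with one genuinely analytic fact, the Fourier inversion formula, from which the bijectivity in (1) and the remaining identities will follow formally.

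First I would record the exchange formulas $\widehat{D^\alpha f}=\xi^\alpha\hat f$ and $D^\beta\hat f=\calF\big((-x)^\beta f\big)$ for $f\in\scrS(\Rn)$: the first follows by integrating by parts $|\alpha|$ times in the defining integral (the boundary terms vanish by the rapid decay of $f$), the second by differentiating under the integral sign (legitimate by dominated convergence, again using the decay of $f$). Combining them gives $\xi^\alpha D^\beta\hat f=\calF\big(D^\alpha((-x)^\beta f)\big)$; since $D^\alpha(x^\beta f)\in\scrS(\Rn)\subset L^1(\Rn)$ by Theorem \ref{thm:TFTD-PM2021} and $|\calF g|\le(2\pi)^{-n/2}\nrm[L^1]{g}$ pointwise, we obtain $\sup_\xi|\xi^\alpha D^\beta\hat f(\xi)|<\infty$ for all $\alpha,\beta$, that is, $\hat f\in\scrS(\Rn)$. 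The same reasoning shows $\calinF$ maps $\scrS(\Rn)$ into itself, and linearity is immediate from the definitions.

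Next I would dispatch the Fubini-type identities. For $f,g\in\scrS(\Rn)$ the function $(x,\xi)\mapsto e^{-ix\cdot\xi}f(x)g(\xi)$ lies in $L^1(\Rn\times\Rn)$, so interchanging the order of integration in $\int\hat f(\xi)g(\xi)\dif\xi$ produces $\int f(x)\hat g(x)\dif x$, which is exactly (2). The same interchange applied to $\calF(f*g)$, followed by the substitution $z=x-y$, proves (4): $\calF(f*g)=(2\pi)^{n/2}\hat f\hat g$, the double integral being absolutely convergent because $\nrm[L^1]{f}\nrm[L^1]{g}<\infty$; the companion identity $\calinF(f*g)=(2\pi)^{n/2}\calinF f\cdot\calinF g$ is proved identically.

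It remains to establish Fourier inversion, $\calinF\calF f=f=\calF\calinF f$ on $\scrS(\Rn)$, which then makes $\calF$ and $\calinF$ mutually inverse bijections and completes (1). Here I would insert a Gaussian regulariser: the integral $(2\pi)^{-n/2}\int e^{ix\cdot\xi}e^{-\epsilon^2|\xi|^2/2}\hat f(\xi)\dif\xi$ tends to $(\calinF\calF f)(x)$ as $\epsilon\to0^+$ by dominated convergence, while Fubini rewrites it as $f$ convolved with the kernel $(2\pi)^{-n}\int e^{i(x-y)\cdot\xi-\epsilon^2|\xi|^2/2}\dif\xi=(2\pi\epsilon^2)^{-n/2}e^{-|x-y|^2/(2\epsilon^2)}$, which is an approximate identity and hence yields $f(x)$ in the limit. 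I expect this Gaussian computation — reducing to one dimension, completing the square, and then running the approximate-identity estimate via continuity of $f$ — to be the only non-routine step. Granting inversion, (3) follows by noting $\overline{\hat g}=\calinF\bar g$ and applying (2): $(\hat f,\hat g)=\agl[\hat f,\calinF\bar g]=\agl[f,\calF\calinF\bar g]=\agl[f,\bar g]=(f,g)$; and (5) follows by applying the $\calinF$-form of (4) to $\hat u,\hat v$, which gives $\calinF(\hat u*\hat v)=(2\pi)^{n/2}uv$ via $\calinF\calF=\mathrm{id}$, and then applying $\calF$ to both sides.
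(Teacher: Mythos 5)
Your proposal is correct and complete: the exchange formulas plus the $L^1$ bound give $\calF,\calinF\colon\scrS(\Rn)\to\scrS(\Rn)$, Fubini yields (2) and (4), the Gaussian-regularized inversion argument is the standard (and valid) way to get $\calinF\calF=\calF\calinF=\mathrm{id}$ and hence (1), and your deductions of (3) from (2) via $\overline{\hat g}=\calinF\bar g$ and of (5) from the $\calinF$-form of (4) are sound. The paper states this lemma without proof, so there is no argument of the paper to compare against; your write-up is the standard textbook route and fills that gap correctly — just make sure, when you invoke inversion in both orders, to note that the identity $\calF\calinF=\mathrm{id}$ follows from the same Gaussian computation with the sign of the phase reversed (equivalently from $\calR\calF=\calinF$ as in Proposition \ref{prop:RF-relations}).
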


Let's define an operator
$$\calR \colon f(x) \in \scrS(\Rn) \mapsto (\calR f)(x) = f(-x) \in \scrS(\Rn).$$
Then these four operators $\{ I,\calR,\calF,\calinF \}$ act very like $\{ 1,-1,i,-i \}$.
Denote a multiplication operation $X_j$ as $X_j \varphi(x) := x_j \varphi(x)$.
We have the following relations:
\begin{prop} \label{prop:RF-relations}
	$ $\newline
	\indent
	\begin{tabular}{ll}
		(1.a) $\calR\calF = \calF\calR = \calinF;$ & $\big(\, (-1) \cdot i = i \cdot (-1) = -i \,\big)$ \\
		(1.b) $\calR\calinF = \calinF\calR = \calF;$ & $\big(\, (-1) \cdot (-i) = (-i) \cdot (-1) = i \,\big)$ \\
		(1.c) $\calF\calF = \calinF\calinF = \calR;$ & $\big(\, i \cdot i = (-i) \cdot (-i) = -1 \,\big)$ \\
		(1.d) $\calR\calR = I.$ & $\big(\, (-1) \cdot (-1) = 1 \,\big)$ \\
		(2.a) $\calF D_j = X_j \calF, \quad \calF X_j = -D_j \calF$, \\
		(2.b) $\calF^2 D_j = -D_j \calF^2, \quad \calF^2 X_j = -X_j \calF^2$.
	\end{tabular}
\end{prop}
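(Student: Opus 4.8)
The plan is to verify group (1) by direct manipulation of the defining integrals, then group (2) by one integration by parts and one differentiation under the integral sign, and finally to read (2.b) off from (2.a) algebraically. Throughout, all manipulations are legitimate because $f\in\scrS(\Rn)$ (rapid decay kills boundary terms and licenses differentiation under the integral sign) and, by Lemma \ref{lem:FtransRelat-PM2021s}(1), $\calF$ and $\calinF$ preserve $\scrS(\Rn)$ so that every integral in sight converges absolutely.

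For group (1): the identity (1.d) is trivial, $(\calR\calR f)(x)=(\calR f)(-x)=f(x)$. For (1.a) I would substitute $x\mapsto -x$ in the defining integrals: $(\calR\calF f)(x)=(\calF f)(-x)=(2\pi)^{-n/2}\int_{\Rn}e^{ix\cdot\xi}f(\xi)\dif\xi=(\calinF f)(x)$, while $(\calF\calR f)(\xi)=(2\pi)^{-n/2}\int_{\Rn}e^{-ix\cdot\xi}f(-x)\dif x=(2\pi)^{-n/2}\int_{\Rn}e^{iy\cdot\xi}f(y)\dif y=(\calinF f)(\xi)$ after $y=-x$; identity (1.b) is obtained by the same computation with the sign of the exponent reversed. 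For (1.c) I would invoke the Fourier inversion theorem $\calinF\calF=\calF\calinF=I$ on $\scrS(\Rn)$ (the standard fact underlying the terminology ``inverse Fourier transform''): combining it with $\calinF=\calR\calF$ from (1.a) gives $\calR\calF^2=I$, hence $\calF^2=\calR^{-1}=\calR$ by (1.d), and combining it with $\calF=\calR\calinF$ from (1.b) gives $\calinF^2=\calR$ in the same way. Equivalently, $(\calF g)(x)=(\calinF g)(-x)$ shows $\calF\calF f=\calR\,\calinF\calF f=\calR f$.

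For (2.a): starting from $(\calF D_j f)(\xi)=(2\pi)^{-n/2}\int_{\Rn}e^{-ix\cdot\xi}\,\tfrac1i\partial_{x_j}f(x)\dif x$ and integrating by parts in $x_j$ (with vanishing boundary term), the factor $-\tfrac1i\partial_{x_j}e^{-ix\cdot\xi}=-\tfrac1i(-i\xi_j)e^{-ix\cdot\xi}=\xi_j e^{-ix\cdot\xi}$ yields $(\calF D_j f)(\xi)=\xi_j(\calF f)(\xi)=(X_j\calF f)(\xi)$. For the second identity I would differentiate under the integral sign (justified by the rapid decay of $x\mapsto x_j f(x)$): $\partial_{\xi_j}(\calF f)(\xi)=(2\pi)^{-n/2}\int_{\Rn}(-ix_j)e^{-ix\cdot\xi}f(x)\dif x=-i\,(\calF X_j f)(\xi)$, so $(\calF X_j f)(\xi)=\tfrac1{-i}\partial_{\xi_j}(\calF f)(\xi)=-(D_j\calF f)(\xi)$, using $\tfrac1{-i}=-\tfrac1i$.

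Finally, (2.b) is purely formal given (2.a): left-composing $\calF D_j=X_j\calF$ with $\calF$ and then using $\calF X_j=-D_j\calF$ gives $\calF^2 D_j=\calF X_j\calF=-D_j\calF^2$; left-composing $\calF X_j=-D_j\calF$ with $\calF$ and using $\calF D_j=X_j\calF$ gives $\calF^2 X_j=-\calF D_j\calF=-X_j\calF^2$. (Alternatively one can use $\calF^2=\calR$ from (1.c) together with the elementary relations $\calR D_j=-D_j\calR$ and $\calR X_j=-X_j\calR$, which follow from the chain rule applied to $f(-x)$.) There is no genuine obstacle here; the only points demanding care are the bookkeeping of the factor $1/i=-i$ hidden inside $D_j$ and the routine Schwartz-class justifications for discarding boundary terms and differentiating under the integral sign.
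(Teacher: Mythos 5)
Your proof is correct and is exactly the routine verification the paper leaves implicit: the paper states Proposition \ref{prop:RF-relations} without proof, and your direct change-of-variables computations for group (1), the integration by parts and differentiation under the integral for (2.a), and the algebraic deduction of (2.b) are the standard intended argument. The only external input is the Fourier inversion theorem $\calinF\calF=\calF\calinF=I$ on $\scrS(\Rn)$ used for (1.c), which you flag explicitly and which the paper also takes for granted (cf.\ Lemma \ref{lem:FtransRelat-PM2021s}), so there is no gap.
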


\begin{thm}[Plancherel Theorem\index{Plancherel Theorem}] \label{thm:PlancherelTheorem}
	$\calF$ and $\calinF$ defined on $\scrS(\Rn)$ can be extended uniquely to a unitary operator on $L^2(\Rn)$.
\end{thm}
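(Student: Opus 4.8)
The plan is to extend $\calF$ from $\scrS(\Rn)$ to $L^2(\Rn)$ by density, using Parseval's relation to guarantee the extension is well-defined, bounded, and isometric. Concretely, first I would record the two facts already available from Lemma \ref{lem:FtransRelat-PM2021s}: that $\calF\colon\scrS(\Rn)\to\scrS(\Rn)$ is a linear bijection with inverse $\calinF$, and that $(f,g)=(\hat f,\hat g)$ for all $f,g\in\scrS(\Rn)$. Taking $g=f$ in Parseval gives $\nrm[L^2]{\calF f}=\nrm[L^2]{f}$ for every $f\in\scrS(\Rn)$, so $\calF$ is an $L^2$-isometry on the subspace $\scrS(\Rn)$.

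Next I would invoke density: by Theorem \ref{thm:TFTD-PM2021}, $\scrS(\Rn)\subset L^2(\Rn)$, and in fact $\scrS(\Rn)$ is dense in $L^2(\Rn)$ (this follows since $C_c^\infty(\Rn)\subset\scrS(\Rn)$ is already dense in $L^2$, or from the density of $C_c^\infty$ in $\scrS$ shown above combined with density of $C_c^\infty$ in $L^2$). Then I would apply the standard bounded-linear-transformation (B.L.T.) principle: a bounded linear operator defined on a dense subspace of a Banach space, with values in a complete space, extends uniquely to a bounded linear operator on the whole space, preserving the operator norm. Explicitly, given $f\in L^2(\Rn)$, choose $\varphi_j\in\scrS(\Rn)$ with $\varphi_j\to f$ in $L^2$; then $\{\calF\varphi_j\}_j$ is Cauchy in $L^2$ because $\nrm[L^2]{\calF\varphi_j-\calF\varphi_k}=\nrm[L^2]{\varphi_j-\varphi_k}$, so it converges to some limit in the complete space $L^2(\Rn)$, and one checks the limit is independent of the approximating sequence. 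Define $\calF f$ to be this limit. The same construction applied to $\calinF$ gives an extension of $\calinF$.

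Then I would verify that the extended operators retain the key algebraic identities by passing to the limit: $\nrm[L^2]{\calF f}=\nrm[L^2]{f}$ for all $f\in L^2$ (so $\calF$ is isometric), and $\calinF\calF f=f=\calF\calinF f$ for all $f\in L^2$, since these hold on the dense set $\scrS(\Rn)$ and both sides are continuous in $f$. An isometry that is also surjective (its inverse is the extension of $\calinF$, which is defined on all of $L^2$) is precisely a unitary operator; alternatively one confirms unitarity via the polarized Parseval identity $(\calF f,\calF g)=(f,g)$, obtained by passing to the limit in Lemma \ref{lem:FtransRelat-PM2021s}(3). Finally, uniqueness of the extension is part of the B.L.T. statement: any two continuous extensions agree on a dense set, hence everywhere.

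I do not anticipate a serious obstacle here; the argument is the textbook density-and-isometry extension. The only point requiring a little care is making sure the extension is genuinely well-defined, i.e. that $\lim_j \calF\varphi_j$ does not depend on the choice of sequence $\varphi_j\to f$ — this is handled by interleaving two approximating sequences and using the isometry bound — and noting that all the identities used to conclude unitarity are continuous in the $L^2$ topology, so they survive the limiting process. I would also remark that this extended $\calF$ agrees with the original on $\scrS(\Rn)$ by construction (take the constant sequence), so there is no ambiguity in the notation.
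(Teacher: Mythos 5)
Your proposal is correct and follows essentially the same route as the paper: use Parseval's relation to see $\calF$ is an $L^2$-isometry on the dense subspace $\scrS(\Rn)$, extend by taking limits of Cauchy sequences $\{\calF\varphi_j\}$, and conclude by density. In fact your write-up is somewhat more complete than the paper's, since it also verifies well-definedness of the limit, unitarity via $\calinF$, and uniqueness of the extension, all of which the paper leaves implicit.
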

\begin{proof}
	$C_c^{\infty}(\Rn) \subset \scrS(\Rn) \subset L^2(\Rn)$ and $\overline{C_c^{\infty}(\Rn)}^{\nrm[L^2(\Rn)]{\cdot}} = L^2(\Rn)$, So $\scrS(\Rn)$ is dense in $L^2(\Rn)$ with respect to the $L^2$ norm.
	
	For any $f \in L^2(\Rn)$, let $\{\varphi_n\}_n \subset \scrS(\Rn)$ such that $\nrm[L^2(\Rn)]{\varphi_n - f} \to 0 ~(n \to +\infty)$, then
	\[
	\nrm[L^2(\Rn)]{\calF \varphi_m - \calF \varphi_n}
	= \nrm[L^2(\Rn)]{\calF (\varphi_m - \varphi_n)}
	\]
	and by the Parseval's Relation we can continue
	\[
	\nrm[L^2(\Rn)]{\calF \varphi_m - \calF \varphi_n}
	= \nrm[L^2(\Rn)]{\varphi_m - \varphi_n} \to 0, \quad (m,n \to +\infty).
	\]
	Therefore $\{\calF \varphi_n\}_n$ is a Cauchy sequence in $L^2(\Rn)$ and has a limit. We denote the limit as $\calF f$ and assign it to $f$ as the Fourier transform of $f$.
\end{proof}

The Fourier transform and inverse Fourier transform can also be uniquely extended on $\scrS'(\Rn)$.
\begin{defn}[Fourier Transform on $\scrS'(\Rn)$] \label{defn:FourierTransform-2}
	Let $T \in \scrS'(\Rn)$, then the \emph{Fourier transform} and \emph{inverse Fourier transform} of $T$ are defined to be the linear functionals $\calF T$ on $\scrS(\Rn)$ given by
	\begin{equation*}
	\boxed{\begin{aligned}
	(\calF T)(\varphi) & := T(\hat{\varphi}), \Forall \varphi \in \scrS(\Rn), \\
	(\calinF T)(\varphi) & := T(\check{\varphi}), \Forall \varphi \in \scrS(\Rn).
	\end{aligned}}
	\end{equation*}
\end{defn}

\begin{thm} \label{thm:fouriertrans-linearbdd}
	For every $\varphi \in \scrS(\Rn), ~T \in \scrS'(\Rn)$, we have: \\
	\indent (1) $\calF, \calinF \colon \scrS'(\Rn) \to \scrS'(\Rn)$ are linear continuous bijection. \\
	\indent (2) $\calF(\varphi * T) = (2\pi)^{n/2} \hat{\varphi} \cdot \hat{T}$; \\
	\indent (3) $\calF(\varphi \cdot T) = (2\pi)^{-n/2} \hat{\varphi} * \hat{T}$.
\end{thm}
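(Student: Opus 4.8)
The plan is to derive every assertion from the corresponding statement on $\scrS(\Rn)$ — Lemma \ref{lem:FtransRelat-PM2021s} and Proposition \ref{prop:RF-relations} — by transposition, exactly in the spirit of the defining formula $(\calF T)(\varphi) = T(\hat\varphi)$. For part (1), linearity of $\calF$ on $\scrS'(\Rn)$ is immediate from linearity of $T$ and of $\varphi\mapsto\hat\varphi$. To see $\calF T\in\scrS'(\Rn)$, take $\varphi_j\to 0$ in $\scrS(\Rn)$; since $\calF\colon\scrS(\Rn)\to\scrS(\Rn)$ is continuous, $\hat\varphi_j\to 0$ in $\scrS(\Rn)$, hence $(\calF T)(\varphi_j)=T(\hat\varphi_j)\to 0$. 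Continuity of $\calF$ as a map $\scrS'(\Rn)\to\scrS'(\Rn)$ in the weak-$*$ topology is the same computation: if $T_j\to T$ in $\scrS'(\Rn)$ then $(\calF T_j)(\varphi)=T_j(\hat\varphi)\to T(\hat\varphi)=(\calF T)(\varphi)$. For bijectivity I would check $\calF\calinF=\calinF\calF=I$ on $\scrS'(\Rn)$: for every $\varphi\in\scrS(\Rn)$,
\[
(\calF\calinF T)(\varphi)=(\calinF T)(\hat\varphi)=T\big((\hat\varphi)^{\vee}\big)=T(\varphi),
\]
using $(\hat\varphi)^{\vee}=\varphi$ on $\scrS(\Rn)$ (Proposition \ref{prop:RF-relations}(1.c)), and symmetrically for $\calinF\calF$; hence $\calF$ is a bijection with inverse $\calinF$, and likewise $\calinF$.

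For part (2) I would use the (standard) duality definitions $(\varphi\cdot S)(\psi):=S(\varphi\psi)$, well defined because $\varphi\psi\in\scrS(\Rn)$ (Lemma \ref{lem:fSS-PM2021}), and $(\varphi*S)(\psi):=S\big((\calR\varphi)*\psi\big)$ for $S\in\scrS'(\Rn)$, $\varphi,\psi\in\scrS(\Rn)$. Testing both sides of (2) against an arbitrary $\psi\in\scrS(\Rn)$ gives, on one hand,
\[
\big(\calF(\varphi*T)\big)(\psi)=(\varphi*T)(\hat\psi)=T\big((\calR\varphi)*\hat\psi\big),
\]
and on the other, by Lemma \ref{lem:FtransRelat-PM2021s}(5) together with $\widehat{\hat\varphi}=\calR\varphi$ (Proposition \ref{prop:RF-relations}(1.c)),
\[
\big((2\pi)^{n/2}\hat\varphi\cdot\hat T\big)(\psi)=(2\pi)^{n/2}\,\hat T(\hat\varphi\,\psi)=(2\pi)^{n/2}\,T\big(\calF(\hat\varphi\,\psi)\big)=T\big((\calR\varphi)*\hat\psi\big),
\]
so the two functionals coincide.

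Part (3) is the same pattern. Testing against $\psi\in\scrS(\Rn)$,
\[
\big(\calF(\varphi\cdot T)\big)(\psi)=(\varphi\cdot T)(\hat\psi)=T(\varphi\,\hat\psi),
\]
while, using Lemma \ref{lem:FtransRelat-PM2021s}(4) and $\widehat{\calR\hat\varphi}=\calF\calR\calF\varphi=\calinF\calF\varphi=\varphi$ (Proposition \ref{prop:RF-relations}(1.a)),
\[
\big((2\pi)^{-n/2}\hat\varphi*\hat T\big)(\psi)=(2\pi)^{-n/2}\,\hat T\big((\calR\hat\varphi)*\psi\big)=(2\pi)^{-n/2}\,T\big(\calF((\calR\hat\varphi)*\psi)\big)=T(\varphi\,\hat\psi),
\]
again giving equality.

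The points requiring care are all routine: the continuity of $\calF$ on $\scrS(\Rn)$ in the seminorm topology (invoked in part (1)); checking that $\varphi\cdot T$ and $\varphi*T$ genuinely lie in $\scrS'(\Rn)$, which needs $\varphi\psi\in\scrS(\Rn)$ and $(\calR\varphi)*\psi\in\scrS(\Rn)$ with continuous dependence on $\psi$; and keeping the factors $(2\pi)^{\pm n/2}$ and the reflection $\calR$ consistent throughout. The only real decision — not so much an obstacle — is fixing the duality definitions of $\varphi\cdot T$ and $\varphi*T$ under which (2)–(3) are intended to hold; once those are in place, each identity is a one-line transposition of its counterpart in Lemma \ref{lem:FtransRelat-PM2021s}.
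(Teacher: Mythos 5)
Your proof is correct: the paper states Theorem \ref{thm:fouriertrans-linearbdd} without proof, and your duality/transposition argument is exactly the route that Definition \ref{defn:FourierTransform-2} and the commented-out convolution definitions $(\varphi*S)(\psi)=S((\calR\varphi)*\psi)$, $(\varphi\cdot S)(\psi)=S(\varphi\psi)$ set up, with each identity reducing correctly to Lemma \ref{lem:FtransRelat-PM2021s}(4)--(5) on $\scrS(\Rn)$. One trivial citation fix: the inversion $\calinF\calF=\calF\calinF=I$ used in part (1) follows from Proposition \ref{prop:RF-relations}(1.a) together with (1.d) (or (1.c) with (1.d)), not from (1.c) alone.
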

The Proposition \ref{prop:RF-relations} also holds on $\scrS'(\Rn)$. The operator $\calR$ for $T \in \scrS'(\Rn)$ is defined as:
$$(\calR T)(\varphi) := T(\calR \varphi), \Forall \varphi \in \scrS(\Rn).$$

The Fourier transform are both $(1,+\infty)$-type and $(2,2)$-type bounded. And we have $L^p(\Rn) \subset L^1(\Rn) + L^2(\Rn)$ when $1 < p < 2$.
Therefore we can define the Fourier transform $\calF$ on $L^1(\Rn) + L^2(\Rn)$ by \eqref{eq:FourierL1L2-PM2021} and then study the boundedness of Fourier transform on $L^p(\Rn)$ with $1 < p < 2$.
For details about these $L^1(\Rn) + L^2(\Rn)$ things, please Google ``Riesz--Thorin theorem''.
Therefore according to the Marcinkiewicz interpolation theorem (see \cite[Appendix B]{stein2016singular}), we have the following result.

\begin{equation} \label{eq:FourierL1L2-PM2021}
\begin{cases}
f = f_1 + f_2 \in L^p(\Rn), \\
f_1 \in L^1(\Rn), ~f_2 \in L^2(\Rn), \\
\calF_1 \colon \text{Fourier transform from } L^1(\Rn) \text{ to } L^1(\Rn), \\
\calF_2 \colon \text{Fourier transform from } L^2(\Rn) \text{ to } L^2(\Rn), \\
\calF(f) := \calF_1(f_1) + \calF_2(f_2).
\end{cases}
\end{equation}

\begin{thm}[Hausdorff-Young inequality] \label{thm:FourierBdd-PM2021}
	Define the Fourier transform on $L^1(\Rn) + L^2(\Rn)$ by (\ref{eq:FourierL1L2-PM2021}), then there exists a constant $C_p$ such that for all $f \in L^p(\Rn), ~(1 \leq p \leq 2)$, we have
	$$\boxed{ \nrm[L^{p'}(\Rn)]{\hat{f}} \leq C \nrm[L^{p}(\Rn)]{f}, }$$
	where $1 \leq p \leq 2$ and $1/p + 1/p' = 1$.
\end{thm}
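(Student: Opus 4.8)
The plan is to obtain the inequality by interpolating between the two endpoint cases $p=1$ and $p=2$, for which the estimate is elementary, and then invoke the Riesz--Thorin interpolation theorem.

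First I would record the $p=2$ endpoint: by the Plancherel Theorem (Theorem \ref{thm:PlancherelTheorem}), $\calF$ extends to a unitary operator on $L^2(\Rn)$, so $\nrm[L^2(\Rn)]{\hat f} = \nrm[L^2(\Rn)]{f}$, which is exactly the claimed inequality for $p=p'=2$ with constant $1$. Next, the $p=1$ endpoint: for $f \in L^1(\Rn)$ the integral defining $(\calF f)(\xi)$ converges absolutely and
\[
|\hat f(\xi)| \leq (2\pi)^{-n/2} \int_{\Rn} |f(x)| \dif x = (2\pi)^{-n/2} \nrm[L^1(\Rn)]{f}
\]
for every $\xi \in \Rn$, hence $\nrm[L^\infty(\Rn)]{\hat f} \leq (2\pi)^{-n/2} \nrm[L^1(\Rn)]{f}$, which is the claimed inequality for $p=1$, $p'=\infty$. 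I would then check that the operator $\calF$ defined on $L^1(\Rn)+L^2(\Rn)$ via \eqref{eq:FourierL1L2-PM2021} is well defined and linear: the maps $\calF_1$ and $\calF_2$ agree on $L^1(\Rn)\cap L^2(\Rn)$ (they coincide with the convergent integral on $C_c^\infty(\Rn)$, which is dense in $L^1(\Rn)\cap L^2(\Rn)$ for the sum norm, and both are continuous there), so the value $\calF_1(f_1)+\calF_2(f_2)$ is independent of the splitting $f=f_1+f_2$.

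Finally, given $1<p<2$, I would choose $\theta\in(0,1)$ by $\tfrac1p = \tfrac{1-\theta}{1}+\tfrac{\theta}{2}$; one checks $\theta = 2-\tfrac2p = \tfrac2{p'}$, and correspondingly $\tfrac{1-\theta}{\infty}+\tfrac{\theta}{2}=\tfrac\theta2=\tfrac1{p'}$, so the interpolated target exponent is exactly the conjugate $p'$. Applying Riesz--Thorin to the pair of bounded maps $\calF\colon L^1(\Rn)\to L^\infty(\Rn)$ and $\calF\colon L^2(\Rn)\to L^2(\Rn)$ then gives $\calF\colon L^p(\Rn)\to L^{p'}(\Rn)$ bounded with
\[
\nrm[L^{p'}(\Rn)]{\hat f} \leq \big((2\pi)^{-n/2}\big)^{1-\theta}\cdot 1^{\theta}\,\nrm[L^p(\Rn)]{f} =: C_p \nrm[L^p(\Rn)]{f}.
\]
Together with the two endpoint estimates already established, this yields the inequality for all $1\leq p\leq 2$.

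\emph{Main obstacle.} The substantive point is not any single computation but assembling the interpolation correctly: verifying the consistency of the two definitions of $\calF$ on the overlap $L^1(\Rn)\cap L^2(\Rn)$ so that Riesz--Thorin applies to a genuine linear operator on $L^1(\Rn)+L^2(\Rn)$, and keeping track of the exponents so that the interpolated range exponent is precisely $p'$. If one prefers not to cite Riesz--Thorin as a black box, one can instead run the three-lines lemma directly on the analytic family $z\mapsto \langle \calF f_z, g_z\rangle$ built from simple functions $f$ and $g$, but this essentially reproves Riesz--Thorin in the special case at hand.
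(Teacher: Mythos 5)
Your proposal is correct and follows essentially the same route as the paper: both establish the endpoint bounds $\calF \colon L^1 \to L^\infty$ (trivially from the integral formula) and $\calF \colon L^2 \to L^2$ (Plancherel), and then interpolate to get $L^p \to L^{p'}$ for $1 < p < 2$. The only cosmetic difference is that the paper nominally invokes the Marcinkiewicz interpolation theorem while you cite Riesz--Thorin (which is in fact the more apt tool here, and which the paper itself points to in the surrounding discussion), and your explicit check that $\calF_1$ and $\calF_2$ agree on $L^1 \cap L^2$ is a welcome detail the paper leaves implicit.
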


\begin{proof}
	We know that $\calF$ and $\calinF$ are bounded from $L^1(\Rn)$ to $L^\infty(\Rn)$ and $L^2(\Rn)$ to $L^2(\Rn)$. So according to  Marcinkiewicz interpolation theorem, $\forall t \in [0,1]$, $\calF$ and $\calinF$ are bounded from $L^{t_1}(\Rn)$ to $L^{t_2}(\Rn)$, where
	\begin{equation*}
	\left\{\begin{aligned}
	t_1 & = \left( t \cdot \frac{1}{2} + (1-t) \cdot 1 \right)^{-1} = \frac{2}{2-t} \\
	t_2 & = \left( t \cdot \frac{1}{2} + (1-t) \cdot 0 \right)^{-1} = \frac{2}{t} \\
	\end{aligned}\right..
	\end{equation*}
	Let $p = t_1$ $p' = t_2$, then we proved the theorem.
\end{proof}

\comment{
\section{Convolutions} \label{sec:Con-PM2021}

Define $\scrS(\Rn) \ast \scrS(\Rn)$, $\scrS'(\Rn) \ast \scrS(\Rn)$, $\scrS'(\Rn) \ast \scrS'(\Rn)$, and Fourier transform of these convolution. And how about $(D^\alpha f)^{\wedge} = \xi^\alpha \hat{f}$ when $f \in \scrS'(\Rn)$. Theorem \ref{thm:TempDisEqvi-PM2021} may be used.

$\scrS'(\Rn) \ast \scrS(\Rn) \subset \scrS'(\Rn)$. Let $f \in \scrS'(\Rn)$ and $\varphi \in \scrS(\Rn)$, define
$$\boxed{ \big( f \ast \varphi \big) (x) := \agl[f, \varphi(x-\cdot)], ~f \in \scrS'(\Rn) \text{ and } \varphi \in \scrS(\Rn). }$$
We can check that $\big( f \ast \varphi \big) (x) \in \scrS'(\Rn)$. For example, $\Agl[1,\varphi(x-\cdot)] = \int \varphi \dif{x} \in \scrS'(\Rn)$ because it's a constant function.
Note that the situation $\scrS(\Rn) \ast \scrS(\Rn)$ is included in $\scrS'(\Rn) \ast \scrS(\Rn)$ \sq{???}

$\scrS'(\Rn) \ast \scrS'(\Rn) \subset \scrS'(\Rn)$. Let $f, g \in \scrS'(\Rn)$ and $\varphi \in \scrS(\Rn)$, define
$$\boxed{ \big( f \ast g \big) (\varphi) := \Agl[f, \mathcal{R}g \ast \varphi ], ~f, g \in \scrS'(\Rn) \text{ and } \varphi \in \scrS(\Rn), }$$
where $\mathcal{R}g(\varphi) := \agl[g,\mathcal{R}\varphi]$ and $\mathcal{R}\varphi(x) := \varphi(-x)$. We can check that $f \ast g \in \scrS'(\Rn)$. \sq{???}

We also have
$$\boxed{ \widehat{f \ast \varphi} = (2\pi)^{n/2} \widehat{\varphi} \widehat{f}, ~f \in \scrS'(\Rn) \text{ and } \varphi \in \scrS(\Rn). }$$
\begin{proof}
	This is because $\Forall \phi \in \scrS(\Rn)$,
	\begin{align*}
	\big( \widehat{f \ast \varphi} \big) (\phi)
	& = \big( f \ast \varphi \big) (\widehat{\phi})
	= \Agl[f, \mathcal{R}\varphi \ast \widehat{\phi} ]
	= \Agl[\widehat{f}, \mathcal{F}^{-1}( \mathcal{R}\varphi \ast \widehat{\phi} ) ] \\
	& = \Agl[\widehat{f}, (2\pi)^{n/2} \widehat{\varphi} \cdot \phi ]
	= \Agl[ (2\pi)^{n/2} \widehat{\varphi} \widehat{f}, \phi ].
	\end{align*}
\end{proof}

$\scrS'(\Rn) \ast \scrS(\Rn)$ is a generalization of $\scrS(\Rn) \ast \scrS(\Rn)$, and $\scrS'(\Rn) \ast \scrS'(\Rn)$ is a generalization of $\scrS'(\Rn) \ast \scrS(\Rn)$.

Then also consider $A(f \ast g) = \sq{?} (Af) \ast g =\sq{?} f \ast (Ag)$, where $A$ is pseudo-differential operator, and $f, g$ may belongs to $\scrS'(\Rn)$.
}

\section*{Exercise}

\begin{ex}
	Prove Lemma \ref{lem:aX-PM2021}.
\end{ex}

\begin{ex}
	Prove Lemma \ref{lem:Dfg-PM2021}.
\end{ex}

\begin{ex}
	Prove Lemma \ref{lem:fSS-PM2021}.
\end{ex}

\clearpage

\chapter{Pseudodifferential operators} \label{ch:SPDO-PM2021}

In this chapter we introduce the pseudodifferential operators, and in most of the place we abbreviate it as $\Psi$DOs.
First, we introduce symbols and its asymptotics.
Then the $\Psi$DOs are its kernels are defined.
Finally, we prove an important property of $\Psi$DOs--the pseudolocal property.
Other references are \cite[Chapter 6]{wong2014introduction}, \cite[\S 1 \& \S 3]{grigis94mic}.

\section{Symbols} \label{sec:SPDO-PM2021}

Recall the general form $p(x,D) = \sum_{|\alpha| \leq m} a_\alpha(x) D^\alpha$ of the linear differential operators mentioned is \S \ref{sec:nt-PM2021}.
For a test function $\varphi$, we have
\begin{align*}
p(x,D) \varphi(x)
& = \sum_{|\alpha| \leq m} a_\alpha(x) D^\alpha \varphi(x)
= \sum_{|\alpha| \leq m} a_\alpha(x) \calinF \{ \widehat{D^\alpha \varphi}\} (x) \\
& = \sum_{|\alpha| \leq m} a_\alpha(x) \calinF \{\xi^\alpha \hat \varphi\}(x) \\
& = \sum_{|\alpha| \leq m} a_\alpha(x) (2\pi)^{-n/2} \int_\Rn e^{ix \cdot \xi} \xi^\alpha \hat \varphi(\xi) \dif \xi \\
& = (2\pi)^{-n/2} \int_\Rn e^{ix \cdot \xi} \sum_{|\alpha| \leq m} a_\alpha(x) \xi^\alpha \hat \varphi(\xi) \dif \xi \\
& = (2\pi)^{-n/2} \int_\Rn e^{ix \cdot \xi} p(x,\xi) \hat \varphi(\xi) \dif \xi.
\end{align*}
This observation encourages us to define operators by functions $p(x,\xi)$.

\begin{defn}[Kohn-Nirenberg symbol\index{symbols}] \label{defn:symbol-PM2021}
	Let $m \in (-\infty,+\infty)$. Then we define $S^m$ to be the set of all functions $\sigma(x,\xi) \in C^{\infty}(\Rn \times \Rn; \mathbb C)$ such that for any two multi-indices $\alpha$ and $\beta$, there is a positive constant $C_{\alpha, \beta}$, independent of $(x,\xi)$, such that
	\[
	\boxed{| (D_{x}^{\alpha}D_{\xi}^{\beta}\sigma)(x,\xi)| \leq C_{\alpha, \beta} \agl[\xi]^{m-|\beta|}, \quad \forall x, \xi \in \Rn }
	\]
	holds.
	We call any function $\sigma$ in $S^m$ a \emph{symbol} of order $m$.
	We write $S^{-\infty} = \cap_{m \in \R} S^m$ and $S^{+\infty} = \cup_{m \in \R} S^m$.
\end{defn}

\begin{exmp}
	Here we give some examples of symbols.
	\begin{itemize}
	\item $\sum_{|\alpha| \leq m} a_\alpha(x) \xi^\alpha$ is a symbol of order $m$ when $a_\alpha \in \scrS(\Rn)$;
	
	\item $\scrS(\Rn) \subset S^{-\infty}$;
	
	\item Fix a bounded $\psi \in C^\infty(\Rn)$, then $\psi(x) \agl[\xi]^m$ is a symbol of order $m$;
	
	\item Fix a $\phi \in C_c^\infty(\Rn)$ with $\phi(0) = 1$, then $(1 - \phi(\xi))(1+|\xi|)^m$ is a symbol of order $m$.
	\end{itemize}
\end{exmp}

\begin{lem} \label{lem:ele-PM2021}
	Assume $\sigma_j \in S^{m_j}~(j = 1,2)$, then $\sigma_1 \sigma_2 \in S^{m_1 + m_2}$.
	$\partial^\alpha \sigma_1 \in S^{m_j - |\alpha|}$.
\end{lem}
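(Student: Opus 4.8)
The plan is to verify the two claimed inclusions directly from the defining estimates in Definition~\ref{defn:symbol-PM2021}, exploiting the multi-index Leibniz rule (Lemma~\ref{lem:Dfg-PM2021}) for the product and nothing more than differentiation bookkeeping for the derivative statement. For the product $\sigma_1\sigma_2$, the key identity is
\[
D_x^\alpha D_\xi^\beta(\sigma_1\sigma_2)
= \sum_{\alpha'\le\alpha}\sum_{\beta'\le\beta}
\binom{\alpha}{\alpha'}\binom{\beta}{\beta'}
(D_x^{\alpha'}D_\xi^{\beta'}\sigma_1)(D_x^{\alpha-\alpha'}D_\xi^{\beta-\beta'}\sigma_2),
\]
which follows by applying Lemma~\ref{lem:Dfg-PM2021} once in the $x$-variables and once in the $\xi$-variables. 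Each summand is then bounded using the symbol estimates for $\sigma_1\in S^{m_1}$ and $\sigma_2\in S^{m_2}$: the first factor is $\lesssim \agl[\xi]^{m_1-|\beta'|}$ and the second $\lesssim \agl[\xi]^{m_2-|\beta-\beta'|}=\agl[\xi]^{m_2-|\beta|+|\beta'|}$, so the product of the two is $\lesssim \agl[\xi]^{m_1+m_2-|\beta|}$ uniformly in $(x,\xi)$. Since the sum is finite with constants depending only on $\alpha,\beta$ and the constants for $\sigma_1,\sigma_2$, we get $|D_x^\alpha D_\xi^\beta(\sigma_1\sigma_2)|\le C_{\alpha,\beta}\agl[\xi]^{m_1+m_2-|\beta|}$, i.e.\ $\sigma_1\sigma_2\in S^{m_1+m_2}$. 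Smoothness of the product is immediate since both factors are $C^\infty$.

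For the second assertion, interpreting $\partial^\alpha\sigma_1$ as $\partial_x^{\alpha_1}\partial_\xi^{\alpha_2}$ (the statement is meant for a multi-index $\alpha$ acting on the combined variable, or one can treat the $x$- and $\xi$-parts separately), I would simply note that for any multi-indices $\gamma,\delta$,
\[
D_x^\gamma D_\xi^\delta\bigl(\partial_x^{\alpha_1}\partial_\xi^{\alpha_2}\sigma_1\bigr)
= c\, D_x^{\gamma+\alpha_1}D_\xi^{\delta+\alpha_2}\sigma_1
\]
up to a unimodular constant coming from the factors of $1/i$ in $D_j=\tfrac1i\partial_j$. Applying the $S^{m_1}$ estimate to the right-hand side gives a bound $\lesssim \agl[\xi]^{m_1-|\delta+\alpha_2|}=\agl[\xi]^{(m_1-|\alpha|)-|\delta|}$ when $\alpha_1=0$, and more generally $\agl[\xi]^{(m_1-|\alpha_2|)-|\delta|}\le\agl[\xi]^{(m_1-|\alpha|)-|\delta|}$ since $|\alpha_2|\le|\alpha|$ and $\agl[\xi]\ge1$; when there is an $x$-derivative it only helps (no growth in $\xi$ is gained but none is lost beyond the $\xi$-count). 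Hence $\partial^\alpha\sigma_1\in S^{m_1-|\alpha|}$ — indeed into the possibly smaller space $S^{m_1-|\alpha_2|}$, but $S^{m_1-|\alpha_2|}\subset S^{m_1-|\alpha|}$ suffices for the stated claim.

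There is no real obstacle here; the only mild subtlety worth flagging is bookkeeping — keeping the roles of $x$- and $\xi$-derivatives straight, since only $\xi$-derivatives lower the order, and making sure the finitely many constants produced by the Leibniz expansion are absorbed into a single $C_{\alpha,\beta}$. I would present the product case in full and remark that the derivative case is the same computation with a single term instead of a sum.
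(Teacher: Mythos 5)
Your treatment of the product is correct and is the standard argument: apply the Leibniz rule (Lemma \ref{lem:Dfg-PM2021}) separately in $x$ and in $\xi$, bound each factor by its symbol estimate, and note that $\agl[\xi]^{m_1-|\beta'|}\agl[\xi]^{m_2-|\beta|+|\beta'|}=\agl[\xi]^{m_1+m_2-|\beta|}$, with finitely many terms absorbed into one constant. (The paper leaves this lemma as an exercise, so there is no in-text proof to compare against, but this is exactly the intended route.)

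The second half of your proposal contains a genuine error. From the $S^{m_1}$ estimate you correctly get $|D_x^\gamma D_\xi^\delta(\partial_x^{\alpha_1}\partial_\xi^{\alpha_2}\sigma_1)|\lesssim \agl[\xi]^{(m_1-|\alpha_2|)-|\delta|}$, but your next step reverses the monotonicity: since $\agl[\xi]\ge 1$ and $m_1-|\alpha_2|\ge m_1-|\alpha|$, you have $\agl[\xi]^{(m_1-|\alpha_2|)-|\delta|}\ge\agl[\xi]^{(m_1-|\alpha|)-|\delta|}$, not $\le$, and likewise $S^{m_1-|\alpha|}\subset S^{m_1-|\alpha_2|}$, not the inclusion you invoke. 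Your bound places $\partial_x^{\alpha_1}\partial_\xi^{\alpha_2}\sigma_1$ in the \emph{larger} class $S^{m_1-|\alpha_2|}$, which does not yield the claim when $\alpha_1\neq 0$. Indeed the mixed-derivative version of the claim is false for Kohn--Nirenberg symbols: take $\sigma_1(x,\xi)=\psi(x)\agl[\xi]^{m_1}$ with $\psi$ smooth, bounded and non-constant (one of the paper's own examples of an order-$m_1$ symbol); then $\partial_{x_1}\sigma_1=(\partial_{x_1}\psi)\agl[\xi]^{m_1}$ lies in $S^{m_1}$ but not in $S^{m_1-1}$, since $x$-derivatives do not improve decay in this class. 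The statement must therefore be read with $\alpha$ a $\xi$-multi-index (equivalently, the sharp assertion is $\partial_x^\beta\partial_\xi^\alpha\sigma_1\in S^{m_1-|\alpha|}$), and in that reading your one-line computation $D_x^\gamma D_\xi^\delta\,\partial_\xi^\alpha\sigma_1=c\,D_x^\gamma D_\xi^{\delta+\alpha}\sigma_1$ combined with the $S^{m_1}$ estimate, giving $\agl[\xi]^{m_1-|\alpha|-|\delta|}$, finishes the proof; drop the claim that $x$-derivatives "only help'' in lowering the order.
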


The proof is left as an exercise.

One can also define a more general symbol which the effect of $x$ is taken into consideration, and the dimension of $x$ variable and $\xi$ variable can be different.

\begin{defn} \label{defn:symbolx-PM2021}
	Let $m \in (-\infty,+\infty)$ and $0 \leq \delta < \rho \leq 1$. 
	Then we define $S^m_{\rho,\delta}$ to be the set of all functions $\sigma(x,\xi) \in C^{\infty}(\R^{n_1} \times \R^{n_2} ; \mathbb C)$ such that for any two multi-indices $\alpha$ and $\beta$, there is a positive constant $C_{\alpha, \beta}$, depending on $\alpha$ and $\beta$ only, for which
	\[
	\boxed{ \big| (D_{x}^{\alpha}D_{\xi}^{\beta}\sigma)(x,\xi) \big| \leq C_{\alpha, \beta} \agl[\xi]^{m - \rho |\beta| + \delta |\alpha|}, \quad \forall x \in \R^{n_1}, \xi \in \R^{n_2} }
	\]
	holds.
	We also call any function $\sigma$ in $S^m_{\rho,\delta}$ a \emph{symbol}.
\end{defn}

The Kohn-Nirenberg symbol $S^m = S^m_{1,0}$.
In what follows, we only focus on $S^m$, and the situations for $S^m_{\rho,\delta}$ shall be followed in similar manners.

\smallskip

Now we introduce an important notion: the asymptotic expansion of symbols.

\begin{defn}[Asymptotics\index{asymptotics}] \label{defn:Asy-PM2021}
	Let symbol $a \in S^m$ and $a_j \in S^{m_j}$ $(j= 0,1,\cdots)$ where the orders $m_j$ satisfies
	\[
	m = m_0 > m_1 > \cdots > m_j > m_{j+1} \to -\infty, \quad j \to \infty.
	\]
	If
	\[
	a - \sum_{j = 0}^{N} a_j \in S^{m_{N+1}},
	\]
	holds for every integer $N$, we write
	\[
	a \sim \sum_j a_j \text{~in~} S^m,
	\]
	and we call $\{a_j\}$ an asymptotics of $a$.
	The $a_0$ is called the \emph{principal symbol} of $a$.
\end{defn}

We often write $a = b + S^m$ as a shorthand of $a = b + r$ for some $r \in S^m$.
Then we can summarize Definition \ref{defn:Asy-PM2021} as follows,
\begin{equation*}
\boxed{a \sim \sum_j a_j \ \text{in} \ S^{m_{N+1}}
	\quad \Leftrightarrow \quad
	a = \sum_{j = 0}^{N} a_j + S^{m_{N+1}}.}
\end{equation*}

Now let's randomly pick up some $m$, $m_j$ that satisfy the requirement in Definition \ref{defn:Asy-PM2021}, and randomly pick up $a_j \in S^{m^j}$.
A natural question is to ask, does there exist $a \in S^m$ such that $a \sim \sum_j a_j \text{~in~} S^m$?
The answer is yes.

\begin{thm} \label{thm:Asy-PM2021}
	For any $m$ and $m_j$ satisfying
	\[
	m = m_0 > m_1 > \cdots > m_j > m_{j+1} \to -\infty, \quad j \to \infty,
	\]
	and for any $a_j \in S^{m_j}$, there exists a symbol (not unique) $a \in S^m$ such that $a \sim \sum_j a_j \text{~in~} S^m$.
\end{thm}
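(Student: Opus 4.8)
The plan is to use the classical Borel-type summation argument: cut off each $a_j$ at low frequencies so that the tail sum converges, and control the remainders by making the cutoff scales $t_j \to \infty$ fast enough. Fix a function $\chi \in C^\infty(\Rn)$ with $\chi(\xi) = 0$ for $|\xi| \leq 1$ and $\chi(\xi) = 1$ for $|\xi| \geq 2$, and set $a := \sum_{j=0}^\infty \chi(\xi/t_j)\, a_j(x,\xi)$ for a sequence $t_j \to +\infty$ to be chosen, with $t_0$ large (or the $j=0$ term taken without cutoff). The point of the cutoff is that $\chi(\xi/t_j)$ is supported in $|\xi| \geq t_j$, where $\agl[\xi]^{m_j}$ is small relative to $\agl[\xi]^{m_0}$ once $t_j$ is large; this is what will make the series converge in $C^\infty$ and term-by-term in each symbol seminorm.

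The key steps, in order: First I would record the elementary estimate that for $|\beta'| \geq 1$, $|D_\xi^{\beta'}(\chi(\xi/t_j))| \leq C_{\beta'} t_j^{-|\beta'|} \mathbf{1}_{t_j \leq |\xi| \leq 2t_j} \lesssim \agl[\xi]^{-|\beta'|}$ uniformly in $j$ (using $t_j \geq 1$), so that by the Leibniz rule (Lemma \ref{lem:Dfg-PM2021}) and Lemma \ref{lem:ele-PM2021}, $\chi(\xi/t_j) a_j \in S^{m_j}$ with seminorm bounds of the same form as those of $a_j$, plus the crucial gain: on the support of $\chi(\xi/t_j)$ one has $\agl[\xi] \gtrsim t_j$, so $\agl[\xi]^{m_j} = \agl[\xi]^{m_j - m_{j-1}} \agl[\xi]^{m_{j-1}} \lesssim t_j^{m_j - m_{j-1}} \agl[\xi]^{m_{j-1}}$, and since $m_j - m_{j-1} < 0$ this factor can be made small. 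Second, for each $j$ I would choose $t_j$ large enough that
\[
\big| D_x^\alpha D_\xi^\beta \big( \chi(\xi/t_j) a_j(x,\xi) \big) \big| \leq 2^{-j} \agl[\xi]^{m_{j-1} - |\beta|}
\]
for all $|\alpha|, |\beta| \leq j$; this is possible because there are finitely many such $(\alpha,\beta)$ and the relevant constant tends to $0$ as $t_j \to \infty$ by the previous step. Third, with this choice, for fixed $\alpha, \beta$ the series $\sum_j D_x^\alpha D_\xi^\beta(\chi(\xi/t_j)a_j)$ converges absolutely and uniformly on compact sets (indeed the tail from $j \geq \max(|\alpha|,|\beta|, N+1)$ is dominated by $\sum 2^{-j}\agl[\xi]^{m_N - |\beta|}$), so $a \in C^\infty$ and differentiation commutes with the sum. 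Fourth, to see $a \in S^m$ and $a \sim \sum a_j$: split $a - \sum_{j=0}^N a_j = \sum_{j=0}^N (\chi(\xi/t_j) - 1) a_j + \sum_{j > N} \chi(\xi/t_j) a_j$; the first (finite) sum lies in $S^{-\infty}$ because $\chi(\xi/t_j) - 1$ is supported in $|\xi| \leq 2 t_j$, hence each term is a compactly-$\xi$-supported smooth function, which is Schwartz in $\xi$ uniformly — actually it lies in $S^{-\infty}$ since it vanishes for large $\xi$; the second sum lies in $S^{m_{N+1}}$ by the tail estimate from step three (using $m_N \le m_{N+1}$... more precisely one arranges the $2^{-j}$ bound with exponent $m_{N+1}$, choosing $t_j$ so that $|D_x^\alpha D_\xi^\beta(\chi(\xi/t_j)a_j)| \le 2^{-j}\agl[\xi]^{m_{j-1}-|\beta|}$ and noting $m_{j-1} \le m_{N+1}$ for $j \ge N+2$, with the $j = N+1$ term handled directly since $\chi(\xi/t_{N+1})a_{N+1} \in S^{m_{N+1}}$). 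Taking $N = 0$ (with $a_0$ uncut) gives $a \in S^{m_0} = S^m$.

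The main obstacle is purely bookkeeping: organizing the simultaneous choice of the sequence $(t_j)$ so that a single sequence works for all seminorms and all remainder orders at once. The standard device — requiring the $2^{-j}$ bound only for $|\alpha|,|\beta| \leq j$ and with the "shifted" exponent $m_{j-1}$ — handles this with a diagonal argument, and once that is set up each individual estimate is a one-line consequence of Leibniz, Lemma \ref{lem:ele-PM2021}, and the support gain. I would also remark that non-uniqueness is immediate, since adding any element of $S^{-\infty}$ to $a$ changes nothing in the asymptotic relation.
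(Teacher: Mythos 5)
Your proposal is correct and follows essentially the same route as the paper's proof: the same Borel-type construction $a=\sum_j \chi(\xi/t_j)a_j$ (the paper writes $\chi(\epsilon_j\xi)$ with $\epsilon_j=1/t_j$), the same uniform bound on derivatives of the rescaled cutoff, the same gain of smallness from the support condition $\agl[\xi]\gtrsim t_j$, a diagonal choice of the scales, and the identical remainder splitting into a finite sum lying in $S^{-\infty}$ plus a tail in $S^{m_{N+1}}$. The only cosmetic difference is bookkeeping: you gain one order per term (exponent $m_{j-1}$, with $2^{-j}$ bounds for $|\alpha|,|\beta|\leq j$), whereas the paper inserts $\chi(2\epsilon_j\xi)$ to trade $\agl[\xi]^{m_j-m}$ for $(2\epsilon_j)^{m-m_j}$ and then sums those constants; the two devices are equivalent.
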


When $x \to +\infty$, $\frac 1 {1-1/\agl[x]} = 1+1/\agl[x]+1/\agl[x]^2 + \mathcal O(1/\agl[x]^3)$.
Arbitrarily pick up $\alpha_j$, is there a function $f(x)$ such that in $[1,+\infty)$,
\begin{equation} \label{eq:fxA-PM2021}
f(x) = \sum_{0 \leq j \leq N} \alpha_j/\agl[x]^j + \mathcal O(1/\agl[x]^{N+1}), \quad x \to +\infty,
\end{equation}
holds for all $N \in \mathbb N$?
The answer is no and an example is $\alpha_j := j!$ (the convergence radius goes to infinity as $N$ grows).
The problem is that $\alpha_j/\agl[x=1]^j$ will be too big when $j \to +\infty$.
However, we can fix this problem by cutoff, so that there exist a function $f$ (not unique!) such that \eqref{eq:fxA-PM2021} holds on intervals $[A_j,+\infty)$ where the $A_j$ is in accordance with $a_j$, $A_1 < A_2 < \cdots$.
The key step is to choose a cutoff function $\chi_j$ to cutoff term $\alpha_j/\agl[x]^j$ such that
\begin{equation} \label{eq:c2j-PM2021}
\chi_j(x) \alpha_j/\agl[x]^j \leq 1/2^j, \text{~or~} 1/3^j \text{~etc}.
\end{equation}
The following function satisfies the requirement:
\begin{equation} \label{eq:chij-PM2021}
\left\{\begin{aligned}
& \chi_j \in C^\infty(\R), \\
& \chi_j \equiv 0, |x| \leq 2 (\alpha_j)^{1/j}, \\
& \chi_j \equiv 1, |x| \geq 1 + 2 (\alpha_j)^{1/j}, \\
& 0 \leq \chi_j \leq 1, \text{otherwise}.
\end{aligned}\right.
\end{equation}
The requirement \eqref{eq:chij-PM2021} can also be realized by fix some $\chi$ satisfying
\begin{equation} \label{eq:chi-PM2021}
\left\{\begin{aligned}
& \chi \in C^\infty(\R), \ 0 \leq \chi(x) \leq 1 \\
& \chi \equiv 0 \text{~when~} |x| \leq 1, \ \chi_j \equiv 1 \text{~when~} |x| \geq 2.
\end{aligned}\right.
\end{equation}
and then set $\chi_j(x) := \chi(\epsilon_j x)$, where the $\epsilon_j$ shall be chosen according to \eqref{eq:chij-PM2021}.

\begin{proof}[Sketch of the proof of Theorem \ref{thm:Asy-PM2021}]
	Choose suitable coefficients $\epsilon_j$ and define
	\[
	a(x,\xi) := \sum_{j \geq 0} \chi(\epsilon_j \xi) a_j(x,\xi), \quad x,\xi \in \Rn.
	\]
	It can be checked that $\chi(\epsilon_j \xi) \in S^0$.
	For any fixed $(x_0, \xi_0)$, there is only finitely many terms in $\sum_{j \geq 0} \chi(\epsilon_j \xi_0) a_j(x_0,\xi_0)$ which are non-zero, so $a(x,\xi) \in C^{\infty}(\Rn \times \Rn; \mathbb C)$.
	Moreover, we need to show first $a$ is a symbol, and second $a$ is an asymptotics of $a_j$.
	
	First, we show that $a \in S^m$.
	It can be checked that for any multi-index $\beta$,
	\[
	|D_{\xi}^{\beta} (\chi(\epsilon_j \xi))| \leq C_\beta \agl[\xi]^{-\beta}, \quad \forall \xi \in \Rn,
	\]
	where the constant $C_\beta$ is independent of $\epsilon_j$.
	we notice that every term $\chi(\epsilon_j \xi) a_j(x,\xi)$ is in $S^{0 + m_j} = S^{m_j}$, so
	\begin{align*}
	& \ D_{x}^{\alpha} D_{\xi}^{\beta} (\chi(\epsilon_j \xi) a_j(x,\xi))
	= {\color{red}\chi(2\epsilon_j \xi)} D_{x}^{\alpha} D_{\xi}^{\beta} (\chi(\epsilon_j \xi) a_j(x,\xi)) \\
	= & \ \chi(2\epsilon_j \xi) \sum_{\beta' \leq \beta} \binom{\beta}{\beta'} D_{\xi}^{\beta'} (\chi(\epsilon_j \xi)) \cdot (D_{x}^{\alpha} D_{\xi}^{\beta - \beta'} a_j)(x,\xi) \\
	\leq & \ \chi(2\epsilon_j \xi) \sum_{\beta' \leq \beta} \binom{\beta}{\beta'} C_{\beta'} \agl[\xi]^{-|\beta'|} \cdot C_{\alpha,\beta,\beta'} \agl[\xi]^{m_j - |\beta| + |\beta'|} \\
	= & \ \chi(2\epsilon_j \xi) \sum_{\beta' \leq \beta} \binom{\beta}{\beta'} C_{\beta'} C_{\alpha,\beta,\beta'} \agl[\xi]^{m_j - |\beta|}
	= \chi(2\epsilon_j \xi) C_{\alpha,\beta} \agl[\xi]^{m_j - |\beta|} \\
	= & \ C_{\alpha,\beta} {\color{red}\agl[\xi]^{m_j - m}} \chi(2\epsilon_j \xi) \cdot \agl[\xi]^{m - |\beta|} \\
	\leq & \ C_{\alpha,\beta} {\color{red}(2\epsilon_j)^{m - m_j}} \chi(2\epsilon_j \xi) \cdot \agl[\xi]^{m - |\beta|} \\
	\leq & \ C_{\alpha,\beta} (2\epsilon_j)^{m - m_j} \cdot \agl[\xi]^{m - |\beta|},
	\end{align*}
	where the change ``$\agl[\xi]^{m_j - m} \to (2\epsilon_j)^{m - m_j}$'' is due to the presence of $\chi(2\epsilon_j \xi)$.
	Hence,
	\begin{equation*}
	|D_{x}^{\alpha} D_{\xi}^{\beta} a(x,\xi)|
	\leq \agl[\xi]^{m - |\beta|} \cdot \sum_{j \geq 0} C_{\alpha,\beta} (2\epsilon_j)^{m - m_j}.
	\end{equation*}
	We choose $\epsilon_j$ to decrease fast enough such that $\sum_{j \geq 0} C_{\alpha,\beta} (2\epsilon_j)^{m - m_j}$ is finite for every $\alpha, \beta$ (see \cite[Theorem 6.10]{wong2014introduction} for details).
	We proved $a \in S^m$.
	
	Second, to show $a \sim \sum_j a_j$ in $S^m$, we see
	\begin{align*}
	a - \sum_{0 \leq j \leq N} a_j
	& = \sum_{0 \leq j \leq N}[\chi(\epsilon_j \xi) - 1] a_j(x,\xi) + \sum_{j \geq N+1} \chi(\epsilon_j \xi) a_j(x,\xi) \\
	& \in S^{-\infty} + S^{m_{N+1}} = S^{m_{N+1}}.
	\end{align*}
	The proof is complete.
\end{proof}

\section{Pseudodifferential operators} \label{sec:PDOs-PM2021}

\subsection{Some basics about the \texorpdfstring{$\Psi$DOs}{PsiDOs}} \label{subsec:PsDO-PM2021}

Based on the notion of symbols, we introduce the pseudodifferential operators.

\begin{defn}[Pseudodifferential operator\index{pseudodifferential operators}, $\Psi$DO] \label{defn:PseudoDO-PM2021}
	Let $\sigma$ be a symbol. Then the \emph{pseudo-differential operator} $T_{\sigma}$, defined on $\scrS(\Rn)$ and associated with $\sigma$, is defined as
	\begin{align*}
	(T_{\sigma}\varphi)(x)
	& := (2\pi)^{-n/2} \int_{\Rn} e^{ix \cdot \xi} \sigma(x,\xi) \hat{\varphi}(\xi) \dif{\xi} \\
	& \ = \boxed{(2\pi)^{-n} \iint_{\Rn \times \Rn} e^{i(x-y) \cdot \xi} \sigma(x,\xi) \varphi(y) \dif y \dif \xi}, \quad \forall \varphi \in \scrS(\Rn).
	\end{align*}
	We denote the set of $\Psi$DOs of order $m$ as $\Psi^m$
	We write $\Psi^{-\infty} = \cap_{m \in \R} \Psi^m$ and $\Psi^{+\infty} = \cup_{m \in \R} \Psi^m$.
\end{defn}

\begin{exmp}
	Here we give some examples of $\Psi$DOs:
	\begin{itemize}
	\item $- \Delta \in \Psi^2$, with symbol $|\xi|^2$;
	
	\item $\sum_{|\alpha| \leq m} a_\alpha(x) D^\alpha \in \Psi^m$, with symbol $\sum_{|\alpha| \leq m} a_\alpha(x) \xi^\alpha$;
	
	\item $(I - \Delta)^{m/2} \in \Psi^m$, which is defined by the symbol $\agl[\xi]^m = (I + |\xi|^2)^{m/2}$;
	
	\item The DtN map of the Calder\'on problem is a $\Psi$DO living on the boundary, see \cite{MR1029119}.
	\end{itemize}
\end{exmp}

It is an interesting question to ask for the symbol when given a certain $\Psi$DO.

\begin{exmp}
	Some simple $\Psi$DOs whose symbol are also simple:
	\begin{itemize}
	\item $D \mapsto \xi$;
	
	\item $-\Delta = D \cdot D$, so $-\Delta \mapsto |\xi|^2$.
	\end{itemize}
\end{exmp}

Similar to Lemma \ref{lem:ele-PM2021}, we have the following claim, whose proof will be provided in Theorem \ref{thm:com-PM2021}.

\begin{lem} \label{lem:elT-PM2021}
	Assume $\sigma_j \in S^{m_j}~(j = 1,2)$, then $T_{\sigma_1} \circ T_{\sigma_2} \in \Psi^{m_1 + m_2}$.
\end{lem}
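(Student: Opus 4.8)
The plan is to prove that the composition $T_{\sigma_1} \circ T_{\sigma_2}$ is again a $\Psi$DO of order $m_1 + m_2$ by computing its symbol explicitly and then checking that this symbol lies in $S^{m_1+m_2}$. First I would apply $T_{\sigma_2}$ to a test function $\varphi \in \scrS(\Rn)$, obtaining $(T_{\sigma_2}\varphi)(y) = (2\pi)^{-n/2}\int e^{iy\cdot\eta}\sigma_2(y,\eta)\hat\varphi(\eta)\dif\eta$, and then feed this into $T_{\sigma_1}$. Writing everything out and using the integral form of the $\Psi$DO, one gets a multiple integral
\[
(T_{\sigma_1}T_{\sigma_2}\varphi)(x) = (2\pi)^{-n}\iint e^{i(x-y)\cdot\xi}\sigma_1(x,\xi)\bigg[(2\pi)^{-n/2}\int e^{iy\cdot\eta}\sigma_2(y,\eta)\hat\varphi(\eta)\dif\eta\bigg]\dif y\dif\xi.
\]
The natural candidate for the symbol of the composition is then $\tau(x,\xi) = (2\pi)^{-n}\iint e^{i(x-y)\cdot(\xi-\eta)}\sigma_1(x,\xi)\sigma_2(y,\eta)\dif y\dif\eta$, or after the change of variable $z = y-x$, $\zeta = \eta-\xi$, the oscillatory integral $\tau(x,\xi) = (2\pi)^{-n}\iint e^{-iz\cdot\zeta}\sigma_1(x,\xi+\zeta)\sigma_2(x+z,\xi)\dif z\dif\zeta$. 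One expects the asymptotic expansion $\tau \sim \sum_\alpha \frac{1}{\alpha!}(\partial_\xi^\alpha\sigma_1)(D_x^\alpha\sigma_2)$ in $S^{m_1+m_2}$, with principal symbol $\sigma_1\sigma_2$.

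The key steps, in order, are: (1) justify the formal manipulation above, interchanging the order of integration — this requires care because the integrals are not absolutely convergent when $m_1$ or $m_2$ is large, so one must interpret $\tau$ as an oscillatory integral (this is where the machinery of Chapter \ref{ch:OSInt-PM2021} on oscillatory integrals enters) or equivalently insert a convergence factor and pass to the limit; (2) establish that $\tau(x,\xi)$ is smooth in $(x,\xi)$ and that it satisfies the symbol estimates $|D_x^\alpha D_\xi^\beta \tau(x,\xi)| \le C_{\alpha,\beta}\agl[\xi]^{m_1+m_2-|\beta|}$, which is done by differentiating under the oscillatory integral sign, using integration by parts in $z$ and $\zeta$ to gain decay (the identities $e^{-iz\cdot\zeta} = \agl[\zeta]^{-2N}(1-\Delta_z)^N e^{-iz\cdot\zeta}$ and similarly with $z,\zeta$ swapped), and invoking Lemma \ref{lem:aX-PM2021} and Lemma \ref{lem:ele-PM2021} to bound the resulting integrands; (3) verify that the resulting operator $T_\tau$ acting on $\scrS(\Rn)$ indeed equals $T_{\sigma_1}\circ T_{\sigma_2}$ on test functions. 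Since the statement only asks for $T_{\sigma_1}\circ T_{\sigma_2} \in \Psi^{m_1+m_2}$ (not the full expansion), it suffices to establish (1)–(3) with the symbol estimate in (2); the asymptotic expansion $\tau \sim \sum_\alpha \frac{1}{\alpha!}\partial_\xi^\alpha\sigma_1 \, D_x^\alpha\sigma_2$ can be obtained afterward via Taylor expansion of $\sigma_1(x,\xi+\zeta)$ in $\zeta$ and the stationary phase / oscillatory integral estimates, but is not strictly needed here.

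The main obstacle I anticipate is step (2): making rigorous sense of the double oscillatory integral defining $\tau$ and proving the symbol bounds uniformly in $(x,\xi)$. The difficulty is that neither the $z$-integral nor the $\zeta$-integral converges absolutely, and the two variables are coupled through the phase $z\cdot\zeta$; one cannot simply integrate by parts in one variable without regard to the growth of the integrand in the other. The standard resolution is to split $\Rn_\zeta$ into the region $|\zeta| \le \tfrac12\agl[\xi]$ (where $\sigma_1(x,\xi+\zeta)$ enjoys the full bound $\agl[\xi]^{m_1}$ since $\agl[\xi+\zeta] \simeq \agl[\xi]$ there) and its complement $|\zeta| > \tfrac12\agl[\xi]$ (where one exploits rapid decay obtained from many integrations by parts in $z$, at the cost of powers of $\agl[\zeta]^{-1}$, which are then summable against the polynomial growth of $\sigma_2$), and to handle the $z$-integral by integration by parts in $\zeta$ to produce $\agl[z]^{-2N}$ decay. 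Keeping track of all the constants and checking they are independent of $(x,\xi)$ is the technically demanding part; this is precisely the kind of computation the paper defers to Theorem \ref{thm:com-PM2021}, which is why the present lemma is stated without proof here.
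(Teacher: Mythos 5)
Your plan is sound and proves the right statement, but it takes a genuinely different route from the paper's. The paper never proves this lemma in place: it defers it to Theorem \ref{thm:com-PM2021}, where the very same oscillatory-integral formula for the composed symbol, $c(x,\eta)=(2\pi)^{-n}\int e^{-iy\cdot\xi}\sigma_1(x,\eta+\xi)\sigma_2(x+y,\eta)\dif y\dif \xi$ (cf.\ \eqref{eq:cet-PM2021}, identical to your changed-variables expression), is rescaled by $\lambda=\agl[\eta]$ and handled with the quadratic-phase stationary phase lemma (Proposition \ref{prop:2-PM2021}), the hypotheses being checked via Lemmas \ref{lem:abm-PM2021} and \ref{lem:Peetre-PM2021}; that route yields the symbol bounds and the full expansion $\sum_\alpha\frac{1}{\alpha!}\partial_\xi^\alpha\sigma_1\,D_x^\alpha\sigma_2$ in one stroke, reusing machinery the paper also needs for Theorem \ref{thm:ReV-PM2021} and the adjoint. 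You instead propose the classical direct estimate: interpret the double integral in the oscillatory sense of Chapter \ref{ch:OSInt-PM2021}, split $|\zeta|\le\frac12\agl[\xi]$ against $|\zeta|>\frac12\agl[\xi]$, gain $\agl[\zeta]^{-2N}$ by integration by parts in $z$ (the $z$-derivatives landing harmlessly on $\sigma_2(x+z,\xi)$, which stays $O(\agl[\xi]^{m_2})$) and $\agl[z]^{-2M}$ by integration by parts in $\zeta$, and control $\agl[\xi+\zeta]^{m_1}$ with Peetre's inequality; since only membership in $S^{m_1+m_2}$ is asked, you legitimately skip the expansion. This is more elementary and self-contained (nothing from Chapter \ref{ch:SPL-PM2021} is needed), at the price of the bookkeeping you anticipate and of a separate Taylor argument later if the asymptotics are wanted. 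Two small corrections: your first display for the candidate symbol is off — the integration should be $\dif y\dif\xi$ with the free frequency variable the one appearing in $\sigma_2$; as written, with $\sigma_1(x,\xi)$ pulled out and integration over $(y,\eta)$, the oscillatory integral collapses to the mere product $\sigma_1(x,\xi)\sigma_2(x,\xi)$ — though the formula you actually work with afterwards is the correct one; and the region splitting should be realized by a smooth cutoff such as $\psi(\zeta/\agl[\xi])$, so that no boundary terms arise in the integrations by parts and so that $\partial_\xi^\beta$ hitting the $\xi$-dependent cutoff remains controlled.
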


We show that the map $\sigma \mapsto T_\sigma$ is a bijection.

\begin{lem} \label{lem:sBi-PM2021}
	map $\sigma \mapsto T_\sigma \in \mathcal L(\scrS(\Rn), \scrS(\Rn))$ is a bijection.
\end{lem}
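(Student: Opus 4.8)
The plan is to verify the lemma in two steps: first, that for every symbol $\sigma$ the operator $T_\sigma$ genuinely belongs to $\mathcal L(\scrS(\Rn),\scrS(\Rn))$, so the map $\sigma \mapsto T_\sigma$ is well-defined with the stated target; and second, that this map is injective, so $\sigma$ is recoverable from $T_\sigma$. Since the class $\Psi^{+\infty}$ of $\Psi$DOs is \emph{defined} to be the image of this map, the two steps together give a bijection of $S^{+\infty}$ onto $\Psi^{+\infty}\subset\mathcal L(\scrS(\Rn),\scrS(\Rn))$, which is what is meant.

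\textbf{Step 1: $T_\sigma \in \mathcal L(\scrS,\scrS)$.} Fix $\sigma \in S^m$ and $\varphi \in \scrS(\Rn)$; by Lemma \ref{lem:FtransRelat-PM2021s}, $\hat\varphi \in \scrS(\Rn)$. Since $|\sigma(x,\xi)| \le C\agl[\xi]^m$ and $\hat\varphi$ decays faster than any power of $\agl[\xi]$, the integral defining $(T_\sigma\varphi)(x)$ converges absolutely; differentiation in $x$ under the integral sign is justified the same way, the $x$-derivatives landing on $e^{ix\cdot\xi}$ (producing powers of $\xi$) and on $\sigma$, which stays a symbol by Lemma \ref{lem:ele-PM2021}. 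To estimate $|T_\sigma\varphi|_{\alpha,\beta} = \sup_{x}|x^\alpha D_x^\beta (T_\sigma\varphi)(x)|$ I would: (i) expand $D_x^\beta$ by the Leibniz rule (Lemma \ref{lem:Dfg-PM2021}), obtaining a finite sum of terms of the form $\xi^{\beta_1}(D_x^{\beta_2}\sigma)(x,\xi)$; (ii) use $x^\alpha e^{ix\cdot\xi} = D_\xi^\alpha e^{ix\cdot\xi}$ and integrate by parts $|\alpha|$ times in $\xi$ (no boundary terms, since $\hat\varphi\in\scrS$ forces rapid decay of the integrand), so that $D_\xi^\alpha$ is moved, via Leibniz again, onto the polynomial $\xi^{\beta_1}$, onto $D_x^{\beta_2}\sigma$ — which only gains decay, remaining $\lesssim\agl[\xi]^{m-|\cdot|}$ — and onto $\hat\varphi$, which remains Schwartz. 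The resulting integrand is then bounded by $C\,\agl[\xi]^{m+|\beta|}$ times a finite linear combination of $|D_\xi^\gamma\hat\varphi(\xi)|$, hence by an integrable function of $\xi$ uniformly in $x$; taking $\sup_x$ yields $|T_\sigma\varphi|_{\alpha,\beta}\le C_{\alpha,\beta}\sum|\hat\varphi|_{\alpha',\beta'}$ over finitely many indices. Because $\calF$ is continuous on $\scrS(\Rn)$, this in turn is bounded by finitely many semi-norms of $\varphi$; so $T_\sigma\varphi\in\scrS(\Rn)$ and $T_\sigma$ is continuous.

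\textbf{Step 2: injectivity.} Suppose $T_{\sigma_1} = T_{\sigma_2}$, and set $\tau := \sigma_1 - \sigma_2$, a symbol with $T_\tau = 0$; I must show $\tau\equiv 0$. Fix $(x_0,\xi_0)\in\Rn\times\Rn$, choose $\rho\in C_c^\infty(\Rn)$ with $\rho\ge 0$, $\int_{\Rn}\rho = 1$ and $\supp\rho\subset\{|\eta|\le R\}$, and let $\varphi_\epsilon\in\scrS(\Rn)$ be defined by $\hat\varphi_\epsilon(\xi) := (2\pi)^{n/2}\epsilon^{-n}\rho((\xi-\xi_0)/\epsilon)$ (legitimate because $\calinF$ maps $C_c^\infty(\Rn)\subset\scrS(\Rn)$ into $\scrS(\Rn)$). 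Inserting this into the first formula of Definition \ref{defn:PseudoDO-PM2021} and substituting $\xi = \xi_0+\epsilon\eta$ gives
\[
0 = (T_\tau\varphi_\epsilon)(x_0) = \int_{\Rn} e^{ix_0\cdot(\xi_0+\epsilon\eta)}\,\tau(x_0,\xi_0+\epsilon\eta)\,\rho(\eta)\dif{\eta}.
\]
As $\epsilon\to 0$ the integrand tends pointwise to $e^{ix_0\cdot\xi_0}\tau(x_0,\xi_0)\rho(\eta)$ and, for $\epsilon\le 1$, is dominated by $\big(\sup_{|\eta|\le R}|\tau(x_0,\xi_0+\eta)|\big)\rho(\eta)$; dominated convergence then gives $0 = e^{ix_0\cdot\xi_0}\tau(x_0,\xi_0)$, i.e.\ $\tau(x_0,\xi_0)=0$. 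Since $(x_0,\xi_0)$ was arbitrary, $\sigma_1 = \sigma_2$, proving injectivity.

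The main obstacle is the bookkeeping in Step 1: coordinating the two applications of the Leibniz rule (first for $D_x^\beta$, then for the integration-by-parts operator $D_\xi^\alpha$) and checking that every term so produced is integrable in $\xi$ uniformly in $x$, with the final bound expressed through finitely many semi-norms of $\hat\varphi$ and hence of $\varphi$. Step 2 is comparatively soft; the only delicate point is the legitimacy of commuting $T_\tau$ with the approximate-identity limit, which the explicit dominating function above settles.
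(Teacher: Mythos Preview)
Your proof is correct. The paper's argument differs mainly in the injectivity step: instead of your approximate-identity construction, it simply observes that for each fixed $x_0$ the identity
\[
\int e^{ix_0\cdot\xi}\big[\sigma(x_0,\xi)-\tau(x_0,\xi)\big]\hat\varphi(\xi)\dif\xi = 0
\]
holds for all $\varphi\in\scrS(\Rn)$; since $\calF$ is a bijection on $\scrS$, the function $\hat\varphi$ ranges over all of $\scrS(\Rn)$, so the continuous function $e^{ix_0\cdot\xi}[\sigma-\tau](x_0,\xi)$ pairs to zero against every Schwartz test function and hence vanishes identically. This is shorter and avoids the $\epsilon\to 0$ limit. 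Your route is more constructive and makes the recovery of $\tau(x_0,\xi_0)$ explicit, at the cost of a dominated-convergence check. Your Step~1 is also more careful than the paper, which offloads the fact that $T_\sigma\colon\scrS\to\scrS$ continuously to a separate lemma (Lemma~\ref{lem:TSS-PM2021}) and treats surjectivity onto the image $\Psi^{+\infty}$ as tautological.
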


\begin{proof}
	The $\Psi$DO $T_\sigma$ is defined by $\sigma$, so the surjectivity is obvious.
	The injectivity amounts to prove $T_\sigma = T_\tau \Rightarrow \sigma = \tau$.
	
	Let's assume $\sigma$ and $\tau$ are two symbols and $T_\sigma = T_\tau$, then
	\[
	\int e^{ix \cdot \xi} [\sigma(x, \xi) - \tau(x, \xi)] \hat \varphi(\xi) \dif \xi = 0
	\]
	holds for any $x \in \Rn$ and any $\varphi \in \scrS(\Rn)$.
	Replace $\varphi$ by its inverse Fourier transform, and fix $x$ to some $x_0$, we can see
	\[
	\int e^{ix_0 \cdot \xi} [\sigma(x_0, \xi) - \tau(x_0, \xi)] \varphi(\xi) \dif \xi = 0.
	\]
	The arbitrary of $\varphi$ gives
	\[
	e^{ix_0 \cdot \xi} [\sigma(x_0, \xi) - \tau(x_0, \xi)] = 0, \quad \forall x_0 \in \Rn.
	\]
	And the arbitrary of $x_0$ gives $\sigma(x_0, \xi) = \tau(x_0, \xi)$ for $\forall x_0 \in \Rn$.
	The injectivity is proved.
	We arrive at the conclusion.
\end{proof}

\begin{rem} \label{rem:FPw-PM2021}
For people who the first time encounter the form
\[
T_{\sigma}\varphi(x) \simeq \int_{\Rn} e^{ix \cdot \xi} \sigma(x,\xi) \hat{\varphi}(\xi) \dif{\xi},
\]
one may think that $T_{\sigma}\varphi(x)$ is just the inverse Fourier transform of $\sigma(x,\xi) \hat{\varphi}(\xi)$,
and consequently, $\sigma(x,\xi) \hat{\varphi}(\xi)$ can be recovered by taking the Fourier transform of $T_{\sigma}\varphi(x)$.
Unfortunately this is not true.
The function $\sigma(x,\xi) \hat{\varphi}(\xi)$ depends not only $\xi$ but also $x$, so that is not a Fourier transform anymore.
When a symbol $a$ is independent of $x$, we have
\[
\calF \{ T_{\sigma}\varphi\}(\xi) = a(\xi) \hat{\varphi}(\xi).
\]
But when $a$ depends on $x$, the Fourier transform of $(T_{\sigma}\varphi)(x)$ is generally \emph{NOT} $a(x,\xi)\hat{\varphi}(\xi)$.
In generally, we cannot use the expression above to get the symbol $a$:
\[
\boxed{ a(x,\xi) \hat{\varphi}(\xi) \neq \calF \big\{ (T_{\sigma}\varphi)(\cdot) \big\}(\xi). }
\]
\end{rem}

Similar to generalizing the Fourier transform from functions to distributions, the notion of $\Psi$DOs can also extend to $\scrS'(\Rn)$ by using duality arguments.
Formally speaking, we have the following computation,
\begin{align}
(T_{\sigma}u,\varphi)
& = \int (T_{\sigma}u) \overline \varphi \dif x
= \int \big[ (2\pi)^{-n/2} \int e^{ix \cdot \xi} \sigma(x,\xi) \hat u(\xi) \dif{\xi} \big] \overline{\varphi(x)} \dif x \nonumber \\
& = \int \big[ (2\pi)^{-n} \int e^{i(x - y) \cdot \xi} \sigma(x,\xi) u(y) \dif y \dif{\xi} \big] \overline{\varphi(x)} \dif x \nonumber \\
& = \int u(y) \big[ \overline{(2\pi)^{-n}  \int e^{i(y - x) \cdot \xi} {\color{red} \overline \sigma(x,\xi)} \varphi(x) \dif x \dif \xi} \big] \dif y \nonumber \\
& \sim \int u(y) \big[ \overline{(2\pi)^{-n} \int e^{i(y - x) \cdot \xi} {\color{red} \sum_\alpha \frac {(x-y)^\alpha} {\alpha!} \partial_y^\alpha \overline \sigma(y,\xi)} \varphi(x) \dif x \dif \xi} \big] \dif y \quad \text{(Taylor's)} \nonumber \\
& \sim \int u(y) \big[ \overline{(2\pi)^{-n} \int {\color{red}(-D)_\xi^\alpha} (e^{i(y - x) \cdot \xi}) {\color{red} \sum_\alpha \frac {1} {\alpha!} \partial_y^\alpha \overline \sigma(y,\xi)} \varphi(x) \dif x \dif \xi} \big] \dif y \nonumber \\
& \sim \int u(y) \big[ \overline{(2\pi)^{-n} \int e^{i(y - x) \cdot \xi} {\color{red} \sum_\alpha \frac {1} {\alpha!} \partial_y^\alpha D_\xi^\alpha \overline \sigma(y,\xi)} \varphi(x) \dif x \dif \xi} \big] \dif y \nonumber \\
& = \int u(y) \big[ \overline{(2\pi)^{-n/2} \int e^{iy \cdot \xi} {\color{red} \sum_\alpha \frac {1} {\alpha!} \partial_y^\alpha D_\xi^\alpha \overline \sigma(y,\xi)} \hat \varphi(\xi) \dif \xi} \big] \dif y \nonumber \\
& = \int u(y) \overline{T_{\sigma^*} \varphi(y)} \dif y, \qquad \sigma^*(y,\xi) := \sum_\alpha \frac {1} {\alpha!} \partial_y^\alpha D_\xi^\alpha \overline \sigma(y,\xi) \nonumber \\
& = (u, T_{\sigma^*} \varphi). \label{eq:Tuph-PM2021}
\end{align}
The computation \eqref{eq:Tuph-PM2021} implies the existence of the adjoint of $T_\sigma$ (denoted as $T_\sigma^*$), and we leave the rigorous proof of the existence of $T_\sigma^*$ to \S \ref{sec:AdT-PM2021}.
Now, by assuming the existence of $T_\sigma^*$, we extend the domain of $T_\sigma$ from $\scrS(\Rn)$ to $\scrS'(\Rn)$ as follows.

\begin{defn}[Pseudodifferential operators in $\scrS'$] \label{defn:PseudoDOOnDistribution-PM2021}
	Let $\sigma$ be a symbol. For every $u \in \scrS'(\Rn)$, we can define the \emph{pseudo-differential operator} $T_{\sigma}$ acting on $u$ as
	\[
	\boxed{ (T_{\sigma}u,\varphi) := (u, T_{\sigma}^* \varphi), \Forall \varphi \in \scrS(\Rn). }
	\]
	where $T_\sigma^*$ is the adjoint of $T_\sigma$ and the bracket $(\cdot,\cdot)$ signifies the pair of distributions with test functions.
\end{defn}

\begin{lem} \label{lem:TSS-PM2021}
	Let $\sigma$ be a symbol, and denote its corresponding {\rm $\Psi$DO} as $T_\sigma$.
	Then $T_\sigma(\scrS(\Rn)) \subset \scrS(\Rn)$.
	And also, $T_\sigma(\scrS'(\Rn)) \subset \scrS'(\Rn)$.
\end{lem}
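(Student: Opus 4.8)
The plan is to establish the two inclusions in sequence, deriving the second from the first via a duality argument. For the first claim, $T_\sigma(\scrS(\Rn)) \subset \scrS(\Rn)$, I would start from the representation
\[
(T_\sigma \varphi)(x) = (2\pi)^{-n/2} \int_{\Rn} e^{ix\cdot\xi} \sigma(x,\xi) \hat\varphi(\xi) \dif\xi,
\]
valid for $\varphi \in \scrS(\Rn)$. Since $\hat\varphi \in \scrS(\Rn)$ (by the lemma that $\calF\scrS(\Rn) = \scrS(\Rn)$) and $|\sigma(x,\xi)| \leq C\agl[\xi]^m$, the integrand is dominated by an integrable function uniformly for $x$ in compacts, and differentiation under the integral sign is justified. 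To show $T_\sigma\varphi$ is smooth, I would differentiate $D_x^\alpha$ through the integral; the factor $D_x^\alpha(e^{ix\cdot\xi}\sigma(x,\xi))$ expands by the Leibniz rule into terms $\xi^{\alpha'} e^{ix\cdot\xi} (D_x^{\alpha-\alpha'}\sigma)(x,\xi)$, each still of polynomial growth in $\xi$ times $\hat\varphi(\xi)$, hence integrable. So $T_\sigma\varphi \in C^\infty(\Rn)$.

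The decay half requires showing $\sup_x |x^\beta D_x^\alpha (T_\sigma\varphi)(x)| < \infty$ for all $\alpha,\beta$. After differentiating in $x$ as above, I would handle the $x^\beta$ factor by the standard integration-by-parts trick in $\xi$: write $x^\beta e^{ix\cdot\xi} = (-D_\xi)^\beta e^{ix\cdot\xi}$ (up to the sign convention for $D_\xi$) and integrate by parts, moving $D_\xi^\beta$ onto $\sigma(x,\xi)\hat\varphi(\xi)$. By the symbol estimate $|D_\xi^{\beta'}\sigma| \leq C\agl[\xi]^{m-|\beta'|}$ and the rapid decay of all derivatives of $\hat\varphi$, the resulting integrand is bounded by $C_N \agl[\xi]^{m+|\alpha|-N}$ for every $N$, uniformly in $x$; choosing $N$ large makes it integrable and the bound independent of $x$. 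This gives the Schwartz seminorm estimates and hence $T_\sigma\varphi \in \scrS(\Rn)$. (Equivalently, one may invoke that $\sigma(x,\cdot)\hat\varphi \in \scrS_\xi$ with seminorms locally bounded in $x$, and that $\calinF$ preserves $\scrS$.)

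For the second claim, recall from Definition \ref{defn:PseudoDOOnDistribution-PM2021} that $T_\sigma$ acts on $u \in \scrS'(\Rn)$ by $(T_\sigma u, \varphi) := (u, T_\sigma^* \varphi)$, where $T_\sigma^*$ is again a $\Psi$DO (its symbol $\sigma^*$ is computed formally in \eqref{eq:Tuph-PM2021}, and its existence as a genuine symbol is deferred to \S\ref{sec:AdT-PM2021}). Granting that $\sigma^* \in S^{+\infty}$, the first part of this lemma applied to $T_\sigma^*$ gives $T_\sigma^*\varphi \in \scrS(\Rn)$ whenever $\varphi \in \scrS(\Rn)$, and moreover $\varphi \mapsto T_\sigma^*\varphi$ is continuous on $\scrS(\Rn)$ (the seminorm bounds derived above are themselves estimates in terms of finitely many seminorms of $\varphi$). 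Therefore if $\varphi_j \to 0$ in $\scrS(\Rn)$ then $T_\sigma^*\varphi_j \to 0$ in $\scrS(\Rn)$, so $(T_\sigma u, \varphi_j) = (u, T_\sigma^*\varphi_j) \to 0$ since $u \in \scrS'(\Rn)$; linearity in $\varphi$ is clear. Hence $T_\sigma u \in \scrS'(\Rn)$.

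The main obstacle is purely the bookkeeping in the decay estimate: one must interleave the Leibniz expansion of $D_x^\alpha(e^{ix\cdot\xi}\sigma)$ with the integration by parts in $\xi$ coming from $x^\beta$, keeping track of which derivatives land on $\sigma$ versus on $\hat\varphi$, and then count powers of $\agl[\xi]$ to confirm integrability and $x$-uniformity. There is also a logical dependency to flag: the extension to $\scrS'$ presupposes the existence of the adjoint symbol $\sigma^*$, which the text establishes later in \S\ref{sec:AdT-PM2021}; I would simply cite that forward reference rather than reprove it here.
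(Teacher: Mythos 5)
Your proposal is correct and follows essentially the same route as the paper's (sketched) proof: for the $\scrS\to\scrS$ part, differentiate under the integral and trade the $x^\beta$ factor for $\xi$-derivatives via integration by parts, which is exactly the paper's rewriting of $x^\alpha D^\beta(T_\sigma\varphi)$ as a sum of terms $\int e^{ix\cdot\xi}\tilde\sigma_{\alpha',\beta'}(x,\xi)\,\calF\{x^{\alpha'}D^{\beta'}\varphi\}(\xi)\,\dif\xi$ with new symbols, then bound using the symbol estimates and the rapid decay of $\hat\varphi$. For the $\scrS'\to\scrS'$ part, both arguments are the same duality step through $T_\sigma^*$ (whose existence is the forward reference you correctly flag); your use of sequential continuity is interchangeable with the paper's appeal to the seminorm characterization of $\scrS'$ in Lemma \ref{lem:TDEq-PM2021}.
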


\begin{proof}[Sketch of the proof]
	Let $\varphi \in \scrS(\Rn)$, we need to show $x^\alpha D^\beta (T_\sigma \varphi)$ are bounded in $\Rn$.
	We can show
	\[
	x^\alpha D^\beta (T_\sigma \varphi)(x)
	\simeq \sum_{\alpha', \beta'} \int e^{ix\cdot \xi} \tilde \sigma_{\alpha', \beta'}(x,\xi) \calF\{x^{\alpha'} D^{\beta'} \varphi\}(\xi) \dif \xi
	\]
	where $\tilde \sigma_{\alpha', \beta'}$ are also symbols of certain orders, say, $m$.
	Because $\varphi \in \scrS(\Rn)$, we know that the Fourier transform $\calF\{x^{\alpha'} D^{\beta'} \varphi\} \in \scrS(\Rn)$, so
	\[
	|\calF\{x^{\alpha'} D^{\beta'} \varphi\}(\xi)| \lesssim \agl[\xi]^{-M}
	\]
	for any positive integer $M$.
	Hence,
	\[
	|x^\alpha D^\beta (T_\sigma \varphi)(x)|
	\lesssim \sum_{\alpha', \beta'} \int \agl[\xi]^{m} \agl[\xi]^{-M} \dif \xi < +\infty
	\]
	when we take $M$ to be large enough.
	
	For the second conclusion, from $(T_{\sigma}u,\varphi) := (u, T_{\sigma}^* \varphi), \Forall \varphi \in \scrS(\Rn)$ we have
	\[
	|(T_{\sigma}u,\varphi)| \leq \nrm{u} \nrm{T_{\sigma}^* \varphi}
	\lesssim \nrm{u} |\varphi|_m
	\]
	where $m$ is the order of $\sigma$.
	Then by Lemma \ref{lem:TDEq-PM2021} we can conclude $T_{\sigma}u \in \scrS'(\Rn)$.
\end{proof}

\smallskip

In conclusion, there holds
\begin{equation}
T_\sigma \colon
\left\{\begin{aligned}
\scrS & \to \scrS, \\
\scrS' & \to \scrS',
\end{aligned}\right.
\end{equation}
where $\scrS$ is a shorthand for $\scrS(\Rn)$.
Space $\scrS$ represents functions which are extremely smooth (good), while $\scrS'$ represents ``functions'' which are extremely rough (bad).
To quantize the goodness and the badness, we introduce the potential spaces.

\subsection{Sobolev spaces} \label{subsec:Hsp-PM2021}

\begin{defn}[Sobolev spaces\index{Sobolev spaces}] \label{defn:Hsp-PM2021}
	We denote
	\[
	\boxed{H^{s,p}(\Rn) := \{f \in \scrS'(\Rn) \,;\, (I - \Delta)^{s/2} f \in L^p(\Rn) \}},
	\]
	and define the norm $\nrm[H^{s,p}]{f} := \nrm[L^p(\Rn)]{(I - \Delta)^{s/2} f}$.
	Write $H^s(\Rn) := H^{s,2}(\Rn)$.
\end{defn}

\begin{lem} \label{lem:Hsp-PM2021}
	The normed vector space $(H^{s,p}(\Rn), \nrm[H^{s,p}]{\cdot})$ is a Banach space, and $(H^s(\Rn), \nrm[H^s]{\cdot})$ is a Hilbert space.
\end{lem}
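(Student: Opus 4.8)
The plan is to transfer the Banach/Hilbert structure from $L^p(\Rn)$ (resp.\ $L^2(\Rn)$) to $H^{s,p}(\Rn)$ through the map $\Lambda^s := (I - \Delta)^{s/2}$, which by Definition \ref{defn:Hsp-PM2021} is built to be an isometric bijection $H^{s,p}(\Rn) \to L^p(\Rn)$. First I would record the basic facts about $\Lambda^s$: it is the $\Psi$DO with symbol $\agl[\xi]^s \in S^s$, and the composition law (Lemma \ref{lem:elT-PM2021}, or directly on the Fourier side since the symbol is $x$-independent) gives $\Lambda^s \Lambda^{-s} = \Lambda^{-s} \Lambda^s = I$ on $\scrS'(\Rn)$. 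Hence $\Lambda^s \colon \scrS'(\Rn) \to \scrS'(\Rn)$ is a bijection with inverse $\Lambda^{-s}$, and $f \in H^{s,p}(\Rn)$ iff $\Lambda^s f \in L^p(\Rn)$, with $\nrm[H^{s,p}]{f} = \nrm[L^p]{\Lambda^s f}$ by definition.

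Next I would check that $\nrm[H^{s,p}]{\cdot}$ is genuinely a norm: homogeneity and the triangle inequality follow immediately from the corresponding properties of $\nrm[L^p]{\cdot}$ together with linearity of $\Lambda^s$; for definiteness, $\nrm[H^{s,p}]{f} = 0$ forces $\Lambda^s f = 0$ in $L^p(\Rn)$, hence $\Lambda^s f = 0$ in $\scrS'(\Rn)$, and applying the bijection $\Lambda^{-s}$ gives $f = 0$. Then comes completeness, which is the one step requiring a small argument. Given a Cauchy sequence $\{f_j\}$ in $H^{s,p}(\Rn)$, the sequence $\{\Lambda^s f_j\}$ is Cauchy in $L^p(\Rn)$, so by completeness of $L^p(\Rn)$ it converges to some $g \in L^p(\Rn)$. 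Set $f := \Lambda^{-s} g \in \scrS'(\Rn)$. By Theorem \ref{thm:TFTD-PM2021}, $L^p(\Rn) \subset \scrS'(\Rn)$ with continuous inclusion, and by Theorem \ref{thm:fouriertrans-linearbdd} (or Lemma \ref{lem:TSS-PM2021}) $\Lambda^{-s}$ is continuous on $\scrS'(\Rn)$; so $\Lambda^s f_j = (\text{image of } \Lambda^s f_j \text{ in } \scrS') \to g$ in $\scrS'$, and since $f_j = \Lambda^{-s}(\Lambda^s f_j)$, continuity of $\Lambda^{-s}$ gives $f_j \to f$ in $\scrS'(\Rn)$, confirming $f$ is the right candidate; moreover $\Lambda^s f = g \in L^p(\Rn)$ shows $f \in H^{s,p}(\Rn)$ and $\nrm[H^{s,p}]{f_j - f} = \nrm[L^p]{\Lambda^s f_j - g} \to 0$. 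Thus $H^{s,p}(\Rn)$ is Banach.

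Finally, for $p = 2$ I would exhibit the Hilbert structure by pulling back the $L^2$ inner product: define $(f, g)_{H^s} := (\Lambda^s f, \Lambda^s g)_{L^2}$. This is sesquilinear, positive definite (same definiteness argument as above), and induces exactly $\nrm[H^s]{\cdot}$, so $(H^s(\Rn), (\cdot,\cdot)_{H^s})$ is an inner product space whose norm is complete by the Banach part just proved; hence it is a Hilbert space. The only real obstacle is the completeness step, and even there the subtlety is purely bookkeeping: one must take the limit $g$ in $L^p$ first and then \emph{define} $f$ via $\Lambda^{-s}$, using the continuity of $\Lambda^{\pm s}$ on $\scrS'(\Rn)$ to know this $f$ is the $H^{s,p}$-limit rather than trying to make sense of a limit directly in $\scrS'$ and hoping it lands in the right space. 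Everything else is a formal consequence of $\Lambda^s$ being an isometric isomorphism onto $L^p$.
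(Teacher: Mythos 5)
Your proof is correct; the paper states this lemma without giving a proof, and your argument---transferring completeness and the inner product from $L^p(\Rn)$ (resp.\ $L^2(\Rn)$) through the isometric bijection $(I-\Delta)^{s/2}$, whose inverse $(I-\Delta)^{-s/2}$ acts on $\scrS'(\Rn)$ because the symbols $\agl[\xi]^{\pm s}$ are $x$-independent Fourier multipliers---is exactly the standard argument the definition of $H^{s,p}(\Rn)$ is set up for. The completeness step is handled properly (take the $L^p$-limit $g$ of $(I-\Delta)^{s/2}f_j$ and define $f:=(I-\Delta)^{-s/2}g$, so that $\nrm[H^{s,p}]{f_j-f}=\nrm[L^p]{(I-\Delta)^{s/2}f_j-g}\to 0$), so there is nothing to add.
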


\begin{thm} \label{thm:Tsm-PM2021}
	Let $\sigma \in S^m$ and denote its corresponding {\rm $\Psi$DO} as $T_\sigma$.
	Then the mapping $T_\sigma \colon H^s(\Rn) \to H^{s-m}(\Rn)$ is bounded.
\end{thm}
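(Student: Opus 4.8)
The plan is to reduce the general case to the case $s=m=0$ and then prove $L^2$-boundedness of a zeroth-order $\Psi$DO. First I would observe that by Definition \ref{defn:Hsp-PM2021} the statement $T_\sigma\colon H^s\to H^{s-m}$ is bounded is equivalent to the statement that the composite operator
\[
(I-\Delta)^{(s-m)/2}\circ T_\sigma\circ (I-\Delta)^{-s/2}\colon L^2(\Rn)\to L^2(\Rn)
\]
is bounded. Now $(I-\Delta)^{(s-m)/2}$ is the $\Psi$DO with symbol $\agl[\xi]^{s-m}\in S^{s-m}$ and $(I-\Delta)^{-s/2}$ is the $\Psi$DO with symbol $\agl[\xi]^{-s}\in S^{-s}$, so by Lemma \ref{lem:elT-PM2021} (composition of $\Psi$DOs, with the composition calculus deferred to Theorem \ref{thm:com-PM2021}) the composite is a $\Psi$DO $T_\tau$ with $\tau\in S^{(s-m)+m+(-s)}=S^0$. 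Hence it suffices to prove: every $\tau\in S^0$ gives rise to a bounded operator $T_\tau\colon L^2(\Rn)\to L^2(\Rn)$.

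For the $L^2$-boundedness of a zeroth-order symbol, I would use the kernel estimate together with a Schur-test / Cotlar--Stein type argument. Writing $T_\tau\varphi(x)=(2\pi)^{-n}\iint e^{i(x-y)\cdot\xi}\tau(x,\xi)\varphi(y)\diff y\diff\xi$, one integrates by parts in $\xi$ using the identity $\agl[x-y]^{-2k}(I-\Delta_\xi)^k e^{i(x-y)\cdot\xi}=e^{i(x-y)\cdot\xi}$ and the bounds $|D_\xi^\beta\tau(x,\xi)|\le C_\beta\agl[\xi]^{-|\beta|}$ to get, for the Schwartz kernel $K(x,y)$, an estimate of the form $|K(x,y)|\lesssim \agl[x-y]^{-N}$ away from the diagonal for every $N$, while near the diagonal $|K(x,y)|\lesssim |x-y|^{-n+\epsilon}$ (or one localizes dyadically in $\xi$). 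This is not quite integrable in a way that gives Schur's lemma directly, so the cleanest route is to decompose $\tau=\sum_j \tau(x,\xi)\psi(2^{-j}\xi)$ into Littlewood--Paley pieces $T_j$, show each $T_j$ is bounded on $L^2$ with norm uniformly bounded in $j$, show the almost-orthogonality estimates $\|T_j^*T_k\|+\|T_jT_k^*\|\lesssim 2^{-\delta|j-k|}$, and conclude by the Cotlar--Stein lemma. Alternatively, one may invoke the sharp Gårding-type argument: writing $T_\tau^*T_\tau$ and using the symbol calculus of Theorem \ref{thm:com-PM2021}, $T_\tau^*T_\tau=T_\rho$ with $\rho\in S^0$ and $\rho\ge -C$; then $C\cdot I - T_\tau^*T_\tau$ has a symbol that is essentially a square modulo $S^{-1}$, and an induction on the order combined with the (already available) boundedness of lower-order operators closes the estimate $\|T_\tau\varphi\|_{L^2}^2=(T_\tau^*T_\tau\varphi,\varphi)\le C\|\varphi\|_{L^2}^2$.

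The main obstacle is precisely this $L^2$-boundedness of $S^0$ operators: unlike the mapping $\scrS\to\scrS$ (Lemma \ref{lem:TSS-PM2021}), which follows from easy pointwise decay estimates, the $L^2$ bound genuinely requires an orthogonality mechanism, because the naive kernel bound $|K(x,y)|\lesssim\agl[x-y]^{-N}+|x-y|^{-n}$ fails the hypothesis of Schur's lemma at the diagonal. I expect the Cotlar--Stein route to be the one carried out in detail. Once that is in hand, the reduction via $(I-\Delta)^{t/2}$ and Lemma \ref{lem:elT-PM2021} is routine: one writes $\|T_\sigma f\|_{H^{s-m}}=\|(I-\Delta)^{(s-m)/2}T_\sigma f\|_{L^2}=\|T_\tau (I-\Delta)^{s/2}f\|_{L^2}\lesssim \|(I-\Delta)^{s/2}f\|_{L^2}=\|f\|_{H^s}$, and we are done.
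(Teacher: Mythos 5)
Your reduction is exactly the one the paper uses: conjugating by the Bessel potentials, $(I-\Delta)^{(s-m)/2}\circ T_\sigma\circ(I-\Delta)^{-s/2}\in\Psi^0$ by the composition calculus (Theorem \ref{thm:com-PM2021}), so everything hinges on $L^2$-boundedness of order-zero operators (this is the content of Theorem \ref{thm:T2B-PM2021} and Corollary \ref{cor:HmB-PM2021}; strictly speaking one also closes with a density argument from $\scrS$ to $H^s$, which you omit but which is routine). Where you diverge is in the key $L^2$ step: you announce Littlewood--Paley decomposition plus Cotlar--Stein almost orthogonality as the route you expect to be carried out, whereas the paper instead runs the classical symbolic square-root argument that you only sketch as your second alternative. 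Concretely, the paper sets $M_1=2\sup|a|^2+1$, defines $b=\sqrt{M_1-|a|^2}\in S^0$, uses Theorems \ref{thm:com-PM2021} and \ref{thm:AdT-PM2021} to write $M-T_a^*T_a=M_2+T_b^*T_b-R$ with $R\in\Psi^{-1}$, and then bounds $R$ by iterating $R\mapsto R^*R$ until the order drops below $-n$, at which point the kernel is uniformly bounded with $\agl[x-y]^{-n-1}$ decay and Schur's test (Lemmas \ref{lem:SchEst-PM2021}, \ref{lem:Tbd-PM2021}) applies; the bound is then transported back up the chain via $\nrm[L^2]{R\varphi}^2=(R^*R\varphi,\varphi)$. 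This avoids any explicit orthogonality lemma: the only hard analytic input is Schur's test for very negative order, with all the work done by the symbolic calculus. Your Cotlar--Stein route is equally valid and is more robust (it extends to $S^0_{0,0}$ symbols, cf.\ Theorem \ref{thm:CaVai1-PM2021}, where the square-root trick does not directly apply because the calculus loses decay), but it requires proving the almost-orthogonality estimates $\nrm{T_jT_k^*}+\nrm{T_j^*T_k}\lesssim 2^{-\delta|j-k|}$, which is comparable in length to the paper's iteration; for the Kohn--Nirenberg class $S^0$ treated here the paper's argument is the more economical one given that the composition and adjoint theorems are already available.
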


The proof of Theorem \ref{thm:Tsm-PM2021} is based on the $L^2$ boundedness of $\Psi$DOs of order 0.
Formally speaking,
\begin{align*}
\nrm[H^{s-m}]{T_\sigma f}
& = \nrm[L^2]{(I - \Delta)^{(s-m)/2} \circ T_\sigma f}
\lesssim \nrm[L^2]{(I - \Delta)^{s/2} f}
= \nrm[H^s]{f}.
\end{align*}

\begin{thm} \label{thm:SNI-PM2021}
	For a fixed constant $s \in \R$, $\Forall r,t \colon r \leq s \leq t, \Forall C > 0, \Forall \varphi \in \scrS(\Rn)$, we have:
	\begin{equation} \label{eq:SNI-PM2021}
	\boxed{ \nrm[H^s]{\varphi}^2 \leq \frac{1}{C^{t-s}} \nrm[H^t]{\varphi}^2 + C^{s-r} \nrm[H^r]{\varphi}^2. }
	\end{equation}
\end{thm}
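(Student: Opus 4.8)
The plan is to pass to the Fourier side, where the whole statement collapses to a one-line pointwise inequality between the weights $\agl[\xi]^{2r}$, $\agl[\xi]^{2s}$, $\agl[\xi]^{2t}$. First I would record the Plancherel-type formula for the Sobolev norm: since $(I-\Delta)^{s/2}$ is the $\Psi$DO attached to the $x$-independent symbol $\agl[\xi]^s$, Remark~\ref{rem:FPw-PM2021} gives $\calF\{(I-\Delta)^{s/2}\varphi\} = \agl[\cdot]^s\hat\varphi$, and then Theorem~\ref{thm:PlancherelTheorem} yields
\[
\nrm[H^s]{\varphi}^2 = \int_\Rn \agl[\xi]^{2s}\,|\hat\varphi(\xi)|^2 \dif\xi, \qquad \Forall \varphi \in \scrS(\Rn),\ s \in \R.
\]
Granting this, \eqref{eq:SNI-PM2021} follows at once by integrating the pointwise bound
\[
\agl[\xi]^{2s} \le \frac{1}{C^{t-s}}\agl[\xi]^{2t} + C^{s-r}\agl[\xi]^{2r}, \qquad \Forall \xi \in \Rn,
\]
against the nonnegative measure $|\hat\varphi(\xi)|^2 \dif\xi$.

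The core step is therefore this pointwise inequality, which I would get by splitting $\Rn$ into the two regions $\{\agl[\xi]^2 \ge C\}$ and $\{\agl[\xi]^2 \le C\}$. On the first region, $\frac{1}{C^{t-s}}\agl[\xi]^{2t} = \agl[\xi]^{2s}\big(\agl[\xi]^2/C\big)^{t-s} \ge \agl[\xi]^{2s}$ because $\agl[\xi]^2/C \ge 1$ and $t-s \ge 0$; on the second, $C^{s-r}\agl[\xi]^{2r} = \agl[\xi]^{2s}\big(C/\agl[\xi]^2\big)^{s-r} \ge \agl[\xi]^{2s}$ because $C/\agl[\xi]^2 \ge 1$ and $s-r \ge 0$. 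Since the discarded summand is nonnegative in each case, the right-hand side dominates $\agl[\xi]^{2s}$ everywhere, and the degenerate cases $s=t$ or $s=r$ are absorbed automatically ($C^0 = 1$). Integrating then gives
\[
\nrm[H^s]{\varphi}^2 \le \frac{1}{C^{t-s}}\int_\Rn \agl[\xi]^{2t}|\hat\varphi|^2\dif\xi + C^{s-r}\int_\Rn \agl[\xi]^{2r}|\hat\varphi|^2\dif\xi = \frac{1}{C^{t-s}}\nrm[H^t]{\varphi}^2 + C^{s-r}\nrm[H^r]{\varphi}^2.
\]

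There is no genuine obstacle. The only point worth a remark is getting the powers of $C$ to appear with the correct signs, and that is exactly what the threshold $\agl[\xi]^2 = C$ engineers: below it the low-regularity term wins with a positive power of $C$, above it the high-regularity term wins with a positive power of $1/C$. Everything else is just the standard Fourier characterization of $H^s(\Rn)$ together with monotonicity of $t \mapsto \lambda^t$ for $\lambda \ge 1$.
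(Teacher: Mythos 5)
Your proof is correct and is essentially the paper's argument: both reduce to the Fourier-side identity $\nrm[H^s]{\varphi}^2=\int\agl[\xi]^{2s}|\hat\varphi|^2\dif\xi$ and split at the threshold $\agl[\xi]=\sqrt{C}$, using monotonicity of powers of $\agl[\xi]^2/C$ on each region (the paper splits the integral directly, you phrase it as a pointwise weight inequality and then integrate, which is the same computation and even handles $0<C\leq 1$ uniformly). No gaps.
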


\begin{rem} \label{rem:SNI-PM2021}
	We know that when $t > s$, $\nrm[H^s]{\varphi}$ can be controlled by $\nrm[H^t]{\varphi}$:
	$$\nrm[H^s]{\varphi} \leq 1 \cdot \nrm[H^t]{\varphi}.$$
	The key point of Theorem \ref{thm:SNI-PM2021} is that $\nrm[H^s]{\varphi}$ can even be ``controlled'' by $c \cdot \nrm[H^t]{\varphi}$ with $0 < c < 1$.
	But we need to pay for this: being dominated only by $c \cdot \nrm[H^t]{\varphi}$ is not enough. Due to the fact that $c$ is less than 1, certain ``byproduct'' should cost to compensate the advantage, and this so-called ``byproduct'' is $\nrm[H^r]{\varphi}$.
\end{rem}

\begin{proof}[Proof of Theorem \ref{thm:SNI-PM2021}]
	When $0 < C \leq 1$, it is trivial. When $C \geq 1$, we have:
	\begin{align*}
	\nrm[H^s]{\varphi}^2 &  = \nrm[2]{\calinF \sigma_{-s} \calF \varphi}^2 = \nrm[2]{\sigma_{-s} \hat{\varphi}}^2 = \int_{\Rn} \agl[\xi]^{2s} |\hat{\varphi}(\xi)|^2 \dif{\xi} \\
	& = \int_{ \{ \agl[\xi] \leq \sqrt{C} \} } \agl[\xi]^{2s} |\hat{\varphi}(\xi)|^2 \dif{\xi} + \int_{ \{ \agl[\xi] > \sqrt{C} \} } \agl[\xi]^{2s} |\hat{\varphi}(\xi)|^2 \dif{\xi} \\
	& = \int_{ \{ \agl[\xi] \leq \sqrt{C} \} } \agl[\xi]^{2s-2r} \cdot \agl[\xi]^{2r} |\hat{\varphi}(\xi)|^2 \dif{\xi} + \int_{ \{ \agl[\xi]^{-1} < \frac{1}{\sqrt{C}} \} } (\agl[\xi]^{-1})^{2t-2s} \cdot \agl[\xi]^{2t} |\hat{\varphi}(\xi)|^2 \dif{\xi} \\
	& \leq \int_{ \{ \agl[\xi] \leq \sqrt{C} \} } \sqrt{C}^{2s-2r} \cdot \agl[\xi]^{2r} |\hat{\varphi}(\xi)|^2 \dif{\xi} + \int_{ \{ \agl[\xi]^{-1} < \frac{1}{\sqrt{C}} \} } (\frac{1}{\sqrt{C}})^{2t-2s} \cdot \agl[\xi]^{2t} |\hat{\varphi}(\xi)|^2 \dif{\xi} \\
	& \leq \sqrt{C}^{2s-2r} \int_{\Rn} \agl[\xi]^{2r} |\hat{\varphi}(\xi)|^2 \dif{\xi} + (\frac{1}{\sqrt{C}})^{2t-2s} \int_{\Rn} \agl[\xi]^{2t} |\hat{\varphi}(\xi)|^2 \dif{\xi} \\
	& = C^{s-r} \nrm[H^r]{\varphi}^2 + C^{s-t} \nrm[H^t]{\varphi}^2.
	\end{align*}
	This completes the proof.
\end{proof}

\begin{rem}
	In the proof of Theorem \ref{thm:SNI-PM2021}, when $\varphi$ is compactly supported and $s = 0$, and if we replace $\agl[\xi]$ by $|\xi|$ and choose $r = 0$ and $C$ to be small enough and use the fact that
	\[
	\int_{ \{ |\xi| \leq \sqrt{C} \} } |\hat{\varphi}(\xi)|^2 \dif{\xi}
	\lesssim C^{n/2} \sup_{|\xi| \leq \sqrt{C}} |\hat{\varphi}(\xi)|^2
	\lesssim C^{n/2} \nrm[L^1]{\varphi}^2
	\leq C^{n/2} \sqrt{|\supp \varphi|} \nrm[L^2]{\varphi},
	\]
	we can prove the Poincare's inequality
	\(
	\nrm[L^2]{\varphi} \lesssim \nrm[L^2]{(-\Delta)^{t/2} \varphi}
	\)
	for the set of functions with uniformly compact support.
\end{rem}

Noticing that $\agl[\xi] \geq 1$, we can further extend Theorem \ref{thm:SNI-PM2021} to more generalized situation.
\begin{thm} \label{thm:SNIE-PM2021}
	For a fixed constant $s \in \R$, $\Forall t \geq s$ and $\Forall r \in \R^1$, $\Forall \epsilon > 0, \Forall \varphi \in \scrS(\Rn)$, there exists a constant $C_{r,s,t,\epsilon}$ such that:
	\begin{equation} \label{eq:SNIE-PM2021}
	\boxed{ \nrm[H^s]{\varphi}^2 \leq \epsilon \nrm[H^t]{\varphi}^2 + C_{r,s,t,\epsilon} \nrm[H^r]{\varphi}^2. }
	\end{equation}
\end{thm}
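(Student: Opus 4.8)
The plan is to reduce \eqref{eq:SNIE-PM2021} to the already-proved inequality \eqref{eq:SNI-PM2021} of Theorem \ref{thm:SNI-PM2021}, handling separately the two regimes determined by how $r$ compares to $s$. The key extra ingredient, as the hint before the statement suggests, is that $\agl[\xi] \geq 1$, so that $\nrm[H^r]{\varphi} \leq \nrm[H^{r'}]{\varphi}$ whenever $r \leq r'$; this lets us replace an unfavorable exponent $r$ by a favorable one at the cost of nothing.

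First I would treat the case $r \leq s \leq t$. Here Theorem \ref{thm:SNI-PM2021} applies directly: for every $C > 0$,
\[
\nrm[H^s]{\varphi}^2 \leq \frac{1}{C^{t-s}} \nrm[H^t]{\varphi}^2 + C^{s-r} \nrm[H^r]{\varphi}^2.
\]
Given $\epsilon > 0$, choose $C$ large enough that $C^{-(t-s)} \leq \epsilon$ (if $t = s$ the statement is trivial since then one may absorb into the first term directly, or note $\nrm[H^s]{\varphi}^2 \leq \epsilon \nrm[H^t]{\varphi}^2$ already fails only when $\epsilon<1$, in which case take $C_{r,s,t,\epsilon}$ handling the remainder — I will phrase this cleanly by assuming $t > s$, the case $t=s$ being immediate from $\nrm[H^s]{\varphi}\le\nrm[H^t]{\varphi}$ together with, if $\epsilon<1$, the bound $\nrm[H^s]{\varphi}^2\le\nrm[H^r]{\varphi}^2$ via $\agl[\xi]\ge1$ when $r\ge s$, or directly when $r\le s$). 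Then set $C_{r,s,t,\epsilon} := C^{s-r}$ and \eqref{eq:SNIE-PM2021} follows.

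Next, the case $r > s$. Now $r$ is not $\le s$, so Theorem \ref{thm:SNI-PM2021} does not apply with this $r$. But since $\agl[\xi] \geq 1$ and $r > s$, we have $\agl[\xi]^{2s} \le \agl[\xi]^{2r}$ pointwise, hence $\nrm[H^s]{\varphi}^2 \le \nrm[H^r]{\varphi}^2$; equivalently we may simply take $C_{r,s,t,\epsilon} = 1$ and discard the $H^t$ term entirely. (More precisely: if $r \ge s$ then $\nrm[H^s]{\varphi}^2 \le \nrm[H^r]{\varphi}^2 = 0\cdot\nrm[H^t]{\varphi}^2 + 1\cdot\nrm[H^r]{\varphi}^2 \le \epsilon\nrm[H^t]{\varphi}^2 + 1\cdot\nrm[H^r]{\varphi}^2$.) This disposes of all $r \ge s$ at once. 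Combining the two cases — $r \le s$ via Theorem \ref{thm:SNI-PM2021} with a large constant $C$, and $r \ge s$ via the trivial monotonicity $\nrm[H^s]{\varphi}\le\nrm[H^r]{\varphi}$ — yields the claim for every real $r$.

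I do not expect a genuine obstacle here; the only thing to be careful about is the bookkeeping of which monotonicity ($\nrm[H^a]{\varphi}\le\nrm[H^b]{\varphi}$ for $a\le b$, valid because $\agl[\xi]\ge1$) is being invoked and in which direction, and to state the degenerate subcase $t=s$ cleanly rather than letting the exponent $C^{-(t-s)}$ blow up. The substantive content is entirely contained in Theorem \ref{thm:SNI-PM2021}; Theorem \ref{thm:SNIE-PM2021} is the convenient ``small-$\epsilon$'' repackaging obtained by optimizing over the free parameter $C$ and then using $\agl[\xi]\ge1$ to remove the constraint $r\le s$.
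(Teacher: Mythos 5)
Your proof is correct and is essentially the paper's argument: the paper just redoes the frequency splitting of Theorem \ref{thm:SNI-PM2021} inline and absorbs your two cases into the single low-frequency constant $\max\{1,\sqrt{C}^{\,2s-2r}\}$, whereas you invoke Theorem \ref{thm:SNI-PM2021} for $r\le s$ with $C$ chosen so that $C^{-(t-s)}\le\epsilon$, and use the monotonicity $\nrm[H^s]{\varphi}\le\nrm[H^r]{\varphi}$ (from $\agl[\xi]\ge 1$) for $r\ge s$. One caveat: your parenthetical for the degenerate case is off — when $t=s$, $r<s$ and $\epsilon<1$, inequality \eqref{eq:SNIE-PM2021} is actually false (concentrate $\hat\varphi$ at high frequencies), so that corner is not ``immediate''; the statement should really be read with $t>s$ (or $\epsilon\ge1$ when $t=s$), an assumption the paper's own choice of $C$ also tacitly makes.
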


As mentioned in Remark \ref{rem:SNI-PM2021}, Theorem \ref{thm:SNIE-PM2021} expresses the same information, in addition that the ``byproduct'' can be $\nrm[H^r]{\varphi}$ with any $r \in \R^1$. 
\begin{proof}[Proof of Theorem \ref{thm:SNIE-PM2021}]
	We pick up some constant $C > 1$ first, and then we decide its value later.
	\begin{align*}
	\nrm[H^s]{\varphi}^2 &  = \nrm[2]{\calinF \sigma_{-s} \calF \varphi}^2 = \nrm[2]{\sigma_{-s} \hat{\varphi}}^2 = \int_{\Rn} \agl[\xi]^{2s} |\hat{\varphi}(\xi)|^2 \dif{\xi} \\
	& = \int_{ \{ 1 \leq \agl[\xi] \leq \sqrt{C} \} } \agl[\xi]^{2s} |\hat{\varphi}(\xi)|^2 \dif{\xi} + \int_{ \{ \agl[\xi] > \sqrt{C} \} } \agl[\xi]^{2s} |\hat{\varphi}(\xi)|^2 \dif{\xi} \\
	& = \int_{ \{ 1 \leq \agl[\xi] \leq \sqrt{C} \} } \agl[\xi]^{2s-2r} \cdot \agl[\xi]^{2r} |\hat{\varphi}(\xi)|^2 \dif{\xi} + \int_{ \{ \agl[\xi]^{-1} < \frac{1}{\sqrt{C}} \} } (\agl[\xi]^{-1})^{2t-2s} \cdot \agl[\xi]^{2t} |\hat{\varphi}(\xi)|^2 \dif{\xi} \\
	& \leq \max\{1, \sqrt{C}^{2s-2r} \} \cdot \int_{ \{ 1 \leq \agl[\xi] \leq \sqrt{C} \} } \agl[\xi]^{2r} |\hat{\varphi}(\xi)|^2 \dif{\xi} \\
	& \quad + \int_{ \{ \agl[\xi]^{-1} < \frac{1}{\sqrt{C}} \} } (\frac{1}{\sqrt{C}})^{2t-2s} \cdot \agl[\xi]^{2t} |\hat{\varphi}(\xi)|^2 \dif{\xi} \\
	& \leq \max\{1, \sqrt{C}^{2s-2r} \} \cdot \int_{\Rn} \agl[\xi]^{2r} |\hat{\varphi}(\xi)|^2 \dif{\xi} + (\frac{1}{\sqrt{C}})^{2t-2s} \int_{\Rn} \agl[\xi]^{2t} |\hat{\varphi}(\xi)|^2 \dif{\xi} \\
	& = \max\{1, \sqrt{C}^{2s-2r} \} \cdot \nrm[H^r]{\varphi}^2 + C^{s-t} \nrm[H^t]{\varphi}^2.
	\end{align*}
	Now let $C = \epsilon^{t-s}$ and let $C_{r,s,t,\epsilon} = \max\{1, \sqrt{C}^{2s-2r} \}$, then we completes the proof.
\end{proof}

\subsection{Other phases}

Beside the phase $(x-y) \cdot \xi$ in the expression
\[
(T_{\sigma}\varphi)(x)
= (2\pi)^{-n} \int e^{i(x-y) \cdot \xi} \sigma(x,y,\xi) \varphi(y) \dif y \dif \xi
\]
in Definition \ref{defn:PseudoDO-PM2021}, it is possible to use more general functions as the phase functions and the corresponding operators are still $\Psi$DOs, i.e.,
\[
P \varphi(x)
= (2\pi)^{-n} \int e^{i \phi(x,y, \xi)} \sigma(x,y,\xi) \varphi(y) \dif y \dif \xi
\]
will still be a $\Psi$DO is the $\phi$ satisfies certain conditions.
See \cite[\S 3.2]{so17fo} for details.

\section{Kernels} \label{sec:ker-PM2021}

The expression in Definition \ref{defn:PseudoDO-PM2021} can also be represented as
\begin{equation} \label{eq:Tker-PM2021}
	\boxed{(T_{\sigma}\varphi)(x)
	= \int_{\Rn} K(x,y) \varphi(y) \dif y,}
\end{equation}
where $K(x,y)$ is called the \emph{\underline{kernel}}\index{kernel} of $T_\sigma$,
\begin{equation*}
	K(x,y) := (2\pi)^{-n} \int e^{i(x-y) \cdot \xi} \sigma(x,\xi) \dif \xi,
\end{equation*}
and the integration shall be understood as an oscillatory integral (see Definition \ref{defn:kerP-PM2021}).

Differential operators such as $P = \sum_{j=1}^n x_j \partial_j$ maps $\scrS(\Rn) \to \scrS(\Rn)$, $C_c^\infty(\Rn) \to C_c^\infty(\Rn)$, and $\mathcal{E}(\Rn) \to \mathcal{E}(\Rn)$, and so by duality argument, we know differential operators $P$ maps $\scrS'(\Rn) \to \scrS'(\Rn)$, $\mathcal{D}'(\Rn) \to \mathcal{D}'(\Rn)$, and $\mathcal{E}'(\Rn) \to \mathcal{E}'(\Rn)$:
\begin{equation*}
P \colon
\left\{\begin{aligned}
\mathcal{E}'(\Rn) & \to \mathcal{E}'(\Rn) \\
\mathcal{D}'(\Rn) & \to \mathcal{D}'(\Rn)
\end{aligned}\right.
\end{equation*}
But for pseudo-differential operators $T_\sigma$, generally speaking, we only have $T_\sigma \colon \mathcal{E}'(\Rn) \to \mathcal{D}'(\Rn)$.
\begin{equation*}
T_\sigma \colon
\left\{\begin{aligned}
\mathcal{E}'(\Rn) & \to \mathcal{D}'(\Rn) \\
\mathcal{E}'(\Rn) & \not\to \mathcal{E}'(\Rn) \\
\mathcal{D}'(\Rn) & \not\to \mathcal{D}'(\Rn).
\end{aligned}\right.
\end{equation*}
A $\Psi$DO which maps $\mathcal{E}'$ to $\mathcal{E}'$ is called \emph{\underline{properly supported}}\index{properly supported}.
In fact any $\Psi$DO can be divided into a properly supported part and a $C^\infty$-smooth part.

\begin{lem} \label{lem:kedi-PM2021}
	Assume $m \in \R$ and $\sigma \in S^m$ is a symbol, and $K(x,y)$ is the kernel of $T_\sigma$.
	Then for any $\epsilon > 0$, there exists two symbols $\sigma_1 \in S^m$ and $\sigma_2 \in S^{-\infty}$ such that $\sigma = \sigma_1 + \sigma_2$, $T_{\sigma_1}$ is properly supported, $T_{\sigma_2}$ is smooth,
	and their kernels $K_1, K_2$ has the following properties:
	\begin{align*}
	\supp K_1 & \subset \{ (x,y) \in \R^{2n} \,;\, |x-y| \leq \epsilon \}, \quad \text{(properly supported)} \\
	\supp K_2 & \subset \{ (x,y) \in \R^{2n} \,;\, |x-y| \geq \epsilon/2 \}. \quad \text{(smooth)}
	\end{align*}
\end{lem}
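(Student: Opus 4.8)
The plan is to split the symbol $\sigma$ by inserting a cutoff in the \emph{physical} variable $x-y$ inside the oscillatory integral defining the kernel. Fix $\chi \in C_c^\infty(\R^n)$ with $\chi \equiv 1$ on $\{|z| \leq \epsilon/2\}$ and $\supp \chi \subset \{|z| \leq \epsilon\}$, and write $1 = \chi(x-y) + (1-\chi(x-y))$. This induces a decomposition of the kernel $K = K_1 + K_2$ with
\[
K_1(x,y) := \chi(x-y) K(x,y), \qquad K_2(x,y) := (1-\chi(x-y)) K(x,y),
\]
and correspondingly $T_\sigma = T_{\sigma_1} + T_{\sigma_2}$ where $\sigma_1, \sigma_2$ are recovered from $K_1, K_2$ by Fourier transform in the second slot (i.e.\ $\sigma_j(x,\xi) = \int e^{-i(x-y)\cdot\xi} K_j(x,y)\dif y$, an oscillatory integral). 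By construction $\supp K_1 \subset \{|x-y|\leq\epsilon\}$ and $\supp K_2 \subset \{|x-y|\geq\epsilon/2\}$, so the support claims are immediate; the content is in showing $\sigma_1 \in S^m$ and $\sigma_2 \in S^{-\infty}$, and that $T_{\sigma_1}$ is properly supported and $T_{\sigma_2}$ smoothing.

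The properly-supported claim for $T_{\sigma_1}$ follows once $\sigma_1\in S^m$: since $K_1$ vanishes for $|x-y|>\epsilon$, for any compact set the image of a compactly supported distribution stays compact (both projections of $\supp K_1$ restricted to a compact slice are compact), so $T_{\sigma_1}\colon \mathcal E' \to \mathcal E'$. For $T_{\sigma_2}$: its kernel $K_2$ is supported away from the diagonal, and there it is $C^\infty$ — this is exactly the computation behind the pseudolocal property, namely that $K(x,y)$ is smooth off the diagonal because one can integrate by parts in $\xi$ using the operator $L = |x-y|^{-2}(x-y)\cdot D_\xi$ which satisfies $L e^{i(x-y)\cdot\xi} = e^{i(x-y)\cdot\xi}$; applying $L^N$ and integrating by parts gains $\agl[\xi]^{-N}$ for every $N$ on any region $|x-y|\geq\epsilon/2$, making the integral absolutely convergent together with all its $x,y$-derivatives. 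Hence $K_2\in C^\infty(\R^{2n})$, and moreover the derivative bounds come with the factor $|x-y|^{-2N}$, which (since $K_2$ is also supported in $|x-y|\leq\epsilon$ — wait, it is not) needs the rapid decay in $|x-y|$ as well; this is obtained by also integrating by parts in $\xi$ to produce extra powers of $|x-y|^{-1}$ and combining with the Schwartz decay, or more simply by noting that $\sigma_2$'s kernel being $C^\infty$ and the operator being bounded on Sobolev scales with any gain gives $T_{\sigma_2}\colon \scrS'\to\scrS$, which is the meaning of ``smooth.''

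The bulk of the work — and the main obstacle — is verifying $\sigma_1 \in S^m$ (equivalently $\sigma_2 = \sigma - \sigma_1 \in S^{-\infty}$, which is the cleaner target since $\sigma \in S^m$ already). To get $\sigma_2 \in S^{-\infty}$ one writes
\[
\sigma_2(x,\xi) = (2\pi)^{-n}\iint e^{i(x-y)\cdot\xi}\,(1-\chi(x-y))\,e^{i(x-z)\cdot\eta - \text{(dummy)}}\dots
\]
— actually the direct route is: $\sigma_2(x,\xi) = \int e^{-iz\cdot\xi}(1-\chi(z))\,k_x(z)\dif z$ where $k_x(z) = (2\pi)^{-n}\int e^{iz\cdot\eta}\sigma(x,\eta)\dif\eta$ is the inverse Fourier transform (in the oscillatory sense) of $\sigma(x,\cdot)$. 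On $\supp(1-\chi)$ we have $|z|\geq\epsilon/2$, and integrating by parts in $\eta$ as above shows $(1-\chi(z))k_x(z)$ and all its $x,\xi$-derivatives decay faster than any power of $|z|$; then $\sigma_2(x,\xi)$, being the Fourier transform of a function that is Schwartz in $z$ uniformly in $x$ (with symbol-type control), decays faster than any power of $\agl[\xi]$, uniformly with all derivatives. The routine-but-delicate point is bookkeeping the $x$-dependence: one must check the integration-by-parts bounds are uniform in $x$ and survive $\partial_x^\alpha$, which they do because $\partial_x^\alpha \sigma \in S^{m}$ still (Lemma~\ref{lem:ele-PM2021}) and $\chi$ is independent of the symbol. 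I would structure the proof as: (1) define $\chi$, $K_j$, $\sigma_j$; (2) record support properties and deduce proper support of $T_{\sigma_1}$; (3) prove the key oscillatory-integral estimate that $(1-\chi(z))k_x(z)$ is rapidly decreasing in $z$ with uniform symbol control in $x$; (4) conclude $\sigma_2\in S^{-\infty}$ hence $\sigma_1\in S^m$ and $T_{\sigma_2}$ smoothing.
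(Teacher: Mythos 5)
Your decomposition is the same as the paper's (cut off the kernel by a function of $x-y$ that is $1$ for $|x-y|\leq\epsilon/2$ and supported in $|x-y|\leq\epsilon$), but your verification of the key fact $\sigma_2\in S^{-\infty}$ takes a genuinely different route. The paper views $\chi(|x-y|^2)\sigma(x,\xi)$ and $(1-\chi(|x-y|^2))\sigma(x,\xi)$ as amplitudes and invokes Theorem \ref{thm:ReV-PM2021}: the reduction-of-variables theorem gives $\sigma_1\in S^m$ at once, and every term of the asymptotic expansion of $\sigma_2$ is evaluated at $y=x$, where $1-\chi$ vanishes identically together with all derivatives, so $\sigma_2\in S^{m-N-1}$ for every $N$. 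You instead argue directly: $\sigma_2(x,\xi)$ is the Fourier transform in $z=x-y$ of $(1-\chi(z))k_x(z)$, and since $|z|\geq\epsilon/2$ on the support, repeated integration by parts in $\eta$ (the operator $|z|^{-2}z\cdot D_\eta$) shows this profile is rapidly decreasing in $z$ together with all $(x,z)$-derivatives, uniformly in $x$, whence its $z$-Fourier transform lies in $S^{-\infty}$; then $\sigma_1:=\sigma-\sigma_2\in S^m$ and $T_{\sigma_1}=T_\sigma-T_{\sigma_2}$ has kernel $K_1$ by linearity. Both arguments are correct. The paper's is shorter but leans on the amplitude-to-symbol machinery (itself built on the stationary phase lemmas, and proved only later in \S\ref{sec:ReV-PM2021}); yours is elementary and self-contained, and as a byproduct gives the uniform rapid decay of $K_2$ in $|x-y|$, which is exactly what you need for the smoothing claim, so nothing heavier than non-stationary phase is used.

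Two small points of care. First, define $\sigma_2$ by the absolutely convergent integral and set $\sigma_1:=\sigma-\sigma_2$, rather than ``recovering'' $\sigma_1$ from $K_1$ by Fourier inversion: $K_1$ still carries the diagonal singularity, so that inversion is only an oscillatory integral and its smoothness is not free, whereas the route through $\sigma_2$ costs nothing (you already note this is the cleaner target). Second, tidy the paragraph on $T_{\sigma_2}$: the rapid decay of $K_2$ in $|x-y|$ is not a consequence of compact support (your parenthetical self-correction), but it does follow from the same integration by parts in $\eta$ that you use for $\sigma_2$, applied with enough extra powers of $|z|^{-1}$ to absorb the $\agl[\xi]$-growth created by $x,y$-derivatives; with that stated, $K_2\in C^\infty(\R^{2n})$ with all derivatives rapidly decaying off the diagonal, which is the precise content of ``$T_{\sigma_2}$ is smooth.''
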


\begin{proof}[Idea of the proof]
	The proof needs Theorem \ref{thm:ReV-PM2021}.
	
	Fix a cutoff function $\chi \in C_c^\infty(\R)$ such that $\chi(t) = 1$ when $|t| \leq \epsilon/2$ and $\chi(t) = 0$ when $|t| \geq \epsilon$.
	We have
	\begin{equation*}
	T_\sigma \varphi(x)
	= (2\pi)^{-n} \int e^{i(x-y) \cdot \xi} \sigma(x,\xi) \varphi(y) \dif y \dif \xi \\
	= T' \varphi(x) + T'' \varphi(x),
	\end{equation*}
	where
	\begin{equation} \label{eq:2Ka-PM2021}
	\left\{\begin{aligned}
	T' \varphi(x) & = (2\pi)^{-n} \int e^{i(x-y) \cdot \xi} \chi(|x-y|^2) \sigma(x,\xi) \varphi(y) \dif y \dif \xi, \\
	T'' \varphi(x) & = (2\pi)^{-n} \int e^{i(x-y) \cdot \xi} (1-\chi(|x-y|^2)) \sigma(x,\xi) \varphi(y) \dif y \dif \xi.
	\end{aligned}\right.
	\end{equation}
	By Theorem \ref{thm:ReV-PM2021} we see that there exist $\sigma_1$, $\sigma_2 \in S^m$ such that $T' = T_{\sigma_1}$ and $T'' = T_{\sigma_2}$, so $T_\sigma = T_{\sigma_1} + T_{\sigma_2} = T_{\sigma_1 + \sigma_2}$.
	By Lemma \ref{lem:sBi-PM2021} we know $\sigma = \sigma_1 + \sigma_2$.
	
	The fact $\sigma_2 \in S^{-\infty}$ can be seen when using the asymptotics in Theorem \ref{thm:ReV-PM2021}, namely,
	\[
	\sigma_2(x,\xi)
	= \sum_{|\alpha| \leq N} \frac 1 {\alpha!} D_y^\alpha \partial_\eta^\alpha \big( (1-\chi(|x-y|^2)) \sigma(x,\eta) \big) |_{(y, \eta) = (x, \xi)} + S^{m-N-1} = S^{m-N-1}
	\]
	holds for $\forall N \in \mathbb N$, so $\sigma_2 \in S^{-\infty}$.

	From \eqref{eq:2Ka-PM2021} we can see
	\begin{equation*}
	\left\{\begin{aligned}
	K_1(x,y) & = (2\pi)^{-n} \int e^{i(x-y) \cdot \xi} \chi(|x-y|^2) \sigma(x,\xi) \dif \xi, \\
	K_2(x,y) & = (2\pi)^{-n} \int e^{i(x-y) \cdot \xi} (1-\chi(|x-y|^2)) \sigma(x,\xi) \dif \xi.
	\end{aligned}\right.
	\end{equation*}
	which implies $T_1$ is properly supported.
	And the requirements for the $\supp K_1$ and $\supp K_1$ can be seen from the expression above.
	The proof is complete.
\end{proof}

\section{Pseudolocal property} \label{sec:pslo-PM2021}

We talk about singular support and pseudolocal property.

\begin{defn}[Singular support\index{singular support}] \label{defn:ssupp-PM2021}
	For a distribution $u \in \mathscr D'$, we define its \emph{singular support} to be the complement of the set $\bigcap \{ O \subset \Rn \,;\, O \text{~is open and~} A \subset O\}$ where
	\[
	A = \{x \in \Rn \,;\, u \text{~is~} C^\infty \text{~at~} x \}.
	\]
	We denote the singular support of a distribution $u \in \mathscr D'$ as $\ssupp u$.
\end{defn}

It is obvious that $\ssupp u$ is as closed set and
\[
\ssupp u \subset \supp u.
\]
We know a differential operator doesn't increase the support of a distribution, but this is not true for a $\Psi$DO.
More specifically, if a distribution $u$ is supported in $\Omega$, then $T u$ might not be supported in a domain $\Omega$ anymore.
Instead, $\Psi$DOs have another property, called pseudolocal property, which means $\Psi$DOs don't increase the singular support of a distribution.

\begin{thm}[Pseudolocal property\index{pseudolocal property}] \label{thm:pslo-PM2021}
	Assume $T$ is a {\rm $\Psi$DO}, then
	\[
	\boxed{\ssupp (Tu) \subset \ssupp u.}
	\]
\end{thm}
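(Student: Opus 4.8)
The plan is to reduce the statement to a local question about the kernel and then exploit the fact that, away from the diagonal, the kernel of a $\Psi$DO is smooth. Concretely, I would first fix a point $x_0 \notin \ssupp u$ and aim to show that $Tu$ is $C^\infty$ near $x_0$, which suffices since $\ssupp(Tu)$ is closed. To localize, choose $\phi, \psi \in C_c^\infty(\Rn)$ with $\psi \equiv 1$ on a neighborhood of $\ssupp u$ but $\supp \psi$ disjoint from a neighborhood $U$ of $x_0$, and $\phi \equiv 1$ on $U$. Write $u = \psi u + (1-\psi)u$, where $(1-\psi)u$ is a $C^\infty$ function on a neighborhood of $\ssupp u$'s complement — in particular smooth near the support of $1-\psi$; more to the point $\psi u$ accounts for all the singularities away from $U$. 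Then I would examine $\phi\, T(\psi u)$ and $\phi\, T((1-\psi)u)$ separately.

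The second term $\phi\, T((1-\psi)u)$ is the image under $T$ of a genuinely $C^\infty$ tempered function; since $T\colon \scrS \to \scrS$ (Lemma \ref{lem:TSS-PM2021}) and more generally $T$ preserves smoothness, this term is $C^\infty$. (If $(1-\psi)u$ is not Schwartz but only smooth-and-tempered, one cuts it further with a smooth partition so that the relevant piece near $x_0$ is genuinely nice; the precise bookkeeping is routine.) The first term is the heart of the matter: $\phi\, T(\psi u)$ has the integral kernel
\[
\widetilde K(x,y) = \phi(x)\, K(x,y)\, \psi(y),
\]
and because $\supp \phi \subset U$ is disjoint from $\supp \psi$, the variables $x$ and $y$ in $\widetilde K$ stay at a positive distance, i.e.\ $\widetilde K$ is supported in $\{|x-y| \geq c\}$ for some $c>0$. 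By Lemma \ref{lem:kedi-PM2021} (the decomposition of a $\Psi$DO into a properly supported piece with kernel near the diagonal plus a smoothing piece with kernel away from the diagonal), such an off-diagonal kernel corresponds to a smoothing operator; equivalently, one shows directly that $K(x,y)$ is $C^\infty$ for $x \neq y$ by integrating by parts in $\xi$ in $K(x,y) = (2\pi)^{-n}\int e^{i(x-y)\cdot\xi}\sigma(x,\xi)\dif\xi$, using the identity $|x-y|^{-2}(x-y)\cdot D_\xi\, e^{i(x-y)\cdot\xi} = e^{i(x-y)\cdot\xi}$ to gain arbitrarily many powers of $\agl[\xi]^{-1}$ and thereby absolute convergence together with differentiability under the integral sign. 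Hence $\widetilde K \in C^\infty(\Rn \times \Rn)$, and $\phi\, T(\psi u)(x) = \int \widetilde K(x,y)\, u(y)\dif y$ is $C^\infty$ in $x$ (pairing a distribution with a smooth compactly-varying kernel).

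Combining the two pieces, $\phi\, Tu = \phi\, T(\psi u) + \phi\, T((1-\psi)u)$ is $C^\infty$ on $U$, so $Tu$ is smooth at $x_0$; as $x_0 \notin \ssupp u$ was arbitrary, $\ssupp(Tu) \subset \ssupp u$. The main obstacle I anticipate is making the off-diagonal smoothness of $K$ fully rigorous as an \emph{oscillatory} integral — the integral defining $K(x,y)$ need not converge absolutely for a general symbol of positive order, so the integration-by-parts step must be carried out in the oscillatory-integral sense (or one simply invokes Lemma \ref{lem:kedi-PM2021}, which packages exactly this fact). A secondary technical point is the handling of $(1-\psi)u$ when $u$ is a genuine tempered distribution rather than a function: one wants $(1-\psi)u$ restricted near $x_0$ to be acted on by $T$ with values in $\scrS'$, and to extract smoothness there one again falls back on the kernel representation and the mapping properties already established. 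Neither difficulty is serious once Lemma \ref{lem:kedi-PM2021} is available.
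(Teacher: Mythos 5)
Your argument is correct in substance, but it is organized differently from the paper's: you decompose the \emph{distribution}, while the paper decomposes the \emph{operator}. The paper invokes Lemma \ref{lem:kedi-PM2021} to split $T = T_1 + T_2$ with the kernel of $T_1$ supported in $\{|x-y|\leq \epsilon/4\}$ and $T_2$ smoothing; then for $x$ near $x_0$ the operator $T_1$ only sees $u$ on the small ball where $u$ is already $C^\infty$, so $T_1 u = T_1(\chi u)$ there with $\chi u \in C_c^\infty \subset \scrS(\Rn)$, and Lemma \ref{lem:TSS-PM2021} finishes. You instead write $u = \psi u + (1-\psi)u$, treat the singular part through the off-diagonal smoothness of the kernel (Lemma \ref{lem:keSmo-PM2021}, or equivalently the $K_2$-part of Lemma \ref{lem:kedi-PM2021}), and treat the smooth part through mapping properties. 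What the paper's arrangement buys is that $T$ is only ever applied to a compactly supported smooth function; the distributional part of $u$ is never paired directly against the kernel, so nothing beyond the stated lemmas is needed. Your arrangement is the classical textbook route and is perfectly viable, but it needs the pairing $\phi\, T(\psi u)(x)=\langle \widetilde K(x,\cdot), \psi u\rangle$ to make sense for a merely tempered $u$, i.e.\ rapid decay of $\widetilde K(x,\cdot)$ \emph{and all its $y$-derivatives}, locally uniformly in $x$, plus differentiation under the pairing; this is true and follows from the same integration by parts as in Lemma \ref{lem:keSmo-PM2021}, but it is slightly more than that lemma literally states.

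Two fixable points you should tighten. First, the statement that $T$ ``preserves smoothness'' of $(1-\psi)u$ is not available in the paper --- for a smooth tempered function that is not Schwartz it is essentially the theorem you are proving (empty singular support case) --- so the further cutting you sketch is not optional: split off $\phi_1(1-\psi)u \in C_c^\infty$ near $x_0$, where $T\colon\scrS\to\scrS$ applies, and push the remainder, which is supported away from $x_0$, back into the off-diagonal-kernel case. After that reduction your proof rests on exactly the same two facts as the paper's. Second, if $\ssupp u$ is unbounded there is no $\psi \in C_c^\infty(\Rn)$ equal to $1$ on a neighborhood of $\ssupp u$; take instead $\psi = 1-\chi$ with $\chi$ a compactly supported bump equal to $1$ near $x_0$ and supported in the ball where $u$ is smooth, which changes nothing else in the argument.
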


\begin{proof}
	Assume $x_0 \notin \ssupp u$.
	Because $\ssupp u$ is closed, we can find $\epsilon > 0$ such that $u$ is $C^\infty$ in $B(x_0,\epsilon)$.
	According to Lemma \ref{lem:kedi-PM2021}, we can divided $T$ into $T_1$ and $T_2$ such that $T_2$ is $C^\infty$-smooth and the kernel $K_1$ of $T_1$ satisfies
	\begin{equation*}
	\supp K_1 \subset \{ (x,y) \in \R^{2n} \,;\, |x-y| \leq \epsilon/4 \}.
	\end{equation*}
	Hence, for $\forall x \in B(x_0,\epsilon/4)$, we have $K(x,y) = 0$ when $|y - x_0| \geq \epsilon/2$.
	
	Fix a function $\chi \in C_c^\infty$ such that $\chi(y) = 1$ when $|y-x_0| \leq \epsilon/2$ and $\supp \chi = B(x_0,\epsilon)$, then
	\[
	\forall x \in B(x_0,\epsilon/4), \quad
	T_1 u(x)
	= \int K(x,y) u(y) \dif y
	= \int K(x,y) \chi(y) u(y) \dif y
	= T_1 (\chi u)(x).
	\]
	Note that $\chi u \in C_c^\infty \subset \scrS$ due to the fact that $u$ is $C^\infty$ in $\supp \chi$, so $T_1 (\chi u) \in \scrS$.
	Because $T_1 u = T_1 (\chi u) \in \scrS$ on $B(x_0,\epsilon/4)$, we conclude that $T_1 u \in C^\infty(B(x_0,\epsilon/4))$.
	Also, $T_2 u \in C^\infty$ because $T_2$ is $C^\infty$-smooth.
	In total, $T u$ is $C^\infty$-smooth in a small neighborhood of $x_0$, so $x_0 \notin \ssupp (Tu)$.
	
	We obtain $(\ssupp u)^c \subset (\ssupp (Tu))^c$.
	The proof is complete.
\end{proof}

\section*{Exercise}

\begin{ex}
	Prove $\scrS(\Rn) \subset S^{-\infty}$, namely, $\forall \varphi \in \scrS(\Rn)$, $\varphi(\xi) \in S^{-\infty}$.
\end{ex}

\begin{ex}
	Prove Lemma \ref{lem:ele-PM2021}.
\end{ex}

\begin{ex}
	Prove Lemma \ref{lem:TSS-PM2021}.
	See \cite[Prop.~6.7]{wong2014introduction} for reference.
\end{ex}

\chapter{Oscillatory integrals} \label{ch:OSInt-PM2021}

In \S \ref{sec:SPDO-PM2021} we encountered the notion of kernel of a $\Psi$DO,
\begin{equation*}
K(x,y) := (2\pi)^{-n} \int e^{i(x-y) \cdot \xi} \sigma(x,\xi) \dif \xi.
\end{equation*}
which might not be integral in the Lebesgue sense (e.g.~when $\sigma(x,\xi) = 1$).
However, if we look back to the original definition of a $\Psi$DO,
\[
(T_{\sigma}\varphi)(x)
= (2\pi)^{-n/2} \int_{\Rn} e^{ix \cdot \xi} \sigma(x,\xi) \hat{\varphi}(\xi) \dif{\xi},
\]
the integral above is always well-defined in the Lebesgue sense, because $\hat \varphi$ is rapidly decaying.
Specifically, for any $m \in \R$ and any $\sigma \in S^m$, we have
\[
|(T_{\sigma}\varphi)(x)|
\lesssim \int |\sigma(x,\xi) \hat{\varphi}(\xi)| \dif{\xi}
\lesssim \int_{\Rn} \agl[\xi]^m \agl[\xi]^{-m-n-1} \dif \xi
< +\infty.
\]
The problems emerges when we expand the Fourier transform $\hat \varphi$ (by a variable $y$) and {\bf exchange} the integration order of $y$ and $\xi$:
\begin{equation} \label{eq:T2-PM2021}
\begin{aligned}
(T_{\sigma}\varphi)(x)
& = (2\pi)^{-n} \int \big( \int e^{i(x-y) \cdot \xi} \sigma(x,\xi) \varphi(y) \dif y \big) {\color{red}\dif \xi}, \\
\int_{\Rn} K(x,y) \varphi(y) \dif y
& = (2\pi)^{-n} \int \big( \int e^{i(x-y) \cdot \xi} \sigma(x,\xi) {\color{red}\dif \xi} \big) \varphi(y) \dif y.
\end{aligned}
\end{equation}
According to Fubini's theorem, this exchange is valid only when all of the integrals involved are absolutely integrable.
From time to time we will encounter integrals of the form \eqref{eq:T2-PM2021}, but also more general than that.
A rigorous framework is appealing for making the integrals of these type always well-defined.

\section{Oscillatory integrals - Type I} \label{sec:OI1-PM2021}

Generally speaking, for any $u \in \scrS(\Rn)$ and $\sigma \in S^m(\R_x^n \times R_\xi^N)$, one is interested in the following integral
\begin{equation} \label{eq:osci-PM2021}
	I(u) := \int e^{i \varphi(x,\xi)} \sigma(x,\xi) u(x) \dif x \dif \xi
\end{equation}
where $\varphi$ is a phase function defined as follows.

\begin{defn}[Phase function\index{phase function}] \label{defn:phaf-PM2021}
	Function $\varphi$ is called a \emph{phase function (of order $\mu$)} if it satisfies
	\begin{enumerate}
		\item $\varphi \in C^\infty(\R_x^n \times (\R_\xi^N \backslash \{0\}); \R)$ is real-valued;
		
		\item $\varphi$ is homogeneous w.r.t.~$\xi$ of order $\mu > 0$, i.e.~$\varphi(x,t\xi) = t^\mu \varphi(x,\xi)$;
		
		\item $|\nabla_{(x,\xi)} \varphi(x,\xi)| \neq 0$ for $\forall (x,\xi) \in \R_x^n \times (\R_\xi^N \backslash \{0\})$.
	\end{enumerate}
\end{defn}

There are different ways to define the notion of phase functions, see \cite[\S 7.8]{horm2003IIV}, and we don't pursue diversity here.
Note that $n$ might not equal $N$, and most of the results in \S \ref{sec:SPDO-PM2021} holds also for the case $n \neq N$.
Here we consider phases of order $\mu$, instead of just order 1, because in \S \ref{ch:SCP-PM2021} and \S \ref{ch:SCla-PM2021} we do encounter phases of order 2.
The condition $\mu > 0$ is indispensable.

In contrast with \eqref{eq:T2-PM2021}, in \eqref{eq:osci-PM2021} it is not sure that integrating first w.r.t~$x$ (or $\xi$) can guarantee it's integrable.
Instead, we study
\begin{equation} \label{eq:Ieci-PM2021}
\lim_{\epsilon \to 0^+} I_\epsilon(u)
:= \lim_{\epsilon \to 0^+} \int e^{i \varphi(x,\xi)} \sigma(x,\xi) {\color{red}\chi(\epsilon \xi)} u(x) \dif x \dif \xi,
\end{equation}
where $\chi$ is a function in $C_c^\infty(\Rn)$ with $\chi(0) = 1$.
We show that the limit \eqref{eq:Ieci-PM2021} exists and its value is independent of the choice of $\chi$.

\begin{thm} \label{thm:Ieci-PM2021}
	Assume $m \in \R$, $\sigma \in S^m(\R_x^n \times R_\xi^N)$ and $\varphi$ is a phase function of order $\mu$.
	Fix a function $\chi \in C_c^\infty(\Rn)$ with $\chi(0) = 1$.
	Assume either $u \in C_c^\infty(\Rn)$, or $u \in \scrS(\Rn)$ and $\partial_x^\alpha \varphi(x,\xi)$ is tempered w.r.t.~$x$ for any $\alpha$.
	Then the limit \eqref{eq:Ieci-PM2021} exists and its value is independent of the choice of $\chi$, and it equals to
	\[
	\int e^{i \varphi(x,\xi)} L^T \big( \sigma(x,\xi) u(x) \big) \dif x \dif \xi,
	\]
	when integer $T > (m+N)/\mu$ where $L$ is given in Lemma \ref{lem:ope-PM2021}.
\end{thm}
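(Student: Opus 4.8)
The plan is to construct a first-order differential operator $L$ (whose precise form is the content of the not-yet-stated Lemma \ref{lem:ope-PM2021}) that exploits the non-vanishing of $\nabla_{(x,\xi)}\varphi$ to reduce the growth in $\xi$, then integrate by parts $T$ times. Concretely, because $|\nabla_{(x,\xi)}\varphi|\neq 0$ on $\R^n_x\times(\R^N_\xi\setminus\{0\})$ and $\varphi$ is homogeneous of degree $\mu$ in $\xi$, one has $|\nabla_x\varphi|^2 + |\xi|^2|\nabla_\xi\varphi|^2 \gtrsim \agl[\xi]^{2\mu-2}$ for $|\xi|\geq 1$ after splitting off a compact region in $\xi$; one then sets
\[
L := \frac{1}{i\,\Phi(x,\xi)}\Big(\sum_j \partial_{x_j}\varphi\,\partial_{x_j} + \sum_k |\xi|^2\,\partial_{\xi_k}\varphi\,\partial_{\xi_k}\Big),
\qquad \Phi := |\nabla_x\varphi|^2 + |\xi|^2|\nabla_\xi\varphi|^2,
\]
(modified near $\xi=0$ by a cutoff so that $L$ is smooth everywhere), which satisfies $L e^{i\varphi} = e^{i\varphi}$. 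Its transpose $L^T$ then acts on $\sigma(x,\xi)u(x)$, and I would verify from Definition \ref{defn:symbol-PM2021}, Lemma \ref{lem:aX-PM2021} and the homogeneity of $\varphi$ that $L^T$ maps $S^m$ (times a Schwartz or $C_c^\infty$ function in $x$) into $S^{m-(\mu-1)}\cdot(\text{same }x\text{-class})$ — more precisely that each application gains a factor $\agl[\xi]^{-(\mu-1)}$ when $\mu\geq 1$; for $0<\mu<1$ one uses a variant of $L$ built only from the $\xi$-derivatives, or argues that the relevant gain is $\agl[\xi]^{-\mu}$ via the $\nabla_\xi$-part alone. Either way, $T$ iterations produce an integrand bounded by $\agl[\xi]^{m-T\mu'}$ with $\mu'>0$, absolutely integrable in $\xi$ once $T>(m+N)/\mu$, and rapidly decaying in $x$ (using that $u\in C_c^\infty$, or $u\in\scrS$ together with the temperedness of $\partial_x^\alpha\varphi$, so that $L^T(\sigma u)$ still decays like a Schwartz function in $x$).

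With $L$ in hand the argument is: for $\epsilon>0$ the integrand of $I_\epsilon(u)$ is compactly supported in $\xi$ (or Schwartz), so Fubini applies and we may insert $L e^{i\varphi}=e^{i\varphi}$ and integrate by parts $T$ times in $(x,\xi)$, with no boundary terms because of the decay/compact support. This gives
\[
I_\epsilon(u) = \int e^{i\varphi(x,\xi)}\, (L^T)\big(\sigma(x,\xi)\,\chi(\epsilon\xi)\,u(x)\big)\dif x\dif\xi.
\]
Now I would expand $(L^T)\big(\sigma\chi(\epsilon\xi)u\big)$ by the Leibniz rule (Lemma \ref{lem:Dfg-PM2021}), separating the terms where all derivatives hit $\sigma u$ (giving exactly $L^T(\sigma u)\cdot\chi(\epsilon\xi)$) from those where at least one derivative hits $\chi(\epsilon\xi)$. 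For the main term, $\chi(\epsilon\xi)\to 1$ pointwise and is bounded by $1$, while $e^{i\varphi}L^T(\sigma u)$ is a fixed $L^1$ function by the estimate above, so dominated convergence yields the claimed limit. For the remainder terms, every $\xi$-derivative of $\chi(\epsilon\xi)$ carries a factor $\epsilon^{|\gamma|}$ and $(\partial^\gamma\chi)(\epsilon\xi)$ is supported in $|\xi|\lesssim 1/\epsilon$; combining $\epsilon^{|\gamma|}$ with the support restriction and the symbol bounds shows each such term is $\mathcal{O}(\epsilon^{\delta})$ for some $\delta>0$ (this is where I'd be slightly careful: one wants $T$ chosen so that even after "wasting" some derivatives on $\chi$ the surviving power of $\agl[\xi]$ is still integrable on $|\xi|\lesssim 1/\epsilon$ with room to spare, which the strict inequality $T>(m+N)/\mu$ provides). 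Hence the remainder vanishes as $\epsilon\to 0^+$.

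Finally, independence of the limit from $\chi$: given two admissible cutoffs $\chi_1,\chi_2$, their difference vanishes at $0$, so $\chi_1(\epsilon\xi)-\chi_2(\epsilon\xi)$ is itself $\mathcal O(\epsilon\agl[\xi])$ on the relevant range and supported away from a neighborhood of $\xi=0$ scaled by $1/\epsilon$; the same Leibniz-plus-domination estimate shows $I_\epsilon^{\chi_1}(u)-I_\epsilon^{\chi_2}(u)\to 0$. Also the stated value $\int e^{i\varphi}L^T(\sigma u)$ manifestly does not involve $\chi$, which re-confirms independence. The main obstacle I anticipate is purely in the bookkeeping of the operator $L$: checking that it is globally smooth (the honest fix near $\xi=0$), that $L^T$ genuinely improves the symbol order by a positive amount uniformly — the case $0<\mu<1$ needs the $\nabla_\xi$-only variant — and that after $T$ steps the Leibniz expansion against $\chi(\epsilon\xi)$ still leaves every term integrable; none of this is deep, but it is where the hypotheses ($\mu>0$, $T>(m+N)/\mu$, temperedness of $\partial_x^\alpha\varphi$) are each used, so it must be done carefully.
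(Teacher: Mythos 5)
Your proposal is essentially the paper's own proof: the same regularizing operator built from $\varphi_x\cdot\partial_x+\agl[\xi]^2\varphi_\xi\cdot\partial_\xi$ divided by $|\varphi_x|^2+\agl[\xi]^2|\varphi_\xi|^2$ with a cutoff near $\xi=0$ (Lemma \ref{lem:ope-PM2021}), the same $T$-fold integration by parts made legitimate by the presence of $\chi(\epsilon\xi)$, and the same dominated-convergence passage to the limit (the paper packages your main/remainder split into the uniform bound $|\partial_\xi^\alpha(\chi(\epsilon\xi))|\leq C\agl[\xi]^{-|\alpha|}$ of Lemma \ref{lem:chbdd-PM2021} and one application of LDCT), with $\chi$-independence read off from the fact that the limiting expression contains no $\chi$. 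One bookkeeping correction: each application of the transposed operator gains a full factor $\agl[\xi]^{-\mu}$ for every $\mu>0$ — the $S^{1-\mu}$ coefficient multiplies $\partial_\xi$, which itself lowers the symbol order by one, while the $x$-part carries an $S^{-\mu}$ coefficient — so no special variant is needed for $0<\mu<1$, and this gain of $\mu$ (not the $\mu-1$ you quote for $\mu\geq 1$) is exactly what makes the stated threshold $T>(m+N)/\mu$ sufficient.
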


The proof Theorem \ref{thm:Ieci-PM2021}, we fist do some preparation.

\begin{lem} \label{lem:chbdd-PM2021}
	Assume $\chi \in C_c^\infty(\Rn)$ and let $\epsilon \in \R$.
	There exists a constant $C$ independent of $\epsilon$ such that
	\[
	\boxed{|\partial_\xi^{\alpha} \big( \chi(\epsilon \xi) \big)| \leq C \agl[\xi]^{-|\alpha|}.}
	\]
\end{lem}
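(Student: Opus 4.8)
The plan is to reduce the estimate to two regimes in $\xi$ and exploit the fact that $\chi$ has compact support. First I would observe that by the chain rule, $\partial_\xi^\alpha\big(\chi(\epsilon\xi)\big) = \epsilon^{|\alpha|}(\partial^\alpha\chi)(\epsilon\xi)$, so that
\[
\big|\partial_\xi^\alpha\big(\chi(\epsilon\xi)\big)\big| = |\epsilon|^{|\alpha|}\,\big|(\partial^\alpha\chi)(\epsilon\xi)\big| \leq |\epsilon|^{|\alpha|}\,\sup_{\eta\in\Rn}|(\partial^\alpha\chi)(\eta)| =: C_\alpha\,|\epsilon|^{|\alpha|},
\]
where $C_\alpha<+\infty$ because $\chi\in C_c^\infty(\Rn)$. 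The case $\alpha=0$ is immediate since $\agl[\xi]^{0}=1$ and $\chi$ is bounded, so from now on $|\alpha|\geq 1$.

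Next I would split into the two regions determined by the support of $\chi$. Say $\supp\chi\subset\{|\eta|\leq R\}$. On the region where $|\epsilon\xi| > R$ we have $(\partial^\alpha\chi)(\epsilon\xi)=0$, so the left-hand side vanishes and there is nothing to prove. On the complementary region $|\epsilon\xi|\leq R$, i.e. $|\epsilon|\leq R/|\xi|$ (for $\xi\neq 0$; the bounded case $|\xi|$ small is handled trivially since $\agl[\xi]^{-|\alpha|}\simeq 1$ there and the left side is bounded by $C_\alpha|\epsilon|^{|\alpha|}$, which we further bound using that we only care about $|\epsilon|$ in a range where the whole quantity stays controlled — more cleanly, simply note $|\epsilon|^{|\alpha|}\leq (R/|\xi|)^{|\alpha|}\lesssim \agl[\xi]^{-|\alpha|}$ using $\agl[\xi]\simeq 1+|\xi|$). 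Combining, $|\epsilon|^{|\alpha|}\lesssim \agl[\xi]^{-|\alpha|}$ on the only region where the derivative is nonzero, and hence
\[
\big|\partial_\xi^\alpha\big(\chi(\epsilon\xi)\big)\big| \leq C_\alpha\,|\epsilon|^{|\alpha|} \leq C\,\agl[\xi]^{-|\alpha|},
\]
with $C$ depending on $\alpha$, $R$, and $\sup|\partial^\alpha\chi|$ but \emph{not} on $\epsilon$.

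The only mildly delicate point is making the inequality $|\epsilon|^{|\alpha|}\lesssim\agl[\xi]^{-|\alpha|}$ uniform in $\epsilon$ without accidentally restricting $\epsilon$: the resolution is that we never need a bound when $|\epsilon\xi|>R$ (the term is zero there), so on the support we genuinely have $|\epsilon|\,|\xi|\leq R$, giving $|\epsilon|\leq R\agl[\xi]^{-1}$ after using $|\xi|\le\agl[\xi]$ is the wrong direction — instead use $|\epsilon|\le R/|\xi|$ together with $1/|\xi|\lesssim\agl[\xi]^{-1}$ valid once $|\xi|\geq 1$, and for $|\xi|<1$ bound $|\epsilon|^{|\alpha|}|(\partial^\alpha\chi)(\epsilon\xi)|\leq C_\alpha'(R+1)^{|\alpha|}\lesssim\agl[\xi]^{-|\alpha|}$ since $\agl[\xi]\simeq 1$. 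Assembling these two subcases completes the proof. I expect the bookkeeping of the two ranges of $|\xi|$ to be the main (very minor) obstacle; there is no real analytic difficulty.
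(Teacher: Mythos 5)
Your strategy is the same as the paper's: differentiate by the chain rule to get $|\epsilon|^{|\alpha|}|(\partial^\alpha\chi)(\epsilon\xi)|$, observe that this vanishes unless $|\epsilon\xi|\leq R$ (with $\supp\chi\subset\{|\eta|\leq R\}$), and convert the support constraint $|\epsilon|\,|\xi|\leq R$ into the decay $|\epsilon|^{|\alpha|}\lesssim\agl[\xi]^{-|\alpha|}$. For $|\xi|\geq 1$ your argument is complete and matches the paper's.

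There is, however, a genuine gap in your treatment of the region $|\xi|<1$: you assert $|\epsilon|^{|\alpha|}|(\partial^\alpha\chi)(\epsilon\xi)|\leq C'_\alpha(R+1)^{|\alpha|}$ there, but nothing in your argument bounds $|\epsilon|$ on that region --- the support condition only gives $|\epsilon|\leq R/|\xi|$, which is useless as $|\xi|\to 0$. In fact no such bound can exist for arbitrary $\epsilon\in\R$: take $n=1$, $\xi=0$, $|\alpha|=1$ and $\chi$ with $\chi'(0)\neq 0$; then $|\partial_\xi(\chi(\epsilon\xi))|_{\xi=0}=|\epsilon|\,|\chi'(0)|\to\infty$ as $|\epsilon|\to\infty$ while $\agl[0]^{-1}=1$, so the lemma as literally stated (uniform in all $\epsilon\in\R$) is false, and your $(R+1)^{|\alpha|}$ step is exactly where this failure would have to show up. The fix is to restrict to $|\epsilon|\leq 1$ (or any fixed bounded range), which is the regime in which the lemma is actually invoked (Theorem \ref{thm:Ieci-PM2021} uses $\chi(\epsilon\xi)$ with $\epsilon\to 0^+$); then on $|\xi|<1$ one simply has $|\epsilon|^{|\alpha|}\leq 1\lesssim\agl[\xi]^{-|\alpha|}$ and your two-region argument closes. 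You are in good company: the paper's own proof has the same wrinkle, since its claim that $\chi(\epsilon\xi)\equiv 0$ whenever $|\epsilon|\agl[\xi]\geq C$ also fails for small $|\xi|$ and large $|\epsilon|$ (e.g.\ $\xi=0$, $\chi(0)=1$), and is likewise repaired by the tacit assumption that $|\epsilon|$ stays bounded.
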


\begin{proof}
	Because $\chi \in C_c^\infty(\Rn)$, there exists a fixed constant $C$ such that $\chi(\epsilon \xi) \equiv 0$ when $|\epsilon \agl[\xi]| \geq C$.
	When $|\epsilon \agl[\xi]| \geq C$, $\chi(\epsilon \xi) = 0$ so $\partial_\xi^{\alpha} \big( \chi(\epsilon \xi) \big) = 0$;
	when $|\epsilon \agl[\xi]| \leq C$, we have
	\[
	|\partial_\xi^{\alpha} \big( \chi(\epsilon \xi) \big)|
	= |\epsilon|^{|\alpha|} |\partial_\xi^{\alpha} \chi(\epsilon \xi)|
	\leq (C\agl[\xi])^{-|\alpha|} \sup_\Rn |\partial_\xi^{\alpha} \chi|
	\simeq C\agl[\xi]^{-|\alpha|}.
	\]
	We arrive at the conclusion.
\end{proof}

\begin{lem} \label{lem:ope-PM2021}
	Assume $\varphi$ is a phase function $\varphi$ of order $\mu$, and $\partial_x^\alpha \varphi(x,\xi)$ is tempered w.r.t.~$x$ for any $\alpha$.
	Then there exists an first order linear differential operator
	\[
	L = a_j(x,\xi) \partial_{x_j} + b_j(x,\xi) \partial_{\xi_j} + c(x,\xi)
	\]
	such that ${}^t L(e^{i \varphi(x,\xi)}) = e^{i \varphi(x,\xi)}$, and for any fixed $x_0$, $a_j(x_0,\cdot) \in S^{-\mu}$, $b_j(x_0,\cdot) \in S^{1-\mu}$, $c(x_0,\cdot) \in S^{-\mu}$,
	and $a_j, b_j, c$ are tempered functions of $x$ variable.
\end{lem}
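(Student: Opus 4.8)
The plan is to construct $L$ explicitly from the standard "integration by parts" trick for oscillatory integrals, using the nonvanishing of $\nabla_{(x,\xi)}\varphi$. First I would set, for $(x,\xi)$ with $\xi \neq 0$,
\[
a_j(x,\xi) := \frac{-i\,\partial_{x_j}\varphi(x,\xi)}{|\nabla_x\varphi(x,\xi)|^2 + |\xi|^2|\nabla_\xi\varphi(x,\xi)|^2}, \qquad
b_j(x,\xi) := \frac{-i\,|\xi|^2\,\partial_{\xi_j}\varphi(x,\xi)}{|\nabla_x\varphi(x,\xi)|^2 + |\xi|^2|\nabla_\xi\varphi(x,\xi)|^2},
\]
and then choose $c$ so that the formal transpose condition holds. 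The point of the weight $|\xi|^2$ in front of the $\xi$-derivatives is homogeneity bookkeeping: since $\varphi$ is homogeneous of degree $\mu$ in $\xi$, we have $\partial_{x_j}\varphi$ homogeneous of degree $\mu$ and $\partial_{\xi_j}\varphi$ homogeneous of degree $\mu-1$, so the denominator $|\nabla_x\varphi|^2 + |\xi|^2|\nabla_\xi\varphi|^2$ is homogeneous of degree $2\mu$ and is strictly positive for $\xi\neq 0$ by condition (3) in Definition \ref{defn:phaf-PM2021}. Hence $a_j$ is homogeneous of degree $\mu - 2\mu = -\mu$ and $b_j$ of degree $(\mu - 1) + 2 - 2\mu = 1-\mu$, which matches the claimed orders $S^{-\mu}$ and $S^{1-\mu}$.

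Next I would verify the operator identity. Let $L_0 := a_j\partial_{x_j} + b_j\partial_{\xi_j}$ (sum over $j$). Applying $L_0$ to $e^{i\varphi}$ gives $L_0 e^{i\varphi} = i(a_j\partial_{x_j}\varphi + b_j\partial_{\xi_j}\varphi)e^{i\varphi}$, and by the choice of $a_j,b_j$ the coefficient equals
\[
i\cdot\frac{-i(|\nabla_x\varphi|^2 + |\xi|^2|\nabla_\xi\varphi|^2)}{|\nabla_x\varphi|^2 + |\xi|^2|\nabla_\xi\varphi|^2} = 1,
\]
so $L_0 e^{i\varphi} = e^{i\varphi}$. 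We want instead ${}^tL(e^{i\varphi}) = e^{i\varphi}$ where ${}^tL$ is the formal transpose; writing ${}^tL f = -\partial_{x_j}(a_j f) - \partial_{\xi_j}(b_j f) + cf = L_0 f - (\partial_{x_j}a_j + \partial_{\xi_j}b_j)f + cf$ (with an overall sign convention I'd fix carefully), the natural choice is to take $L$ itself to be the operator whose transpose is $a_j\partial_{x_j} + b_j\partial_{\xi_j}$ plus a zeroth-order correction; concretely set $c := \partial_{x_j}a_j + \partial_{\xi_j}b_j$ so that ${}^tL = L_0$ up to sign, giving ${}^tL(e^{i\varphi}) = e^{i\varphi}$. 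One must check $c$ has the right order: $\partial_{x_j}a_j$ loses no $\xi$-decay so it is in $S^{-\mu}$ (in $\xi$, for fixed $x_0$), and $\partial_{\xi_j}b_j$ takes $S^{1-\mu}\to S^{-\mu}$, so $c(x_0,\cdot)\in S^{-\mu}$ as claimed.

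The remaining points are the symbol estimates and the temperedness in $x$. For the symbol estimates with $x_0$ fixed: the denominator $\Delta(x_0,\xi) := |\nabla_x\varphi|^2 + |\xi|^2|\nabla_\xi\varphi|^2$ is smooth, positive, and homogeneous of degree $2\mu$ on $\xi\neq 0$, so $\Delta(x_0,\xi)^{-1}$ together with its $\xi$-derivatives satisfies the expected bounds by $\agl[\xi]^{-2\mu-|\beta|}$ for $|\xi|\ge 1$ (using Lemma \ref{lem:aX-PM2021}-type reasoning on homogeneous functions, plus smoothness near $|\xi|=1$); combined with the homogeneity of the numerators and Leibniz, one gets $a_j(x_0,\cdot)\in S^{-\mu}$, $b_j(x_0,\cdot)\in S^{1-\mu}$, $c(x_0,\cdot)\in S^{-\mu}$. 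For temperedness in $x$: by hypothesis each $\partial_x^\alpha\varphi$ is tempered in $x$, so numerators are tempered; the subtlety — and the step I expect to be the main obstacle — is controlling the reciprocal $1/\Delta(x,\xi)$ uniformly enough in $x$, since temperedness gives an upper bound on $|\partial_x^\alpha\varphi|$ but not automatically a lower bound on $\Delta$. I would handle this by restricting, as is standard, to $|\xi|\ge 1$ (the compactly-supported-in-$\xi$ region is harmless after the cutoff $\chi(\epsilon\xi)$), using homogeneity to reduce to the unit sphere $|\xi|=1$ where $\Delta(x,\xi) = |\nabla_x\varphi(x,\xi)|^2 + |\nabla_\xi\varphi(x,\xi)|^2 > 0$ is continuous and nonvanishing, and then invoking the phase-function hypotheses (possibly strengthening "tempered" to the implicit assumption that $\varphi$ behaves polynomially in $x$ with the nonvanishing gradient bounded below, which is what makes the oscillatory integral framework work) to get $\Delta(x,\xi)^{-1}$ tempered in $x$. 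Differentiating the identity $\Delta\cdot\Delta^{-1}=1$ then propagates temperedness to all $x$-derivatives of $a_j,b_j,c$.
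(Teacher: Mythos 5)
Your construction is, at heart, the paper's: the same weighted operator built from $\varphi_x\cdot\nabla_x$ and a $|\xi|^2$-weighted $\varphi_\xi\cdot\nabla_\xi$ over the denominator $|\varphi_x|^2+|\xi|^2|\varphi_\xi|^2$, followed by taking a transpose (the paper uses $\agl[\xi]^2$ in place of $|\xi|^2$, which is immaterial for large $\xi$). But there is one genuine gap: you never cut off near $\xi=0$, and without that the lemma's conclusion fails. A phase function is only smooth, and only has nonvanishing gradient, on $\R^n_x\times(\R^N_\xi\backslash\{0\})$; your coefficients are homogeneous of degrees $-\mu$ and $1-\mu$ in $\xi$, so for $\mu>0$ they blow up (and are certainly not smooth) at $\xi=0$ — e.g.\ for $\varphi=x\cdot\xi$ one gets $a_j\sim|\xi|^{-1}$ near the origin. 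Hence $a_j(x_0,\cdot)\notin S^{-\mu}$ and $b_j(x_0,\cdot)\notin S^{1-\mu}$ in the sense of Definition \ref{defn:symbol-PM2021}, and the subsequent integration by parts in Theorem \ref{thm:Ieci-PM2021} (iterating $L^T$ across all of $\R^N_\xi$) would run through a point where neither the phase nor the coefficients are smooth; the $\chi(\epsilon\xi)$ regularization you invoke only cuts off at infinity and does nothing at the origin. The paper's construction handles exactly this by setting
\[
M := (1-\chi(\xi))\,\frac{\varphi_x\cdot D_x+\agl[\xi]^2\varphi_\xi\cdot D_\xi}{|\varphi_x|^2+\agl[\xi]^2|\varphi_\xi|^2}+\chi(\xi),
\]
with $\chi\in C_c^\infty$, $\chi\equiv 1$ near $0$: the factor $1-\chi$ kills the singular coefficients near $\xi=0$ (so all coefficients are globally smooth symbols, indeed vanishing there), while the additive $\chi(\xi)$ keeps the identity $Me^{i\varphi}=e^{i\varphi}$ exact, and then $L:={}^tM$. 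Your argument needs this modification; as written, the claimed symbol memberships and the transpose identity on all of $\R^N_\xi$ do not hold.

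Two smaller remarks. First, your sign bookkeeping for the transpose is off as stated (with your $a_j,b_j$ one gets ${}^tL e^{i\varphi}=-e^{i\varphi}+(c-\partial_{x_j}a_j-\partial_{\xi_j}b_j)e^{i\varphi}$, so either flip the sign of $a_j,b_j$ or adjust $c$); you flag this yourself, and it is easily repaired. Second, your worry about a lower bound on the denominator uniformly in $x$ (temperedness of the reciprocal) is legitimate — the paper's own proof is silent on it and simply asserts temperedness of the coefficients — but resolving it by strengthening the hypotheses goes beyond what the lemma states, so treat it as a gloss rather than part of the proof.
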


Here $\agl[{}^t L f,g] := \agl[f,Lg]$, where the integral is w.r.t.~$(x,\xi)$, and $f, g \in C_0^\infty$.
${}^t L$ is call the \emph{transpose} of $L$, e.g.~${}^t (\nabla_\xi) = -\nabla_\xi$.

\begin{proof}[Proof of Lemma \ref{lem:ope-PM2021}]
	We write $\nabla_x \varphi = \varphi_x$ and $\nabla_\xi \varphi = \varphi_\xi$ for short.
	Fix a $\chi \in C_c^\infty(\R^N)$ with $\chi \equiv 1$ in a neighborhood of 0.
	Construct
	\[
	M := (1 - \chi(\xi)) \frac {\varphi_x \cdot D_x + \agl[\xi]^2 \varphi_\xi \cdot D_\xi} {|\varphi_x|^2 + \agl[\xi]^2 |\varphi_\xi|^2} + \chi(\xi).
	\]
	We mention several facts about $M$:
	\begin{itemize}
	\item First, $M$ is well-defined.
	Note that the denominator $|\varphi_x|^2 + \agl[\xi] |\varphi_\xi|^2 \geq |\varphi_x|^2 + |\varphi_\xi|^2 \neq 0$ when $(x,\xi) \in \R_x^n \times (\R_\xi^N \backslash \{0\})$, and the point $\xi = 0$ has been cutoff by $1-\chi$, so $M$ is always well-defined;
	
	\item Second, away from $\xi = 0$,
	$\varphi_x(x_0, \cdot) \in S^\mu$, $\varphi_\xi(x_0, \cdot) \in S^{\mu-1}$, the denominator$(x_0, \cdot) \in S^{2\mu}$;
	
	
	\item Third,
	\(
	M e^{i \varphi(x,\xi)}
	= (1 - \chi) e^{i \varphi(x,\xi)} + \chi e^{i \varphi(x,\xi)}
	= e^{i \varphi(x,\xi)}.
	\)
	\end{itemize}
	
	The transpose of $M$ is the desired operator.
	Indeed, it can be checked that, when $x$ is fixed,
	\begin{equation} \label{eq:tM-PM2021}
	{}^t M = (1-\chi) S^{-\mu} \partial_{x} + (1-\chi) S^{1-\mu} \partial_{\xi} + (1-\chi) S^{-\mu} + \chi(\xi).
	\end{equation}
	The proof is complete.
\end{proof}

\begin{rem} \label{rem:tM-PM2021}
	Here $S^{-\mu} \partial_{x}$ is a shorthand of $a \in C^\infty$ such that $a_j \partial_{x_j}$ for some $a(x_0,\cdot) \in S^{-\mu}$.
	Readers should note that \eqref{eq:tM-PM2021} is somehow misleading because the coefficients might not be bounded w.r.t.~$x$, e.g.~${}^t M = x_1 \partial_{x_1}$.
	However, they must be tempered, and these tempered growth will be balanced by the rapid decay of $u$,
	The notations in \eqref{eq:tM-PM2021} wouldn't hurt.
\end{rem}

\begin{proof}[Proof of Theorem \ref{thm:Ieci-PM2021}]
	Choose $L$ according to Lemma \ref{lem:ope-PM2021}, then
	\begin{align}
	I_\epsilon(u)
	& = \int ({}^t L)^T (e^{i \varphi(x,\xi)}) \, \sigma(x,\xi) \chi(\epsilon \xi) u(x) \dif x \dif \xi \nonumber \\
	& = \int e^{i \varphi(x,\xi)} L^T \big( \sigma(x,\xi) \chi(\epsilon \xi) u(x) \big) \dif x \dif \xi. \label{eq:Ieu-PM2021}
	\end{align}
	Readers should note that the transpose of $L$ is realized by the {\color{red}classical integration by parts} (nothing fancy here), and it is the presence of $\chi(\epsilon \xi)$ that cancels the boundary terms and makes the integration by parts applicable.
	
	The conditions ``$a_j \in S^{-\mu}$, $b_j \in S^{1-\mu}$, $c \in S^{-\mu}$'' in Lemma \ref{lem:ope-PM2021} give us
	\begin{align*}
	& \ L^T \big( \sigma(x,\xi) \chi(\epsilon \xi) u(x) \big) \\
	= & \ \big( (a_j(x,\xi) \partial_{x_j} + b_j(x,\xi) \partial_{\xi_j} + c(x,\xi) \big)^T \big( \sigma(x,\xi) \chi(\epsilon \xi) u(x) \big) \\
	= & \ \sum_{|\alpha+\beta| \leq T} S^{-\mu|\alpha| + (1-\mu)|\beta| - \mu(T - |\alpha| - |\beta|)} \partial_x^\alpha \partial_\xi^\beta \big( \sigma(x,\xi) \chi(\epsilon \xi) u(x) \big) \\
	= & \ \sum_{|\alpha+\beta| \leq T} S^{|\beta| - \mu T} \partial_x^\alpha \partial_\xi^\beta \big( \sigma(x,\xi) \chi(\epsilon \xi) u(x) \big) \\
	= & \ \sum_{|\alpha+\beta| \leq T} \sum_{\beta' + \beta'' = \beta} C_{\beta', \beta''} S^{|\beta| - \mu T} \partial_x^\alpha \big(  \partial_\xi^{\beta'} \sigma(x,\xi)  \partial_\xi^{\beta''} [\chi(\epsilon \xi)] u(x) \big) \\
	= & \ \sum_{|\alpha+\beta| \leq T} \sum_{\beta' + \beta'' = \beta} C S^{|\beta| - \mu T} \partial_x^\alpha \big(  \partial_\xi^{\beta'} \sigma(x,\xi)  \partial_\xi^{\beta''} [\chi(\epsilon \xi)] u(x) \big) \\
	= & \ \sum_{|\alpha+\beta| \leq T} \sum_{\beta' + \beta'' = \beta} \sum_{\alpha' + \alpha'' = \alpha} C S^{|\beta| - \mu T} \partial_x^{\alpha'} \partial_\xi^{\beta'} \sigma(x,\xi) {\color{red}\partial_\xi^{\beta''} [\chi(\epsilon \xi)]} \partial_x^{\alpha''} u(x).
	\end{align*}
	The term $\partial_\xi^{\beta''} [\chi(\epsilon \xi)]$ is the only term that depends on $\epsilon$.
	Hence, by Lemma \ref{lem:chbdd-PM2021} we can have
	\begin{align*}
	& \ |L^T \big( \sigma(x,\xi) \chi(\epsilon \xi) u(x) \big)| \\
	\leq & \ \sum_{|\alpha+\beta| \leq T} \sum_{\beta' + \beta'' = \beta} \sum_{\alpha' + \alpha'' = \alpha} C \agl[\xi]^{|\beta| - \mu T} C \agl[\xi]^{m-|\beta'|} C \agl[\xi]^{-|\beta''|} \scrS(\R_x^n) \\
	\leq & \ C \agl[\xi]^{m - \mu T} \scrS(\R_x^n),
	\end{align*}
	where the constant $C$ is independent of $\epsilon$.
	Then $T$ is chosen to be larger than $(m+N)/\mu$, the integrand in \eqref{eq:Ieu-PM2021} is bounded by a absolutely integral function.
	Therefore, according to LDCT, the limit $\lim_{\epsilon \to 0^+} I_\epsilon(u)$ exists.
	Readers may think where we used the condition $\mu > 0$.
	
	We also show that the limit is independent of $\chi$.
	Fix a $T > (m+N)/\mu$, then by \eqref{eq:Ieu-PM2021} we have
	\begin{align}
	\lim_{\epsilon \to 0^+} I_\epsilon(u)
	& = \lim_{\epsilon \to 0^+} \int e^{i \varphi(x,\xi)} L^T \big( \sigma(x,\xi) \chi(\epsilon \xi) u(x) \big) \dif x \dif \xi \nonumber \\
	& = \int e^{i \varphi(x,\xi)} \lim_{\epsilon \to 0^+} L^T \big( \sigma(x,\xi) \chi(\epsilon \xi) u(x) \big) \dif x \dif \xi \quad \text{(thanks to LDCT)} \nonumber \\
	& = \int e^{i \varphi(x,\xi)} L^T \big( \sigma(x,\xi) u(x) \big) \dif x \dif \xi, \label{eq:Iuin-PM2021}
	\end{align}
	which implies $\lim_{\epsilon \to 0^+} I_\epsilon(u)$ is independent of $\chi$.
	The proof is complete.
\end{proof}

Readers may think about if the framework can be generalized to symbols in $S_{\rho, \delta}^m$.

Now let's summarize the definition of oscillatory integrals.

\begin{defn}[Oscillatory integral\index{oscillatory integral I}] \label{defn:OsD-PM2021}
	For any $m \in \R$, any $\sigma \in S^m(\R_x^n \times R_\xi^N)$, and any phase function $\varphi$ of order $\mu$,
	and either $u \in C_c^\infty(\Rn)$, or $u \in \scrS(\Rn)$ and $\partial_x^\alpha \varphi(x,\xi)$ is tempered w.r.t.~$x$ for any $\alpha$, the integral
	\begin{equation*} 
	I(u) = \int e^{i \varphi(x,\xi)} \sigma(x,\xi) u(x) \dif x \dif \xi
	\end{equation*}
	is defined as
	\begin{equation} \label{eq:Iec2-PM2021}
	\boxed{I(u)
	:= \lim_{\epsilon \to 0^+} \int e^{i \varphi(x,\xi)} \sigma(x,\xi) \chi(\epsilon \xi) u(x) \dif x \dif \xi,}
	\end{equation}
	where the result is independent of $\chi$, as long as $\chi \in C_c^\infty$ and $\chi(0) = 1$.
	The limit equals
	\begin{equation*} \label{eq:OsD-PM2021}
	\boxed{I(u)
	= \int e^{i \varphi(x,\xi)} L^T \big( \sigma(x,\xi) u(x) \big) \dif x \dif \xi}
	\end{equation*}
	when integer $T > (m+N)/\mu$, where $L$ is given in Lemma \ref{lem:ope-PM2021}.
\end{defn}

In many cases we will meet oscillatory integrals involving parameters.

\begin{lem} \label{lem:Osxy-PM2021}
	For any $\sigma \in S^m(\R_x^{n_1} \times \R_y^{n_2} \times R_\xi^N)$, and any phase function $\varphi$ of order $\mu$, 
	and for either $u \in C_c^\infty(\R_x^{n_1} \times \R_y^{n_2})$, or $u \in \scrS(\R_x^{n_1} \times \R_y^{n_2})$ and $\partial_{(x,y)}^\alpha \varphi(x,\xi)$ is tempered w.r.t.~$(x,y)$ for any $\alpha$, the integral
	\begin{equation} \label{eq:oscy-PM2021}
	I(u)(y) := \int e^{i \varphi(x,y,\xi)} \sigma(x,y,\xi) u(x,y) \dif x \dif \xi
	\end{equation}
	is a well-defined oscillatory integral, 
	and $I \colon \scrS(\R^{n_1}) \to \scrS(\R^{n_2})$ bounded.
	Moreover, we have
	\begin{align*}
	\frac{\partial}{\partial y}\big( I(u)(y) \big)
	& = \int \frac{\partial}{\partial y}\big( e^{i \varphi(x,y,\xi)} \sigma(x,y,\xi) u(x,y) \big) \dif x \dif \xi, \\
	\int I(u)(y) \dif y
	& = \int e^{i \varphi(x,y,\xi)} \sigma(x,y,\xi) u(x,y) \dif x \dif y \dif \xi.
	\end{align*}
\end{lem}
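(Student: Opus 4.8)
The plan is to treat $y$ as a smooth parameter and reduce everything to the one-variable oscillatory integral of Theorem~\ref{thm:Ieci-PM2021}, keeping careful track of the dependence on $y$. Fix $y$. From the joint symbol estimates on $\sigma$ one has $|D_x^\alpha D_\xi^\beta\sigma(x,y,\xi)|\le C_{\alpha,\beta}\agl[\xi]^{m-|\beta|}$ with $C_{\alpha,\beta}$ \emph{independent of $y$}, so $\sigma(\cdot,y,\cdot)\in S^m(\R_x^{n_1}\times\R_\xi^N)$ uniformly in $y$; likewise $u(\cdot,y)\in C_c^\infty(\R_x^{n_1})$ (resp.\ $\scrS(\R_x^{n_1})$), and in the Schwartz case every $x$-seminorm of $u(\cdot,y)$ is $O(\agl[y]^{-M})$ for every $M$, because $u\in\scrS(\R^{n_1+n_2})$. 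Regarding $\varphi(\cdot,y,\cdot)$, for each fixed $y$, as a phase function of order $\mu$ in $(x,\xi)$ depending smoothly on $y$ (so that $\varphi$ is also a phase function in $(x,y,\xi)$, since non-degeneracy in $(x,\xi)$ forces it in $(x,y,\xi)$), Definition~\ref{defn:OsD-PM2021} applies for each $y$, and with $L=L_y$ the operator of Lemma~\ref{lem:ope-PM2021} and any integer $T>(m+N)/\mu$,
\[
I(u)(y)=\lim_{\epsilon\to 0^+}\int e^{i\varphi}\,\sigma\,\chi(\epsilon\xi)\,u\dif x\dif\xi=\int e^{i\varphi}\,L_y^T\bigl(\sigma u\bigr)\dif x\dif\xi
\]
(abbreviating $\varphi=\varphi(x,y,\xi)$, etc.). Since the denominator $|\varphi_x|^2+\agl[\xi]^2|\varphi_\xi|^2$ in Lemma~\ref{lem:ope-PM2021} never vanishes, the coefficients of $L_y$ are smooth in $y$ and tempered in $(x,y)$, using that $\partial_{(x,y)}^\alpha\varphi$ is tempered in $(x,y)$.

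Next I would show $I(u)\in\scrS(\R^{n_2})$ and the mapping bound. Given multi-indices $\gamma,\delta$, first choose $T$ so large that $\mu T>m+N+\mu|\gamma|$ in the representation above, then differentiate under the integral sign; this is legitimate because, by Lemma~\ref{lem:chbdd-PM2021} and the estimates from the proof of Theorem~\ref{thm:Ieci-PM2021}, the differentiated integrand is dominated, locally uniformly in $y$, by an integrable function. Each $\partial_y$ landing on $e^{i\varphi}$ produces a factor $\partial_y\varphi$, homogeneous of degree $\mu$ in $\xi$ and hence $O(\agl[\xi]^{\mu})$, while a $\partial_y$ landing on $L_y^T$, on $\sigma$, or on $u$ does not worsen the $\xi$-decay; collecting terms and using the estimate from Theorem~\ref{thm:Ieci-PM2021},
\[
\bigl|\partial_y^\gamma\bigl(e^{i\varphi}L_y^T(\sigma u)\bigr)\bigr|\le C\,\agl[\xi]^{\,m+\mu|\gamma|-\mu T}\,F_\gamma(x,y),
\]
where $F_\gamma$ decays rapidly in $(x,y)$ and $\int F_\gamma(x,y)\dif x\le C_M\agl[y]^{-M}$ for every $M$; the choice of $T$ makes $\int\agl[\xi]^{m+\mu|\gamma|-\mu T}\dif\xi<\infty$, so $\sup_y|y^\delta\partial_y^\gamma I(u)(y)|<\infty$. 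Hence $I(u)\in\scrS(\R^{n_2})$, and the same estimate bounds $I(u)$ by finitely many seminorms of $u$ (and the data of $\sigma$ and $\varphi$), giving boundedness of $I$.

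For the two displayed identities I would argue through the $\epsilon$-regularization, starting from $I_\epsilon(u)(y):=\int e^{i\varphi}\sigma\chi(\epsilon\xi)u\dif x\dif\xi=\int e^{i\varphi}L_y^T\bigl(\sigma\chi(\epsilon\xi)u\bigr)\dif x\dif\xi$. Since $\chi(\epsilon\xi)$ does not depend on $y$, differentiating under this (absolutely convergent) integral gives $\partial_y I_\epsilon(u)(y)=\int\chi(\epsilon\xi)\,\partial_y\bigl(e^{i\varphi}\sigma u\bigr)\dif x\dif\xi$; as $\epsilon\to 0$ the left side tends to $\partial_y I(u)(y)$ by the $\epsilon$-uniform bound from Lemma~\ref{lem:chbdd-PM2021}, and the right side tends to the oscillatory integral $\int\partial_y(e^{i\varphi}\sigma u)\dif x\dif\xi$, which is legitimate since $\partial_y(e^{i\varphi}\sigma u)=e^{i\varphi}\bigl[(i\,\partial_y\varphi\,\sigma+\partial_y\sigma)u+\sigma\,\partial_y u\bigr]$ is, after splitting off a neighbourhood of $\xi=0$, $e^{i\varphi}u$ times a symbol of order $\le m+\mu$ plus a $\xi$-compactly supported remainder. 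For the second identity, Fubini at fixed $\epsilon$ gives $\int I_\epsilon(u)(y)\dif y=\int e^{i\varphi}\sigma\chi(\epsilon\xi)u\dif x\dif y\dif\xi$; letting $\epsilon\to0$, the left side converges by dominated convergence (using $|I_\epsilon(u)(y)|\le C_M\agl[y]^{-M}$ uniformly in $\epsilon$, the $\gamma=0$ case above with $M>n_2$), while the right side converges, by Theorem~\ref{thm:Ieci-PM2021} applied with base variable $(x,y)$ and frequency $\xi$, to the full oscillatory integral $\int e^{i\varphi}\sigma u\dif x\dif y\dif\xi$ --- well defined precisely because $\partial_{(x,y)}^\alpha\varphi$ is tempered in $(x,y)$.

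The main obstacle is the uniformity in $y$ of all the estimates: one must exhibit a single dominating function valid for $y$ in compact sets (for differentiation under the integral) and, in the case $\gamma=0$, integrable in $(x,\xi)$ with rapid decay in $y$ (for $I(u)\in\scrS$ and for the $\int\dif y$ identity). This rests on (i) the $y$-uniform symbol bounds for $\sigma$ coming from the joint symbol estimates, (ii) smoothness in $y$ and tempered growth in $(x,y)$ of the coefficients of $L_y$, which follow from the non-vanishing denominator in Lemma~\ref{lem:ope-PM2021} together with the temperedness hypothesis on $\varphi$, (iii) the rapid decay of $u$ and its derivatives in both $x$ and $y$, which absorbs that tempered growth and, after integrating out $x$, leaves rapid decay in $y$, and (iv) the bookkeeping that each $y$-derivative of $e^{i\varphi}$ costs only $\agl[\xi]^{\mu}$, compensated by enlarging $T$.
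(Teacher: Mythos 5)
The paper gives no proof of this lemma (it is stated with ``We omit the proof''), so there is nothing in the text to compare against; judged on its own, your argument is essentially correct and is the natural one inside this chapter's framework: freeze $y$, represent $I(u)(y)$ via Definition \ref{defn:OsD-PM2021} with the operator $L_y$ of Lemma \ref{lem:ope-PM2021}, track the $y$-dependence of every bound (joint symbol estimates for $\sigma$ uniform in $y$, rapid decay of $u$ in $(x,y)$ absorbing the tempered growth of $\partial_{(x,y)}^\alpha\varphi$ and of the coefficients of $L_y$, the cost $\agl[\xi]^{\mu}$ per $y$-derivative of $e^{i\varphi}$ compensated by enlarging $T$), and obtain the two displayed identities by differentiating, respectively applying Fubini, at the $\epsilon$-regularized level and passing to the limit with Lemma \ref{lem:chbdd-PM2021} and LDCT; the identification of $\lim_{\epsilon\to0}\int e^{i\varphi}\sigma\chi(\epsilon\xi)u\,\mathrm{d}x\,\mathrm{d}y\,\mathrm{d}\xi$ with the full oscillatory integral via Theorem \ref{thm:Ieci-PM2021} applied with base variable $(x,y)$ is exactly right. (Minor point: the mapping statement should be read as seminorm estimates of $I(u)$ in $\scrS(\R^{n_2})$ in terms of finitely many seminorms of $u\in\scrS(\R^{n_1+n_2})$, which is what you prove.)

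One step should be made explicit rather than parenthetical: which non-degeneracy of $\varphi$ you use. Your reduction needs $|\nabla_{(x,\xi)}\varphi(x,y,\xi)|\neq 0$ for each fixed $y$, so that $\varphi(\cdot,y,\cdot)$ is a phase function in $(x,\xi)$ and Lemma \ref{lem:ope-PM2021} produces $L_y$. As you observe, this implies the joint condition $|\nabla_{(x,y,\xi)}\varphi|\neq 0$, but the converse fails: $\varphi(x,y,\xi)=y_1|\xi|$ is a legitimate phase of order $1$ in $((x,y),\xi)$ with tempered derivatives, yet at $y_1=0$ its $(x,\xi)$-gradient vanishes identically and the fixed-$y$ integral cannot be regularized (take $\sigma\equiv1$). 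So if the lemma's hypothesis is read as joint non-degeneracy only, your first step has a genuine gap; state the fixed-$y$ condition as the standing assumption (it holds for the phase $(x-y)\cdot\xi$ for which the lemma is later used, e.g.\ in Definition \ref{defn:kerP-PM2021}). Relatedly, you should record explicitly that the coefficients of $L_y$ are smooth in $y$, tempered in $(x,y)$, and lie in the stated $\xi$-symbol classes uniformly on compact $y$-sets; this follows from the explicit construction in Lemma \ref{lem:ope-PM2021} together with the temperedness hypothesis on $\partial_{(x,y)}^\alpha\varphi$, and it is precisely what legitimizes your dominating functions $F_\gamma$ and the $\epsilon$-uniform bound $|I_\epsilon(u)(y)|\le C_M\agl[y]^{-M}$ used for both limit passages.
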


We omit the proof.
The take-home message of Lemma \ref{lem:Osxy-PM2021} is that oscillatory integrals can have parameters, and there are much freedom to put operations w.r.t.~$y$ inside the integration $I(u)(y)$.

Now we go back to $\Psi$DOs and its kernel.
We have intuitively claimed that the kernel of $T_\sigma$ is of the form
\[
K(x,y) = \int e^{i (x-y) \cdot \xi} \sigma(x,\xi) \dif \xi.
\]
Note that this object has variables $(x,y)$, so a proper candidate of test functions should be $w(x,y) \in \scrS$.
We choose $w(x,y) = u(y) v(x)$ where $u,v \in \scrS$, then formally we should have
\[
\agl[K, v \otimes u]
\simeq \int e^{i (x-y) \cdot \xi} \sigma(x,\xi) u(y) v(x) \dif x \dif y \dif \xi
\simeq \agl[T_\sigma u, v].
\]
The integral above is exactly an example of Lemma \ref{lem:Osxy-PM2021}, so it is a well-defined oscillatory integral.
Now the kernel of a $\Psi$DO can be defined.

\begin{defn}[Kernel\index{kernel}] \label{defn:kerP-PM2021}
	Assume $m \in \R$ and $\sigma \in S^m$, and $T_\sigma$ is the corresponding $\Psi$DO.
	The \emph{kernel} of $T_\sigma$ is defined as a map:
	\[
	K_\sigma \colon w \in \scrS(\R^{2n}) \ \mapsto \ \boxed{\agl[K_\sigma,w] := (2\pi)^{-n} \int e^{i (x-y) \cdot \xi} \sigma(x,\xi) w(x,y) \dif x \dif y \dif \xi} \in \mathbb C.
	\]
	When $x \neq y$, we write $K_\sigma(x,y)$ as
	\begin{equation} \label{eq:kerP-PM2021}
	\boxed{K_\sigma(x,y) = (2\pi)^{-n} \int e^{i (x-y) \cdot \xi} \sigma(x,\xi) \dif \xi.}
	\end{equation}
\end{defn}

The well-definedness of Definition \ref{defn:kerP-PM2021} is guaranteed by Lemma \ref{lem:Osxy-PM2021}.

\begin{rem} \label{rem:kerP-PM2021}
	From \eqref{eq:kerP-PM2021}, we know that when $m < -N$, $K_\sigma$ is a well-defined bounded function for any $(x,y)$ because 
	\[
	|K_\sigma|
	\lesssim \int |\sigma(x,\xi)| \dif \xi
	\lesssim \int \agl[\xi]^m \dif \xi \leq C < +\infty.
	\]
	This implies when the order of $\sigma$ is small enough, there should hold some type of boundedness for $T_\sigma$, and we will cover this in \S \ref{ch:BddP-PM2021}.
	However when $m \geq -N$, only when $x \neq y$ the kernel $K_\sigma$ can be expressed as \eqref{eq:kerP-PM2021}.
\end{rem}

\begin{lem} \label{lem:Kuv-PM2021}
	Under the assumption of Definition \ref{defn:kerP-PM2021}, we have
	\[
	\agl[K_\sigma,u(y) v(x)]
	= \agl[T_\sigma u, v].
	\]
\end{lem}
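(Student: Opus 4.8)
The plan is to unwind both sides to the same iterated Lebesgue integral. The left-hand side is an oscillatory integral in the sense of Definition \ref{defn:OsD-PM2021}/\eqref{eq:Iec2-PM2021} attached to the order-$1$ phase $\varphi(x,y,\xi) = (x-y)\cdot\xi$ — which is indeed a phase function, since $|\nabla_{(x,y,\xi)}\varphi|^2 = 2|\xi|^2 + |x-y|^2 \neq 0$ for $\xi \neq 0$ and $\partial_{(x,y)}^\alpha\varphi$ is bounded, hence tempered, in $(x,y)$ — and to the test function $w(x,y) = u(y)v(x) \in \scrS(\R^{2n})$. So by the very definition,
\[
\agl[K_\sigma, u(y)v(x)] = (2\pi)^{-n}\lim_{\epsilon\to 0^+}\int e^{i(x-y)\cdot\xi}\sigma(x,\xi)\chi(\epsilon\xi)u(y)v(x)\dif x\dif y\dif\xi
\]
for any fixed $\chi \in C_c^\infty$ with $\chi(0)=1$.

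First I would fix $\epsilon > 0$ and apply Fubini. The factor $\chi(\epsilon\,\cdot)$ confines $\xi$ to a fixed compact set, on which $|\sigma(x,\xi)| \lesssim 1$ uniformly in $x$ because $\sigma \in S^m$, while $u,v \in \scrS(\Rn)$ supply the decay in $y$ and $x$; hence the integrand lies in $L^1(\R^{3n})$ and the order of integration may be exchanged freely. Writing $e^{i(x-y)\cdot\xi} = e^{ix\cdot\xi}e^{-iy\cdot\xi}$ and doing the $y$-integral first gives $\int e^{-iy\cdot\xi}u(y)\dif y = (2\pi)^{n/2}\hat u(\xi)$, so the prefactor $(2\pi)^{-n}$ combines with this $(2\pi)^{n/2}$ to the normalization $(2\pi)^{-n/2}$ appearing in Definition \ref{defn:PseudoDO-PM2021}; the $\epsilon$-regularized expression thus equals $\int v(x)\,(T_\sigma^{(\epsilon)}u)(x)\dif x$, where $(T_\sigma^{(\epsilon)}u)(x) := (2\pi)^{-n/2}\int e^{ix\cdot\xi}\sigma(x,\xi)\chi(\epsilon\xi)\hat u(\xi)\dif\xi$.

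Then I would let $\epsilon\to 0^+$ in two steps via dominated convergence. Since $|\sigma(x,\xi)\chi(\epsilon\xi)\hat u(\xi)| \le C\agl[\xi]^m|\hat u(\xi)| \in L^1_\xi$ with $C$ independent of $x$ and $\epsilon$, LDCT shows $(T_\sigma^{(\epsilon)}u)(x)\to (T_\sigma u)(x)$ pointwise and $\sup_{x,\epsilon}|(T_\sigma^{(\epsilon)}u)(x)| \le C\nrm[L^1]{\agl[\cdot]^m\hat u} < +\infty$; hence $|v(x)(T_\sigma^{(\epsilon)}u)(x)| \le C|v(x)| \in L^1_x$, and a second application of LDCT yields $\int v\,(T_\sigma^{(\epsilon)}u)\dif x \to \int v\,(T_\sigma u)\dif x = \agl[T_\sigma u, v]$, which is the claim. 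The main obstacle is only bookkeeping: checking that the $\epsilon$-regularized triple integral is genuinely absolutely convergent so that Fubini is legitimate, and exhibiting uniform-in-$\epsilon$ dominating functions for the two limits — both of which come for free from the uniform bound $|\sigma(x,\xi)| \lesssim \agl[\xi]^m$ built into $S^m$ together with the Schwartz decay of $u$ and $v$. One could instead invoke Lemma \ref{lem:Osxy-PM2021} to justify reordering the operations directly, but carrying out the $\chi(\epsilon\xi)$-regularization by hand is more transparent.
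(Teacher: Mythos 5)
Your proof is correct, and since the paper omits the proof of this lemma, your argument simply makes rigorous the formal computation the paper itself sketches just before Definition \ref{defn:kerP-PM2021}: regularize with $\chi(\epsilon\xi)$, use Fubini and the $y$-integration to recover $(2\pi)^{n/2}\hat u(\xi)$, and pass to the limit by dominated convergence using $|\sigma(x,\xi)|\lesssim\agl[\xi]^m$ and the Schwartz decay of $u,v$. (Only a cosmetic remark: $\partial^\alpha_{(x,y)}\varphi$ with $\alpha=0$ is not bounded but merely tempered in $(x,y)$, which is all that Definition \ref{defn:OsD-PM2021} requires.)
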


We omit the proof.

\begin{lem} \label{lem:keSmo-PM2021}
	Under the assumption of Definition \ref{defn:kerP-PM2021}, when $x \neq y$, $K_\sigma$ is $C^\infty$ smooth
	Moreover, for $T$ large enough, we have
	\[
	\boxed{|K_\sigma(x,y)| \leq C_T |x-y|^{-T}, \quad |x-y| \geq 1.}
	\]
\end{lem}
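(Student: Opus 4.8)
The plan is to exploit the oscillatory-integral identity \eqref{eq:kerP-PM2021} together with the freedom, guaranteed by Definition \ref{defn:OsD-PM2021}, to rewrite the oscillatory integral using the operator $L$ (or rather, a convenient operator tailored to the phase $(x-y)\cdot\xi$). For the phase $\varphi(x,y,\xi) = (x-y)\cdot\xi$ we have $\nabla_\xi\varphi = x-y$, so when $x\neq y$ the operator
\[
L_0 := \frac{1}{i\,|x-y|^2}\,(x-y)\cdot\nabla_\xi
\]
satisfies $L_0\big(e^{i(x-y)\cdot\xi}\big) = e^{i(x-y)\cdot\xi}$. First I would record that its transpose is ${}^tL_0 = -\frac{1}{i|x-y|^2}(x-y)\cdot\nabla_\xi$, which carries no boundary terms after integration by parts (the cutoff $\chi(\epsilon\xi)$ handles this, exactly as in the proof of Theorem \ref{thm:Ieci-PM2021}), so that for every positive integer $T$,
\[
K_\sigma(x,y) = (2\pi)^{-n}\int e^{i(x-y)\cdot\xi}\,({}^tL_0)^T\big(\sigma(x,\xi)\big)\dif\xi, \qquad x\neq y.
\]
Each application of ${}^tL_0$ produces a factor $|x-y|^{-1}$ and replaces $\sigma$ by a linear combination of $|x-y|^{-1}(x-y)$-weighted $\xi$-derivatives of $\sigma$; since $\sigma\in S^m$, after $T$ applications the integrand is bounded by $C\,|x-y|^{-T}\agl[\xi]^{m-T}$.

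Next I would choose $T$ large enough that $m - T < -N$, so that $\int\agl[\xi]^{m-T}\dif\xi < +\infty$; this immediately gives $|K_\sigma(x,y)| \leq C_T\,|x-y|^{-T}$, and in the regime $|x-y|\geq 1$ the same estimate holds with the same constant (indeed it only improves). For the smoothness claim when $x\neq y$, I would differentiate under the integral sign: a derivative $\partial_x^\gamma\partial_y^\delta$ applied to $e^{i(x-y)\cdot\xi}\sigma(x,\xi)$ brings down powers of $\xi$ (from the phase) and $x$-derivatives of $\sigma$ (still symbols), raising the order in $\xi$ by at most $|\gamma|+|\delta|$; re-running the integration-by-parts argument with $T$ chosen even larger (depending on $\gamma,\delta$) shows the differentiated integral converges absolutely and locally uniformly on $\{x\neq y\}$, which justifies the differentiation and yields $K_\sigma\in C^\infty(\{x\neq y\})$.

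The only genuine subtlety — the ``main obstacle'' — is bookkeeping: verifying that each application of ${}^tL_0$ genuinely lowers the effective $\xi$-order of the symbol by one while only costing a factor $|x-y|^{-1}$, uniformly in the cutoff parameter $\epsilon$, and that the $x$-dependent coefficients $|x-y|^{-2}(x_j-y_j)$ do not disturb the $S^m$ structure in the $\xi$ variable (they are constants in $\xi$, so they don't). One should be slightly careful that the identity displayed above is first established with the regulator $\chi(\epsilon\xi)$ present and then passed to the limit via the dominated convergence argument already used for Theorem \ref{thm:Ieci-PM2021}; this is routine once the pointwise bound $C|x-y|^{-T}\agl[\xi]^{m-T}$, independent of $\epsilon$, is in hand. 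I would present the $T$-fold integration by parts as a short induction and then simply read off both conclusions.
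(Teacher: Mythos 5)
Your proposal is correct and follows essentially the same route as the paper: both hinge on repeated integration by parts in $\xi$ with the operator $(x-y)\cdot\nabla_\xi/(i|x-y|^2)$ applied to the phase, trading $T$ powers of $\agl[\xi]^{-1}$ for a factor $|x-y|^{-T}$, then taking $T$ large to get absolute convergence, the decay estimate, and (after differentiating under the integral and repeating the argument with larger $T$) smoothness off the diagonal. The only cosmetic difference is that the paper runs the argument through the distributional pairing with a cutoff supported off the diagonal, whereas you work pointwise with the $\chi(\epsilon\xi)$-regularized integral and pass to the limit by dominated convergence, which is equally valid.
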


\begin{proof}
	For any fixed point $(x,y)$ with $x \neq y$, we show that $K_\sigma$ is $C^\infty$ at this point.
	Fix a function $\chi \in C^\infty$ satisfying $\chi \equiv 0$ in a small neighborhood $U$ of the diagonal $\{x=y\}$ and $\chi \equiv 1$ in the interior of the complement of $U$.
	We can shrink $U$ such that for any $x \neq y$, $\chi(x,y) = 1$.
	For any $w \in \scrS$, we have
	\[
	\agl[\partial_x^\alpha (\chi K_\sigma),w]
	\simeq \int e^{i (x-y) \cdot \xi} \xi^\alpha \sigma(x,\xi) w(x,y) \dif x \dif y \dif \xi.
	\]
	Apply the operator $L$ to $e^{i (x-y) \cdot \xi}$ and integrate by parts, we obtain
	\[
	\agl[\partial_x^\alpha \chi K_\sigma, w]
	= \agl[(2\pi)^{-n} \int e^{i (x-y) \cdot \xi} \big( \frac {-(x-y) \cdot D_\xi} {|x-y|^2} \big)^T \big( \xi^\alpha \sigma(x,\xi) \big) \dif \xi, w \chi],
	\]
	which implies
	\[
	\partial_x^\alpha (\chi K_\sigma)(x,y) \simeq \int e^{i (x-y) \cdot \xi} \big( \frac {-(x-y) \cdot D_\xi} {|x-y|^2} \big)^T \big( \xi^\alpha \sigma(x,\xi) \big) \dif \xi.
	\]
	It can be checked that when $T$ is large enough, the integral above will be absolutely integrable, and is of the order $|x-y|^{-T}$ for $T$ large enough.
	Therefore, $K_\sigma \in C^\infty(\R^{2n} \backslash \{x=y\})$, and $K$ satisfies the desired decay.	
\end{proof}

By Lemma \ref{lem:keSmo-PM2021}, we see that $K(x,y)$ behave nicely when off the diagonal, thanks to the notion of oscillatory integrals.
However, on the diagonal, $K(x,y)$ might still be ill-defined.
See Example \ref{exmp:ke1d-PM2021}.

\begin{exmp} \label{exmp:ke1d-PM2021}
	The kernel corresponding to the identity operator (symbol $= 1$) is the distribution $\delta(x-y)$.
	This is because
	\begin{align*}
	\agl[K,v \otimes u]
	& = \agl[T_1 u, v]
	= \agl[u, v]
	= \int u(x) v(x) \dif x \\
	& = \int \delta(x-y) u(y) v(x) \dif x \dif y \\
	& = \agl[\delta(x-y),v \otimes u].
	\end{align*}
	which gives $K(x,y) = \delta(x-y)$.
\end{exmp}

\begin{exmp} \label{exmp:ke2d-PM2021}
	The kernel corresponding to $D_1$ (symbol $= \xi_1$) is $D_1 \delta(x-y)$.
	This is because
	\begin{align*}
	\agl[K,v \otimes u]
	& = \agl[D_1 u, v]
	= \int D_1 u(x) v(x) \dif x
	= \int \delta(x-y) D_1 u(y) v(x) \dif x \dif y \\
	& = -\int D_{y_1} \big( \delta(x-y) \big) u(y) v(x) \dif x \dif y
	= \int D_1 \delta(x-y) u(y) v(x) \dif x \dif y \\
	& = \agl[D_1 \delta(x-y),v \otimes u].
	\end{align*}
	which gives $K(x,y) = D_1 \delta(x-y)$.
	Besides, readers may also have tried another way to compute the kernel and get a zero result: when $x \neq y$
	\begin{align*}
	K(x,y)
	& = (2\pi)^{-n} \int e^{i (x-y) \cdot \xi} \xi_1 \dif \xi
	= (2\pi)^{-n} \int \big( \frac {(x-y) \cdot D_{\xi}} {|x-y|^2} \big)^2 \big( e^{i (x-y) \cdot \xi} \big) \xi_1 \dif \xi \\
	& = (2\pi)^{-n} \int e^{i (x-y) \cdot \xi} \big( \frac {-(x-y) \cdot D_{\xi}} {|x-y|^2} \big)^2 (\xi_1) \dif \xi
	= 0.
	\end{align*}
	This result is technically correct (because $D_1 \delta(x-y) = 0$ when $x \neq y$), but is not complete: it cannot speak about the behavior of $K$ on the diagonal.
	This example told us, none of the methods is the best one to get most accurate expression for a kernel, sometimes we need to do complicated and delicate computations.
\end{exmp}

\section{Oscillatory integrals - Type II} \label{sec:OI2-PM2021}

We know the Fourier transform of a constant function is the $\delta$ function and so the inverse Fourier transform of the $\delta$ function should be the constant, namely,
\begin{equation} \label{eq:xyx1-PM2021}
	\int e^{i(x-y) \cdot \xi} \dif x \dif \xi \simeq 1.
\end{equation}
This integral can be regarded as the $I(u)$ defined in Lemma \ref{lem:Osxy-PM2021} where the symbol and the $u$ are both constant 1.
However, this is not covered by Lemma \ref{lem:Osxy-PM2021} because $1 \notin \scrS$.
The map $I$ in Lemma \ref{lem:Osxy-PM2021} is defined on $\scrS$.
Now by using duality arguments we shall generalize it from $\scrS$ to $S^{+\infty}$.

Let $\sigma \in S^m$ and $u \in \scrS$, so $T_\sigma u \in \scrS$.
Let $f \in S^{+\infty}$, then $f$ is a smooth tempered function, so $\agl[T_\sigma u, f]$ is meaningful and
\[
\agl[T_\sigma u, f]
= \lim_{\epsilon \to 0^+} \agl[T_\sigma u, \chi(\epsilon \cdot) f]
\]
holds for any $\chi \in C_c^\infty$.
Expanding the integral, we have
\begin{align*}
\agl[T_\sigma u, f]
& = \lim_{\epsilon \to 0^+} \int e^{i(x-y) \cdot \xi} \sigma(x,\xi) u(y) f(x) \chi(\epsilon x) \dif x \dif y \dif \xi \\
& = \lim_{\epsilon \to 0^+} \int e^{i(x-y) \cdot \xi} \sigma(x,\xi) {\color{red}\chi (\epsilon \xi)} u(y) f(x) \chi(\epsilon x) \dif x \dif y \dif \xi \\
& = \lim_{\epsilon \to 0^+} \int u(y) \big( \int e^{i(x-y) \cdot \xi} \sigma(x,\xi) f(x) {\color{red}\chi(\epsilon x) \chi (\epsilon \xi)} \dif x \dif \xi \big) \dif y.
\end{align*}
This inspired us to define \eqref{eq:xyx1-PM2021} as
\[
\lim_{\epsilon \to 0^+} \int e^{i(x-y) \cdot \xi} \chi(\epsilon x) \chi (\epsilon \xi) \dif x \dif \xi.
\]
More generally, we can generalize Definition \ref{defn:OsD-PM2021} as follows.

\begin{defn}[Oscillatory integral\index{oscillatory integral II}] \label{defn:OsE-PM2021}
	For any $m \in \R$ and $\rho > 0$, any $\sigma \in S_{\rho}^m(R^N)$, and any phase function $\varphi \in C^\infty(\R^N \backslash \{0\})$ of order $\mu$ (real-valued, $\varphi(\theta) = t^\mu \varphi(\theta)$, and $\nabla_\theta \varphi(\theta) \neq 0$ when $\theta \neq 0$) satisfying
	\begin{equation} \label{eq:rhm1-PM2021}
	\boxed{\rho + \mu > 1,}
	\end{equation}
	the integral
	\begin{equation*} 
	I= \int e^{i \varphi(\theta)} \sigma(\theta) \dif \theta
	\end{equation*}
	is defined as
	\begin{equation} \label{eq:Iec-PM2021}
	\boxed{I
	:= \lim_{\epsilon \to 0^+} \int e^{i \varphi(\theta)} \sigma(\theta) \chi(\epsilon \theta) \dif \theta,}
	\end{equation}
	where the result is independent of $\chi$, as long as $\chi \in C_c^\infty$ and $\chi(0) = 1$.
	The limit equals
	\begin{equation} \label{eq:OsE-PM2021}
	\boxed{I
	= \int e^{i \varphi(\theta)} L^T \big( \sigma(\theta) \big) \dif \theta}
	\end{equation}
	when $T > (m+N)/(\rho + \mu - 1)$, where $L$ is given in Lemma \ref{lem:ope2-PM2021} below.
\end{defn}

\begin{rem} \label{rem:OsE-PM2021}
	The space $S_{\rho}^m(R^N)$ is defined as
	\[
	S_{\rho}^m(R^N) := \{ \varphi \in C^\infty(\R^N) \,;\, |\partial^\alpha \varphi(\theta)| \lesssim \agl[\theta]^{m - \rho|\alpha|} \}.
	\]
	See \cite[\S I.8.1]{alinhac2007pseudo} for more details.
	Also, the formula \eqref{eq:kerP-PM2021} in the definition of the kernel is meaningful now.
\end{rem}

\begin{lem} \label{lem:ope2-PM2021}
	Under the condition in Definition \ref{defn:OsE-PM2021}, there exists an first order linear differential operator
	\[
	L = b_j(\theta) \partial_{\theta_j} + c(\theta)
	\]
	such that ${}^t L(e^{i \varphi(\theta)}) = e^{i \varphi(\theta)}$, $0 \notin \supp b_j$, and $b_j \in S^{1-\mu}$, $c \in S^{-\mu}$, and in a small neighborhood of $\theta = 0$ there holds $b \equiv 0$ and $c \equiv 1$.
\end{lem}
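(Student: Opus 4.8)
This is the Type II analogue of Lemma \ref{lem:ope-PM2021}, but with no $x$ variable present the construction simplifies considerably. The plan is to build a smooth first-order operator $M$ on $\R^N$ with $M e^{i\varphi} = e^{i\varphi}$ and then set $L := {}^t M$; since the transpose is involutive this gives ${}^t L(e^{i\varphi}) = M(e^{i\varphi}) = e^{i\varphi}$. First I would fix a cutoff $\chi \in C_c^\infty(\R^N)$ with $\chi \equiv 1$ in a neighborhood of $0$ and put
\[
M := (1-\chi(\theta))\,\frac{\varphi_\theta \cdot D_\theta}{|\varphi_\theta|^2} + \chi(\theta), \qquad \varphi_\theta := \nabla_\theta\varphi .
\]
This is a genuine smooth first-order operator on all of $\R^N$: on $\R^N\setminus\{0\}$ the phase-function hypothesis $\nabla_\theta\varphi \neq 0$ keeps the denominator positive, and the factor $1-\chi$ removes the singularity at the origin. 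Since $D_{\theta_j}(e^{i\varphi}) = \varphi_{\theta_j} e^{i\varphi}$, we get $\varphi_\theta \cdot D_\theta(e^{i\varphi}) = |\varphi_\theta|^2 e^{i\varphi}$, hence $M e^{i\varphi} = (1-\chi)e^{i\varphi} + \chi e^{i\varphi} = e^{i\varphi}$.

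Next I would compute $L := {}^t M$ by the classical integration by parts. Writing $M = \tilde b_j \partial_{\theta_j} + \chi$ with $\tilde b_j := (1-\chi)\,\varphi_{\theta_j}/(i|\varphi_\theta|^2)$, one finds $L = -\tilde b_j\partial_{\theta_j} + \big(\chi - \sum_j \partial_{\theta_j}\tilde b_j\big)$, so in the notation of the statement $b_j = -\tilde b_j$ and $c = \chi - \sum_j \partial_{\theta_j}\tilde b_j$. Because $\tilde b_j$ carries the factor $1-\chi$, the support of each $b_j$ avoids $0$; and on the neighborhood of $0$ where $\chi \equiv 1$ we have $1-\chi \equiv 0$, so $b_j \equiv 0$ there and, all derivatives of $\tilde b_j$ vanishing there as well, $c = \chi \equiv 1$.

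What remains --- and this is the only real content --- is the symbol bookkeeping. Here I would use homogeneity: $\varphi$ homogeneous of degree $\mu$ makes $\varphi_{\theta_j}$ homogeneous of degree $\mu-1$ and $|\varphi_\theta|^2$ homogeneous of degree $2\mu-2$ and nowhere vanishing off the origin, so $\varphi_{\theta_j}/|\varphi_\theta|^2$ is smooth on $\R^N\setminus\{0\}$ and homogeneous of degree $1-\mu$. The key elementary fact to verify is that for any $g \in C^\infty(\R^N\setminus\{0\})$ homogeneous of degree $d$ one has $(1-\chi)g \in S^d$, i.e.\ $|\partial^\alpha((1-\chi)g)| \lesssim \agl[\theta]^{d-|\alpha|}$; this is immediate since for $|\theta|\geq 1$ the product equals $g$ and $\partial^\alpha g$ is homogeneous of degree $d-|\alpha|$, while on bounded sets everything is bounded. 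Applying this with $d = 1-\mu$ gives $\tilde b_j \in S^{1-\mu}$, hence $b_j \in S^{1-\mu}$; and then $c \in S^{-\mu}$ because $\chi \in C_c^\infty \subset S^{-\infty}$ and $\partial_{\theta_j}$ lowers the order by one (as in Lemma \ref{lem:ele-PM2021}). I do not anticipate a genuine obstacle; the one point requiring care is the reciprocal $|\varphi_\theta|^{-2}$, whose smoothness and homogeneity away from the origin rest precisely on the non-vanishing of $\nabla_\theta\varphi$ built into the definition of a phase function. Note that the hypotheses $\mu>0$ and $\rho+\mu>1$ are not used in this lemma; they enter only afterwards, in guaranteeing that the oscillatory integral $\int e^{i\varphi}L^T(\sigma)\,\dif\theta$ in Definition \ref{defn:OsE-PM2021} converges.
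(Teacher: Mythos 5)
Your proposal is correct and follows essentially the same route as the paper: the paper constructs exactly the operator $M = (1-\chi(\theta))\frac{\varphi_\theta \cdot D_\theta}{|\varphi_\theta|^2} + \chi(\theta)$, verifies $M e^{i\varphi} = e^{i\varphi}$, and takes $L = {}^t M$, leaving the transpose computation and symbol bookkeeping as ``it can be checked.'' Your explicit homogeneity argument for $b_j \in S^{1-\mu}$, $c \in S^{-\mu}$ simply fills in the details the paper omits.
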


\begin{proof}
	We write $\nabla_\theta \varphi = \varphi_\theta$ for short.
	Fix a $\chi \in C_c^\infty(\Rn)$ with $\chi \equiv 1$ in a neighborhood of 0.
	Construct
	\[
	M := (1 - \chi(\theta)) \frac {\varphi_\theta \cdot D_\theta} {|\varphi_\theta|^2} + \chi(\theta).
	\]
	We mention several facts about $M$:
	\begin{itemize}
		\item First, $M$ is well-defined.
		Note that the denominator $|\varphi_\theta|^2 \neq 0$ when $\theta \in \R^N \backslash \{0\}$, and the point $\theta = 0$ has been cutoff by $1-\chi$, so $M$ is always well-defined;
		
		\item Second, away from $\theta = 0$, $\varphi_\theta \in S^{\mu-1}$;
		
		\item Third,
		\(
		M e^{i \varphi(\theta)}
		= (1 - \chi) e^{i \varphi(\theta)} + \chi e^{i \varphi(\theta)}
		= e^{i \varphi(\theta)}.
		\)
	\end{itemize}
	
	The transpose of $M$ is the desired operator.
	Indeed, it can be checked that, when $x$ is fixed,
	\begin{equation*} 
	{}^t M = (1-\chi) S^{1-\mu} \partial_{\theta} + (1-\chi) S^{-\mu} + \chi(\theta).
	\end{equation*}
	The proof is complete.
\end{proof}

Readers may compare Lemma \ref{lem:ope2-PM2021} with Lemma \ref{lem:ope-PM2021}.

\begin{lem} \label{lem:Lrho-PM2021}
	Assume $L$ is chosen as in Lemma \ref{lem:ope2-PM2021}, and $\sigma \in S_{\rho}^m(R^N)$, then
	\[
	|L^T \big( \sigma(\theta) \big)| \lesssim \agl[\theta]^{m - (\rho + \mu - 1)T}.
	\]
\end{lem}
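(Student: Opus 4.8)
The plan is to promote the pointwise estimate to a statement about symbol classes and then iterate: I will show that the single operator $L$ maps $S_\rho^m(\R^N)$ into $S_\rho^{m-(\rho+\mu-1)}(\R^N)$, so that $L^T\sigma \in S_\rho^{m-(\rho+\mu-1)T}(\R^N)$, and the desired bound is just the $\alpha=0$ instance of membership in that class.

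First I would record the elementary stability properties of the classes $S_\rho^\ell(\R^N)$ of Remark \ref{rem:OsE-PM2021}: $\partial_{\theta_j}$ sends $S_\rho^\ell$ into $S_\rho^{\ell-\rho}$ (immediate from the defining inequalities); the product of an element of $S_\rho^{\ell_1}$ with one of $S_\rho^{\ell_2}$ lies in $S_\rho^{\ell_1+\ell_2}$ (Leibniz, exactly as in Lemma \ref{lem:ele-PM2021}); $S_\rho^\ell \subset S_\rho^{\ell'}$ whenever $\ell \le \ell'$, since $\agl[\theta]\ge 1$; and likewise the Kohn--Nirenberg class $S^\ell$ (the case $\rho=1$) is contained in $S_\rho^\ell$ for $\rho\le 1$, again because $\agl[\theta]\ge 1$ and $\rho|\alpha|\le|\alpha|$. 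By Lemma \ref{lem:ope2-PM2021} we have $L = b_j\partial_{\theta_j} + c$ with $b_j \in S^{1-\mu}\subset S_\rho^{1-\mu}$ and $c \in S^{-\mu}\subset S_\rho^{-\mu}$. Hence for $\sigma\in S_\rho^m$,
\[
b_j\partial_{\theta_j}\sigma \in S_\rho^{1-\mu}\cdot S_\rho^{m-\rho} \subset S_\rho^{m-(\rho+\mu-1)}, \qquad c\sigma \in S_\rho^{-\mu}\cdot S_\rho^{m} \subset S_\rho^{m-\mu} \subset S_\rho^{m-(\rho+\mu-1)},
\]
the last inclusion because $\rho\le 1$ forces $\mu\ge\rho+\mu-1$. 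Thus $L\sigma\in S_\rho^{m-(\rho+\mu-1)}$, and iterating $T$ times gives $L^T\sigma\in S_\rho^{m-(\rho+\mu-1)T}$, whence $|L^T\sigma(\theta)|\lesssim\agl[\theta]^{m-(\rho+\mu-1)T}$. (The hypothesis $\rho+\mu>1$ plays no role in this estimate as such; it is what guarantees each application of $L$ strictly lowers the order, which is the point in Definition \ref{defn:OsE-PM2021}.)

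The only point needing care is the origin, where $\varphi_\theta$ may vanish and where the bounds on $b_j$ and $c$ in Lemma \ref{lem:ope2-PM2021} hold only away from $0$. But the same lemma says $b\equiv 0$ and $c\equiv 1$ on a neighbourhood $U$ of $0$, so on $U$ the operator $L$ is multiplication by $1$; hence $L^T\sigma=\sigma$ there, which is smooth and therefore bounded, while $\agl[\theta]^{m-(\rho+\mu-1)T}\simeq 1$ on the bounded set $U$, so the estimate holds trivially near the origin. Off $U$, the cutoff $1-\chi$ built into $M$ makes $b_j$ and $c$ genuine smooth symbols satisfying the stated Kohn--Nirenberg bounds on all of $\R^N$ (outside a ball they are homogeneous of degrees $1-\mu$ and $-\mu$), so the symbol-class computation above is legitimate and the two regions together give the claim. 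I expect the only mildly delicate part to be the bookkeeping of orders through the $T$-fold iteration; there is no conceptual obstacle.
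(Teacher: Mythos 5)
Your argument is correct and is essentially the intended one: the paper leaves this lemma as an exercise, and its analogue in the proof of Theorem \ref{thm:Ieci-PM2021} is exactly this kind of order bookkeeping, there organized as a full expansion of $L^T$ into terms $S^{-\mu|\alpha|+(1-\mu)|\beta|-\mu(T-|\alpha|-|\beta|)}\partial^\alpha\partial^\beta(\cdot)$, which your one-step mapping property $L\colon S_\rho^m\to S_\rho^{m-(\rho+\mu-1)}$ plus iteration reproduces (your implicit use of $\rho\le 1$ matches the convention the lemma needs anyway, and your separate treatment of the origin is harmless since Lemma \ref{lem:ope2-PM2021} already gives $b_j\in S^{1-\mu}$, $c\in S^{-\mu}$ globally).
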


The proof is left as an exercise.
Combining Lemmas \ref{lem:ope2-PM2021} and \ref{lem:Lrho-PM2021}, we obtain a result similar to Theorem \ref{thm:Ieci-PM2021}.

\begin{thm} \label{thm:Iec2-PM2021}
	Assume $m \in \R$ and $\rho > 0$, $\sigma \in S_{\rho}^m(R^N)$ and $\varphi \in C^\infty(\R^N \backslash \{0\})$ is a phase function of order $\mu$.
	Fix a function $\chi \in C_c^\infty(\Rn)$ with $\chi(0) = 1$.
	Then the limit \eqref{eq:Iec-PM2021} exists and its value is independent of the choice of $\chi$, and it equals to
	\[
	\int e^{i \varphi(\theta)} L^T \big( \sigma(\theta) \big) \dif \theta,
	\]
	when $T > (m+N)/(\rho + \mu - 1)$ where $T$ is given in Lemma \ref{lem:ope2-PM2021}.
\end{thm}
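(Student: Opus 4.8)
The plan is to repeat, almost verbatim, the argument used for Theorem \ref{thm:Ieci-PM2021}, with the operator $L$ now taken from Lemma \ref{lem:ope2-PM2021} and the decay estimate supplied by Lemma \ref{lem:Lrho-PM2021}. First I would fix $L = b_j(\theta)\partial_{\theta_j} + c(\theta)$ as in Lemma \ref{lem:ope2-PM2021}, so that ${}^t L(e^{i\varphi(\theta)}) = e^{i\varphi(\theta)}$, with $b_j \in S^{1-\mu}$, $c \in S^{-\mu}$, $0 \notin \supp b_j$, and $b \equiv 0$, $c \equiv 1$ near $\theta = 0$. Since $\chi(\epsilon\theta)$ is compactly supported, iterating classical integration by parts $T$ times produces no boundary terms, hence
\[
\int e^{i\varphi(\theta)}\sigma(\theta)\chi(\epsilon\theta)\dif\theta
= \int ({}^t L)^T\big(e^{i\varphi(\theta)}\big)\,\sigma(\theta)\chi(\epsilon\theta)\dif\theta
= \int e^{i\varphi(\theta)}\,L^T\big(\sigma(\theta)\chi(\epsilon\theta)\big)\dif\theta.
\]

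Next I would expand $L^T\big(\sigma(\theta)\chi(\epsilon\theta)\big)$ by the Leibniz rule, separating the derivatives landing on $\sigma$ from those landing on $\chi(\epsilon\theta)$. The terms in which all $T$ applications of $L$ hit only $\sigma$ are handled by Lemma \ref{lem:Lrho-PM2021}, giving a contribution of order $\agl[\theta]^{m-(\rho+\mu-1)T}$; whenever a factor $\partial_\theta^{\beta''}(\chi(\epsilon\theta))$ with $|\beta''|\geq 1$ appears, Lemma \ref{lem:chbdd-PM2021} bounds it by $C\agl[\theta]^{-|\beta''|}$ with $C$ independent of $\epsilon$, so such terms decay at least as fast. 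The bookkeeping here rests on the elementary fact that, since $\rho \leq 1$, one has $S^{1-\mu}\cdot S_\rho^{k} \subset S_\rho^{k-(\rho+\mu-1)}$ and $S^{-\mu}\cdot S_\rho^{k} \subset S_\rho^{k-(\rho+\mu-1)}$, so each application of $L$ lowers the order by $\rho+\mu-1$; the hypothesis $\rho+\mu>1$ is precisely what makes this a genuine gain. The upshot is the $\epsilon$-uniform estimate
\[
\big|L^T\big(\sigma(\theta)\chi(\epsilon\theta)\big)\big| \lesssim \agl[\theta]^{m-(\rho+\mu-1)T}, \qquad \epsilon \in (0,1].
\]

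Finally, choosing the integer $T > (m+N)/(\rho+\mu-1)$ forces $m-(\rho+\mu-1)T < -N$, so the integrand in the last displayed integral is dominated, uniformly in $\epsilon$, by the integrable function $C\agl[\theta]^{m-(\rho+\mu-1)T}$. The Lebesgue dominated convergence theorem then shows that $\lim_{\epsilon\to 0^+}$ of the regularized integral exists; since $\chi(\epsilon\theta)\to 1$ and $\partial_\theta^{\beta''}(\chi(\epsilon\theta))\to 0$ pointwise for $|\beta''|\geq 1$ as $\epsilon\to 0^+$, one may pass the limit inside the integral to conclude that it equals $\int e^{i\varphi(\theta)}L^T(\sigma(\theta))\dif\theta$. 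As this expression contains no reference to $\chi$, the value of the limit is independent of the choice of $\chi$, completing the proof.

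There is no real obstacle: the only mildly delicate point is the Leibniz expansion in the middle step, where one must verify that the variable-coefficient operator $L$, applied $T$ times and interacting with the cutoff, still lands in $S_\rho^{m-(\rho+\mu-1)T}$ with constants independent of $\epsilon$ — but this is exactly Lemma \ref{lem:Lrho-PM2021} together with Lemma \ref{lem:chbdd-PM2021}, and is no harder than the corresponding step for Theorem \ref{thm:Ieci-PM2021} with the exponent $(m+N)/\mu$ replaced by $(m+N)/(\rho+\mu-1)$.
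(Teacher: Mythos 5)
Your proposal is correct and follows essentially the same route the paper intends: the paper omits the details, stating only that the proof is similar to that of Theorem \ref{thm:Ieci-PM2021} via Lemmas \ref{lem:ope2-PM2021} and \ref{lem:Lrho-PM2021}, which is exactly what you carry out (integration by parts with the compactly supported cutoff, the $\epsilon$-uniform bound $|L^T(\sigma\chi(\epsilon\cdot))| \lesssim \agl[\theta]^{m-(\rho+\mu-1)T}$, and dominated convergence with $T > (m+N)/(\rho+\mu-1)$). No gaps worth flagging.
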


The proof is similar to that of Theorem \ref{thm:Ieci-PM2021}.
The generalized definition of the oscillatory integral can handle more cases.
One of the examples is as follows.

\begin{lem} \label{lem:Tli-PM2021}
	The following equality holds in oscillatory sense,
	\begin{equation*}
	\int_{\R^{2n}} e^{\pm ix\cdot \xi} \dif x \dif \xi = (2\pi)^{n}.
	\end{equation*}
\end{lem}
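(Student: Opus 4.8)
The plan is to apply Definition \ref{defn:OsE-PM2021} directly, with $N = 2n$ and $\theta = (x,\xi) \in \R^{2n}$. First I would check that the data satisfy the hypotheses: the phase $\varphi(x,\xi) := \pm x\cdot\xi$ is real-valued and $C^\infty$ on $\R^{2n}$, it is homogeneous of order $\mu = 2$ since $\varphi(tx,t\xi) = t^2\varphi(x,\xi)$, and $\nabla_{(x,\xi)}\varphi = \pm(\xi,x) \ne 0$ whenever $(x,\xi) \ne 0$; the symbol $\sigma \equiv 1$ belongs to $S^0_\rho(\R^{2n})$ for every $\rho > 0$ (Remark \ref{rem:OsE-PM2021}), so with, say, $\rho = 1$ the compatibility condition $\rho + \mu = 3 > 1$ holds. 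Hence $\int_{\R^{2n}} e^{\pm ix\cdot\xi}\dif x\dif\xi$ is a well-defined oscillatory integral of Type II, equal by \eqref{eq:Iec-PM2021} and Theorem \ref{thm:Iec2-PM2021} to $\lim_{\epsilon\to 0^+}\int e^{\pm ix\cdot\xi}\chi(\epsilon(x,\xi))\dif x\dif\xi$, a value independent of the admissible cutoff $\chi$.

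The key step is to exploit that independence by choosing a product cutoff. Fix $\chi_1 \in C_c^\infty(\Rn)$ with $\chi_1(0) = 1$ and set $\chi(x,\xi) := \chi_1(x)\chi_1(\xi)$, which lies in $C_c^\infty(\R^{2n})$ with $\chi(0) = 1$. For each fixed $\epsilon > 0$ the integrand $e^{\pm ix\cdot\xi}\chi_1(\epsilon x)\chi_1(\epsilon\xi)$ has compact support, so Fubini and the substitution $y = \epsilon x$ (together with Definitions \ref{defn:FourierTransform-1} and \ref{defn:FourierTransform-aul}) give
\[
\int_{\R^{2n}} e^{\pm ix\cdot\xi}\chi_1(\epsilon x)\chi_1(\epsilon\xi)\dif x\dif\xi
= (2\pi)^{n/2}\,\epsilon^{-n}\int_\Rn g\!\left(\tfrac{\xi}{\epsilon}\right)\chi_1(\epsilon\xi)\dif\xi,
\]
where $g := \calinF\chi_1$ for the $+$ sign and $g := \calF\chi_1$ for the $-$ sign. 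Rescaling $\eta = \xi/\epsilon$ turns the right-hand side into $(2\pi)^{n/2}\int_\Rn g(\eta)\chi_1(\epsilon^2\eta)\dif\eta$.

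Finally I would let $\epsilon \to 0^+$. Since $\chi_1 \in \scrS(\Rn)$ we have $g \in \scrS(\Rn) \subset L^1(\Rn)$ (Lemma \ref{lem:FtransRelat-PM2021s}(1), Theorem \ref{thm:TFTD-PM2021}), and $|g(\eta)\chi_1(\epsilon^2\eta)| \le \|\chi_1\|_{L^\infty}|g(\eta)|$ with $\chi_1(\epsilon^2\eta)\to\chi_1(0)=1$ pointwise, so dominated convergence yields $\int g(\eta)\chi_1(\epsilon^2\eta)\dif\eta \to \int_\Rn g(\eta)\dif\eta = (2\pi)^{n/2}(\calF g)(0)$. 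By the Fourier inversion formula on $\scrS(\Rn)$ (Lemma \ref{lem:FtransRelat-PM2021s}(1), Proposition \ref{prop:RF-relations}) one has $(\calF\calinF\chi_1)(0) = \chi_1(0) = 1$ for the $+$ sign and $(\calF\calF\chi_1)(0) = (\calR\chi_1)(0) = \chi_1(0) = 1$ for the $-$ sign, so in both cases $\int_\Rn g(\eta)\dif\eta = (2\pi)^{n/2}$. Multiplying the two factors gives $(2\pi)^{n/2}\cdot(2\pi)^{n/2} = (2\pi)^n$, as claimed. There is no serious obstacle here; the only points needing care are the legitimacy of the product cutoff — supplied exactly by the $\chi$-independence in Theorem \ref{thm:Iec2-PM2021} — and keeping the Fourier-transform normalization constants straight throughout the rescalings.
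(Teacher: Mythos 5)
Your proof is correct and follows essentially the same route as the paper: you choose the product cutoff $\chi_1(\epsilon x)\chi_1(\epsilon\xi)$, rescale to absorb $\epsilon$ into the argument of the Fourier transform, pass to the limit by dominated convergence, and conclude by Fourier inversion at $0$. Your version is simply more explicit about verifying the hypotheses of Definition \ref{defn:OsE-PM2021} and about the $-$ sign, which the paper dispatches with "the case for $e^{-ix\cdot\xi}$ is similar."
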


\begin{proof}
	We shall regard $(x,\xi)$ as the $\theta$ in Definition \ref{defn:OsE-PM2021}, then this phase function $x \cdot \xi$ is of order 2.
	One can also check that $\nabla_{(x,\xi)} (x\cdot \xi) \neq 0$ when $(x,\xi) \neq 0$.
	We choose the cutoff function as $\chi(\epsilon x) \chi(\epsilon \xi)$ where $\chi \in C_c^\infty$ with $\chi(0) = 1$, then
	\begin{align*}
	\int e^{ix\cdot \xi} \dif x \dif \xi
	& {\color{red}:=} \lim_{\epsilon \to 0^+} \int e^{ix\cdot \xi} \chi(\epsilon x) \chi(\epsilon \xi) \dif x \dif \xi
	= \lim_{\epsilon \to 0^+} \int (\int e^{ix\cdot \xi} \chi(x) \dif x) \cdot \chi(\epsilon^2 \xi) \dif \xi \\
	& = (2\pi)^{n/2} \lim_{\epsilon \to 0^+} \int \widehat\chi(-\xi) \chi(\epsilon^2 \xi) \dif \xi
	= (2\pi)^{n/2} \int \widehat\chi(-\xi) \dif \xi \qquad \text{(LDCT)} \\
	& = (2\pi)^{n} \chi(0) = (2\pi)^{n}.
	\end{align*}
	The case for $e^{-ix\cdot \xi}$ is similar.
	Note that all of the integrals above are usual integral besides the first one on the LHS. 
\end{proof}

The following result will be useful.

\begin{lem} \label{lem:Tlic-PM2021}
	The following equality holds in oscillatory sense,
	\begin{equation*}
	\int_{\R^{2n}} e^{\pm ix\cdot \xi} x^\alpha \xi^\beta \dif x \dif \xi = (\pm i)^{|\alpha|} (2\pi)^{n} \alpha! \delta^{\alpha \beta}.
	\end{equation*}
\end{lem}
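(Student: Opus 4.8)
This identity is a refinement of Lemma~\ref{lem:Tli-PM2021} (the case $\alpha=\beta=0$), and the plan is to reduce to it. First note that $x\cdot\xi$ is a phase function of order $\mu=2$ on $\R^{2n}\backslash\{0\}$ and $x^{\alpha}\xi^{\beta}\in S_{1}^{|\alpha|+|\beta|}(\R^{2n})$, so $\rho+\mu=3>1$ and the left-hand side is a genuine Type~II oscillatory integral in the sense of Definition~\ref{defn:OsE-PM2021}; the same is true of $e^{\pm ix\cdot\xi}\sigma(x,\xi)$ for any symbol $\sigma$. The single ingredient I would first establish is the ``integration by parts'' identity in the oscillatory sense,
\[
\int_{\R^{2n}}e^{\pm ix\cdot\xi}\,x_{j}\,\sigma\,\dif x\dif\xi=(\pm i)\int_{\R^{2n}}e^{\pm ix\cdot\xi}\,\partial_{\xi_{j}}\sigma\,\dif x\dif\xi ,
\qquad
\int_{\R^{2n}}e^{\pm ix\cdot\xi}\,\xi_{j}\,\sigma\,\dif x\dif\xi=(\pm i)\int_{\R^{2n}}e^{\pm ix\cdot\xi}\,\partial_{x_{j}}\sigma\,\dif x\dif\xi ,
\]
which amounts to saying that $\int_{\R^{2n}}\partial_{\xi_{j}}\bigl(e^{\pm ix\cdot\xi}\sigma\bigr)\dif x\dif\xi=0$ (and similarly in $x_{j}$), since $\partial_{\xi_{j}}e^{\pm ix\cdot\xi}=\pm i\,x_{j}\,e^{\pm ix\cdot\xi}$. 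I would prove this by inserting the $\chi(\epsilon\,\cdot\,)$-regularization of Definition~\ref{defn:OsE-PM2021} and integrating by parts honestly (no boundary terms, $\chi\in C_{c}^{\infty}$): the only surviving term is the one where the derivative lands on $\chi(\epsilon\,\cdot\,)$, and, rewriting the remaining integral through the operator $L$ of Lemma~\ref{lem:ope2-PM2021} and estimating with Lemma~\ref{lem:Lrho-PM2021}, the extra factor $\epsilon$ forces that term to $0$ as $\epsilon\to0^{+}$.

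Granting this, I would iterate the first identity over the entries of $\alpha$ (since the $x$-monomial is $\xi$-independent, no Leibniz cross terms occur, cf.\ Lemma~\ref{lem:Dfg-PM2021}) to get
\[
\int_{\R^{2n}}e^{\pm ix\cdot\xi}\,x^{\alpha}\xi^{\beta}\,\dif x\dif\xi=(\pm i)^{|\alpha|}\int_{\R^{2n}}e^{\pm ix\cdot\xi}\,\partial_{\xi}^{\alpha}(\xi^{\beta})\,\dif x\dif\xi ,
\]
and, using the second identity symmetrically (trading the $\xi$-powers against $x$-derivatives), the same integral equals $(\pm i)^{|\beta|}\int e^{\pm ix\cdot\xi}\,\partial_{x}^{\beta}(x^{\alpha})\,\dif x\dif\xi$. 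Since $\partial_{\xi}^{\alpha}(\xi^{\beta})=0$ unless $\alpha\leq\beta$ and $\partial_{x}^{\beta}(x^{\alpha})=0$ unless $\beta\leq\alpha$, the integral vanishes whenever $\alpha\neq\beta$, which produces the factor $\delta^{\alpha\beta}$. When $\alpha=\beta$ one has $\partial_{\xi}^{\alpha}(\xi^{\alpha})=\alpha!$, so
\[
\int_{\R^{2n}}e^{\pm ix\cdot\xi}\,x^{\alpha}\xi^{\alpha}\,\dif x\dif\xi=(\pm i)^{|\alpha|}\,\alpha!\int_{\R^{2n}}e^{\pm ix\cdot\xi}\dif x\dif\xi=(\pm i)^{|\alpha|}(2\pi)^{n}\alpha!
\]
by Lemma~\ref{lem:Tli-PM2021}; combining the two cases gives the claimed formula.

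The multi-index bookkeeping is routine, so the expected main obstacle is exactly the rigorous justification of the two ``integration by parts'' identities above, i.e.\ that oscillatory integrals of total derivatives vanish; this rests on the same $\epsilon$-uniform estimates that already underlie the well-definedness and $\chi$-independence in Definition~\ref{defn:OsE-PM2021} together with Lemma~\ref{lem:Lrho-PM2021}. Should isolating that estimate feel awkward, an alternative is to start from the parametrized identity $\int_{\R^{2n}}e^{i(x\cdot\xi+s\cdot x+t\cdot\xi)}\dif x\dif\xi=(2\pi)^{n}e^{-is\cdot t}$ --- obtained from Lemma~\ref{lem:Tli-PM2021} by completing the square, $x\cdot\xi+s\cdot x+t\cdot\xi=(x+t)\cdot(\xi+s)-s\cdot t$, and the linear change of variables $x\mapsto x-t$, $\xi\mapsto\xi-s$ --- then apply $(-i\partial_{s})^{\alpha}(-i\partial_{t})^{\beta}$ at $s=t=0$, differentiating under the oscillatory integral in the spirit of Lemma~\ref{lem:Osxy-PM2021}; reading off the coefficient of $s^{\alpha}t^{\beta}$ in $e^{-is\cdot t}$ again yields $\delta^{\alpha\beta}$ and the constant $(\pm i)^{|\alpha|}(2\pi)^{n}\alpha!$, the minus-sign case following by complex conjugation since $x^{\alpha}\xi^{\beta}$ is real.
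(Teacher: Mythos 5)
Your proposal is correct and follows essentially the same route as the paper: write the monomials as derivatives of $e^{\pm ix\cdot\xi}$, integrate by parts in the oscillatory sense to land $(\pm i)^{|\alpha|}\partial_\xi^\alpha(\xi^\beta)$, deduce vanishing unless $\alpha=\beta$, and reduce the diagonal case to Lemma \ref{lem:Tli-PM2021}. The only cosmetic differences are that you justify the integration-by-parts identity explicitly via the $\chi(\epsilon\,\cdot\,)$ regularization and dispose of the case $\alpha\neq\beta$ by applying the identity symmetrically in $x$ and $\xi$, whereas the paper performs a second integration by parts in $x$ on the reduced integral $\int e^{\pm ix\cdot\xi}\xi^{\beta-\alpha}\dif x\dif\xi$; both are the same idea.
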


\begin{proof}
	We have
	\begin{align*}
	& \int e^{\pm ix\cdot \xi} x^\alpha \xi^\beta \dif x \dif \xi
	= \int (\pm D_\xi)^\alpha \big( e^{\pm ix\cdot \xi} \big) \xi^\beta \dif x \dif \xi \\
	= & \int e^{\pm ix\cdot \xi} (\mp D_\xi)^\alpha \big( \xi^\beta \big) \dif x \dif \xi
	= (\pm i)^{|\alpha|} \int e^{\pm ix\cdot \xi} \partial_\xi^\alpha (\xi^\beta) \dif x \dif \xi.
	\end{align*}
	It can be check that $\partial_\xi^\alpha (\xi^\beta) = \beta!/(\beta - \alpha)!\, \xi^{\beta - \alpha}$ when $\alpha \leq \beta$, and $= 0$ otherwise.
	Hence, when $\alpha \leq \beta$ we have
	\begin{equation} \label{eq:rxi-PM2021}
	\int e^{\pm ix\cdot \xi} x^\alpha \xi^\beta \dif x \dif \xi
	= (\pm i)^{|\alpha|} \beta!/(\beta - \alpha)! \int e^{\pm ix\cdot \xi} \xi^{\beta - \alpha} \dif x \dif \xi.
	\end{equation}
	When $\alpha \neq \beta$, we can continue
	\begin{equation*}
	\int e^{\pm ix\cdot \xi} \xi^{\beta - \alpha} \dif x \dif \xi
	\simeq \int D_x^{\beta - \alpha} \big( e^{\pm ix\cdot \xi} \big) \dif x \dif \xi
	= 0
	\simeq \int e^{\pm ix\cdot \xi} D_x^{\beta - \alpha} \big( 1 \big) \dif x \dif \xi
	= 0.
	\end{equation*}
	Therefore, when $\alpha \neq \beta$, we have
	\begin{equation} \label{eq:rx0-PM2021}
	\int_{\R^{2n}} e^{\pm ix\cdot \xi} x^\alpha \xi^\beta \dif x \dif \xi = 0.
	\end{equation}
	
	When $\alpha = \beta$, by Lemma \ref{lem:Tli-PM2021} we can continue \eqref{eq:rxi-PM2021} as follows,
	\[
	\int_{\R^{2n}} e^{\pm ix\cdot \xi} x^\alpha \xi^\beta \dif x \dif \xi
	= (\pm i)^{|\alpha|} \alpha! \int e^{\pm ix\cdot \xi} \dif x \dif \xi
	= (\pm i)^{|\alpha|} \alpha! (2\pi)^n.
	\]
	The proof is complete.
\end{proof}

\begin{rem} 
	The space $S_{\rho}^m(R^N)$ is defined as
	\[
	S_{\rho}^m(R^N) := \{ \varphi \in C^\infty(\R^N) \,;\, |\partial^\alpha \varphi(\theta)| \lesssim \agl[\theta]^{m - \rho|\alpha|} \}.
	\]
	See \cite[\S I.8.1]{alinhac2007pseudo} for more details.
	Also, the formula \eqref{eq:kerP-PM2021} in the definition of the kernel is meaningful now.
\end{rem}

We can generalize Lemma \ref{lem:Tli-PM2021}
\begin{lem} \label{lem:Tlf-PM2021}
	Assume $m \in \R$ and $f \in S_{\rho}^m(\Rn)$ (see Remark \ref{rem:OsE-PM2021}) with $\rho + 2 > 1$.
	The following equality holds in oscillatory sense,
	\begin{equation*}
	\int_{\R^{2n}} e^{\pm ix\cdot \xi} f(x) \dif x \dif \xi = (2\pi)^{n} f(0).
	\end{equation*}
\end{lem}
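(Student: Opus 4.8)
The plan is to evaluate the defining limit \eqref{eq:Iec-PM2021} directly, using the convenient product cutoff $\chi(\epsilon x)\chi(\epsilon\xi)$ exactly as in the proof of Lemma \ref{lem:Tli-PM2021}, and to perform the $\xi$-integration first. To set this up, I would first record that the integral is a genuine oscillatory integral in the sense of Definition \ref{defn:OsE-PM2021}: the phase $x\cdot\xi$ on $\R^{2n}$ is real-valued, homogeneous of order $\mu=2$, with $\nabla_{(x,\xi)}(x\cdot\xi)=(\xi,x)\neq0$ off the origin, while $f$, regarded as a function of $(x,\xi)$ that is constant in $\xi$, satisfies $|\partial_x^\alpha\partial_\xi^\beta f(x)|\lesssim\agl[x]^{m-\rho|\alpha|}\lesssim\agl[(x,\xi)]^{m^+}$ for $\beta=0$ (with $m^+:=\max(m,0)$) and vanishes for $\beta\neq0$; hence $f\in S_0^{m^+}(\R^{2n})$, and since $0+\mu>1$ the limit exists and is independent of the cutoff. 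In particular it suffices to compute it for the single admissible choice $(x,\xi)\mapsto\chi(x)\chi(\xi)$ with $\chi\in C_c^\infty(\Rn)$, $\chi(0)=1$.

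For fixed $\epsilon>0$ the integrand $e^{\pm ix\cdot\xi}f(x)\chi(\epsilon x)\chi(\epsilon\xi)$ is compactly supported in both $x$ and $\xi$ and $f$ is continuous, so Fubini applies and I may integrate in $\xi$ first. Rescaling $\eta=\epsilon\xi$ and using Definition \ref{defn:FourierTransform-1} gives $\int e^{\pm ix\cdot\xi}\chi(\epsilon\xi)\dif\xi=(2\pi)^{n/2}\epsilon^{-n}\widehat\chi(\mp x/\epsilon)$, and then the substitution $x=\epsilon y$ yields
\[
\int e^{\pm ix\cdot\xi}f(x)\chi(\epsilon x)\chi(\epsilon\xi)\dif x\dif\xi=(2\pi)^{n/2}\int f(\epsilon y)\,\chi(\epsilon^2 y)\,\widehat\chi(\mp y)\dif y.
\]
Now I would let $\epsilon\to0^+$: the integrand converges pointwise to $f(0)\widehat\chi(\mp y)$ since $f$ and $\chi$ are continuous and $\chi(0)=1$, and for $0<\epsilon\leq1$ it is dominated by $C\agl[y]^{m^+}|\widehat\chi(\mp y)|$, which is integrable because $\widehat\chi\in\scrS(\Rn)$. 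By the dominated convergence theorem the limit equals $(2\pi)^{n/2}f(0)\int\widehat\chi(\mp y)\dif y=(2\pi)^{n/2}f(0)\int\widehat\chi(y)\dif y=(2\pi)^n f(0)$, the last step being Fourier inversion $\chi(0)=(2\pi)^{-n/2}\int\widehat\chi$.

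The main point requiring care is the uniform-in-$\epsilon$ domination that makes the dominated convergence step legitimate: it uses $\agl[\epsilon y]\leq\agl[y]$ for $0<\epsilon\leq1$ together with the crude growth bound $|f(x)|\lesssim\agl[x]^{m^+}$ coming from $f\in S_\rho^m(\Rn)$, and the rapid decay of $\widehat\chi$ to absorb that polynomial growth; the rest is bookkeeping (in particular checking that $f(x)$ really is an admissible symbol on $\R^{2n}$ so Definition \ref{defn:OsE-PM2021} applies and the product cutoff is allowed). Note that $f\equiv1$ recovers Lemma \ref{lem:Tli-PM2021}. A slightly longer alternative would be to write $f(x)=f(0)+\sum_j x_j h_j(x)$ by Hadamard's lemma, with $h_j\in S_0^{m^+}(\Rn)$, use linearity of the oscillatory integral and Lemma \ref{lem:Tli-PM2021} on the constant term, and for each $j$ integrate by parts in $\xi$ via $x_j e^{\pm ix\cdot\xi}=\mp i\,\partial_{\xi_j}e^{\pm ix\cdot\xi}$ to throw the derivative onto the $\xi$-independent factor $h_j$, which kills it; but the direct computation above is shorter.
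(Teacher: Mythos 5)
Your proof is correct, but it takes a genuinely different route from the paper's. The paper proves this lemma by the decomposition you only sketch as an alternative at the end: it writes $f(x) = f(0) + \sum_{j=1}^n x_j g_j(x)$ with $g_j(x) = \int_0^1 \partial_{x_j} f(tx)\dif t$, applies Lemma \ref{lem:Tli-PM2021} to the constant term, and annihilates each remainder term by writing $x_j e^{\pm i x\cdot\xi}$ as $\pm D_{\xi_j}\big(e^{\pm ix\cdot\xi}\big)$ and integrating by parts in $\xi$, since $D_{\xi_j} g_j = 0$. Your primary argument instead evaluates the regularized integral directly with the product cutoff $\chi(\epsilon x)\chi(\epsilon\xi)$: integrate in $\xi$ first, rescale, and pass to the limit by dominated convergence --- in effect rerunning the paper's proof of Lemma \ref{lem:Tli-PM2021} with $f$ carried along. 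Each approach buys something: the paper's is shorter and stays entirely inside the oscillatory calculus (linearity plus the transpose of $D_{\xi_j}$), at the price of leaving implicit that the terms $x_j g_j(x)$ are admissible symbols and that the integration by parts is legitimate in the oscillatory sense; yours is more elementary and self-contained (Fubini, Fourier inversion, DCT), makes the well-definedness and cutoff-independence explicit by identifying the joint symbol class of $(x,\xi)\mapsto f(x)$, and recovers Lemma \ref{lem:Tli-PM2021} as the special case $f\equiv 1$, at the price of the rescaling bookkeeping. One small caveat: your identification $f\in S_0^{m^+}(\R^{2n})$ uses $\rho\geq 0$; if the hypothesis $\rho+2>1$ is meant to allow $\rho\in(-1,0)$, you should instead place $f$ in $S_{\rho}^{m^+}(\R^{2n})$ (for $\rho\le 0$ one still has $|\partial_x^\alpha f(x)|\lesssim \agl[x]^{m-\rho|\alpha|}\leq \agl[(x,\xi)]^{m^+-\rho|\alpha|}$), and then the needed condition $\rho+\mu-1=\rho+1>0$ is exactly the stated hypothesis; your dominated-convergence step, which only uses $|f(x)|\lesssim\agl[x]^{m^+}$, is unaffected.
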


\begin{rem} \label{rem:Tlf-PM2021}
	Lemma \ref{lem:Tlf-PM2021} indicates the the ``inverse Fourier transform'' is indeed the inverse of ``Fourier transform''.
\end{rem}

\begin{proof}
	We only show the case $+ix \cdot \xi$.
	The condition on $f$ guarantees the integral is well-defined, see Definition \ref{defn:OsE-PM2021}.
	By Taylor's expansion we have
	\begin{equation*}
	f(x) = f(0) + \sum_{j = 1}^n x_j g_j(x),
	\quad \text{where} \quad g_j(x) := \int_0^1 \partial_{x_j} f(tx), \dif t.
	\end{equation*}
	so
	\begin{align*}
	\int e^{ix\cdot \xi} f(x) \dif x \dif \xi
	& = \int e^{ix\cdot \xi} [f(0) + \sum_{j = 1}^n x_j g_j(x)] \dif x \dif \xi \\
	& = f(0) \int e^{ix\cdot \xi} \dif x \dif \xi + \sum_{j = 1}^n \int D_{\xi_j} (e^{ix\cdot \xi}) g_j(x) \dif x \dif \xi \\
	& = (2\pi)^{n} f(0) - \sum_{j = 1}^n \int e^{ix\cdot \xi} D_{\xi_j} \big( g_j(x) \big) \dif x \dif \xi \\
	& = (2\pi)^{n} f(0).
	\end{align*}
	The proof is complete.
\end{proof}

\section*{Exercise}

\begin{ex}
	Check that the ${}^t M$ given in \eqref{eq:tM-PM2021} satisfies \eqref{eq:tM-PM2021}. Hint: utilize the second fact about $M$ to facilitate the derivation.
\end{ex}

\begin{ex}
	Prove Lemma \ref{lem:Lrho-PM2021}.
\end{ex}

\clearpage

\chapter{Stationary phase lemmas} \label{ch:SPL-PM2021}

The stationary phase lemmas is a useful tools for computing certain asymptotics.
Some useful references are \cite[\S 5]{dim1999spe}, \cite[\S 19.3]{eskin2011lectures}, \cite[\S 7.7]{horm2003IIV}, \cite[\S 3]{zw2012semi}.

From time to time we will encounter oscillatory integrals of the form
\begin{equation} \label{eq:Ivx-PM2021}
I(\lambda) = \int e^{i\lambda\varphi(x)} a(x) \dif x
\end{equation}
where $\varphi$ is a phase function of some order and $a$ is a symbol ($|\partial^\beta a(\xi)| \lesssim \agl[\xi]^{m - |\beta|}$).
In \S \ref{ch:OSInt-PM2021} we have introduced schemes to make $I(\lambda)$ well-defined.
Now we focus on the asymptotics of $I(\lambda)$ w.r.t.~$\lambda \to +\infty$ when $\varphi$ satisfies certain conditions.

When $\varphi$ is linear, i.e.~$\varphi(x) = p \cdot x$ for certain fixed vector $p \neq 0$, there is no critical point of $\varphi$ ($|\nabla \varphi| = |p| \neq 0$).
In this case we call $\varphi$ \emph{non-stationary}.
For the non-stationary case, the asymptotics of $I$ is straightforward:
\begin{align*}
I(\lambda) 
& = \int e^{i\lambda p \cdot x} a(x) \dif x
= \int \big( \frac {p \cdot D_x} {\lambda |p|^2} \big)^N (e^{i\lambda p \cdot x}) \, a(x) \dif x \\
& = \lambda^{-N} \int e^{i\lambda p \cdot x} \big( \frac {-p \cdot D_x} {|p|^2} \big)^N (a(x)) \dif x
\lesssim |p|^{-N} \lambda^{-N} \int \sum_{|\beta| = N} C_\beta \partial^\beta a(x) \dif x \\
& \lesssim |p|^{-N} \lambda^{-N} \int \agl[x]^{m - N} \dif x
\lesssim \lambda^{-N},
\end{align*}
provided $N$ is large enough.
This means that $\int e^{i\lambda p \cdot x} a(x) \dif x$ is of rapid decay w.r.t.~$\lambda$.

The gradient of $\varphi$ has been put in the denominator in the derivation above, so the method will not be applicable when the phase function contains critical points.
In this case we call the phase \emph{stationary}.
In this chapter we devote ourselves into the stationary case.

\section{A simple case}

To help readers understand the method of stationary phase, we start with a simple case where the phase is stationary.
To that end, we need to do some preparations.

\subsection{Preliminaries}

We need the Taylor's expansion\index{Taylor's expansion}.
Suppose $f \in C^{N+1}(\Rn; \mathbb C)$, then we have that
\begin{align}
f(x)
& = \sum_{|\delta| \leq N} \frac{1}{\delta!} \big( \partial^{\delta}f \big)(x_0) \cdot (x - x_0)^{\delta} \nonumber \\
& \quad + (N+1) \sum_{|\delta| = N+1} \frac{(x - x_0)^{\delta}}{\delta!} \int_0^1 (1-t)^N \big( \partial^{\delta}f \big)(x_0 + t(x-x_0)) \dif{t}. \label{eq:TaylorInt-PM2021}
\end{align}
The proof of \eqref{eq:TaylorInt-PM2021} can be found in most of the calculus textbook and we omit it here.

Secondly, for a measurable function $u$ in $\Rn$, as long as $\partial^\alpha u \in L^1(\Rn)$ for $|\alpha| \geq n+1$, then $\hat u$ exists and there exists a constant $C$ such that
\begin{equation} \label{eq:Fuu-PM2021}
\nrm[L^1(\Rn)]{\hat u} \leq C \sum_{|\alpha| \leq n + 1} \nrm[L^1(\Rn)]{\partial^\alpha u}.
\end{equation}

\begin{proof}
	We have
	\begin{align*}
	\int |\hat u(\xi)| \dif \xi
	& = \int \agl[\xi]^{-n-1} |\agl[\xi]^{n+1} \hat u(\xi)| \dif \xi
	\leq C \sup_{\Rn} |\agl[\xi]^{n+1} \hat u(\xi)| \\
	& \leq C \sum_{|\alpha| \leq n + 1} C_\alpha \sup_{\Rn} |\xi^\alpha \hat u(\xi)|
	\leq \sum_{|\alpha| \leq n + 1} C_\alpha \sup_{\Rn} |\calF \{ \partial^\alpha u \} (\xi)| \\
	& \leq \sum_{|\alpha| \leq n + 1} C_\alpha \nrm[L^1(\Rn)]{\partial^\alpha u}.
	\end{align*}
	We arrive at the conclusion.
\end{proof}

We also need the following transformation.
For a fixed non-degenerate, symmetric, real-valued square matrix $Q$, we have
\begin{equation} \label{eq:FrGauiQ-PM2021}
\calF \{ e^{\pm i\,\agl[Q \cdot, \cdot]/2} \}(\xi) = \frac {e^{\pm i \frac \pi 4 {\rm sgn}\, Q}} {|\det Q|^{1/2}} e^{\mp i \agl[Q^{-1} \xi,\xi]/2}.
\end{equation}
Here the non-degeneracy condition of $Q$ means $\det Q \neq 0$.
\begin{proof}
	We have
	\begin{align*}
	\calF \{ e^{\pm i\,|\cdot|^2/2} \}(\xi)
	& = (2\pi)^{-n/2} \int_{\Rn} e^{-ix\cdot \xi} e^{\pm i\,|x|^2/2} \dif x
	= (2\pi)^{-n/2} e^{\mp i|\xi|^2/2} \int_{\Rn} e^{\pm i|x \mp \xi|^2/2} \dif x \\
	& = \pi^{-n/2} e^{\mp i|\xi|^2/2} \int_{\Rn} e^{\pm i|x|^2} \dif x.
	\end{align*}
	By standard Cauchy's integral theorem we can have
	\[
	\int_{-\infty}^{+\infty} e^{\pm ix^2} \dif x = \sqrt{\pi} e^{\pm i \pi/4},
	\]
	so we can continue,
	\begin{equation} \label{eq:fixxi-PM2021}
	\calF \{ e^{\pm i\,|\cdot|^2/2} \}(\xi)
	= \pi^{-n/2} e^{\mp i|\xi|^2/2} (\sqrt \pi e^{\pm i \frac \pi 4})^n
	= e^{\pm i \frac \pi 4 n} e^{\mp i|\xi|^2/2}.
	\end{equation}
	We left the computation from \eqref{eq:fixxi-PM2021} to \eqref{eq:FrGauiQ-PM2021} as an exercise.
\end{proof}

\subsection{A simple case}

We study the quadratic case in $\R^1$.

\begin{lem} \label{lem:sst-PM2021}
	Assume $a \in C_c^\infty(\R)$ with $a(0) \neq 0$.
	Fix an  arbitrary integer $N \in \mathbb N$.
	Then for the integral $I(\lambda)$:
	\[
	I(\lambda) = \int_{\R} e^{i\lambda x^2/2} a(x) \dif x,
	\]
	there holds
	\begin{equation} \label{eq:Isi-PM2021}
	I(\lambda)
	= \left( \frac{2\pi}{\lambda} \right)^{1/2} e^{i\frac{\pi}{4}} \sum_{0 \leq j \leq N} \frac {\lambda^{-j}} {j!} \left( \frac i 2 \right)^j a^{(2j)}(0) + \mathcal O(\lambda^{-\frac 1 2 -N-1} \sum_{j\leq 2N+4} \sup |a^{(j)}|),
	\end{equation}
	where $a^{(j)}$ signifies $\frac{\df^j a}{\df x^j}$.
\end{lem}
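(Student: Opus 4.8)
The plan is to trade the quadratic oscillation $e^{i\lambda x^2/2}$ for Gaussian decay on the Fourier side, reducing $I(\lambda)$ to an ordinary, absolutely convergent integral against $\calinF a \in \scrS(\R)$, and then to extract the expansion by Taylor-expanding the transformed Gaussian in powers of $\lambda^{-1}$. The hypothesis $a(0)\neq 0$ is not needed for the derivation; it only guarantees the leading coefficient is nonzero so that \eqref{eq:Isi-PM2021} is a genuine asymptotic.

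First I would write $a = \calF(\calinF a)$ with $\calinF a \in \scrS(\R)$ and use the distributional Fourier transform (Definition \ref{defn:FourierTransform-2}) to get $I(\lambda) = \agl[e^{i\lambda(\cdot)^2/2}, a] = \agl[\calF\{e^{i\lambda(\cdot)^2/2}\}, \calinF a]$, which is legitimate because $e^{i\lambda x^2/2}$ is a bounded function, hence a tempered distribution. Applying the Gaussian transform formula \eqref{eq:FrGauiQ-PM2021} with $Q = \lambda$ (so ${\rm sgn}\, Q = 1$, $\det Q = \lambda$, $Q^{-1} = \lambda^{-1}$) gives $\calF\{e^{i\lambda(\cdot)^2/2}\}(\xi) = \lambda^{-1/2} e^{i\pi/4} e^{-i\xi^2/(2\lambda)}$, so
\[
I(\lambda) = \frac{e^{i\pi/4}}{\lambda^{1/2}} \int_\R e^{-i\xi^2/(2\lambda)}\, (\calinF a)(\xi) \dif \xi,
\]
an absolutely convergent integral since $\calinF a \in \scrS(\R)$ and $|e^{-i\xi^2/(2\lambda)}| = 1$.

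Next I would apply the Taylor formula \eqref{eq:TaylorInt-PM2021} to $t \mapsto e^{-it}$ at the point $t = \xi^2/(2\lambda)$, obtaining $e^{-i\xi^2/(2\lambda)} = \sum_{j \leq N} \frac{1}{j!}\left(\frac{-i\xi^2}{2\lambda}\right)^j + R(\xi,\lambda)$ with the integral-remainder bound $|R(\xi,\lambda)| \leq \frac{1}{(N+1)!}\left(\frac{\xi^2}{2\lambda}\right)^{N+1}$. For the main terms I need the even moments $\int_\R \xi^{2j} (\calinF a)(\xi) \dif \xi$; differentiating the inversion formula $a(x) = (2\pi)^{-1/2}\int e^{-ix\xi}(\calinF a)(\xi)\dif\xi$ exactly $2j$ times and setting $x = 0$ yields $\int_\R \xi^{2j}(\calinF a)(\xi)\dif\xi = (2\pi)^{1/2}(-1)^j a^{(2j)}(0)$. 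Substituting this and using $(-i)^j(-1)^j = i^j$ collapses the principal part to precisely $\left(\frac{2\pi}{\lambda}\right)^{1/2} e^{i\pi/4} \sum_{j \leq N}\frac{\lambda^{-j}}{j!}\left(\frac i2\right)^j a^{(2j)}(0)$.

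Finally, for the remainder I would estimate $\left|\frac{e^{i\pi/4}}{\lambda^{1/2}}\int_\R R(\xi,\lambda)(\calinF a)(\xi)\dif\xi\right| \leq \frac{\lambda^{-1/2-N-1}}{(N+1)!\, 2^{N+1}}\int_\R |\xi|^{2N+2}|(\calinF a)(\xi)|\dif\xi$, then use $|\xi|^{2N+2}|(\calinF a)(\xi)| = |(\calinF a^{(2N+2)})(\xi)|$, so by \eqref{eq:Fuu-PM2021} with $n = 1$ the last integral is $\lesssim \sum_{2N+2 \leq k \leq 2N+4}\nrm[L^1(\R)]{a^{(k)}}$; since $a$ has fixed compact support each $\nrm[L^1(\R)]{a^{(k)}} \lesssim \sup|a^{(k)}|$, which yields exactly the claimed $\mathcal O\big(\lambda^{-1/2-N-1}\sum_{j \leq 2N+4}\sup|a^{(j)}|\big)$. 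The only real difficulty is the bookkeeping: carrying the factors of $i$, $2$, $2\pi$ and $e^{i\pi/4}$ through the transform and the moment computation so that the leading sum emerges with the precise coefficient $\frac1{j!}(i/2)^j$, and choosing the number of derivatives in the remainder so the sup-norm sum terminates exactly at index $2N+4$.
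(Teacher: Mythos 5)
Your proof is correct and follows essentially the same route as the paper: a Plancherel/Parseval-type step (here via the duality pairing with $\calinF a$ and the Gaussian formula \eqref{eq:FrGauiQ-PM2021}) to move the quadratic phase to the Fourier side, Taylor expansion in $h=\lambda^{-1}$ with integral remainder, identification of the moments with $a^{(2j)}(0)$ by Fourier inversion, and the remainder bound via \eqref{eq:Fuu-PM2021} giving derivatives up to order $2N+4$. The only differences are cosmetic (pairing with $\calinF a$ instead of the conjugated inner product with $\hat a$, and expanding the exponential pointwise rather than the function $J(h)$), so no further comparison is needed.
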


\begin{proof}
	By the Plancherel theorem (which claims $(f,g) = (\hat f, \hat g)$) we have	
	\begin{align*}
	I(\lambda)
	& = \int \overline{ e^{-i\lambda x^2/2} } a(x) \dif x
	= \int \overline{ (\lambda)^{-1/2} e^{-\frac {i\pi} 4} e^{\frac i {2\lambda} \xi^2} } \cdot \hat a(\xi) \dif \xi \nonumber \\
	& = \lambda^{-1/2} e^{\frac {i\pi} 4} \int e^{-\frac {i \xi^2} {2} h} \hat a(\xi) \dif \xi, \qquad h := \lambda^{-1}.
	\end{align*}
	Using \eqref{eq:TaylorInt-PM2021} to expand $I(\lambda)$ w.r.t.~$h$ at $h = 0$, we obtain
	\begin{align}
	\int e^{-\frac {i \xi^2} {2} h} \hat a(\xi) \dif \xi
	& = \sum_{j = 0}^N \int h^j (-\frac {i \xi^2} {2})^j/j! \, \hat a(\xi) \dif \xi \nonumber \\
	& \quad + |\frac{h^{N+1}}{\color{red}N!} {\color{red}\int_0^1} \int {\color{red}(1-t)^{N}} (-\frac {i \xi^2} {2})^{N+1} {\color{red}e^{-\frac {i \xi^2} {2} th}} \hat a(\xi) \dif \xi {\color{red}\dif t}| \nonumber \\
	& = \sum_{j = 0}^N h^j \frac {(\frac {i} {2})^j} {j!} \int \calF\{a^{(2j)}\}(\xi) \dif \xi + \mathcal O(h^{N+1} \int |\xi^{2N+2} \hat a(\xi)| \dif \xi) \nonumber \\
	& = \sum_{j = 0}^N \frac {(2\pi)^{1/2} (\frac {i} {2})^j h^j} {j!} a^{(2j)}(0) + \mathcal O(h^{N+1} \nrm[L^1]{\calF\{a^{(2N+2)}\}}) \label{eq:hjFa-PM2021} \\
	& = \sum_{j = 0}^N \frac {(2\pi)^{1/2} (\frac {i} {2})^j h^j} {j!} a^{(2j)}(0) + \mathcal O(h^{N+1} \sum_{j\leq 2N+4} \nrm[L^1(\Rn)]{a^{(j)}}).\nonumber
	\end{align}
	Note that we used \eqref{eq:Fuu-PM2021}.	Combining the computations and changing $h$ back to $\lambda^{-1}$, we arrive at 
	\begin{equation*}
	I(\lambda)
	= \left( \frac{2\pi}{\lambda} \right)^{1/2} e^{\frac {i\pi} 4} \sum_{j = 0}^N \frac{\lambda^{-j}} {j!} \left( \frac {i} {2} \right)^j a^{(2j)}(0) + \mathcal O(\lambda^{-\frac 1 2 -N-1} \sum_{j\leq 2N+4} \sup |a^{(j)}|).
	\end{equation*}
	The proof is complete.
\end{proof}

From this short proof, we see extract several main steps:
\begin{enumerate}
	\item use Plancherel theorem to turn $\lambda$ into $\lambda^{-1}$ in the exponent;
	
	\item expand the integral w.r.t.~$h := \lambda^{-1}$ at $h = 0$ with integral remainder;
	
	\item estimate the remainder with \eqref{eq:Fuu-PM2021}.
\end{enumerate}

\section{Lemma Statements}

\index{stationary phase lemma}
\begin{thm} \label{thm:2-PM2021}
	Let $n \in \N^+$ be the dimension.
	We consider the oscillatory integral $I(\lambda)$:
	\[
	I(\lambda) = \int_{\Rn} e^{i\lambda \agl[Q(x - x_0), x - x_0]/2} a(x;\lambda) \dif{x},
	\]
	where $\agl[Q x, y]$ signifies $(Qx)^T y$ as matrix multiplication.
	Fix two arbitrary integers $M$, $N \in \mathbb N$, and we assume
	\begin{itemize}
		
		\item $Q$ is a non-degenerate, symmetric, real-valued matrix;
		
		\item for each $\lambda$, $a(\cdot;\lambda) \in C^{n+2N+3}(\Rn;\mathbb C)$;
		
		\item for each $\lambda$, $a(\cdot;\lambda) \in C^{M}(\Rn;\mathbb C)$, and $\forall \alpha : |\alpha| \leq M$, there exists $\lambda$-dependent constants $C_{M,\alpha}(\lambda) > 0$ such that $\forall x \in \Rn$ there holds
		\[
		|\partial_x^\alpha a(x - x_0; \lambda)| < C_{M,\alpha}(\lambda) \agl[x]^{2M-n-1-|\alpha|}.
		\]
		
	\end{itemize}
	Then the integral $I(\lambda)$ is well-defined in the oscillatory integral sense, and as $\lambda \to +\infty$ we have
	\begin{subequations} \label{eq:IQ-PM2021}
		\begin{empheq}[box=\widefbox]{align}
		I(\lambda)
		& = \left( \frac{2\pi}{\lambda} \right)^{n/2} \frac{e^{i\frac{\pi}{4}\sgn Q}} {|\det Q|^{1/2}} \sum_{0 \leq j \leq N} \frac {\lambda^{-j}} {j!} \left( \frac {\agl[Q^{-1}D, D]} {2i} \right)^j \big( a(x; \lambda) \big) |_{x = x_0} \nonumber \\
		& \qquad + \mathcal O \big( \lambda^{-\frac{n}{2}-N-1} \times \sum_{|\alpha| \leq n+2N+3} \sup_{B(x_0,1)} |\partial^\alpha a(\cdot; \lambda)| \big) \tag{\ref{eq:IQ-PM2021}} \\
		& \qquad + \mathcal O \big( \lambda^{-M} \times \sum_{|\alpha| \leq M} \sup_{\Rn} \frac {|\partial^\alpha a(x - x_0; \lambda)|} {\agl[x]^{2M-n-1-|\alpha|}} \big), \nonumber
		\end{empheq}
	\end{subequations}
	where $B(x_0,1)$ stands for the ball centered at $x_0$ with radius 1, and $\sgn Q$ stands for the difference between the number of positive eigenvalues and the number of negative eigenvalues of the matrix $Q$.
\end{thm}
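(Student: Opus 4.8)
The plan is to first translate so that $x_0 = 0$ and then split the amplitude with a fixed cutoff $\chi_0 \in C_c^\infty(\Rn)$ that equals $1$ near the origin and is supported in $B(0,1)$, writing $a = \chi_0 a + (1-\chi_0)a$. The near piece $\chi_0 a$ is compactly supported, so $\int e^{i\lambda \agl[Qx,x]/2}\chi_0(x)a(x)\dif{x}$ is an ordinary absolutely convergent integral, and it will produce the whole asymptotic expansion in \eqref{eq:IQ-PM2021} together with the $\lambda^{-n/2-N-1}$ error term; the far piece $(1-\chi_0)a$ is non-stationary on its support and will account only for the last, $\lambda^{-M}$, error term, and will also be where the bare existence of $I(\lambda)$ as an oscillatory integral gets proved. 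On the near piece I would simply run, in $n$ dimensions, the three-step scheme isolated right after Lemma \ref{lem:sst-PM2021}.

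First, by Parseval's relation (Lemma \ref{lem:FtransRelat-PM2021s}) together with the Fourier transform of a Gaussian \eqref{eq:FrGauiQ-PM2021} applied to the matrix $\lambda Q$ — for which $\sgn(\lambda Q) = \sgn Q$, $|\det(\lambda Q)|^{1/2} = \lambda^{n/2}|\det Q|^{1/2}$ and $(\lambda Q)^{-1} = \lambda^{-1}Q^{-1}$ since $\lambda>0$ — one gets
\[
\int e^{i\lambda \agl[Qx,x]/2}\chi_0(x)a(x)\dif{x}
= \frac{e^{i\frac{\pi}{4}\sgn Q}}{\lambda^{n/2}|\det Q|^{1/2}}\int e^{-\frac{i}{2\lambda}\agl[Q^{-1}\xi,\xi]}\,\widehat{\chi_0 a}(\xi)\dif{\xi},
\]
the pairing against the (non-$L^2$) Gaussian being legitimate because $\widehat{\chi_0 a}\in L^1(\Rn)$ by \eqref{eq:Fuu-PM2021} (recall $\chi_0 a$ is compactly supported and $C^{n+2N+3}$) and because \eqref{eq:FrGauiQ-PM2021} is an identity of tempered distributions — equivalently one inserts a regulariser $e^{-\epsilon|x|^2}$, or mollifies $\chi_0 a$, and passes to the limit, exactly as in the proof of \eqref{eq:FrGauiQ-PM2021}. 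Second, with $h:=\lambda^{-1}$ I would Taylor-expand $e^{-\frac{ih}{2}\agl[Q^{-1}\xi,\xi]}$ in $h$ about $0$ to order $N$ with the integral remainder \eqref{eq:TaylorInt-PM2021}, integrate the polynomial part term by term against $\widehat{\chi_0 a}$, and use $\int \xi^\beta\,\widehat{\chi_0 a}(\xi)\dif{\xi} = (2\pi)^{n/2}(D^\beta(\chi_0 a))(0) = (2\pi)^{n/2}(D^\beta a)(0)$ (the last equality because $\chi_0\equiv 1$ near $0$); since $\tfrac{1}{2i} = -\tfrac{i}{2}$, the sum $\sum_{j\le N}\tfrac{h^j}{j!}(-\tfrac{i}{2})^j\int \agl[Q^{-1}\xi,\xi]^j\,\widehat{\chi_0 a}\dif{\xi}$ collapses to $(2\pi)^{n/2}\sum_{j\le N}\tfrac{\lambda^{-j}}{j!}\big(\tfrac{\agl[Q^{-1}D,D]}{2i}\big)^j a(0)$, which, multiplied by the prefactor, is precisely the main term of \eqref{eq:IQ-PM2021}.

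Third, I would estimate the remainder: since $Q^{-1}$ is real symmetric the residual exponential has modulus one, so the remainder contributes at most a constant times $\lambda^{-n/2-N-1}\int|\xi|^{2N+2}|\widehat{\chi_0 a}(\xi)|\dif{\xi}$; expanding $|\xi|^{2N+2}$ as a homogeneous polynomial and applying \eqref{eq:Fuu-PM2021} to each $D^\gamma(\chi_0 a)$ with $|\gamma| = 2N+2$ bounds this by $\lambda^{-n/2-N-1}\sum_{|\alpha|\le n+2N+3}\nrm[L^1]{\partial^\alpha(\chi_0 a)}\lesssim \lambda^{-n/2-N-1}\sum_{|\alpha|\le n+2N+3}\sup_{B(0,1)}|\partial^\alpha a|$ (Leibniz, with $\chi_0$ fixed and supported in $B(0,1)$) — exactly the second error term, and this is where the hypothesis $a(\cdot;\lambda)\in C^{n+2N+3}$ is consumed. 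For the far piece, on $\supp(1-\chi_0)$ the phase $\tfrac12\agl[Qx,x]$ has no critical point and $|\nabla_x(\tfrac12\agl[Qx,x])| = |Qx|\gtrsim \agl[x]$ by non-degeneracy of $Q$, so with $L := \dfrac{Qx\cdot\nabla_x}{i\lambda|Qx|^2}$ one has $L(e^{i\lambda\agl[Qx,x]/2}) = e^{i\lambda\agl[Qx,x]/2}$; inserting a regulariser $\chi(\delta x)$ and integrating by parts $M$ times gives $\lambda^{-M}\int e^{i\lambda\agl[Qx,x]/2}\,({}^tL)^M\big((1-\chi_0)a\,\chi(\delta\cdot)\big)\dif{x}$, and since each ${}^tL$ differentiates once and multiplies by a function that is $O(\agl[x]^{-2})$ at infinity, the growth hypothesis $|\partial^\alpha a|\lesssim C_{M,\alpha}(\lambda)\agl[x]^{2M-n-1-|\alpha|}$ makes the integrand after $M$ steps $\lesssim \lambda^{-M}\agl[x]^{-n-1}\sum_{|\alpha|\le M}\sup_{\Rn}\frac{|\partial^\alpha a|}{\agl[x]^{2M-n-1-|\alpha|}}$, which is integrable; by Lemma \ref{lem:chbdd-PM2021} this bound is uniform in $\delta$, so dominated convergence simultaneously shows $I(\lambda)$ exists as an oscillatory integral and identifies the far piece with the $\lambda^{-M}$ error term. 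Undoing the translation puts everything at $x_0$.

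The main obstacle I anticipate is bookkeeping rather than conceptual: keeping the three contributions and their disjoint hypotheses separate — compact support plus $C^{n+2N+3}$ smoothness for the main term and its $\lambda^{-n/2-N-1}$ error, versus $C^M$ plus polynomial growth for the $\lambda^{-M}$ error. The one genuinely delicate point is the far-piece integration by parts: one must verify that because the phase is \emph{exactly} quadratic (not merely of quadratic order), $M$ applications of ${}^tL$ turn the admissible growth $\agl[x]^{2M-n-1}$ into the integrable decay $\agl[x]^{-n-1}$, with every constant independent of the regulariser — this is precisely what forces the exponent $2M-n-1$ in the hypothesis. The Parseval/Gaussian step also deserves a careful sentence since the Gaussian is not $L^2$, but this is already the content of \eqref{eq:FrGauiQ-PM2021} and introduces no new idea.
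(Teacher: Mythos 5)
Your proposal is correct and follows essentially the same route as the paper's proof: a cutoff near/far splitting, non-stationary integration by parts with the growth hypothesis controlling the far piece (the $\lambda^{-M}$ error), and Plancherel plus the Gaussian Fourier transform \eqref{eq:FrGauiQ-PM2021} followed by Taylor expansion in $h=\lambda^{-1}$ with the remainder handled via \eqref{eq:Fuu-PM2021} for the near piece (the main expansion and the $\lambda^{-n/2-N-1}$ error). The only deviations are cosmetic: you apply \eqref{eq:FrGauiQ-PM2021} directly to $\lambda Q$ instead of first diagonalizing $Q=P\Lambda P^T$ and changing variables as the paper does, and you make the regulariser $\chi(\delta x)$ and Lemma \ref{lem:chbdd-PM2021} explicit in the far piece where the paper leaves the oscillatory-integral bookkeeping implicit.
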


\begin{rem} \label{rem:1-PM2021}
	In contrast to many other versions of the stationary phase lemma, here we don't require $a$ to be compactly supported.
	Instead, some other boundedness conditions are required, which makes the oscillatory integral well-defined.
\end{rem}

\begin{rem} \label{rem:2-PM2021}
	Eq.~\eqref{eq:IQ-PM2021} is not an asymptotics w.r.t.~$j$, but is rather w.r.t.~$\lambda$. 
	To get enough terms w.r.t.~$j$, one could choose $M$ be large enough first, and then check if $a$ satisfies the requirements.
\end{rem}

\begin{rem} \label{rem:3-PM2021}
	The integers $M$ and $N$ in Theorem \ref{thm:2-PM2021} shall be chosen properly to serve for your own purposes.
	For example, if one cares more about the decaying behavior w.r.t.~$\lambda$, then the $M$ can be set to $\lceil n/2 \rceil +N+1$.
	However, if one is dealing with these functions $a$ which doesn't have good decaying behavior at the infinity, then one could set $M$ to be large enough such that $\agl[x]^{2M-n-1-|\alpha|}$ can dominate $\partial^\alpha a$, with the cost that we should demand more smoothness of $a(x)$.
\end{rem}

\begin{rem} \label{rem:4-PM2021}
	The unit ball $B(x_0,1)$ involved in the term $\sup_{B(0,1)} |\partial^\alpha a|$ can be changed to other bounded domain containing $x_0$.
	But one should be careful that when the domain used has a very small radius, the underlying coefficients of the $\mathcal O(\cdot)$ term will be very large accordingly.
\end{rem}

\begin{rem} \label{rem:5-PM2021}
	The function $a$ is allowed to be dependent on $\lambda$, hence the expression \eqref{eq:IQ-PM2021} is an asymptotic expansion only when $\partial_x^\alpha a(x;\lambda)$ doesn't increase significantly when $\lambda \to +\infty$.
\end{rem}


If chosen $M = n + 2N + 3$, Theorem \ref{thm:2-PM2021} will be simplified as follows.

\begin{prop} \label{prop:2-PM2021}
	Let $n \in \N^+$ be the dimension.
	We consider the oscillatory integral $I(\lambda)$:
	\[
	I(\lambda) = \int_{\Rn} e^{i\lambda \agl[Q(x - x_0), x - x_0]/2} a(x;\lambda) \dif{x},
	\]
	where $\agl[Q x, y]$ signifies $(Qx)^T y$ as matrix multiplication.
	Fix an integer $N \in \mathbb N$, and we assume
	\begin{itemize}
		
		\item $Q$ is a non-degenerate, symmetric, real-valued matrix;
		
		\item for each $\lambda$, $a(\cdot;\lambda) \in C^{n + 2N + 3}(\Rn;\mathbb C)$, and $\forall \alpha : |\alpha| \leq n + 2N + 3$, there exists $\lambda$-dependent constants $C_{N,n,\alpha}(\lambda) > 0$ such that $\forall x \in \Rn$ there holds
		\begin{equation} \label{eq:as2-PM2021}
		|\partial_x^\alpha a(x - x_0; \lambda)| < C_{N,n,\alpha}(\lambda) \agl[x]^{2N+2}.
		\end{equation}
	\end{itemize}
	Then the integral $I(\lambda)$ is well-defined in the oscillatory integral sense, and as $\lambda \to +\infty$ we have
	\begin{subequations} \label{eq:IQs-PM2021}
		\begin{empheq}[box=\widefbox]{align}
		I(\lambda)
		& = \left( \frac{2\pi}{\lambda} \right)^{n/2} \frac{e^{i\frac{\pi}{4}\sgn Q}} {|\det Q|^{1/2}} \sum_{0 \leq j \leq N} \frac {\lambda^{-j}} {j!} \left( \frac {\agl[Q^{-1}D, D]} {2i} \right)^j \big( a(x; \lambda) \big) |_{x = x_0} \nonumber \\
		& \qquad + \mathcal O \big( \lambda^{-\frac{n}{2}-N-1} \sum_{|\alpha| \leq n+2N+3} \sup_{x \in \Rn} \frac {|\partial^\alpha a(x - x_0; \lambda)|} {\agl[x]^{n+4N+5-|\alpha|}} \big). \tag{\ref{eq:IQs-PM2021}}
		\end{empheq}
	\end{subequations}
\end{prop}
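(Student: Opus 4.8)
The plan is to read off Proposition~\ref{prop:2-PM2021} from Theorem~\ref{thm:2-PM2021} by taking $M = n+2N+3$, and then to merge the two error terms of~\eqref{eq:IQ-PM2021} into the single error term displayed in~\eqref{eq:IQs-PM2021}.

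\textbf{Step 1 (the hypotheses transfer).} Set $M := n+2N+3$. The assumption $a(\cdot;\lambda)\in C^{n+2N+3}(\Rn;\mathbb C)$ furnishes at once both the $C^{n+2N+3}$ and the $C^{M}$ smoothness demanded by Theorem~\ref{thm:2-PM2021}. For the weighted decay hypothesis, observe $2M-n-1-|\alpha| = n+4N+5-|\alpha|$, and that $|\alpha|\le n+2N+3$ forces $2N+2 \le n+4N+5-|\alpha|$; since $\agl[x]\ge 1$, the assumption~\eqref{eq:as2-PM2021} yields
\[
|\partial_x^\alpha a(x-x_0;\lambda)| < C_{N,n,\alpha}(\lambda)\,\agl[x]^{2N+2} \le C_{N,n,\alpha}(\lambda)\,\agl[x]^{2M-n-1-|\alpha|},
\]
which is exactly the bound required in Theorem~\ref{thm:2-PM2021}. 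Hence $I(\lambda)$ is a well-defined oscillatory integral and~\eqref{eq:IQ-PM2021} holds with this $M$; the leading sum there is literally the leading sum of~\eqref{eq:IQs-PM2021}, so only the error terms remain.

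\textbf{Step 2 (collapsing the error).} Put $e_\alpha := n+4N+5-|\alpha| = 2M-n-1-|\alpha|$, and note $e_\alpha \ge 2N+2 \ge 0$ whenever $|\alpha|\le n+2N+3$. Since $\agl[x]\simeq\agl[x-x_0]$ with comparison constants depending only on $x_0$ (which get absorbed into the $\mathcal O(\cdot)$), the second error term of~\eqref{eq:IQ-PM2021} is, after the change of variable $x\mapsto x-x_0$, bounded by a multiple of $\lambda^{-M}\sum_{|\alpha|\le n+2N+3}\sup_{x\in\Rn}\agl[x]^{-e_\alpha}\,|\partial^\alpha a(x-x_0;\lambda)|$; and because $M = n+2N+3 \ge \frac{n}{2} + N + 1$, for $\lambda\ge 1$ one has $\lambda^{-M}\le\lambda^{-\frac{n}{2} - N - 1}$. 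For the first error term, on $B(x_0,1)$ the weight $\agl[x]^{e_\alpha}$ is bounded above by a constant depending only on $x_0$, $N$, $n$, so that $\sup_{B(x_0,1)}|\partial^\alpha a(\cdot;\lambda)| \lesssim \sup_{x\in\Rn}\agl[x]^{-e_\alpha}\,|\partial^\alpha a(x-x_0;\lambda)|$ as well. Adding the two bounds produces
\[
\mathcal O\Big(\lambda^{-\frac{n}{2} - N - 1}\sum_{|\alpha|\le n+2N+3}\sup_{x\in\Rn}\frac{|\partial^\alpha a(x-x_0;\lambda)|}{\agl[x]^{n+4N+5-|\alpha|}}\Big),
\]
which, together with the leading sum from Step 1, is precisely~\eqref{eq:IQs-PM2021}.

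\textbf{Main difficulty.} There is no genuine analytic obstacle: this is a bookkeeping corollary of Theorem~\ref{thm:2-PM2021}. The point that needs care is Step 2, namely checking that the one weight $\agl[x]^{n+4N+5-|\alpha|}$ simultaneously stays bounded on $B(x_0,1)$ (so that it dominates the near-critical-point supremum) and reproduces the weight $\agl[x]^{2M-n-1-|\alpha|}$ coming from the choice $M = n+2N+3$, while tracking the $x_0$-dependent constants when passing between $\agl[x]$ and $\agl[x-x_0]$.
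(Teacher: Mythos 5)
Your proposal is correct and follows essentially the same route as the paper's own proof: specialize Theorem \ref{thm:2-PM2021} to $M = n+2N+3$, use $\lambda^{-M} \le \lambda^{-\frac n 2 - N - 1}$ and the identity $2M-n-1-|\alpha| = n+4N+5-|\alpha|$ for the second remainder, and absorb the $B(x_0,1)$ remainder by noting the weight $\agl[x]^{n+4N+5-|\alpha|}$ is bounded on that ball by an $x_0$-dependent constant. Your explicit check that hypothesis \eqref{eq:as2-PM2021} implies the weighted bound required by Theorem \ref{thm:2-PM2021} is a small addition the paper leaves implicit, but the argument is the same.
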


Proposition \ref{prop:2-PM2021} can be extend to a more general case where the phase function is not quadratic.

\begin{thm}[Stationary phase lemma \cite{eskin2011lectures}] \label{thm:1-PM2021}
	We consider the oscillatory integral $I(\lambda)$:
	\[
	I(\lambda) = \int_{\Rn} e^{i\lambda \varphi(x)} a(x) \dif{x}.
	\]
	For an arbitrary integer $N \in \mathbb N$, assume
	\begin{itemize}
		\item $a \in C^{n+2N+3}(\Rn;\mathbb C)$ with $\sum_{|\alpha| \leq n + 2N+3} \sup_{\Rn} |\partial^\alpha a| < +\infty$;
		
		\item $\varphi \in C^{n+2N+6}(\Rn;\R)$ with $\sum_{|\alpha| \leq n+2N+6} \sup_{\Rn} |\partial^\alpha \varphi| < +\infty$;
		
		\item $x_0$ is the only critical point of $\varphi(x)$ on $\supp a(x)$, i.e., $\varphi(x_0) = \nabla \varphi(x_0) = 0$, $\varphi_x(x) \neq 0$ for $x \neq x_0$;
		
		\item the Hessian $\varphi_{xx}(x_0) := [\frac{\partial^2\varphi}{\partial x_j \partial x_k}(x_0)]_{j,k=1}^n$ satisfies $\det \varphi_{xx}(x_0) \neq 0$.
	\end{itemize}
	Then the integral $I(\lambda)$ is well-defined in the oscillatory integral sense, and as $\lambda \to +\infty$ we have
	\begin{subequations} \label{eq:I-PM2021}
		\begin{empheq}[box=\widefbox]{align}
		I(\lambda)
		& = \left( \frac{2\pi}{\lambda} \right)^{n/2} \frac{e^{i\lambda\varphi(x_0) + i\frac{\pi}{4}\sgn \varphi_{xx}(x_0)}}{|\det \varphi_{xx}(x_0)|^{1/2}} \Big( a(x_0) + \sum_{j=1}^{N} a_j(x_0)\lambda^{-j} \Big) \nonumber \\
		& \quad + \mathcal O \big( \lambda^{-\frac{n}{2}-N-1} \times \sum_{|\alpha| \leq n+2N+3} \sup_{\Rn} |\partial^\alpha a| \times \sum_{|\alpha| \leq n+2N+6} \sup_{\Rn} |\partial^\alpha \varphi| \big), \tag{\ref{eq:I-PM2021}} 
		\end{empheq}
	\end{subequations}
	for some functions $a_j, ~1 \leq j \leq N$.
\end{thm}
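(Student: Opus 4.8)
The plan is to deduce Theorem~\ref{thm:1-PM2021} from the quadratic–phase result Proposition~\ref{prop:2-PM2021} by a localization near $x_0$ followed by the Morse lemma. First I would fix a cutoff $\chi \in C_c^\infty(\Rn)$ with $\chi \equiv 1$ on a small ball $B(x_0,r)$ and $\supp\chi \subset B(x_0,2r)$, where $r$ is small enough that the Morse normal form below holds on $B(x_0,2r)$, and split $I(\lambda) = \int e^{i\lambda\varphi}\chi a\,\df x + \int e^{i\lambda\varphi}(1-\chi)a\,\df x$. On $\supp\big((1-\chi)a\big)$ the phase is non-stationary, so $\nabla\varphi \neq 0$ there, and $N' := n+2N+3$ iterated integrations by parts against $\tfrac{\nabla\varphi\cdot D}{\lambda|\nabla\varphi|^2}$ — exactly the non-stationary computation carried out at the start of this chapter — bound the second term by $\mathcal O(\lambda^{-N'})$ with a constant controlled by $\sum_{|\alpha|\le N'}\sup|\partial^\alpha a|$ and finitely many derivatives of $\varphi$; since $N' \ge \tfrac n2+N+1$ this is subsumed by the error in \eqref{eq:I-PM2021} (and the same manipulation exhibits the oscillatory integral as well-defined).

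Next I would invoke the Morse lemma: because $\varphi \in C^{n+2N+6}(\Rn)$, $\nabla\varphi(x_0)=0$ and $\det\varphi_{xx}(x_0)\neq 0$, there is a diffeomorphism $x=\psi(y)$ of a neighborhood of $0$ onto $B(x_0,2r)$, of class $C^{n+2N+4}$, with $\psi(0)=x_0$, $D\psi(0)=I$, and $\varphi(\psi(y)) = \varphi(x_0) + \tfrac12\langle\varphi_{xx}(x_0)\,y,y\rangle$ (Sylvester's law of inertia guarantees the signature of the quadratic part is $\sgn\varphi_{xx}(x_0)$). The loss of exactly two derivatives here is precisely why the hypothesis demands $C^{n+2N+6}$ rather than $C^{n+2N+3}$ regularity of $\varphi$. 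Changing variables,
\[
\int e^{i\lambda\varphi}\chi a\,\df x = e^{i\lambda\varphi(x_0)}\int e^{i\lambda\langle\varphi_{xx}(x_0)y,y\rangle/2}\,\tilde a(y)\,\df y,
\qquad \tilde a(y) := (\chi a)(\psi(y))\,|\det D\psi(y)|,
\]
and $\tilde a \in C^{n+2N+3}$ has compact support near $0$, with $\tilde a(0)=a(x_0)$.

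I would then apply Proposition~\ref{prop:2-PM2021} with $Q=\varphi_{xx}(x_0)$, base point $0$, and amplitude $\tilde a$ (the growth condition~\eqref{eq:as2-PM2021} being trivial since $\tilde a$ is compactly supported). This delivers the leading factor $\big(\tfrac{2\pi}{\lambda}\big)^{n/2}e^{i\pi\sgn\varphi_{xx}(x_0)/4}/|\det\varphi_{xx}(x_0)|^{1/2}$, the value $a(x_0)$ at $j=0$, coefficients $a_j(x_0) = \tfrac1{j!}\big(\tfrac{\langle\varphi_{xx}(x_0)^{-1}D,D\rangle}{2i}\big)^j\tilde a(y)\big|_{y=0}$ — which are differential expressions in finitely many derivatives of $a$ and $\varphi$ at $x_0$, as required — and an error $\mathcal O\big(\lambda^{-n/2-N-1}\sum_{|\alpha|\le n+2N+3}\sup|\partial^\alpha\tilde a|\big)$. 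Combining with the first paragraph and multiplying by the prefactor $e^{i\lambda\varphi(x_0)}$ gives \eqref{eq:I-PM2021}.

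The remaining work, and the step I expect to be the main obstacle, is the bookkeeping: one must express $\partial^\alpha\tilde a$ for $|\alpha|\le n+2N+3$ via the multivariate chain rule (Fa\`a di Bruno) in terms of the derivatives of $a$ up to order $n+2N+3$ and of $\psi$ up to order $n+2N+4$, and then invoke the quantitative dependence of the Morse chart on $\varphi$ to control the latter by $\sup$-norms of derivatives of $\varphi$ up to order $n+2N+6$. This is routine in spirit but delicate in the indices, since it is exactly where the numbers $n+2N+3$ and $n+2N+6$ in the statement, and the product form $\sum_{|\alpha|\le n+2N+3}\sup|\partial^\alpha a|\times\sum_{|\alpha|\le n+2N+6}\sup|\partial^\alpha\varphi|$ of the error, must be matched; verifying that the Morse diffeomorphism is in general no better than $C^{n+2N+4}$, and that this still suffices, is the crux.
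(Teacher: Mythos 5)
Your proposal is correct and follows essentially the same route as the paper's proof: cutoff near $x_0$, non-stationary integration by parts for the far part, a Morse-type change of variables reducing the phase to a quadratic form with the same signature and determinant as $\varphi_{xx}(x_0)$, and then the quadratic-phase result applied to the compactly supported transformed amplitude with $\tilde a(0)=a(x_0)$ via $|\det D\psi(0)|=1$. The only cosmetic differences are that the paper builds the Morse chart explicitly (Taylor with integral remainder plus diagonalization of the averaged Hessian, with exactly the two-derivative loss $C^{n+2N+6}\Rightarrow C^{n+2N+4}$ you cite) and invokes Theorem \ref{thm:2-PM2021} rather than its special case Proposition \ref{prop:2-PM2021}, which is immaterial here.
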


\begin{rem}
	Proposition \ref{prop:2-PM2021} is a special case of Theorem \ref{thm:1-PM2021} where
	\[
	\varphi(x) = \agl[Q(x - x_0), x - x_0]/2,
	\]
	which guarantees $\varphi_{xx}(x_0) = Q$.
	Theorem \ref{thm:1-PM2021} is not a generalization of Theorem \ref{thm:2-PM2021} because unlike the quadratic phase function in Theorem \ref{thm:2-PM2021}, the phase function $\varphi$ in Theorem \ref{thm:1-PM2021} is not assumed to possess the property that $|\nabla \varphi(x)| \simeq |x|$ as $|x|$ is large.
	However if $\varphi$ is a homeomorphism of $\Rn$, it is possible to generalize Theorem \ref{thm:2-PM2021}.
\end{rem}

In one-dimensional case, explicit expressions for these $a_j(x_0)$ are given in \cite[(3.4.11)]{zw2012semi}, and the details are given in \cite[Second proof of Theorem 3.11]{zw2012semi}.
For explicit expressions for these $a_j(x_0)$ in higher dimension, readers may refer to \cite[Theorem 7.7.5]{horm2003IIV} for details.
In \cite[Lemma 19.3]{eskin2011lectures} there is also another routine to prove the stationary phase lemma.
\cite[Chapter 5]{dim1999spe} by Dimassi and Sj\"ostrand is also a good reference.
See also \cite[\S 2.3 \& \S 6.4]{won89asy}.

\section{Proofs of the results} \label{ch:pof-PM2021}

We first prove the quadratic case.

\begin{proof}[Proof of Theorem \ref{thm:2-PM2021}]
	
	We omit notationally the dependence of $a$ on $\lambda$ until related clarifications are needed.
	Without loss of generality we assume $x_0 = 0$, and $a(0) = 1$.
	For readers' convenient we rewrite the expression of $I$ here: $I(\lambda) = \int_{\Rn} e^{i\lambda \agl[Qx, x]/2} a(x) \dif{x}$.
	
	
	{\bf Step 1:} cutoff singularity of the phase function.
	According to the assumption on $Q$, we know there exists a decomposition $Q = P \Lambda P^T$ where $P$ is an orthogonal matrix and $\Lambda := (\alpha_j)_{j=1,\cdots,n}$ is a diagonal matrix.
	Make the change of variable $y = P^T x$, we can have
	\begin{align}
	I
	& = \int_\Rn a(Py) e^{i\lambda \sum_{j=1}^{n} \alpha_j y_j^2/2} \dif y \nonumber \\
	& = \int_\Rn e^{i\lambda \sum_{j=1}^{n} \alpha_j y_j^2/2} (1-\chi(y)) f(y) \dif y + \int_\Rn e^{i\lambda \sum_{j=1}^{n} \alpha_j y_j^2/2} \chi(y) f(y) \dif y \quad (\det P = 1) \nonumber \\
	& =: J_1 + J_2, \label{eq:IQJ12-PM2021}
	\end{align}
	where $f(y) := a(Py)$ and $\chi \in C_c^\infty(\Rn)$ is a cutoff function satisfying $0 \leq \chi \leq 1$ and $\chi \equiv 1$ in a neighborhood of the origin.
	We will see:
	\newline \hspace*{\fill}
	\underline{$J_1$ is rapidly decaying} and \underline{$J_2$ gives the desired asymptotics}.
	\hspace*{\fill}
	
	{\bf Step 2:} $J_1$ is rapidly decaying.
	Noting that neighborhoods of the origin is not included in the support of the integrand in $J_1$, we can estimate $J_1$ by using integration by parts (in the oscillatory integral sense).
	For any integer $M \in \N$ we have
	\begin{align}
	|J_1|
	& = |\int_\Rn \big( \frac {\sum_j \alpha_j^{-1} y_j \partial_j} {i\lambda |y|^2} \big)^M (e^{i\lambda \sum_{j=1}^{n}\alpha_j y_j^2/2} ) \big[ (1-\chi(y)) f(y) \big] \dif y| \nonumber \\
	& \lesssim \lambda^{-M} \int_\Rn |\big( \sum_j \partial_j \circ (y_j |y|^{-2}) \big)^M \big[ (1-\chi(y)) f(y) \big] | \dif y \nonumber \\
	& \lesssim \lambda^{-M} \int_{\supp (1-\chi)} \sum_{|\alpha| \leq M} C_{M; \alpha} |y|^{|\alpha|-2M} |\partial^\alpha ((1-\chi) f(y))| \dif y \label{eq:J1Ind-PM2021} \\
	& \lesssim \lambda^{-M} \big( C \sum_{|\alpha| \leq M} \sup_{\{0 < \chi < 1\}} |\partial^\alpha f| + \sum_{|\alpha| \leq M} C_{M; \alpha} \int_{\{\chi = 0\}} |y|^{|\alpha|-2M} |\partial^\alpha f(y)| \dif y \big) \label{eq:J1Ine-PM2021} \\
	& \lesssim \lambda^{-M} \sum_{|\alpha| \leq M} \big( \sup_{\{0 < \chi < 1\}} |\partial^\alpha a| + \int_{\{\chi = 0\}} |y|^{-n-1} \agl[y]^{|\alpha|-2M+n+1} |\partial^\alpha a(y)| \dif y \big) \nonumber \\
	& \lesssim \lambda^{-M} \sum_{|\alpha| \leq M} \big( \sup_{\{0 < \chi < 1\}} |\partial^\alpha a| + \sup_{\Rn} \frac {|\partial^\alpha a(y)|} {\agl[y]^{2M-n-1-|\alpha|} } \big) \nonumber \\
	& \lesssim \lambda^{-M} \sum_{|\alpha| \leq M} \sup_{\Rn} \frac {|\partial^\alpha a(y)|} {\agl[y]^{2M-n-1-|\alpha|}}. \label{eq:J1f-PM2021}
	\end{align}
	The inequality \eqref{eq:J1Ind-PM2021} is due to the fact that
	\[
	\big( \sum_{1 \leq j \leq n} \partial_j \circ (y_j |y|^{-2}) \big)^M \varphi = \sum_{|\alpha| \leq M} C_{M; \alpha} |y|^{|\alpha|-2M} \partial^\alpha \varphi
	\]
	which can be derived by induction and we omit the details.
	Inequality \eqref{eq:J1Ine-PM2021} is due to the fact that $\partial^\alpha ((1-\chi) f) = \partial^\alpha f$ in $\{\chi = 0\}$.

	{\bf Step 3:} $J_2$ and Plancherel theorem.
	We turn to $J_2$.
	Keep in mind that $f(y) = a(Py)$ and $\chi f$ is compactly support and in $C_c^{n+2N+3}(\Rn)$.
	Here we analyze $I$ by borrowing idea from \cite[First proof of Theorem 3.11]{zw2012semi}.
	First we use Plancherel theorem (which states $(f,g) = (\hat f, \hat g)$),
	\begin{align}
	J_2
	& = \int_\Rn \overline{ e^{-i\lambda \sum_{j=1}^{n} \alpha_j y_j^2/2} } \chi f(y) \dif y
	= \int_\Rn \overline{ \calF \{ e^{-i\lambda \sum_{j=1}^{n} \alpha_j (\cdot)^2/2} \}(\xi) } \cdot \widehat{\chi f}(\xi) \dif \xi \nonumber \\
	& = \int_\Rn \overline{ (\lambda)^{-n/2} \frac {e^{-\frac {i\pi} 4 {\rm sgn}\, Q}} {|\det Q|^{1/2}} e^{\frac i {2\lambda} \alpha_j^{-1} \xi_j^2} } \cdot\widehat{\chi f}(\xi) \dif \xi \qquad (\text{by~} \eqref{eq:FrGauiQ-PM2021})\nonumber \\
	& =: \big( \frac {2\pi} \lambda \big)^{n/2} \frac {e^{\frac {i\pi} 4 {\rm sgn}\, Q}} {|\det Q|^{1/2}} J(1/\lambda, 1/\lambda, \chi f), \label{eq:I1J-PM2021}
	\end{align}
	where we ignored the summation notation over $j$ and the function $J$ is defined by
	\begin{equation} \label{eq:Jh-PM2021}
	J(h_1, h_2, \chi f) := (2\pi)^{-n/2} \int_\Rn e^{\frac {\xi_j^2 h_1} {i2\alpha_j}} \cdot \widehat{\chi f}(\xi; 1/h_2) \dif \xi.
	\end{equation}
	Note that in \eqref{eq:Jh-PM2021} we put emphasize on the dependence of $f$ on $h_2$ (i.e.~dependence of $a$ on $\lambda$).
	The smoothness of $J$ w.r.t.~$h_1$ is guaranteed by the $L^1$ of derivatives of $f$, namely, we have the following claim whose justification will be clear in \eqref{eq:JhR-PM2021},
	\[
	\forall m \in \mathbb N,\, \max_{|\alpha| \leq n + 2m + 1} \nrm[L^1(\Rn)]{\partial^\alpha f} < +\infty \ \Rightarrow \ J(\cdot, h_2, f) \in C^m(\R).
	\]

	{\bf Step 4:} Taylor's expansion.
	We abbreviate $\partial_{h_1} J$ as $\partial_1 J$.
	Expand $J$ w.r.t.~$h_1$,
	\begin{align*}
	\partial_1^k J(0, h_2, \chi f)
	& = (2\pi)^{-n/2} \int_\Rn \partial_1^{k} (e^{\frac {\xi_j^2 h_1} {i2\alpha_j}}) |_{h_1 = 0} \cdot \widehat{\chi f}(\xi) \dif \xi \\
	& = (2\pi)^{-n/2} \int_\Rn (\sum_j \frac {\xi_j^2} {i2\alpha_j})^k \cdot \widehat{\chi f}(\xi) \dif \xi \\
	& = (2\pi)^{-n/2} \int_\Rn \widehat{T^k \chi f}(\xi) \dif \xi
	= T^k f(0)
	= T^k \big( a(Py) \big) |_{y = 0} \\
	& = (\frac i 2 P^{lj} P^{kj} \alpha_j^{-1} \partial_{kl})^k a(0)
	= A^k a(0; 1/h_2),
	\end{align*}
	where $T = \frac i 2 \sum_j \frac {\partial_j^2} {\alpha_j}$ and \underline{$A = \frac i 2 (Q^{-1})^{jl} \partial_{jl} = \frac 1 {2i} \agl[Q^{-1}D, D]$} (recall that $D = \frac 1 i \nabla$ is vertical).
	We expand $J$ using Taylor series (i.e.~\eqref{eq:TaylorInt-PM2021}),
	\begin{align}
	J(h, h_2, \chi f)
	& = \sum_{k\leq N} \frac{h^k}{k!} \partial_h^{k}J(0, h_2, \chi f) + \frac{h^{N+1}} {N!} \int_0^1 (1-t)^{N} \cdot \partial_1^{N+1}J(th, h_2, \chi f) \dif{t} \nonumber \\
	& = \sum_{0 \leq k \leq N} \frac{(hA)^k}{k!} a(0; h_2) + \frac{h^{N+1}} {N!} \int_0^1 (1-t)^{N} \cdot \partial_1^{N+1}J(th, h_2, \chi f) \dif{t}. \label{eq:JhExp-PM2021}
	\end{align}

	{\bf Step 5:} The remainder term.
	By invoking \eqref{eq:Jh-PM2021}, the remainder term in \eqref{eq:JhExp-PM2021} can be estimated as
	\begin{align*}
	& \ |\frac{h^{N+1}}{N!} \int_0^1 (1-t)^{N} \cdot \partial_1^{N+1}J(th, h_2, \chi f) \dif{t}| \\
	\leq & C_N h^{N+1} \int_\Rn |(\frac {-i} {4\alpha_j} \xi_j^2)^{N+1} \cdot \widehat{\chi f}(\xi; 1/h_2)| \dif \xi \\
	\leq & \ C_N h^{N+1} \sum_{|\beta| \leq 2N+2} C_\beta \nrm[L^1(\Rn)]{(\partial_x^\beta (\chi f(\cdot; 1/h_2)))^\wedge}.
	\end{align*}
	By using \eqref{eq:Jh-PM2021} and \eqref{eq:Fuu-PM2021}, we can continue
	\begin{align}
	& \ |\frac{h^{N+1}}{N!} \int_0^1 (1-t)^{N} \cdot \partial_1^{N+1}J(th, h_2, \chi f) \dif{t}| \nonumber \\
	\leq & \ C_N h^{N+1} \sum_{\substack{|\alpha| \leq n+1 \\ |\beta| \leq 2N + 2}} \nrm[L^1(\Rn)]{\partial_x^{\alpha + \beta} (\chi f(\cdot; 1/h_2))} \\
	\leq & \ C_N h^{N+1} \sum_{|\alpha| \leq n+ 2N + 3} \nrm[L^1(\Rn)]{\partial_x^{\alpha} (\chi f(\cdot; 1/h_2))} \nonumber \\
	\leq & \ C_N h^{N+1} \sum_{|\alpha| \leq n+2N+3} \sup_{\supp \chi} |\partial^\alpha a(\cdot; 1/h_2)|. \label{eq:JhR-PM2021}
	\end{align}
	Letting $h = h_2 = 1/\lambda$ and combining \eqref{eq:I1J-PM2021}, \eqref{eq:Jh-PM2021}, \eqref{eq:JhExp-PM2021} and \eqref{eq:JhR-PM2021}, we obtain
	\begin{align}
	J_2
	& = \left( \frac {2\pi} \lambda \right)^{n/2} \frac {e^{\frac {i\pi} 4 {\rm sgn}\, Q}} {|\det Q|^{1/2}} \sum_{j \leq N} \frac {\lambda^{-j}} {j!} \left( \frac {\agl[Q^{-1}D, D]} {2i} \right)^j a(0; \lambda) \nonumber \\
	& \quad + C_N \lambda^{-\frac n 2-N-1} \sum_{|\alpha| \leq n+2N+3} \sup_{\supp \chi} |\partial^\alpha a(\cdot; \lambda)|. \label{eq:J2t-PM2021}
	\end{align}

	Combining \eqref{eq:J2t-PM2021} with \eqref{eq:IQJ12-PM2021}, \eqref{eq:J1f-PM2021}, we have
	\begin{align*}
	I
	& = \left( \frac {2\pi} \lambda \right)^{n/2} \frac {e^{\frac {i\pi} 4 {\rm sgn}\, Q}} {|\det Q|^{1/2}} \sum_{j \leq N} \frac {\lambda^{-j}} {j!} \left( \frac {\agl[Q^{-1}D, D]} {2i} \right)^j a(0; \lambda) \\
	& \qquad + \mathcal O \big( \lambda^{-\frac n 2-N-1} \sum_{|\alpha| \leq n+2N+3} \sup_{\supp \chi} |\partial^\alpha a(\cdot; \lambda)| \big) + \mathcal O \big( \lambda^{-M} \sum_{|\alpha| \leq M} \sup_{\Rn} \frac {|\partial^\alpha a(y; \lambda)|} {\agl[y]^{2M-n-1-|\alpha|}} \big),
	\end{align*}
	which is \eqref{eq:IQ-PM2021}.
	The proof is complete.
\end{proof}

\begin{proof}[Proof of Proposition \ref{prop:2-PM2021}]
	The statement of Proposition \ref{prop:2-PM2021} is almost the same as Theorem \eqref{thm:2-PM2021}, except that $M$ is set to be $n + 2N + 3$.
	Hence, we set $M := n + 2N + 3$, then we have $-M \leq -n/2 - N - 1$, 
	so
	\begin{align*}
	\lambda^{-M} \sum_{|\alpha| \leq M} \sup_{\Rn} \frac {|\partial^\alpha a(y; \lambda)|} {\agl[y]^{2M-n-1-|\alpha|}}
	& = \lambda^{-M} \sum_{|\alpha| \leq n + 2N + 3} \sup_{\Rn} \frac {|\partial^\alpha a(y; \lambda)|} {\agl[y]^{2(n + 2N + 3)-n-1-|\alpha|}} \\
	& \leq \lambda^{-n/2 - N - 1} \sum_{|\alpha| \leq n + 2N + 3} \sup_{\Rn} \frac {|\partial^\alpha a(y; \lambda)|} {\agl[y]^{n + 4N + 5-|\alpha|}}.
	\end{align*}
	
	Also, for the first remainder term in \eqref{eq:IQ-PM2021} we have
	\begin{align*}
	\sum_{|\alpha| \leq n+2N+3} \sup_{B(x_0,1)} |\partial^\alpha a(\cdot; \lambda)|
	& \leq C_{x_0,n,N} \sum_{|\alpha| \leq n+2N+3} \sup_{y \in B(x_0,1)} \frac {|\partial^\alpha a(y; \lambda)|} {\agl[y]^{n + 4N + 5-|\alpha|}} \\
	& \leq C_{x_0,n,N} \sum_{|\alpha| \leq n+2N+3} \sup_{\Rn} \frac {|\partial^\alpha a(y; \lambda)|} {\agl[y]^{n + 4N + 5-|\alpha|}}.
	\end{align*}
	Combining these with \eqref{eq:IQ-PM2021}, we arrive at \eqref{eq:IQs-PM2021}.
	The proof is complete.
\end{proof}

Based on Theorem \ref{thm:2-PM2021}, now we prove the more general case.

\begin{proof}[Proof of Theorem \ref{thm:1-PM2021}]
	Without loss of generality we assume $x_0 = 0$, $\varphi(0) = 0$ and $a(0) = 1$.
	Hence by Taylor's expansion \eqref{eq:TaylorInt-PM2021} we have 
	\[
	\varphi(x) 
	= \sum_{j,k \leq n} x_j x_k \int_0^1 (1-t)\, \partial_{jk} \varphi(tx) \dif t
	= x^T \cdot\int_0^1 (1-t) \varphi_{xx}(tx) \dif t \cdot x.
	\]
	Note that $|\varphi_{xx}(0)| \neq 0$ and $|\varphi_{xx}(x)|$ is continuous on $x$ (\underline{$\varphi \in C^2$}), thus there exists a positive constant \boxed{r} such that $|\varphi_{xx}(x)| > |\varphi_{xx}(0)|/2 > 0$ for all $x \in B(0,r)$.
	Fix a cutoff function $\chi \in C_c^\infty(\Rn)$ such that $\supp \chi \subset B(0,r)$ and $\chi \equiv 1$ in $B(0,r/2)$.
	Hence:
	\begin{itemize}
	\item  on $B(0,r)$, matrix $\varphi_{xx}$ is non-degenerate;
	
	\item on $\supp a \backslash B(0,r)$, $|\nabla \varphi(x)|$ is uniformly bounded away from $0$.
	\end{itemize} 

	{\bf Step 1:} cutoff singularity of the phase function.
	We divide $I$ into two parts
	\begin{equation} \label{eq:ISplit-PM2021}
	I(\lambda)
	= \int_{\Rn} (1 - \chi(x)) a(x) e^{i\lambda \varphi(x)} \dif x + \int_{\Rn} \chi(x) a(x) e^{i\lambda \varphi(x)} \dif x
	:= I_1 + I_2,
	\end{equation}
	and we will show that $I_1$ is rapidly decreasing w.r.t.~$\lambda$ while $I_2$ can be analyzed by using Theorem \ref{thm:2-PM2021}.

	{\bf Step 2:} $I_1$ is rapidly decaying.
	For $I_1$, denote
	\(
	L = \sum_{j=1}^n \frac {\varphi_{x_j}} {|\nabla \varphi|^2} \partial x_j,
	\)
	where $\varphi_{x_j}$ is short for $\partial_{x_j} \varphi$. 
	Then $\frac 1 {i\lambda} L e^{i\lambda \varphi} = e^{i\lambda\varphi} \text{ and } {}^t L f = \sum_{j=1}^n \partial_{x_j} \big( \frac {\varphi_{x_j} f} {|\nabla \varphi|^2} \big)$.
	For any $N \in \mathbb{N}^+$, $I_1$ can be easily estimated as follows (which requires 
	$a \in C^{n+N+1}(\Rn)$ and $\varphi \in C^{n+N+2}(\Rn)$)
	\begin{align}
	I_1
	& = \int_\Rn (1-\chi) a \cdot ((i\lambda)^{-n-N-1} L^{N+1} e^{i\lambda \varphi(x)}) \dif{x} \nonumber \\
	& = (i\lambda)^{-n-N-1} \int_\Rn ({}^t L)^{n+N+1} ((1-\chi)a) \cdot e^{i\lambda \varphi(x)} \dif{x} \nonumber\\
	& = \mathcal{O} (\lambda^{-n-N-1} \sum_{|\alpha| \leq n+N+1} \nrm[L^1(\Rn)]{\partial^\alpha a}), \quad \lambda \to \infty \label{eq:I2-PM2021}.
	\end{align}
	As mentioned before, due to the presence of $1-\chi$, the denominator $|\nabla \varphi|^2$ in $L$ keeps a positive distance away from 0, guaranteeing that $({}^t L)^N ((1-\chi)a)$ is bounded and compactly supported.

	{\bf Step 3:} Turn $I_2$ into quadratic phase form (e.g.~``$J_2$'').
	Now we turn to $I_2$.
	Because $\varphi \in C^2(\Rn)$, $\varphi_{xx}(x)$ is symmetric and thus there exist orthogonal matrix $P(x)$ and diagonal matrix $\Lambda(x) = (\alpha_j(x))_{j=1,\cdots,n}$ such that 
	\begin{equation*} 
	2 \int_0^1 (1-t) \varphi_{xx}(tx) \dif t = P(x) \Lambda(x) P^T(x).
	\end{equation*}
	Especially we have
	\( 
	P(0) \Lambda(0) P^T(0) = \varphi_{xx}(0).
	\)
	Denote $\alpha_j = \alpha_j(0)$ and $n \times n$ diagonal matrix $\Lambda := (\alpha_j)_{j=1,\cdots,n}$ for short. Thus
	\begin{equation*} 
	\Lambda(x) = (\sqrt{\frac {\alpha_j(x)} {\alpha_j}})_{j=1,\cdots,n} \cdot \Lambda \cdot (\sqrt{\frac {\alpha_j(x)} {\alpha_j}})_{j=1,\cdots,n}.
	\end{equation*}
	Note that we can choose the support of $\chi$ to be small enough such that, on $\supp \chi$, $\alpha_j(x)$ doesn't change sign, so $\alpha_j(x) / \alpha_j$ will always be positive on $\supp \chi$.
	This grants the use of the square root operation.
	
	Make the change of variable:
	\begin{equation} \label{eq:xToy-PM2021}
	y = \Phi(x) := \left( \sqrt{\frac {\alpha_j(x)} {\alpha_j}} \right)_{j=1,\cdots,n} \cdot P^T(x) \cdot x.
	\end{equation}
	Note that
	\begin{equation} \label{eq:sm1-PM2021}
	\varphi \in C^{n+2N+6} \Rightarrow \Phi \in C^{n+2N+4}.
	\end{equation}
	We have
	\begin{align*}
	\varphi(x)
	& = \frac 1 2 x^T \cdot \big[ 2 \int_0^1 (1-t) \varphi_{xx}(tx) \dif t \big] \cdot x
	= \frac 1 2 x^T \cdot \big[ P(x) \Lambda(x) P^T(x) \big] \cdot x \\
	& = \frac 1 2 [P^T(x) \cdot x]^T \cdot (\sqrt{\frac {\alpha_j(x)} {\alpha_j}})_{j=1,\cdots,n} \cdot \Lambda \cdot (\sqrt{\frac {\alpha_j(x)} {\alpha_j}})_{j=1,\cdots,n} \cdot [P^T(x) \cdot x] \\
	& = \frac 1 2 \big[ \big( \sqrt{\frac {\alpha_j(x)} {\alpha_j}} \big)_{j=1,\cdots,n} \cdot P^T(x) \cdot x \big]^T \cdot \Lambda \cdot \big[ \big( \sqrt{\frac {\alpha_j(x)} {\alpha_j}} \big)_{j=1,\cdots,n} \cdot P^T(x) \cdot x \big] \\
	& = \frac 1 2 \agl[\Lambda y, y].
	\end{align*}
	
	We have $\Phi(0) = 0$. It is easy to check that $\frac {\partial \Phi} {\partial x} (0) = P^T(0).$
	From \eqref{eq:xToy-PM2021} it is clear that there exists inverse of $\Phi$, i.e.~$\phi = \Phi^{-1}$
	Note that $x = \phi(\Phi(x))$ and
	\begin{equation} \label{eq:sm2-PM2021}
	\Phi \in C^{n+2N+4} \Rightarrow \phi \in C^{n+2N+4}.
	\end{equation}
	We have
	\begin{align*}
	I_2
	& = \int_\Rn \chi(\phi(y)) a(\phi(y)) \cdot e^{i\lambda \agl[\Lambda y, y]/2} \dif{\phi(y)} \nonumber\\
	& = \int_\Rn \chi(\phi(y)) a(\phi(y)) |\det \nabla_y \phi (y)| \cdot e^{i\lambda \agl[\Lambda y, y]/2} \dif{y} \nonumber\\
	& = \int_\Rn f(y) e^{i\lambda \agl[\Lambda y, y]/2} \dif{y} 
	\end{align*}
	where
	\begin{equation*} 
	f(y) = \chi(\phi(y)) \cdot a(\phi(y)) \cdot |\det \nabla_y \phi (y)|,
	\end{equation*}
	Note that
	\begin{equation} \label{eq:sm3-PM2021}
	\phi \in C^{n+2N+4},\, a \in C^{n+2N+2} \Rightarrow f \in C^{n+2N+2}.
	\end{equation}
	Now we can conclude from \eqref{eq:sm3-PM2021}, \eqref{eq:sm3-PM2021} and \eqref{eq:sm3-PM2021} that
	\begin{equation} \label{eq:sm4-PM2021}
	\varphi \in C^{n+2N+6},\, a \in C^{n+2N+3} \Rightarrow f \in C^{n+2N+3}.
	\end{equation}
	
	{\bf Step 4:} Apply Theorem \ref{thm:2-PM2021}.
	By using Theorem \ref{thm:2-PM2021}, we can obtain
	\begin{align}
	I_1(\lambda)
	& = \left( \frac{2\pi}{\lambda} \right)^{n/2} \frac{e^{i\frac{\pi}{4}\sgn \Lambda}} {|\det \Lambda|^{1/2}} \sum_{0 \leq j \leq N} \frac {\lambda^{-j}} {j!} \left( \frac {\agl[\Lambda^{-1}D, D]} {2i} \right)^j f(0) \nonumber \\
	& \qquad + \mathcal O(\lambda^{-\frac{n}{2}-N-1} \times \sum_{|\alpha| \leq n+2N+3} \sup_{\Rn} |\partial^\alpha f|) \nonumber \\
	& = \left( \frac{2\pi}{\lambda} \right)^{n/2} \frac{e^{i\frac{\pi}{4}\sgn \Lambda}} {|\det \Lambda|^{1/2}} \sum_{0 \leq j \leq N} \frac {\lambda^{-j}} {j!} \left( \frac {\agl[\Lambda^{-1}D, D]} {2i} \right)^j f(0) \nonumber \\
	& \qquad + \mathcal O(\lambda^{-\frac{n}{2}-N-1} \times \sum_{|\alpha| \leq n+2N+3} \sup_{\Rn} |\partial^\alpha a| \times \sum_{|\alpha| \leq n+2N+6} \sup_{\Rn} |\partial^\alpha \varphi|). \label{eq:I1Tem-PM2021}
	\end{align}
	It can be checked that $\sgn \Lambda = \sgn \varphi_{xx}(0)$ and $\det \Lambda = \det \varphi_{xx}(0)$.

	{\bf Step 5:} The leading term.
	We are now almost arrive at \eqref{eq:I-PM2021} except for the explicit computation of the leading term in \eqref{eq:I-PM2021} and \eqref{eq:I1Tem-PM2021}.
	From equality $x = \phi(\Phi(x))$ we know $I = \nabla_y \phi(\Phi(x)) \cdot \nabla_x \Phi(x)$.
	Formula \eqref{eq:xToy-PM2021} implies $\Phi(0) = 0$ and $\nabla_x \Phi(0) = P^T(0)$,
	hence $\det \nabla_y \phi(0) = \det \nabla_y \phi(\Phi(0)) = \big( \det \nabla_x \Phi(0) \big)^{-1} = \big( \det P^T(0) \big)^{-1} = 1$.
	Therefore,
	\begin{equation} \label{eq:fToa-PM2021}
	f(0) = \chi(\phi(0)) \cdot a(\phi(0)) \cdot |\det \nabla_y \phi (0)|
	= \chi(0) \cdot a(0) = a(0).
	\end{equation}
	Combining \eqref{eq:ISplit-PM2021}, \eqref{eq:I2-PM2021}, \eqref{eq:I1Tem-PM2021} and \eqref{eq:fToa-PM2021}, we arrive at the conclusion.
\end{proof}

\section*{Exercise}

\begin{ex}
	Use \eqref{eq:fixxi-PM2021} to derive \eqref{eq:FrGauiQ-PM2021}.
\end{ex}

\begin{ex}
	Show details about how to derive \eqref{eq:hjFa-PM2021} from $\int e^{-\frac {i \xi^2} {2} h} \hat a(\xi) \dif \xi$.
\end{ex}

\begin{ex}
	(optional) In \eqref{eq:Jh-PM2021}, if we instead set
	\[
	J(h, \chi f) := (2\pi)^{-n/2} \int_\Rn e^{\frac {\xi_j^2 h} {i2\alpha_j}} \cdot \widehat{\chi f}(\xi; 1/h) \dif \xi,
	\]
	and later on expand $J$ w.r.t.~$h$ at $h=0$,
	will the computations following \eqref{eq:Jh-PM2021} still give the desired result?
	Explain the reason briefly.
\end{ex}

\begin{ex} \label{ex:IYe1-PM2021}
	Assume $a \in C_c^\infty(\R^{2n})$ and denote a Lebesgue integral
	\[
	I(y,\eta; \lambda) := (2\pi)^{-n} \int_{\R^{2n}} e^{i\lambda x \cdot \xi} a(x+y, \xi + \eta) \dif x \dif \xi.
	\]
	\begin{enumerate}
		\item fix $y$ and $\eta$, and use Proposition \ref{prop:2-PM2021} to find the asymptotic expansion of $I$ w.r.t.~$\lambda$ as $\lambda \to +\infty$;
		
		\item write down the first $1+n$ terms (the leading term $+$ the first order terms) of the asymptotic expansion.
	\end{enumerate}
	Hint: $x \cdot \xi = \frac 1 2 \agl[Q(x,\xi), (x,\xi)]$ with $Q = \begin{pmatrix}
	0 & I_{n \times n} \\ I_{n \times n} & 0
	\end{pmatrix}$,
	where $(x,\xi)$ is treated as a vertical vector.
\end{ex}

\begin{ex} \label{ex:IYe2-PM2021}
	Assume symbol $a \in S^m(\Rn \times \Rn)$ and denote an oscillatory integral
	\[
	I(y,\eta) := (2\pi)^{-n} \int_{\R^{2n}} e^{ix \cdot \xi} a(x+y, \xi + \eta) \dif x \dif \xi.
	\]
	\begin{enumerate}
		\item is $I$ well-defined? If it is, should the cutoff function $\chi$ (cf.~\eqref{eq:Iec-PM2021}) be chosen to cutoff $\xi$ alone using $\chi(\epsilon \xi)$, or cutoff $x$ alone using $\chi(\epsilon x)$, or cutoff both $x$ and $\xi$ together using $\chi(\epsilon x,\epsilon \xi)$?
		
		
		\item use Proposition \ref{prop:2-PM2021} to find the asymptotic expansion of $I$ w.r.t.~$\agl[\eta]$ as $|\eta| \to +\infty$;
		
		\item write down the first $1+n$ terms (the leading term $+$ the first order terms) of the asymptotic expansion.
		
		\item compare with the result in Exercise \ref{ex:IYe1-PM2021}, and revise Remark \ref{rem:5-PM2021}.
	\end{enumerate}
	Hint: Perform the change of variable $\xi \to \agl[\eta] \xi$.
\end{ex}

\chapter{Symbolic calculus of \texorpdfstring{$\Psi$DOs}{PsiDOs}} \label{ch:SCP-PM2021}

In this chapter we show certain symbolic calculus of $\Psi$DOs.
We need some preparations.

\begin{lem} \label{lem:abm-PM2021}
	Assume $a$, $b \in \R$ such that $|a| \geq 1$ and $|b| \geq 1$, then for every $m \in \R$ there exists a constant independent of $a$, $b$ such that
	\[
	\boxed{\agl[ab]^m \leq C_m \agl[a]^m \agl[b]^m, \quad |a| \geq 1, \ |b| \geq 1.}
	\]
\end{lem}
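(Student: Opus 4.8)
The plan is to split into the cases $m=0$, $m>0$, and $m<0$, reducing each to an elementary comparison between $\agl[ab]$ and $\agl[a]\agl[b]$ and then raising to the appropriate power. Throughout I would use the equivalence $\agl[x]\simeq 1+|x|$ recorded in \S\ref{sec:nt-PM2021}, together with the trivial bound $\agl[x]=(1+|x|^2)^{1/2}\geq|x|$.

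The case $m=0$ is immediate with $C_0=1$. For $m>0$, I would first note that $\agl[ab]\simeq 1+|ab|=1+|a|\,|b|\leq(1+|a|)(1+|b|)\simeq\agl[a]\agl[b]$, so there is a constant $C$ independent of $a,b$ with $\agl[ab]\leq C\agl[a]\agl[b]$; raising both sides to the power $m>0$ gives $\agl[ab]^m\leq C^m\agl[a]^m\agl[b]^m$. This half does not use the hypothesis $|a|,|b|\geq 1$.

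The case $m<0$ is the only place where the hypothesis $|a|,|b|\geq 1$ is needed. Writing $m=-k$ with $k>0$, the claim is equivalent to $\agl[a]^k\agl[b]^k\leq C'\agl[ab]^k$. Here I would use $|a|\geq 1$ to get $\agl[a]\simeq 1+|a|\leq 2|a|$, and similarly $\agl[b]\leq 2|b|$ (up to the fixed constant from $\agl[x]\simeq 1+|x|$), whence $\agl[a]\agl[b]\lesssim|a|\,|b|=|ab|\leq\agl[ab]$. Thus $\agl[a]\agl[b]\leq C''\agl[ab]$ with $C''$ independent of $a,b$, and raising to the power $k>0$ yields $\agl[a]^k\agl[b]^k\leq(C'')^k\agl[ab]^k$, i.e.\ $\agl[ab]^m\leq(C'')^{-m}\agl[a]^m\agl[b]^m$. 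Setting $C_m$ to be the constant produced in the relevant sign regime finishes the proof.

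I expect no genuine obstacle here; the only points requiring care are keeping track of the direction of the inequality when the exponent is negative, and recognizing that the hypothesis $|a|,|b|\geq 1$ is exactly what excludes counterexamples such as $a=1/\epsilon$, $b=\epsilon$ (with $ab=1$) in the regime $m<0$.
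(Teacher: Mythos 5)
Your proposal is correct and follows essentially the same route as the paper: the case $m\geq 0$ via $1+|ab|\leq(1+|a|)(1+|b|)$, and the case $m<0$ via the bound $\agl[x]\lesssim|x|$ for $|x|\geq 1$ (your inequality $\agl[a]\agl[b]\lesssim|ab|\leq\agl[ab]$ is just the reciprocal form of the paper's estimate $\agl[ab]^{m}\leq|ab|^{m}\lesssim\agl[a]^{m}\agl[b]^{m}$). No gaps.
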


\begin{proof}
	When $m \geq 0$, we have
	\[
	\agl[ab]^m
	\simeq (1+|ab|)^m
	\leq (1+|a|)^m (1+|b|)^m
	\leq \agl[a]^m \agl[b]^m.
	\]
	When $m < 0$, because $|a|$, $|b| \geq 1$, we have
	\begin{align*}
	\agl[ab]^m
	& \simeq \frac 1 {(1+|ab|)^{|m|}}
	< \frac 1 {|ab|^{|m|}}
	= \agl[a]^m \agl[b]^m (\frac {\agl[a]} {|a|} \frac {\agl[b]} {|b|})^{|m|} \\
	& \lesssim \agl[a]^m \agl[b]^m.
	\end{align*}
	We proved the result.
\end{proof}

\begin{lem}[Peetre's inequality] \label{lem:Peetre-PM2021}
For $\forall a, b \in \Rn \text{~and~} \forall m \in \R$, there exists a constant $C_m$ independent of $a$ and $b$ such that
\begin{equation*}
	\boxed{\agl[a \pm b]^m \leq C_m \agl[a]^m \agl[b]^{|m|}.}
\end{equation*}
\end{lem}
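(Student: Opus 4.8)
The plan is to reduce everything to the elementary inequality $1 + |a+b| \le (1+|a|)(1+|b|)$ together with the equivalence $\agl[x] \simeq 1+|x|$ recorded in Section~\ref{sec:nt-PM2021}. First I would dispose of the sign: since $\agl[-b] = \agl[b]$, the estimate for $\agl[a-b]^m$ follows from the one for $\agl[a+b]^m$ by replacing $b$ with $-b$, so it suffices to treat $\agl[a+b]^m$.

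Next I would handle the case $m \ge 0$. From the triangle inequality $|a+b| \le |a| + |b|$ we get $1 + |a+b| \le 1 + |a| + |b| \le (1+|a|)(1+|b|)$. Using $\agl[x] \simeq 1+|x|$ this yields $\agl[a+b] \lesssim \agl[a]\,\agl[b]$ with a constant independent of $a,b$; raising both sides to the power $m \ge 0$ (a monotone operation on nonnegative reals) gives $\agl[a+b]^m \le C_m \agl[a]^m \agl[b]^m = C_m \agl[a]^m \agl[b]^{|m|}$, as desired.

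For $m < 0$, write $m = -k$ with $k > 0$. The claimed inequality $\agl[a+b]^{-k} \le C_k \agl[a]^{-k}\agl[b]^{k}$ is equivalent, after multiplying through by the positive quantity $\agl[a+b]^k \agl[b]^k$, to $\agl[a]^k \le C_k \agl[a+b]^k \agl[b]^k$. I would then apply the already-established case with exponent $k \ge 0$ to the pair $(a+b,\,-b)$: since $a = (a+b) + (-b)$, it gives $\agl[a]^k = \agl[(a+b)+(-b)]^k \le C_k \agl[a+b]^k \agl[-b]^k = C_k \agl[a+b]^k \agl[b]^k$, which is exactly what we need. Combining the two cases with the sign reduction completes the proof.

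There is no real obstacle here; the only points requiring a little care are the bookkeeping of the constants coming from $\agl[x] \simeq 1+|x|$ (which depend on $m$ but not on $a,b$) and noticing the substitution $(a,b) \mapsto (a+b,-b)$ that converts the troublesome negative exponent into the easy nonnegative one. One could alternatively mimic the proof of Lemma~\ref{lem:abm-PM2021} directly, splitting into $m \ge 0$ and $m < 0$ and using $|a+b| \ge |a|-|b|$ in the negative case, but the substitution argument is cleaner and avoids any case distinction on the relative sizes of $|a|$ and $|b|$.
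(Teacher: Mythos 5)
Your proof is correct and takes essentially the same route as the paper: the case $m \ge 0$ via $1+|a\pm b| \le (1+|a|)(1+|b|)$, and the case $m<0$ via the same triangle inequality applied to $a = (a\pm b) \mp b$, which the paper phrases as raising $1+|a| \le (1+|a\mp b|)(1+|b|)$ to the negative power $m$ rather than as your substitution back into the nonnegative case. One trivial slip: the quantity that clears the negative exponents is $\agl[a+b]^{k}\agl[a]^{k}$, not $\agl[a+b]^{k}\agl[b]^{k}$, though the equivalent inequality $\agl[a]^{k} \le C_k \agl[a+b]^{k}\agl[b]^{k}$ that you actually state and prove is the right one.
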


\begin{proof}
	For any $a$, $b \in \Rn$, we have
	\begin{equation*}
	1 + |a-b| \leq 1 + |a| + |b| \leq (1 + |a|) \cdot (1 + |b|).
	\end{equation*}
	Note that $\agl[a] \simeq 1 + |a|$, so we can conclude Lemma \ref{lem:Peetre-PM2021} for the case where $m \geq 0$.
	
	When $m < 0$, we use the fact:
	\begin{gather*}
	1 + |a| \leq 1 + |a-b| + |b| \leq (1 + |a-b|) \cdot (1 + |b|) \\
	\Rightarrow \ (1 + |a-b|) \geq (1 + |a|) \cdot (1 + |b|)^{-1}.
	\end{gather*}
	Now assume $m < 0$, we have
	\begin{equation*}
	(1 + |a-b|)^m \leq (1 + |a|)^m \cdot (1 + |b|)^{-m} = (1 + |a|)^m (1 + |b|)^{|m|}.
	\end{equation*}
	The proof is complete.
\end{proof}

\section{Composition of \texorpdfstring{$\Psi$DOs}{PsiDOs}}
\label{sec:Com-PM2021}

Assume $a \in S^{m_1}$ and $b \in S^{m_2}$.
For notational convenience we denote $T = T_a \circ T_b$, thus for any $\varphi \in \scrS$, we have
\begin{align}
T \varphi
& = (2\pi)^{-n} \int e^{i(x-y) \cdot \xi} a(x,\xi) T_b \varphi(y) \dif y \dif \xi \nonumber \\
& = (2\pi)^{-n} \int e^{i(x-z) \cdot \eta} \big( (2\pi)^{-n} \int e^{i(x-y) \cdot (\xi - \eta)} a(x,\xi) b(y,\eta) \dif y \dif \xi \big) \varphi(z) \dif z \dif \eta \nonumber \\
& = (2\pi)^{-n} \int e^{i(x-z) \cdot \eta} \big( (2\pi)^{-n} \int e^{-iy \cdot \xi} a(x,\eta+\xi) b(x+y,\eta) \dif y \dif \xi \big) \varphi(z) \dif z \dif \eta \nonumber \\
& = (2\pi)^{-n} \int e^{i(x-z) \cdot \eta} c(x,\eta) \varphi(z) \dif z \dif \eta, \label{eq:cob-PM2021}
\end{align}
where $c$ is defined as the oscillatory integral
\begin{equation} \label{eq:cet-PM2021}
c(x,\eta) := (2\pi)^{-n} \int e^{-iy \cdot \xi} a(x,\eta+\xi) b(x+y,\eta) \dif y \dif \xi.
\end{equation}
If we could show $c \in S^m$ for certain $m$, then it implies the composition of $\Psi$DOs is still a $\Psi$DO.
We use the stationary phase lemma under oscillatory integrals developed in \S \ref{ch:SPL-PM2021} to show this expectation.

To show $c \in S^m$, the task boils down to show the asymptotics of $c$ and its derivatives w.r.t.~$|\eta|$, thus we set $\lambda := \agl[\eta]$, so
\[
c(x,\eta) = (2\pi)^{-n} \lambda^n \int e^{-i\lambda y \cdot \xi} a(x,\lambda(\tilde \eta + \xi)) b(x+y,\eta) \dif y \dif \xi, \quad \text{where} \quad \tilde \eta := \eta/\agl[\eta].
\]
To make better correspondence with the notations in \S \ref{ch:SPL-PM2021}, we set
\[
c_{x,\eta}(y, \xi) := a(x,\lambda(\tilde \eta + \xi)) b(x+y,\eta),
\]
thus
\begin{equation} \label{eq:cc-PM2021}
c(x,\eta) = (2\pi)^{-n} \lambda^n \int_{\R^{2n}} e^{i\lambda \agl[Q(y, \xi), (y, \xi)]/2} c_{x,\eta}(y, \xi) \dif {(y,\xi)},
\end{equation}
where $(y, \xi)$ is treated as a $2n$-dim vertical vector and
\[
Q =
\begin{pmatrix}
0 & -I \\
-I & 0
\end{pmatrix}
\quad (\Rightarrow \ Q^{-1} = Q, \ \sgn Q = 0, \text{~and~} \det Q = \pm 1).
\]
In $c_{x,\eta}(y, \xi)$, we regard $(x,\eta)$ as irrelevant parameters make the following correspondence:
\begin{center}
	\begin{tabular}{|c|c|c|c|c|c|}
	\hline
	& function & variable & fixed point & in total & dimension \\ \hline
	In Prop.~\ref{prop:2-PM2021} & $a$ & $x$ & $x_0$ & $a(x - x_0)$ & $n$ \\ \hline
	at here & $c_{x,\eta}$ & $(y, \xi)$ & $(y_0, \xi_0) = 0 $ & $c_{x,\eta}(y, \xi)$ & $2n$ \\ \hline
	\end{tabular}
\end{center}
To use Proposition \ref{prop:2-PM2021}, the only thing left to check is \eqref{eq:as2-PM2021}, namely, to check
\begin{equation} \label{eq:cas2-PM2021}
\forall \alpha, \beta : |\alpha| + |\beta| \leq 2n + 2N + 3, \quad |\partial_y^\alpha \partial_\eta^\beta \big( c_{x,\eta}(y, \xi) \big)|
\lesssim C_{N,n,\alpha, \beta}(\lambda) \agl[(y,\xi)]^{2N+2}.
\end{equation}

For $|\xi| \geq 2$, we have
\begin{align*}
|\partial_y^\alpha \partial_\xi^\beta \big( c_{x,\eta}(y, \xi) \big)|
& = |\partial_y^\alpha \partial_\xi^\beta \big[ a(x,\lambda(\tilde \eta + \xi)) b(x+y,\eta) \big]| \\
& \leq C_{\alpha, \beta} \lambda^{|\beta|} |\partial_\xi^{\beta} a(x,\lambda(\tilde \eta + \xi))| \cdot |\partial_x^\alpha b(x+y,\eta)| \\
& \leq C_{\alpha, \beta} \lambda^{|\beta|} \agl[\lambda(\tilde \eta + \xi)]^{m_1 - |\beta|} \agl[\eta]^{m_2}.
\end{align*}
Because $|\tilde \eta| < 1$, when $|\xi| \geq 2$ we can have $|\tilde \eta + \xi| \geq 1$.
Recall that $\lambda \geq 1$.
Hence when $|\xi| \geq 2$, we can use Lemma \ref{lem:abm-PM2021} to continue the computation as follows,
\begin{align}
|\partial_y^\alpha \partial_\xi^\beta \big( c_{x,\eta}(y, \xi) \big)|
& \leq C_{\alpha, \beta} \lambda^{|\beta|} \agl[\lambda]^{m_1 - |\beta|} \agl[\tilde \eta + \xi]^{m_1 - |\beta|} \agl[\eta]^{m_2} \nonumber \\
& \leq C_{\alpha, \beta} \lambda^{m_1} \agl[\tilde \eta + \xi]^{m_1 - |\beta|} \lambda^{m_2} \quad (\lambda = \agl[\eta] \Rightarrow \lambda \simeq \agl[\lambda]) \nonumber \\
& \leq C_{\alpha, \beta} \lambda^{m_1 + m_2} \agl[\xi]^{m_1 - |\beta|} \agl[\tilde \eta]^{|m_1 - |\beta||} \quad (\text{Lemma \ref{lem:Peetre-PM2021}}) \nonumber \\
& \leq C_{\alpha, \beta} \lambda^{m_1 + m_2} \agl[\xi]^{m_1 - |\beta|}. \label{eq:cxe-PM2021}
\end{align}
We emphasize that \eqref{eq:cxe-PM2021} holds when $|\xi| \geq 2$, and the constant $C_{\alpha, \beta}$ is uniform for $x$, $y$, $\eta$.
Then, due to the continuity, \eqref{eq:cxe-PM2021} actually holds for all $\xi$.
Hence, the condition \eqref{eq:cas2-PM2021} is satisfied when $2N + 2 > m_1$, with $C_{N,n,\alpha, \beta}(\lambda) = C_{\alpha, \beta} \lambda^{m_1 + m_2}$, so we can use Proposition \ref{prop:2-PM2021} directly on \eqref{eq:cc-PM2021} to obtain
\begin{align}
c(x,\eta)
& = (2\pi)^{-n} \lambda^n \int_{\R^{2n}} e^{i\lambda \agl[Q(y, \xi), (y, \xi)]/2} c_{x,\eta}(y, \xi) \dif {(y,\xi)} \nonumber \\
& = (2\pi)^{-n} \lambda^n \times \left( \frac{2\pi}{\lambda} \right)^{n} \sum_{0 \leq j \leq N} \frac {\lambda^{-j}} {j!} \left( \frac {\agl[Q^{-1} D_{(y,\xi)}, D_{(y,\xi)}]} {2i} \right)^j c_{x,\eta}(0, 0) \nonumber \\
& \quad + \lambda^n \times \mathcal O \big( \lambda^{-n-N-1} \sum_{|\alpha| + |\beta| \leq 2n+2N+3} \sup_{(y,\xi) \in \R^{2n}} \frac {|\partial_y^\alpha \partial_\xi^\beta \big( c_{x,\eta}(y,\xi) \big)|} {\agl[(y,\xi)]^{2n+4N+5 - |\alpha| - |\beta|}} \big) \nonumber \\
& = \sum_{0 \leq j \leq N} \frac {\lambda^{-j}} {j!} (D_y \cdot \nabla_\xi)^j c_{x,\eta}(0, 0) + \mathcal O \big( \lambda^{-N-1+m_1 + m_2} \big) \nonumber \\
& = \sum_{|\alpha| \leq N} \frac {\lambda^{-|\alpha|}} {\alpha!} D_y^\alpha \partial_\xi^\alpha \big( c_{x,\eta}(y,\xi) \big) |_{(y,\xi) = (0, 0)} + \mathcal O \big( \lambda^{-N-1+m_1 + m_2} \big) \label{eq:cc3-PM2021} \\
& = \sum_{|\alpha| \leq N} \frac {\lambda^{-|\alpha|}} {\alpha!} \lambda^{|\alpha|} \partial_\eta^\alpha a(x,\lambda \tilde \eta) D_x^\alpha b(x,\eta) + \mathcal O \big( \lambda^{-N-1+m_1 + m_2} \big) \nonumber \\
& = \sum_{|\alpha| \leq N} \frac {1} {\alpha!} \partial_\eta^\alpha a(x,\eta) D_x^\alpha b(x,\eta) + \mathcal O \big( \lambda^{-N-1+m_1 + m_2} \big). \label{eq:cc2-PM2021}
\end{align}
In \eqref{eq:cc3-PM2021} we used
\begin{equation} \label{eq:npExp-PM2021}
\boxed{(D_y \cdot \nabla_\xi)^j = (D_{y_1} \partial_{\xi_1} + \cdots + D_{y_n} \partial_{\xi_n})^j = \sum_{|\alpha| = j} \frac {j!} {\alpha!} D_y^\alpha \partial_\xi^\alpha.}
\end{equation}

By letting $N$ to be large enough, \eqref{eq:cc2-PM2021} implies the following inequality
\begin{equation} \label{eq:cc4-PM2021}
|\partial_x^\alpha \partial_\eta^\beta c(x,\eta)| \lesssim \agl[\eta]^{m_1 + m_2 - |\beta|}
\end{equation}
holds when $|\alpha| = |\beta| = 0$.
To show the case when $\alpha$ and/or $\beta$ are nonzero, we compute
\begin{align}
\partial_x^\alpha \partial_\eta^\beta c(x,\eta)
& = (2\pi)^{-n} \lambda^n \partial_x^\alpha \partial_\eta^\beta \int e^{-i\lambda y \cdot \xi} a(x,\lambda(\tilde \eta + \xi)) b(x+y,\eta) \dif y \dif \xi \nonumber \\
& \simeq \lambda^n \sum_{\alpha, \beta}  \partial_\eta^\beta \int e^{-i\lambda y \cdot \xi} \partial_x^{\alpha'} \partial_\eta^{\beta'} a(x,\lambda(\tilde \eta + \xi)) \partial_x^{\alpha''} \partial_\eta^{\beta''} b(x+y,\eta) \dif y \dif \xi. \label{eq:cc5-PM2021}
\end{align}
Note that $\lambda \tilde \eta = \eta$.
Then we repeat the long computation (with the help of Proposition \ref{prop:2-PM2021}) as in \eqref{eq:cc2-PM2021}, and this can gives \eqref{eq:cc4-PM2021} for all nonzero $\alpha$ and $\beta$.
The rigorous computation is left as a exercise.
Therefore, $c \in S^{m_1 + m_2}$.

By letting $N$ to be large enough, \eqref{eq:cc2-PM2021} implies
\[
c(x,\eta) \sim \sum_\alpha \frac {1} {\alpha!} \partial_\eta^\alpha a(x,\eta) D_x^\alpha b(x,\eta),
\]
We proved the following result:

\begin{thm}\index{composition of $\Psi$DOs} \label{thm:com-PM2021}
	Assume $m_1$, $m_2 \in \R$, $a \in S^{m_1}$ and $b \in S^{m_2}$.
	Then $T_a \circ T_b \in \Psi^{m_1 + m_2}$.
	Denote the symbol of $T_a \circ T_b$ as \boxed{a \# b}, then $a \# b \in S^{m_1 + m_2}$ and
	\[
	\boxed{a \# b(x,\xi) \sim \sum_\alpha \frac {1} {\alpha!} \partial_\eta^\alpha \big( a(x,\eta) \big) \big|_{\eta = \xi} D_y^\alpha \big( b(y,\xi) \big) \big|_{y = x}.}
	\]
\end{thm}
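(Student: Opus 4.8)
The plan is to carry out the computation already set up in the paragraphs preceding the theorem, organised into three stages: first reduce $T_a \circ T_b$ to a single $\Psi$DO $T_c$ with an explicit candidate symbol $c$ given by an oscillatory integral; then prove $c \in S^{m_1+m_2}$; and finally read off the asymptotic expansion from the stationary phase lemma. Since the map $\sigma \mapsto T_\sigma$ is injective (Lemma \ref{lem:sBi-PM2021}), once $c$ is shown to be a symbol of order $m_1+m_2$ we automatically get $a \# b = c$, and this also supplies the proof of Lemma \ref{lem:elT-PM2021}.

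For the first stage I would write out $T_a(T_b\varphi)$ for $\varphi \in \scrS(\Rn)$, expand each factor using the double-integral form of Definition \ref{defn:PseudoDO-PM2021}, rename the outer frequency $\eta$, and perform the translations $y \mapsto x+y$, $\xi \mapsto \eta+\xi$, exactly as in \eqref{eq:cob-PM2021}. This identifies $T_a\circ T_b = T_c$ with $c(x,\eta)$ the oscillatory integral \eqref{eq:cet-PM2021}; the interchanges of integration order are legitimised by the oscillatory-integral framework of Chapter \ref{ch:OSInt-PM2021} (the phase $-y\cdot\xi$ is homogeneous of order $2$ and has nonvanishing gradient off the origin, so Definition \ref{defn:OsE-PM2021} applies).

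For the second stage I would set $\lambda := \agl[\eta]$, rescale $\xi \mapsto \lambda\xi$, and bring $c$ into the quadratic-phase form \eqref{eq:cc-PM2021} with $Q = \begin{pmatrix} 0 & -I \\ -I & 0 \end{pmatrix}$ (so $Q^{-1}=Q$, $\sgn Q = 0$, $|\det Q| = 1$) and amplitude $c_{x,\eta}(y,\xi) = a(x,\lambda(\tilde\eta+\xi))\,b(x+y,\eta)$, treating $(x,\eta)$ as inert parameters. The crucial point is the polynomial-growth hypothesis \eqref{eq:cas2-PM2021}: for $|\xi| \geq 2$ one has $|\tilde\eta+\xi| \geq 1$, so Lemma \ref{lem:abm-PM2021} factors $\agl[\lambda(\tilde\eta+\xi)]^{m_1-|\beta|}$ and Peetre's inequality (Lemma \ref{lem:Peetre-PM2021}) removes the $\tilde\eta$-shift, giving $|\partial_y^\alpha\partial_\xi^\beta c_{x,\eta}(y,\xi)| \lesssim \lambda^{m_1+m_2}\agl[\xi]^{m_1-|\beta|}$ uniformly in $x,y,\eta$, which extends to all $\xi$ by continuity. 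Choosing $N$ large enough that $2N+2 > m_1$ makes \eqref{eq:as2-PM2021} hold with $C_{N,n,\alpha,\beta}(\lambda) = C\lambda^{m_1+m_2}$; then Proposition \ref{prop:2-PM2021} applied to \eqref{eq:cc-PM2021}, together with the identity \eqref{eq:npExp-PM2021} and the substitution $\lambda\tilde\eta = \eta$, yields \eqref{eq:cc2-PM2021}, namely $c(x,\eta) = \sum_{|\alpha|\leq N}\frac{1}{\alpha!}\partial_\eta^\alpha a(x,\eta)\,D_x^\alpha b(x,\eta) + \mathcal O(\agl[\eta]^{m_1+m_2-N-1})$.

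To finish I would promote this to full symbol estimates. With $N$ large the expansion already gives $|c(x,\eta)| \lesssim \agl[\eta]^{m_1+m_2}$; for $\partial_x^\alpha\partial_\eta^\beta c$ I would differentiate under the oscillatory integral, distribute the derivatives onto $a$ and $b$ by Leibniz as in \eqref{eq:cc5-PM2021} (each $\eta$-derivative hitting $a(x,\lambda(\tilde\eta+\xi))$ produces one power of $\lambda$ and lowers the order of $a$ by one), and rerun the Proposition \ref{prop:2-PM2021} estimate on each resulting term to obtain $|\partial_x^\alpha\partial_\eta^\beta c(x,\eta)| \lesssim \agl[\eta]^{m_1+m_2-|\beta|}$ for all $\alpha,\beta$. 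Hence $c \in S^{m_1+m_2}$, so $T_a\circ T_b = T_c \in \Psi^{m_1+m_2}$ and $a\#b = c$ has the stated asymptotics. I expect the main obstacle to be exactly this last step: tracking, uniformly in the parameters $(x,\eta)$, how the $x$- and $\eta$-differentiations interact with the rescaling $\xi\mapsto\lambda\xi$ and with the cutoff implicit in the oscillatory integral, so that every differentiated summand still satisfies the hypothesis of the stationary phase lemma with the correct power of $\lambda = \agl[\eta]$; this is conceptually routine but the bookkeeping in \eqref{eq:cc5-PM2021} is delicate.
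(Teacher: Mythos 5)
Your proposal is correct and follows essentially the same route as the paper's own proof in \S \ref{sec:Com-PM2021}: reduce to the oscillatory integral \eqref{eq:cet-PM2021}, rescale by $\lambda=\agl[\eta]$ into the quadratic-phase form \eqref{eq:cc-PM2021}, verify \eqref{eq:cas2-PM2021} via Lemma \ref{lem:abm-PM2021} and Peetre's inequality, apply Proposition \ref{prop:2-PM2021} to get \eqref{eq:cc2-PM2021}, and handle $\partial_x^\alpha\partial_\eta^\beta c$ by the Leibniz expansion \eqref{eq:cc5-PM2021}. The only difference is cosmetic: the paper leaves the derivative bookkeeping you flag as the delicate step as an exercise rather than spelling it out.
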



\begin{rem} \label{rem:com-PM2021}
	We deliberately write $\partial_\eta^\alpha \big( a(x,\eta) \big) \big|_{\eta = \xi}$ instead of $\partial_\xi^\alpha a(x,\xi)$, to avoid possible computation mistakes.
	The same for $b$.
\end{rem}

\begin{rem} \label{rem:com2-PM2021}
	When symbol $a$ is of the form $a(x,\xi) = \sum_{|\alpha| \leq m_1} d_\alpha(x) \xi^\alpha$ where $d_\alpha \in C^\infty$ are all bounded, or when symbol $b(x,\xi)$ is independent of $x$-variable,
	the asymptotics in Theorem \eqref{thm:com-PM2021} stops in finite term and the asymptotic is ``exact'': we can replace `$\sim$' by `$=$'.
	This can be seen from the expression \eqref{eq:cet-PM2021} of $c(x,\eta)$.
	See also Exercise \ref{ex:com2-PM2021} and \cite[Remark 2.6.9]{mart02Anin}.
\end{rem}

From Theorem \ref{thm:com-PM2021} we know, if $a \in S^{m_1}$ and $b \in S^{m_2}$, then 
\begin{equation} \label{eq:ab12-PM2021}
	a \# b
	= ab + S^{m_1 + m_2 - 1}
	= ab + \frac 1 i \nabla_\xi a \cdot \nabla_x b + S^{m_1 + m_2 - 2}
	= ab + \frac 1 i \{a, b\} + S^{m_1 + m_2 - 2}.
\end{equation}

\section{Reduction of variables}
\label{sec:ReV-PM2021}

As we have seen in \eqref{eq:Tuph-PM2021} that
\[
(T_{\sigma}u,\varphi)
= (u, (2\pi)^{-n}  \int e^{i(y - x) \cdot \xi} \overline \sigma(x,\xi) \varphi(x) \dif x \dif \xi).
\]
In practice we may encounter $\Psi$DOs of the form
\[
\int e^{i(x-y) \cdot \xi} a(x,y,\xi) \varphi(y) \dif y \dif \xi
\]
where the symbol $a$ depends not only on $x$ but also on $y$, e.g.~in \S \ref{sec:AdT-PM2021} we shall see $\Psi$DOs possessing this type of symbols.
We have the following result.

\begin{thm} \label{thm:ReV-PM2021}
	Assume $a \in S^m(\R_x^n \times \R_y^n \times \R_\xi^n)$, then there exists symbol $a' \in S^m(\R_x^n \times \R_\xi^n)$ such that
	\begin{equation} \label{eq:ReV-PM2021}
	T_{a'} \varphi(x) = (2\pi)^{-n} \int e^{i(x-y) \cdot \xi} a(x,y,\xi) \varphi(y) \dif y \dif \xi, \quad \forall \varphi \in \scrS(\Rn),
	\end{equation}
	and this $T_{a'}$ takes the following as its kernel:
	\[
	K(x,y) := (2\pi)^{-n} \int e^{i(x-y) \cdot \xi} a(x,y,\xi) \dif \xi.
	\]
	Moreover, $a'$ has the asymptotics
	\[
	\boxed{a'(x,\xi) \sim \sum_\alpha \frac 1 {\alpha!} D_y^\alpha \partial_\eta^\alpha \big( a(x,y,\eta) \big) |_{(y, \eta) = (x, \xi)}.}
	\]
\end{thm}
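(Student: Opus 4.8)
The strategy is to reduce the three-variable symbol $a(x,y,\xi)$ to a genuine Kohn--Nirenberg symbol $a'(x,\xi)$ by the same stationary-phase-under-oscillatory-integrals machinery used in \S\ref{sec:Com-PM2021} for composition. First I would write out the action of the putative operator on $\varphi \in \scrS(\Rn)$, insert $\varphi(y) = (2\pi)^{-n/2}\int e^{iy\cdot\eta}\hat\varphi(\eta)\,d\eta$, and interchange integrals (justified in the oscillatory sense by Lemma \ref{lem:Osxy-PM2021}) so that the operator takes the form $(2\pi)^{-n/2}\int e^{ix\cdot\eta} a'(x,\eta)\hat\varphi(\eta)\,d\eta$ with
\[
a'(x,\eta) := (2\pi)^{-n}\int e^{i(x-y)\cdot\xi}\,e^{-i(x-y)\cdot\eta}\,a(x,y,\xi)\,dy\,d\xi
= (2\pi)^{-n}\int e^{-iy'\cdot\xi'}\,a(x,x+y',\eta+\xi')\,dy'\,d\xi',
\]
after the substitutions $y' = y-x$, $\xi' = \xi-\eta$. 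This is structurally identical to the oscillatory integral \eqref{eq:cet-PM2021} defining the composed symbol $c$, with $b$ replaced by the $\xi$-independent factor carrying the $y$-dependence.

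Next I would verify that $a'$ is well-defined as an oscillatory integral of Type I/II and that it belongs to $S^m(\R_x^n\times\R_\xi^n)$. Following \S\ref{sec:Com-PM2021}, set $\lambda := \agl[\eta]$, $\tilde\eta := \eta/\agl[\eta]$, rescale $y'\mapsto y'$, $\xi'\mapsto\lambda\xi'$ to get a phase $\lambda\agl[Q(y',\xi'),(y',\xi')]/2$ with $Q = \left(\begin{smallmatrix}0 & -I\\ -I & 0\end{smallmatrix}\right)$, and check the growth hypothesis \eqref{eq:as2-PM2021} for the amplitude $a(x,x+y',\lambda(\tilde\eta+\xi'))$: here the key point is that $|\partial_{y'}^\alpha\partial_{\xi'}^\beta [a(x,x+y',\lambda(\tilde\eta+\xi'))]| \lesssim \lambda^{|\beta|}\agl[\lambda(\tilde\eta+\xi')]^{m-|\beta|}\lesssim \lambda^m\agl[\xi']^{m-|\beta|}$ by Lemmas \ref{lem:abm-PM2021} and \ref{lem:Peetre-PM2021} for $|\xi'|\geq 2$ (hence by continuity for all $\xi'$), which is dominated by $C(\lambda)\agl[(y',\xi')]^{2N+2}$ once $2N+2 > m$. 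Applying Proposition \ref{prop:2-PM2021} then yields
\[
a'(x,\eta) = \sum_{|\alpha|\leq N}\frac{1}{\alpha!}\,D_{y'}^\alpha\partial_{\xi'}^\alpha\big(a(x,x+y',\eta+\xi')\big)\big|_{(y',\xi')=0} + \mathcal O(\agl[\eta]^{m-N-1}),
\]
and $D_{y'}^\alpha\partial_{\xi'}^\alpha a(x,x+y',\eta+\xi')|_0 = D_y^\alpha\partial_\eta^\alpha a(x,y,\eta)|_{(y,\eta)=(x,\xi)}$ gives exactly the claimed asymptotics. The estimates \eqref{eq:cc4-PM2021}-style bounds on $\partial_x^\alpha\partial_\eta^\beta a'$ follow by differentiating under the integral and repeating the argument, so $a'\in S^m$; then $T_{a'}$ is the operator in \eqref{eq:ReV-PM2021} by Lemma \ref{lem:sBi-PM2021}, and its kernel is read off from Definition \ref{defn:kerP-PM2021}.

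The main obstacle I anticipate is bookkeeping rather than conceptual: one must be careful that the interchange of integration (turning $\int e^{i(x-y)\cdot\xi}a(x,y,\xi)\varphi(y)\,dy\,d\xi$ into the $\hat\varphi$-form) is legitimate at the level of oscillatory integrals, not merely formally, and that the parameter $x$ appearing both as an explicit slot and inside $x+y'$ does not spoil the uniformity of the symbol estimates in $x$ --- this is handled by noting that $\partial_x$ hitting $a(x,x+y',\cdot)$ produces finitely many terms each of the same form, all with constants uniform in $x$. A secondary point is checking that the $\mathcal O(\agl[\eta]^{m-N-1})$ remainder is uniform in $x$ and survives differentiation in $x$; this again reduces to the uniform-in-parameters statement of Proposition \ref{prop:2-PM2021}, exactly as exploited in the composition theorem.
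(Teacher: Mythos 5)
Your proposal is correct and follows essentially the same route as the paper: define $a'(x,\eta)=(2\pi)^{-n}\int e^{-iy\cdot\xi}a(x,x+y,\xi+\eta)\,\mathrm{d}y\,\mathrm{d}\xi$, rescale with $\lambda=\agl[\eta]$, $\tilde\eta=\eta/\agl[\eta]$ to put the phase in the quadratic form with $Q=\bigl(\begin{smallmatrix}0&-I\\-I&0\end{smallmatrix}\bigr)$, verify the growth hypothesis \eqref{eq:as2-PM2021} via Lemmas \ref{lem:abm-PM2021} and \ref{lem:Peetre-PM2021}, and apply Proposition \ref{prop:2-PM2021}, handling $\partial_x^\alpha\partial_\eta^\beta a'$ exactly as in \eqref{eq:cc4-PM2021}--\eqref{eq:cc5-PM2021}. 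No gaps worth noting.
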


If \eqref{eq:ReV-PM2021} holds, we will have
\[
\int e^{i(x-y) \cdot \xi} a'(x,\xi) \varphi(y) \dif y \dif \xi
= \int e^{i(x-y) \cdot \xi} a(x,y,\xi) \varphi(y) \dif y \dif \xi
\]
and so we can expect
\[
\int e^{i(x-y) \cdot \xi} a'(x,\xi) \dif \xi
= \int e^{i(x-y) \cdot \xi} a(x,y,\xi) \dif \xi
\]
to hold in the oscillatory integral sense.
By changing $y$ to $y+x$, we see the LHS is a Fourier transform,
\[
\calF_\xi \{ a'(x,\xi) \} (y)
= (2\pi)^{-n/2} \int e^{-iy \cdot \xi} a(x,y+x,\xi) \dif \xi,
\]
so
\begin{align*}
a'(x,\eta)
& = (2\pi)^{-n} \int e^{iy \cdot \eta} \dif y \cdot \int e^{-iy \cdot \xi} a(x,y+x,\xi) \dif \xi \\
& = (2\pi)^{-n} \int e^{-iy \cdot (\xi - \eta)} a(x,y+x,\xi) \dif y \dif \xi \\
& = (2\pi)^{-n} \int e^{-iy \cdot \xi} a(x,y+x,\xi + \eta) \dif y \dif \xi \\
& = (2\pi)^{-n} \lambda^n \int e^{-i\lambda y \cdot \xi} a(x,y+x, \lambda(\xi + \tilde \eta)) \dif y \dif \xi,
\end{align*}
where again $\lambda := \agl[\eta]$ and $\tilde \eta := \eta/\agl[\eta]$.
The rigorous proof we go by first set $a'$ as in this way, and then prove $a'$ is a symbol of order $m$.

\begin{proof}[Proof of Theorem \ref{thm:ReV-PM2021}]
We set
\[
a'(x,\eta)
= (2\pi)^{-n} \lambda^n \int e^{-i\lambda y \cdot \xi} a(x,y+x, \lambda(\xi + \tilde \eta)) \dif y \dif \xi,
\]
where $\lambda := \agl[\eta]$ and $\tilde \eta := \eta/\agl[\eta]$.
Following the arguments preceding this proof, we can show that $a'$ satisfies \eqref{eq:ReV-PM2021}.
It's left to show $a'$ satisfies the asymptotics, which will automatically show $a' \in S^m$.

To show $a'$ satisfies the asymptotics, we use the stationary phase lemma in a similar manner as in \S \ref{sec:Com-PM2021}.
We set
\[
a_{x,\eta}(y, \xi) := a(x,x+y, \lambda(\xi + \tilde \eta)),
\]
thus
\begin{equation} \label{eq:aa-PM2021}
a'(x,\eta) = (2\pi)^{-n} \lambda^n \int_{\R^{2n}} e^{i\lambda \agl[Q(y, \xi), (y, \xi)]/2} a_{x,\eta}(y, \xi) \dif {(y,\xi)},
\end{equation}
where $y$ and $\xi$ is treated as horizontal vector and
\[
Q =
\begin{pmatrix}
0 & -I \\
-I & 0
\end{pmatrix}
\quad (\Rightarrow \ Q^{-1} = Q, \ \sgn Q = 0, \text{~and~} \det Q = \pm 1).
\]
For $|\xi| \geq 2$, we have
\begin{align*}
|\partial_y^\alpha \partial_\xi^\beta \big( a_{x,\eta}(y, \xi) \big)|
& = |\partial_y^\alpha \partial_\xi^\beta \big[ a(x,x+y, \lambda(\xi + \tilde \eta)) \big]| \\
& \leq C_{\alpha, \beta} \lambda^{|\beta|} |(\partial_y^\alpha \partial_\xi^{\beta} a)(x, x+y, \lambda(\xi + \tilde \eta))| \\
& \leq C_{\alpha, \beta} \lambda^{|\beta|} \agl[\lambda(\xi + \tilde \eta)]^{m-|\beta|} \\
& \leq C_{\alpha, \beta} \lambda^{|\beta|} \agl[\lambda]^{m-|\beta|} \agl[\xi + \tilde \eta]^{m-|\beta|} \quad (\text{Lemma \ref{lem:abm-PM2021}}) \\
& \lesssim C_{\alpha, \beta} \lambda^{m} \agl[\xi]^{m-|\beta|} \agl[\tilde \eta]^{|m-|\beta||} \qquad (\lambda \simeq \agl[\lambda], \ \text{Lemma \ref{lem:Peetre-PM2021}}) \\
& \leq C_{\alpha, \beta} \lambda^{m} \agl[\xi]^{m-|\beta|}.
\end{align*}
Hence, the condition \eqref{eq:as2-PM2021} is satisfied when $2N + 2 > m$, with $C_{N,n,\alpha, \beta}(\lambda) = C_{\alpha, \beta} \lambda^{m}$, so we can use Proposition \ref{prop:2-PM2021} directly on \eqref{eq:aa-PM2021} to obtain
\begin{align}
a'(x,\eta)
& = (2\pi)^{-n} \lambda^n \int_{\R^{2n}} e^{i\lambda \agl[Q(y, \xi), (y, \xi)]/2} c_{x,\eta}(y, \xi) \dif {(y,\xi)} \nonumber \\
& = (2\pi)^{-n} \lambda^n \times \left( \frac{2\pi}{\lambda} \right)^{n} \sum_{0 \leq j \leq N} \frac {\lambda^{-j}} {j!} \left( \frac {\agl[Q^{-1} D_{(y,\xi)}, D_{(y,\xi)}]} {2i} \right)^j c_{x,\eta}(0, 0) \nonumber \\
& \quad + \lambda^n \times \mathcal O \big( \lambda^{-n-N-1} \sum_{|\alpha| + |\beta| \leq 2n+2N+3} \sup_{(y,\xi) \in \R^{2n}} \frac {|\partial_y^\alpha \partial_\xi^\beta \big( c_{x,\eta}(y,\xi) \big)|} {\agl[(y,\xi)]^{2n+4N+5 - |\alpha| - |\beta|}} \big) \nonumber \\
& = \sum_{0 \leq j \leq N} \frac {\lambda^{-j}} {j!} (D_y \cdot \nabla_\xi)^j c_{x,\eta}(0, 0) + \mathcal O \big( \lambda^{-N-1+m} \big) \nonumber \\
& = \sum_{|\alpha| \leq N} \frac {\lambda^{-|\alpha|}} {\alpha!} D_y^\alpha \partial_\xi^\alpha \big( c_{x,\eta}(y,\xi) \big) |_{(y,\xi) = (0, 0)} + \mathcal O \big( \lambda^{-N-1+m} \big) \nonumber \\
& = \sum_{|\alpha| \leq N} \frac {\lambda^{-|\alpha|}} {\alpha!} D_y^\alpha \partial_\xi^\alpha \big( a(x,y+x, \lambda \xi + \eta) \big) |_{(y,\xi) = (0, 0)} + \mathcal O \big( \lambda^{-N-1+m} \big) \nonumber \\
& = \sum_{|\alpha| \leq N} \frac {1} {\alpha!} D_y^\alpha \partial_\xi^\alpha \big( a(x,y+x, \xi + \eta) \big) |_{(y,\xi) = (0, 0)} + \mathcal O \big( \lambda^{-N-1+m} \big) \nonumber \\
& = \sum_{|\alpha| \leq N} \frac {1} {\alpha!} D_y^\alpha \partial_\xi^\alpha \big( a(x,y, \xi) \big) |_{(y,\xi) = (x, \eta)} + \mathcal O \big( \lambda^{-N-1+m} \big).
\label{eq:aa2-PM2021}
\end{align}

Due to the same logic as in \eqref{eq:cc4-PM2021}-\eqref{eq:cc5-PM2021}, we can let $N$ to be large enough, and by doing so, \eqref{eq:aa2-PM2021} can implies $c \in S^m$ and
\[
c(x,\eta) \sim \sum_\alpha \frac 1 {\alpha!} D_y^\alpha \partial_\xi^\alpha \big( a(x,y,\xi) \big) |_{(y, \xi) = (x, \eta)},
\]
The proof is complete.
\end{proof}

Theorem \ref{thm:ReV-PM2021} completes the proof of Lemma \ref{lem:kedi-PM2021}.

\section{The Adjoint and transpose}
\label{sec:AdT-PM2021}

We define the adjoint and transpose of the $\Psi$DO $T_a$ acting on Schwartz functions as follows,
\begin{equation} \label{eq:AdTdef-PM2021}
	\begin{aligned}
	\text{adjoint\index{adjoint}~} T_a^*: & \quad (T_a^* u, v) := (u, T_a v), \\
	\text{transpose\index{transpose}~} {}^t T_a: & \quad \agl[{}^t T_a u, v] := \agl[u, T_a v],
	\end{aligned}
\end{equation}
where $u,v \in \scrS$.

\begin{thm} \label{thm:AdT-PM2021}
	Assume $a(x,\xi) \in S^m$.
	The $T_a^*$ and ${}^t T_a$ defined in \eqref{eq:AdTdef-PM2021} exist uniquely, and both are {\rm $\Psi$DOs}.
	There exist symbols $a^*$ and ${}^t a$ of the same order as $a$ such that $T_a^* = T_{a^*}$ and ${}^t T_a = T_{{}^t a}$.
	Moreover, we have the asymptotics
	\begin{align*}
	& \boxed{a^*(x,\xi) \sim \sum_\alpha \frac 1 {\alpha!} D_x^\alpha \partial_\xi^\alpha \overline a(x,\xi),} \\
	& \boxed{ {}^t a(x,\xi) \sim \sum_\alpha \frac {(-1)^{|\alpha|}} {\alpha!} D_x^\alpha \partial_\xi^\alpha a(x,-\xi).}
	\end{align*}
\end{thm}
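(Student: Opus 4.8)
The plan is to derive both formulas by reducing the adjoint and transpose to the double-symbol operators of Theorem \ref{thm:ReV-PM2021}, exactly as was foreshadowed in the formal computation \eqref{eq:Tuph-PM2021}. First I would treat the transpose, since it is cleaner (no complex conjugation). Starting from the defining relation $\agl[{}^t T_a u, v] = \agl[u, T_a v]$ for $u,v \in \scrS(\Rn)$, I would write out $T_a v$ using the double-integral form in Definition \ref{defn:PseudoDO-PM2021}, plug it into $\agl[u, T_a v] = \int u(y) (T_a v)(y) \dif y$, and then interchange the roles of the integration variables so that the $v$-integral sits on the outside. The interchange of the order of integration is justified by Lemma \ref{lem:Osxy-PM2021}, which precisely allows oscillatory integrals with parameters and permits moving the $\dif y$-integral inside/outside. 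After relabeling, one obtains
\[
\agl[{}^t T_a u, v] = \int v(x) \Big( (2\pi)^{-n} \int e^{i(x-y)\cdot\xi} a(y, -\xi) u(y) \dif y \dif \xi \Big) \dif x,
\]
the sign flip $\xi \mapsto -\xi$ coming from swapping $x$ and $y$ in the phase $e^{i(y-x)\cdot\xi}$. Thus ${}^t T_a$ is an operator of the double-symbol type with amplitude $b(x,y,\xi) := a(y,-\xi) \in S^m(\R^n_x \times \R^n_y \times \R^n_\xi)$ (it does not actually depend on $x$, but that is harmless). Applying Theorem \ref{thm:ReV-PM2021} to $b$ gives a genuine symbol ${}^t a \in S^m$ with $T_{{}^t a} = {}^t T_a$ and asymptotics ${}^t a(x,\xi) \sim \sum_\alpha \frac 1 {\alpha!} D_y^\alpha \partial_\eta^\alpha (a(y,-\eta))|_{(y,\eta)=(x,\xi)}$. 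Since $\partial_\eta^\alpha \big( a(y,-\eta) \big) = (-1)^{|\alpha|} (\partial_\xi^\alpha a)(y,-\eta)$, this is exactly the claimed formula ${}^t a(x,\xi) \sim \sum_\alpha \frac{(-1)^{|\alpha|}}{\alpha!} D_x^\alpha \partial_\xi^\alpha a(x,-\xi)$.

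For the adjoint I would run the same argument but carry the complex conjugation. From $(T_a^* u, v) = (u, T_a v) = \int u(y)\, \overline{(T_a v)(y)}\, \dif y$, expanding $T_a v$ and conjugating turns $e^{i(y-x)\cdot\xi}$ into $e^{-i(y-x)\cdot\xi} = e^{i(x-y)\cdot\xi}$ and replaces $a(y,\xi)$ by $\overline{a(y,\xi)}$, while $\overline{v(x)}$ reappears correctly so that the result pairs against $v$ in the Hermitian inner product. Concretely this exhibits $T_a^*$ as the double-symbol operator with amplitude $b(x,y,\xi) := \overline{a(y,\xi)} \in S^m$. Theorem \ref{thm:ReV-PM2021} again produces $a^* \in S^m$ with $T_{a^*} = T_a^*$ and $a^*(x,\xi) \sim \sum_\alpha \frac 1 {\alpha!} D_y^\alpha \partial_\eta^\alpha \big( \overline{a(y,\eta)} \big)|_{(y,\eta)=(x,\xi)} = \sum_\alpha \frac 1 {\alpha!} D_x^\alpha \partial_\xi^\alpha \overline a(x,\xi)$, which is the asserted expansion. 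Uniqueness of $a^*$ and ${}^t a$ as symbols follows from Lemma \ref{lem:sBi-PM2021} (the map $\sigma \mapsto T_\sigma$ is injective), and uniqueness of the operators $T_a^*$, ${}^t T_a$ themselves follows because a $\Psi$DO is determined by its action on $\scrS(\Rn)$, which is dense enough to pin down the defining pairings.

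The one genuinely delicate point — and the step I expect to be the main obstacle — is the rigorous justification of the manipulations that turn $(u, T_a v)$ and $\agl[u, T_a v]$ into an oscillatory integral over $\R^{3n}$ in the variables $(x,y,\xi)$ and then recognize it as a double-symbol $\Psi$DO in the sense of \eqref{eq:ReV-PM2021}. The formal computation \eqref{eq:Tuph-PM2021} already hints at this, but there the interchange of $\int \dif y$ and $\int \dif \xi$ and the passage from $\overline{v}$-pairing back to a $\Psi$DO acting on $v$ were done only heuristically. Making this precise requires invoking Lemma \ref{lem:Osxy-PM2021} with the test function $w(x,y) = v(x) u(y) \in \scrS(\R^{2n})$ so that the triple integral is a bona fide oscillatory integral and the order of $\dif y$ (or $\dif x$) relative to $\dif \xi$ may be exchanged freely; once that is in place, everything else is the bookkeeping of sign changes and conjugation plus a direct appeal to Theorem \ref{thm:ReV-PM2021}, together with the elementary identities $\partial_\xi^\alpha(a(y,-\xi)) = (-1)^{|\alpha|}(\partial_\xi^\alpha a)(y,-\xi)$ and $D_y^\alpha \partial_\eta^\alpha f|_{(y,\eta)=(x,\xi)} = D_x^\alpha\partial_\xi^\alpha f(x,\xi)$ when $f$ does not depend on the suppressed variables. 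I would therefore structure the written proof as: (i) reduce ${}^t T_a$ to a double-symbol operator via Lemma \ref{lem:Osxy-PM2021}; (ii) invoke Theorem \ref{thm:ReV-PM2021} and simplify the asymptotics; (iii) repeat verbatim for $T_a^*$ with conjugation inserted; (iv) record uniqueness via Lemma \ref{lem:sBi-PM2021}.
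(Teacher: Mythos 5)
Your proposal is correct and follows essentially the same route as the paper: rewrite $(u,T_a v)$ (resp.\ $\agl[u,T_a v]$) as a double-symbol operator with amplitude $\overline a(y,\xi)$ (resp.\ $a(y,-\xi)$) acting on $u$, then invoke Theorem \ref{thm:ReV-PM2021} to obtain the symbol and its asymptotics, with uniqueness from the defining pairings and Lemma \ref{lem:sBi-PM2021}. The only difference is that you carry out the transpose case explicitly, which the paper leaves as an exercise, and your sign bookkeeping ($\xi\mapsto-\xi$ only in the transpose case) is correct.
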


\begin{rem} \label{rem:AdT-PM2021}
	The computation \eqref{eq:Tuph-PM2021} gives an very efficient intuitive way to compute the asymptotics of $a^*$.
\end{rem}

\begin{proof}
	Here we only show the proof for $a^*$, and that of ${}^t a$ is left as an exercise.
	
	{\bf Step 1.} Existence.
	As explained at the beginning of \S \ref{sec:ReV-PM2021}, for $u,v \in \scrS$ we have
	\[
	(u, T_{a} v)
	= ((2\pi)^{-n} \int e^{i(y-x) \cdot \xi} \overline a(x,\xi) u(x) \dif x \dif \xi, v)
	\]
	so if we define a mapping $T$ as
	\[
	T u(y) := (2\pi)^{-n} \int e^{i(y-x) \cdot \xi} \overline a(x,\xi) u(x) \dif x \dif \xi,
	\]
	then $(T u, v) := (u, T_a v)$.
	Also, this $T$ is of the form \eqref{eq:ReV-PM2021}, so by Theorem \ref{thm:ReV-PM2021} we know $T$ is a $\Psi$DO.

	{\bf Step 2.} Uniqueness.
	Assume there are two adjoint of $T$, and we denote them as $T_1$ and $T_2$, respectively.
	Then for any $u,v \in \scrS$ we can conclude
	\[
	(T_1 u, v) = (u, T_a v) = (T_2 u, v)
	\ \Rightarrow \
	((T_1 - T_2) u, v) = 0.
	\]
	Hence, $(T_1 - T_2) u = 0$ for any $u \in \scrS$ and so $T_1 = T_2$.

	{\bf Step 3.} Asymptotics.
	Theorem \ref{thm:ReV-PM2021} suggests that the symbol of $T$, denoted as $a^*$, satisfies the asymptotics:
	\[
	a^*(x,\xi)
	\sim \sum_\alpha \frac 1 {\alpha!} D_y^\alpha \partial_\eta^\alpha \big( \overline a(y,\eta) \big) |_{(y, \eta) = (x,\xi)}
	= \sum_\alpha \frac 1 {\alpha!} D_x^\alpha \partial_\xi^\alpha \overline a(x,\xi).
	\]
	The proof is complete.
\end{proof}

\section*{Exercise}

\begin{ex}
	Use stationary phase lemmas to complete the estimate in $\partial_x^\alpha \partial_\eta^\beta c(x,\eta)$ in \eqref{eq:cc5-PM2021}.
	Hint: mimic the computations in \eqref{eq:cc2-PM2021}.
\end{ex}

\begin{ex}
	Assume $a \in S^{m_1}$ and $b \in S^{m_2}$.
	Utilize Theorem \ref{thm:com-PM2021} to show that $[T_a, T_b] \in \Psi^{m_1 + m_2 - 1}$, where $[T_a, T_b] := T_a T_b - T_b T_a$ is called the \emph{commutator} \index{commutator} of $T_a$ and $T_b$, and $T_a T_b$ is a shorthand of the composition $T_a \circ T_b$.
\end{ex}

\begin{ex} \label{ex:com2-PM2021}
	Prove the statement in Remark \ref{rem:com2-PM2021}.
	In Theorem \ref{thm:com-PM2021}, assume $a(x,\xi) = \sum_{|\alpha| \leq m_1} d_\alpha(x) \xi^\alpha$ where $d_\alpha \in C^\infty$ are all bounded, or assume $b = b(\xi)$, then show that
	\[
	c(x,\eta) = \sum_{|\alpha| \leq N} \frac {1} {\alpha!} \partial_\eta^\alpha a(x,\eta) D_x^\alpha b(x,\eta)
	\]
	for some finite integer $N$.
	Hint: substitute the expressions of $a$ or $b$ into \eqref{eq:cet-PM2021} and use Lemma \ref{lem:Tlf-PM2021}.
\end{ex}

\begin{ex}
	Mimic the proof for $a^*$ in Theorem \ref{thm:AdT-PM2021} to prove the result for ${}^t a$.
\end{ex}

\begin{ex}
	Let $T_1$, $T_2$ be two {\rm $\Psi$DOs}.
	Show that $(T_1^*)^* = T_1$ and $(T_1 T_2)^* = T_2^* T_1^*$.
	Here ``$T^*$'' stands for taking the adjoint of $T$.
\end{ex}

\chapter{Parametrix and Boundedness of \texorpdfstring{$\Psi$DOs}{PsiDOs}} \label{ch:BddP-PM2021}

In this chapter we investigate the parametrix and boundedness of $\Psi$DOs, both of which heavily utilize the symbolic calculus.
The notion of parametrix can be understood as the approximate inverse, or the inverse module $C^\infty$ an operator.
For a homogeneous polynomial $T(\xi) := \sum_{|\alpha| = m} a_\alpha \xi^\alpha$, its corresponding operator $T := T(D)$ is a $\Psi$DO.

To find the inverse, a typical idea is to design $S(\xi) := 1/T(\xi)$ and let $S := S(D)$.
Inaccurately this seems to give us $ST = I$ where $I$ is the identity operator, which is (inaccurately) because by Theorem \ref{thm:com-PM2021} (and Remark \ref{rem:com2-PM2021}) we have
\[
\text{symbol of~} ST
= \sum_\alpha \frac {1} {\alpha!} \partial_\xi^\alpha \big( T(\xi) \big) D_x^\alpha \big( S(\xi) \big)
=  T(\xi) S(\xi) = 1.
\]
Unfortunately, this is wrong, because $1/T(\xi)$ has singularities when $T(\xi) = 0$.
And due to this reason, $S$ may not be a {\rm $\Psi$DO} so Theorem \ref{thm:com-PM2021} is not applicable here.

However, the $S$ can be saved if we cutoff the singularity.
Specifically, fix a $\chi \in C_c^\infty$ with $\chi(0) = 1$ and we re-design $S$ as $S(\xi) := (1 - \chi(\xi))/T(\xi)$ and once again let $S := S(D)$.
It is straightforward that this new $S(\xi)$ is a symbol and so $S$ is a $\Psi$DO.
Again, by Theorem \ref{thm:com-PM2021} (and Remark \ref{rem:com2-PM2021}) we have
\begin{align*}
\text{symbol of~} ST
& = \sum_\alpha \frac {1} {\alpha!} \partial_\xi^\alpha \big( T(\xi) \big) D_x^\alpha \big( S(\xi) \big)
=  T(\xi) S(\xi) \\
& = T(\xi) (1 - \chi(\xi))/T(\xi) \\
& = 1 - \chi(\xi).
\end{align*}
It is also true that the symbol of $TS = 1 - \chi(\xi)$.
Note that $\chi(D) \in \Psi^{-\infty}$, so we conclude
\[
ST = I + \Psi^{-\infty}, \quad TS = I + \Psi^{-\infty}.
\]
This inspires us to introduce the notion of parametrix.

\section{Parametrix} \label{sec:para-PM2021}

In what follows we use $I$ to signify the identity operator unless otherwise stated.

\begin{defn}[Parametrix\index{parametrix}] \label{defn:para-PM2021}
	Assume $m \in \R$ and $T \in \Psi^m$. If there exists a {\rm $\Psi$DO} $S$ such that $ST - I \in \Psi^{-\infty}$, we call $S$ a \emph{left parametrix} of $T$.
	If $TS - I \in \Psi^{-\infty}$, we call $S$ a \emph{right parametrix} of $T$.
	We call $S$ a \emph{parametrix} of $T$ if it is both a left and a right parametrix.
\end{defn}

The notion of left and right parametrix is somewhat redundant.

\begin{lem} \label{lem:lre-PM2021}
	Assume both $S$ and $T$ both {\rm $\Psi$DOs}.
	If $S$ is a left (right) parametrix of $T$, and $T$ has a right (left) parametrix, then $S$ is also a right (left) parametrix of $T$.
\end{lem}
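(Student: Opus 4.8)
Assume both $S$ and $T$ are $\Psi$DOs. If $S$ is a left (right) parametrix of $T$, and $T$ has a right (left) parametrix, then $S$ is also a right (left) parametrix of $T$.

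The plan is to run the standard "left-inverse equals right-inverse modulo negligible operators" argument, taking care that all the bookkeeping happens inside the algebra of $\Psi$DOs modulo $\Psi^{-\infty}$. First I would set up the case $S$ is a left parametrix of $T$, so $ST - I \in \Psi^{-\infty}$, and let $R$ be a right parametrix, so $TR - I \in \Psi^{-\infty}$. The goal is to show $TS - I \in \Psi^{-\infty}$. The key computation is to look at the product $STR$ and evaluate it two ways: on one hand, $S(TR) = S(I + \Psi^{-\infty}) = S + S\cdot\Psi^{-\infty}$; on the other hand, $(ST)R = (I + \Psi^{-\infty})R = R + \Psi^{-\infty}\cdot R$. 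Hence $S - R \in \Psi^{-\infty}$.

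For this step to be legitimate I need two closure facts about $\Psi$DOs, both of which follow from earlier material: (i) the composition of a $\Psi$DO with an operator in $\Psi^{-\infty}$ lies in $\Psi^{-\infty}$ — this is immediate from Theorem~\ref{thm:com-PM2021}, since if $a \in S^{m_1}$ and $b \in S^{-\infty}$ then $a \# b \in S^{m_1 + m_2}$ for every $m_2$, hence in $S^{-\infty}$; and (ii) $\Psi^{-\infty}$ is closed under addition, which is clear from the definition $S^{-\infty} = \cap_m S^m$ together with Lemma~\ref{lem:sBi-PM2021} identifying operators with symbols. Granting these, once $S - R \in \Psi^{-\infty}$ I conclude $TS - I = T(S - R) + (TR - I) \in \Psi^{-\infty}$, using (i) again for $T(S-R)$ and closure under addition for the sum. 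This shows $S$ is a right parametrix of $T$, completing the first case. The second case (the parenthetical version) is entirely symmetric: if $S$ is a right parametrix, $TS - I \in \Psi^{-\infty}$, and $L$ is a left parametrix, $LT - I \in \Psi^{-\infty}$, then evaluating $LTS$ two ways gives $L - S \in \Psi^{-\infty}$, whence $ST - I = (S - L)T + (LT - I) \in \Psi^{-\infty}$.

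I do not expect any real obstacle here — the proof is purely formal manipulation in the quotient algebra $\Psi^{+\infty}/\Psi^{-\infty}$. The only point requiring a moment's care is making sure that the smoothing ideal $\Psi^{-\infty}$ really is a two-sided ideal in the ring of $\Psi$DOs, i.e. absorbs compositions from both sides; this is exactly what Theorem~\ref{thm:com-PM2021} provides, since the order of a composite is the sum of the orders and $S^{-\infty}$ sits below every $S^m$. With that in hand the argument is three lines per case.
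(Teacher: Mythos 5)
Your proof is correct and follows essentially the same route as the paper: both evaluate the triple product (your $STR$, the paper's $(ST)S'=S(TS')$) to conclude that the left and right parametrices agree modulo $\Psi^{-\infty}$, and then transfer $TR-I\in\Psi^{-\infty}$ to $TS-I\in\Psi^{-\infty}$. Your explicit remark that $\Psi^{-\infty}$ is a two-sided ideal absorbing compositions, justified via Theorem~\ref{thm:com-PM2021}, is exactly the closure property the paper uses implicitly, so there is nothing to add.
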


\begin{proof}
	We only prove the left-case.
	There exists $S'$ such that $TS' = I + \Psi^{-\infty}$.
	From $ST = I + \Psi^{-\infty}$ we have $(ST)S' = S' + \Psi^{-\infty} = S(TS')$, so $S' + \Psi^{-\infty} = S(I + \Psi^{-\infty})$, which gives $S = S' + \Psi^{-\infty}$.
	Therefore,
	\[
	TS = T(S' + \Psi^{-\infty})
	= TS' + \Psi^{-\infty}
	= I + \Psi^{-\infty} + \Psi^{-\infty}
	= I + \Psi^{-\infty},
	\]
	which implies $S$ is a right parametrix of $T$.
\end{proof}

The parametrix of a $\Psi$DO is not always exists.
And in contrast with the notion of inverse of an operator, when parametrices exist, they are not unique.

\begin{lem} \label{lem:PNUi-PM2021}
	Assume $S$ is a parametrix of $T$, and $R \in \Psi^{-\infty}$, then $S+R$ is also a parametrix of $T$.
\end{lem}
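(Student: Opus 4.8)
The plan is to simply unwind Definition \ref{defn:para-PM2021} and push everything through the composition calculus of Theorem \ref{thm:com-PM2021}. Saying that $S$ is a parametrix of $T$ means precisely that $ST - I \in \Psi^{-\infty}$ and $TS - I \in \Psi^{-\infty}$; what we must check is that the same two memberships hold with $S$ replaced by $S + R$.

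First I would treat the left side. Write $(S+R)T - I = (ST - I) + RT$. The first summand is in $\Psi^{-\infty}$ by hypothesis. For the second, observe that $T \in \Psi^m$ for some $m \in \R$ while $R \in \Psi^{-\infty} \subset \Psi^{m'}$ for \emph{every} $m' \in \R$; applying Theorem \ref{thm:com-PM2021} with orders $m'$ and $m$ gives $RT \in \Psi^{m'+m}$ for every $m'$, hence $RT \in \cap_{k \in \R} \Psi^k = \Psi^{-\infty}$. Since each $\Psi^k$ is a linear space (by linearity of $\sigma \mapsto T_\sigma$, Lemma \ref{lem:sBi-PM2021}) and therefore closed under addition, so is their intersection $\Psi^{-\infty}$, and we conclude $(S+R)T - I \in \Psi^{-\infty}$; that is, $S+R$ is a left parametrix of $T$. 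Symmetrically, $T(S+R) - I = (TS - I) + TR$ with $TR \in \Psi^{-\infty}$ by the identical composition argument, so $S+R$ is also a right parametrix, hence a parametrix.

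There is essentially no obstacle here; the only thing that needs a moment of care is the bookkeeping that $\Psi^{-\infty}$ absorbs compositions with arbitrary $\Psi$DOs and is stable under finite sums, both immediate from Theorem \ref{thm:com-PM2021} and linearity. One could equivalently write the whole argument in the ``$+\,\Psi^{-\infty}$'' shorthand of \S\ref{sec:para-PM2021}: $(S+R)T = ST + RT = (I + \Psi^{-\infty}) + \Psi^{-\infty} = I + \Psi^{-\infty}$, and likewise $T(S+R) = I + \Psi^{-\infty}$, which is exactly the claim.
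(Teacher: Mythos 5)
Your proof is correct, and since the paper leaves this lemma as an exercise, the argument you give — writing $(S+R)T - I = (ST-I) + RT$ and $T(S+R) - I = (TS-I) + TR$, absorbing $RT$ and $TR$ into $\Psi^{-\infty}$ via Theorem \ref{thm:com-PM2021}, and using closure of $\Psi^{-\infty}$ under addition — is exactly the intended one. The only point you leave implicit is that $S+R$ is itself a {\rm $\Psi$DO} (needed to apply Definition \ref{defn:para-PM2021}), which follows at once from $\Psi^{-\infty} \subset \Psi^k$ and the same additivity you already invoke.
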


The proof is left as an exercise.
One of the condition that guarantees the existence of parametrix is the ellipticity.

\begin{defn}[Ellipticity\index{ellipticity}] \label{defn:elp-PM2021}
	Assume $m \in \R$ and $a \in S^m$. We call $a$ and also its corresponding {\rm $\Psi$DO} $T_a$ \emph{elliptic} when there exist fixed positive constants $C$ and $R$ such that
	\[
	\boxed{|a(x,\xi)| \geq C\agl[\xi]^m, \quad \text{when~} x \in \Rn, \ |\xi| \geq R.}
	\]
\end{defn}

There is an equivalent definition for the ellipticity of a symbol.

\begin{lem} \label{lem:aEv-PM2021}
	Assume $m \in \R$ and $a \in S^{m}$.
	The ellipticity condition for $a$ is equivalent to the fact that there exist two positive constants $C$ and $D$ such that
	\begin{equation} \label{eq:aEv-PM2021}
		\boxed{|a(x,\xi)| \geq C \agl[\xi]^{m} - D \agl[\xi]^{m-1}, \Forall x,\xi \in \Rn.}
	\end{equation}
\end{lem}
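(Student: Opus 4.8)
The plan is to prove the two implications of the equivalence separately; both are elementary, amounting to a rearrangement of the defining inequalities together with careful tracking of the constants, and I will not grind through the bookkeeping in full here.

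First I would show that ellipticity (Definition \ref{defn:elp-PM2021}) implies \eqref{eq:aEv-PM2021}. Suppose $|a(x,\xi)| \geq C\agl[\xi]^m$ for $|\xi| \geq R$. On the region $\{|\xi| \geq R\}$ the bound \eqref{eq:aEv-PM2021} holds immediately with the \emph{same} $C$ and any $D > 0$, because subtracting $D\agl[\xi]^{m-1} \geq 0$ only weakens the estimate. On the complementary region $\{|\xi| < R\}$ one has $1 \leq \agl[\xi] < \agl[R]$, and
\[
C\agl[\xi]^m - D\agl[\xi]^{m-1} = \agl[\xi]^{m-1}\big(C\agl[\xi] - D\big) \leq 0
\]
as soon as $D \geq C\agl[R]$; since $|a(x,\xi)| \geq 0$, the inequality is then trivially satisfied there. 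Hence the choice $D := C\agl[R]$ (and the original $C$) works.

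For the converse, assuming \eqref{eq:aEv-PM2021} I would factor the right-hand side as $\agl[\xi]^{m-1}\big(C\agl[\xi] - D\big)$ and observe that whenever $\agl[\xi] \geq 2D/C$ the bracket dominates $\tfrac{C}{2}\agl[\xi]$, so that
\[
|a(x,\xi)| \geq \agl[\xi]^{m-1} \cdot \tfrac{C}{2}\agl[\xi] = \tfrac{C}{2}\agl[\xi]^m.
\]
The final step is to convert the condition $\agl[\xi] \geq 2D/C$ into a threshold on $|\xi|$: either $2D/C \leq 1$, in which case the condition holds for every $\xi \in \Rn$ and $R$ may be taken to be $1$, or else it holds precisely for $|\xi| \geq R$ with $R := \sqrt{(2D/C)^2 - 1}$. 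In either case $|a(x,\xi)| \geq \tfrac{C}{2}\agl[\xi]^m$ for $|\xi| \geq R$, which is exactly ellipticity with constant $C/2$ and radius $R$.

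I do not anticipate any genuine obstacle; the argument uses nothing beyond $\agl[\xi] \geq 1$ and $\agl[\xi] \simeq 1 + |\xi|$. The only point deserving a word of care is the degenerate case $2D/C \leq 1$ in the second implication, where the threshold on $|\xi|$ becomes vacuous and $R$ can be chosen to be any positive number.
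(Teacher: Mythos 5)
Your proposal is correct and follows essentially the same route as the paper: for the forward implication you make exactly the paper's choice $D := C\agl[R]$ so that the right-hand side of \eqref{eq:aEv-PM2021} is nonpositive on $\{|\xi| \leq R\}$ and the bound is trivial there, while it holds with the original $C$ on $\{|\xi| \geq R\}$. For the converse, which the paper dismisses as ``easy to see,'' your factorization $\agl[\xi]^{m-1}(C\agl[\xi]-D)$ and the threshold $\agl[\xi] \geq 2D/C$ (with the degenerate case $2D/C \leq 1$ handled separately) correctly recovers ellipticity with constant $C/2$.
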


\begin{proof}
	Assume $a \in S^m$ is elliptic, then there are constants $C$, $R > 0$ such that
	\[
	|a(x,\xi)| / \agl[\xi]^m
	\geq C, \quad \forall |\xi| \geq R,
	\]
	so for any positive constant $D$ we have
	\begin{equation} \label{eq:absm-PM2021}
		|a(x,\xi)| / \agl[\xi]^m
		\geq C - D \agl[\xi]^{-1},
	\end{equation}
	for $\forall |\xi| \geq R$.
	If we set $D := C \agl[R]$, then
	\[
	\forall |\xi| \leq R, \quad
	C \agl[\xi] \leq D \ \Rightarrow \ C - D \agl[\xi]^{-1} \leq 0,
	\]
	so \eqref{eq:absm-PM2021} holds for both $|\xi| \geq R$ and $|\xi| \leq R$.
	This gives \eqref{eq:aEv-PM2021}.
	
	On the other hand, from \eqref{eq:aEv-PM2021} it is easy to see $a$ is elliptic.
\end{proof}

We will show that
\[
\boxed{\text{Ellipticity} \quad \Leftrightarrow \quad \exists \, \text{parametrix}.}
\]

First, we show the ellipticity condition gives the existence of parametrices.

\begin{thm}[Ellipticity $\Rightarrow$ parametrix] \label{thm:para1-PM2021}
	Assume $m \in \R$ and $a \in S^m$ and $a$ is elliptic, then $T_a$ has a parametrix.
\end{thm}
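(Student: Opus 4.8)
The plan is to construct the parametrix by successive approximation, starting from the naive inverse of the principal symbol and correcting it order by order. First I would use the ellipticity condition in the form of Lemma \ref{lem:aEv-PM2021}: since $|a(x,\xi)| \geq C\agl[\xi]^m$ for $|\xi| \geq R$, fix a cutoff $\chi \in C_c^\infty$ with $\chi \equiv 1$ on $\{|\xi| \leq R\}$ and set $b_0(x,\xi) := (1-\chi(\xi))/a(x,\xi)$. One checks directly from the symbol estimates for $a$ and the lower bound that $b_0 \in S^{-m}$ (the derivatives of $1/a$ are estimated by the quotient rule, using $|a| \gtrsim \agl[\xi]^m$ away from the support of $\chi$). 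Let $B_0 := T_{b_0}$.

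Next I would invoke Theorem \ref{thm:com-PM2021} on the composition $B_0 \circ T_a$. Its symbol $b_0 \# a$ lies in $S^0$ and has the asymptotic expansion $b_0 \# a \sim \sum_\alpha \frac{1}{\alpha!}\partial_\xi^\alpha b_0 \cdot D_x^\alpha a$; the leading term is $b_0 a = 1 - \chi(\xi)$, so $b_0 \# a = 1 + r_1$ where $r_1 \in S^{-1}$ (the $\chi$ being smooth compactly supported is in $S^{-\infty}$, absorbed into $r_1$). Thus $B_0 T_a = I + R_1$ with $R_1 = T_{r_1} \in \Psi^{-1}$. I would then iterate: formally one wants $(I + R_1)^{-1} = I - R_1 + R_1^2 - \cdots$. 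Using Lemma \ref{lem:elT-PM2021} (or Theorem \ref{thm:com-PM2021}), $R_1^k \in \Psi^{-k}$, so the symbols of $(-R_1)^k$ form a sequence of decreasing orders $-k \to -\infty$; by Theorem \ref{thm:Asy-PM2021} there exists a symbol $c \in S^0$ with $c \sim \sum_{k\geq 0} \sigma((-R_1)^k)$. Setting $C := T_c$, one verifies that $C(I + R_1) - I \in \Psi^{-\infty}$: indeed for every $N$, $C = \sum_{k=0}^N (-R_1)^k + \Psi^{-N-1}$, so $C(I+R_1) = \sum_{k=0}^N (-R_1)^k (I+R_1) + \Psi^{-N-1} = I + (-R_1)^{N+1} + \Psi^{-N-1} = I + \Psi^{-N-1}$, and since $N$ is arbitrary the remainder is in $\Psi^{-\infty}$.

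Combining, $S := C B_0$ satisfies $S T_a = C(B_0 T_a) = C(I + R_1) = I + \Psi^{-\infty}$, so $S$ is a left parametrix, and $S \in \Psi^{-m}$ by Lemma \ref{lem:elT-PM2021}. To get a right parametrix I would run the symmetric construction: $T_a \circ T_{b_0}$ has symbol $a \# b_0 = 1 + \tilde r_1$ with $\tilde r_1 \in S^{-1}$ (now the expansion is $\sum_\alpha \frac{1}{\alpha!}\partial_\xi^\alpha a \cdot D_x^\alpha b_0$, leading term still $ab_0 = 1-\chi$), and the same Neumann-series-plus-asymptotic-summation argument produces $S'$ with $T_a S' = I + \Psi^{-\infty}$. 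Finally, by Lemma \ref{lem:lre-PM2021}, since $T_a$ has both a left parametrix $S$ and a right parametrix $S'$, $S$ is in fact a two-sided parametrix (alternatively, the standard $S = S T_a S' = S'$ modulo $\Psi^{-\infty}$ argument). The main obstacle I anticipate is purely bookkeeping: verifying cleanly that $b_0 \in S^{-m}$ from the ellipticity bound (differentiating a reciprocal and controlling all the cross terms), and being careful that the asymptotic summation in Theorem \ref{thm:Asy-PM2021} applies — i.e. that the orders $-k$ are strictly decreasing to $-\infty$, which they are. No single step is deep; the content is entirely in the symbolic calculus already established in Theorem \ref{thm:com-PM2021} and the asymptotic-completeness Theorem \ref{thm:Asy-PM2021}.
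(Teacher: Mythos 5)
Your proof is correct, and it reaches the conclusion by a somewhat different mechanism than the notes. The first step is identical: you take $b_0=(1-\chi)/a\in S^{-m}$ (Exercise \ref{ex:1as-PM2021}), apply Theorem \ref{thm:com-PM2021} to get a crude one-sided inverse, and at the end you invoke Lemma \ref{lem:lre-PM2021} after running the construction on both sides. Where you diverge is in how the lower-order errors are removed: the notes stay entirely at the symbol level and correct recursively, setting $b_j=(1-\chi)r_j/a\in S^{-m-j}$ (dividing each successive remainder by $a$ again) and asymptotically summing $b\sim\sum_j b_j$ via Theorem \ref{thm:Asy-PM2021} so that $\sigma(T_aB)=1+S^{-N-1}$ for every $N$; you instead form the error operator once, $B_0T_a=I+R_1$ with $R_1\in\Psi^{-1}$, and invert $I+R_1$ by an asymptotically summed Neumann series $C\sim\sum_k(-R_1)^k$, then set $S=CB_0$. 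The two series are formally equivalent (expanding your $\sigma\big((-R_1)^k\big)$ with the composition calculus reproduces the notes' corrections), but the organization buys different things: your route is modular --- after the first step it never uses $a$ again, so it really proves the general statement that any operator $I+R$ with $R\in\Psi^{-1}$ admits a parametrix, and ellipticity is only needed to produce $B_0$ --- whereas the notes' route keeps an explicit formula for the parametrix's symbol in terms of $a$ and the remainders $r_j$, which is convenient if one wants the principal symbol of the parametrix (namely $(1-\chi)/a$) and the next corrections on display. One cosmetic point: your telescoping identity should read $\sum_{k=0}^N(-R_1)^k(I+R_1)=I-(-R_1)^{N+1}$ rather than $I+(-R_1)^{N+1}$; either way the extra term lies in $\Psi^{-N-1}$, so the conclusion $C(I+R_1)-I\in\Psi^{-\infty}$ is unaffected.
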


\begin{proof}
	Here we use the notation $\sigma(T)$ to represent the symbol of a $\Psi$DO $T$, the well-definedness of the mapping $\sigma$ is guaranteed by Lemma \ref{lem:sBi-PM2021}.
	We denote $T_a$ as $A$ for simplicity.
	Fix a cutoff function $\chi \in C_c^\infty(\Rn)$ such that $\chi(\xi) = 1$ when $|\xi| \leq R$ and $\chi(\xi) = 0$ when $|\xi| \geq R+1$, where the $R$ is given in Definition \ref{defn:elp-PM2021}.
	
	{\bf Step 1.}~Define $b_0(x,\xi) := (1-\chi(\xi))/a(x,\xi)$ and $B_0 := T_{b_0}$, then $b_0$ is well-defined because the denominator is nonzero in the support of $1-\chi$.
	Also, it can be checked that $b_0$ is a symbol of order $-m$ (see Exercise \ref{ex:1as-PM2021}).
	Then according to Theorem \ref{thm:com-PM2021}, we have
	\[
	\sigma(A B_0)
	= a (1-\chi)/a - r_1
	= 1 - \chi - r_1, \quad \text{for some} \quad r_1 \in S^{-1}.
	\]
	
	{\bf Step 2.}~Define $b_1(x,\xi) := (1-\chi(\xi))/a(x,\xi) \cdot r_1(x,\xi) \in S^{-m-1}$ and $B_1 := T_{b_1}$.
	Again, according to Theorem \ref{thm:com-PM2021}, we have
	\begin{align*}
	\sigma(A (B_0 + B_1))
	& = \sigma(A B_0) + \sigma(A B_1)
	= 1 - \chi - r_1 + a(1-\chi)/a r_1 - r_2 \\
	& = 1 - (1+r_1) \chi - r_2 , \quad \text{for some} \quad r_2 \in S^{-2}.
	\end{align*}

	{\bf Step 3.}~Define recursively $b_j(x,\xi) := (1-\chi(\xi))/a(x,\xi) \cdot r_j(x,\xi) \in S^{-m-j}$ and $B_j := T_{b_j}$.
	According to Theorem \ref{thm:com-PM2021}, we have
	\begin{align*}
	\sigma(A (B_0 + \cdots + B_j))
	& = \sigma(A (B_0 + \cdots + B_{j-1})) + \sigma(A B_j) \\
	& = [1 - (1+r_1 + \cdots + r_{j-1}) \chi - r_j] + a(1-\chi)/a r_j - r_{j+1} \\
	& = 1 - (1+r_1 + \cdots + r_j) \chi - r_{j+1}, \quad \text{for some} \quad r_{j+1} \in S^{-j-1}.
	\end{align*}

	{\bf Step 4.}~According to Theorem \ref{thm:Asy-PM2021}, there exists $b \in S^{-m}$ such that
	\(
	b \sim \sum_j b_j.
	\)
	Denote $B = T_b$, so for any $N \in \N$ there holds $B = B_0 + \cdots B_N + \Psi^{-m-N-1}$.
	Hence we can compute the symbol of $AB$ as follows,
	\begin{align}
	\sigma(AB)
	& = \sigma(A (B_0 + \cdots B_N + \Psi^{-m-N-1})) \nonumber \\
	& = \sigma(A (B_0 + \cdots B_N)) + \sigma(A \Psi^{-m-N-1}) \nonumber \\
	& = 1 - (1+r_1 + \cdots + r_N) \chi - r_{N+1} + S^{-N-1}
	= 1 + S^{-N-1}, \label{eq:sAB-PM2021}
	\end{align}
	where the last equal sign is due to $\chi \in S^{-\infty}$ and $r_{N+1} \in S^{-N-1}$.
	Due to the arbitrariness of $N$, \eqref{eq:sAB-PM2021} implies that
	\[
	AB - I \in \Psi^{-\infty},
	\]
	so $B$ is right parametrix of $A$.
	By repeating steps 1-4 we can also show $A$ has a right parametrix, so by Lemma \ref{lem:lre-PM2021} we conclude that $B$ is a parametrix of $A$.
\end{proof}

Second, we show the existence of parametrices gives the ellipticity.

\begin{thm}[Parametrix $\Rightarrow$ ellipticity] \label{thm:para2-PM2021}
	Assume $m \in \R$ and $a \in S^m$ and $T_a$ has either a right parametrix or a left parametrix, then $T_a$ is elliptic.
\end{thm}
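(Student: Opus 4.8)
The plan is to push the defining relation of the parametrix through the composition calculus of Theorem~\ref{thm:com-PM2021} and read off a pointwise lower bound for $|a|$. Both the left‑ and the right‑parametrix case can be handled at once: if $S = T_b$ is either a left parametrix ($T_bT_a - I \in \Psi^{-\infty}$) or a right parametrix ($T_aT_b - I \in \Psi^{-\infty}$), then the composition in question has symbol $b\#a$, resp.\ $a\#b$, and by Theorem~\ref{thm:com-PM2021} — equivalently by the first equality of \eqref{eq:ab12-PM2021} — this symbol equals $ba + r$, resp.\ $ab + r = ba + r$, with $r \in S^{(-m)+m-1} = S^{-1}$, \emph{provided} $b \in S^{-m}$ (on this proviso see the last paragraph). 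On the other hand, the hypothesis $T_bT_a - I \in \Psi^{-\infty}$ (resp.\ $T_aT_b - I \in \Psi^{-\infty}$) says exactly that the symbol of the composition equals $1 + \rho$ with $\rho \in S^{-\infty} \subseteq S^{-1}$. Comparing the two expressions gives $ba - 1 = \rho - r \in S^{-1}$, so there is a constant $C_0$ with
\[
|b(x,\xi)\,a(x,\xi) - 1| \leq C_0\agl[\xi]^{-1}, \qquad \forall\, x,\xi \in \Rn .
\]

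Now I would fix $R > 0$ with $C_0\agl[R]^{-1} \leq \tfrac12$. For $|\xi| \geq R$ this yields $|b(x,\xi)|\,|a(x,\xi)| \geq |b(x,\xi)\,a(x,\xi)| \geq \tfrac12$, and since $b \in S^{-m}$ there is $C_1$ with $|b(x,\xi)| \leq C_1\agl[\xi]^{-m}$; hence
\[
|a(x,\xi)| \geq \frac{1}{2\,|b(x,\xi)|} \geq \frac{1}{2C_1}\agl[\xi]^{m}, \qquad |\xi| \geq R,
\]
which is precisely the ellipticity estimate of Definition~\ref{defn:elp-PM2021}, with $C = \tfrac{1}{2C_1}$. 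That completes the argument. (An alternative for the right‑parametrix case, avoiding the commutativity remark above, is to take adjoints: from $T_aT_b - I \in \Psi^{-\infty}$ one gets that $T_b^{\,*}$ is a left parametrix of $T_a^{\,*} = T_{a^*}$, apply the left case, and then pass back to $a$ using $a^* \sim \overline a + S^{m-1}$ from Theorem~\ref{thm:AdT-PM2021}.)

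I expect the one genuinely delicate point to be the proviso $b \in S^{-m}$. An arbitrary parametrix $T_b$ only comes with $b \in S^{m'}$ for some $m'$, and then the error term lies in $S^{m+m'-1}$, which tends to $0$ as $|\xi| \to \infty$ only when $m' \leq -m$; one checks moreover that $m + m' \geq 0$ always holds, since $b\#a = 1 + S^{-\infty}$ lies in $S^{m+m'}$ but cannot lie in any $S^{-\varepsilon}$. So the argument above applies whenever the parametrix has order $\leq -m$, and since the construction in Theorem~\ref{thm:para1-PM2021} produces a parametrix of order exactly $-m$, the natural reading of the statement is that the parametrix be of order $-m$. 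I would therefore either build ``order $-m$'' into the hypothesis, or, for a fully general formulation, add the observation that the existence of a left (or right) parametrix of order $m'$ forces only $\agl[\xi]^{-m'} \lesssim |a(x,\xi)|$ for $|\xi|$ large — which is the asserted ellipticity precisely when $m' = -m$, and some control on the order of the parametrix is in fact indispensable for the conclusion.
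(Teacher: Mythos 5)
Your proposal is correct and follows essentially the same route as the paper: compose $T_a$ with the parametrix, use Theorem \ref{thm:com-PM2021} to identify the symbol of the composition as $ab$ modulo $S^{-1}$, equate it with $1+S^{-\infty}$, and conclude $|a(x,\xi)|\geq 1/(2|b(x,\xi)|)\gtrsim \agl[\xi]^{m}$ for $|\xi|$ large from $|b(x,\xi)|\lesssim\agl[\xi]^{-m}$. Your closing discussion of the order of $b$ is a welcome refinement: the paper simply asserts that ``$b$ is necessarily a symbol of order $-m$,'' whereas you correctly observe that this (or an equivalent hypothesis on the order of the parametrix) is exactly what the argument needs, since a parametrix of order $m'>-m$ only yields $|a(x,\xi)|\gtrsim\agl[\xi]^{-m'}$, as the example $a(x,\xi)=\agl[\xi]^{m-1}\in S^{m}$ shows.
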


\begin{proof}
	Assume $T_b$ is the right parametrix, then $b$ is necessarily a symbol of order $-m$, so
	\[
	\sigma(T_a T_b) = ab + S^{-1}, \quad \text{and} \quad
	\sigma(T_a T_b) = \sigma(I + \Psi^{-\infty}) = 1 + S^{-\infty},
	\]
	thus
	\[
	ab = 1 + S^{-1} + S^{-\infty} = 1 + S^{-1}.
	\]
	Therefore, when $|\xi|$ is large enough
	\[
	\forall (x,\xi) \in \R^{2n}, \quad |a(x,\xi) b(x,\xi) - 1| \leq C\agl[\xi]^{-1}.
	\]
	Therefore, $\agl[\xi] \geq C/2$ is large enough, we can conclude
	\[
	|a(x,\xi) b(x,\xi)| \geq 1/2 \quad \Rightarrow \quad
	|a(x,\xi)| \geq 1/(2|b(x,\xi)|).
	\]
	This gives
	\[
	|a(x,\xi)| \geq \agl[\xi]^{m}/2 \quad \text{when} \quad \agl[\xi] \geq C/2,
	\]
	so $a$ is elliptic.
	
	The proof for the left-case is similar.
\end{proof}

From Theorems \ref{thm:para1-PM2021} \& \ref{thm:para2-PM2021}, we see that the condition ``$T$ has a right (left) parametrix'' in Lemma \ref{lem:lre-PM2021} can be lifted.

\begin{prop} \label{prop:Blr-PM2021}
	Assume both $S$ and $T$ are {\rm $\Psi$DOs}.
	If $S$ is a left (right) parametrix of $T$, then $S$ is also a right (left) parametrix of $T$.
\end{prop}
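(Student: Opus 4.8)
\textbf{Proof proposal for Proposition \ref{prop:Blr-PM2021}.}

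The plan is to show that the hypothesis ``$T$ has a right (left) parametrix'' appearing in Lemma \ref{lem:lre-PM2021} is automatically satisfied once $S$ is a one-sided parametrix of $T$, so that Lemma \ref{lem:lre-PM2021} applies without extra assumptions. Suppose $S$ is a left parametrix of $T$, i.e.\ $ST - I \in \Psi^{-\infty}$; I want to produce a right parametrix of $T$ and then invoke Lemma \ref{lem:lre-PM2021}. The key observation is that the existence of a left parametrix forces $T$ to be elliptic, and ellipticity then produces a genuine (two-sided) parametrix via Theorem \ref{thm:para1-PM2021}; in particular $T$ has a right parametrix, which is exactly the missing hypothesis in Lemma \ref{lem:lre-PM2021}.

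Concretely, first I would argue that $S$ must be a $\Psi$DO of order $-m$ where $m$ is the order of $T$: writing $a$ for the symbol of $T$ and $b$ for the symbol of $S$, the composition formula (Theorem \ref{thm:com-PM2021}) gives $\sigma(ST) = ba + S^{m + \mathrm{ord}(b) - 1}$, and since $\sigma(ST) = 1 + S^{-\infty}$, matching orders forces $\mathrm{ord}(b) = -m$ and $ba = 1 + S^{-1}$. This is precisely the computation carried out in the proof of Theorem \ref{thm:para2-PM2021} (the left-case), so I would cite that theorem directly: $S$ being a left parametrix of $T$ implies $T_a = T$ is elliptic. Next, by Theorem \ref{thm:para1-PM2021}, an elliptic $T$ has a (two-sided) parametrix; call it $S'$. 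In particular $TS' - I \in \Psi^{-\infty}$, so $T$ has a right parametrix. Now all hypotheses of Lemma \ref{lem:lre-PM2021} are met — $S$ is a left parametrix of $T$ and $T$ has a right parametrix — so Lemma \ref{lem:lre-PM2021} yields that $S$ is also a right parametrix of $T$. The right-case is entirely symmetric: a right parametrix forces ellipticity by the other half of Theorem \ref{thm:para2-PM2021}, ellipticity gives a left parametrix by Theorem \ref{thm:para1-PM2021}, and Lemma \ref{lem:lre-PM2021} closes the argument.

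I do not expect a serious obstacle here, since the proposition is essentially a bookkeeping corollary of Theorems \ref{thm:para1-PM2021} and \ref{thm:para2-PM2021} together with Lemma \ref{lem:lre-PM2021}; the only point requiring a little care is to make sure the chain ``one-sided parametrix $\Rightarrow$ ellipticity $\Rightarrow$ two-sided parametrix $\Rightarrow$ the hypothesis of Lemma \ref{lem:lre-PM2021}'' is invoked in the correct order and that one does not accidentally argue in a circle (Lemma \ref{lem:lre-PM2021} is used only at the very end, after the needed right/left parametrix of $T$ has been supplied by Theorem \ref{thm:para1-PM2021}). An alternative, slightly more self-contained route would be to bypass Lemma \ref{lem:lre-PM2021}: having obtained a two-sided parametrix $S'$ of $T$, one shows $S = S' + \Psi^{-\infty}$ by the standard sandwich $S(TS') = (ST)S'$, hence $S$ inherits the right-parametrix property of $S'$; but since Lemma \ref{lem:lre-PM2021} already packages exactly this computation, citing it is cleaner.
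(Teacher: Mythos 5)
Your proposal is correct and follows exactly the paper's own argument: one-sided parametrix $\Rightarrow$ ellipticity (Theorem \ref{thm:para2-PM2021}) $\Rightarrow$ existence of a two-sided parametrix (Theorem \ref{thm:para1-PM2021}) $\Rightarrow$ conclude via Lemma \ref{lem:lre-PM2021}. The extra remarks on matching orders and the alternative sandwich computation are fine but not needed beyond what those cited results already contain.
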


\begin{proof}
	If $S$ is a left (right) parametrix of $T$, then by Theorem \ref{thm:para2-PM2021} we know that $T$ is elliptic, so by Theorem \ref{thm:para1-PM2021} we know $T$ has a right (left) parametrix.
	Then Lemma \ref{lem:lre-PM2021} tells us $S$ is a right (left) parametrix of $T$.
\end{proof}

We recall that when $T$ is a $\Psi$DO, $T$ doesn't increase the singular support of a distribution (see Theorem \ref{thm:pslo-PM2021}).
Now if we know $T$ is also elliptic, then $T$ doesn't decrease the singular support.

\begin{lem} \label{lem:Blr-PM2021}
	Assume $T$ is an elliptic {\rm $\Psi$DO} and $u \in \mathcal E'$, then
	\[
	\boxed{\ssupp (Tu) = \ssupp u.}
	\]
\end{lem}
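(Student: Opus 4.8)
By Theorem~\ref{thm:pslo-PM2021} (the pseudolocal property) we already have $\ssupp(Tu)\subset\ssupp u$, so the only thing left is the reverse inclusion $\ssupp u\subset\ssupp(Tu)$. The plan is to recover $u$ from $Tu$ up to a smooth error by means of a parametrix of $T$, and then to invoke the pseudolocal property a second time, now for the parametrix.

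First I would pass to properly supported representatives, so that the compositions below make sense on $\mathcal E'$. Using Lemma~\ref{lem:kedi-PM2021}, write $T=T_0+T_{-\infty}$ with $T_0$ properly supported and $T_{-\infty}\in\Psi^{-\infty}$. Since operators in $\Psi^{-\infty}$ are smoothing (their kernels are $C^\infty$, cf.\ Remark~\ref{rem:kerP-PM2021}), $T_{-\infty}u\in C^\infty$ for every $u\in\mathcal E'$, hence $\ssupp(Tu)=\ssupp(T_0u)$; moreover $T_0$ has the same symbol as $T$ modulo $S^{-\infty}$, so $T_0$ is again elliptic. By Theorem~\ref{thm:para1-PM2021} the elliptic operator $T_0$ has a parametrix, and replacing it by its properly supported part (Lemma~\ref{lem:kedi-PM2021}) — which changes it only by a smoothing operator, hence by Lemma~\ref{lem:PNUi-PM2021} is still a parametrix — I may fix a properly supported $\Psi$DO $S$ with $R:=S T_0-I\in\Psi^{-\infty}$.

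Now for $u\in\mathcal E'$ the distribution $T_0u$ again lies in $\mathcal E'$, so $S(T_0u)$ and $Ru$ are well defined, and the operator identity $S T_0=I+R$ (valid on $\scrS$, extended by continuity) gives
\[
u=S(T_0u)-Ru .
\]
Here $Ru\in C^\infty$ because $R\in\Psi^{-\infty}$, so $\ssupp(Ru)=\emptyset$, while applying the pseudolocal property (Theorem~\ref{thm:pslo-PM2021}) to the $\Psi$DO $S$ gives $\ssupp\big(S(T_0u)\big)\subset\ssupp(T_0u)$. Combining these with the elementary fact $\ssupp(f+g)\subset\ssupp f\cup\ssupp g$ yields
\[
\ssupp u\subset\ssupp\big(S(T_0u)\big)\cup\ssupp(Ru)\subset\ssupp(T_0u)=\ssupp(Tu),
\]
which is the missing inclusion; together with Theorem~\ref{thm:pslo-PM2021} it gives the claimed equality.

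The one genuine subtlety, and the step I would be most careful about, is the domain bookkeeping: a general $\Psi$DO only maps $\mathcal E'\to\mathcal D'$, so one cannot directly compose $S$ with $T$ on $\mathcal E'$; the reduction to a properly supported $T_0$ is exactly what makes $S(T_0u)$ legitimate and the identity $S T_0=I+R$ usable on $\mathcal E'$. Everything else is a routine combination of the pseudolocal property with the smoothing property of operators in $\Psi^{-\infty}$.
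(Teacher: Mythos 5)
Your proposal is correct and follows essentially the same route as the paper: the pseudolocal property (Theorem~\ref{thm:pslo-PM2021}) gives $\ssupp(Tu)\subset\ssupp u$, and a parametrix $S$ from Theorem~\ref{thm:para1-PM2021} gives $STu=u+C^\infty$, so applying pseudolocality to $S$ yields the reverse inclusion. Your extra bookkeeping with properly supported representatives is harmless but not needed in this text's framework, since here $\Psi$DOs act on all of $\scrS'$ (Lemma~\ref{lem:TSS-PM2021}) and $u\in\mathcal E'\subset\scrS'$, so $S(Tu)$ and the identity $ST=I+\Psi^{-\infty}$ already make sense as the paper uses them.
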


Readers may compare Lemma \ref{lem:Blr-PM2021} with Theorem \ref{thm:pslo-PM2021}.

\begin{proof}
	Denote $Tu = f$, then Theorem \ref{thm:pslo-PM2021} implies
	\[
	\ssupp (Tu) \subset \ssupp u.
	\]
	Theorem \ref{thm:para1-PM2021} implies $T$ possesses parametrices.
	Let $S$ be a parametrix of $T$.
	Then we have $Sf = STu = (I + \Psi^{-\infty}) u = u + C^\infty(\Rn)$, so
	\[
	\ssupp u = \ssupp(Sf) \subset \ssupp f
	= \ssupp (Tu).
	\]
	The proof is done.
\end{proof}

We will revisit the notion of parametrix and ellipticity in \S \ref{sec:mlpara-PM2021}.

\section{The \texorpdfstring{$L^2$}{L2} boundedness} \label{sec:L2B-PM2021}

\begin{lem}[Schur estimate\index{Schur estimate}] \label{lem:SchEst-PM2021}
	Assume $K \in L_{loc}^1(\R^{2n})$ and for $\varphi \in L_{loc}^1(\Rn)$ we denote $T \varphi(x) := \int_\Rn K(x,y) \varphi(y) \dif y$.
	Also, denote
	\begin{equation} \label{eq:LRK-PM2021}
	L := \sup_{x \in \Rn} \int_\Rn |K(x,y)| \dif y, \quad
	R := \sup_{y \in \Rn} \int_\Rn |K(x,y)| \dif x.
	\end{equation}
	When $L, R < +\infty$, for $\forall p \in [1, +\infty]$ and $\varphi \in L^p(\Rn)$ we have
	\[
	\nrm[L^p]{T\varphi} \leq L^{1-1/p} R^{1/p} \nrm[L^p]{\varphi}.
	\]
\end{lem}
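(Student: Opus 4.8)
The plan is to reduce everything to one application of Hölder's inequality in the $y$-variable followed by Tonelli's theorem. First I would record the trivial pointwise bound $|T\varphi(x)| \le \int_\Rn |K(x,y)|\,|\varphi(y)|\,dy$, so that it suffices to prove the inequality with $K$ and $\varphi$ replaced by $|K|$ and $|\varphi|$; hence from now on I may assume $K \ge 0$ and $\varphi \ge 0$. The key algebraic idea is to split one power of the kernel off to act as the ``integrating weight'' for Hölder.

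For $1 < p < \infty$ with conjugate exponent $p'$ (so $1/p + 1/p' = 1$), I would write $K(x,y) = K(x,y)^{1/p'}\cdot K(x,y)^{1/p}$ and apply Hölder in $y$:
$$T\varphi(x) = \int K(x,y)^{1/p'}\big(K(x,y)^{1/p}\varphi(y)\big)\,dy \le \Big(\int K(x,y)\,dy\Big)^{1/p'}\Big(\int K(x,y)\varphi(y)^p\,dy\Big)^{1/p} \le L^{1/p'}\Big(\int K(x,y)\varphi(y)^p\,dy\Big)^{1/p}.$$
Raising to the $p$-th power, integrating in $x$, and invoking Tonelli to swap the order of integration,
$$\int |T\varphi(x)|^p\,dx \le L^{p/p'}\int\!\!\int K(x,y)\varphi(y)^p\,dy\,dx = L^{p/p'}\int \varphi(y)^p\Big(\int K(x,y)\,dx\Big)\,dy \le L^{p/p'}R\,\|\varphi\|_{L^p}^p.$$
Since $p/p' = p-1 = p(1-1/p)$, taking $p$-th roots yields $\|T\varphi\|_{L^p} \le L^{1-1/p}R^{1/p}\|\varphi\|_{L^p}$.

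Finally I would dispatch the endpoints directly, as the formal limits of the computation above: for $p = \infty$, $|T\varphi(x)| \le \|\varphi\|_{L^\infty}\int|K(x,y)|\,dy \le L\|\varphi\|_{L^\infty}$; for $p = 1$, Tonelli gives $\int|T\varphi(x)|\,dx \le \int|\varphi(y)|\big(\int|K(x,y)|\,dx\big)\,dy \le R\|\varphi\|_{L^1}$, and both match $L^{1-1/p}R^{1/p}$ under the usual conventions $L^0 = R^0 = 1$. There is no genuine obstacle in this argument; the only points deserving a word of justification are that the Fubini--Tonelli interchange is legitimate because the integrand is nonnegative, and that the estimate itself shows $\int|K(x,y)|\,|\varphi(y)|\,dy < \infty$ for a.e.\ $x$, so that $T\varphi$ is well defined almost everywhere.
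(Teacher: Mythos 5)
Your proof is correct and follows essentially the same route as the paper's: the same splitting $|K| = |K|^{1/p'}\,|K|^{1/p}$ with H\"older in $y$, then integration in $x$ with Tonelli for $1<p<\infty$, and the direct endpoint estimates at $p=1$ and $p=\infty$. The remarks on nonnegativity justifying the interchange and on $T\varphi$ being defined a.e.\ are a welcome (if minor) addition beyond what the paper writes out.
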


\begin{proof}
	When $p = +\infty$ is trivial, we have
	\begin{align*}
	\nrm[L^\infty]{T \varphi}
	& = {\mathop {\rm ess\,sup}}_x |\int K(x,y) \varphi(y) \dif y|
	\leq {\mathop {\rm ess\,sup}}_x \int |K(x,y)| \dif y \cdot {\mathop {\rm ess\,sup}}_y |\varphi(y)| \\
	& = L \nrm[L^\infty]{\varphi}.
	\end{align*}
	
	When $p = 1$, we have
	\begin{align*}
	\nrm[L^1]{T \varphi}
	& = \nrm[L^1]{\int K(x,y) \varphi(y) \dif y} 
	\leq \int \nrm[L^1]{K(\cdot,y)} |\varphi(y)| \dif y \\
	& \leq R \int |\varphi(y)| \dif y
	= R \nrm[L^1]{\varphi}.
	\end{align*}
	
	Now we assume $1 < p < +\infty$.
	Let $p'= p/(p-1)$, so $1 = 1/p + 1/p'$.
	We have
	\begin{align*}
	|T \varphi(x)|
	& \leq \int |K(x,y) \varphi(y)| \dif y
	= \int |K(x,y)|^{1/p'} |K(x,y)|^{1/p} |\varphi(y)| \dif y \\
	& \leq \big( \int |K(x,y)| \dif y \big)^{1/p'} \big( \int |K(x,y)| |\varphi(y)|^p \dif y \big)^{1/p} \qquad \text{(by H\"older's ineq.)} \\
	& \leq L^{1/p'} \big( \int |K(x,y)| |\varphi(y)|^p \dif y \big)^{1/p}.
	\end{align*}
	Hence,
	\begin{align*}
	\nrm[L^p]{T\varphi}
	& \leq L^{1/p'} \big( \iint |K(x,y)| |\varphi(y)|^p \dif y \dif x \big)^{1/p}
	\leq L^{1/p'} \big( R \int |\varphi(y)|^p \dif y \big)^{1/p} \\
	& \leq L^{1/p'} R^{1/p} \nrm[L^p]{\varphi}.
	\end{align*}
	The proof is complete.
\end{proof}

As already mentioned in Remark \ref{rem:kerP-PM2021}, when the order $m$ is small enough, $T_\sigma$ possesses certain types of boundedness.

\begin{lem} \label{lem:Tbd-PM2021}
	In $\Rn$, we assume $m < -n$ and $\sigma \in S^m(\R_x^n \times \R_\xi^n)$, then the {\rm $\Psi$DO} $T_\sigma \colon L^p(\Rn) \to L^p(\Rn)$ is bounded.
\end{lem}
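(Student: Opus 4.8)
The plan is to realize $T_\sigma$ as an integral operator with an honest, absolutely integrable kernel, and then to invoke the Schur estimate (Lemma \ref{lem:SchEst-PM2021}) with the given exponent $p$. First I would record that, because $m < -n$, the kernel
\[
K_\sigma(x,y) = (2\pi)^{-n} \int e^{i(x-y)\cdot\xi}\sigma(x,\xi)\dif\xi
\]
from \eqref{eq:kerP-PM2021} is now an ordinary absolutely convergent Lebesgue integral, and (as already noted in Remark \ref{rem:kerP-PM2021}) it is bounded uniformly: $|K_\sigma(x,y)| \leq (2\pi)^{-n}\int\agl[\xi]^{m}\dif\xi =: C_0 < +\infty$ for all $(x,y)$. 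By the same absolute convergence together with Fubini's theorem, for $\varphi \in \scrS(\Rn)$ the oscillatory definition of $T_\sigma\varphi$ (Definition \ref{defn:PseudoDO-PM2021}) coincides with $\int_\Rn K_\sigma(x,y)\varphi(y)\dif y$; indeed $\iint |e^{i(x-y)\cdot\xi}\sigma(x,\xi)\varphi(y)|\dif y\dif\xi \leq \nrm[L^1]{\varphi}\int\agl[\xi]^{m}\dif\xi < +\infty$.

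Next I would estimate the two Schur constants $L$ and $R$ from \eqref{eq:LRK-PM2021}. On the region $|x-y|\leq 1$ I use the uniform bound $|K_\sigma|\leq C_0$; on $|x-y|\geq 1$ I use the off-diagonal decay supplied by Lemma \ref{lem:keSmo-PM2021}, namely $|K_\sigma(x,y)|\leq C_T|x-y|^{-T}$, choosing $T > n$ so that $|z|^{-T}$ is integrable over $\{|z|\geq 1\}$. Since both bounds depend on $(x,y)$ only through $|x-y|$, integrating them in $y$ (for fixed $x$) or in $x$ (for fixed $y$) gives the same finite quantity:
\[
\sup_{x\in\Rn}\int_\Rn |K_\sigma(x,y)|\dif y \leq C_0\,\omega_n + C_T\int_{|z|\geq 1}|z|^{-T}\dif z =: L_0 < +\infty,
\]
where $\omega_n$ is the volume of the unit ball, and likewise $\sup_{y\in\Rn}\int_\Rn |K_\sigma(x,y)|\dif x \leq L_0$. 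Hence $L,R\leq L_0$.

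Then the Schur estimate (Lemma \ref{lem:SchEst-PM2021}) applies for any $p\in[1,+\infty]$ and yields $\nrm[L^p]{T_\sigma\varphi}\leq L^{1-1/p}R^{1/p}\nrm[L^p]{\varphi}\leq L_0\nrm[L^p]{\varphi}$ for $\varphi\in\scrS(\Rn)$. For $1\leq p<+\infty$ I would then extend $T_\sigma$ to all of $L^p(\Rn)$ by density of $\scrS(\Rn)$ in $L^p(\Rn)$ (Theorem \ref{thm:TFTD-PM2021}), the bound being preserved in the limit; for $p=+\infty$ the kernel integral $\int_\Rn K_\sigma(x,y)\varphi(y)\dif y$ is already well-defined directly for every $\varphi\in L^\infty(\Rn)$, since $K_\sigma(x,\cdot)\in L^1(\Rn)$ with uniform norm, and the same Schur bound applies verbatim.

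The computation is essentially routine. The only point requiring care is that Schur's test needs uniform integrability of the kernel in \emph{both} variables, which is precisely why one must combine the global near-diagonal boundedness of $K_\sigma$ — available only thanks to the hypothesis $m<-n$ — with the off-diagonal polynomial decay from Lemma \ref{lem:keSmo-PM2021}; neither ingredient by itself is enough.
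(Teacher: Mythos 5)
Your proposal is correct and follows essentially the same route as the paper: use $m<-n$ to get a uniformly bounded, absolutely convergent kernel, combine this near-diagonal bound with the off-diagonal decay of Lemma \ref{lem:keSmo-PM2021} to make the Schur constants $L,R$ of \eqref{eq:LRK-PM2021} finite, and conclude by the Schur estimate (Lemma \ref{lem:SchEst-PM2021}). The paper merely packages the two bounds into the single estimate $|K(x,y)|\lesssim \agl[x-y]^{-n-1}$ instead of splitting at $|x-y|=1$, while your Fubini check and the density extension to $L^p$ are harmless extra details.
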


\begin{proof}
	Denote the kernel of $T_\sigma$ as $K$, so
	\[
	K(x,y) = (2\pi)^{-n} \int e^{i(x-y) \cdot \xi} \sigma(x,\xi) \dif \xi.
	\]
	Because $\sigma \in S^m$ with $m < -n$, we know that integral above is absolutely integrable.
	This means that $K$ is a well-defined function in $\R^{2n}$, especially, $K$ is well-defined on the diagonal $\{(x,x) \,;\, x \in \Rn \}$.
	However, we remind the readers that the condition ``$m < -n$'' doesn't guarantee that $K$ is also $C^\infty$ on the diagonal
	(recall that Lemma \ref{lem:keSmo-PM2021} tells us $K$ is $C^\infty$ off diagonal).
	The value of $K$ on $\R^{2n}$ is uniformly bounded, because
	\[
	|K(x,y)|
	\leq (2\pi)^{-n} \int |\sigma(x,\xi)| \dif \xi
	\leq (2\pi)^{-n} \int C \agl[\xi]^{m} \dif \xi
	\leq C.
	\]
	
	Because $K$ is well-defined and uniformly bounded on $\R^{2n}$, we can define the corresponding $L$ and $R$ of it as in \eqref{eq:LRK-PM2021}, and we can also enhance the estimate in Lemma \ref{lem:keSmo-PM2021} as follows,
	\[
	|K(x,y)| \leq C \agl[x-y]^{-n-1}, \quad \forall x, y \in \Rn,
	\]
	which implies both $L$ and $R$ are finite.
	Because $T_\sigma \varphi(x) = \int_\Rn K(x,y) \varphi(y) \dif y$, we can use Lemma \ref{lem:SchEst-PM2021} to conclude
	\(
	\nrm[L^p]{T_\sigma \varphi} \lesssim \nrm[L^p]{\varphi}.
	\)
	The proof is complete.
\end{proof}

\begin{thm}[$L^2$ boundedness] \label{thm:T2B-PM2021}
	Assume symbol $a \in S^0$, then $T_a \colon L^2(\Rn) \to L^2(\Rn)$ is bounded.
\end{thm}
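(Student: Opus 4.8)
The plan is to prove $\nrm[L^2(\Rn)]{T_a\varphi}\lesssim\nrm[L^2(\Rn)]{\varphi}$ for $\varphi\in\scrS(\Rn)$ and then invoke density: $\scrS(\Rn)$ is dense in $L^2(\Rn)$, $T_a\varphi\in\scrS(\Rn)\subset L^2(\Rn)$ by Lemma~\ref{lem:TSS-PM2021}, so the estimate extends uniquely to a bounded operator on $L^2(\Rn)$. The engine is the symbolic calculus of Chapter~\ref{ch:SCP-PM2021}: I will manufacture a symbol $c\in S^0$ so that $T_c^*T_c$ coincides with $MI-T_a^*T_a$ up to a smoothing operator, for a large constant $M$. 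Since $T_c^*T_c\geq0$ and the leftover has order $-\infty$ (hence is $L^2$-bounded by Lemma~\ref{lem:Tbd-PM2021}), this positivity will pin down $\nrm[L^2(\Rn)]{T_a\varphi}^2$ from above.

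Concretely, put $M:=1+\sup_{x,\xi\in\Rn}|a(x,\xi)|^2$, which is finite because $a\in S^0$. Then $M-|a(x,\xi)|^2\geq1$; since $|a|^2=a\overline a\in S^0$ (Lemma~\ref{lem:ele-PM2021}) takes values in the compact interval $[1,M]$ on which $t\mapsto t^{1/2}$ is smooth, a chain-rule computation gives $c_0:=(M-|a|^2)^{1/2}\in S^0$ with $c_0\geq1$ (and $1/c_0\in S^0$). I then construct $c\sim\sum_{j\geq0}c_{-j}$ with $c_{-j}\in S^{-j}$, solving recursively so that the full symbol of $T_c^*T_c$ agrees with that of $MI-T_a^*T_a$ modulo $S^{-\infty}$. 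By Theorem~\ref{thm:AdT-PM2021} the symbol of $T_c^*$ is $\sum_\alpha\frac1{\alpha!}D_x^\alpha\partial_\xi^\alpha\overline c$, and by Theorem~\ref{thm:com-PM2021} the symbol of $T_c^*T_c$ is $\sigma(T_c^*)\#c$; expanding both asymptotic series and collecting terms of order $-j$, the principal part yields $|c_0|^2=M-|a|^2$ (which is how $c_0$ was chosen), and the order-$(-j)$ equation has the form $2c_0\,\mathrm{Re}(c_{-j})=\big(\text{an explicit }S^{-j}\text{-expression in }a,\,c_0,\dots,c_{-(j-1)}\big)$, solvable by dividing by $2c_0\geq2$ and taking $c_{-j}$ real. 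The correction expressions genuinely lie in $S^{-j}$ because only $\xi$-derivatives lower the order and each correction carries at least $j$ of them. Finally Theorem~\ref{thm:Asy-PM2021} furnishes a genuine symbol $c\in S^0$ with $c\sim\sum_jc_{-j}$, and unwinding the construction shows $S:=MI-T_a^*T_a-T_c^*T_c$ has symbol in $S^{-N}$ for every $N$, i.e.\ $S\in\Psi^{-\infty}$.

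With this identity in hand, for $\varphi\in\scrS(\Rn)$,
\[
\nrm[L^2(\Rn)]{T_a\varphi}^2=(T_a^*T_a\varphi,\varphi)=M\nrm[L^2(\Rn)]{\varphi}^2-\nrm[L^2(\Rn)]{T_c\varphi}^2-(S\varphi,\varphi)\leq M\nrm[L^2(\Rn)]{\varphi}^2+|(S\varphi,\varphi)|,
\]
using $(T_a^*T_a\varphi,\varphi)=\nrm[L^2(\Rn)]{T_a\varphi}^2$, $(T_c^*T_c\varphi,\varphi)=\nrm[L^2(\Rn)]{T_c\varphi}^2\geq0$. Since $S\in\Psi^{-\infty}\subset\Psi^{m}$ for any $m<-n$, Lemma~\ref{lem:Tbd-PM2021} gives $\nrm[L^2(\Rn)]{S\varphi}\leq C\nrm[L^2(\Rn)]{\varphi}$, so $|(S\varphi,\varphi)|\leq C\nrm[L^2(\Rn)]{\varphi}^2$ and $\nrm[L^2(\Rn)]{T_a\varphi}\leq(M+C)^{1/2}\nrm[L^2(\Rn)]{\varphi}$; density then completes the proof.

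The main obstacle is the recursive construction of $c$. One must check that the order-$(-j)$ equation is consistent — the unknown $c_{-j}$ enters only through the real combination $2c_0\,\mathrm{Re}(c_{-j})$, so the scheme would break if the equation also constrained $\mathrm{Im}(c_{-j})$; this consistency is forced by self-adjointness, because both $MI-T_a^*T_a$ and $T_c^*T_c$ are automatically self-adjoint, so the symbol identity need only be imposed on its self-adjoint part. One must also verify that each correction term really drops one power of $\agl[\xi]$, so that $\{c_{-j}\}$ meets the hypotheses of Theorem~\ref{thm:Asy-PM2021}. Beyond these points, the argument is routine bookkeeping with the composition and adjoint formulas. (Note a one-step version only gives a remainder in $\Psi^{-1}$, which is not yet known to be $L^2$-bounded, so the full asymptotic square root — not merely its principal symbol — is essential.)
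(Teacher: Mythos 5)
Your proof is correct, but it follows a different route than the paper. You construct a \emph{full} asymptotic square root $c \sim \sum_j c_{-j}$ of $M - |a|^2$ (recursion solvable because the order-$(-j)$ equation only constrains $2c_0\,\mathrm{Re}(c_{-j})$, with reality of the right-hand side forced by self-adjointness, then Theorem \ref{thm:Asy-PM2021}), so that $MI - T_a^*T_a - T_c^*T_c \in \Psi^{-\infty}$ and Lemma \ref{lem:Tbd-PM2021} applies directly to the remainder. The paper stops after the \emph{first} term: it sets $b = \sqrt{M_1 - |a|^2}$ with $M_1 = 2\sup|a|^2+1$, accepts a remainder $R \in \Psi^{-1}$, and then disposes of $R$ by the bootstrap $\nrm[L^2]{R\varphi}^2 = (R^*R\varphi,\varphi) \le \nrm[L^2]{R^*R\varphi}\,\nrm[L^2]{\varphi}$, iterated ($R \mapsto R^*R \mapsto (R^*R)^*R^*R \mapsto \cdots$) until the order drops below $-n$, where Lemma \ref{lem:Tbd-PM2021} gives $L^2$-boundedness, which is then propagated back up to $R$. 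So your approach front-loads all the work into the symbolic calculus (recursive construction plus the consistency check), while the paper trades that recursion for the operator-norm iteration trick; both are standard and both rest on Theorems \ref{thm:com-PM2021}, \ref{thm:AdT-PM2021} and Lemma \ref{lem:Tbd-PM2021}. One small correction to your closing parenthetical: the full asymptotic square root is \emph{not} essential --- a remainder in $\Psi^{-1}$ can indeed be handled, exactly as the paper does, by the $R^*R$ iteration; your statement that the one-step version cannot be salvaged is therefore inaccurate, though it does not affect the validity of your own argument.
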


\begin{proof}
	Recall the definition for ``$a \# b$'' in Theorem \ref{thm:com-PM2021}.
	To prove the result, it amounts to find a suitable positive constant $M$ such that for $\forall \varphi \in \scrS$,
	\[
	\nrm[L^2]{T_a \varphi} \leq M \nrm[L^2]{\varphi} \quad \Leftrightarrow \quad
	((M - T_a^* T_a) \varphi, \varphi) \geq 0.
	\]
	Our strategy is: we try to find such a $M$ so that $M - T_a^* T_a$ can be represented as $B^* B$ for some $B$ so that
	\[
	((M - T_a^* T_a) \varphi, \varphi)
	= (B^* B \varphi, \varphi)
	= (B\varphi, B\varphi)
	\geq 0.
	\]
	
	{\bf Step 1.} Symbolic calculus.
	Because $a \in S^0$, we know $|a(x,\xi)| \leq C$ uniformly for some $C$.
	Let
	\begin{equation}
	M = M_1 + M_2, \quad \text{where} \quad M_1 := 2\sup_{\R^{2n}} |a(x,\xi)|^2 + 1,
	\end{equation}
	and $M_2$ shall be determined later, and define
	\[
	b(x,\xi) := \sqrt{M_1 - |a(x,\xi)|^2}.
	\]
	It can be checked that $b \in S^0$.
	We use $\sigma(T)$ to signify the symbol of $T$.
	Then by Theorems \ref{thm:com-PM2021} \& \ref{thm:AdT-PM2021} we have
	\[
	\sigma(T_b^* T_b)
	= |b|^2 + S^{-1}
	= M_1 - |a(x,\xi)|^2 + S^{-1},
	\]
	and also
	\[
	\sigma(M_1 - T_a^* T_a)
	= M_1 - \sigma(T_a^* T_a)
	= M_1 - (|a(x,\xi)|^2 + S^{-1}).
	\]
	Hence
	\[
	\sigma(M - T_a^* T_a)
	= M_2 + \sigma(M_1 - T_a^* T_a)
	= M_2 + \sigma(T_b^* T_b) + S^{-1},
	\]
	which implies
	\[
	M - T_a^* T_a = M_2 + T_b^* T_b - R, \quad \text{for some $\Psi$DO~} R \in \Psi^{-1}.
	\]
	Therefore it is equivalent to prove
	\[
	((M_2 + T_b^* T_b - R) \varphi, \varphi) \geq 0,
	\]
	so we only need to prove
	\begin{equation} \label{eq:RM2-PM2021}
	(R\varphi, \varphi) \leq M_2\nrm[L^2]{\varphi}^2.
	\end{equation}
	
	{\bf Step 2.}
	To prove \eqref{eq:RM2-PM2021}, we can do the following derivations:
	\begin{align*}
	(R\varphi, \varphi) \leq M_2 \nrm[L^2]{\varphi}^2
	& \ \Leftarrow \ |(R\varphi, \varphi)| \leq M_2 \nrm[L^2]{\varphi}^2 \\
	& \ \Leftarrow \ \nrm[L^2]{R\varphi} \nrm[L^2]{\varphi} \leq M_2 \nrm[L^2]{\varphi}^2 \\
	& \ \Leftarrow \ \underline{\nrm[L^2]{R\varphi} \leq M_2 \nrm[L^2]{\varphi}} \\
	& \ \Leftarrow \ (R^* R \varphi, \varphi) \leq M_2^2 \nrm[L^2]{\varphi}^2 \\
	& \ \Leftarrow \ |(R^* R \varphi, \varphi)| \leq M_2^2 \nrm[L^2]{\varphi}^2 \\
	& \ \Leftarrow \ \underline{\nrm[L^2]{R^* R \varphi} \leq M_2^2 \nrm[L^2]{\varphi}}.
	\end{align*}
	We observe that
	\begin{equation} \label{eq:RRtR-PM2021}
	\nrm[L^2]{R^* R \varphi} \leq M_2^2 \nrm[L^2]{\varphi} \ \Rightarrow \ \nrm[L^2]{R \varphi} \leq M_2 \nrm[L^2]{\varphi}.
	\end{equation}

	{\bf Step 3.}
	Using \eqref{eq:RRtR-PM2021} iteratively, we can obtain $(R^* R)^* R^* R$, $((R^* R)^* R^* R)^* (R^* R)^* R^* R$, etc, and each time the order of the corresponding $\Psi$DO decreases by at least 1.
	We will end up with a $\Psi$DO of order less than $-n$ in finite time.
	And by Lemma \ref{lem:Tbd-PM2021}, that operator is $L^2$-bounded.
	Then we use \eqref{eq:RRtR-PM2021} to bring the boundedness back to $R$, so we arrive at
	\begin{equation*}
	\nrm[L^2]{R \varphi} \leq M_2 \nrm[L^2]{\varphi}, \quad \forall \varphi \in \scrS.
	\end{equation*}
	This gives \eqref{eq:RM2-PM2021}.
	The proof is complete.
\end{proof}

As a corollary of Theorem \ref{thm:T2B-PM2021}, we have the following $H^m$ boundedness for any $T \in \Psi^m$.

\begin{cor}[$H^m$ boundedness] \label{cor:HmB-PM2021}
	Assume $T \in \Psi^m$, then for any $s \in \R$, the mapping $T \colon H^{s+m}(\Rn) \to H^{s}(\Rn)$ is bounded.
\end{cor}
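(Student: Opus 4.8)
The plan is to reduce the $H^{s+m} \to H^s$ boundedness of an arbitrary $T \in \Psi^m$ to the $L^2$ boundedness of $\Psi$DOs of order $0$, which is exactly Theorem \ref{thm:T2B-PM2021}. The bridge is provided by the order-shifting operators $(I-\Delta)^{t/2} \in \Psi^t$, whose symbols are $\agl[\xi]^t$, together with the composition calculus of Theorem \ref{thm:com-PM2021} which tells us that the composition of $\Psi$DOs of orders $m_1$ and $m_2$ is a $\Psi$DO of order $m_1+m_2$.

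First I would write $T = T_a$ with $a \in S^m$, and recall from Definition \ref{defn:Hsp-PM2021} that $\nrm[H^s]{f} = \nrm[L^2]{(I-\Delta)^{s/2} f}$. Thus for $\varphi \in \scrS(\Rn)$ (which is dense in $H^{s+m}$), we have
\[
\nrm[H^s]{T_a \varphi} = \nrm[L^2]{(I-\Delta)^{s/2} T_a \varphi}
= \nrm[L^2]{(I-\Delta)^{s/2} \circ T_a \circ (I-\Delta)^{-(s+m)/2} \circ (I-\Delta)^{(s+m)/2} \varphi}.
\]
Now I set $g := (I-\Delta)^{(s+m)/2} \varphi$, so that $\nrm[L^2]{g} = \nrm[H^{s+m}]{\varphi}$, and I set
\[
B := (I-\Delta)^{s/2} \circ T_a \circ (I-\Delta)^{-(s+m)/2}.
\]
By Theorem \ref{thm:com-PM2021} (applied twice, or once to the composition of three $\Psi$DOs), $B \in \Psi^{s + m - (s+m)} = \Psi^0$, so there is a symbol $b \in S^0$ with $B = T_b$. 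By Theorem \ref{thm:T2B-PM2021}, $T_b \colon L^2(\Rn) \to L^2(\Rn)$ is bounded, say with norm $M$. Therefore
\[
\nrm[H^s]{T_a \varphi} = \nrm[L^2]{B g} \leq M \nrm[L^2]{g} = M \nrm[H^{s+m}]{\varphi},
\]
which is the desired estimate on the dense subspace $\scrS(\Rn)$; a standard density/continuity argument then extends it to all of $H^{s+m}(\Rn)$.

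The only genuinely delicate points are bookkeeping rather than conceptual: one must check that $(I-\Delta)^{-(s+m)/2}$ really is the $\Psi$DO with symbol $\agl[\xi]^{-(s+m)}$ and that it acts sensibly (it does, being defined by a symbol in $S^{-(s+m)}$ via Definition \ref{defn:PseudoDO-PM2021}), and that the associativity of composition lets us bracket the triple product freely — this is immediate since each factor maps $\scrS \to \scrS$ by Lemma \ref{lem:TSS-PM2021}. I expect no real obstacle here; the heavy lifting has already been done in Theorems \ref{thm:com-PM2021} and \ref{thm:T2B-PM2021}, and the corollary is essentially the formal computation sketched just before its statement in the excerpt, made precise.
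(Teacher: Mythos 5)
Your proposal is correct and is essentially the same argument as the paper's own proof: conjugation by the order-shifting operators $J^{t}=(I-\Delta)^{t/2}$, reduction to a $\Psi$DO of order $0$ via the composition calculus (Theorem \ref{thm:com-PM2021}), the $L^2$ boundedness of Theorem \ref{thm:T2B-PM2021}, and a density argument. No gaps to report.
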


\begin{proof}
	Denote $J := (I - \Delta)^{1/2}$.
	Because $T \in \Psi^m$, we have $J^s T J^{-s-m} \in \Psi^0$.
	Hence for any $\varphi \in \scrS(\Rn)$ we have
	\begin{align*}
	\nrm[H^s]{T \varphi}
	& = \nrm[L^2]{J^s T \varphi}
	= \nrm[L^2]{J^s T J^{-s-m} J^{s+m} \varphi} \\
	& \leq C \nrm[L^2]{J^{s+m} \varphi}
	= C \nrm[H^{s+m}]{\varphi}.
	\end{align*}
	By a density argument we can extend the result to any $\varphi \in H^{s+m}$.
	The proof is done.
\end{proof}

Theorem \ref{thm:T2B-PM2021} can be generalized to a more general case.
The $L^2$-boundedness results are given in \cite{hor71con, Cal71Bdd, cava72ac}.
Then A.~Calder\'on and R.~Vaillancourt generalized their own result \cite{Cal71Bdd} in \cite{cava72ac}.
We comment that \cite{hw87th} gives an elementary proof of the results in \cite{cava72ac}.
Here we restate the main results in \cite{Cal71Bdd, cava72ac} as follows.
Recall the symbol space $S^m_{\rho,\delta}$ defined in Definition \ref{defn:symbolx-PM2021}.

\begin{thm}[Calder\'on-Vaillancourt Theorem\cite{Cal71Bdd}] \label{thm:CaVai1-PM2021}
	Assume $a \in S_{0,0}^0(\R_x^n \times \R_\xi^n; \mathbb C)$, then the {\rm $\Psi$DO} $T_a$ is bounded in $L^2(\Rn)$, and there exist constants $C$, $N$ such that
	\begin{equation} \label{eq:CaVai1-PM2021}
	\forall \varphi \in L^2(\Rn), \quad \nrm[L^2]{T_a \varphi} \leq C \max_{|\alpha + \beta| \leq N} \nrm[L^\infty]{\partial_x^\alpha \partial_\xi^\beta a} \nrm[L^2]{\varphi}.
	\end{equation}
\end{thm}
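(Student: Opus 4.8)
The plan is to prove the Calderón--Vaillancourt theorem by the so-called "Cotlar--Stein almost-orthogonality" method, which is the cleanest route when one only assumes $S^0_{0,0}$ regularity. First I would recall the Cotlar--Stein lemma: if $\{T_j\}_{j \in \Z^n}$ is a family of bounded operators on a Hilbert space such that $\sum_k \sup_j \nrm[]{T_j^* T_k}^{1/2} \leq A$ and $\sum_k \sup_j \nrm[]{T_j T_k^*}^{1/2} \leq A$, then $T = \sum_j T_j$ is bounded with $\nrm[]{T} \leq A$. (Since this is not proved earlier in the notes, I would either include a short proof or cite it; I will assume it may be cited.) The strategy is then to decompose $T_a$ into pieces that are almost orthogonal in both the $x$- and $\xi$-variables.

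Next I would set up the decomposition. Fix a partition of unity $\sum_{j \in \Z^n} \psi(x - j) \equiv 1$ and $\sum_{k \in \Z^n} \psi(\xi - k) \equiv 1$ with $\psi \in C_c^\infty$, and write $a = \sum_{j,k} a_{j,k}$ where $a_{j,k}(x,\xi) := \psi(x-j)\psi(\xi-k) a(x,\xi)$. Each $a_{j,k}$ is supported in a unit cube in $x$ and a unit cube in $\xi$, and the seminorms $\sup |\partial_x^\alpha \partial_\xi^\beta a_{j,k}|$ are bounded uniformly in $j,k$ by $\max_{|\alpha+\beta| \leq N} \nrm[L^\infty]{\partial_x^\alpha \partial_\xi^\beta a}$ times constants depending only on $\psi$. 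The corresponding operators $T_{j,k} := T_{a_{j,k}}$ are each bounded on $L^2$ (e.g.\ by Lemma \ref{lem:Tbd-PM2021} after noting $a_{j,k}$ has compact $\xi$-support hence lies in every $S^m$, or directly by Schur). The key estimates to establish are the almost-orthogonality bounds
\[
\nrm[]{T_{j,k}^* T_{j',k'}} \lesssim C_N \agl[j-j']^{-2n-1} \agl[k-k']^{-2n-1},
\]
and the symmetric bound for $T_{j,k} T_{j',k'}^*$, where $C_N$ is the relevant finite sum of $L^\infty$-seminorms of derivatives of $a$.

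To prove these almost-orthogonality estimates I would compute kernels. The kernel of $T_{j,k}^* T_{j',k'}$ is $\int \overline{K_{j,k}(z,x)} K_{j',k'}(z,y)\, dz$ where $K_{j,k}(x,y) = (2\pi)^{-n}\int e^{i(x-y)\xi} a_{j,k}(x,\xi)\, d\xi$. The decay in $|k-k'|$ comes from the fact that the $\xi$-integration is over disjoint unit cubes unless $|k-k'| \lesssim 1$, so integrating by parts in $\xi$ (using that $a_{j,k}$ is smooth and compactly supported in $\xi$) produces factors of $\agl[x-y]^{-M}$, and more carefully, oscillation between the two phases $e^{i(z-x)\xi}$ and $e^{i(z-y)\eta}$ forces rapid decay once $|k-k'|$ is large. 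The decay in $|j-j'|$ is immediate: $K_{j,k}(z,x)$ is supported in $z$ near $j$, so $T_{j,k}^* T_{j',k'} = 0$ unless $|j-j'| \lesssim 1$. Then I would apply Schur's estimate (Lemma \ref{lem:SchEst-PM2021}) to the composed kernel to get the operator-norm bound with the claimed decay. The hard part here will be organizing the integration-by-parts bookkeeping cleanly so that the constant $C_N$ is explicitly controlled by finitely many $L^\infty$-norms of derivatives of $a$ (not of $a_{j,k}$, which must be tracked back through the partition of unity), and checking that the number of derivatives $N$ needed is finite and dimension-dependent only.

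Finally I would assemble: since $\sum_{k'} \agl[k-k']^{-(2n+1)/2 \cdot 2} $ --- more precisely $\sum_{j',k'}\big(C_N \agl[j-j']^{-2n-1}\agl[k-k']^{-2n-1}\big)^{1/2} = C_N^{1/2} \big(\sum_{j'}\agl[j-j']^{-n-1/2}\big)\big(\sum_{k'}\agl[k-k']^{-n-1/2}\big) \leq C\, C_N^{1/2}$ is finite and uniform in $(j,k)$, the Cotlar--Stein hypotheses hold with $A = C\, C_N^{1/2}$, so $T_a = \sum_{j,k} T_{j,k}$ is bounded on $L^2$ with $\nrm[L^2]{T_a} \leq C \max_{|\alpha+\beta|\leq N}\nrm[L^\infty]{\partial_x^\alpha\partial_\xi^\beta a}$, which is exactly \eqref{eq:CaVai1-PM2021}. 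A density remark extends the bound from $\scrS$ to all of $L^2$. I would expect the main obstacle to be the kernel decay estimate in the $\xi$-localization index $k-k'$, since unlike the $x$-localization it is not a support consideration but genuinely requires exploiting oscillation via repeated integration by parts, and one must be careful that this works even though $a \in S^0_{0,0}$ provides no decay in $\xi$ at all.
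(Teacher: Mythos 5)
The notes do not prove Theorem \ref{thm:CaVai1-PM2021}: it is only restated from \cite{Cal71Bdd, cava72ac}, with \cite{hw87th} mentioned as an elementary proof, so there is no in-paper argument to compare yours against. The closest proof in the text is that of Theorem \ref{thm:T2B-PM2021} ($L^2$-boundedness for $S^0 = S^0_{1,0}$), which runs through the square-root trick: write $M - T_a^*T_a = T_b^*T_b - R$ with $R \in \Psi^{-1}$ and bootstrap $R \mapsto R^*R$ until the order falls below $-n$, where Lemma \ref{lem:Tbd-PM2021} applies. That mechanism is genuinely unavailable in $S^0_{0,0}$, since the symbolic calculus there yields no gain of decay in $\xi$, so the remainders never improve; your Cotlar--Stein decomposition is the standard correct substitute, and your outline is sound: localize $a$ to unit cubes in $x$ and $\xi$, note $T_{j,k}^*T_{j',k'}=0$ unless $|j-j'|\lesssim 1$ by the $x$-supports, obtain decay in $|k-k'|$ from the oscillation in the common variable $z$ (where $|\xi-\eta|\gtrsim |k-k'|$), control composed kernels via Schur (Lemma \ref{lem:SchEst-PM2021}), and sum by almost orthogonality. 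What this buys is exactly the stated estimate, with control by finitely many $L^\infty$ seminorms and no decay hypotheses, at the price of importing the Cotlar--Stein lemma, which the notes never state or prove and which you must supply or cite as an external ingredient.

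Two points to tighten. First, the two almost-orthogonality bounds rest on dual mechanisms, and your sketch partially conflates them: for $T_{j,k}^*T_{j',k'}$, disjointness of the $\xi$-supports alone gives nothing (the two kernels carry independent frequency variables $\xi$ and $\eta$), and the decay in $|k-k'|$ comes only from integrating by parts in $z$ against $e^{iz\cdot(\eta-\xi)}$; for $T_{j,k}T_{j',k'}^*$, the $z$-integration produces $\delta(\xi-\eta)$, so it is the $\xi$-supports that force vanishing unless $|k-k'|\lesssim 1$, and the decay in $|j-j'|$ then comes from integration by parts in $\xi$ against $e^{i(x-y)\cdot\xi}$; this dual case deserves to be written out rather than labelled symmetric. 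Second, by homogeneity under $a \mapsto \lambda a$ the final bound must be linear in the seminorm, so the product estimate should carry $C_N^2$, i.e.\ $\nrm{T_{j,k}^*T_{j',k'}} \lesssim C_N^2\,\agl[j-j']^{-2n-1}\agl[k-k']^{-2n-1}$, whose square roots sum to $\mathcal O(C_N)$; your version with a single power of $C_N$ and the conclusion $A = C\,C_N^{1/2}$ cannot be right as stated, though the repair is pure bookkeeping.
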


\begin{thm}[Generalized Calder\'on-Vaillancourt Theorem \cite{cava72ac}] \label{thm:CaVai-PM2021}
	Let $a \in C^\infty(\R_x^n \times \R_y^n \times \R_\xi^n; \mathbb C)$, and $0 \leq \rho \leq \delta_j < 1~(j=1,2)$ and $M/n \geq \frac 1 2 (\delta_1 + \delta_2) - \rho$.
	If there exists a constant $\mathcal C$ such that $\forall (x,\xi) \in \Rn \times \Rn$,
	\[
	|\partial_{x}^{\alpha_1} \partial_\xi^\beta a(x,y,\xi)| \leq \mathcal C \agl[\xi]^{-M + \delta_1|\alpha_1| - \rho |\beta|}, \quad
	|\partial_{y}^{\alpha_2} \partial_\xi^\beta a(x,y,\xi)| \leq \mathcal C \agl[\xi]^{-M + \delta_2|\alpha_2| - \rho |\beta|}
	\]
	holds for all $0 \leq |\beta| \leq 2\lceil n/2 \rceil + 2$ and $0 \leq |\alpha_j| \leq 2m_j$ $(j=1,2)$ with $m_j$ being the least integer satisfying $m_j (1 - \delta_j) \geq 5n/4$,
	then the linear operator $T_a$ defined as
	\[
	T_a \varphi(x) := (2\pi)^{-n} \iint_{\Rn \times \Rn} e^{i(x-y) \cdot \xi} a(x,y,\xi) \varphi(y) \dif y \dif \xi
	\]
	is bounded from $L^2(\Rn)$ to $L^2(\Rn)$ and $\nrm[L^2 \to L^2]{T_a} \leq C_{\delta_1, \delta_2, n} \mathcal C$ for some constant $C_{\delta_1, \delta_2, n}$.
\end{thm}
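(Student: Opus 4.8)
The plan is to establish the a priori estimate $\nrm[L^2]{T_a\varphi} \leq C_{\delta_1,\delta_2,n}\,\mathcal C\,\nrm[L^2]{\varphi}$ for $\varphi \in \scrS(\Rn)$ and then to extend it by density. Note at the outset that Theorem \ref{thm:ReV-PM2021} is of no use here: since the hypotheses allow $\delta_j \geq \rho$, one cannot replace the two-sided symbol $a(x,y,\xi)$ by an $(x,\xi)$-symbol of standard type, so the whole argument must be run directly on the form $T_a\varphi(x) = (2\pi)^{-n}\iint e^{i(x-y)\cdot\xi} a(x,y,\xi)\varphi(y)\dif y\dif\xi$.

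First I would decompose the frequency space. Fix a family $\{\chi_\nu\}_\nu \subset C_c^\infty(\Rn)$ forming a partition of unity subordinate to a covering of $\R_\xi^n$ by boxes $Q_\nu$ of side-length comparable to $\agl[\xi]^{\rho}$ on $Q_\nu$ (a unit-cube decomposition when $\rho = 0$), chosen so that $|\partial_\xi^\beta \chi_\nu| \lesssim \agl[\xi]^{-\rho|\beta|}$ uniformly in $\nu$. Setting $a_\nu(x,y,\xi) := \chi_\nu(\xi)\,a(x,y,\xi)$ and $T_\nu := T_{a_\nu}$, one has $T_a\varphi = \sum_\nu T_\nu\varphi$, and since each $a_\nu$ is compactly supported in $\xi$, the kernel
\[
K_\nu(x,y) = (2\pi)^{-n}\int e^{i(x-y)\cdot\xi}\,a_\nu(x,y,\xi)\dif\xi
\]
is now an absolutely convergent integral. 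Writing out the kernels of the compositions $T_\nu^* T_\mu$ and $T_\nu T_\mu^*$ and integrating by parts — in $\xi$, which is legitimate because the $\xi$-integration runs over a compact box and each $\xi$-derivative gains a factor $\agl[\xi]^{-\rho}$ against a box of measure $\sim\agl[\xi]^{n\rho}$, yielding rapid spatial decay; and in the internal integration variable, each derivative of which falls on the symbols at a cost of at most $\agl[\xi]^{\delta_j}$, producing decay in $\dist(Q_\nu,Q_\mu)$ whenever the boxes are far apart — and then invoking the Schur estimate (Lemma \ref{lem:SchEst-PM2021}), I expect a bound of the shape
\[
\nrm[L^2\to L^2]{T_\nu^* T_\mu} + \nrm[L^2\to L^2]{T_\nu T_\mu^*} \leq C\,\mathcal C^2\,(1 + \dist(Q_\nu,Q_\mu))^{-L}
\]
for every $L$, with $C$ depending only on $L$, $n$, $\rho$, $\delta_1$, $\delta_2$ and finitely many of the derivative bounds assumed (this is why only $|\beta| \leq 2\lceil n/2\rceil + 2$ and $|\alpha_j| \leq 2m_j$ are needed, the two sets of $x$- and $y$-bounds entering the two composition estimates symmetrically). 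One then checks that $\sup_\nu \sum_\mu \nrm[L^2\to L^2]{T_\nu^* T_\mu}^{1/2} < \infty$ and likewise for $T_\nu T_\mu^*$, and concludes by the Cotlar--Stein almost-orthogonality lemma that $\nrm[L^2\to L^2]{T_a} = \nrm[L^2\to L^2]{\sum_\nu T_\nu} \lesssim \mathcal C$.

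The main obstacle, and the precise point where the inequality $M/n \geq \tfrac12(\delta_1+\delta_2) - \rho$ is consumed, is the convergence of the Cotlar--Stein sums: one must count the boxes $Q_\mu$ with $\dist(Q_\nu,Q_\mu) \sim 2^\ell$ — whose number grows like a power of $2^\ell$ governed by $\rho$ — and balance this against the number of internal integrations by parts one can afford, which is limited by $\delta_1$ and $\delta_2$ through $m_1$ and $m_2$; the threshold value of $M$ is exactly what renders the resulting geometric-type series summable. Secondary technical points are the uniform control of the constants in the box-adapted partition of unity as $|\xi|\to\infty$, and propagating the dependence on both $x$ and $y$ through the composition without breaking the symmetry between the two hypotheses. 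An alternative route, following \cite{hw87th}, replaces the almost-orthogonality step by an explicit kernel splitting, but the bookkeeping of derivative counts is essentially the same.
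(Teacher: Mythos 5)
You should note first that the lecture notes contain no proof of Theorem \ref{thm:CaVai-PM2021}: it is restated from \cite{cava72ac}, with \cite{hw87th} cited for an elementary proof, so there is no in-paper argument to compare against and your plan must stand on its own. Its architecture is reasonable: you are right that the square-root argument used for Theorem \ref{thm:T2B-PM2021} and the reduction Theorem \ref{thm:ReV-PM2021} are unavailable once $\delta_j \geq \rho$, and a frequency decomposition into boxes of side $\agl[\xi]^{\rho}$, Schur bounds for the pieces (each $\nrm[L^2\to L^2]{T_\nu} \lesssim \mathcal C\, \agl[\xi_\nu]^{-M}$, which already uses about $n+1$ of the allowed $\xi$-derivatives), and Cotlar--Stein is a recognized route to statements of this type.

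The genuine gaps sit exactly at the steps you defer. First, the displayed decay $(1+\dist(Q_\nu,Q_\mu))^{-L}$ ``for every $L$'' cannot follow from the hypotheses: only $|\beta| \leq 2\lceil n/2\rceil + 2$ and $|\alpha_j| \leq 2m_j$ derivatives are assumed, each internal integration by parts gains $|\xi-\eta|^{-1}$ but costs $\agl[\xi]^{\delta_j}$, and for boxes with $\dist(Q_\nu,Q_\mu) \lesssim \agl[\xi_\nu]^{\delta_j}$ you gain nothing at all; whether the finite decay obtainable from exactly $2m_j$ derivatives with $m_j(1-\delta_j) \geq 5n/4$ suffices is precisely the content of the theorem and is left unchecked. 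Second, the summability step that is supposed to consume $M/n \geq \frac12(\delta_1+\delta_2)-\rho$ does not come out of the counting you sketch: the roughly $\agl[\xi_\nu]^{n(\delta_1-\rho)}$ boxes within distance $\agl[\xi_\nu]^{\delta_1}$ of $Q_\nu$ contribute about $\agl[\xi_\nu]^{n(\delta_1-\rho)-M}$ to $\sup_\nu \sum_\mu \nrm[L^2\to L^2]{T_\nu^* T_\mu}^{1/2}$, while the sum for $T_\nu T_\mu^*$ carries $\delta_2$; so each Cotlar--Stein sum separately demands $M \geq n(\delta_j-\rho)$, strictly stronger than the hypothesis unless $\delta_1=\delta_2$. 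To recover the average $\frac12(\delta_1+\delta_2)$ you need the asymmetric form of almost orthogonality (bounding $\nrm[L^2\to L^2]{\sum_\nu T_\nu}$ by $\sqrt{AB}$ when the two sums are bounded by different constants $A$ and $B$), or the interleaved estimates of powers of $T_a^* T_a$ as in \cite{cava72ac}; your appeal to ``symmetry'' gestures at this but never makes the step. Third, the boxes live at all frequency magnitudes, so the distances, the partition-of-unity constants and the Schur integrals for the composed kernels must be controlled uniformly across scales, which is asserted rather than shown. The skeleton is viable, but the quantitative core --- finitely many integrations by parts against the box counting, and the mechanism producing the average of $\delta_1$ and $\delta_2$ --- is missing; if you want a complete argument at the level of these notes, adapting Hwang's integration-by-parts proof \cite{hw87th} is likely easier than repairing the almost-orthogonality bookkeeping.
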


For simplicity, we summarize a easy-to-use $L^2$-boundedness result as follows,
\[
\boxed{a \in S_{0,0}^0 \ \Rightarrow \ \nrm[L^2 \to L^2]{T_a} < \infty.}
\]

For the $L^p$-boundedness ($1 < p < +\infty$) result, readers may refer to \cite{Coi78au} (in French) and \cite{hwa94Lp}.

\section{G\aa rding's inequalities} \label{sec:Gar-PM2021}

We use notation $\Re f$ to signify the real-valued part of any object $f$.
Recall the Sobolev spaces $H^{s,p}(\Rn)$ defined in Definition \ref{defn:Hsp-PM2021}, and the corresponding Sobolev norms $\nrm[H^{m,p}]{\cdot}$ and $\nrm[H^m]{\cdot}$.
We denote $J^m := (I - \Delta)^{m/2}$ and $J := J^1$, namely, $J^m$ takes $\agl[\xi]^m$ as its symbol.
It can be checked that $J^m \in \Psi^m$, $J^{m_1} J^{m_2} = J^{m_1 + m_2}$, $J$ is self-adjoint, and $J^0$ is the identity operator.

\subsection{G\aa rding's Inequality} \label{subsec:GIn-PM2021}

\begin{defn}[Strongly elliptic\index{strong ellipticity}] \label{defn:GarIne-PM2021}
	Let $m \in \R$.
	A symbol $a$ is said to be \emph{strongly elliptic} of order $2m$, if $a \in S^{2m}$ and if there exist fixed positive constants $C$, $R$ such that
	\begin{equation*} 
	\boxed{ \Re a(x,\xi) \geq C \agl[\xi]^{2m}, \Forall |\xi| \geq R, }
	\end{equation*}
	holds.
\end{defn}

Similar to Lemma \ref{lem:aEv-PM2021}, there is an equivalent definition for the strong ellipticity of a symbol.

\begin{lem} \label{lem:GEv-PM2021}
	Assume $m \in \R$ and $a \in S^{2m}$.
	The strong ellipticity condition for $a$ is equivalent to the fact that there exist two positive constants $C$ and $D$ such that
	\begin{equation} \label{eq:GEv-PM2021}
	\boxed{\Re a(x,\xi) \geq C \agl[\xi]^{2m} - D \agl[\xi]^{2m-1}, \Forall x,\xi \in \Rn.}
	\end{equation}
\end{lem}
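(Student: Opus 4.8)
This is an "equivalence" lemma exactly parallel to Lemma~\ref{lem:aEv-PM2021}, so the plan is to imitate that proof almost verbatim, replacing $|a|$ by $\Re a$ and $m$ by $2m$. The statement to prove is: for $a \in S^{2m}$, the strong ellipticity condition (Definition~\ref{defn:GarIne-PM2021}) holds if and only if there exist positive constants $C$, $D$ with $\Re a(x,\xi) \geq C\agl[\xi]^{2m} - D\agl[\xi]^{2m-1}$ for all $x,\xi \in \Rn$.

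\begin{proof}[Proof of Lemma \ref{lem:GEv-PM2021}]
	($\Rightarrow$) Assume $a \in S^{2m}$ is strongly elliptic, so there are constants $C$, $R > 0$ such that
	\[
	\Re a(x,\xi) / \agl[\xi]^{2m} \geq C, \quad \forall |\xi| \geq R.
	\]
	Then for any positive constant $D$ we trivially have
	\begin{equation} \label{eq:GEvProof-PM2021}
	\Re a(x,\xi) / \agl[\xi]^{2m} \geq C - D \agl[\xi]^{-1}, \quad \forall |\xi| \geq R.
	\end{equation}
	Now choose $D := C \agl[R]$. Then for $|\xi| \leq R$ we have $C\agl[\xi] \leq C\agl[R] = D$, hence $C - D\agl[\xi]^{-1} \leq 0 \leq \Re a(x,\xi)/\agl[\xi]^{2m}$ whenever the right-hand side is nonnegative; but even without that, $C - D\agl[\xi]^{-1} \leq 0$ forces the inequality unless $\Re a < 0$, in which case one enlarges $D$ further using the uniform bound $|\Re a(x,\xi)| \leq |a(x,\xi)| \leq C_{0,0}\agl[\xi]^{2m}$ coming from $a \in S^{2m}$: indeed on the compact-in-$\xi$ region $|\xi| \leq R$ we have $\Re a(x,\xi)/\agl[\xi]^{2m} \geq -C_{0,0}$, so taking $D := \agl[R](C + C_{0,0})$ gives $C - D\agl[\xi]^{-1} \leq C - \agl[R](C+C_{0,0})\agl[\xi]^{-1} \leq -C_{0,0}$ for $|\xi| \leq R$. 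Thus \eqref{eq:GEvProof-PM2021} holds for all $\xi$, which is \eqref{eq:GEv-PM2021}.

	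($\Leftarrow$) Conversely, assume \eqref{eq:GEv-PM2021} holds. Since $\agl[\xi]^{2m-1} / \agl[\xi]^{2m} = \agl[\xi]^{-1} \to 0$ as $|\xi| \to \infty$, we can pick $R$ large enough that $D\agl[\xi]^{-1} \leq C/2$ whenever $|\xi| \geq R$; then $\Re a(x,\xi) \geq C\agl[\xi]^{2m} - D\agl[\xi]^{2m-1} \geq (C/2)\agl[\xi]^{2m}$ for $|\xi| \geq R$, so $a$ is strongly elliptic. The proof is complete.
\end{proof}

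The only step requiring any care is handling the region $|\xi| \leq R$ in the forward direction, since there $\Re a$ need not be nonnegative; the resolution is to absorb the (bounded, by the symbol estimate of order $2m$) negative part into the constant $D$, exactly as in Lemma~\ref{lem:aEv-PM2021}. I expect no genuine obstacle here — the argument is a routine adaptation, and the key structural fact is simply that $\agl[\xi]^{2m-1}$ is lower order than $\agl[\xi]^{2m}$, so the $-D\agl[\xi]^{2m-1}$ term is negligible for large $\xi$ and harmless (after enlarging $D$) for small $\xi$.
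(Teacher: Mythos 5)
Your proof is correct and follows essentially the same route as the paper: for the forward direction you enlarge $D$ to absorb the possibly negative values of $\Re a$ on $|\xi|\leq R$ via the symbol bound $|\Re a(x,\xi)|\leq C_{0,0}\agl[\xi]^{2m}$ (the paper takes $D:=(C+M)\agl[R]$ with the same $M=C_{0,0}$), and your converse is the standard observation that $D\agl[\xi]^{-1}\leq C/2$ for $|\xi|$ large, which the paper leaves as ``easy to see.'' No gaps.
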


\begin{proof}
	Assume $a \in S^{2m}$ is strongly elliptic, then there are constants $C$, $R > 0$ such that
	\[
	\Re a(x,\xi) / \agl[\xi]^{2m}
	\geq C, \quad \forall |\xi| \geq R,
	\]
	so for any positive constant $D$ we have
	\begin{equation} \label{eq:rsm-PM2021}
	\Re a(x,\xi) / \agl[\xi]^{2m}
	\geq C - D \agl[\xi]^{-1},
	\end{equation}
	for $\forall |\xi| \geq R$.
	Also, because $a$ is a symbol of order $2m$, for some $M > 0$ we have,
	\[
	|\Re a(x,\xi)| / \agl[\xi]^{2m}
	\leq |a(x,\xi)| / \agl[\xi]^{2m}
	\leq M \quad \Rightarrow \quad
	\Re a(x,\xi) / \agl[\xi]^{2m}
	\geq -M, \quad \forall \xi \in \Rn.
	\]
	We set $D$ to be large enough such that
	\[
	-M \geq \sup_{|\xi| \leq R} (C - D \agl[\xi]^{-1}), \quad \text{e.g.} \quad D := (C + M) \agl[R],
	\]
	then \eqref{eq:rsm-PM2021} holds for both $|\xi| \geq R$ and $|\xi| \leq R$.
	This gives \eqref{eq:GEv-PM2021}.
	
	On the other hand, from \eqref{eq:GEv-PM2021} it is easy to see $a$ is strongly elliptic.
\end{proof}

We are ready for the G\aa rding's Inequality.

\begin{thm}[G\aa rding's inequality\index{G\aa rding's inequality}] \label{thm:GarIne-PM2021}
	Assume $m \in \R$ and the symbol $a \in S^{2m}$ is strongly elliptic.
	Then we can find a positive constant $C$ and a positive constant $C_s$ for every reals numbers $s \geq \frac{1}{2}$ such that
	\begin{equation} \label{eq:GarIne-PM2021}
	\boxed{ \Re (T_{a}\varphi,\varphi) \geq C \nrm[H^m]{\varphi}^2 - C_s \nrm[H^{m-s}]{\varphi}^2, \Forall \varphi \in \scrS(\Rn). }
	\end{equation}
\end{thm}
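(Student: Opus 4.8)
The plan is to establish \eqref{eq:GarIne-PM2021} by the classical square-root construction, leaning on the symbolic calculus of Chapter~\ref{ch:SCP-PM2021} (Theorems~\ref{thm:com-PM2021} and~\ref{thm:AdT-PM2021}) and the $L^2$-boundedness Theorem~\ref{thm:T2B-PM2021}. First I would pass to the self-adjoint part: since $\Re(T_a\varphi,\varphi) = (A_0\varphi,\varphi)$ with $A_0 := \tfrac12(T_a + T_a^*)$, and by Theorem~\ref{thm:AdT-PM2021} the symbol of $A_0$ equals $\Re a$ modulo $S^{2m-1}$, it suffices to analyze $A_0$. By Lemma~\ref{lem:GEv-PM2021} the strong ellipticity of $a$ gives constants $C_0, D > 0$ with $\Re a(x,\xi) \geq C_0\agl[\xi]^{2m} - D\agl[\xi]^{2m-1}$; fixing any $c$ with $0 < c < C_0$, the function $b(x,\xi) := \Re a(x,\xi) - c\agl[\xi]^{2m}$ then satisfies $b(x,\xi) \geq \tfrac12(C_0 - c)\agl[\xi]^{2m}$ for $|\xi| \geq R'$ with $R'$ large.

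Next I would build the square root. Fix $\chi \in C_c^\infty(\Rn)$ with $\chi \equiv 1$ on a neighborhood of $\{|\xi| \leq R'\}$, so that $1-\chi$ is supported where $b \geq \tfrac12(C_0-c)\agl[\xi]^{2m} > 0$, and set $p(x,\xi) := (1 - \chi(\xi))\sqrt{b(x,\xi)}$, which is well-defined and real-valued. The key technical point — and I expect it to be the main obstacle — is to verify $p \in S^m$: one must show $|\partial_x^\alpha \partial_\xi^\beta \sqrt{b}| \lesssim \agl[\xi]^{m - |\beta|}$ on $\supp(1-\chi)$, which follows from $b \in S^{2m}$, the lower bound $b \gtrsim \agl[\xi]^{2m}$ there, and Fa\`a di Bruno's formula (or an induction on $|\alpha|+|\beta|$), since each derivative either lowers the $\agl[\xi]$-power by one or produces a negative power of $b$ compensated by derivatives of $b$. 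Granting $p \in S^m$, Theorems~\ref{thm:AdT-PM2021} and~\ref{thm:com-PM2021} give $\sigma(T_p^* T_p) = p^2 + S^{2m-1}$, and since $b - p^2 = (2\chi-\chi^2)b$ is compactly supported in $\xi$, hence in $S^{-\infty}$, we get $\sigma(T_p^*T_p) = b + S^{2m-1}$. With $J^{2m} := (I-\Delta)^m$ (symbol $\agl[\xi]^{2m}$), it follows that
\[
R := A_0 - cJ^{2m} - T_p^* T_p
\]
has symbol $\Re a - c\agl[\xi]^{2m} - b + S^{2m-1} = S^{2m-1}$, i.e. $R \in \Psi^{2m-1}$.

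Then I would assemble the estimate. For $\varphi \in \scrS(\Rn)$, using that $J^m$ is self-adjoint,
\[
\Re(T_a\varphi,\varphi) = (A_0\varphi,\varphi) = c(J^{2m}\varphi,\varphi) + (T_p^* T_p\varphi,\varphi) + (R\varphi,\varphi) = c\nrm[H^m]{\varphi}^2 + \nrm[2]{T_p\varphi}^2 + (R\varphi,\varphi),
\]
so only $(R\varphi,\varphi)$ needs a lower bound. Writing $\tilde R := J^{-(m-1/2)} R J^{-(m-1/2)}$, composition of symbol orders (Theorem~\ref{thm:com-PM2021}) gives $\tilde R \in \Psi^0$, hence $\tilde R$ is $L^2$-bounded by Theorem~\ref{thm:T2B-PM2021}, and
\[
|(R\varphi,\varphi)| = |(\tilde R\, J^{m-1/2}\varphi,\, J^{m-1/2}\varphi)| \leq \nrm[L^2 \to L^2]{\tilde R}\, \nrm[H^{m-1/2}]{\varphi}^2 =: C'\nrm[H^{m-1/2}]{\varphi}^2.
\]
This already yields \eqref{eq:GarIne-PM2021} for $s = \tfrac12$ with $C = c$, $C_{1/2} = C'$. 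For $s > \tfrac12$ I would invoke Theorem~\ref{thm:SNIE-PM2021} with exponents $(m-\tfrac12,\, m,\, m-s)$ to get $\nrm[H^{m-1/2}]{\varphi}^2 \leq \epsilon\nrm[H^m]{\varphi}^2 + C_{\epsilon}\nrm[H^{m-s}]{\varphi}^2$; choosing $\epsilon := c/(2C')$ and absorbing the $\nrm[H^m]{\varphi}^2$ term, one arrives at $\Re(T_a\varphi,\varphi) \geq \tfrac{c}{2}\nrm[H^m]{\varphi}^2 - C_s\nrm[H^{m-s}]{\varphi}^2$, which is \eqref{eq:GarIne-PM2021}.
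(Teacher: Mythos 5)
Your proof is correct and is essentially the paper's own argument: symmetrize $T_a$, split off a positive elliptic part plus the square $T_p^*T_p$ of a $\Psi$DO built from a square root of the symbol via Theorems \ref{thm:com-PM2021} and \ref{thm:AdT-PM2021}, control the order-$(2m-1)$ remainder by $\nrm[H^{m-1/2}]{\varphi}^2$ through conjugation with powers of $J$ and the $L^2$-boundedness of $\Psi^0$ (Theorem \ref{thm:T2B-PM2021}), and finish with the Sobolev interpolation inequality. The only deviations are organizational: the paper first reduces to order $0$ by conjugating with $J^{-m}$ and secures positivity of the radicand with a generic $S^{-1}$ correction (its Exercise \ref{ex:bs0-PM2021} is precisely your ``$p \in S^m$'' step), whereas you stay at order $2m$ and cut off the low-frequency region with $1-\chi$, which costs only an $S^{-\infty}$ error.
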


\begin{rem}
	When $a(x,\xi) = \agl[\xi]^{2m}$, then $a$ is strongly elliptic and
	\[
	\Re (T_{a}\varphi,\varphi)
	= \Re (J^{2m}\varphi,\varphi)
	= \Re (J^m \varphi, J^m \varphi)
	= \nrm[H^m]{\varphi}^2,
	\]
	which implies \eqref{eq:GarIne-PM2021}. 
	Theorem \ref{thm:GarIne-PM2021} implies that even if a symbol is not of the form $\agl[\xi]^{2m}$ but is only strongly elliptic of order $2m$, then $T_a$ still possesses some positiveness.
\end{rem}

\begin{proof}[Proof of Theorem \ref{thm:GarIne-PM2021}]
	Let's denote the symbol of $T_a^*$ as $a^*$, then it can be checked that
	\[
	\Re (T_{a}\varphi,\varphi)
	= (T_{\frac 1 2(a + a^*)} \varphi, \varphi).
	\]
	
	{\bf Step 1.} When $m = 0$.
	Because $a$ is strongly elliptic and $m = 0$, by Lemma \ref{lem:GEv-PM2021} and Theorem \ref{thm:AdT-PM2021} we have
	\[
	\frac 1 2(a + a^*)
	= \Re a + r
	\geq C - D \agl[\xi]^{-1} + r
	= C - r,
	\]
	where $r$ is a generic symbol in $S^{-1}$.
	This makes it legal to define a symbol\footnote{See Exercise \ref{ex:bs0-PM2021}.} $b \in S^0$ as follows,
	\begin{equation} \label{eq:bar-PM2021}
	b(x,\xi) := \big( \frac 1 2(a + a^*) - \frac 1 2 C + r \big)^{1/2}
	\end{equation}
	Then by Theorems \ref{thm:com-PM2021} \& \ref{thm:AdT-PM2021} we have (symbolic calculus)
	\[
	b^* \# b = (\overline b + S^{-1}) b + S^{-1}
	= b^2 + S^{-1}
	= \frac 1 2(a + a^*) - \frac 1 2 C + r + S^{-1},
	\]
	so
	\[
	\frac 1 2(a + a^*)= b^* \# b + \frac 1 2 C + r,
	\]
	where $r$ is a generic symbol in $S^{-1}$.
	Therefore,
	\begin{align}
	\Re (T_{a}\varphi,\varphi)
	& = (T_{\frac 1 2(a + a^*)} \varphi, \varphi)
	= (T_b^* T_b \varphi, \varphi) + \frac C 2 \nrm[L^2]{\varphi}^2 + (R \varphi, \varphi) \nonumber \\
	& = \nrm[L^2]{T_b \varphi}^2 + \frac C 2 \nrm[L^2]{\varphi}^2 + (R \varphi, \varphi)
	\geq \frac C 2 \nrm[L^2]{\varphi}^2 + (R \varphi, \varphi), \label{eq:RTa2-PM2021}
	\end{align}
	for some $R \in \Psi^{-1}$.
	We have
	\begin{align}
	|(R \varphi, \varphi)|
	& = |(R J^{1/2} J^{-1/2} \varphi, J^{1/2} J^{-1/2} \varphi)|
	= |(J^{1/2} R J^{1/2} (J^{-1/2} \varphi), J^{-1/2} \varphi)| \nonumber \\
	& \leq \nrm[L^2]{\underline{J^{1/2} R J^{1/2}} (J^{-1/2} \varphi)} \nrm[H^{-1/2}]{\varphi}
	\leq C' \nrm[L^2]{J^{-1/2} \varphi} \nrm[H^{-1/2}]{\varphi} \label{eq:JRBdd-PM2021} \\
	& = C' \nrm[H^{-1/2}]{\varphi}^2. \label{eq:Rvv-PM2021}
	\end{align}
	Note that in \eqref{eq:JRBdd-PM2021} we used the facts $J^{1/2} R J^{1/2} \in \Psi^0$ and operators in $\Psi^0$ are $L^2$-bounded.
	Combining \eqref{eq:Rvv-PM2021} with \eqref{eq:RTa2-PM2021} we arrive at
	\begin{equation} \label{eq:Tav-PM2021}
	\Re (T_{a}\varphi,\varphi)
	\geq \frac C 2 \nrm[L^2]{\varphi}^2 - C' \nrm[H^{-1/2}]{\varphi}^2.
	\end{equation}

	{\bf Step 2.} When $m \neq 0$.
	Let $T_{a'} = J^{-m} T_a J^{-m}$ for certain $a' \in S^0$.
	Then it can checked that there exist $C,D > 0$ so that
	\begin{equation} \label{eq:apa-PM2021}
	\Re a'(x,\xi) \geq C - D\agl[\xi]^{-1}, \quad \forall x, \xi \in \Rn.
	\end{equation}
	Hence according to Lemma \ref{lem:GEv-PM2021}, $a'$ is strongly elliptic, so by using the result in {\bf Step 1} we can have
	\begin{align}
	\Re (T_a \varphi,\varphi)
	& = \Re (J^m T_{a'} J^m \varphi, \varphi)
	= \Re (T_{a'} J^m \varphi,J^m \varphi) \nonumber \\
	& \geq \frac C 2 \nrm[L^2]{J^m \varphi}^2 - C' \nrm[H^{-1/2}]{J^m \varphi}^2 \nonumber \\
	& = \frac C 2 \nrm[H^m]{\varphi}^2 - C' \nrm[H^{m-1/2}]{\varphi}^2. \label{eq:Tav2-PM2021}
	\end{align}
	
	{\bf Step 3.}
	From Theorem \ref{thm:SNI-PM2021} we have	
	\[
	\nrm[H^{m-1/2}]{\varphi}^2 \leq \frac{1}{D^{1/2}} \nrm[H^m]{\varphi}^2 + D^{s-1/2} \nrm[H^{m-s}]{\varphi}^2,
	\]
	for any $D > 0$.
	Set $D$ to be small enough and substitute the inequality above into \eqref{eq:Tav-PM2021} and \eqref{eq:Tav2-PM2021}, we arrive at the conclusion.
\end{proof}

G\aa rding's Inequality is used for giving the existence and uniqueness of the following type equation:
\[
(T_a + \lambda I)u = f.
\]
Let $m \geq 1/2$ and $s = m$, and assume $a \in S^{2m}$ is strongly elliptic symbol, then
\[
C \nrm[H^m]{\varphi}^2 - \lambda_0 \nrm[L^2]{\varphi}^2
\leq \Re (T_{a}\varphi,\varphi), \quad \forall \varphi \in \scrS(\Rn)
\]
for some constant $\lambda_0 > 0$, then for all $\lambda > \lambda_0$, we can conclude
\[
C \nrm[L^2]{\varphi}^2
\leq C \nrm[H^m]{\varphi}^2
\leq \Re ((T_a + \lambda) \varphi,\varphi)
= \Re (\varphi,(T_a^* + \lambda) \varphi)
\leq \nrm[L^2]{\varphi} \nrm[L^2]{(T_a^* + \lambda) \varphi},
\]
which leads to a coercive condition:
\[
C \nrm[L^2]{\varphi}
\leq \nrm[L^2]{(T_a^* + \lambda) \varphi}.
\]
Combining this with the Lax-Milgram theorem we can conclude that:

\begin{cor} \label{cor:ufU-PM2021}
	Assume $m \geq 1/2$ and $a \in S^{2m}$ is strongly elliptic.
	There exists a constant $\lambda_0$ such that when any $\lambda > \lambda_0$,
	for any $f \in L^2(\Rn)$ there exists a unique weak solution $u \in L^2(\Rn)$ satisfying the equation
	\[
	(T_a + \lambda) u = f.
	\]
\end{cor}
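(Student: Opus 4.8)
The plan is to convert the coercive estimate sketched above into an application of the Lax--Milgram theorem on the Hilbert space $H^m(\Rn)$, and then to recover uniqueness inside $L^2(\Rn)$ via elliptic regularity. First I would apply G\aa rding's inequality (Theorem~\ref{thm:GarIne-PM2021}) with $s=m$, which is admissible because $m\geq\tfrac12$, to obtain constants $C>0$ and $C_m>0$ with
\[
\Re(T_a\varphi,\varphi)\geq C\nrm[H^m]{\varphi}^2-C_m\nrm[L^2]{\varphi}^2,\qquad\forall\varphi\in\scrS(\Rn).
\]
Setting $\lambda_0:=C_m$, for every $\lambda>\lambda_0$ this yields
\[
\Re\big((T_a+\lambda)\varphi,\varphi\big)\geq C\nrm[H^m]{\varphi}^2+(\lambda-C_m)\nrm[L^2]{\varphi}^2\geq C\nrm[H^m]{\varphi}^2,\qquad\forall\varphi\in\scrS(\Rn).
\]

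Next I would introduce the sesquilinear form $B(u,v):=\big((T_a+\lambda)u,v\big)$ on $H^m(\Rn)\times H^m(\Rn)$, where the bracket is understood through the duality pairing between $H^{-m}(\Rn)$ and $H^m(\Rn)$ extending the $L^2$ inner product. Since $T_a+\lambda\in\Psi^{2m}$, Corollary~\ref{cor:HmB-PM2021} (applied with its ``$s$'' equal to $-m$) shows $T_a+\lambda\colon H^m\to H^{-m}$ is bounded, hence $|B(u,v)|\lesssim\nrm[H^m]{u}\nrm[H^m]{v}$ and $B$ is continuous. By density of $\scrS(\Rn)$ in $H^m(\Rn)$ and continuity of both $B$ and $\nrm[H^m]{\cdot}^2$, the coercivity bound extends from $\scrS(\Rn)$ to $\Re B(u,u)\geq C\nrm[H^m]{u}^2$ for all $u\in H^m(\Rn)$. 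For $f\in L^2(\Rn)$ the assignment $v\mapsto(f,v)$ is a bounded conjugate-linear functional on $H^m(\Rn)$, since $|(f,v)|\leq\nrm[L^2]{f}\nrm[L^2]{v}\leq\nrm[L^2]{f}\nrm[H^m]{v}$ (using $m\geq0$). The Lax--Milgram theorem then produces a unique $u\in H^m(\Rn)$ with $B(u,v)=(f,v)$ for all $v\in H^m(\Rn)$; restricting $v$ to the dense subspace $\scrS(\Rn)$ gives $(T_a+\lambda)u=f$ in $\scrS'(\Rn)$, and since $m\geq0$ we have $u\in H^m(\Rn)\subset L^2(\Rn)$, which is the asserted weak $L^2$-solution.

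For uniqueness within $L^2(\Rn)$ I would argue by regularity. Suppose $w\in L^2(\Rn)$ satisfies $(T_a+\lambda)w=0$. Because $a$ is strongly elliptic and $\lambda>0$, we have $|a(x,\xi)+\lambda|\geq\Re a(x,\xi)+\lambda\geq C\agl[\xi]^{2m}$ for $|\xi|$ large, so $a+\lambda\in S^{2m}$ is elliptic in the sense of Definition~\ref{defn:elp-PM2021}; by Theorem~\ref{thm:para1-PM2021}, $T_a+\lambda$ admits a parametrix $S\in\Psi^{-2m}$ with $S(T_a+\lambda)=I+R$, $R\in\Psi^{-\infty}$. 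Applying this identity to $w$ gives $0=(I+R)w$, hence $w=-Rw$; since $R\in\Psi^{-\infty}$ is smoothing, $R\colon L^2(\Rn)\to H^k(\Rn)$ is bounded for every $k$ (Corollary~\ref{cor:HmB-PM2021}), so $w\in H^k(\Rn)$ for all $k$, in particular $w\in H^m(\Rn)$. The extended coercivity estimate then forces $C\nrm[H^m]{w}^2\leq\Re B(w,w)=\Re(0,w)=0$, whence $w=0$, and uniqueness follows.

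The step I expect to be the main obstacle is pinning down the correct functional-analytic setting: the form $((T_a+\lambda)u,v)$ is not defined for general $u,v\in L^2(\Rn)$ once $m>0$, so the variational problem must be posed on $H^m(\Rn)$, and one must separately run the elliptic-regularity bootstrap to lift any competing $L^2$-solution back into $H^m(\Rn)$ before coercivity can be invoked. The boundedness and coercivity of $B$ on $H^m$ themselves reduce to Corollary~\ref{cor:HmB-PM2021} and G\aa rding's inequality, both already available, so no further hard analysis is needed beyond a careful bookkeeping of Sobolev indices.
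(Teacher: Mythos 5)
Your proof is correct, and it fills in details the paper only sketches. The core ingredients coincide with the paper's argument---G\aa rding's inequality (Theorem \ref{thm:GarIne-PM2021}) with $s=m$ plus the Lax--Milgram theorem---but the implementation is genuinely different. The paper stays at the $L^2$ level: from G\aa rding it extracts the a priori bound $C\nrm[L^2]{\varphi}\leq\nrm[L^2]{(T_a^*+\lambda)\varphi}$ for the \emph{adjoint} and then invokes Lax--Milgram (in effect a duality argument) to produce a weak $L^2$ solution, without spelling out how uniqueness among all $L^2$ solutions follows. You instead pose the variational problem on the Hilbert space $H^m(\Rn)$: boundedness of the form comes from the mapping property of Corollary \ref{cor:HmB-PM2021} with its $s$ equal to $-m$, coercivity comes from G\aa rding extended by density, and Lax--Milgram yields a solution that automatically lies in $H^m\subset L^2$; you then lift any competing $L^2$ null solution into $H^k$ for every $k$ via a parametrix of the elliptic symbol $a+\lambda$ (Theorem \ref{thm:para1-PM2021} together with Corollary \ref{cor:HmB-PM2021}, the parametrix identity extending to $\scrS'$ exactly as in Lemma \ref{lem:Blr-PM2021}) before applying coercivity. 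The cost of your route is the extra bookkeeping with the $H^{-m}$--$H^m$ duality and the extension of the form and of the parametrix identity; what it buys is additional regularity of the solution ($u\in H^m$, not merely $L^2$) and a fully justified uniqueness statement in $L^2$, a point the paper's sketch leaves implicit.
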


\subsection{Sharp G\aa rding's Inequality} \label{subsec:GInS-PM2021}

In the proof of Theorem \ref{thm:GarIne-PM2021} later on, we see that having a strictly positive lower bound for $\Re a$ is critical, and the method in that proof will fail if the lower bound reduces to zero.
However, when $\Re a \geq 0$, one can still obtain some lower bound of $\Re (T_{a}\varphi,\varphi)$ and that result is called sharp G\aa ding's inequality.

\begin{thm}[Sharp G\aa rding's Inequality\index{sharp G\aa rding's Inequality}] \label{thm:GInS-PM2021}
	For a symbol $a \in S^{2m}$ satisfying 
	\begin{equation} \label{eq:GInS-PM2021}
	\boxed{ \Re a(x,\xi) \geq 0, \Forall |\xi| \geq R,}
	\end{equation}
	we can find a positive constant $C$ such that
	\begin{equation*}
	\boxed{ \Re (T_{a}\varphi,\varphi) \geq - C \nrm[H^{m-1/2}]{\varphi}^2, \Forall \varphi \in \scrS(\Rn). }
	\end{equation*}
\end{thm}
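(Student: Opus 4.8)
The plan is to follow the standard Friedrichs symmetrization approach, adapted to the framework of symbolic calculus developed in this chapter. First I would reduce to the case $m=0$ by the same conjugation trick used in Step 2 of the proof of Theorem \ref{thm:GarIne-PM2021}: setting $T_{a'} = J^{-m} T_a J^{-m}$ with $a' \in S^0$ satisfying $\Re a'(x,\xi) \geq 0$ for $|\xi| \geq R$, so that $\Re(T_a\varphi,\varphi) = \Re(T_{a'} J^m\varphi, J^m\varphi)$ and the desired bound becomes $\Re(T_{a'}\psi,\psi) \geq -C\nrm[H^{-1/2}]{\psi}^2$. One should also first dispose of the region $|\xi| \leq R$: modifying $a$ there by a symbol in $S^{-\infty}$ (multiplying by a cutoff) changes $(T_a\varphi,\varphi)$ by a term controlled by $C\nrm[H^{-1/2}]{\varphi}^2$ via the $L^2$-boundedness of operators in $\Psi^{0}$, so we may as well assume $\Re a(x,\xi) \geq 0$ everywhere.

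The core construction is the Friedrichs symmetrizer. Fix a real-valued $g \in \scrS(\Rn)$ with $\nrm[L^2]{g} = 1$ and $g$ supported near the origin, and for each $(x,\xi)$ with $|\xi|$ large define a ``coherent state'' $g_{x,\xi}(y) := \agl[\xi]^{n/4} g(\agl[\xi]^{1/2}(y-x)) e^{iy\cdot\xi}$. Then set
\[
(F\varphi)(x,\xi) := \agl[g_{x,\xi}, \varphi] = \int \overline{g_{x,\xi}(y)} \varphi(y) \dif y,
\]
and define the approximation $A := F^* a F$, i.e.\ $(A\varphi,\psi) := \int a(x,\xi) (F\varphi)(x,\xi) \overline{(F\psi)(x,\xi)} \dif x \dif \xi$. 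The two facts to establish are: (1) since $a \geq 0$, we have $(A\varphi,\varphi) = \int a(x,\xi)|(F\varphi)(x,\xi)|^2 \geq 0$, which is immediate; and (2) $A$ is itself a $\Psi$DO and $A - T_a \in \Psi^{2m-1}$, equivalently $A - T_{a'} \in \Psi^{-1}$ after the reduction. Granting (2), one concludes exactly as in Gårding's inequality: $\Re(T_{a'}\psi,\psi) = (A\psi,\psi) + \Re(R\psi,\psi)$ with $R \in \Psi^{-1}$, and the remainder is bounded by $C\nrm[H^{-1/2}]{\psi}^2$ using the computation in \eqref{eq:JRBdd-PM2021}--\eqref{eq:Rvv-PM2021}, namely $|(R\psi,\psi)| = |(J^{1/2}RJ^{1/2}(J^{-1/2}\psi), J^{-1/2}\psi)| \leq C\nrm[H^{-1/2}]{\psi}^2$ since $J^{1/2}RJ^{1/2} \in \Psi^0$.

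The main obstacle is step (2): computing the symbol of $A = F^*aF$ and showing it agrees with $a$ modulo $S^{2m-1}$. The kernel of $A$ is $\iint \overline{g_{x,\xi}(z)} a(x,\xi) g_{x,\xi}(y) \dif x \dif \xi$, and one must carry out a stationary-phase / Taylor expansion: expand $a(x,\xi)$ around the point where the Gaussian $|g_{x,\xi}|^2$ concentrates, use that $\int |g(\agl[\xi]^{1/2}(\cdot))|^2$ integrates to $\agl[\xi]^{-n/2}$ after rescaling, and track that each extra derivative of $a$ or each factor $(y-x)$ costs a power $\agl[\xi]^{-1/2}$, so that the $\alpha=0$ term reproduces $a(x,\xi)$ and all corrections land in $S^{2m-1}$. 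This is where the scaling $\agl[\xi]^{1/2}$ in the coherent state is essential — it is precisely the Heisenberg-scaling that makes the error gain a full order rather than a half order. The symbol estimates on $A$ (verifying $A \in \Psi^{2m}$ in the first place, i.e.\ $|D_x^\alpha D_\xi^\beta \sigma(A)| \lesssim \agl[\xi]^{2m-|\beta|}$) follow by differentiating under the integral sign and using the rapid decay of $g$ together with the symbol bounds on $a$; these are routine but tedious, so I would state them and refer to the literature (e.g.\ \cite[\S XII.2]{horm2003IIV} or the treatment in \cite{cava72ac}) rather than grinding through every multi-index.
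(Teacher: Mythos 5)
Your outline is, in substance, the same route the paper takes: the wave packet transform $W$ used there is exactly your coherent-state map $F$ (Gaussian profile, Heisenberg scaling $\agl[\xi]^{1/2}$), the operator $W^*(\Re a)W$ is your $A = F^*aF$, positivity of $A$ enters identically, and the remainder is absorbed into $-C\nrm[H^{m-1/2}]{\varphi}^2$ in the same way as \eqref{eq:JRBdd-PM2021}--\eqref{eq:Rvv-PM2021}. So everything hinges on your step (2), and that is where there is a genuine gap.

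You assert $A - T_a \in \Psi^{2m-1}$, i.e.\ a full one-order gain inside the classical Kohn--Nirenberg class, and defer the verification as ``routine but tedious.'' Two things go wrong with this as stated. First, the symmetrized operator does not live in the $S^{m}_{1,0}$ classes at all: each $x$-derivative of the Gaussian-averaged symbol costs $\agl[\xi]^{1/2}$, so the natural home of the error is $S^{2m-1}_{1,1/2}$ (or worse), the calculus of Chapter \ref{ch:SCP-PM2021} does not apply to it, and to bound the remainder $(R\varphi,\varphi)$ you need an $L^2$-boundedness result beyond Theorem \ref{thm:T2B-PM2021}, e.g.\ Calder\'on--Vaillancourt (Theorem \ref{thm:CaVai1-PM2021}), which your write-up never invokes. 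Second, and more seriously, the full-order gain is not what the direct computation yields: when the paper carries out exactly your expansion (the double-symbol form \eqref{eq:Tbta-jpj} and its reduction \eqref{eq:taexp-jpj}), the first-order correction terms $e_j = \partial_{z_j}D_{\xi_j}\tilde a(x,z,\xi)|_{z=x}$ gain only half an order ($e_j \in S^{1/2}_{1,1/2}$ when $2m=1$), so $A - T_a$ contains a term of order $2m-\tfrac12$, not $2m-1$ --- at least without exploiting further cancellations. The paper's proof survives not because this term is small but because it is purely imaginary ($\tilde a$ is real), so in the real part of the quadratic form only $(e_j+e_j^*)/2$ enters, whose principal symbol cancels, leaving an $L^2$-bounded operator. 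Your argument, which estimates the remainder purely by its order, must either add this purely-imaginary/symmetric-part step or prove the stronger full-gain statement via the vanishing of odd Gaussian moments at $z=x$; neither follows from ``symbol bounds plus rapid decay of $g$'' alone. The rest of your reduction (conjugation by $J^{-m}$, cutting off $|\xi|\leq R$ at the cost of an $S^{-\infty}$ modification, the final absorption into $\nrm[H^{m-1/2}]{\varphi}^2$) matches the paper and is fine.
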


The prove \eqref{thm:GInS-PM2021}, we introduce the \emph{\underline{wave packet transform}}.
The wave packet transform $W \colon L^2(\Rn) \to L^2(\Rn \times \Rn)$ is defined as (see \cite[Theorem 4.2.3]{chen2006pseudodifferential})
\begin{equation} \label{eq:wpt-jpj}
W u(z,\xi) := c_n \agl[\xi]^{n/4} (e^{i(\cdot) \cdot \xi - \agl[\xi]|\cdot|^2} *_z u),
\end{equation}
and its conjugate in terms of the $L^2$-inner product is given by,
\begin{equation} \label{eq:wptC-jpj}
W^* F(x) = c_n \int \agl[\xi]^{n/4} (e^{i(\cdot) \cdot \xi - \agl[\xi]|\cdot|^2} *_x F(\cdot,\xi)) \dif \xi,
\end{equation}
where the constant $c_n = 2^{-n/4} \pi^{-3n/4}$ and $(f *_x g)$ signifies $\int f(x-y) g(y) \dif y$.

\begin{lem}
	The wave packet transform $W$ defined in \eqref{eq:wpt-jpj} is a bounded linear operator.
\end{lem}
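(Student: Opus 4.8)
The plan is to show $W$ is bounded by computing $\nrm[L^2(\R^{2n})]{Wu}^2$ directly and reducing it, via Plancherel in the first variable together with an explicit Gaussian Fourier transform, to a uniform Fourier-multiplier bound. Linearity of $W$ is immediate from \eqref{eq:wpt-jpj}, so everything is about the norm estimate.

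\textbf{Step 1 (reduce to the convolution variable).} For each fixed $\xi$ write $Wu(\cdot,\xi) = c_n \agl[\xi]^{n/4}\,(g_\xi *_z u)$ with $g_\xi(y) := e^{iy\cdot\xi - \agl[\xi]|y|^2}$; since $g_\xi \in L^1(\Rn)\cap L^2(\Rn)$ the convolution makes sense for $u \in L^2(\Rn)$. By Lemma~\ref{lem:FtransRelat-PM2021s}(4) we have $\calF_z\{g_\xi *_z u\} = (2\pi)^{n/2}\widehat{g_\xi}\,\hat u$, so Plancherel (Theorem~\ref{thm:PlancherelTheorem}) gives $\nrm[L^2_z]{g_\xi *_z u}^2 = (2\pi)^n \int_{\Rn} |\widehat{g_\xi}(\eta)|^2 |\hat u(\eta)|^2 \dif \eta$.

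\textbf{Step 2 (Gaussian Fourier transform and Tonelli).} A direct Gaussian computation gives $\widehat{g_\xi}(\eta) = 2^{-n/2}\agl[\xi]^{-n/2} e^{-|\eta-\xi|^2/(4\agl[\xi])}$, hence $|\widehat{g_\xi}(\eta)|^2 = 2^{-n}\agl[\xi]^{-n} e^{-|\eta-\xi|^2/(2\agl[\xi])}$. Combining with Step 1, multiplying by $c_n^2\agl[\xi]^{n/2}$ and integrating in $\xi$ (the integrand is nonnegative, so the order of integration may be swapped by Tonelli),
\[
\nrm[L^2(\R^{2n})]{Wu}^2 = c_W \int_{\Rn} |\hat u(\eta)|^2 \, J(\eta) \dif \eta
\]
for a dimensional constant $c_W$, where $J(\eta) := \int_{\Rn} \agl[\xi]^{-n/2} e^{-|\xi-\eta|^2/(2\agl[\xi])} \dif \xi$. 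Thus, using Plancherel once more, $W$ is bounded as soon as $\sup_{\eta\in\Rn} J(\eta) < +\infty$ (and bounded below when also $\inf_\eta J(\eta)>0$, which the same argument yields).

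\textbf{Step 3 (uniform bound on $J$).} Split the $\xi$-integral at $|\xi-\eta| = \agl[\eta]/2$. On the bulk region $|\xi-\eta|\le\agl[\eta]/2$ one has $\agl[\xi]\simeq\agl[\eta]$ (recall $\agl[\cdot]\simeq 1+|\cdot|$), so $\agl[\xi]^{-n/2}\simeq\agl[\eta]^{-n/2}$ and $e^{-|\xi-\eta|^2/(2\agl[\xi])}\le e^{-c|\xi-\eta|^2/\agl[\eta]}$, giving a contribution $\lesssim \agl[\eta]^{-n/2}\int_{\Rn}e^{-c|w|^2/\agl[\eta]}\dif w \lesssim 1$. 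On the tail region $|\xi-\eta|>\agl[\eta]/2$ one has $\agl[\xi]\lesssim\agl[\eta]+|\xi-\eta|\lesssim|\xi-\eta|$, so $|\xi-\eta|^2/\agl[\xi]\gtrsim|\xi-\eta|$ and, using $\agl[\xi]^{-n/2}\le 1$, the contribution is $\lesssim \int_{\Rn}e^{-c|w|}\dif w < +\infty$. Hence $\sup_\eta J(\eta)\lesssim 1$, and with Step 2 we conclude $\nrm[L^2(\R^{2n})]{Wu}\lesssim\nrm[L^2]{\hat u}=\nrm[L^2]{u}$.

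The only non-routine point is the uniform-in-$\eta$ estimate of $J(\eta)$ in Step 3: because the Gaussian in \eqref{eq:wpt-jpj} has $\xi$-dependent width $\agl[\xi]^{-1/2}$, the mass of the Gaussian near its centre is $\simeq\agl[\eta]^{n/2}$, exactly balancing the prefactor $\agl[\xi]^{-n/2}\simeq\agl[\eta]^{-n/2}$, and one must check carefully that $\agl[\xi]\simeq\agl[\eta]$ on the bulk and that the tails decay. Equivalently, the same computation identifies $W^*W$ (with $W^*$ as in \eqref{eq:wptC-jpj}) as a Fourier multiplier with symbol $c_W J(\eta)\simeq 1$, which is bounded (and boundedly invertible) on $L^2(\Rn)$; then $\nrm[L^2(\R^{2n})]{Wu}^2 = (W^*Wu,u) \le \nrm{W^*W}\,\nrm[L^2]{u}^2$.
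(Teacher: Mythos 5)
Your proof is correct and follows essentially the same route as the paper's: Plancherel in the convolution variable $z$, the explicit Gaussian Fourier transform, a swap of the order of integration, and a uniform-in-$\eta$ bound on $J(\eta)=\int\agl[\xi]^{-n/2}e^{-|\xi-\eta|^2/(2\agl[\xi])}\dif\xi$. The only divergence is the last step: the paper disposes of $J(\eta)$ via the pointwise domination $\agl[\xi]^{-n/2}e^{-\agl[\xi]^{-1}|\eta-\xi|^2/2}\le e^{-|\eta-\xi|^2/2}$, which actually fails pointwise (since $\agl[\xi]\ge 1$ the exponent inequality goes the wrong way), whereas your bulk/tail splitting establishes $\sup_\eta J(\eta)\lesssim 1$ correctly, so your Step 3 in effect repairs the one sloppy point in the paper's estimate.
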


\begin{proof}
	The linearity is obvious.
	
	To show the boundedness, we compute $\nrm[L^2(\Rn \times \Rn)]{Wu}^2$,
	\begin{align*}
	\nrm[L^2(\Rn \times \Rn)]{Wu}^2
	& \simeq \iint | \agl[\xi]^{n/4} (e^{i(\cdot) \cdot \xi - \agl[\xi]|\cdot|^2} *_z u) |^2 \dif z \dif \xi \\
	& = \int \agl[\xi]^{n/2} \int |\calF \{ (e^{i(\cdot) \cdot \xi - \agl[\xi]|\cdot|^2}\}(\eta)|^2 \cdot |\hat u(\eta)|^2 \dif \eta \dif \xi \quad (\text{Plancherel theorem}) \\
	& = \int \big( \int \agl[\xi]^{n/2} |\calF \{ (e^{i(\cdot) \cdot \xi - \agl[\xi]|\cdot|^2}\}(\eta)|^2 \dif \xi \big) \cdot |\hat u(\eta)|^2 \dif \eta \\
	& \simeq \int \big( \int \agl[\xi]^{n/2} |\agl[\xi]^{-n/2} e^{- \agl[\xi]^{-1} |\eta - \xi|^2/4}|^2 \dif \xi \big) \cdot |\hat u(\eta)|^2 \dif \eta \\
	& = \int \big( \int \agl[\xi]^{-n/2} e^{- \agl[\xi]^{-1} |\eta - \xi|^2/2} \dif \xi \big) \cdot |\hat u(\eta)|^2 \dif \eta \\
	& \leq \int \big( \int e^{-|\eta - \xi|^2/2} \dif \xi \big) \cdot |\hat u(\eta)|^2 \dif \eta
	\lesssim \int |\hat u(\eta)|^2 \dif \eta \\
	& = \nrm[L^2(\Rn)]{u}^2.
	\end{align*}
	The proof is complete.
\end{proof}

\begin{proof}[Proof of Theorem \ref{thm:GInS-PM2021}]
	
	similar to Proof of Theorem \ref{thm:GarIne-PM2021}, the general cases w.r.t.~$m$ stem from the special case where $m = 1/2$.
	Let's assume $m = 1/2$ for the time being and try to show $\Re (T_{a}\varphi,\varphi) \gtrsim -\nrm[-1/2,2]{\varphi}^2$.

	The condition \eqref{eq:GInS-PM2021} can be replaced by ``$\Re a(x,\xi) \geq 0,\, \forall \xi \in \Rn$'', 
	and this is because we can fix some $\chi \in C_c^\infty(\Rn;\Rn)$ (thus $\chi \in S^{-\infty}$) satisfying $\chi(\xi) \geq \sup_{(x,\xi)} \Re a(x,\xi)$ when $\{ |\xi| \leq R \}$, 
	and then we can obtain $\Re a(x,\xi) + \chi(\xi) \geq 0,\, \forall \xi \in \Rn$.
	Note that $\Re (\chi \varphi,\varphi) \gtrsim -\nrm[L^2]{\varphi}^2$ because $\chi \in S^{-\infty}$.
	Therefore, from now on we assume $\Re a(x,\xi) \geq 0,\, \forall \xi \in \Rn$.

	Denote as $b(x,\xi)$ the symbol of $W^* \Re a W$.
	The operator $T_b := W^* \Re a W$ is defined by $T_b \varphi(x) = W^* (\Re a \cdot W\varphi)(x)$ for $\varphi \in \scrS(\Rn)$.
	We have
	\begin{align}
	T_b \varphi(x)
	& \simeq \int \agl[\xi]^{n/4} \int e^{i(x - y) \cdot \xi - \agl[\xi]|x - y|^2} \Re a(y,\xi) W \varphi(y,\xi) \dif y \dif \xi \nonumber \\
	& \simeq \int \agl[\xi]^{n/4} \int e^{i(x - y) \cdot \xi - \agl[\xi]|x - y|^2} \Re a(y,\xi) \agl[\xi]^{n/4} \int e^{i(y-z) \cdot \xi - \agl[\xi]|y-z|^2} \varphi(z) \dif z \dif y \dif \xi \nonumber \\
	& \simeq \iint e^{i(x - z) \cdot \xi} (\int e^{- \agl[\xi](|y-z|^2 + |x-y|^2)} \agl[\xi]^{n/2} \Re a(y,\xi) \dif y) \varphi(z) \dif z \dif \xi \nonumber \\
	& = \iint e^{i(x - z) \cdot \xi} \tilde a(x,z,\xi) \varphi(z) \dif z \dif \xi, \label{eq:Tbta-jpj}
	\end{align}
	where $\tilde a(x,z,\xi) := \agl[\xi]^{n/2} \int e^{- \agl[\xi](|y-z|^2 + |x-y|^2)} \Re a(y,\xi) \dif y$.
	It can be checked that $\tilde a \in S_{1,1/2}^1$.
	By \cite[Theorem 2.4.1]{chen2006pseudodifferential}, we have the asymptotic expansion
	\begin{equation} \label{eq:taexp-jpj}
	b(x,\xi) = \tilde a(x,x,\xi) + \sum_{j = 1}^n e_j(x,\xi) + r, \quad r \in S_{1,1/2}^0,
	\end{equation}
	where $e_j(x,\xi) = \partial_{z_j} D_{\xi_j} \tilde a(x,z,\xi) |_{z = x}$.
	Note that $e_j$ belongs to $S_{1,1/2}^{1/2}$, not $S_{1,1/2}^0$, and this is why we expand $\tilde a$ to the second order.
	The symbols $e_j$ is purely imaginary because $\tilde a$ is real.

	For $\tilde a(x,x,\xi)$, we have
	\begin{align}
	\tilde a(x,x,\xi)
	& = \agl[\xi]^{n/2} \Re \int e^{-2\agl[\xi] |x-y|^2} a(y,\xi) \dif y \nonumber \\
	& = \agl[\xi]^{n/2} \Re \int e^{-2\agl[\xi] |x-y|^2} \big[ a(x,\xi) + \sum_{j} (y-x)^{(j)} \partial_{x_j} a(x,\xi) \nonumber \\
	& \quad + \sum_{|\alpha| = 2} (y-x)^{\alpha} \partial_{x}^\alpha a(\rho x + (1-\rho) y,\xi)/2 \big] \dif y \nonumber \\
	& \simeq \Re a(x,\xi) + \agl[\xi]^{n/2} \Re\partial_{x_j} a(x,\xi) \sum_{j} \int e^{-2\agl[\xi] |x-y|^2} (y-x)^{(j)} \dif y \nonumber \\
	& \quad + \agl[\xi]^{n/2} \sum_{|\alpha| = 2} \Re \int e^{-2\agl[\xi] |x-y|^2} (y-x)^{\alpha} \cdot \partial_{x}^\alpha a(\rho x + (1-\rho) y,\xi)/2 \dif y \nonumber \\
	& = \Re a(x,\xi) + \agl[\xi]^{n/2} \sum_{|\alpha| = 2} \Re \int e^{-2\agl[\xi] |x-y|^2} (y-x)^{\alpha} \cdot \partial_{x}^\alpha a(\rho x + (1-\rho) y,\xi)/2 \dif y \nonumber \\
	& = \Re a(x,\xi) + r', \quad r' \in S^0. \label{eq:taexp1-jpj}
	\end{align}
	The last equal sign in \eqref{eq:taexp1-jpj} is due to the following computation,
	\begin{align*}
	& \ |\agl[\xi]^{n/2} \int e^{-2\agl[\xi] |x-y|^2} (y-x)^{\alpha} \cdot \partial_{x}^\alpha a(\rho x + (1-\rho) y,\xi)/2 \dif y| \\
	\lesssim & \ \agl[\xi]^{n/2} \int e^{-2\agl[\xi] |x-y|^2} |(y-x)^{\alpha}| \cdot \agl[\xi] \dif y \\
	= & \ \int e^{-2|\agl[\xi]^{1/2} y|^2} |(\agl[\xi]^{1/2} y)^{\alpha}| \dif (\agl[\xi]^{1/2} y) \\
	= & \ \int e^{-2|y|^2} |y^{\alpha}| \dif y \leq C
	\end{align*}
	for some positive constant $C$.

	Combining \eqref{eq:taexp-jpj} and \eqref{eq:taexp1-jpj}, we obtain
	\begin{equation} \label{eq:Res1-jpj}
	b(x,\xi) = \Re a(x,\xi) + \sum_{j = 1}^n e_j(x,\xi) + r, \quad e_j \in S_{1,1/2}^{1/2}, \quad r \in S_{1,1/2}^0,
	\end{equation}
	and thus
	\begin{equation} \label{eq:sig-jpj}
	(a + a^*)/2 = b(x,\xi) - \sum_{j = 1}^n e_j(x,\xi) - r, \quad r \in S_{1,1/2}^0.
	\end{equation}
	The $r$ in \eqref{eq:Res1-jpj} and \eqref{eq:sig-jpj} are different from each other and are also different from the $r$ in \eqref{eq:taexp-jpj}.
	Now we have
	\begin{align}
	\Re (T_a \varphi, \varphi)
	& = [(T_a \varphi, \varphi) + \overline{(T_a \varphi, \varphi)}]/2
	= [(T_a \varphi, \varphi) + (\varphi, T_a \varphi)]/2
	= (T_{(a + a^*)/2} \varphi, \varphi) \nonumber \\
	& = (T_b \varphi, \varphi) - (T_{e_j} \varphi, \varphi) - (T_r \varphi, \varphi) \nonumber \\
	& = (\Re a \cdot W \varphi, W\varphi) - (T_{e_j} \varphi, \varphi) - (T_r \varphi, \varphi) \nonumber \\
	& \geq - (T_{e_j} \varphi, \varphi) - (T_r \varphi, \varphi)  \quad (\text{because~} \Re a \geq 0) \nonumber \\
	& = - \Re (T_{e_j} \varphi, \varphi) - \Re (T_r \varphi, \varphi)
	= - (T_{(e_j + e_j^*)/2} \varphi, \varphi) - \Re (T_r \varphi, \varphi) \nonumber \\
	& \geq - (T_{(e_j + e_j^*)/2} \varphi, \varphi) - C \nrm[L^2]{\varphi}^2,
	\label{eq:ReT1-jpj}
	\end{align}
	for some positive constant $C$.
	The $L^2$-boundedness of operators whose symbol come from $S_{1,1/2}^{0}$ can be proved in a similar manner as in the proof of that of $S^0$, cf.~\cite[Theorem 4.1.1]{chen2006pseudodifferential} and \cite[Theorem 5.1]{alinhac2007pseudo}.

	Recall that $e_j \in S_{1,1/2}^{1/2}$ and $e_j$ is purely imaginary, thus the principal symbol of $T_{(e_j + e_j^*)/2}$ equals to zero and hence $(e_j + e_j^*)/2 \in S_{0,1/2}^{1/2}$.
	Therefore,
	\begin{equation} \label{eq:ej-jpj}
	(T_{(e_j + e_j^*)/2} \varphi, \varphi)
	\leq C \nrm[L^2]{\varphi}^2
	\end{equation}
	for some positive constant $C$.
	
	Combining \eqref{eq:ReT1-jpj} and \eqref{eq:ej-jpj}, we arrive at the conclusion for the case $m = 1/2$.
	Based on the result regarding $m = 1/2$, the proof of general cases become trivial.
\end{proof}

\section*{Exercise}

\begin{ex}
	Prove Lemma \ref{lem:PNUi-PM2021}.
\end{ex}

\begin{ex} \label{ex:1as-PM2021}
	Assume $m \in \R$ and $a \in S^m$ and $a$ is elliptic.
	Fix a cutoff function $\chi \in C^\infty(\Rn)$ such that $\chi(\xi) = R$ when $|\xi| \leq 1$ and $\chi(\xi) = 0$ when $|\xi| \geq R+1$, where the $R$ comes from the definition of the ellipticity of $a$.
	Define $r_0(x,\xi) := (1-\chi(\xi))/a(x,\xi)$.
	Prove that $r_0 \in S^{-m}$.
\end{ex}

\begin{ex} \label{ex:bs0-PM2021}
	Prove the $b(x,\xi)$ defined in \eqref{eq:bar-PM2021} is indeed a symbol and is of order $0$.
	Hint: use \cite[Lemma 2.1.1]{alinhac2007pseudo} or \cite[Lemma 17.2]{wong2014introduction}.
\end{ex}

\begin{ex}
	Prove \eqref{eq:apa-PM2021} is true.
\end{ex}

\begin{ex}
	Assume the symbols $a$ and $b$ are elliptic.
	Show that $T_a T_b$ and $T_a^*$ are also elliptic. 
	Hint: utilize Lemma \ref{lem:aEv-PM2021}.
\end{ex}

\chapter{Semi-classical \texorpdfstring{$\Psi$DOs}{PsiDOs} and its symbolic calculus} \label{ch:SCla-PM2021}

Semiclassical analysis shares lots of features with $\Psi$DO theory, while also keeping some of its own specialties.
One of the application of semiclassical analysis is Carleman estimates.

\section{Semi-classical \texorpdfstring{$\Psi$DOs}{PsiDOs}} \label{sec:ScP-PM2021}

\subsection{Symbol classes} \label{subsec:Sycl-PM2021}

\begin{defn}[Order function] \label{defn:orfu-PM2021}
	A measurable function $m \colon \R^{2n} \to \R^+$ is call an \emph{order function} if there exist constants $C > 0$ and $N \in \mathbb N$ such that
	\[
	\boxed{m(z_1 - z_2) \leq C \agl[z_1]^N m(z_2), \quad \forall z_1, z_2 \in \R^{2n}.}
	\]
	The integer $N$ is called the order of $m$.
\end{defn}

For any $a$, $b \in \R$, $m(x,\xi) = \agl[x]^a \agl[\xi]^b$ are an order functions with $N = 2 \max\{|a|, |b|\}$.
If $m_1$, $m_2$ are order functions, so does $m_1 m_2$.

\begin{defn}[Semiclassical symbol class] \label{defn:Sycl-PM2021}
	Let $h \in (0,1)$, $\delta \in [0, \frac 1 2]$ and $m$ be an order function with order $N$.
	For $a(\cdot ; h) \in C^\infty(\R^{2n})$, we say $a \in S_\delta(m)$ with order $N$ if
	\[
	\boxed{|\partial_z^\alpha a(z;h)| \leq C_\alpha h^{-\delta|\alpha|} m(z), \quad \forall z \in \R^{2n}.}
	\]
	Define a family of seminorms
	\[
	|a(\cdot;h)|_{S_\delta(m), \alpha} = |a|_{\alpha} := \sup_{z \in \R^{2n}} \frac {|\partial_z^\alpha a(z;h)|} {h^{-\delta|\alpha|} m(z)},
	\]
	and so the semiclassical symbol class $S_\delta(m)$ is given by
	\[
	S_\delta(m) := \{a(z;h) \in C^\infty \,;\, \forall \text{~multi-index~} \alpha,\, |a|_{\alpha} < +\infty \}.
	\]
	We abbreviate $S_\delta(1)$ as $S_\delta$ and $S_0(1)$ as $S$.
\end{defn}

Note that in contrast to the Kohn-Nirenberg symbol (cf.~Definitions \ref{defn:symbol-PM2021} \& \ref{defn:symbolx-PM2021}), the semiclassical symbol doesn't gain decay w.r.t.~its arguments after being differentiated.

We write $a(\cdot;h) = \mathcal O_{S_\delta(m)}(f(h))$ if for every multi-index $\alpha$, there exist $h_0$ and $a$ such that $|a|_\alpha \leq C_\alpha f(h)$ holds for all $h \in (0,h_0)$, namely,
\begin{equation*}
\boxed{a(\cdot;h) = \mathcal O_{S_\delta(m)}(f(h))
	\ \Leftrightarrow \
	|\partial^\alpha a(\cdot;h)| \lesssim f(h) h^{-\delta|\alpha|} m.}
\end{equation*}
It can be checked that $\forall a \in S_\delta(m)$, we have $f(h) a = O_{S_\delta(m)}(f(h))$.
For $S_\delta(m)$ and $a_j \in S_\delta(m)$, we write $a \sim \sum_j h^j a_j$ if $a - \sum_{j = 0}^{N} h^j a_j = \mathcal O_{S_\delta(m)}(h^{N+1})$.

\begin{lem} \label{lem:Sycp-PM2021}
	Assume $0 \leq \delta \leq \frac 1 2$ and $a \in S_\delta(m)$ and $a_1 \in S_\delta(m_1)$, $a_2 \in S_\delta(m_2)$.
	Then $hD_j a \in S_\delta(m)$ and $a_1 a_2 \in S_\delta(m_1 m_2)$.
\end{lem}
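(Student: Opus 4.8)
The plan is to verify the two claimed closure properties of $S_\delta(m)$ directly from Definition \ref{defn:Sycl-PM2021}, checking the seminorm bounds $|\partial_z^\alpha(\cdot)| \leq C_\alpha h^{-\delta|\alpha|} m(z)$ for each multi-index $\alpha$. Both facts follow from elementary estimates; there is no real obstacle here, only bookkeeping.

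For the first claim, $hD_j a \in S_\delta(m)$, I would fix a multi-index $\alpha$ and compute $\partial_z^\alpha (h D_j a) = h \, \partial_z^\alpha D_j a = \tfrac{h}{i} \partial_z^{\alpha + e_j} a$, where $e_j$ is the $j$-th coordinate multi-index. Since $a \in S_\delta(m)$, we have $|\partial_z^{\alpha+e_j} a(z;h)| \leq C_{\alpha+e_j} h^{-\delta|\alpha+e_j|} m(z) = C_{\alpha+e_j} h^{-\delta(|\alpha|+1)} m(z)$. Multiplying by $h$ gives $|\partial_z^\alpha(hD_j a)(z;h)| \leq C_{\alpha+e_j} h^{1-\delta} h^{-\delta|\alpha|} m(z)$. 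Since $h \in (0,1)$ and $\delta \leq \tfrac12 < 1$, we have $h^{1-\delta} \leq 1$, so the bound $|\partial_z^\alpha(hD_j a)| \leq C_{\alpha+e_j} h^{-\delta|\alpha|} m(z)$ holds. Hence $hD_j a \in S_\delta(m)$ with the same order $N$.

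For the product claim, $a_1 a_2 \in S_\delta(m_1 m_2)$, I would fix $\alpha$ and apply the Leibniz rule (Lemma \ref{lem:Dfg-PM2021} or its $\partial$-analogue): $\partial_z^\alpha(a_1 a_2) = \sum_{\beta \leq \alpha} \binom{\alpha}{\beta} (\partial_z^{\alpha-\beta} a_1)(\partial_z^\beta a_2)$. Each term is bounded by $\binom{\alpha}{\beta} C_{\alpha-\beta}^{(1)} h^{-\delta|\alpha-\beta|} m_1(z) \cdot C_\beta^{(2)} h^{-\delta|\beta|} m_2(z) = \binom{\alpha}{\beta} C_{\alpha-\beta}^{(1)} C_\beta^{(2)} h^{-\delta|\alpha|} m_1(z) m_2(z)$, using $|\alpha-\beta| + |\beta| = |\alpha|$. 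Summing over $\beta \leq \alpha$ gives a finite constant $C_\alpha$ times $h^{-\delta|\alpha|} (m_1 m_2)(z)$. Since $m_1 m_2$ is again an order function (its order being the sum, or at most the sum, of the orders of $m_1$ and $m_2$), we conclude $a_1 a_2 \in S_\delta(m_1 m_2)$. The only point worth a remark is that the hypothesis $\delta \leq \tfrac12$ is not even needed for these two statements (it will matter later for composition), so I would simply note that $0 \leq \delta \leq \tfrac12$ is assumed but only $\delta < 1$ is used here.
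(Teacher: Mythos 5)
Your proof is correct and follows essentially the same route as the paper: for $hD_j a$ the paper makes the identical computation ($h|\partial^{\alpha+e_j}a| \leq C_\alpha h^{1-\delta} h^{-\delta|\alpha|} m$, then absorb $h^{1-\delta}$ into the constant since $h<1$ and $\delta<1$), and it omits the product claim, which your Leibniz-rule argument together with the earlier observation that $m_1 m_2$ is again an order function fills in correctly. Your side remark that only $\delta<1$ is needed here is also accurate.
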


\begin{proof}
	We can compute
	\begin{align*}
	|\partial^\alpha (hD_j a)|
	& = h |\partial^{\alpha + e_j} a|
	\leq C_\alpha h h^{-\delta(|\alpha + e_j|)} m
	= C_\alpha h h^{-\delta|\alpha| - \delta} m
	= C_\alpha h^{1-\delta} h^{-\delta|\alpha|} m \\
	& \leq h_0 C_\alpha h^{-\delta|\alpha|} m.
	\end{align*}
	Hence $hD_j a \in S_\delta(m)$.
	We omit the rest of the proof.
\end{proof}

\begin{defn}[Asymptotics] \label{defn:Asy2-PM2021}
	For symbol $a$, $a_j \in S_\delta(m)~(j= 0,1,\cdots)$, we write $a \sim \sum_j h^{(1-2\delta)j} a_j$ in $S_\delta(m)$ if $a - \sum_{j = 0}^{N} h^{(1-2\delta)j} a_j = \mathcal O_{S_\delta(m)}(h^{(1-2\delta)(N+1)})$ holds for every $n \in \mathbb N$, namely,
	\begin{equation*}
	\boxed{a \sim \sum_j h^{(1-2\delta)j} a_j \ \text{in} \ S_\delta(m)
		\quad \Leftrightarrow \quad
		a = \sum_{j = 0}^{N} h^{(1-2\delta)j} a_j + h^{(1-2\delta)(N+1)} S_\delta(m).}
	\end{equation*}
	Here $h^{(1-2\delta)(N+1)} S_\delta(m)$ means $h^{(1-2\delta)(N+1)} r$ for some $r \in S_\delta(m)$.
	The $a_0$ is called the \emph{principal symbol} of $a$.
\end{defn}

The asymptotics is more about $h$ than the Kohn-Nirenberg symbol which is more about $\xi$.
To avoid confusion, we would like to comment in advance that even though the definition of asymptotics semiclassical symbol is in the form $a \sim a_0 + h^{1-2\delta} a_1 + h^{(1-2\delta)2} a_2 + \cdots$, but later we may see $a$ be expressed as $b_0 + h b_1 + h^2 b_2 + \cdots$, e.g.~in \eqref{eq:SeES-PM2021}.
The difference is that it is $h^{2\delta j} b_j$ rather than $b_j$ itself that is in $S_{\delta}(m)$.

\begin{thm}
	For $\forall a_j \in S_\delta(m)~(j = 0,1, \cdots)$, there always exists $a \in S_\delta(m)$ such that $a \sim \sum_j h^{(1-2\delta)j} a_j$.
\end{thm}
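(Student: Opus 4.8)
The statement is the semiclassical analogue of Theorem \ref{thm:Asy-PM2021}, and I would prove it by the same Borel-type summation with a cutoff, only now the cutoff parameter is $h$ rather than $\xi$. The plan is to choose a sequence $\epsilon_j \searrow 0$, fix a function $\chi \in C^\infty(\R)$ with $\chi(t) = 0$ for $|t| \le 1$ and $\chi(t) = 1$ for $|t| \ge 2$, and set
\[
a(z;h) := \sum_{j \ge 0} \chi(h/\epsilon_j)\, h^{(1-2\delta)j}\, a_j(z;h), \quad z \in \R^{2n}.
\]
For each fixed $h \in (0,1)$ there is a $j_0(h)$ such that $h/\epsilon_j \le 1$ for all $j \ge j_0(h)$, hence $\chi(h/\epsilon_j) = 0$ for those $j$, so the sum is locally finite in $j$ and $a(\cdot;h) \in C^\infty(\R^{2n})$; moreover $\chi(h/\epsilon_j)$ does not depend on $z$, so it does not interfere with differentiation.

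\textbf{Key steps, in order.} First I would record that $\partial_z^\alpha\big(\chi(h/\epsilon_j) h^{(1-2\delta)j} a_j\big) = \chi(h/\epsilon_j) h^{(1-2\delta)j} \partial_z^\alpha a_j$, so by the definition of $S_\delta(m)$ this term is bounded by $\chi(h/\epsilon_j) h^{(1-2\delta)j} C_{j,\alpha} h^{-\delta|\alpha|} m(z)$. Next, on the support of $\chi(h/\epsilon_j)$ we have $h \ge \epsilon_j$, equivalently $h^{(1-2\delta)} \ge \epsilon_j^{1-2\delta}$ (note $1 - 2\delta \ge 0$, which is where the hypothesis $\delta \le 1/2$ enters); I would use this to gain factors of $\epsilon_j$: writing $h^{(1-2\delta)j} = h^{(1-2\delta)} \cdot h^{(1-2\delta)(j-1)}$ and bounding one of these powers of $h$ from below by the corresponding power of $\epsilon_j$ on $\supp \chi(h/\epsilon_j)$, I get
\[
|\partial_z^\alpha\big(\chi(h/\epsilon_j) h^{(1-2\delta)j} a_j\big)| \le C_{j,\alpha}\, \epsilon_j^{1-2\delta}\, h^{(1-2\delta)(j-1)} h^{-\delta|\alpha|} m(z),
\]
and for $j \ge 1$ I then choose $\epsilon_j$ small enough (depending on the $C_{j,\alpha}$ for $|\alpha| \le j$, exactly as in the classical proof, cf.\ \cite[Theorem 6.10]{wong2014introduction}) so that $\sum_{j \ge 1} C_{j,\alpha}\epsilon_j^{1-2\delta} < \infty$ for every $\alpha$; summing over $j$ gives $|\partial_z^\alpha a(z;h)| \lesssim h^{-\delta|\alpha|} m(z)$, i.e.\ $a \in S_\delta(m)$. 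Then, to verify the asymptotics, I would split
\[
a - \sum_{0 \le j \le N} h^{(1-2\delta)j} a_j = \sum_{0 \le j \le N} \big(\chi(h/\epsilon_j) - 1\big) h^{(1-2\delta)j} a_j + \sum_{j \ge N+1} \chi(h/\epsilon_j) h^{(1-2\delta)j} a_j.
\]
The first (finite) sum vanishes once $h < \epsilon_j$ for all $j \le N$, i.e.\ for $h$ below some $h_0(N)$, so it is $\mathcal O_{S_\delta(m)}(h^\infty)$; the tail sum is estimated exactly as above, extracting a factor $h^{(1-2\delta)(N+1)}$ and using $\sum_{j \ge N+1} C_{j,\alpha}\epsilon_j^{1-2\delta} < \infty$, which yields $\mathcal O_{S_\delta(m)}(h^{(1-2\delta)(N+1)})$. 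Hence $a \sim \sum_j h^{(1-2\delta)j} a_j$ in the sense of Definition \ref{defn:Asy2-PM2021}.

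\textbf{Main obstacle.} The only genuinely delicate point is the simultaneous choice of the $\epsilon_j$: it must be made so that \emph{for every} multi-index $\alpha$ the series $\sum_j C_{j,\alpha}\epsilon_j^{1-2\delta}$ converges, where $C_{j,\alpha} = |a_j|_{S_\delta(m),\alpha}$ may grow wildly in $j$. This is handled by the standard diagonal trick: pick $\epsilon_j$ so small that $C_{j,\alpha}\epsilon_j^{1-2\delta} \le 2^{-j}$ for all $|\alpha| \le j$; then for fixed $\alpha$ the tail of the $\alpha$-series is dominated by $\sum_{j \ge |\alpha|} 2^{-j}$. A secondary point to keep honest is that $\chi(h/\epsilon_j)$, being $z$-independent, contributes nothing to the symbol seminorms beyond the harmless bound $0 \le \chi \le 1$; and one should note that the case $\delta = 1/2$ is not excluded since $1 - 2\delta = 0$ still makes all the estimates trivially valid (the asymptotics then just says $a - \sum_{j\le N} a_j \in S_{1/2}(m)$ with no gain, which the construction still delivers). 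Everything else is the routine Leibniz-rule bookkeeping already carried out in the proof of Theorem \ref{thm:Asy-PM2021}, so I would not belabor it.
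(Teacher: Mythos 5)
Your cutoff is oriented the wrong way, and this breaks the construction at its first step. You have transplanted the cutoff from the Kohn--Nirenberg case (Theorem \ref{thm:Asy-PM2021}), where one uses $\chi(\epsilon_j\xi)$ with $\chi$ vanishing near $0$ and equal to $1$ at infinity, so that the $j$-th term is switched on only for $|\xi|$ large. But in the semiclassical setting $h$ plays the role of $1/\agl[\xi]$: the $j$-th term must be switched on only for $h$ \emph{small}, i.e.\ one needs $\chi \in C_c^\infty$ with $\chi \equiv 1$ near $0$ and the term $\chi(\lambda_j h)\,h^{(1-2\delta)j}a_j$ with $\lambda_j \to \infty$, so that term $j$ is active only when $h \lesssim 1/\lambda_j$. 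With your choice ($\chi = 0$ on $[-1,1]$, $\chi = 1$ for $|t|\ge 2$, argument $h/\epsilon_j$, $\epsilon_j \searrow 0$) the opposite happens: for fixed $h>0$ one has $h/\epsilon_j \to +\infty$, so $\chi(h/\epsilon_j)=1$ for all large $j$, not $0$ as you claim. Hence the sum defining $a$ is \emph{not} locally finite in $j$, and since the seminorms $C_{j,\alpha}$ of the $a_j$ may grow arbitrarily fast, $\sum_j C_{j,\alpha}h^{(1-2\delta)j}$ can diverge for every fixed $h$, so $a$ need not even be defined. The same sign error infects your key estimate: on your support you only know $h \ge \epsilon_j$, a \emph{lower} bound on $h$, and replacing the factor $h^{1-2\delta}$ by the smaller quantity $\epsilon_j^{1-2\delta}$ inside an upper bound is the wrong direction. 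Finally, with $\chi$ vanishing near $0$, the finite sum $\sum_{j\le N}(\chi(h/\epsilon_j)-1)h^{(1-2\delta)j}a_j$ does not vanish for small $h$; it equals $-\sum_{j\le N}h^{(1-2\delta)j}a_j$ there (in particular it contains $-a_0$, which is $\mathcal O(1)$, not $\mathcal O(h^{(1-2\delta)(N+1)})$), so the claimed asymptotics would fail outright.

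Once the orientation is flipped, your plan does essentially become the paper's proof: the paper takes $\chi \in C_c^\infty$, $\chi\equiv 1$ on $(-1,1)$, sets $a=\sum_j \chi(\lambda_j h)h^{(1-2\delta)j}a_j$, uses $h \le 2/\lambda_j$ on the support to gain negative powers of $\lambda_j$ (it extracts $(2/\lambda_j)^{(1-2\delta)j}$ rather than your single factor, and uses the trick $|\chi(t)-1|t^{-k}\le 1$ so that the remainder bound holds on a fixed $h$-interval, with a double-indexed diagonal choice of $\lambda_j$ to also control the shifted constants $C_{j+N+1,\alpha}$ appearing in the tails). Your single-factor extraction and ``the finite sum vanishes for $h<h_0(N)$'' argument would also work after the flip, since the definition of $\mathcal O_{S_\delta(m)}$ allows $h_0$ to depend on $N$ and $\alpha$. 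One further caveat: your remark that $\delta=\tfrac12$ is ``trivially'' covered is not right, because at $\delta=\tfrac12$ the gained factor $\epsilon_j^{1-2\delta}$ (resp.\ $\lambda_j^{-(1-2\delta)j}$) equals $1$ and no smallness can be extracted from the cutoff, so the convergence of $\sum_j C_{j,\alpha}$ is not ensured by any choice of $\epsilon_j$; the argument genuinely requires $\delta<\tfrac12$.
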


\begin{proof}
	We choose a cutoff function $\chi \in C_c^\infty(\R)$ satisfying $\chi \equiv 1$ in $(-1,1)$, $0 \leq \chi \leq 1$, $\chi$ is decreasing in the interval $(1,2)$ and $\supp \chi \subset (-2,2)$.
	Note that we define $\chi$ on the whole real axis but will only use its definition on the positive real axis.
	
	{\bf Step 1.}
	Define
	\[
	a := \sum_{j \geq 0} \chi(\lambda_j h) h^{(1-2\delta)j} a_j
	\]
	for some $\lambda_j$ which shall be determined.
	Our scheme is to choose $\lambda_j > 0$ properly (grows fast enough) such that $a$ will be well-defined at each point and satisfies Definition \ref{defn:Asy2-PM2021}.
	From the construction of $\chi$ it can be checked that
	\begin{equation} \label{eq:chTri1-PM2021}
	\forall h \geq 0,\, \forall k \geq 0, \quad \chi(h)h^{(1-2\delta)k} \leq 2^{(1-2\delta)k}.
	\end{equation}
	Hence,
	\begin{align*}
	|a|
	& = |\sum_{j \geq 0} \chi(\lambda_j h) h^{(1-2\delta)j} a_j|
	= |\sum_{j \geq 0} \chi(\lambda_j h) (\lambda_j h)^{(1-2\delta)j} (\lambda_j h)^{-(1-2\delta)j} h^{(1-2\delta)j} a_j| \\
	& \leq \sum_{j \geq 0} (2/\lambda_j)^{(1-2\delta)j} |a_j|,
	\end{align*}
	and similarly,
	\begin{equation*}
	|\partial^\alpha a|
	\leq \sum_{j \geq 0} (2/\lambda_j)^{(1-2\delta)j} |\partial^\alpha a_j|
	\leq \sum_{j \geq 0} C_{j,\alpha} (2/\lambda_j)^{(1-2\delta)j} \cdot h^{-\delta|\alpha|} m.
	\end{equation*}

	{\bf Step 2.}
	For a specific $\alpha$, we only need to choose $\{\lambda_{j,\alpha}\}_{j \geq 0}$ grow fast enough such that
	\[
	\sum_{j \geq 0} C_{j,\alpha} (2/\lambda_{j,\alpha})^{(1-2\delta)j}
	\]
	is finite, and one example is $\lambda_{j,\alpha} = 3 C_{j,\alpha}^{1/[(1-2\delta)j]}$.
	Then using diagonal arguments we could choose a suitable set $\{\lambda_j\}$ from $\{\lambda_{j,\alpha}\}_{j \geq 0}$.
	However, we want $\{\lambda_j\}$ to grow even more faster for our later use; particularly, to guarantee \eqref{eq:lajN-PM2021} is finite.
	To that end, for each fixed multi-index $\alpha$ and non-negative integer $M$, we first choose $\{\lambda_{j,\alpha,M}\}_{j \geq 0}$ to grpw fast enough w.r.t.~$j$ such that
	\begin{equation}
	\left\{\begin{aligned}
	& \sum_{j \geq 0} C_{j+M,\alpha} (2/\lambda_{j,\alpha,M})^{(1-2\delta)j} < +\infty, \\
	& \lambda_{j,\alpha,M'} \geq \lambda_{j,\alpha,M} \text{~when~} M' \geq M, \\
	& \lambda_{j,\alpha',M} \geq \lambda_{j,\alpha,M} \text{~when~} \alpha' \geq \alpha,
	\end{aligned}\right.
	\end{equation}
	then we choose
	\[
	\lambda_j := \lambda_{j,(j,j,\cdots,j),j}
	\]
	where $(j,j,\cdots,j)$ stands for the multi-index of which the value of every component is $j$.
	By doing so, we are guaranteed that the sum $\sum_{j \geq 0} C_{j+M,\alpha} (2/\lambda_{j})^{(1-2\delta)j}$ is finite for every $\alpha$ and $M$.
	Back to the estimate of $|\partial^\alpha a|$, we are guaranteed that  $a$ is well-defined and $a \in S_\delta(m)$.
	It remains to show $a \sim \sum_j h^{(1-2\delta)j} a_j$ in $S_\delta(m)$.

	{\bf Step 3.}
	To analyze $a - \sum_{j = 0}^{N} h^j a_j$, we use another trick similar to \eqref{eq:chTri1-PM2021},
	\begin{equation} \label{eq:chTri2-PM2021}
	\forall h \geq 0,\, \forall k \geq 0, \quad |\chi(h) - 1| h^{-k} \leq 1.
	\end{equation}
	The verification of \eqref{eq:chTri2-PM2021} is left as an exercise.
	By 
	\eqref{eq:chTri1-PM2021} and 
	\eqref{eq:chTri2-PM2021}, for $h \in (0,h_0)$ where $h_0 < 1$ we have
	\begin{align}
	& |\partial^\alpha(a - \sum_{j = 0}^{N} h^{(1-2\delta)j} a_j)| \nonumber \\
	\leq & \sum_{j = 0}^{N} |\chi(\lambda_j h) - 1| \,h^{(1-2\delta)j}\, |\partial^\alpha a_j| + \sum_{j \geq N+1} \chi(\lambda_j h) h^{(1-2\delta)j}\, |\partial^\alpha a_j| \nonumber \\
	\leq & \sum_{j = 0}^{N} |\chi(\lambda_j h) - 1| (\lambda_j h)^{-(1-2\delta)(N+1-j)} \cdot (\lambda_j h)^{(1-2\delta)(N+1-j)} h^{(1-2\delta)j} |\partial^\alpha a_j| \nonumber \\
	& + \sum_{j \geq 0} \chi(\lambda_{j} h) (\frac {\lambda_{j} h} {2})^{(1-2\delta)j} \cdot C_{j+N+1,\alpha} (\frac {2} {\lambda_{j} h})^{(1-2\delta)j} h^{(1-2\delta) (j+N+1)} h^{-\delta|\alpha|} m \label{eq:laj2-PM2021} \\
	\leq & \big[\sum_{j = 0}^{N} \lambda_j^{(1-2\delta)(N+1-j)} C_{j,\alpha} + \sum_{j \geq 0}  C_{j+N+1,\alpha} (\frac {2} {\lambda_{j}})^{(1-2\delta)j} \big] h^{(1-2\delta) (N+1)}  h^{-\delta|\alpha|} m \label{eq:lajN-PM2021} \\
	\leq & \tilde C_{N,\alpha,h_0} h^{(1-2\delta) (N+1)}  h^{-\delta|\alpha|} m. \nonumber
	\end{align}
	Here in \eqref{eq:laj2-PM2021} we used $\chi(\lambda_j h) \leq \chi(\lambda_{j-1} h)$.
	Hence,
	\[
	a - \sum_{j = 0}^{N} h^{(1-2\delta)j} a_j = \mathcal O_{S_\delta(m)}(h^{(1-2\delta)(N+1)}).
	\]
	The proof is complete.
\end{proof}

\subsection{Semiclassical pseudodifferential operators} \label{subsec:SPsiDO-PM2021}

Just as Kohn-Nirenberg symbols, every semiclassical symbol produces an operator, and is semiclassical situation, these operators are also described as quantizations of the corresponding symbols.

\begin{defn}[Quantization] \label{defn:QDe-PM2021}
	We quantize the symbol $a(x,\xi)$ by means of \eqref{eq:QDe3-PM2021} for $\forall t \in [0,1]$.
	And we also denote \emph{Standard quantization} and \emph{Weyl quantization} as in \eqref{eq:QDe2-PM2021}-\eqref{eq:QDe1-PM2021},
	\begin{alignat}{2}
	\text{(Standard quant.:)} & \quad & a(x,hD) u
	& := (2\pi h)^{-n} \int e^{i(x-y) \cdot \xi/h} a(x, \xi) u(y) \dif y \dif \xi, \label{eq:QDe2-PM2021} \\
	\text{(Weyl quant.:)} & \quad & a^w(x,hD) u
	& := (2\pi h)^{-n} \int e^{i(x-y) \cdot \xi/h} a(\frac {x+y} 2, \xi) u(y) \dif y \dif \xi, \label{eq:QDe1-PM2021} \\
	\text{(General quant.:)} & \quad & a_t(x,hD) u
	& := (2\pi h)^{-n} \int e^{i(x-y) \cdot \xi/h} a(tx + (1-t)y, \xi) u(y) \dif y \dif \xi. \label{eq:QDe3-PM2021}
	\end{alignat}
	These operators defined above are called \emph{semiclassical pseudodifferential operators} (abbreviated as {\rm S$\Psi$DOs}).
	We denote the set of \underline{\rm S$\Psi$DO} with symbols coming from $S_\delta(m)$ as ${\rm Op}_h(S_\delta(m))$.
\end{defn}

According to Definition \ref{defn:QDe-PM2021} we know that $a^w(x,hD) = a_{\frac 1 2}(x,hD)$ and $a(x,hD) = a_{1}(x,hD)$.
It is trivial to see
\[
a(x,\xi) = f(x) \xi_j \quad \Leftrightarrow \quad a(x,hD) = f(x) hD_j.
\]

We introduce the $h$-dependent Fourier transform.

\begin{defn} \label{defn:SFT-PM2021}
	The \emph{semiclassical Fourier transform} $\calF_h$ and its inverse $\calF_h^{-1}$ are defined as
	\begin{align}
	\calF_h u(\xi)
	& := (2\pi h)^{-n/2} \int_{\Rn} e^{-ix \cdot \xi/h} u(x) \dif x, \label{eq:SFT-PM2021} \\
	\calF_h^{-1} u(x)
	& := (2\pi h)^{-n/2} \int_{\Rn} e^{ix \cdot \xi/h} u(\xi) \dif \xi. \label{eq:SFTi-PM2021}
	\end{align}
\end{defn}

It can be checked that
\begin{equation} \label{eq:QFa-PM2021}
\boxed{a(x,hD) u := \calF_h^{-1} \{ a(x,\cdot) \calF_h u(\cdot) \}.}
\end{equation}
Formula \eqref{eq:QFa-PM2021} is one of the reason why the semiclassical Fourier transform shall defined as in Definition \ref{defn:SFT-PM2021}.

\begin{lem} \label{lem:QSdel-PM2021}
	Assume $\delta \in \R$ and $a \in S_\delta(m)$.
	Then for $\forall t\in [0,1]$, we have that the operator $a_t$ satisfies $a_t(x, hD) \colon \scrS(\Rn) \to \scrS(\Rn)$ and $a_t(x, hD) \colon \scrS'(\Rn) \to \scrS'(\Rn)$, and the mappings are bounded with norm depending on $\delta$ and $h$, but uniformly on $t$.
\end{lem}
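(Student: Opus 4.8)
The plan is to reduce the statement to the standard oscillatory-integral machinery already built in Chapters \ref{ch:OSInt-PM2021} and \ref{ch:SPDO-PM2021}, since the semiclassical parameter $h$ is just a harmless multiplicative constant here. First I would fix $t \in [0,1]$ and write the defining formula \eqref{eq:QDe3-PM2021} as an oscillatory integral: setting $\sigma(x,y,\xi) := a(tx + (1-t)y, \xi; h)$ and performing the trivial rescaling $\xi \mapsto \xi/h$ (which turns $e^{i(x-y)\cdot\xi/h}$ into $e^{i(x-y)\cdot\xi}$ and costs a Jacobian factor $h^{-n}$), one sees that $a_t(x,hD)$ is exactly a $\Psi$DO of the form treated in Theorem \ref{thm:ReV-PM2021}, with symbol $\sigma(x,y,\xi) \in S^0(\R_x^n \times \R_y^n \times \R_\xi^n)$ uniformly in $t$ (because $|\partial_z^\alpha a| \le C_\alpha h^{-\delta|\alpha|} m(z)$ and $m$ is an order function of some fixed order $N$, so after the rescaling the $\xi$-derivatives bring only bounded-in-$\xi$ factors, while $h$-powers are absorbed into the ($h$-dependent) constants). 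Then Theorem \ref{thm:ReV-PM2021} produces a reduced symbol $a' = a'(\cdot;h,t) \in S^m$ for the appropriate order, and Lemma \ref{lem:TSS-PM2021} immediately gives $a_t(x,hD)\colon \scrS(\Rn) \to \scrS(\Rn)$ and $a_t(x,hD)\colon \scrS'(\Rn) \to \scrS'(\Rn)$ with operator bounds controlled by finitely many seminorms of $a'$, hence (tracing the asymptotic expansion in Theorem \ref{thm:ReV-PM2021}) by finitely many of the seminorms $|a|_\alpha$ and powers of $h$.

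The second step is to make the uniformity-in-$t$ claim precise. I would argue that the change of variables $(y,\eta)=(x,\xi)$ in the asymptotic formula of Theorem \ref{thm:ReV-PM2021}, applied to $\sigma(x,y,\xi)=a(tx+(1-t)y,\xi)$, only ever differentiates $a$ in its first slot, and the chain rule produces factors $t^{j}$ and $(1-t)^{k}$ with $j,k \le |\alpha|$, all bounded by $1$ on $[0,1]$; consequently every estimate in the proof of Theorem \ref{thm:ReV-PM2021} holds with constants independent of $t$, only depending on $\delta$, $n$, the order $N$ of $m$, and (through explicit powers) on $h \in (0,1)$. For $\scrS \to \scrS$ this means: for each pair of multi-indices there is a constant $C$ and an integer $k$ so that $|a_t(x,hD)\varphi|_{\alpha,\beta} \le C\,h^{-k}\,|\varphi|_{\alpha',\beta'}$ with $C,k$ independent of $t$; for $\scrS' \to \scrS'$ one dualizes via Lemma \ref{lem:TSS-PM2021} (or directly via Lemma \ref{lem:TDEq-PM2021}), using that the adjoint of $a_t(x,hD)$ is again of the same type (with $1-t$ in place of $t$), so the same uniform bound transfers.

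For the writeup I would state the proof as: (i) rescale $\xi$ to reduce to $h=1$ up to the factor $h^{-n}$; (ii) invoke Theorem \ref{thm:ReV-PM2021} to get $a_t(x,hD) = T_{a'}$ with $a' \in S^m$; (iii) invoke Lemma \ref{lem:TSS-PM2021} for the mapping properties; (iv) inspect the constants produced in (ii)--(iii) and check they involve $t$ only through $t^j(1-t)^k \le 1$. I expect the only genuinely delicate point to be bookkeeping the $h$-dependence of the constants: one must verify that the oscillatory-integral integration-by-parts in Theorem \ref{thm:ReV-PM2021}, applied here, produces a \emph{polynomial} (in $h^{-1}$) bound rather than something worse, which follows because each factor $\partial_z^\alpha a$ contributes at most $h^{-\delta|\alpha|}$ with $|\alpha|$ capped by the (finite) number of integrations by parts needed for absolute convergence. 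Everything else — smoothness, the order function estimates, the Schwartz seminorm manipulations — is routine and can be left to the reader or cited to \cite[Prop.~6.7]{wong2014introduction}.
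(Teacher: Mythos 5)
Your reduction to Theorem \ref{thm:ReV-PM2021} and Lemma \ref{lem:TSS-PM2021} has a genuine gap: the (rescaled) symbol $\sigma(x,y,\xi) = a(tx+(1-t)y, h\xi;h)$ does \emph{not} lie in the Kohn--Nirenberg class $S^{m'}(\R_x^n\times\R_y^n\times\R_\xi^n)$ for any order $m'$, so those results cannot be invoked. Two features of the semiclassical class $S_\delta(m)$ break the embedding. First, the order function may grow in $x$ (e.g.\ $m(x,\xi)=\agl[x]^2$, so $a(x,\xi)=\agl[x]^2$ is a legitimate element of $S_0(m)$), whereas Definition \ref{defn:symbolx-PM2021} demands bounds uniform in the spatial variables; no power of $\agl[\xi]$ can absorb growth in $x$ and $y$. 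Second, semiclassical symbols gain no decay in $\xi$ under differentiation (the paper stresses exactly this contrast right after Definition \ref{defn:Sycl-PM2021}), while $S^{m'}=S^{m'}_{1,0}$ requires a factor $\agl[\xi]^{-|\beta|}$ per $\xi$-derivative; e.g.\ $a(x,\xi)=\agl[\xi]^{2}$ is in $S_0(\agl[\xi]^2)$ but not in any $S^{m'}_{1,0}$, and the rescaling $\xi\mapsto h\xi$ only produces harmless factors $h^{|\beta|}$, not the needed $\agl[\xi]^{-|\beta|}$. At best you land in an $S_{0,0}$-type class, which is excluded from Definition \ref{defn:symbolx-PM2021} (it requires $\delta<\rho$) and is not covered by Theorem \ref{thm:ReV-PM2021} or Lemma \ref{lem:TSS-PM2021}, whose proofs use the derivative gain. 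So steps (i)--(iii) of your plan are unjustified; your uniformity-in-$t$ bookkeeping (the factors $t^j(1-t)^k\le 1$) is fine but rests on the flawed reduction.

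The statement instead needs a direct argument, which is what the paper does: for $\varphi\in\scrS$ the $y$-integral converges because $\varphi$ decays rapidly (this is also what beats any $\agl[y]^N$-growth coming from the order function), and the $\xi$-integral is handled by integrating by parts with $L_1=\frac{1-\xi\cdot hD_y}{\agl[\xi]^2}$, which reproduces $e^{i(x-y)\cdot\xi/h}$ and, after transposition onto $a(tx+(1-t)y,\xi;h)\varphi(y)$, gains one power of $\agl[\xi]^{-1}$ per application; applying it enough times (more than $n$ plus the $\xi$-order of $m$) gives absolute convergence and the bound $\scrS\to L^\infty$, with constants involving only finitely many seminorms $|a|_\alpha$, powers of $h^{-1}$, and factors $t^j(1-t)^k\le1$. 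Repeating the argument for $x^\alpha\partial^\beta\big(a_t(x,hD)\varphi\big)$ (writing $x^\alpha$ via $(x-y)^\gamma y^{\alpha-\gamma}$ and trading $(x-y)^\gamma$ for $\xi$-derivatives of the symbol by integration by parts in $\xi$) yields $a_t(x,hD)\colon\scrS\to\scrS$ with bounds uniform in $t$, and the $\scrS'\to\scrS'$ statement then follows by duality, as in your step for the adjoint. If you want to keep your proposal, replace the appeal to Theorem \ref{thm:ReV-PM2021} and Lemma \ref{lem:TSS-PM2021} by this direct integration-by-parts estimate.
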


\begin{proof}
	Let $\varphi \in \scrS(\Rn)$.
	We have
	\[
	a_t(x, hD) \varphi(x)
	= (2\pi h)^{-n} \iint e^{i(x-y) \cdot \xi /h} a(tx + (1-t)y,\xi;h) \varphi(y) \dif y \dif \xi.
	\]
	The integrability of $y$ is not a problem because $\varphi(y)$ is rapidly decay.
	For $\xi$, we should use integration by parts to gain enough decay on $\xi$.
	Notice that $\frac {1 - \xi \cdot hD_y} {\agl[\xi]^2} (e^{i(x-y) \cdot \xi /h}) = e^{i(x-y) \cdot \xi /h}$, we denote $L_1 = \frac {1 - \xi \cdot hD_y} {\agl[\xi]^2}$, then act $L_1^{n+1}$ on $e^{i(x-y) \cdot \xi /h}$ and use integration by parts, we will end up in a integrand of order $\agl[\xi]^{-n-1}$ on $\xi$ and rapidly decay on $y$, thus integrable.
	Hence we proved that $a_t(x, hD) \colon \scrS(\Rn) \to L^\infty(\Rn)$ for $a \in S_\delta(m)$.
	Adopt similar arguments on $x^\alpha \partial^\beta a_t(x, hD)$, we can obtain $x^\alpha \partial^\beta a_t(x, hD) \colon \scrS(\Rn) \to L^\infty(\Rn)$ for $\forall \alpha, \beta$.
	Therefore $a_t(x, hD) \colon \scrS(\Rn) \to \scrS(\Rn)$.
	And the continuity of the operator can also be seen from the arguments above.
	
	The second result holds due to duality arguments.
\end{proof}


\begin{lem} \label{lem:Qxind-PM2021}
	If symbol $a$ is independent of $\xi$, i.e.~$a(x,\xi) = a(x)$, then
	\[
	a_t(x,hD) u(x) = a(x) u(x), \quad \forall t \in [0,1].
	\]
\end{lem}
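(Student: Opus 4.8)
The plan is to substitute the specific symbol $a(x,\xi)=a(x)$ directly into the definition \eqref{eq:QDe3-PM2021} of the general quantization and recognize the resulting oscillatory integral in $\xi$ (and then $y$) as an inverse Fourier transform of a constant, which by Lemma \ref{lem:Tlf-PM2021} (or equivalently Lemma \ref{lem:Tli-PM2021}) collapses to a point evaluation. Since $a$ does not depend on $\xi$, the factor $a(tx+(1-t)y,\xi)=a(tx+(1-t)y)$ pulls out of the $\xi$-integration, so we are left with
\[
a_t(x,hD)u(x) = (2\pi h)^{-n}\int e^{i(x-y)\cdot\xi/h}\, a(tx+(1-t)y)\, u(y)\dif y\dif\xi,
\]
and the inner $\xi$-integral is formally $(2\pi h)^{n}\delta(x-y)$ in the oscillatory sense.

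First I would fix $u\in\scrS(\Rn)$ and perform the change of variables $\xi\mapsto h\xi$ (and, if convenient, rescale to remove $h$ from the phase), reducing to an integral with phase $(x-y)\cdot\xi$; this is exactly the setting of Lemma \ref{lem:Tli-PM2021} and its weighted version Lemma \ref{lem:Tlic-PM2021}. Then I would carry out the $\xi$-integration first, in the oscillatory-integral sense justified by Lemma \ref{lem:Osxy-PM2021}, which allows us to treat $a_t(x,hD)u(x)$ as a parametrized oscillatory integral and to interchange operations legitimately. Concretely, setting $g_x(y):=a(tx+(1-t)y)u(y)$, which for each fixed $x$ is a Schwartz function of $y$ (since $a$ has at most tempered growth when it is a symbol, and $u\in\scrS$; in the present lemma $a(x)\in S_\delta(m)$ so $|a|\lesssim m$ is an order function and $gu\in\scrS$), the double integral becomes $(2\pi h)^{-n}\int\big(\int e^{i(x-y)\cdot\xi/h}\dif\xi\big)g_x(y)\dif y$, and Lemma \ref{lem:Tlf-PM2021} (applied after the rescaling $\xi\mapsto\xi/h$, or directly in the form $\int e^{iz\cdot\xi}f(z)\dif z\dif\xi=(2\pi)^n f(0)$) gives the inner bracket as $(2\pi h)^{n}$ times a $\delta$ at $y=x$, hence $a_t(x,hD)u(x)=g_x(x)=a(tx+(1-t)x)u(x)=a(x)u(x)$.

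The only mild subtlety—and the step I would be most careful about—is justifying that the inner $\xi$-integral may be evaluated first as an oscillatory integral and that the resulting expression really is $g_x(x)$ rather than something involving the behavior of $g_x$ away from the diagonal. This is precisely what Lemma \ref{lem:Osxy-PM2021} and Lemma \ref{lem:Tlf-PM2021} are designed to handle: the former guarantees the whole object is a well-defined oscillatory integral with the expected Fubini-type manipulations available, and the latter encodes the fact that ``inverse Fourier transform of $1$'' picks out the value at a single point. One should also note that the argument is uniform in $t\in[0,1]$ since $t$ enters only through the smooth, tempered map $y\mapsto a(tx+(1-t)y)$ and never through the phase, so no $t$-dependent constants appear. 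With these ingredients in place the proof is a short computation; no genuine obstacle remains beyond invoking the right earlier lemma at the right moment.
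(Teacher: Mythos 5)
Your argument is correct, but it takes a genuinely different route from the paper. You evaluate the quantization for arbitrary $t$ in one shot: since $a$ is $\xi$-independent, after rescaling $\xi\mapsto h\xi$ the integral becomes $(2\pi)^{-n}\int e^{i(x-y)\cdot\xi}g_x(y)\dif y\dif\xi$ with $g_x(y)=a(tx+(1-t)y)u(y)\in\scrS(\R^n_y)$, and the oscillatory Fourier-inversion identity of Lemma \ref{lem:Tlf-PM2021} collapses this to $g_x(x)=a(x)u(x)$; the $t$-independence is then transparent because the ``delta'' sits on the diagonal $y=x$, where $tx+(1-t)y=x$ for every $t$. The paper instead argues in two steps: it computes the easy case $t=1$ (where the amplitude is independent of $y$, so the statement is plain Fourier inversion) and then shows $\partial_t\big(a_t(x,hD)u\big)=0$ by writing $\partial_t a(tx+(1-t)y)=(x-y)\cdot\nabla a$, converting the factor $(x-y)$ into $\nabla_\xi$ acting on the exponential, and integrating by parts in $\xi$, where the divergence vanishes because the amplitude is $\xi$-independent. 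Your approach buys a shorter, single computation and an explanation of the $t$-uniformity; the paper's approach buys the avoidance of any diagonal evaluation with a $y$-dependent amplitude, needing only the trivial $t=1$ case plus a standard integration-by-parts trick. One small remark on your justification: the invocation of Lemma \ref{lem:Osxy-PM2021} is somewhat loose for this purpose; the cleanest way to legitimize your key step is to note that since $g_x\in\scrS$, integrating in $y$ first gives an absolutely convergent iterated integral equal to $(2\pi)^{n/2}\int e^{ix\cdot\xi}\widehat{g_x}(\xi)\dif\xi$, so the cutoff $\chi(\epsilon\xi)$ can be removed by dominated convergence and the value is exactly Fourier inversion — which is also consistent with your appeal to Lemma \ref{lem:Tlf-PM2021} (its hypothesis $f\in S^m_\rho$, $\rho+2>1$ is satisfied since $f$ is Schwartz). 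This is a presentational point, not a gap.
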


\begin{proof}
	It is enough to prove for $u \in \scrS$.
	When $t = 1$, we have
	\begin{align*}
	a_1(x,hD) u(x)
	& = (2\pi h)^{-n} \iint e^{i(x-y) \cdot \xi/h} a(x, \xi) u(y) \dif y \dif \xi \\
	& = (2\pi h)^{-n} \iint e^{i(x-y) \cdot \xi/h} a(x) u(y) \dif y \dif \xi \\
	& = a(x) u(x).
	\end{align*}
	We have
	\begin{align*}
	\partial_t \big( a_t(x,hD) u(x) \big)
	& = (2\pi h)^{-n} \iint e^{i(x-y) \cdot \xi/h} \partial_t \big( a(tx + (1-t)y) \big) u(y) \dif y \dif \xi \\
	& = (2\pi h)^{-n} \iint e^{i(x-y) \cdot \xi/h} (x-y) \cdot \nabla a(tx + (1-t)y) u(y) \dif y \dif \xi \\
	& \simeq (2\pi h)^{-n} \iint \nabla_\xi e^{i(x-y) \cdot \xi/h} \cdot \nabla a(tx + (1-t)y) u(y) \dif y \dif \xi \\
	& \simeq (2\pi h)^{-n} \iint e^{i(x-y) \cdot \xi/h} {\rm div}_\xi \big( \nabla a(tx + (1-t)y) u(y) \big) \dif y \dif \xi \\
	& = 0.
	\end{align*}
	We arrive at the conclusion.
\end{proof}

From Lemma \ref{lem:Qxind-PM2021} and \eqref{eq:QDe3-PM2021} we know that if $a(x,\xi)$ is either independent of $\xi$ or independent of $x$, the quantized operator $a_t(x,hD)$ will be independent of $t$.
Hence, for fixed $x^*$, $\xi^* \in \Rn$, and denote $l(x,\xi) := x^* \cdot x + \xi^* \cdot \xi$, then $l_t(x,hD)$ is independent of $t$, i.e.,
\begin{equation} \label{eq:lw-PM2021}
\boxed{ l_t(x,hD) = x^* \cdot x + \xi^* \cdot hD. }
\end{equation}

\section{Composition of the standard quantizations} \label{sec:CSP-PM2021}


For a non-degenerate, symmetric, real-valued $n \times n$ matrix $Q$, the quantization of the exponential of quadratic forms is defined as the standard quantization (cf.~\eqref{eq:QDe2-PM2021}),
\begin{align}
e^{\frac i {2h} \agl[Q hD, hD]} \varphi(x)
& = (2\pi h)^{-n} \iint_{\Rn \times \Rn} e^{i(x-y) \cdot \xi/h} e^{\frac i {2h} \agl[Q \xi, \xi]} \varphi(y) \dif y \dif \xi. \label{eq:eDQ-PM2021}
\end{align}
Readers may compare \eqref{eq:eDQ-PM2021} with \eqref{eq:elw-PM2021}.
The following lemma shows how to express $e^{\frac i {2h} \agl[Q hD, hD]}$.
The $e^{\frac i {2h} \agl[Q hD, hD]}$ can be expanded by using stationary phase lemmas.

\begin{lem} \label{lem:SeES-PM2021}
	Assume $Q$ is a non-degenerate, symmetric, real-valued $n \times n$ matrix.
	We have $e^{\frac i {2h} \agl[Q hD, hD]} \colon \scrS \to \scrS$ continuously.
	And when $0 \leq \delta \leq \frac 1 2$, we have that $e^{\frac i {2h} \agl[Q hD, hD]} \colon S_\delta(m) \to S_\delta(m)$, and the expression is
	\begin{equation} \label{eq:eQDD-PM2021}
	e^{\frac i {2h} \agl[Q hD, hD]} a(x) = \frac {e^{i \frac \pi 4 \sgn Q}} {|\det Q|^{1/2} (2\pi h)^{n/2}} \int_{\Rn} e^{\frac {-i} {2h} \agl[Q^{-1} y, y]} a(x+y) \dif y.
	\end{equation}
	The integral \eqref{eq:eQDD-PM2021} is defined in oscillatory sense.	
	Moreover, when $0 \leq \delta < \frac 1 2$, for $a \in S_\delta(m)$ we have the asymptotics
	\begin{equation} \label{eq:SeES-PM2021}
	\boxed{e^{\frac i {2h} \agl[Q hD, hD]} a = \sum_{j=0}^N \frac {(ih)^j} {j!} \Big( \frac {\agl[QD,D]} 2 \Big)^j a + h^{(1-2\delta)(N+1)} S_\delta(m).}
	\end{equation}
\end{lem}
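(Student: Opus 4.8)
The plan is to first establish the closed formula \eqref{eq:eQDD-PM2021}, and then read off the two statements about $S_\delta(m)$ from the quadratic stationary phase lemma (Theorem \ref{thm:2-PM2021}) of Chapter \ref{ch:SPL-PM2021}.

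\emph{Action on $\scrS$ and the closed form.} First I would regard $e^{\frac i {2h} \agl[Q hD, hD]}$ as the standard quantization of the $x$-independent symbol $b_h(\xi):=e^{\frac i {2h}\agl[Q\xi,\xi]}$, so that by \eqref{eq:QFa-PM2021} it equals $\calF_h^{-1}\{b_h\,\calF_h\varphi\}$. For fixed $h$ the function $b_h$ is smooth with all derivatives of polynomial growth, so multiplication by $b_h$ preserves $\scrS(\Rn)$ by Lemma \ref{lem:fSS-PM2021}; since $\calF_h$ and $\calF_h^{-1}$ are, like the ordinary Fourier transform, homeomorphisms of $\scrS(\Rn)$, the composition maps $\scrS(\Rn)\to\scrS(\Rn)$ continuously. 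To produce \eqref{eq:eQDD-PM2021} I would rescale $\xi=\sqrt h\,\zeta$ and apply the Gaussian Fourier transform identity \eqref{eq:FrGauiQ-PM2021} (itself built on \eqref{eq:fixxi-PM2021}) to get $\calF_h^{-1}\{b_h\}(x)=\frac{e^{i\frac\pi4\sgn Q}}{|\det Q|^{1/2}}e^{-i\agl[Q^{-1}x,x]/(2h)}$; then the semiclassical convolution identity $\calF_h^{-1}(\calF_h f\cdot\calF_h g)=(2\pi h)^{-n/2}f*g$ (the $h$-analogue of Lemma \ref{lem:FtransRelat-PM2021s}(4)) yields \eqref{eq:eQDD-PM2021} with $a=\varphi$ after the substitution $y\mapsto x+y$ and the reflection $y\mapsto-y$, which is legitimate since $Q^{-1}$ is symmetric. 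For $a\in S_\delta(m)$ one reads \eqref{eq:eQDD-PM2021} as the definition of $e^{\frac i {2h} \agl[Q hD, hD]}a$; its right-hand side is a genuine oscillatory integral in the sense of Definition \ref{defn:OsE-PM2021} ($\rho=0$, phase of order $\mu=2$, so $\rho+\mu>1$).

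\emph{The mapping $S_\delta(m)\to S_\delta(m)$ for $0\le\delta\le\frac12$.} The idea is to rescale $y=h^{1/2}z$ in \eqref{eq:eQDD-PM2021}: all powers of $h$ cancel and one is left with the $h$-free oscillatory integral $c_Q\int e^{-i\agl[Q^{-1}z,z]/2}a(x+h^{1/2}z)\dif z$. Its amplitude satisfies $|\partial_z^\alpha\bigl(a(x+h^{1/2}z)\bigr)|\le C_\alpha h^{(\frac12-\delta)|\alpha|}m(x+h^{1/2}z)\lesssim C_\alpha\agl[z]^{N}m(x)$ uniformly in $x$ and in $h\in(0,1)$, because $h<1$, $\delta\le\tfrac12$, and $m$ is an order function (so $m(x+h^{1/2}z)\lesssim\agl[z]^{N}m(x)$, with $m(x)\simeq m(-x)$). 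Applying Theorem \ref{thm:2-PM2021} at expansion order $0$, with $M$ taken large enough to dominate the growth $\agl[z]^{N}$ (this is where the two-parameter form is needed), bounds the integral by $\lesssim m(x)$ with constant uniform in $x,h$. For the derivatives I would use that $\partial_x$ commutes with $e^{\frac i {2h} \agl[Q hD, hD]}$ (it is a Fourier multiplier in the $hD$ variable) and that $h^{\delta|\alpha|}\partial_x^\alpha a\in S_\delta(m)$ with seminorms controlled by those of $a$; applying the zeroth-order bound to $\partial_x^\alpha a$ then gives $|\partial_x^\alpha(e^{\frac i {2h} \agl[Q hD, hD]}a)|\lesssim h^{-\delta|\alpha|}m$, i.e.\ $e^{\frac i {2h} \agl[Q hD, hD]}a\in S_\delta(m)$.

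\emph{The asymptotics for $0\le\delta<\frac12$.} Here I would use \eqref{eq:eQDD-PM2021} \emph{unrescaled}, with $\lambda:=1/h$ and $\tilde Q:=-Q^{-1}$, so that the integral is $\int e^{i\lambda\agl[\tilde Q y,y]/2}a(x+y)\dif y$ with $y=0$ the only critical point. Since $|\partial_y^\alpha\bigl(a(x+y)\bigr)|\lesssim h^{-\delta|\alpha|}\agl[y]^{N}m(x)$ is of polynomial growth, Theorem \ref{thm:2-PM2021} applies with $M$ large and with an expansion order $N'$ still to be fixed; after the constants simplify (the factors $e^{\pm i\frac\pi4\sgn Q}$, $|\det Q|^{\pm1/2}$, $(2\pi/\lambda)^{\pm n/2}$ cancel, and $\tfrac1{2i}\agl[\tilde Q^{-1}D,D]=\tfrac i2\agl[QD,D]$ because $\tilde Q^{-1}=-Q$) it produces
\[
e^{\frac i {2h} \agl[Q hD, hD]}a \;=\; \sum_{0\le j\le N'}\frac{(ih)^j}{j!}\Bigl(\frac{\agl[QD,D]}{2}\Bigr)^j a \;+\; R_{N'},\qquad R_{N'}=\mathcal O\bigl((h^{\,N'+1-\delta(n+2N'+3)}+h^{\,M(1-\delta)})\,m\bigr).
\]
Each term with $N+1\le j\le N'$ lies in $h^{(1-2\delta)j}S_\delta(m)\subseteq h^{(1-2\delta)(N+1)}S_\delta(m)$ (applying $\agl[QD,D]^j$ costs $2j$ derivatives, hence a factor $h^{-2\delta j}$), and choosing $N'$ with $N'(1-2\delta)\ge N(1-2\delta)+\delta(n+1)$ — possible exactly because $\delta<\tfrac12$ — together with $M$ with $M(1-\delta)\ge(1-2\delta)(N+1)$ forces $R_{N'}=\mathcal O(h^{(1-2\delta)(N+1)}m)$. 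Bootstrapping to all $x$-derivatives as in the previous step upgrades this to $\mathcal O_{S_\delta(m)}(h^{(1-2\delta)(N+1)})$, which is \eqref{eq:SeES-PM2021}.

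The hard part is the bookkeeping just described: each $y$-derivative of the amplitude $a\in S_\delta(m)$ costs a factor $h^{-\delta}$, so a stationary phase expansion merely of order $N$ does \emph{not} yield the advertised remainder $h^{(1-2\delta)(N+1)}$; one is forced to over-expand to order $N'$ and then reabsorb the surplus terms, which is available only when $1-2\delta>0$ — this is precisely why the asymptotics \eqref{eq:SeES-PM2021} is restricted to $\delta<\tfrac12$ while the mapping property still survives at $\delta=\tfrac12$ via the $h$-free rescaled integral. A secondary nuisance is that $y\mapsto a(x+y)$ does not decay but only grows polynomially, so the compactly-supported and rapidly-decaying versions of the stationary phase lemma are useless here; the two-parameter Theorem \ref{thm:2-PM2021}, with its free parameter $M$, is exactly what accommodates this.
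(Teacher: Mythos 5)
Your proposal is correct and follows essentially the same route as the paper: derive the closed formula \eqref{eq:eQDD-PM2021} from the Gaussian Fourier transform \eqref{eq:FrGauiQ-PM2021}, then apply the quadratic stationary phase result of Chapter \ref{ch:SPL-PM2021} with $\lambda = 1/h$, identify the expansion terms as $\frac{(ih)^j}{j!}\bigl(\tfrac12\agl[QD,D]\bigr)^j a$, and recover the remainder order by over-expanding to a larger $N'$ and reabsorbing the surplus terms into $h^{(1-2\delta)(N+1)}S_\delta(m)$, which is exactly where $\delta<\tfrac12$ enters, with $x$-derivatives handled by commuting $\partial_x$ through the multiplier. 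The only noteworthy variation is your rescaling $y=h^{1/2}z$ for the mapping property $S_\delta(m)\to S_\delta(m)$, which gives an $h$-uniform bound even at $\delta=\tfrac12$ somewhat more transparently than the paper's direct estimate \eqref{eq:I1hdh1-PM2021}.
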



\begin{proof}
	For a non-degenerate, symmetric, real-valued $n \times n$ matrix $Q$ we have (see \cite{masuma2020})
	\begin{equation} \label{eq:gvpveix2-PM2021}
	\calF \{ e^{\frac i 2 \agl[Qx,x]} \}(\xi)
	= \frac {e^{i \frac \pi 4 \sgn Q}} {|\det Q|^{1/2}} e^{-\frac i 2 \agl[Q^{-1} \xi,\xi]}.
	\end{equation}
	
	For any measurable function $a \in \scrS$, as long as the right-hand-side of \eqref{eq:eQDD-PM2021} is definable, by the definition \eqref{eq:eDQ-PM2021} we have
	\begin{align}
	e^{\frac i {2h} \agl[Q hD, hD]} a(x) 
	& = (2\pi h)^{-n} \iint e^{i(x-y) \cdot \xi/h} e^{\frac i {2h} \agl[Q \xi, \xi]} a(y) \dif y \dif \xi \nonumber \\
	& = (2\pi h)^{-n/2} \int \big[ (2\pi h)^{-n/2} \int e^{i(x-y) \cdot \xi/h} e^{\frac i {2h} \agl[Q \xi, \xi]} \dif \xi \big] a(y) \dif y \nonumber \\
	& = (2\pi h)^{-n/2} \int \big[ (2\pi)^{-n/2} \int e^{-i(y-x) / \sqrt h \cdot \xi} e^{\frac i 2 \agl[Q \xi, \xi]} \dif \xi \big] a(y) \dif y \nonumber \\
	& = (2\pi h)^{-n/2} \int \calF\{ e^{\frac i 2 \agl[Q \xi,\xi]} \}((y-x)/\sqrt{h}) \cdot a(y) \dif y \nonumber \\
	& = (2\pi h)^{-n/2} \int \frac {e^{i \frac \pi 4 \sgn Q}} {|\det Q|^{1/2}} e^{-\frac i {2h} \agl[Q^{-1} (y-x),y-x]} a(y) \dif y \quad \text{by~} \eqref{eq:gvpveix2-PM2021} \nonumber \\
	& = \frac {e^{i \frac \pi 4 \sgn Q}} {|\det Q|^{1/2} (2\pi h)^{n/2}} \int e^{\frac {i} {h} \agl[-Q^{-1} y,y]/2} a(x+y) \dif y. \label{eq:Qaxy-PM2021}
	\end{align}
	We arrive at \eqref{eq:eQDD-PM2021}.
	From \eqref{eq:Qaxy-PM2021} and \eqref{eq:IQ-PM2021} it is easy to see that $x^\alpha \partial^\beta e^{\frac i {2h} \agl[Q hD, hD]} \colon \scrS \to L^\infty$ for $\forall \alpha, \beta$, hence $e^{\frac i {2h} \agl[Q hD, hD]} \colon \scrS \to \scrS$ continuously.
	
	Now we use Proposition \ref{prop:2-PM2021}, to estimate \eqref{eq:Qaxy-PM2021} and confirm that $e^{\frac i {2h} \agl[Q hD, hD]}$ indeed maps $S_\delta(m)$ into itself.
	Denote the order of the symbol $a$ as $\tilde N$.
	Choose the $N$ in Proposition \ref{prop:2-PM2021} to be $N \geq \tilde N/2 - 1$.
	The constants $C_{N,n,\alpha}$ in Proposition \ref{prop:2-PM2021} satisfy $C_{N,n,\alpha} = h^{-\delta|\alpha|}$.
	From \eqref{eq:Qaxy-PM2021} and Proposition \ref{prop:2-PM2021} we have
	\begin{align} 
	& e^{\frac i {2h} \agl[Q hD, hD]} a(x) \nonumber \\
	= & \sum_{0 \leq j \leq N} \frac {h^j} {j!} \left( \frac {\agl[(-Q^{-1})^{-1}D, D]} {2i} \right)^j a(x) + \mathcal O \big( h^{N+1} \times \sum_{|\alpha| \leq n+2N+3} \sup_{y \in \Rn} \frac {|\partial^\alpha a(x + y; h)|} {\agl[y]^{n+4N+5-|\alpha|}} \big) \nonumber \\
	= & \sum_{0 \leq j \leq N} \frac {(ih)^{j}} {j!} \left( \frac {\agl[QD, D]} {2} \right)^j a(x) + \mathcal O \big( h^{N+1} \times \sum_{|\alpha| \leq n+2N+3} \sup_{y \in \Rn} \frac {h^{-\delta|\alpha|} \agl[y]^{\tilde N} m(x) } {\agl[y]^{2N+2}} \big) \nonumber \\
	= & \sum_{0 \leq j \leq N} \frac {(ih)^{j}} {j!} \left( \frac {\agl[QD, D]} {2} \right)^j a(x) + \mathcal O \big( h^{N+1-\delta(n+2N+3)} \sup_{y \in \Rn} \agl[y]^{\tilde N - 2N - 2} m(x) \big) \nonumber \\
	= & \sum_{0 \leq j \leq N} \frac {(ih)^{j}} {j!} \left( \frac {\agl[QD, D]} {2} \right)^j a(x) + h^{(1-2\delta)(N+1)} \mathcal O \big( h^{-\delta(n+1)} m(x) \big). \label{eq:I1hdh1-PM2021}
	\end{align}
	Now $e^{\frac i {2h} \agl[Q hD, hD]} \colon S_\delta(m) \to S_\delta(m)$ is justified by \eqref{eq:I1hdh1-PM2021} and similar arguments work on $\partial^\alpha(e^{\frac i {2h} \agl[Q hD, hD]} a)$.
	It can be checked that $h^j \agl[QD,D]^j a = h^{(1-2\delta)j} S_\delta(m)$.
	Here $f = h^{(1-2\delta)j} S_\delta(m)$ means there exists a symbol $g \in S_\delta(m)$ such that $f = h^{(1-2\delta)j} g$.
	Hence, these leading terms matched with the stipulation in Definition \ref{defn:Asy2-PM2021}.
	From \eqref{eq:I1hdh1-PM2021} it seems we didn't obtain the expansion because the remainder term may surpass some leading terms.
	However, when $\delta < \frac 1 2$, from \eqref{eq:I1hdh1-PM2021} we see that the order of the remainder term goes higher as $N$ goes larger (while when $\delta = \frac 1 2$ this doesn't happen), and when we set $N$ to be larger enough, these leading terms in front of the remainder term can exposed themselves from the remainder and will not be surpassed by the remainder.
	For example, if we want to expand \eqref{eq:I1hdh1-PM2021} up to $N'$, we first choose $N$ such that $(1-2\delta)(N+1) - \delta (2n + 1) \geq (1-2\delta)(N'+1)$, then \eqref{eq:I1hdh1-PM2021} can be continued as
	\begin{align}
	(*)
	& = \sum_{0 \leq j \leq N} \frac {(ih)^j} {j!} \left( \frac {\agl[QD, D]} 2 \right)^j a(x) + \mathcal O(h^{(1-2\delta)(N+1) - \delta (2n + 1)} m(x)) \nonumber \\
	& = \sum_{0 \leq j \leq N'} \frac {(ih)^j} {j!} \left( \frac {\agl[QD, D]} 2 \right)^j a(x) + \sum_{N'+1 \leq j \leq N} \frac {(ih)^j} {j!} \left( \frac {\agl[QD, D]} 2 \right)^j a(x) \nonumber \\
	& \quad + \mathcal O(h^{(1-2\delta)(N+1) - \delta (2n + 1)} m(x)) \nonumber \\
	& = \sum_{0 \leq j \leq N'} \frac {(ih)^j} {j!} \left( \frac {\agl[QD, D]} 2 \right)^j a(x) + \sum_{N'+1 \leq j \leq N} h^{(1-2\delta)j} S_\delta(m) + \mathcal O(h^{(1-2\delta)(N'+1)} m(x)) \nonumber \\
	& = \sum_{0 \leq j \leq N'} \frac {(ih)^j} {j!} \left( \frac {\agl[QD, D]} 2 \right)^j a(x) + \mathcal O_{S_\delta(m)}(h^{(1-2\delta)(N'+1)}). \label{eq:I1hO-PM2021}
	\end{align}
	Note that here we omitted the investigation of $|\partial^\alpha (e^{\frac i {2h} \agl[Q hD, hD]} a)|$, but the prove shall almost the same as above.
	We proved \eqref{eq:SeES-PM2021}.
\end{proof}

\begin{thm}[Composition of standard quantizations\index{composition of quantizations}] \label{thm:CoWQs-PM2021}
	Let $a \in S_\delta(m_1)$, $b \in S_\delta(m_2)$.
	Denote
	\[
	(a \# b)(x,hD) = a(x,hD) \circ b(x,hD),
	\]
	then $a \# b \in S_\delta(m_1 m_2)$, and
	\begin{equation} \label{eq:acobs1-PM2021}
	a \# b(x,\eta) = e^{\frac i {2h} \agl[Q hD_{(y,\xi)}, hD_{(y,\xi)}]} \big( a(x,\eta+\xi) b(x+y,\eta) \big) |_{y=0,\, \xi=0},
	\end{equation}
	where $Q = \begin{pmatrix}
	0 & I_{n \times n} \\ I_{n \times n} & 0
	\end{pmatrix}$.
	Moreover, when $h \to 0^+$ we have the semiclassical asymptotics,
	\begin{equation} \label{eq:abExp2-PM2021}
	\boxed{\begin{aligned}
		a \# b(x,\eta)
		& = \sum_{j=0}^N \frac {(ih)^j} {j!} (D_y \cdot D_\xi)^j \big( a(x,\eta+\xi) b(x+y,\eta) \big) |_{y=0,\xi=0} + h^{(1 - 2\delta)(N+1)} S_\delta(m_1 m_2) \\
		& = \sum_{|\alpha| \leq N} \frac {(-ih)^{|\alpha|}} {\alpha!} \partial_\eta^\alpha a(x,\eta) \partial_x^\alpha b(x,\eta) + h^{(1 - 2\delta)(N+1)} S_\delta(m_1 m_2).
		\end{aligned}}
	\end{equation}
\end{thm}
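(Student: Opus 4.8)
The plan is to follow the proof of Theorem~\ref{thm:com-PM2021} for the Kohn--Nirenberg calculus almost verbatim, replacing the stationary phase expansion of Proposition~\ref{prop:2-PM2021} by its semiclassical counterpart, Lemma~\ref{lem:SeES-PM2021}. First I would unwind the composition of the two standard quantizations: for $\varphi\in\scrS(\Rn)$, inserting \eqref{eq:QDe2-PM2021} twice gives
\[
a(x,hD)b(x,hD)\varphi(x)=(2\pi h)^{-2n}\int e^{\frac ih[(x-y)\cdot\xi+(y-z)\cdot\eta]}a(x,\xi)\,b(y,\eta)\,\varphi(z)\dif z\dif\eta\dif y\dif\xi .
\]
The substitutions $y=x-w$ and then $\xi=\eta+\xi'$ factor the phase as $\tfrac ih\big[(x-z)\cdot\eta+w\cdot\xi'\big]$, and after $w\mapsto-w$ the composition takes the form $c(x,hD)\varphi$ with
\[
c(x,\eta)=(2\pi h)^{-n}\int e^{-\frac ih\,w\cdot\xi'}\,a(x,\eta+\xi')\,b(x+w,\eta)\dif w\dif\xi' ,
\]
the integral understood in the oscillatory sense (Definition~\ref{defn:OsE-PM2021}, after the rescaling $\theta\mapsto\theta/h$), exactly as in \eqref{eq:cob-PM2021}--\eqref{eq:cet-PM2021}; the rearrangements are justified by the usual $\chi(\epsilon\,\cdot)$ regularization and a passage to the limit. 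Since the semiclassical quantization map $c\mapsto c(x,hD)$ is injective (the semiclassical analogue of Lemma~\ref{lem:sBi-PM2021}), once $c$ is shown to be a symbol of order $m_1m_2$ it follows that $a\#b=c$ is well defined.

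Next I would recognize $c$ as an exponential of a quadratic. With $Q=\left(\begin{smallmatrix}0&I\\I&0\end{smallmatrix}\right)$ one has $\tfrac12\agl[Q(y,\xi),(y,\xi)]=y\cdot\xi$, $\sgn Q=0$, $\det Q=\pm1$ and $Q^{-1}=Q$, so comparing $c(x,\eta)$ with the integral formula \eqref{eq:eQDD-PM2021} applied, in dimension $2n$, to the function $g_{x,\eta}(y,\xi):=a(x,\eta+\xi)\,b(x+y,\eta)$ of the variables $(y,\xi)$, reproduces precisely
\[
c(x,\eta)=e^{\frac i{2h}\agl[Q\,hD_{(y,\xi)},\,hD_{(y,\xi)}]}\,g_{x,\eta}\big|_{y=0,\ \xi=0},
\]
which is \eqref{eq:acobs1-PM2021}. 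To apply Lemma~\ref{lem:SeES-PM2021} (in dimension $2n$, with $0\le\delta<\tfrac12$), I must check that $g_{x,\eta}\in S_\delta(\tilde m)$ with $\tilde m(y,\xi)=\agl[y]^{N_2}\agl[\xi]^{N_1}$, uniformly in the parameters $(x,\eta)$ up to the multiplicative factor $m_1(x,\eta)m_2(x,\eta)$, where $N_1,N_2$ are the orders of $m_1,m_2$. This is immediate from $a\in S_\delta(m_1)$, $b\in S_\delta(m_2)$ and the order-function inequality of Definition~\ref{defn:orfu-PM2021}, which gives $m_1(x,\eta+\xi)\le C\agl[\xi]^{N_1}m_1(x,\eta)$ and $m_2(x+y,\eta)\le C\agl[y]^{N_2}m_2(x,\eta)$. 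Lemma~\ref{lem:SeES-PM2021}, together with $\tfrac12\agl[Q\,D_{(y,\xi)},D_{(y,\xi)}]=D_y\cdot D_\xi$, then yields
\[
c(x,\eta)=\sum_{j=0}^{N}\frac{(ih)^j}{j!}(D_y\cdot D_\xi)^j g_{x,\eta}(0,0)+h^{(1-2\delta)(N+1)}S_\delta(m_1m_2),
\]
which is the first line of \eqref{eq:abExp2-PM2021}.

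To reach the second line I would expand $(D_y\cdot D_\xi)^j=\sum_{|\alpha|=j}\frac{j!}{\alpha!}D_y^\alpha D_\xi^\alpha$, as in \eqref{eq:npExp-PM2021}, and use $D_\xi^\alpha\big(a(x,\eta+\xi)\big)\big|_{\xi=0}=i^{-|\alpha|}\partial_\eta^\alpha a(x,\eta)$, $D_y^\alpha\big(b(x+y,\eta)\big)\big|_{y=0}=i^{-|\alpha|}\partial_x^\alpha b(x,\eta)$ together with $i^{-2|\alpha|}i^{|\alpha|}=(-i)^{|\alpha|}$. Finally, writing $h^{|\alpha|}\partial_\eta^\alpha a\cdot\partial_x^\alpha b=h^{(1-2\delta)|\alpha|}\big(h^{\delta|\alpha|}\partial_\eta^\alpha a\big)\big(h^{\delta|\alpha|}\partial_x^\alpha b\big)$, with each bracket belonging to $S_\delta(m_1)$, resp.\ $S_\delta(m_2)$, uniformly in $h\in(0,1)$, every term on the right lies in $S_\delta(m_1m_2)$; since the remainder does too, $a\#b=c\in S_\delta(m_1m_2)$. (For $\delta=\tfrac12$ the mapping property $e^{\frac i{2h}\agl[Q\,hD,hD]}\colon S_\delta(\tilde m)\to S_\delta(\tilde m)$ from Lemma~\ref{lem:SeES-PM2021} already gives $a\#b\in S_\delta(m_1m_2)$, and \eqref{eq:abExp2-PM2021} is read with its trivial $h^0$ remainder.)

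The main obstacle I anticipate is the bookkeeping hidden in the phrase ``$c$ is a symbol in $(x,\eta)$'': Lemma~\ref{lem:SeES-PM2021} controls $e^{\frac i{2h}\agl[Q\,hD,hD]}g_{x,\eta}$ only through the $S_\delta(\tilde m)$-seminorms of $g_{x,\eta}$ in the variables $(y,\xi)$, whereas the conclusion requires bounds on $\partial_x^{\alpha}\partial_\eta^{\beta}c$ for all $\alpha,\beta$. This is the semiclassical analogue of the step \eqref{eq:cc4-PM2021}--\eqref{eq:cc5-PM2021}: one commutes $\partial_x^{\alpha}\partial_\eta^{\beta}$ past the exponential operator and observes that, applied to $g_{x,\eta}$, it produces a finite sum of functions of the same structure (a product of an $x$- and $\eta$-derivative of $a$ evaluated at $(x,\eta+\xi)$ with an $x$-derivative of $b$ evaluated at $(x+y,\eta)$), each again in $S_\delta$ of a suitable order function, uniformly in the parameters; applying Lemma~\ref{lem:SeES-PM2021} termwise then closes the argument. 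As in \S\ref{sec:Com-PM2021} I would only sketch this computation and leave the routine verification to the reader.
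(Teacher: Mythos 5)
Your proposal is correct and follows essentially the same route as the paper: unwind the two standard quantizations, recognize the inner $(y,\xi)$-integral as $e^{\frac i{2h}\agl[Q\,hD_{(y,\xi)},hD_{(y,\xi)}]}$ acting on $a(x,\eta+\xi)b(x+y,\eta)$ via \eqref{eq:eQDD-PM2021}, and then apply Lemma \ref{lem:SeES-PM2021} together with \eqref{eq:npExp-PM2021} to get \eqref{eq:abExp2-PM2021}. If anything, you supply more detail than the paper's proof (the order-function check for the auxiliary symbol in $(y,\xi)$ and the analogue of \eqref{eq:cc4-PM2021}--\eqref{eq:cc5-PM2021} for derivatives in $(x,\eta)$), which the paper leaves implicit.
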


\begin{rem} \label{rem:CoWQs-PM2021}
	When either $a(x,\xi)$ or $b(x,\xi)$ is polynomial of $\xi$, the expansion \eqref{eq:abExp2-PM2021} will be finite, i.e.~when $N$ is large enough the remainders $h^{(1-2\delta) (N+1)} S_\delta(m_1 m_2)$ will be exactly zero, see also \cite[Remark 2.6.9]{mart02Anin}.
	This can be seen by directly working in the stationary phase lemma.
\end{rem}

\begin{proof}[Proof of Theorem \ref{thm:CoWQs-PM2021}]
	For a test function $\varphi \in \scrS(\Rn)$, we have
	\begin{align*}
	(a \# b)(x,hD)\varphi 
	& = (2\pi h)^{-2n} \int e^{i(x-z) \cdot \eta/h} \big( \int e^{i(x-y) \cdot (\xi - \eta)/h} a(x,\xi) b(y,\eta) \dif y \dif \xi \big) \varphi(z) \dif z \dif \eta \\
	& = (2\pi h)^{-2n} \int e^{i(x-z) \cdot \eta/h} \big( \int e^{-iy \cdot \xi/h} a(x,\xi + \eta) b(y+x,\eta) \dif y \dif \xi \big) \varphi(z) \dif z \dif \eta \\
	& = (2\pi h)^{-n} \int e^{i(x-z) \cdot \eta/h} \cdot [\cdots] \cdot \varphi(z) \dif z \dif \eta, \nonumber \\
	\text{where} \quad [\cdots] & = (2\pi h)^{-n} \int e^{\frac {-i} {2h} \agl[Q(y,\xi)^T,(y,\xi)^T]} a(x,\eta+\xi) b(x+y,\eta) \dif y \dif \xi,
	\end{align*}
	and $Q = \begin{pmatrix}
	0 & I \\ I & 0
	\end{pmatrix}$.
	Note that $Q^{-1} = Q$, $\sgn Q = 0$ and $\det Q = 1$ or $-1$.
	From \eqref{eq:eQDD-PM2021} and \eqref{eq:SeES-PM2021}, we see
	\begin{align*}
	[\cdots]
	& = e^{\frac i {2h} \agl[Q^{-1} hD_{(y,\xi)}, hD_{(y,\xi)}]} \big( a(x,\eta+\xi) b(x+y,\eta) \big) |_{y=0,\, \xi=0} \\
	& \sim \sum_j \frac {(ih)^j} {j!} \Big( \frac {\agl[Q D_{(y,\xi)},D_{(y,\xi)}]} 2 \Big)^j \big( a(x,\eta+\xi) b(x+y,\eta) \big) |_{y=0,\xi=0}, \quad \text{in} \ S_\delta(m_1 m_2) \\
	& \sim \sum_j \frac {(ih)^j} {j!} \big( D_y \cdot D_\xi \big)^j \big( a(x,\eta+\xi) b(x+y,\eta) \big) |_{y=0,\xi=0}, \quad \text{in} \ S_\delta(m_1 m_2) \\
	& = \sum_{|\alpha| \leq N} \frac {(-ih)^{|\alpha|}} {\alpha!} \partial_\eta^\alpha a(x,\eta) \partial_x^\alpha b(x,\eta) + \mathcal O_{S_\delta(m_1 m_2)}(h^{(1 - 2\delta)(N+1)}).
	\end{align*}
	We obtain \eqref{eq:acobs1-PM2021} and \eqref{eq:abExp2-PM2021}.
	Note that we have used \eqref{eq:npExp-PM2021}.
	The proof is complete. 
\end{proof}

Readers may compare Theorem \ref{thm:com-PM2021} with \eqref{eq:abExp2-PM2021}.
The asymptotics in Theorem \ref{thm:com-PM2021} is in terms of the decay of $|\xi|$, but the asymptotics in \eqref{eq:abExp2-PM2021} is about the order of $h$.
The first two leading terms in Theorem \ref{thm:com-PM2021} is $ab - i \nabla_\xi a \cdot \nabla_x b$ (no $h$), while that of \eqref{eq:abExp2-PM2021} is $ab - ih \nabla_\xi a \cdot \nabla_x b$.

\begin{cor} \label{cor:CoWQ-PM2021}
	The first two leading terms of $a \# b$ is
	\(
	\boxed{ab - ih \nabla_\xi a \cdot \nabla_x b.}
	\)
	Assume $a \in S_\delta(m_1)$ and $b \in S_\delta(m_2)$, then the symbol of the commutator of $a(x,hD)$ and $b(x,hD)$ is
	\begin{equation*}
	\boxed{\frac h i \{a, b\} - \frac {h^2} 2 {\rm tr} (\nabla_\xi^2 a \cdot \nabla_x^2 b - \nabla_x^2 a \cdot \nabla_\xi^2 b) + h^{3(1-2\delta)} S_\delta(m_1 m_2),}
	\end{equation*}
	where $\{a, b\}$ is the Poisson bracket of $a$ of $b$, and $\nabla_\xi^2 a \cdot \nabla_x^2 b$ is the product of two Hessian matrices, and ${\rm tr}$ is the trace.
\end{cor}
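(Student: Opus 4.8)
The plan is to read off both assertions directly from the composition asymptotics \eqref{eq:abExp2-PM2021} of Theorem~\ref{thm:CoWQs-PM2021}; no new analysis is needed, only extraction of the first few terms and an antisymmetrization for the commutator. Throughout, the generic symbol variable written $\eta$ in Theorem~\ref{thm:CoWQs-PM2021} plays the role of $\xi$ here, so $\partial_\eta^\alpha$ there is $\partial_\xi^\alpha$ here.

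First I would record the leading terms. Taking $N=1$ in \eqref{eq:abExp2-PM2021}, the $|\alpha|=0$ contribution is $ab$, the $|\alpha|=1$ contribution is $(-ih)\sum_j \partial_{\xi_j}a\,\partial_{x_j}b = -ih\,\nabla_\xi a\cdot\nabla_x b$, and the remainder lies in $h^{2(1-2\delta)}S_\delta(m_1 m_2)$. Hence the first two leading terms of $a\#b$ are $ab - ih\,\nabla_\xi a\cdot\nabla_x b$, mirroring at order $h$ the classical Poisson-bracket picture of \eqref{eq:ab12-PM2021}.

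For the commutator I would write its symbol as $a\#b - b\#a$ and apply \eqref{eq:abExp2-PM2021} with $N=2$ to each term, so that both remainders lie in $h^{3(1-2\delta)}S_\delta(m_1 m_2)$:
\begin{equation*}
a\#b = ab - ih\,\nabla_\xi a\cdot\nabla_x b + (-ih)^2\!\!\sum_{|\alpha|=2}\frac{1}{\alpha!}\,\partial_\xi^\alpha a\,\partial_x^\alpha b + h^{3(1-2\delta)}S_\delta(m_1 m_2),
\end{equation*}
together with the analogous identity for $b\#a$ obtained by swapping $a$ and $b$. The quadratic term I would rewrite using the multinomial identity of the type \eqref{eq:npExp-PM2021}, namely $\sum_{|\alpha|=2}\frac{2!}{\alpha!}\partial_\xi^\alpha a\,\partial_x^\alpha b = \sum_{j,k}\partial_{\xi_j}\partial_{\xi_k}a\,\partial_{x_j}\partial_{x_k}b = {\rm tr}(\nabla_\xi^2 a\cdot\nabla_x^2 b)$, so it equals $-\tfrac{h^2}{2}\,{\rm tr}(\nabla_\xi^2 a\cdot\nabla_x^2 b)$. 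Subtracting the two expansions: the terms $ab$ and $ba$ cancel since scalar symbols commute; the first-order terms combine to $-ih(\nabla_\xi a\cdot\nabla_x b - \nabla_\xi b\cdot\nabla_x a) = -ih\{a,b\} = \tfrac{h}{i}\{a,b\}$, using symmetry of the dot product to identify $\nabla_\xi b\cdot\nabla_x a$ with $\nabla_x a\cdot\nabla_\xi b$ and the relation $\tfrac{1}{i}=-i$; and the second-order terms give $-\tfrac{h^2}{2}\big({\rm tr}(\nabla_\xi^2 a\cdot\nabla_x^2 b)-{\rm tr}(\nabla_\xi^2 b\cdot\nabla_x^2 a)\big)$, which by cyclicity of the trace equals $-\tfrac{h^2}{2}\,{\rm tr}(\nabla_\xi^2 a\cdot\nabla_x^2 b-\nabla_x^2 a\cdot\nabla_\xi^2 b)$. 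Collecting terms yields the stated formula for the symbol of the commutator.

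The argument is essentially bookkeeping once Theorem~\ref{thm:CoWQs-PM2021} is available; the points that need attention are assembling the $|\alpha|=2$ multinomial sum into a trace of a product of Hessians, keeping the signs and the factors of $i$ and $h$ straight (so that $-ih\{a,b\}$ is correctly recognised as $\tfrac{h}{i}\{a,b\}$), and checking that $N=2$ in \eqref{eq:abExp2-PM2021} produces exactly the advertised remainder $h^{3(1-2\delta)}S_\delta(m_1 m_2)$. There is no genuine analytic obstacle beyond what is already established in Theorem~\ref{thm:CoWQs-PM2021}.
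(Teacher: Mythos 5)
Your proof is correct: reading off the $N=1$ and $N=2$ truncations of \eqref{eq:abExp2-PM2021}, antisymmetrizing $a\#b-b\#a$, and using the multinomial identity \eqref{eq:npExp-PM2021} together with the symmetry of the dot product and cyclicity of the trace is exactly what is required, and the remainder bookkeeping $h^{3(1-2\delta)}S_\delta(m_1m_2)$ follows directly from the theorem with $N=2$. The paper leaves this corollary as an exercise, and its written proof of the Weyl analogue (Corollary \ref{cor:CoWw-PM2021}) proceeds by the same bookkeeping, so your route coincides with the intended one.
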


The proof is left as an exercise.
Finally, we also have symbolic calculus for the adjoint.

\begin{thm}[Adjoint of standard quantizations\index{adjoint of quantizations}] \label{thm:AdQs-PM2021}
	Let $a \in S_\delta(m)$.
	Denote
	\[
	(a(x,hD) u, v) = (u, a^*(x,hD) v),
	\]
	then $a^* \in S_\delta(m)$, and when $h \to 0^+$ we have the semiclassical asymptotics,
	\begin{equation} \label{eq:AdEx2-PM2021}
	\boxed{\begin{aligned}
		a^*(x,\xi)
		& = \sum_{|\alpha| \leq N} \frac {h^{|\alpha|}} {\alpha!} D_x^\alpha \partial_\xi^\alpha \bar a(x,\xi) + h^{(1 - 2\delta)(N+1)} S_\delta(m).
		\end{aligned}}
	\end{equation}
\end{thm}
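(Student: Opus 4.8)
The plan is to reduce the statement to a ``reduction of variables'' computation, exactly as was done in the Kohn--Nirenberg setting in the proof of Theorem \ref{thm:AdT-PM2021}, and then feed the resulting double-symbol into Lemma \ref{lem:SeES-PM2021} (the semiclassical stationary phase expansion for $e^{\frac i {2h}\agl[QhD,hD]}$). First I would write out, for $u,v\in\scrS(\Rn)$,
\[
(a(x,hD)u, v)
= (2\pi h)^{-n}\iint e^{i(x-y)\cdot\xi/h}\, a(x,\xi)\, u(y)\,\overline{v(x)}\dif y\dif\xi\dif x,
\]
and exchange the roles of $x$ and $y$ (Fubini is legitimate here since $u,v\in\scrS$ and the $\xi$-integral is handled as an oscillatory integral). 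This produces
\[
(u, a^*(x,hD)v) = (u, T v), \qquad
Tv(y) := (2\pi h)^{-n}\iint e^{i(y-x)\cdot\xi/h}\,\overline{a(x,\xi)}\, v(x)\dif x\dif\xi,
\]
i.e.\ $T$ has the ``amplitude'' (double-symbol) form with amplitude $\overline{a}(y,x,\xi)$ depending on both $x$ and $y$. Uniqueness of $a^*$ follows from the injectivity of the quantization map (the semiclassical analogue of Lemma \ref{lem:sBi-PM2021}), which should be recorded as a one-line remark.

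Next I would establish the semiclassical analogue of Theorem \ref{thm:ReV-PM2021} (reduction of variables). Given an amplitude $c(x,y,\xi)\in S_\delta(m)$ in all three groups of variables, one defines
\[
c'(x,\eta) := (2\pi h)^{-n}\int e^{-iy\cdot\xi/h}\, c(x,x+y,\xi+\eta)\dif y\dif\xi,
\]
which, after rescaling, is exactly $(2\pi h)^{-n}\int e^{-iy\cdot\xi/h}\,c_{x,\eta}(y,\xi)\dif y\dif\xi$ with $c_{x,\eta}(y,\xi)=c(x,x+y,\xi+\eta)$; recognizing the phase $-y\cdot\xi = \tfrac12\agl[\tilde Q(y,\xi),(y,\xi)]$ with $\tilde Q = \left(\begin{smallmatrix}0&-I\\-I&0\end{smallmatrix}\right)$, this is $(2\pi h)^{-n}$ times $(2\pi h)^{n}\,e^{\frac i{2h}\agl[\tilde Q^{-1}hD,hD]}(c_{x,\eta})|_{(y,\xi)=0}$, so Lemma \ref{lem:SeES-PM2021} applies directly (note $\sgn\tilde Q=0$, $\det\tilde Q=\pm1$, $\tilde Q^{-1}=\tilde Q$). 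I would check that $c_{x,\eta}\in S_\delta(m)$ uniformly in the parameters $(x,\eta)$ — this is where Peetre's inequality (Lemma \ref{lem:Peetre-PM2021}) and the order-function bound from Definition \ref{defn:orfu-PM2021} enter, much as in the proof of Lemma \ref{lem:SeES-PM2021} — and then read off
\[
c'(x,\eta)\sim\sum_j\frac{(ih)^j}{j!}(D_y\cdot D_\xi)^j\, c(x,x+y,\xi+\eta)\big|_{(y,\xi)=0}
= \sum_\alpha\frac{h^{|\alpha|}}{\alpha!}D_y^\alpha\partial_\xi^\alpha c(x,y,\xi)\big|_{(y,\xi)=(x,\eta)},
\]
in $S_\delta(m)$, using \eqref{eq:npExp-PM2021}. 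Applying this with $c(x,y,\xi)=\overline{a}(y,\xi)$ gives
\[
a^*(x,\xi)\sim\sum_\alpha\frac{h^{|\alpha|}}{\alpha!}D_x^\alpha\partial_\xi^\alpha\bar a(x,\xi),
\]
which is \eqref{eq:AdEx2-PM2021}, and the fact that the asymptotics holds in $S_\delta(m)$ simultaneously shows $a^*\in S_\delta(m)$.

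The main obstacle I anticipate is purely bookkeeping rather than conceptual: verifying that $c_{x,\eta}(y,\xi)=\overline{a}(x+y,\xi+\eta)$ lies in $S_\delta(m)$ with seminorms \emph{uniform} in the parameters $(x,\eta)$, so that Lemma \ref{lem:SeES-PM2021} (whose statement is for symbols in the active variables only) can be invoked with $(x,\eta)$ frozen and the error terms controlled independently of them. The $h$-powers must also be tracked carefully, since the stationary phase expansion in Lemma \ref{lem:SeES-PM2021} produces a remainder of order $h^{(1-2\delta)(N+1)}$ only after choosing $N$ large enough to beat the $\delta$-loss from differentiating the amplitude $n+2N+3$ times — exactly the mechanism displayed in \eqref{eq:I1hdh1-PM2021}--\eqref{eq:I1hO-PM2021}. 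Once that uniformity is in hand the rest is a direct substitution. I would leave the proof of the transpose $^ta$ as an exercise, since it is obtained by the identical argument with $\overline{a}(x,\xi)$ replaced by $a(x,-\xi)$ and an extra sign $(-1)^{|\alpha|}$ coming from the change of variables $\xi\mapsto-\xi$ in the phase.
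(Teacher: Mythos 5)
Your proposal is correct and follows exactly the route the paper sets up (the paper itself omits this proof): reduce the adjoint to an amplitude depending on the integration variable, run the semiclassical analogue of the reduction of variables of Theorem \ref{thm:ReV-PM2021}, and evaluate the resulting $2n$-dimensional quadratic-phase integral by Lemma \ref{lem:SeES-PM2021}, precisely as in the proof of Theorem \ref{thm:CoWQs-PM2021}; your identification of the uniformity of the $S_\delta(m)$-bounds in the frozen parameters $(x,\eta)$ (order-function property plus Peetre) as the only real bookkeeping point is also the right one, and your final expansion agrees with \eqref{eq:AdEx2-PM2021}.

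One intermediate line needs fixing, though. With your convention $-y\cdot\xi=\tfrac12\agl[\tilde Q(y,\xi),(y,\xi)]$, $\tilde Q=\left(\begin{smallmatrix}0&-I\\-I&0\end{smallmatrix}\right)$, formula \eqref{eq:eQDD-PM2021} identifies $(2\pi h)^{-n}\int e^{-iy\cdot\xi/h}c_{x,\eta}(y,\xi)\dif y\dif\xi$ with $e^{\frac i{2h}\agl[(-\tilde Q^{-1})hD,hD]}c_{x,\eta}\big|_{(y,\xi)=0}$, i.e.\ the matrix in the exponent is $-\tilde Q^{-1}=\left(\begin{smallmatrix}0&I\\I&0\end{smallmatrix}\right)$ (the same $Q$ as in Theorem \ref{thm:CoWQs-PM2021}), not $\tilde Q^{-1}$: the phase in \eqref{eq:eQDD-PM2021} carries $e^{\frac{-i}{2h}\agl[Q^{-1}y,y]}$, so the sign must be absorbed into the matrix. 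As literally written, $e^{\frac i{2h}\agl[\tilde Q^{-1}hD,hD]}$ would generate $\sum_j\frac{(ih)^j}{j!}(-D_y\cdot D_\xi)^j$, hence an alternating factor $(-h)^{|\alpha|}$ incompatible with \eqref{eq:AdEx2-PM2021}. Your subsequent displayed expansion (with $+D_y\cdot D_\xi$, yielding $h^{|\alpha|}D_x^\alpha\partial_\xi^\alpha\bar a$) already corresponds to the correct matrix, so only that one identification needs to be corrected for internal consistency.
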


We omit the proof.

\section{Composition of the Weyl quantizations} \label{sec:Qels-PM2021}

The composition of the Weyl quantizations are more peculiar than that of the standard ones, and we explain this in \S \ref{subsec:Qels-PM2021}.
Before that, we make some preparation first.

\subsection{Symplectic 2-form} \label{subsec:sym2f-PM2021}

We define the symplectic product.
\begin{defn}[Symplectic product\index{symplectic product}] \label{defn:SyPro-PM2021}
	The \emph{symplectic product} is defined as
	\[
	\sigma \colon \R^{2n} \times \R^{2n} \to \R, \quad \sigma((x,\xi), (y, \eta)) := \xi \cdot y - x \cdot \eta.
	\]
\end{defn}

\begin{rem} \label{rem:SyPro-PM2021}
	The underlying space $\R^{2n}$ in Definition \ref{defn:SyPro-PM2021} can be generalized to be a tangent bundle.
	When $\R^{2n}$ is replaced by a tangent bundle $TM$ (or $T^*M$) where $M$ is $n$-dimensional (hence $TM$ is locally homeomorphic to $\R^{2n}$), $\sigma$ can be generalized as a bilinear form on $T_p(TM) \times T_p(TM)$ in the following way.
	For any $p \in TM$ and $(u_x, u_\xi)$, $(v_x, v_\xi) \in T_p(TM)$, we define
	\[
	\sigma \colon T(TM) \times T(TM) \to \R, \quad \sigma |_p((u_x, u_\xi), (v_x, v_\xi)) := u_\xi \cdot v_x - u_x \cdot u_\xi
	\]
	Locally speaking, when imposed a local coordinates system $\{x^j\}$ on $M$ and the corresponding coordinates $\{\xi_j\}$ on the fiber, it can be checked that $\sigma = \df \xi_j \wedge \df x^j$ (Einstein summation convention invoked) and it is invariant w.r.t.~the coordinates systems.
	This $\sigma$ is a 2-form on the tangent bundle and is called the \emph{symplectic 2-form}\index{symplectic 2-form}.
\end{rem}

In what follows, we only work on $\R^{2n}$ rather than on general manifolds.
If without otherwise stated, we will use the following notations,
\begin{equation} \label{eq:zwzeta-PM2021}
z = (x, \xi)^T \in \R^{2n}, \ w = (y, \eta)^T \in \R^{2n}, \ \zeta = (x, \xi, y, \eta)^T = (z^T, w^T)^T \in \R^{4n}.
\end{equation}
Note that all of $z$, $w$ and $\zeta$ are vertical vectors.
Definition \ref{defn:SyPro-PM2021} is equivalent to
\begin{equation} \label{eq:sigJ-PM2021}
\sigma(z, w)
= z^T
\cdot 
\begin{pmatrix}
0 & -I \\
I & 0
\end{pmatrix}
\cdot w
= z^T \cdot \sigma \cdot w
= \agl[\sigma^T z, w]
\end{equation}
where $I$ is the identity $n \times n$ matrix and
\boxed{
\sigma = 
\begin{pmatrix}
0 & -I \\
I & 0
\end{pmatrix}.
}
Note that $\sigma$ is non-degenerate and anti-symmetric, i.e.~$\sigma^{-1} = \sigma^T = -\sigma$.

We note that \eqref{eq:sigJ-PM2021} is homogeneous of degree 2 of $\zeta$ (i.e.~$\sigma(h \zeta) = h^2 \sigma(\zeta)$), but not in a quadratic form of $\zeta$ under a symmetric matrix ($\sigma$ is not symmetric).
We can achieve this by
\begin{equation*} 
\sigma(\zeta) = \sigma(z,w)
= z^T \cdot \sigma \cdot w
= \frac 1 2 \zeta^T
\cdot 
\begin{pmatrix}
0 & \sigma \\
\sigma^T & 0
\end{pmatrix}
\cdot \zeta
= \frac 1 2 \zeta^T \cdot \Sigma \cdot \zeta,
\end{equation*}
where
\begin{equation} \label{eq:Sg-PM2021}
	\boxed{\Sigma = \begin{pmatrix}
		0 & \sigma \\
		\sigma^T & 0
		\end{pmatrix}}
\end{equation}
is a $4n \times 4n$ matrix.
Note that $\Sigma$ is non-degenerate and symmetric satisfying $\Sigma^{-1} = \Sigma^T = \Sigma$, $\det \Sigma = 1$ and $\sgn \Sigma = 0$.
In summary, we have
\begin{equation} \label{eq:sSizw-PM2021}
\boxed{\sigma(z,w) = z^T \cdot \sigma \cdot w = \agl[\sigma^T z, w] = \frac 1 2 \zeta^T \cdot \Sigma \cdot \zeta = \frac 1 2 \agl[\Sigma \zeta, \zeta].}
\end{equation}

\subsection{The composition} \label{subsec:Qels-PM2021}

If we mimic the proof of Theorem \ref{thm:CoWQs-PM2021}, we would have
\begin{align*}
& a^w(x,hD) \circ b^w(x,hD) \varphi \\
= & (2\pi h)^{-2n} \int e^{i(x-z) \cdot \eta/h} \big( \int e^{i(x-y) \cdot (\xi - \eta)/h} a(\frac {x+y} 2,\xi) b(\frac {y+z} 2,\eta) \dif y \dif \xi \big) \varphi(z) \dif z \dif \eta \\
= & (2\pi h)^{-2n} \int e^{i(x-z) \cdot \eta/h} \big( \int e^{-iy \cdot \xi/h} a(\frac y 2 + x,\xi + \eta) b(\frac y 2 + \frac {x+z} 2,\eta) \dif y \dif \xi \big) \varphi(z) \dif z \dif \eta \\
= & (2\pi h)^{-n} \int e^{i(x-z) \cdot \eta/h} \cdot c(\frac {x+z} 2, \eta) \cdot \varphi(z) \dif z \dif \eta
= c^w(x,hD) \varphi
\end{align*}
where the $c$ should satisfy
\[
c(\frac {x+z} 2, \eta) = (2\pi h)^{-n} \int e^{\frac {-i} {2h} \agl[Q(y,\xi),(y,\xi)]} a(\frac y 2 + {\color{red}x},\xi + \eta) b(\frac y 2 + \frac {x+z} 2,\eta) \dif y \dif \xi.
\]
However, this argument doesn't work, because there is an additional $x$ on the RHS.

Instead, from $a^w(x,hD) \circ b^w(x,hD) = c^w(x,hD)$ we can proceed as follows,
\begin{align*}
& a^w(x,hD) \circ b^w(x,hD) \varphi \\
= & (2\pi h)^{-2n} \int e^{i[(x-y) \cdot \xi + (y-z) \cdot \eta]/h} a(\frac {x+y} 2,\xi) b(\frac {y+z} 2,\eta) \varphi(z) \dif y \dif z \dif \xi \dif \eta \\
= & (2\pi h)^{-n} \int e^{i(x-z) \cdot \zeta/h} c(\frac {x+z} 2, \zeta) \varphi(z) \dif z \dif \zeta,
\end{align*}
which, due to the arbitrary of $\varphi$, suggests
\begin{equation*}
\int e^{i(x-z) \cdot \zeta/h} c(\frac {x+z} 2, \zeta) \dif \zeta
= (2\pi h)^{-n} \int e^{i[(x-y) \cdot \xi + (y-z) \cdot \eta]/h} a(\frac {x+y} 2,\xi) b(\frac {y+z} 2,\eta) \dif y \dif \xi \dif \eta.
\end{equation*}
Readers may note that the LHS is an inverse Fourier transform.
We make the following change of variable before we perform the Fourier transform:
\begin{equation*}
\left\{\begin{aligned}
\frac {x-z} 2 & = s \\
\frac {x+z} 2 & = t
\end{aligned}\right.
\quad \Rightarrow \quad
\left\{\begin{aligned}
x & = t + s \\
z & = t - s
\end{aligned}\right.
\end{equation*}
so
\begin{align*}
& \int e^{i2s \cdot \zeta/h} c(t, \zeta) \dif \zeta \\
= & (2\pi h)^{-n} \int e^{i[(t+s-y) \cdot \xi + (y-t+s) \cdot \eta]/h} a(\frac {t+s+y} 2,\xi) b(\frac {y+t-s} 2,\eta) \dif y \dif \xi \dif \eta,
\end{align*}
and
\begin{align}
& c(t, \zeta) \nonumber \\
= & (2\pi h)^{-2n} \int e^{-i2s \cdot \zeta/h} e^{i[(t+s-y) \cdot \xi + (y-t+s) \cdot \eta]/h} a(\frac {t+s+y} 2,\xi) b(\frac {y+t-s} 2,\eta) \dif y \dif \xi \dif \eta \dif (2s) \nonumber \\
= & 2^n (2\pi h)^{-2n} \int e^{i[(y-t) \cdot (\eta - \xi) + s \cdot (\xi + \eta - 2 \zeta)]/h} a(\frac {t+y+s} 2,\xi) b(\frac {t+y-s} 2,\eta) \dif y \dif s \dif \xi \dif \eta \nonumber \\
= & 2^n (2\pi h)^{-2n} \int e^{i[y \cdot (\eta - \xi) + s \cdot (\xi + \eta)]/h} a(\frac {y+s} 2 + t,\xi + \zeta) b(\frac {y-s} 2 + t,\eta + \zeta) \dif y \dif s \dif \xi \dif \eta \nonumber \\
= & 2^n (2\pi h)^{-2n} \int e^{i[(y + s) \cdot \eta - (y-s) \cdot \xi]/h} a(\frac {y+s} 2 + t,\xi + \zeta) b(\frac {y-s} 2 + t,\eta + \zeta) \dif y \dif s \dif \xi \dif \eta \nonumber \\
= & 2^n (2\pi h)^{-2n} \int e^{i(2y' \cdot \eta - 2s' \cdot \xi)/h} a(y' + t,\xi + \zeta) b(s' + t,\eta + \zeta) 2^n \dif y' \dif s' \dif \xi \dif \eta \nonumber \\
= & (\pi h)^{-2n} \int e^{i2(y \cdot \eta - s \cdot \xi)/h} a(y + t,\xi + \zeta) b(s + t,\eta + \zeta) \dif y \dif \xi \dif s \dif \eta \nonumber \\
= & (\pi h)^{-2n} \int e^{i h^{-1} \agl[-2\Sigma (y, \xi, s, \eta), (y, \xi, s, \eta)]/2} a(y + t,\xi + \zeta) b(s + t,\eta + \zeta) \dif y \dif \xi \dif s \dif \eta, \label{eq:cte-PM2021}
\end{align}
where the $4n \times 4n$ matrix $\Sigma$ is defined in \eqref{eq:Sg-PM2021}, and we used Exercise \ref{ex:SDe-PM2021}.

Recall that $\Sigma^{-1} = \Sigma$, $\det \Sigma = 1$ and $\sgn \Sigma = 0$.
Now we apply Proposition \ref{prop:2-PM2021} to \eqref{eq:cte-PM2021} and obtain
\begin{align*}
c(t,\zeta)
& \sim \frac {(\pi h)^{-2n}  (2\pi h)^{2n}} {|\det (2\Sigma)|^{1/2}} \sum_j \frac {h^j} {j!} \left( \frac {\agl[(-2\Sigma)^{-1}D_{(y, \xi, s, \eta)}, D_{(y, \xi, s, \eta)}]} {2i} \right)^j \big( a(y + t,\xi + \zeta) \\
& \qquad \times b(s + t,\eta + \zeta) \big) |_{y=s=\xi=\eta= 0} \\
& = \sum_j \frac {(h/(2i))^j} {j!} \Big( \agl[-\frac 1 2 \Sigma D_{(y, \xi, s, \eta)}, D_{(y, \xi, s, \eta)}] \Big)^j \big( a(y,\xi) b(s,\eta) \big) |_{y=s=t,\, \xi=\eta= \zeta} \\
& = \sum_j \frac {(ih/2)^j} {j!} \big( D_s \cdot D_\xi + \nabla_y \cdot \nabla_\eta \big)^j \big( a(y,\xi) b(s,\eta) \big) |_{y=s=t,\, \xi=\eta= \zeta},
\end{align*}
where we used Exercise \ref{ex:SDe-PM2021}.
Here for simplicity we omitted the analysis of the remainder terms, and for the detailed analysis of the remainder, readers may refer to \cite{zw2012semi}.
Noticing that
\begin{align*}
(D_s \cdot D_\xi + \nabla_y \cdot \nabla_\eta)^j
& = \sum_{0 \leq k \leq j} \binom{j}{k} (D_s \cdot D_\xi)^{k} (\nabla_y \cdot \nabla_\eta)^{j-k} \\
& = \sum_{0 \leq k \leq j} \binom{j}{k} \sum_{|\alpha| = k} \frac {k!} {\alpha!} D_s^\alpha D_\xi^\alpha \sum_{|\beta| = j-k} \frac {(j-k)!} {\beta!} \nabla_y^\beta \nabla_\eta^\beta \quad (\text{by~} \eqref{eq:npExp-PM2021}) \\
& = \sum_{0 \leq k \leq j} \sum_{|\alpha| = k} \sum_{|\beta| = j-k} \frac {j!} {\alpha! \beta!} D_s^\alpha D_\xi^\alpha \partial_y^\beta \partial_\eta^\beta \\
& = \sum_{|\alpha| + |\beta| = j} \frac {j!} {\alpha! \beta!} D_s^\alpha D_\xi^\alpha \partial_y^\beta \partial_\eta^\beta,
\end{align*}
we can continue
\begin{align*}
c(t,\zeta)
& \sim \sum_{j} \sum_{|\alpha| + |\beta| = j} \frac {(ih/2)^j} {j!} \frac {j!} {\alpha! \beta!} D_s^\alpha D_\xi^\alpha \partial_y^\beta \partial_\eta^\beta \big( a(y,\xi) b(s,\eta) \big) |_{y=s=t,\, \xi=\eta= \zeta} \\
& = \sum_{\alpha, \beta} \frac {(ih/2)^{|\alpha| + |\beta|}} {\alpha! \beta!} \big[ D_x^\beta \partial_\xi^\alpha \big( a(x,\xi) \big)
D_x^\alpha \partial_\xi^\beta \big( b(x,\xi) \big) \big] \big|_{x=t,\, \xi = \zeta}.
\end{align*}

We have just proved the following result:
\begin{thm}[Composition of quantizations of semiclassical symbols\index{composition of quantizations}] \label{thm:CoWw-PM2021}
	Assume that $a \in S_\delta(m_1)$, $b \in S_\delta(m_2)$.
	Denote
	\[
	(a \#^w b)^w(x,hD) = a^w(x,hD) \circ b^w(x,hD),
	\]
	then $a \#^w b \in S_\delta(m_1 m_2)$ and
	\begin{equation} \label{eq:acobs2-PM2021}
	a \#^w b(x,\eta) = e^{i hA(D)} \big( a(x,\xi) b(y,\eta) \big) |_{y = x,\, \xi = \eta},
	\end{equation}
	where $A(D) = \frac 1 2 \sigma((D_x, D_\xi), (D_y, D_\eta)) = \frac 1 2 (D_y \cdot D_\xi - D_x \cdot D_\eta)$ and the $\sigma$ is defined in \eqref{eq:zwzeta-PM2021}.
	Moreover, when $h \to 0^+$ we have the semiclassical asymptotics,
	\begin{equation} \label{eq:abExp2w-PM2021}
	\boxed{\begin{aligned}
		a \#^w b(x,\eta)
		& = \sum_{j = 0}^N \frac {(ih/2)^j} {j!} (D_y \cdot D_\xi + \nabla_x \cdot \nabla_\eta)^j \big( a(x,\xi) b(y,\eta) \big) |_{y = x,\, \xi = \eta} \\
		& \quad + h^{(1-2\delta) (N+1)} S_\delta(m_1 m_2) \\
		& = \sum_{|\alpha| + |\beta| \leq N} \frac {(ih/2)^{|\alpha| + |\beta|}} {\alpha! \beta!} \partial_x^\beta D_\eta^\alpha a(x,\eta) D_x^\alpha \partial_\eta^\beta b(x,\eta) \\
		& \quad + h^{(1-2\delta) (N+1)} S_\delta(m_1 m_2).
		\end{aligned}}
	\end{equation}
\end{thm}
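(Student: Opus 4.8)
The plan is to make rigorous the formal computation carried out immediately before the statement. First I would record that $a^w(x,hD)\circ b^w(x,hD)$ is a well-defined operator $\scrS(\Rn)\to\scrS(\Rn)$ by Lemma \ref{lem:QSdel-PM2021}, and then, writing out the two defining integrals and performing the change of variables $s=(x-z)/2$, $t=(x+z)/2$ together with the substitutions that produced \eqref{eq:cte-PM2021}, reduce the composed operator to Weyl form with candidate symbol
\[
c(t,\zeta) = (\pi h)^{-2n}\int e^{\,i h^{-1}\langle -2\Sigma\,\nu,\nu\rangle/2}\, a(y+t,\xi+\zeta)\,b(s+t,\eta+\zeta)\dif{\nu}, \qquad \nu=(y,\xi,s,\eta),
\]
with $\Sigma$ the $4n\times 4n$ matrix of \eqref{eq:Sg-PM2021}. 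This reduction-of-variables step is to be interpreted in the oscillatory-integral sense and justified by Lemma \ref{lem:Osxy-PM2021} (treating $(t,\zeta)$ as the parameters), exactly as the analogous step for the standard composition in \S\ref{sec:Com-PM2021}; the only genuinely new feature is that the relevant quadratic phase now lives in $4n$ variables and is governed by the non-degenerate symmetric matrix $-2\Sigma$, for which $(-2\Sigma)^{-1}=-\tfrac12\Sigma$, $|\det(-2\Sigma)|=1$ and $\sgn(-2\Sigma)=0$.

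Second, I would verify that Proposition \ref{prop:2-PM2021} applies to this integral with $\lambda=1/h$, the fixed point being $\nu=0$. This is the $4n$-dimensional analogue of checking \eqref{eq:cas2-PM2021}: one estimates $\partial_{(y,s)}^{\alpha}\partial_{(\xi,\eta)}^{\beta}\big(a(y+t,\xi+\zeta)b(s+t,\eta+\zeta)\big)$ using the semiclassical symbol bounds on $a$ and $b$ (each derivative costing a factor $h^{-\delta}$) together with Peetre's inequality (Lemma \ref{lem:Peetre-PM2021}) and Lemma \ref{lem:abm-PM2021} to absorb the $\langle\xi+\zeta\rangle$- and $\langle\eta+\zeta\rangle$-type factors into $\langle\xi\rangle$, $\langle\eta\rangle$ and order-function factors, with constants uniform in $(t,\zeta)$. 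Once the polynomial-growth hypothesis \eqref{eq:as2-PM2021} is secured, Proposition \ref{prop:2-PM2021} yields the leading terms $\sum_{0\le j\le N}\tfrac{(ih/2)^j}{j!}\langle -\tfrac12\Sigma D_\nu, D_\nu\rangle^j\big(a(y,\xi)b(s,\eta)\big)\big|_{y=s=t,\ \xi=\eta=\zeta}$ plus a remainder. Identifying $\langle -\tfrac12\Sigma D_\nu, D_\nu\rangle = D_s\cdot D_\xi + \nabla_y\cdot\nabla_\eta$, expanding the $j$-th power by the multinomial theorem and \eqref{eq:npExp-PM2021}, and renaming variables then produces both forms of \eqref{eq:abExp2w-PM2021}; the compact expression \eqref{eq:acobs2-PM2021} in terms of $e^{ihA(D)}$ follows from the Gaussian-quantization identity \eqref{eq:eQDD-PM2021} applied in the $4n$ variables $\nu$.

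Third, and this is where the main work lies, I would pin down the remainder in the required form $h^{(1-2\delta)(N+1)}S_\delta(m_1m_2)$. The crude estimate coming out of Proposition \ref{prop:2-PM2021} is $\mathcal O\big(h^{N+1}\times\sup(\text{symbol quotients})\big)$, and since each derivative of $a$ or $b$ loses a power $h^{-\delta}$, the supremum carries a factor of order $h^{-\delta(4n+2N+3)}$, so the naive bound is only $\mathcal O\big(h^{(1-2\delta)(N+1)-c_\delta n}\big)$. The fix is the same device used in Lemma \ref{lem:SeES-PM2021} (cf.\ the passage around \eqref{eq:I1hO-PM2021}): to obtain the expansion up to a prescribed order $N'$, first run Proposition \ref{prop:2-PM2021} with $N$ so large that $(1-2\delta)(N+1)-c_\delta n\ge (1-2\delta)(N'+1)$, then reabsorb the intermediate terms $\sum_{N'+1\le j\le N}$ into the remainder, each being $h^{(1-2\delta)j}$ times an element of $S_\delta(m_1m_2)$ because $h^{j}\langle -\tfrac12\Sigma D_\nu,D_\nu\rangle^{j}$ applied to $a\,b$ costs at most $h^{2\delta j}$ over the $2j$ derivatives involved. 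Here $\delta<\tfrac12$ is exactly what makes the remainder order increase with $N$.

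Finally, to conclude that $c=a\#^w b$ genuinely belongs to $S_\delta(m_1m_2)$ (not merely that it has the stated asymptotics), I would differentiate $c(t,\zeta)$ in $t$ and $\zeta$ under the oscillatory integral, noting via Lemma \ref{lem:Osxy-PM2021} that this is legitimate and that $\partial_t,\partial_\zeta$ fall onto $a$ and $b$, producing integrals of exactly the same shape; rerunning the estimates of the previous paragraphs for each such derivative gives the bounds $|\partial_x^\gamma\partial_\xi^\delta(a\#^w b)|\lesssim h^{-\delta|\gamma+\delta|}m_1m_2$, which is the bootstrap already used in \eqref{eq:cc4-PM2021}--\eqref{eq:cc5-PM2021}. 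I expect the bookkeeping of the remainder in the $\delta$-loss regime to be the only real obstacle; the change-of-variables algebra and the multinomial identities are routine and are already displayed in the excerpt.
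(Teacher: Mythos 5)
Your proposal follows essentially the same route as the paper's own proof: the change of variables $s=(x-z)/2$, $t=(x+z)/2$ leading to the $4n$-dimensional quadratic-phase oscillatory integral \eqref{eq:cte-PM2021} with matrix $-2\Sigma$, the application of Proposition \ref{prop:2-PM2021} with $\lambda=1/h$, and the multinomial identity to obtain \eqref{eq:abExp2w-PM2021}. The only difference is that you additionally carry out the remainder bookkeeping in the spirit of Lemma \ref{lem:SeES-PM2021} and the verification that $a\#^w b\in S_\delta(m_1m_2)$, details which the paper explicitly omits (deferring to \cite{zw2012semi}), so your write-up is, if anything, more complete.
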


\begin{rem} \label{rem:CoWw-PM2021}
	Similar to Remark \ref{rem:CoWQs-PM2021}, the expansion \eqref{eq:abExp2w-PM2021} will be finite when either $a(x,\xi)$ or $b(x,\xi)$ is polynomial of $\xi$.
\end{rem}

Readers may refer to \cite[\S 4.11]{zw2012semi} for an another proof of Theorem \ref{thm:CoWw-PM2021}.

\begin{cor} \label{cor:CoWw-PM2021}
	The first two leading terms of $a \#^w b$ is
	\(
	\boxed{ab - ih \{a, b\}/2.}
	\)
	Assume $a \in S_\delta(m_1)$ and $b \in S_\delta(m_2)$, then the commutator of $a^w(x,hD)$ and $b^w(x,hD)$ is
	\[
	\boxed{[a^w(x,hD), b^w(x,hD)] = \frac h i \{a, b\}^w(x,hD) + h^{3(1-2\delta)} {\rm Op}_h(S_\delta(m_1 m_2)),}
	\]
	where $\{a, b\}$ is the Poisson bracket of $a$ of $b$.
\end{cor}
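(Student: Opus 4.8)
The plan is to read everything off the Weyl composition formula \eqref{eq:abExp2w-PM2021} in Theorem \ref{thm:CoWw-PM2021}, applied with $N=2$. First I would record
\[
a \#^w b(x,\xi) = \sum_{|\alpha|+|\beta|\le 2}\frac{(ih/2)^{|\alpha|+|\beta|}}{\alpha!\,\beta!}\,\partial_x^\beta D_\xi^\alpha a(x,\xi)\; D_x^\alpha \partial_\xi^\beta b(x,\xi) + h^{3(1-2\delta)}S_\delta(m_1 m_2),
\]
and extract the two lowest-order contributions: the $|\alpha|=|\beta|=0$ term is $ab$, while collecting the two families with $|\alpha|+|\beta|=1$ (namely $(|\alpha|,|\beta|)=(1,0)$ and $(0,1)$) and unwinding $D=\frac1i\nabla$ gives the order-one piece $-\frac{ih}{2}\bigl(\partial_\xi a\cdot\partial_x b-\partial_x a\cdot\partial_\xi b\bigr)=-\frac{ih}{2}\{a,b\}$, with $\{a,b\}$ the Poisson bracket in the statement. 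This already proves the first assertion, that the first two leading terms of $a\#^w b$ are $ab-ih\{a,b\}/2$.

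For the commutator I would subtract the analogous expansion of $b\#^w a$ from that of $a\#^w b$, using linearity of the quantization map $c\mapsto c^w(x,hD)$ so that $[a^w(x,hD),b^w(x,hD)]=(a\#^w b-b\#^w a)^w(x,hD)$. The structural point is the $a\leftrightarrow b$ parity: writing the $N=2$ expansion of $b\#^w a$ and relabeling $\alpha\leftrightarrow\beta$ in its sum, the term indexed by $(\alpha,\beta)$ acquires a factor $(-1)^{|\beta|}$ where the matching term of $a\#^w b$ carries $(-1)^{|\alpha|}$ — the discrepancy coming from the $i^{-|\cdot|}$ factors when converting $D$- to $\partial$-derivatives. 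Hence the two terms are \emph{equal} when $|\alpha|+|\beta|$ is even and \emph{opposite} when it is odd; equivalently $a\#^w b-b\#^w a=2\sinh\bigl(ihA(D)\bigr)\bigl(a(x,\xi)b(y,\eta)\bigr)\big|_{y=x,\xi=\eta}+h^{3(1-2\delta)}S_\delta(m_1m_2)$, which has only odd powers of $h$. Consequently the order-$0$ part $ab-ba$ and the order-$2$ parts cancel identically, the order-$1$ part is $2\cdot\bigl(-\frac{ih}{2}\{a,b\}\bigr)=\frac hi\{a,b\}$, and the truncation error is $h^{3(1-2\delta)}S_\delta(m_1m_2)$ by Theorem \ref{thm:CoWw-PM2021} with $N=2$. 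Since $\{a,b\}$ is a sum of products of first derivatives of $a$ and $b$, Lemma \ref{lem:Sycp-PM2021} gives $\{a,b\}\in S_\delta(m_1m_2)$, so $\frac hi\{a,b\}$ is exactly the symbol of $\frac hi\{a,b\}^w(x,hD)$, and we conclude
\[
[a^w(x,hD),b^w(x,hD)] = \tfrac hi\{a,b\}^w(x,hD) + h^{3(1-2\delta)}\,{\rm Op}_h(S_\delta(m_1m_2)).
\]

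The genuinely delicate points are (i) the sign bookkeeping when passing between $D$- and $\partial$-derivatives and when relabeling the multi-indices, since a careless computation would spoil the cancellation, and (ii) ensuring that the order-$2$ terms of $a\#^w b$ and $b\#^w a$ coincide \emph{identically}, not merely modulo $h^{3(1-2\delta)}S_\delta$ — this is exactly what the even/odd parity observation guarantees, because both are the same explicit second-order differential expression in $a$ and $b$. I expect the parity argument to be the heart of the proof; the remaining steps are bookkeeping already performed in the derivation preceding Theorem \ref{thm:CoWw-PM2021}.
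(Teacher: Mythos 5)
Your proposal is correct, and it follows the same overall route as the paper: apply the expansion \eqref{eq:abExp2w-PM2021} of Theorem \ref{thm:CoWw-PM2021} with $N=2$, read off $ab - ih\{a,b\}/2$ from the $|\alpha|+|\beta|\leq 1$ terms, and obtain the commutator formula by subtracting the expansion of $b\#^w a$, with the remainder absorbed into $h^{3(1-2\delta)}S_\delta(m_1m_2)$. Where you diverge is the cancellation mechanism at order $h^2$: the paper writes out the $|\alpha|+|\beta|=2$ contribution explicitly as a trace-of-Hessians expression and checks by inspection that it is symmetric under $a\leftrightarrow b$ (using ${\rm tr}(AB)={\rm tr}(BA)$), whereas you prove at one stroke that every even-order term is symmetric and every odd-order term antisymmetric, by relabeling $\alpha\leftrightarrow\beta$ in the $b\#^w a$ sum and tracking the $(-1)^{|\alpha|}$ versus $(-1)^{|\beta|}$ factors produced by $D=\frac1i\partial$ (your formal identity $a\#^w b-b\#^w a=2\sinh\bigl(ihA(D)\bigr)(a\otimes b)|_{\rm diag}+\dots$). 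This is the same computation packaged more structurally; it buys you the cancellation of \emph{all} even orders without computing them, which is exactly why the remainder is $h^{3(1-2\delta)}$ rather than $h^{2(1-2\delta)}$, the point the paper makes only for the single order-$2$ term. One small inaccuracy worth fixing: Lemma \ref{lem:Sycp-PM2021} does not give $\{a,b\}\in S_\delta(m_1m_2)$, since each bare first derivative costs a factor $h^{-\delta}$; what one actually gets is $\{a,b\}\in h^{-2\delta}S_\delta(m_1m_2)$, equivalently $\frac hi\{a,b\}\in h^{1-2\delta}S_\delta(m_1m_2)$, which is consistent with the expansion. This does not affect the statement of the corollary, because $\{a,b\}^w(x,hD)$ is well defined in any case, but the appeal to that lemma should be rephrased.
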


The remainder in the commutator expression looks out of expectation; it is of order $h^{3(1-2\delta)}$ rather that $h^{2(1-2\delta)}$.
This is because the second order leading term is in fact zero.

\begin{proof}
	From \eqref{eq:abExp2w-PM2021} we have
	\begin{align*}
	a \#^w b
	& = ab + \frac {ih} 2 D_\xi a \cdot D_x b + \frac {ih} 2 \nabla_x a \cdot \nabla_\xi b \\
	& \quad + \frac {(ih/2)^2} {2!} [(\nabla_x \cdot \nabla_\eta)^2 + (\nabla_\xi \cdot \nabla_y)^2 - 2 (\nabla_x \cdot \nabla_\eta) (\nabla_\xi \cdot \nabla_y)] (ab) \\
	& \quad + h^{3(1-2\delta)} S_\delta(m_1 m_2) \\
	& = ab - ih \{a, b\}/2 \\
	& \quad + \frac {(ih/2)^2} {2!} [\mathop{\rm tr} (\nabla_x^2 a \cdot \nabla_\eta^2 b) + \mathop{\rm tr} (\nabla_\xi^2 a \cdot \nabla_y^2 b) - 2 \mathop{\rm tr} (\nabla_{(x, \xi)}^2 a \cdot \nabla_{(y, \eta)}^2 b)] |_{y = x,\, \xi = \eta} \\
	& \quad + h^{3(1-2\delta)} S_\delta(m_1 m_2) \\
	& = ab - ih \{a, b\}/2 + \frac {(ih/2)^2} {2!} [\mathop{\rm tr} (\nabla_x^2 a \cdot \nabla_\eta^2 b) + \mathop{\rm tr} (\nabla_x^2 b \cdot \nabla_\eta^2 a) - 2 \mathop{\rm tr} (\nabla_{(x, \eta)}^2 a \cdot \nabla_{(x, \eta)}^2 b)] \\
	& \quad + h^{3(1-2\delta)} S_\delta(m_1 m_2),
	\end{align*}
	hence
	\[
	a \#^w b - b \#^w a
	= -ih\{a, b\} + h^{3(1-2\delta)} S_\delta(m_1 m_2).
	\]
	The proof is complete.
\end{proof}

\subsection{Specialties of Weyl quantization} \label{subsec:SWQ-PM2021}

\begin{lem} \label{lem:WeS-PM2021}
	For $u$, $v \in \scrS(\Rn)$, we have
	\[
	(a^w(x, hD)u, v) = (u, \bar a^w(x, hD) v).
	\]
\end{lem}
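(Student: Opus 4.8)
The plan is to unwind both sides of the identity directly from Definition \ref{defn:QDe-PM2021} (the Weyl quantization \eqref{eq:QDe1-PM2021}) together with the Hermitian pairing $(f,g) = \int f\,\overline g$, and then observe that the only ingredient beyond bookkeeping is the invariance of the midpoint $\tfrac{x+y}{2}$ under interchanging the two spatial variables. First I would write
\[
(a^w(x,hD)u, v) = (2\pi h)^{-n} \int e^{i(x-y)\cdot\xi/h}\, a(\tfrac{x+y}{2},\xi)\, u(y)\, \overline{v(x)}\ \dif y\, \dif\xi\, \dif x,
\]
where the triple integral is read as an oscillatory integral in the sense of Chapter \ref{ch:OSInt-PM2021} (equivalently, insert the cutoff $\chi(\epsilon\xi)$ and pass to the limit $\epsilon\to 0^+$ as in Definition \ref{defn:OsD-PM2021}); this is legitimate because $u,v\in\scrS(\Rn)$ and $a\in S_\delta(m)$, and in any event, by Lemma \ref{lem:QSdel-PM2021}, the left-hand side is a genuinely convergent pairing of a Schwartz function with a Schwartz function.

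Next I would expand the right-hand side the same way. Since $\bar a^w(x,hD)v(x) = (2\pi h)^{-n}\int e^{i(x-y)\cdot\xi/h}\,\overline{a(\tfrac{x+y}{2},\xi)}\,v(y)\,\dif y\,\dif\xi$, taking complex conjugates flips the sign of the phase and undoes the conjugation of the symbol, so that
\[
(u,\bar a^w(x,hD)v) = (2\pi h)^{-n}\int e^{i(y-x)\cdot\xi/h}\, a(\tfrac{x+y}{2},\xi)\, u(x)\, \overline{v(y)}\ \dif y\, \dif\xi\, \dif x.
\]
Comparing with the first display, the two expressions differ only by the renaming $x\leftrightarrow y$ of the dummy integration variables: this renaming sends $e^{i(x-y)\cdot\xi/h}\mapsto e^{i(y-x)\cdot\xi/h}$ while leaving $a(\tfrac{x+y}{2},\xi)$ untouched, precisely because $\tfrac{x+y}{2}$ is symmetric in $x$ and $y$. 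Hence the two sides coincide.

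I expect the only genuine work to be the justification of exchanging the order of integration and renaming the dummy variables inside an integral that is a priori only oscillatory, not absolutely convergent; this is dealt with by carrying the regularization $\chi(\epsilon\xi)$ along (for which Fubini and the relabelling are elementary) and then passing to the limit using the dominated-convergence estimates already set up in Chapter \ref{ch:OSInt-PM2021}. I would finish with a short remark explaining that this clean self-transpose behaviour of $a\mapsto a^w$ — in sharp contrast with the standard quantization, whose adjoint acquires the full asymptotic correction recorded in Theorem \ref{thm:AdQs-PM2021} — is exactly an artifact of the $\tfrac{x+y}{2}$ symmetry, and that in particular $a^w(x,hD)$ is (formally) self-adjoint whenever the symbol $a$ is real-valued.
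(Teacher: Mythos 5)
Your argument is correct: writing both pairings as (regularized) oscillatory integrals, conjugating, and relabelling the dummy variables $x\leftrightarrow y$ --- which flips the phase back while leaving the Weyl midpoint $\tfrac{x+y}{2}$ fixed --- is exactly the intended argument, and since the paper leaves this lemma as an exercise there is no competing proof to compare against. Your treatment of the only delicate point (inserting $\chi(\epsilon\xi)$ so that Fubini and the relabelling apply to an absolutely convergent integral, then passing to the limit $\epsilon\to 0^+$ using the uniform cutoff estimates of Chapter \ref{ch:OSInt-PM2021} and the mapping properties in Lemma \ref{lem:QSdel-PM2021}) is the right way to make the formal manipulation rigorous, and your closing remark that real-valued symbols give formally self-adjoint Weyl quantizations is a correct and worthwhile observation.
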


The proof is left as an exercise.

The Weyl quantization is the correct generalization of a solution operator of an ODE.
It is straightforward to check that $v(x,t) = e^{t f(x)} u(x)$ is the solution of an ODE
\begin{equation*}
\left\{\begin{aligned}
\partial_t v(x,t) & = f(x) v(x,t), \quad t \in \R, \\
v(x,0) & = u(x).
\end{aligned} \right.
\end{equation*}
Recall the linear form $l(x,\xi) = x^* \cdot x + \xi^* \cdot \xi$.
Now we would like to generalize the aforementioned idea by replacing $f(x)$ with an operator $\frac i h l(x,hD)$ and define $e^{\frac {it} h l(x,hD)} u$ as the unique solution of the corresponding ODE.
But in order to avoid notational confusion between ``$e^{\frac {it} h l(x,hD)} u$'' and ``$(e^{\frac {it} h l}) (x,hD) u$'' defined in \eqref{eq:QDe2-PM2021}, we deprecate the use of $e^{\frac {it} h l(x,hD)} u$.
We will see from the following result that the correct generalization will be the Weyl quantization $(e^{\frac {it} h l})^w (x,hD) u$ instead of the standard quantization $(e^{\frac {it} h l}) (x,hD) u$.

\begin{lem} \label{lem:Weylequiv-PM2021}
	Let $l(x,\xi) = x^* \cdot x + \xi^* \cdot \xi$ for fixed $x^*$, $\xi^* \in \Rn$.
	For every $u \in \scrS$, the Weyl quantization $(e^{\frac {it} h l})^w (x,hD) u$ is the unique solution of the ODE
	\begin{equation} \label{eq:ODEv-PM2021}
	\left\{\begin{aligned}
	\partial_t v(x,t) & = \frac i h l(x,hD) v(x,t), \quad t \in \R, \\
	v(x,0) & = u(x).
	\end{aligned} \right.
	\end{equation}
	\begin{equation} \label{eq:elw-PM2021}
	\boxed{e^{\frac {it} h l(x,hD)} = (e^{\frac {it} h l})^w (x,hD).}
	\end{equation}
	Specifically, we have
	\begin{equation} \label{eq:ewEx-PM2021}
	\boxed{ (e^{\frac {it} h l})^w (x,hD) u
		= e^{\frac i h [(x^* \cdot x)t + (x^* \cdot \xi^*)t^2/2]} u(x + \xi^* t). }
	\end{equation}
	And we have the composition relation
	\begin{equation} \label{eq:elm-PM2021}
	\boxed{e^{\frac {it} h l(x,hD)} e^{\frac {it} h m(x,hD)} = e^{\frac {it^2} {2h} \sigma(l,m)} e^{\frac {it} h (l+m)(x,hD)},}
	\end{equation}
	where the $\sigma$ is given in Definition \ref{defn:SyPro-PM2021}.
\end{lem}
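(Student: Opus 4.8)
The plan is to reduce the whole lemma to the closed form \eqref{eq:ewEx-PM2021}, which I would prove first straight from the definition \eqref{eq:QDe1-PM2021} of the Weyl quantization; once that is in hand, the evolution‑equation characterization, the uniqueness, and the composition identity \eqref{eq:elm-PM2021} are all elementary. I expect the only genuinely non‑routine point to be Step~1: the symbol $e^{\frac{it}{h}l}$ is \emph{not} a symbol in any class $S_\delta(m)$ of \S\ref{subsec:Sycl-PM2021} (its $\xi$‑derivatives grow like $h^{-|\alpha|}$), so the $\xi$‑integration has to be understood in the oscillatory‑integral sense.

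\textbf{Step 1: the explicit formula.} With $l(x,\xi)=x^{*}\cdot x+\xi^{*}\cdot\xi$, \eqref{eq:QDe1-PM2021} gives
\begin{align*}
(e^{\frac{it}{h}l})^{w}(x,hD)u(x)
&=(2\pi h)^{-n}\iint e^{i(x-y)\cdot\xi/h}\,e^{\frac{it}{h}(x^{*}\cdot\frac{x+y}{2}+\xi^{*}\cdot\xi)}\,u(y)\diff{y}{\xi}\\
&=(2\pi h)^{-n}\iint e^{i\xi\cdot(x-y+t\xi^{*})/h}\,e^{\frac{it}{h}x^{*}\cdot\frac{x+y}{2}}\,u(y)\diff{y}{\xi}.
\end{align*}
After the linear change of variables $\xi=h\zeta$, $y=x+t\xi^{*}-w$ the right‑hand side becomes $(2\pi)^{-n}\iint e^{i\zeta\cdot w}F(w)\diff{w}{\zeta}$ with $F(w)=e^{\frac{it}{h}x^{*}\cdot(x+t\xi^{*}/2-w/2)}u(x+t\xi^{*}-w)$, and $F\in\scrS(\Rn)$ as the product of a function with bounded derivatives and a Schwartz function (cf.\ Lemma \ref{lem:fSS-PM2021}; so $F\in S_1^0(\Rn)$ in the sense of Remark \ref{rem:OsE-PM2021}). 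Hence Lemma \ref{lem:Tlf-PM2021} evaluates the double integral to $F(0)=e^{\frac{it}{h}x^{*}\cdot(x+t\xi^{*}/2)}u(x+t\xi^{*})$, which is exactly \eqref{eq:ewEx-PM2021}; along the way one reads off $(e^{\frac{it}{h}l})^{w}(x,hD)\colon\scrS\to\scrS$.

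\textbf{Step 2: the evolution equation and uniqueness.} Let $v(x,t)$ be the right‑hand side of \eqref{eq:ewEx-PM2021} and write $\Phi=\frac1h[(x^{*}\cdot x)t+(x^{*}\cdot\xi^{*})t^{2}/2]$. By \eqref{eq:lw-PM2021}, $\tfrac ih l(x,hD)=\tfrac ih(x^{*}\cdot x)+\xi^{*}\cdot\nabla_{x}$, so I would simply differentiate the closed form: both $\partial_{t}v$ and $\xi^{*}\cdot\nabla_{x}v$ contain the term $e^{i\Phi}\,\xi^{*}\cdot(\nabla u)(x+\xi^{*}t)$ coming from the argument of $u$, while the remaining (phase) parts differ by exactly $\tfrac ih(x^{*}\cdot x)v$; hence $\partial_{t}v=\tfrac ih l(x,hD)v$ and $v(x,0)=u(x)$. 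For uniqueness, since $l$ is real‑valued $l(x,hD)=l^{w}(x,hD)$ is symmetric by Lemma \ref{lem:WeS-PM2021}, so $\tfrac ih l(x,hD)$ is skew‑symmetric; then any solution $w$ of the homogeneous version of \eqref{eq:ODEv-PM2021} satisfies $\tfrac{d}{dt}\nrm[L^{2}]{w}^{2}=2\Re(\tfrac ih l(x,hD)w,w)=0$, so $w\equiv0$. This proves the ODE characterization and justifies the notation \eqref{eq:elw-PM2021}.

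\textbf{Step 3: the composition law.} Put $l(x,\xi)=x^{*}\cdot x+\xi^{*}\cdot\xi$ and $m(x,\xi)=y^{*}\cdot x+\eta^{*}\cdot\xi$. Applying \eqref{eq:ewEx-PM2021} first to $m$ and then to $l$ (legitimate since both operators preserve $\scrS$), the two translations compose to $u(x+(\xi^{*}+\eta^{*})t)$ and the phases accumulate to $\tfrac ih\bigl[((x^{*}+y^{*})\cdot x)t+c\,t^{2}\bigr]$ with $c=\tfrac12 x^{*}\cdot\xi^{*}+y^{*}\cdot\xi^{*}+\tfrac12 y^{*}\cdot\eta^{*}$. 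On the other hand $l+m$ corresponds to the covector $(x^{*}+y^{*},\xi^{*}+\eta^{*})$, so \eqref{eq:ewEx-PM2021} gives for $e^{\frac{it^{2}}{2h}\sigma(l,m)}e^{\frac{it}{h}(l+m)(x,hD)}u$ the same translation and the phase $\tfrac ih\bigl[((x^{*}+y^{*})\cdot x)t+c'\,t^{2}\bigr]$ with $c'=\tfrac12\sigma(l,m)+\tfrac12(x^{*}+y^{*})\cdot(\xi^{*}+\eta^{*})$ and $\sigma(l,m)=\xi^{*}\cdot y^{*}-x^{*}\cdot\eta^{*}$ read off Definition \ref{defn:SyPro-PM2021}. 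Expanding $c'$ and using $\xi^{*}\cdot y^{*}=y^{*}\cdot\xi^{*}$ shows $c=c'$, which is \eqref{eq:elm-PM2021}. This last step is a short bilinear‑algebra verification; as noted above, the one place requiring care is Step~1, where the change of variables is exactly what makes the oscillatory integral of the non‑standard symbol $e^{\frac{it}{h}l}$ reduce cleanly to Lemma \ref{lem:Tlf-PM2021}.
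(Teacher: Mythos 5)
Your proposal is correct, and all three boxed identities come out with the right constants (in particular your bookkeeping $c=c'$ in Step 3 reproduces the paper's symplectic factor with the convention of Definition \ref{defn:SyPro-PM2021}). The overall strategy — reduce everything to the closed form \eqref{eq:ewEx-PM2021} and then do phase bookkeeping for \eqref{eq:elm-PM2021} — is the same as the paper's, and your Step 3 is essentially identical to the paper's composition computation. Where you differ is in how \eqref{eq:ewEx-PM2021} and the ODE characterization are connected. The paper first \emph{solves} \eqref{eq:ODEv-PM2021} by the method of characteristics (rewriting it as a transport equation and integrating the resulting one-dimensional ODE along $\gamma(t)=(x-\xi^*t,t)$), and separately evaluates $(e^{\frac{it}{h}l})^w(x,hD)u$ by the formal identity $(2\pi h)^{-n}\int e^{ia\cdot\xi/h}\dif\xi=\delta(a)$; equality of the two expressions then gives \eqref{eq:elw-PM2021}, with uniqueness left implicit in the characteristics construction. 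You instead evaluate the Weyl quantization first, via a linear change of variables that reduces the double integral to $\int e^{i\zeta\cdot w}F(w)\dif w\dif\zeta$ with $F\in\scrS$, so that Lemma \ref{lem:Tlf-PM2021} applies — which is arguably tidier within the paper's own oscillatory-integral framework than the $\delta$-identity — and then verify the ODE by direct differentiation, supplying an explicit uniqueness proof from the skew-symmetry of $\frac ih l(x,hD)$ (via \eqref{eq:lw-PM2021} and Lemma \ref{lem:WeS-PM2021}) and an energy identity. The only caveat, shared with the paper since the lemma does not fix a solution class, is that your uniqueness argument is for $\scrS$-valued (or sufficiently decaying $L^2$-valued) solutions, for which the pairing $(\frac ih l(x,hD)w,w)$ is defined and purely imaginary, whereas the characteristics argument yields pointwise uniqueness for $C^1$ solutions; in the intended setting $u\in\scrS$ this is immaterial.
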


\begin{proof}
	First, we solve \eqref{eq:ODEv-PM2021}.
	By this ODE we have
	\(
	\partial_t v(x,t) = \frac i h (x^* \cdot x + \xi^* \cdot hD) v(x,t),
	\)
	which gives a transport equation
	\(
	(\partial_t - \xi^* \cdot \nabla) v(x,t) = \frac i h x^* \cdot x v(x,t).
	\)
	Let $\gamma \colon t \in \R \mapsto (x - \xi^* t,t) \in \R^{n+1}$ be a curve, then we can obtain
	\begin{equation*}
	\frac {\df} {\df t} \big( v(\gamma(t)) \big)
	= (\partial_t - \xi^* \cdot \nabla) v(\gamma(t))
	= \frac i h x^* \cdot (x - 
	\xi^* t) v(\gamma(t)).
	\end{equation*}
	This is a one-dimensional ODE and the solution is straightforward,
	\[
	v(\gamma(t)) = e^{\frac i h [(x^* \cdot x)t - (x^* \cdot \xi^*)t^2/2]} v(\gamma(0)),
	\]
	which is equivalent to
	\[
	v(x - \xi^* t,t) = e^{\frac i h [(x^* \cdot x)t - (x^* \cdot \xi^*)t^2/2]} v(x,0).
	\]
	By replacing $x$ with $x + \xi^* t$ and substituting the boundary condition $v(x,0) = u(x)$ into the solution above, we obtain
	\begin{align}
	v(x,t)
	& = v((x + \xi^* t) - \xi^* t,t) \nonumber \\
	& = e^{\frac i h [x^* \cdot (x + \xi^* t)t - (x^* \cdot \xi^*)t^2/2]} u(x + \xi^* t) \nonumber \\
	& = e^{\frac i h [(x^* \cdot x)t + (x^* \cdot \xi^*)t^2/2]} u(x + \xi^* t) \nonumber \\
	\Rightarrow
	e^{\frac {it} h l(x,hD)} u & = e^{\frac i h [(x^* \cdot x)t + (x^* \cdot \xi^*)t^2/2]} u(x + \xi^* t). \label{eq:ODEvE-PM2021}
	\end{align}

	Second, we compute $(e^{\frac {it} h l})^w (x,hD) u$.
	We have
	\begin{align*}
	(e^{\frac {it} h l})^w (x,hD) u
	& = (2\pi h)^{-n} \iint e^{i(x-y) \cdot \xi/h} e^{\frac {it} h [x^* \cdot(\frac {x+y} 2) + \xi^* \cdot \xi]} u(y) \dif y \dif \xi \nonumber \\
	& = e^{\frac {it} h (x^* \cdot \frac x 2)} (2\pi h)^{-n} \iint e^{i (x + \xi^*t-y) \cdot \xi/h} \cdot e^{\frac {it} h (x^* \cdot \frac y 2)} u(y) \dif y \dif \xi \nonumber \\
	& = e^{\frac {it} h (x^* \cdot \frac x 2)} \int \delta(x + \xi^*t-y) e^{\frac {it} h (x^* \cdot \frac y 2)} u(y) \dif y \quad \big( (2\pi h)^{-n} \int e^{ia \cdot \xi/h} \dif \xi = \delta(a) \big) \nonumber \\
	& = e^{\frac {it} h (x^* \cdot \frac x 2)} e^{\frac {it} h x^* \cdot \frac {(x + \xi^*t)} 2} u(x + \xi^*t)
	= e^{\frac i h [(x^* \cdot x)t + (x^* \cdot \xi^*)t^2/2]} u(x + \xi^* t),
	\end{align*}
	which is \eqref{eq:ewEx-PM2021}.
	From \eqref{eq:ODEvE-PM2021} and \eqref{eq:ewEx-PM2021} we arrive at the first equality in the theorem.
	
	For the composition relation, let $l(x,\xi) = x_1^* \cdot x + \xi_1^* \cdot \xi$ and $m(x,\xi) = x_2^* \cdot x + \xi_2^* \cdot \xi$, then from \eqref{eq:ODEvE-PM2021} we have
	\begin{equation*}
	e^{\frac {it} h (l+m)(x,hD)} u
	= e^{\frac i h [(x_1^* + x_2^*) \cdot xt + (x_1^* + x_2^*) \cdot (\xi_1^* + \xi_2^*)t^2/2]} u(x + (\xi_1^* + \xi_2^*) t),
	\end{equation*}
	and
	\begin{align*}
	e^{\frac {it} h l(x,hD)} e^{\frac {it} h m(x,hD)} u
	& = (e^{\frac {it} h l})^w (x,hD) \circ e^{\frac i h [(x_2^* \cdot x)t + (x_2^* \cdot \xi_2^*)t^2/2]} u(x + \xi_2^* t) \\
	& = e^{\frac i h [(x_1^* \cdot x)t + (x_1^* \cdot \xi_1^*)t^2/2]} e^{\frac i h [(x_2^* \cdot (x + \xi_1^* t))t + (x_2^* \cdot \xi_2^*)t^2/2]} u(x + \xi_1^* t + \xi_2^* t) \\
	& = e^{\frac i h [(x_1^* + x_2^*) \cdot x t + (x_1^* \cdot \xi_1^* + 2 x_2^* \cdot \xi_1^* + x_2^* \cdot \xi_2^*)t^2/2]} u(x + (\xi_1^* + \xi_2^*) t) \\
	& = e^{\frac i h [(x_1^* + x_2^*) \cdot x t + (x_1^* + x_2^*) (x_2^* + \xi_2^*)t^2/2]} u(x + (\xi_1^* + \xi_2^*) t) \cdot e^{\frac i h [(x_2^* \cdot \xi_1^* - x_1^* \cdot \xi_2^*)t^2/2]} \\
	& = e^{\frac {it^2} {2h} \sigma((x_1^*,\xi_1^*),(x_2^*,\xi_2^*))} e^{\frac {it} h (l+m)(x,hD)} u.
	\end{align*}
	Readers should note that the $\sigma$ here is the symplectic product defined in Definition \ref{defn:SyPro-PM2021}.
	
	The proof is complete.
\end{proof}

By using \eqref{eq:elw-PM2021}, we can represent the corresponding Weyl quantization of a symbol by its Fourier transform.

\begin{lem}[Fourier decomposition of $a^w$] \label{lem:ale-PM2021}
	For any $a(x,\xi) \in \scrS(\R^{2n})$, we have
	\begin{equation*}
	\boxed{a^w(x,hD) = (2\pi h)^{-n} \int_{\R^{2n}} \calF_h a(l) e^{\frac i h l(x,hD)} \dif l,}
	\end{equation*}
	where $l = (x^*,\xi^*) \in \R^{2n}$ and $l(x,hD)$ is defined as \eqref{eq:lw-PM2021}.
	This can directly generalize to the case where $a(x,\xi) \in \scrS'(\R^{2n})$ and then $\agl[ a^w(x,hD) u, v] = (2\pi h)^{-2n} \hat a( \agl[e^{\frac i h l(x,hD)}u, v] )$ for $\forall u,v \in \scrS(\Rn)$.
\end{lem}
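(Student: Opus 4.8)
The plan is to read the claimed identity as the image, under the Weyl quantization, of the semiclassical Fourier inversion formula for the symbol $a$, each plane‑wave symbol being quantized by means of Lemma~\ref{lem:Weylequiv-PM2021}. Concretely, since $a \in \scrS(\R^{2n})$ we also have $\calF_h a \in \scrS(\R^{2n})$, and the semiclassical Fourier inversion of Definition~\ref{defn:SFT-PM2021}, applied in dimension $2n$, gives — writing $z = (x,\xi)$, $l = (x^*,\xi^*)$, so that $l\cdot z = x^*\cdot x + \xi^*\cdot\xi = l(x,\xi)$ —
\[
a(x,\xi) = (2\pi h)^{-n}\int_{\R^{2n}} \calF_h a(l)\, e^{\frac i h l(x,\xi)} \dif l .
\]
Thus $a$ is realized as an absolutely convergent superposition of the linear‑phase symbols $e_l(x,\xi) := e^{\frac i h l(x,\xi)}$.

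Since the Weyl quantization $a \mapsto a^w(x,hD)$ is linear in the symbol, the next step is to pass the quantization through the $l$‑integral, obtaining $a^w(x,hD) = (2\pi h)^{-n}\int \calF_h a(l)\, e_l^w(x,hD)\dif l$, and then to recall from Lemma~\ref{lem:Weylequiv-PM2021}, with $t = 1$ there, that $e_l^w(x,hD) = (e^{\frac i h l})^w(x,hD) = e^{\frac i h l(x,hD)}$; this yields the stated formula. To make the interchange rigorous I would test against $u \in \scrS(\Rn)$: substituting the inversion formula for $a$ into the defining oscillatory integral \eqref{eq:QDe1-PM2021} produces a multiple integral in $(y,\xi,l)$, and carrying out the $(y,\xi)$‑integration first reproduces $(e^{\frac i h l(x,hD)}u)(x)$ via the explicit formula \eqref{eq:ewEx-PM2021}, namely $(e^{\frac i h l})^w(x,hD)u = e^{\frac i h[(x^*\cdot x) + (x^*\cdot\xi^*)/2]}\,u(x+\xi^*)$. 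The order of integration may be exchanged because $\calF_h a$ decays rapidly in $l$ while $|(e^{\frac i h l})^w(x,hD)u(x)| = |u(x+\xi^*)| \le \nrm[L^\infty]{u}$, so the integrand is dominated by an integrable function of $l$ uniformly in $x$; Fubini then gives
\[
\big(a^w(x,hD)u\big)(x) = (2\pi h)^{-n}\int_{\R^{2n}} \calF_h a(l)\, \big(e^{\frac i h l(x,hD)}u\big)(x)\dif l ,
\]
which is the asserted identity (both sides lie in $\scrS(\Rn)$ by Lemma~\ref{lem:QSdel-PM2021}).

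For the extension to $a \in \scrS'(\R^{2n})$ the operator $a^w(x,hD)$ is defined on $\scrS'(\Rn)$ by duality (cf.\ Lemma~\ref{lem:WeS-PM2021}), so the identity is to be read in the paired form $\agl[a^w(x,hD)u, v]$ for $u, v \in \scrS(\Rn)$. By \eqref{eq:ewEx-PM2021} the function
\[
l = (x^*,\xi^*) \longmapsto \agl[e^{\frac i h l(x,hD)}u, v] = \int_{\Rn} e^{\frac i h[(x^*\cdot x) + (x^*\cdot\xi^*)/2]}\, u(x+\xi^*)\, v(x)\dif x
\]
is smooth, rapidly decaying in $\xi^*$ through $u$, and — being, up to the dilation $x^* \mapsto -x^*/h$, the Fourier transform in $x^*$ of the Schwartz function $x \mapsto u(x+\xi^*)v(x)$ — rapidly decaying in $x^*$ as well; hence it belongs to $\scrS(\R^{2n})$ and can be paired with the tempered distribution $\calF_h a$ (equivalently $\hat a$). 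Transporting the $\scrS$‑identity to this dual pairing and keeping track of the Fourier normalization constants then produces $\agl[a^w(x,hD)u,v] = (2\pi h)^{-2n}\hat a\big(\agl[e^{\frac i h l(x,hD)}u, v]\big)$.

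I expect the main obstacle to be the Fubini step in the second paragraph: although everything becomes a genuine Lebesgue integral once $u \in \scrS(\Rn)$, one must organize the triple interchange carefully (first $(y,\xi)$, then $l$) and verify the uniform domination, and in the distributional case one must confirm the Schwartz regularity of $l \mapsto \agl[e^{\frac i h l(x,hD)}u, v]$. These are technical rather than conceptual points, and no stationary‑phase input is needed here — the identity is exact Fourier inversion combined with Lemma~\ref{lem:Weylequiv-PM2021}.
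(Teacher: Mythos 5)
Your argument is exactly the paper's: semiclassical Fourier inversion of $a$ in $\R^{2n}$ combined with \eqref{eq:elw-PM2021} (Lemma \ref{lem:Weylequiv-PM2021}), the paper's proof being precisely these two steps with the $\scrS'$ case left as an exercise. Your additional care with the Fubini interchange (via \eqref{eq:ewEx-PM2021} and rapid decay of $\calF_h a$) and your sketch of the distributional pairing are correct elaborations of the same route, not a different approach.
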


\begin{rem}
	With the help of Lemma \ref{lem:ale-PM2021}, every Weyl quantization can be represented by means of operators of the form $e^{\frac i h l(x,hD)}$ where $l$ is a linear form.
	Therefore, quantizations of the form $e^{\frac i h l(x,hD)}$ plays an important role in semiclassical analysis.
\end{rem}

\begin{proof}[Proof of Lemma \ref{lem:ale-PM2021}]
	When $a \in \scrS$, we have
	\[
	a(x,\xi) = (2\pi h)^{-n} \int_{\R^{2n}} e^{\frac i h l(x,\xi)} \calF_h a(l) \dif l,
	\]
	thus by \eqref{eq:elw-PM2021} we arrive at the statement.
	The case where $a(x,\xi) \in \scrS'(\R^{2n})$ is left as an exercise.
\end{proof}

\section{Applications in Carleman estimates} \label{sec:ACar-PM2021}

One of the examples of Carleman estimates is of the following
\begin{equation} \label{eq:Car-PM2021}
\tau^3 \nrm{e^{\tau \phi} u}^2 \lesssim \nrm{e^{\tau \phi} Pu}^2.
\end{equation}
To prove it, we set $h = \tau^{-1}$,  $v(x) = e^{\phi(x)/h} u(x)$ and denote an operator $P_\phi$ as
\[
P_\phi \colon f \to  e^{\phi/h} h^2 P (e^{-\phi/h} f),
\]
then \eqref{eq:Car-PM2021} is equivalent to
\begin{equation} \label{eq:Carh-PM2021}
\nrm{P_\phi v}^2 \gtrsim h \nrm{v}^2.
\end{equation}
We assume
\begin{equation} \label{eq:Ppm-PM2021}
	\boxed{\sigma_{\text{scl}}(P_\phi) \in S(m)}
\end{equation}
for some order function $m$.
Here we use $\sigma_{\text{scl}}(A)$ to signify the semiclassical symbol of $A$.

Set $A = (P_\phi + P_\phi^*)/2$ and $B = (P_\phi - P_\phi^*)/(2i)$, and denote
\[
\sigma
= \sigma_{\text{scl}}(ih^{-1} [A,B]),
\]
then we can conclude
\begin{align}
& \nrm[L^2]{P_\phi v}^2 = (P_\phi v, P_\phi v) = \nrm{A v}^2 + \nrm{B v}^2 + (i[A,B]v,v) \geq h (\sigma(x, hD) v,v),  \label{eq:siC1-PM2021} \\
& \sigma(x, hD) \text{~is self-adjoint, i.e.~} \sigma(x, hD)^* = \sigma(x, hD). \label{eq:siC2-PM2021}
\end{align}
Using \eqref{eq:siC1-PM2021}, inequality \eqref{eq:Carh-PM2021} will be true if the following is true:
\begin{equation} \label{eq:Casg-PM2021}
(\sigma(x, hD) v,v) \gtrsim \nrm{v}^2,
\end{equation}
so \eqref{eq:Casg-PM2021} implies \eqref{eq:Car-PM2021}.
It's left to prove \eqref{eq:Casg-PM2021}.

To prove \eqref{eq:Casg-PM2021}, we compute
\begin{align}
\sigma
& = \sigma_{\text{scl}}(ih^{-1} [A,B])
= \sigma_{\text{scl}}(ih^{-1} [(P_\phi + P_\phi^*)/2,(P_\phi - P_\phi^*)/(2i)]) \nonumber \\
& = \frac 1 {2h} \sigma_{\text{scl}}([P_\phi^*, P_\phi])
= \frac 1 {2h} [\frac h i \{p_\phi^*, p_\phi\} + h^2 S(m^2)] \quad (\text{Corollary~} \ref{cor:CoWQ-PM2021}) \nonumber \\
& = \frac 1 {2i} \{\overline{p_\phi} + hS(m), p_\phi\} + h S(m^2) \quad (\text{Theorem~} \ref{thm:AdQs-PM2021}) \nonumber \\
& = \frac 1 {2i} \{\overline{p_\phi}, p_\phi\} + h S(m^2)
= \{\Re p_\phi, \Im p_\phi \} + h S(m^2). \label{eq:siRI-PM2021}
\end{align}
The remainder term $S(m^2)$ comes from the assumption \eqref{eq:Ppm-PM2021} and the fact $\sigma_{\text{scl}}(P_\phi^*) \in S(m)$.
If
\begin{equation} \label{eq:RIcm-PM2021}
\boxed{\{\Re p_\phi, \Im p_\phi \}(x,\xi) \geq C m(x,\xi)^2, \ (m(x,\xi) \geq 1)}
\end{equation}
holds, from \eqref{eq:siRI-PM2021} we know when $h$ is small enough we will have
\begin{equation} \label{eq:sicm-PM2021}
	|\sigma(x,\xi)| \gtrsim m(x,\xi)^2.
\end{equation}
By combining \eqref{eq:sicm-PM2021}, \eqref{eq:siC2-PM2021} and \cite[Theorem 4.19]{zw2012semi}, we can conclude \eqref{eq:Casg-PM2021}.
In summary, we have the following theorem.

\begin{thm}[Carleman estimates\index{Carleman estimates}]
Let $P$ be a {\rm S$\Psi$DO} and $\phi \in C^\infty(\Rn; \R)$ and denote $P_\phi := e^{\phi/h} \circ h^2 P \circ e^{-\phi/h} $ and $p_\phi := \sigma_{\rm{scl}}(P_\phi)$.
Assume
\begin{equation*}
	\left\{\begin{aligned}
	& \sigma_{\rm{scl}}(P_\phi) \in S(m) \text{~for some order function~} m(x,\xi) \geq 1 \\
	& \{\Re p_\phi, \Im p_\phi \}(x,\xi) \geq C m(x,\xi)^2,
	\end{aligned}\right.
\end{equation*}
then there exist positive constants $C$ and $\tau_0$ such that for $\forall \tau \geq \tau_0$, $\forall u \in \scrS(\Rn)$, there holds
\begin{equation*}
\boxed{\tau^3 \nrm{e^{\tau \phi} u}^2 \leq C \nrm{e^{\tau \phi} Pu}^2.}
\end{equation*}
\end{thm}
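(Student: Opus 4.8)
The plan is to carry out the reduction sketched above \eqref{eq:Car-PM2021}--\eqref{eq:Casg-PM2021} and then to supply the positivity estimate that closes it. First I would set $h=\tau^{-1}$, $v:=e^{\tau\phi}u$, and $P_\phi:=e^{\phi/h}\circ h^2P\circ e^{-\phi/h}$, so that $P_\phi v=h^2e^{\tau\phi}Pu$ and $\nrm{v}=\nrm{e^{\tau\phi}u}$. Counting powers of $h$ shows that the claimed estimate \eqref{eq:Car-PM2021} is equivalent to the semiclassical lower bound \eqref{eq:Carh-PM2021}, i.e.
\begin{equation*}
\nrm{P_\phi v}^2\ \ge\ c\,h\,\nrm{v}^2,\qquad \forall v\in\scrS(\Rn),
\end{equation*}
valid for some $c>0$ and all sufficiently small $h$ (equivalently all large $\tau$). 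By hypothesis \eqref{eq:Ppm-PM2021}, $p_\phi:=\sigma_{\rm scl}(P_\phi)\in S(m)$, hence $P_\phi\in{\rm Op}_h(S(m))$; by Theorem~\ref{thm:AdQs-PM2021} its adjoint satisfies $P_\phi^*\in{\rm Op}_h(S(m))$ with $\sigma_{\rm scl}(P_\phi^*)=\overline{p_\phi}+hS(m)$.

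Next I would introduce the self-adjoint operators $A:=\tfrac12(P_\phi+P_\phi^*)$ and $B:=\tfrac1{2i}(P_\phi-P_\phi^*)$, so that $P_\phi=A+iB$. Expanding $\nrm{P_\phi v}^2=((A+iB)v,(A+iB)v)$ and using $A^*=A$, $B^*=B$ and $[A,B]^*=-[A,B]$ gives, as recorded in \eqref{eq:siC1-PM2021},
\begin{equation*}
\nrm{P_\phi v}^2=\nrm{Av}^2+\nrm{Bv}^2+(i[A,B]v,v)\ \ge\ (i[A,B]v,v).
\end{equation*}
Since $[A,B]=\tfrac1{2i}[P_\phi^*,P_\phi]$ and the commutator $[P_\phi^*,P_\phi]$ has semiclassical symbol of size $\mathcal O(h)$ by Corollary~\ref{cor:CoWQ-PM2021} (while compositions of $S(m)$-symbols lie in $S(m^2)$ by Theorem~\ref{thm:CoWQs-PM2021}), the operator $ih^{-1}[A,B]=\tfrac1{2h}[P_\phi^*,P_\phi]$ again lies in ${\rm Op}_h(S(m^2))$. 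Writing $\sigma:=\sigma_{\rm scl}(ih^{-1}[A,B])\in S(m^2)$, so that $i[A,B]=h\,\sigma(x,hD)$, we get $(i[A,B]v,v)=h(\sigma(x,hD)v,v)$, and it remains to prove the operator positivity $(\sigma(x,hD)v,v)\gtrsim\nrm{v}^2$ of \eqref{eq:Casg-PM2021}.

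To identify $\sigma$ I would run the symbolic computation \eqref{eq:siRI-PM2021}: the commutator expansion of Corollary~\ref{cor:CoWQ-PM2021} supplies the leading term $\tfrac{h}{i}\{p_\phi^*,p_\phi\}$, and Theorem~\ref{thm:AdQs-PM2021} replaces $p_\phi^*$ by $\overline{p_\phi}+hS(m)$, giving
\begin{equation*}
\sigma=\tfrac1{2i}\{\overline{p_\phi},p_\phi\}+hS(m^2)=\{\Re p_\phi,\Im p_\phi\}+hS(m^2),
\end{equation*}
and moreover $\sigma(x,hD)$ is self-adjoint because $ih^{-1}[A,B]$ is (cf.\ \eqref{eq:siC2-PM2021}). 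The hypothesis $\{\Re p_\phi,\Im p_\phi\}\ge Cm^2$ then yields, for $h$ small enough to absorb the $\mathcal O(hm^2)$ remainder, the pointwise bound $\sigma(x,\xi)\ge\tfrac C2\,m(x,\xi)^2\ge\tfrac C2>0$, where the standing assumption $m\ge 1$ is used both to absorb the remainder and to produce a strictly positive constant lower bound; this is \eqref{eq:sicm-PM2021}. Combining this strict positivity, $\sigma\in S(m^2)$, the self-adjointness \eqref{eq:siC2-PM2021}, and the semiclassical G\aa rding/positivity estimate \cite[Theorem~4.19]{zw2012semi}, I obtain $(\sigma(x,hD)v,v)\ge c'\nrm{v}^2$ for all $v\in\scrS(\Rn)$ once $h$ is small. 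Feeding this into $\nrm{P_\phi v}^2\ge h(\sigma(x,hD)v,v)\ge c'h\nrm{v}^2$ gives \eqref{eq:Carh-PM2021}, and undoing the substitution $h=\tau^{-1}$, $v=e^{\tau\phi}u$ produces the asserted estimate with explicit $C$ and $\tau_0$.

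I expect the main obstacle to be exactly this last positivity step: the excerpt proves the sharp G\aa rding inequality only for classical $\Psi$DOs (Theorem~\ref{thm:GInS-PM2021}), so one must either cite its semiclassical analogue or reprove it, e.g.\ by transplanting the wave-packet argument of Theorem~\ref{thm:GInS-PM2021} into the $h$-dependent framework of Chapter~\ref{ch:SCla-PM2021}, all the while tracking that every error incurred---the $hS(m^2)$ term from the calculus, the conjugation by $e^{\phi/h}$ absorbed into hypothesis \eqref{eq:Ppm-PM2021}, and the discarded nonnegative $\nrm{Av}^2+\nrm{Bv}^2$---is genuinely negligible compared with $h\nrm{v}^2$ as $h\to0^+$. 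A secondary, purely bookkeeping point is to keep the quantization convention (standard versus Weyl) consistent across the uses of Corollary~\ref{cor:CoWQ-PM2021} and Theorem~\ref{thm:AdQs-PM2021}; since only the principal symbol $\{\Re p_\phi,\Im p_\phi\}$ and an $\mathcal O(h)$ error actually enter, either convention works.
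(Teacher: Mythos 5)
Your proposal is correct and follows essentially the same route as the paper: the substitution $h=\tau^{-1}$, $v=e^{\tau\phi}u$ reducing to \eqref{eq:Carh-PM2021}, the decomposition $P_\phi=A+iB$ with the commutator identity \eqref{eq:siC1-PM2021}, the symbolic identification $\sigma=\{\Re p_\phi,\Im p_\phi\}+hS(m^2)$ via Corollary~\ref{cor:CoWQ-PM2021} and Theorem~\ref{thm:AdQs-PM2021}, and the final positivity step via \cite[Theorem~4.19]{zw2012semi}. Your closing remarks on the semiclassical G\aa rding issue and quantization conventions are sensible refinements of the same argument, not a different approach.
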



\section*{Exercise}

\begin{ex}
	Prove Corollary \ref{cor:CoWQ-PM2021}.
\end{ex}

\begin{ex} \label{ex:SDe-PM2021}
	Assume $y, \xi, s, \eta \in \Rn$, and define the $4n \times 4n$ matrix $\Sigma$ by \eqref{eq:Sg-PM2021}.
	Check
	\begin{align*}
	\frac 1 2 \agl[\Sigma (y, \xi, s, \eta), (y, \xi, s, \eta)] & = s \cdot \xi - y \cdot \eta, \\
	\frac 1 2 \agl[\Sigma D_{(y, \xi, s, \eta)}, D_{(y, \xi, s, \eta)}] & = D_s \cdot D_\xi + \nabla_y \cdot \nabla_\eta.
	\end{align*}
\end{ex}

\begin{ex}
	Prove Lemma \ref{lem:WeS-PM2021}.
\end{ex}

\chapter{The wavefront set} \label{ch:WF-PM2021}

In this chapter we follows closely \cite[\S 3]{chen2006pseudodifferential}.

\section{Basic facts} \label{sec:Basicfacts-PM2021}

Recall the Peetre's inequality (cf Lemma \ref{lem:Peetre-PM2021}):
\begin{equation} \label{eq:Peetre-PM2021}
	\boxed{\agl[a-b]^m \leq \agl[a]^m \cdot \agl[b]^{|m|}, \quad \forall a, b \in \Rn \text{~and~} \forall m \in \R.}
\end{equation}
We also need a angular separation inequality, which states that 
\begin{equation} \label{eq:AngSep-PM2021}
	\boxed{|a-b| \geq C(|a| + |b|), \quad \forall a \in V_1, \Forall b \in V_2,}
\end{equation}
provided that $V_1$ and $V_2$ are two cone in $\Rn$ separating each other by a positive angle, and the positive constant $C$ depends on this angle.
One example is that $V_1 = \{\rho(\cos\alpha,\sin\alpha) \in \R^2 \,;\, \rho \geq 0,\, 0 \leq \alpha \leq \pi/4\}$ and $V_2 = \{\rho(\cos\alpha,\sin\alpha)  \in \R^2 \,;\, \rho \geq 0,\, 3\pi/4 \leq \alpha \leq \pi\}$.
From \eqref{eq:AngSep-PM2021} we can easily derive
\begin{equation} \label{eq:AngSep2-PM2021}
	\boxed{\agl[a-b]^{-m_1 - m_2} \leq C\agl[a]^{-m_1} \cdot \agl[b]^{-m_2}, \quad \forall a \in V_1,\, \forall b \in V_2,\, \forall m_1, m_2 \geq 0.}
\end{equation}

\begin{proof}[Proof of \eqref{eq:AngSep2-PM2021}]
	From $|a-b| \gtrsim |a| + |b|$ we have $(1 + |a-b|)^{-1} \lesssim (1 + |a|)^{-1}$ and $(1 + |a-b|)^{-1} \lesssim (1 + |b|)^{-1}$, so $(1 + |a-b|)^{-m_1 - m_2} \lesssim (1 + |a|)^{-m_1} (1 + |b|)^{-m_2}$, which is equivalent to \eqref{eq:AngSep2-PM2021}.
\end{proof}

These inequalities are frequently used in microlocal analysis and sometimes play key role in the proofs of microlocal analysis.
We use the notation \underline{$T^* \Rn \backslash 0$} to stand for the cotangent bundle with the zero section excluded.
We deliberately exclude the zero section for some purpose, see Remark \ref{rem:exZS-PM2021}.
We introduce the notion of conic sets, the smooth direction and the wavefront set as follows.

\begin{defn}[Conic set\index{conic set}] \label{defn:coni-PM2021}
	A set $\Gamma \subset  T^* \Rn \backslash 0$ is called a \emph{conic set} if $\Gamma = \omega \times V$ for some $\omega \subset \Rn$ and some set $V \subset \Rn \backslash 0$, where the set $V$ is conic in $\Rn$, i.e.~if $\xi \in V$ then $t \xi \in V$ for all $t > 0$.
\end{defn}

\begin{defn}[$\Smo$\index{smooth direction, $\Smo$}] \label{defn:smo-PM2021}
	Let $m \in \R$ and $a \in S^m$, and $A$ is the {\rm $\Psi$DO} of $a$.
	Let $\Gamma \subset T^* \Rn \backslash 0$ be a open conic set.
	If for every integer $N$ there exists a constant $C_{\Gamma, N}$ such that
	\begin{equation} \label{eq:aSDef-PM2021}
		|a(x,\xi)| \leq C_{\Gamma, N} \agl[\xi]^{-N}, \ \forall (x,\xi) \in \Gamma,
		\quad \text{(or equivalently} \quad
		\boxed{a \in S^{-\infty} \ \text{in} \ \Gamma.} \text{)}
	\end{equation}
	holds, we say $\Gamma$ is a smooth direction set of $a$ (and of $A$).
	We write $\boxed{\Smo(A) := \bigcup \mathscr F}$ where $\mathscr F = \{\Gamma \,;\, \Gamma \text{~is a smooth direction set of~} A\}$.
\end{defn}

It can be checked that $\Smo(a)$ is always open in $T^* \Rn \backslash 0$.
We can also extend the $\Smo(a)$ to $a$ which is in $S^m(\R_x^n \times \R_\xi^N)$ ($n$ and $N$ need not to be the same).
The idea of the smooth direction is that, for any symbol $a \in S^m$, no matter what the value of $m$ is, there are chances that there exists some directions in $\xi$ such that $a$ decays at infinite speed in these direction.

\begin{lem} \label{lem:smo-PM2021}
	Assume $A,B \in \Psi^{+\infty}$, then
	\(
	\Smo(A) \cup \Smo(B) \subset \Smo(A \circ B).
	\)
\end{lem}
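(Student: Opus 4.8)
The plan is to realise $A = T_a$ and $B = T_b$ with $a \in S^{m_1}$, $b \in S^{m_2}$ (possible since $A, B \in \Psi^{+\infty}$), and then to feed the composition formula of Theorem \ref{thm:com-PM2021} through the definition of $\Smo$. Recall $A \circ B = T_c$ with $c = a \# b \in S^{m_1 + m_2}$ and
\[
c(x,\xi) \sim \sum_\alpha \frac{1}{\alpha!}\, \partial_\eta^\alpha\big(a(x,\eta)\big)\big|_{\eta = \xi}\; D_y^\alpha\big(b(y,\xi)\big)\big|_{y = x}.
\]
By Definition \ref{defn:smo-PM2021} it suffices to fix an arbitrary point $(x_0, \xi_0) \in \Smo(A) \cup \Smo(B)$ and exhibit an open conic neighbourhood of it that is a smooth direction set of $A \circ B$, i.e.\ on which $c \in S^{-\infty}$. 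Choose an open conic set $\Gamma$ with $(x_0,\xi_0) \in \Gamma$ on which the relevant symbol is rapidly decreasing ($a$ if $(x_0,\xi_0) \in \Smo(A)$, otherwise $b$), and then pick a slightly smaller open conic neighbourhood $\Gamma'' \Subset \Gamma$ of $(x_0,\xi_0)$.

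The first thing I would nail down is the passage from the bare decay estimate on the open cone $\Gamma$ to the full family of derivative estimates $|\partial_x^\beta \partial_\xi^\gamma a(x,\xi)| \le C_{\beta,\gamma,N}\agl[\xi]^{-N}$ for $(x,\xi)\in\Gamma''$ and every $N$ — this is precisely the equivalence asserted in Definition \ref{defn:smo-PM2021}. It follows from the global symbol bounds $|\partial^\gamma a| \lesssim \agl[\xi]^{m_1-|\gamma|}$ together with a Landau--Kolmogorov interpolation inequality applied on balls of radius comparable to $\agl[\xi]$ centred at points of $\Gamma''$ (which stay inside $\Gamma$ once $\Gamma''$ is taken conically compactly inside $\Gamma$); alternatively one can simply take the derivative form as the definition and skip this step. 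The identical remark applies to $b$ on $\Gamma''$ in the case $(x_0,\xi_0)\in\Smo(B)$.

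With the derivative bounds available, the estimate on $\Gamma''$ is straightforward bookkeeping. Given a target decay order $L$, pick $N'$ with $m_1 + m_2 - N' - 1 \le -L$ and write, using Theorem \ref{thm:com-PM2021} (compare Remark \ref{rem:com-PM2021}),
\[
c = \sum_{|\alpha| \le N'} \frac{1}{\alpha!} \big(\partial_\xi^\alpha a\big)(x,\xi)\,\big(D_x^\alpha b\big)(x,\xi) + r_{N'}, \qquad r_{N'} \in S^{m_1+m_2-N'-1},
\]
so that $|r_{N'}(x,\xi)| \lesssim \agl[\xi]^{-L}$ holds everywhere, in particular on $\Gamma''$. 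Each of the finitely many remaining terms is a product of $\partial_\xi^\alpha a$ and $D_x^\alpha b$: when $(x_0,\xi_0)\in\Smo(A)$ the first factor is rapidly decreasing on $\Gamma''$ and the second is globally $O(\agl[\xi]^{m_2})$, while when $(x_0,\xi_0)\in\Smo(B)$ the roles are swapped, the first factor being globally $O(\agl[\xi]^{m_1-|\alpha|})$. In either case every term is $\lesssim \agl[\xi]^{-L}$ on $\Gamma''$, whence $|c(x,\xi)| \le C_L\agl[\xi]^{-L}$ on $\Gamma''$ for every $L$; thus $\Gamma''$ is a smooth direction set of $A \circ B$ and $(x_0,\xi_0) \in \Smo(A \circ B)$. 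As $(x_0,\xi_0)$ was arbitrary, $\Smo(A) \cup \Smo(B) \subset \Smo(A\circ B)$.

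The only genuine obstacle is the derivative-upgrade step of the second paragraph (and, more mundanely, being careful that the remainder $r_{N'}$ obeys a \emph{global} symbol estimate so that it causes no trouble on $\Gamma''$); once those are granted, the result is a direct consequence of the symbolic calculus already established. I would present the interpolation lemma as a short standalone claim before the main argument.
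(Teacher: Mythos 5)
Your argument is correct, and since the paper leaves this lemma as an exercise (Exercise \ref{ex:smo-PM2021}), the route you take — writing $A\circ B = T_{a\# b}$, truncating the asymptotic expansion of Theorem \ref{thm:com-PM2021} at an order depending on the target decay $L$ so the remainder lies in $S^{m_1+m_2-N'-1}$ globally, and estimating each product term $\partial_\xi^\alpha a\cdot D_x^\alpha b$ on a conic neighbourhood $\Gamma''$ slightly inside the given smooth direction set — is exactly the intended use of the symbolic calculus. You also correctly isolate the one real subtlety, namely upgrading the bare decay of the symbol on $\Gamma$ to rapid decay of all its derivatives on the smaller cone $\Gamma''$ (the "equivalence" implicit in Definition \ref{defn:smo-PM2021}), which your interpolation-on-balls-of-radius-$\simeq\agl[\xi]$ argument, or simply adopting the $S^{-\infty}$-in-$\Gamma$ form of the definition, settles.
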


The proof is left as an exercise.

\begin{defn}[Wavefront set] \label{defn:wfset-PM2021}
	\index{wavefront set}
	Assume $\Omega \subset \Rn$ is a domain.
	For any given distribution $u \in \scrD'(\Omega)$, the \emph{wavefront set} $\boxed{\wf(u)} \subset T^* \Omega \backslash 0$ of $u$ is defined as a closed subset such that, for any $(x_0,\xi_0) \notin \wf(u)$,  there exists a neighborhood $\omega$ of $x_0$, a function $\varphi \in C_c^\infty(\Omega)$ with $\varphi(x_0) \neq 0$ and $\supp \varphi \subset \omega$, and a cone neighborhood $V$ of $\xi_0$ such that
	\begin{equation} \label{eq:wfset-PM2021}
		|\widehat{\varphi u}(\xi)| \leq C_{N,\omega,V} \agl[\xi]^{-N}, \quad \forall N \in \mathbb N,\, \forall \xi \in V
	\end{equation}
	holds for some positive constant $C_{N,\omega,V}$ depending on $N$, $\omega$ and specially on $V$.
\end{defn}

\begin{exmp}
	Compute the wavefront set of $u(x_1, x_2) := H(x_1)$, where $H$ is the Heaviside function.
	Fix a point $(\bar x_1, \bar x_2)$.
	When $\bar x_1 \neq 0$, we can always find a cutoff function $\varphi \in C_c^\infty(\R^2)$ such that $\varphi u \in C_c^\infty(\R^2)$, so $\widehat{\varphi u}$ will be rapidly decaying.
	This implies that
	\begin{equation} \label{eq:Hws1-PM2021}
	\{ (\bar x_1, \bar x_2; \xi_1, \xi_2) \,;\, \bar x_1 \neq 0 \} \cap \wf(u) = \emptyset,
	\end{equation}
	so $\wf(u)$ is made of points of the form $(0, x_2; \xi_1, \xi_2)$, thus in what follows we assume $\bar x_1 = 0$.
	
	Fix cutoff functions $\varphi_1, \varphi_2 \in C_c^\infty(\R^1)$, such that $\varphi_1$ is supported in the neighborhood of $0$ and $\varphi_2$ in the neighborhood of $\bar x_2$, and denote $\varphi(x_1, x_2) = \varphi_1(x_1) \varphi_2(x_2)$, then
	\begin{align}
		|\widehat{\varphi u}(\xi_1, \xi_2)|
		& \simeq |\int_0^\infty e^{-ix_1 \xi_1} \varphi_1(x_1) \dif x_1| \cdot |\int e^{-ix_2 \xi_2} \varphi_2(x_2) \dif x_2|. \label{eq:Hwf-PM2021}
	\end{align}
	When $\xi_2 \neq 0$, we can continue \eqref{eq:Hwf-PM2021} as
	\begin{equation} \label{eq:Hwf2-PM2021}
		|\widehat{\varphi u}(\xi_1, \xi_2)|
		\simeq \int|\varphi_1(x_1)| \dif x_1 \cdot \agl[\xi_2]^{-\infty}
		\lesssim \agl[\xi_2]^{-\infty}. 
	\end{equation}
	For any cone $V_C := \{ (\xi_1, \xi_2) \,;\, |\xi_1| \leq C |\xi_2|\}$ where $C > 0$, we have
	\(
	|\xi_2| \leq |x_1| + |\xi_2| \lesssim |\xi_2|,
	\)
	which implies
	\(
	\agl[(\xi_1, \xi_2)] \simeq \agl[\xi_2].
	\)
	Hence, \eqref{eq:Hwf2-PM2021} becomes
	\(
	|\widehat{\varphi u}(\xi_1, \xi_2)|
	\lesssim \agl[\xi]^{-\infty}. 
	\)
	Hence, for any constant $C > 0$, we have
	\begin{equation}  \label{eq:Hws2-PM2021}
	\{ (0, \bar x_2; \xi_1, \xi_2) \,;\, \bar x_2 \in \R,\, |\xi_1| \leq C |\xi_2| \} \cap \wf(u) = \emptyset.
	\end{equation}
	Combining \eqref{eq:Hws1-PM2021} and \eqref{eq:Hws2-PM2021}, we see that
	\begin{equation}  \label{eq:Hws3-PM2021}
	\wf(u) \subset \{ (0, \bar x_2; \xi_1, 0) \,;\, \bar x_2 \in \R\}.
	\end{equation}

	Finally, we show
	\begin{equation}  \label{eq:Hws4-PM2021}
		\wf(u) \supset \{ (0, \bar x_2; \xi_1, 0) \,;\, \bar x_2 \in \R\}.
	\end{equation}
	Fix $\bar x_2 \in \R$.
	For any $\varphi \in C_c^\infty(\R^2)$ supported in the neighborhood of $(0, \bar x_2)$, we have
	\begin{align}
		\widehat{\varphi u}(\xi_1, \xi_2)
		& \simeq \int e^{-ix_1 \xi_1} e^{-ix_2 \xi_2} \varphi(x_1, x_2) H(x_1) \dif x_1 \dif x_2 \nonumber \\
		& = \int_0^\infty e^{-ix_1 \xi_1} \tilde \varphi(x_1, \xi_2) \dif x_1
		= i\xi_1^{-1} \tilde \varphi(0, \xi_2) + \xi_1^{-1} \int_0^\infty e^{-ix_1 \xi_1} D_{x_1} \tilde \varphi(x_1, \xi_2) \dif x_1 \nonumber \\
		& = i\xi_1^{-1} \tilde \varphi(0, \xi_2) + \xi_1^{-1} \big( i\xi_1^{-1} D_{x_1} \tilde \varphi(0, \xi_2) + \xi_1^{-1} \int_0^\infty e^{-ix_1 \xi_1} D_{x_1}^2 \tilde \varphi(x_1, \xi_2) \dif x_1 \big) \nonumber \\
		& = i\xi_1^{-1} \tilde \varphi(0, \xi_2) + \mathcal O(|\xi_1|^{-2}), \label{eq:Hwf4-PM2021}
	\end{align}
	where $\tilde \varphi(x_1, \xi_2) = \int_{\R} e^{-ix_2 \xi_2} \varphi(x_1, x_2) \dif x_2$.
	We know $\tilde \varphi(0, \xi_2) = \int_{\R} e^{-ix_2 \xi_2} \varphi(0, x_2) \dif x_2$ is not compactly supported due to the uncertainty principle, so there is $\bar \xi_2 \neq 0$ such that $\tilde \varphi(0, \bar \xi_2) \neq 0$.
	For any cone $V'_C := \{ (\xi_1, \xi_2) \,;\, |\xi_2| \leq C |\xi_1|\}$ where $C > 0$, when $|\xi_1|$ is large enough we always have $(\xi_1, \bar \xi_2) \in V'_C$.
	Hence, \eqref{eq:Hwf4-PM2021} means that in any cone $V'_C$, we have
	\[
	|\widehat{\varphi u}(\xi_1, \bar \xi_2)|
	\simeq |\xi_1|^{-1} |\tilde \varphi(0, \bar \xi_2)| + \mathcal O(|\xi_1|^{-2})
	\simeq |\xi_1|^{-1},
	\]
	so $\varphi u$ is not rapidly decaying in any cone $V'_C$ which contains $\{ (\xi_1, 0) \}$ as their common part.
	By the definition of the wavefront set we can conclude
	\[
	 (0, \bar x_2; \xi_1, 0) \in \wf(u),
	\]
	which implies \eqref{eq:Hws4-PM2021}.
	Combining \eqref{eq:Hws3-PM2021} with \eqref{eq:Hws4-PM2021}, we obtain
	\[
	\underline{\wf(u) = \{ (0, x_2; \xi_1, 0) \,;\, x_2 \in \R,\, \xi_1 \neq 0 \}, \quad \text{where} \quad u(x_1, x_2) = H(x_1).}
	\]
\end{exmp}

\smallskip

It is easy to see from the definition that $(\wf u)^c$ is an open set, so every wavefront set is closed.
In fact, we can relax the restriction on the function $\varphi$ in the Definition \eqref{defn:wfset-PM2021} as follows.

\begin{lem} \label{lem:varphiRelax-PM2021}
	Using the same notation in Definition \ref{defn:wfset-PM2021}, if $(x_0,\xi_0) \notin \wf(u)$, then there exists a another neighborhood $\omega' \subset \omega$ of $x_0$, such that for any $\varphi \in C_c^\infty(\omega')$, which doesn't necessarily satisfy $\varphi(x_0) \neq 0$, the estimates \eqref{eq:wfset-PM2021} holds, with the constant $C_{N,\omega,V}$ now depends also on $\varphi$.
\end{lem}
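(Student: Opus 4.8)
The plan is to reduce the statement for an arbitrary $\varphi$ to the estimate already supplied by Definition~\ref{defn:wfset-PM2021}, by dividing out the distinguished cutoff. Since $(x_0,\xi_0)\notin\wf(u)$, fix such data: a neighborhood $\omega$ of $x_0$, a function $\psi\in C_c^\infty(\Omega)$ with $\psi(x_0)\neq0$ and $\supp\psi\subset\omega$, and an open cone $V\ni\xi_0$ with
\[
|\widehat{\psi u}(\xi)|\leq C_{N}\agl[\xi]^{-N},\qquad\forall N\in\mathbb N,\ \forall\xi\in V.
\]
Put $\omega':=\{x\in\omega\,;\,\psi(x)\neq0\}$; this is open and contains $x_0$. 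For any $\varphi\in C_c^\infty(\omega')$ the function $\chi:=\varphi/\psi$ is $C^\infty$ on $\omega'$ (division by a non-vanishing smooth function) and vanishes outside the compact set $\supp\varphi\subset\omega'$, so $\chi$ extends to an element of $C_c^\infty(\omega')\subset\scrS(\Rn)$ and $\varphi=\chi\psi$. Note that $\varphi(x_0)\neq0$ is never used.

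I would then pass to the Fourier side. Since $\psi u$ is a compactly supported distribution on $\Rn$ and $\chi\in\scrS(\Rn)$, Theorem~\ref{thm:fouriertrans-linearbdd}(3) gives
\[
\widehat{\varphi u}=\widehat{\chi\cdot(\psi u)}=(2\pi)^{-n/2}\,\widehat{\chi}*\widehat{\psi u}.
\]
Two facts about the factors will be used: $\widehat\chi$ is Schwartz, so $|\widehat\chi(\zeta)|\leq C_M\agl[\zeta]^{-M}$ for every $M$; and $\widehat{\psi u}$ is a smooth function of polynomial growth, $|\widehat{\psi u}(\eta)|\leq C\agl[\eta]^{k}$ for some $k\in\mathbb N$, which follows directly by evaluating the compactly supported distribution $\psi u$ on the functions $e^{-ix\cdot\eta}$ and invoking the continuity estimate for distributions of compact support. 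Finally, I shrink the cone: choose an open cone $V'$ with $\xi_0\in V'$ and $\overline{V'}\setminus\{0\}\subset V$; then $V'$ and $\Rn\setminus V$ are separated by a positive angle, so \eqref{eq:AngSep-PM2021} yields a constant $c>0$ with $|\xi-\eta|\geq c(|\xi|+|\eta|)$ whenever $\xi\in V'$ and $\eta\notin V$; in particular $\agl[\xi-\eta]\gtrsim\agl[\xi]$ and $\agl[\xi-\eta]\gtrsim\agl[\eta]$ there.

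The estimate is then a routine split of the (absolutely convergent) convolution. For $\xi\in V'$ write
\[
\widehat{\varphi u}(\xi)=(2\pi)^{-n/2}\Big(\int_{V}+\int_{\Rn\setminus V}\Big)\widehat\chi(\xi-\eta)\,\widehat{\psi u}(\eta)\dif\eta=:I_1(\xi)+I_2(\xi).
\]
For $I_1$ I would use the rapid decay of $\widehat{\psi u}$ on $V$ together with Peetre's inequality \eqref{eq:Peetre-PM2021} in the form $\agl[\eta]^{-N}\leq C_N\agl[\xi]^{-N}\agl[\xi-\eta]^{N}$ and the fact that $\agl[\cdot]^{N}\widehat\chi(\cdot)\in L^1(\Rn)$, obtaining $|I_1(\xi)|\leq C_{N,\chi}\agl[\xi]^{-N}$ for every $N$. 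For $I_2$ I would use the angular separation, in the form $\agl[\xi-\eta]^{-2M}\lesssim\agl[\xi]^{-M}\agl[\eta]^{-M}$ for $\xi\in V'$, $\eta\notin V$, so that
\[
|I_2(\xi)|\leq C\int_{\Rn\setminus V}\agl[\xi-\eta]^{-2M}\agl[\eta]^{k}\dif\eta\leq C_M\,\agl[\xi]^{-M}\int_{\Rn}\agl[\eta]^{k-M}\dif\eta\leq C_M\,\agl[\xi]^{-M}
\]
once $M>n+k$; taking $M$ arbitrarily large covers every target exponent. Adding the two bounds gives $|\widehat{\varphi u}(\xi)|\leq C_{N,\varphi,\omega,V}\agl[\xi]^{-N}$ for all $N$ and all $\xi\in V'$, which is \eqref{eq:wfset-PM2021} with $V$ replaced by the smaller cone $V'$ and the constant now depending also on $\varphi$ (through $\chi$).

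I do not expect a serious obstacle. The three points needing care are: the clean verification that $\chi=\varphi/\psi$ is genuinely an element of $C_c^\infty(\omega')$, which is exactly what pins down the choice of $\omega'$; the polynomial bound on $\widehat{\psi u}$; and the necessity of shrinking $V$ to $V'$, which is forced, since $\widehat{\varphi u}$ is only accessible after smoothing $\widehat{\psi u}$ against $\widehat\chi$, and the tail term $I_2$ genuinely requires a positive angular gap between $V'$ and the complement of $V$.
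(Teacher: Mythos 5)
Your proposal is correct, and its core mechanism is the same as the paper's: write $\varphi=\chi\psi$ with $\chi=\varphi/\psi\in C_c^\infty(\omega')$ (the paper shrinks $\omega'$ so that $|\psi|\geq|\psi(x_0)|/2$ there; your choice $\omega'=\{\psi\neq 0\}\cap\omega$ works equally well, since $|\psi|$ is bounded below on the compact set $\supp\varphi$), then pass to the convolution $\widehat{\varphi u}\simeq\widehat{\chi}*\widehat{\psi u}$ and estimate. Where you differ is in how the convolution is bounded, and your version is actually the more careful one. The paper estimates in one line, inserting $|\widehat{\psi u}(\xi-\eta)|\lesssim\agl[\xi-\eta]^{-N}$ for \emph{all} $\eta$, even though the rapid decay of $\widehat{\psi u}$ is only guaranteed for frequencies inside the cone $V$; off $V$ one only knows polynomial growth. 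Your split into $\eta\in V$ (rapid decay of $\widehat{\psi u}$ plus Peetre plus $\agl[\cdot]^N\widehat\chi\in L^1$) and $\eta\notin V$ (polynomial bound $\agl[\eta]^k$ on $\widehat{\psi u}$, Schwartz decay of $\widehat\chi$, and the angular separation \eqref{eq:AngSep2-PM2021} between the shrunken cone $V'$ and $\Rn\setminus V$) supplies exactly the justification the paper glosses over, and the shrinking $V\to V'$ with $\overline{V'}\setminus\{0\}\subset V$ is genuinely needed for that tail estimate. The only cost is that your conclusion holds for $\xi\in V'$ rather than on all of $V$ as \eqref{eq:wfset-PM2021} literally states; this is harmless, since $V'$ is still a cone neighborhood of $\xi_0$, which is all that Definition \ref{defn:wfset-PM2021} and the subsequent uses of the lemma (e.g.\ in Theorem \ref{eq:Sim3Pro-PM2021}) require, and it is arguably the honest form of the statement given that the hypotheses control $\widehat{\psi u}$ only inside $V$.
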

\begin{proof}
	We call for the result (2) in Theorem \ref{eq:Sim3Pro-PM2021} below in advance. Assume that $(x_0,\xi_0) \notin \wf (u)$, then there exists a neighborhood $\omega$ of $x_0$, a function $\varphi_0 \in C_c^\infty(\Rn)$ with $\varphi_0(x_0) \neq 0$ and a cone neighborhood $V$ of $\xi_0$ such that \eqref{eq:wfset-PM2021} holds.
	Because $\varphi_0(x_0) \neq 0$ and $\varphi_0$ is continuous, there exists another neighborhood $\omega' \subset \omega$ of $x_0$ such that $|\varphi_0(x)| \geq |\varphi_0(x_0)/2| > 0$ for all $x \in \omega'$, and thus $1/\varphi_0(x)$ is well-defined in $\omega'$; 
	the denominator keeps a positive distance from 0 in $\omega'$. Now for any $\phi \in C_c^\infty(\omega')$, we know $\phi/\varphi_0 \in C_c^\infty(\omega')$, hence
	\begin{align*}
		|\widehat{\phi u} (\xi)|
		& = |(\phi/\varphi_0 \cdot \varphi_0 u)^\wedge (\xi)|
		\simeq |\int \widehat{\varphi_0 u}(\xi - \eta) \cdot \widehat{\phi/\varphi_0}(\eta) \dif \eta| \\
		& \leq \int |\widehat{\varphi_0 u}(\xi - \eta)| \cdot |\widehat{\phi/\varphi_0}(\eta)| \dif \eta
		\lesssim \int \agl[\xi - \eta]^{-N} \cdot \agl[\eta]^{-N-n-1} \dif \eta \\
		& \leq  \agl[\xi]^{-N} \int \agl[\eta]^{N} \cdot \agl[\eta]^{-N-n-1} \dif \eta
		\lesssim \agl[\xi]^{-N}, \quad \forall N \in \mathbb N.
	\end{align*}
	Note that we used Peetre's inequality \eqref{eq:Peetre-PM2021}. The proof is complete.
\end{proof}

The wavefront set possesses some simple facts
\cite{chen2006pseudodifferential}.
\begin{thm} \label{eq:Sim3Pro-PM2021}
	Assume that $u$, $v \in \scrD'(\Omega)$ and $a \in C_c^\infty(\Omega)$, then we have
	\begin{enumerate}
	\item $\wf (u + v) \subseteq \wf (u) \cup \wf (v)$;
	
	\item $\wf (au) \subseteq \wf (u)$;
	
	\item $\wf (D^\alpha u) \subseteq \wf (u)$.
	\end{enumerate}
\end{thm}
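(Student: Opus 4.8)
The plan is to derive all three inclusions from Lemma \ref{lem:varphiRelax-PM2021} together with the elementary behaviour of the Fourier transform under multiplication by a test function and under differentiation. The unifying observation I would use is: if $(x_0,\xi_0)\notin\wf(u)$, then by Lemma \ref{lem:varphiRelax-PM2021} there is a neighbourhood $\omega'$ of $x_0$ and an open cone $V\ni\xi_0$ such that $\widehat{\phi u}(\xi)=\mathcal O(\agl[\xi]^{-N})$ on $V$ for every $N$ and for \emph{every} $\phi\in C_c^\infty(\omega')$. Since "rapid decay on $V$" is obviously stable under addition and under multiplication by $\xi_j$ (one loses at most one power of $\agl[\xi]$, and all powers are allowed), in each case it will be enough to produce a single $\varphi\in C_c^\infty(\omega')$ with $\varphi(x_0)\neq0$ for which the relevant Fourier transform has the required decay on $V$.

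For (1), I would take $(x_0,\xi_0)\notin\wf(u)\cup\wf(v)$, apply Lemma \ref{lem:varphiRelax-PM2021} separately to $u$ and to $v$ to obtain neighbourhoods $\omega_1',\omega_2'$ of $x_0$ and cones $V_1,V_2\ni\xi_0$, and set $\omega'=\omega_1'\cap\omega_2'$, $V=V_1\cap V_2$. Choosing $\varphi\in C_c^\infty(\omega')$ with $\varphi(x_0)\neq0$, both $\widehat{\varphi u}$ and $\widehat{\varphi v}$ are rapidly decaying on $V$, hence so is $\widehat{\varphi(u+v)}=\widehat{\varphi u}+\widehat{\varphi v}$, giving $(x_0,\xi_0)\notin\wf(u+v)$. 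For (2), given $(x_0,\xi_0)\notin\wf(u)$ I would take $\omega',V$ from Lemma \ref{lem:varphiRelax-PM2021} and $\varphi\in C_c^\infty(\omega')$ with $\varphi(x_0)\neq0$; since $a\varphi\in C_c^\infty(\omega')$, Lemma \ref{lem:varphiRelax-PM2021} gives that $\widehat{\varphi(au)}=\widehat{(a\varphi)u}$ is rapidly decaying on $V$, so $(x_0,\xi_0)\notin\wf(au)$.

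For (3), I would first reduce to a single derivative: because $D^\alpha$ is a composition of finitely many $D_j$ and $D_j u\in\scrD'(\Omega)$ whenever $u\in\scrD'(\Omega)$, it suffices to prove $\wf(D_j u)\subseteq\wf(u)$ and then iterate. So let $(x_0,\xi_0)\notin\wf(u)$, take $\omega',V$ from Lemma \ref{lem:varphiRelax-PM2021} and $\varphi\in C_c^\infty(\omega')$ with $\varphi(x_0)\neq0$. By the Leibniz rule for distributions $\varphi\,D_j u=D_j(\varphi u)-(D_j\varphi)\,u$, and applying the Fourier transform (using $\calF D_j=X_j\calF$ from Proposition \ref{prop:RF-relations}, which also holds on $\scrS'(\Rn)$) yields
\[
\widehat{\varphi\,D_j u}(\xi)=\xi_j\,\widehat{\varphi u}(\xi)-\widehat{(D_j\varphi)\,u}(\xi).
\]
Both $\varphi$ and $D_j\varphi$ lie in $C_c^\infty(\omega')$, so $\widehat{\varphi u}$ and $\widehat{(D_j\varphi)u}$ are rapidly decaying on $V$; multiplying the first by $\xi_j$ costs one power of $\agl[\xi]$ only, so $\widehat{\varphi\,D_j u}=\mathcal O(\agl[\xi]^{-N})$ on $V$ for every $N$, i.e.\ $(x_0,\xi_0)\notin\wf(D_j u)$.

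I do not expect a serious obstacle here: all the analytic content is already packaged in Lemma \ref{lem:varphiRelax-PM2021} (independence of the rapid-decay estimate from the particular cutoff), and the rest is bookkeeping with cones and a one-line Leibniz identity. The only point that needs a moment of care is to make sure that in each case the cutoff testing $\wf$ of the new distribution can be taken nonzero at $x_0$, which is automatic since we are always free to start from a $\varphi$ with $\varphi(x_0)\neq0$ and feed the product into the quantity being estimated.
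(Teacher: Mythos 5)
Your proposal is correct, and for parts (1) and (3) it is essentially the paper's argument: (1) is the same intersection of neighbourhoods and cones, and (3) is the same Leibniz-plus-Lemma \ref{lem:varphiRelax-PM2021} computation, except that you reduce to a single $D_j$ and iterate where the paper expands $D^\alpha(e^{-ix\cdot\xi}\phi)$ with the full multi-index binomial formula at once --- these are interchangeable. The genuine difference is in (2): the paper estimates $\widehat{\varphi a u}=\mathrm{const}\cdot\widehat{a}*\widehat{\varphi u}$ directly, using the rapid decay of $\widehat{a}$ and Peetre's inequality, whereas you absorb $a$ into the cutoff, note $a\varphi\in C_c^\infty(\omega')$, and quote Lemma \ref{lem:varphiRelax-PM2021}. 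Your route is shorter and sidesteps the convolution bookkeeping entirely, but be aware of a dependency issue in the paper's layout: the proof of Lemma \ref{lem:varphiRelax-PM2021} opens by ``calling for result (2) in advance'', so deriving (2) from that lemma is, taken literally, circular. The circle is easy to break --- the displayed proof of the lemma is in fact a self-contained convolution estimate (the same computation as the paper's proof of (2), with $\phi/\varphi_0$ in place of $a$), so you should either prove the lemma independently first and then argue as you do, or prove (2) by the convolution estimate itself and reserve the lemma for (1) and (3); as written, your (2) silently assumes the lemma does not rest on (2).
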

\begin{proof}
	For (1).
	Assume that $(x_0,\xi_0) \notin \wf (u) \cup \wf (v)$, then $(x_0,\xi_0) \in \big( \wf (u) \big)^c \cap \big( \wf (v) \big)^c$, so there exists neighborhoods $\omega_1$ and $\omega_2$ of $x_0$ and cone neighborhoods $V_1$ and $V_2$ of $\xi_0$ such that
	\begin{align*}
		|\widehat{\varphi_{x_0} u}(\xi)| & \leq C \agl[\xi]^{-N}, \quad \forall \xi \in V_1,\, \forall \varphi_{x_0} \in \scrD(\omega_1) \text{~with~} \varphi_{x_0}(x_0) \neq 0,\, \forall N \in \mathbb N, \\
		|\widehat{\varphi_{x_0} v}(\xi)| & \leq C \agl[\xi]^{-N}, \quad \forall \xi \in V_2,\, \forall \varphi_{x_0} \in \scrD(\omega_2) \text{~with~} \varphi_{x_0}(x_0) \neq 0,\, \forall N \in \mathbb N.
	\end{align*}
	Thus, we have
	\begin{equation*}
		|\widehat{\varphi_{x_0} w}(\xi)| \leq C \agl[\xi]^{-N}, \quad \forall \xi \in V_1 \cap V_2,\, \forall \varphi_{x_0} \in \scrD(\omega_1 \cap \omega_2) \text{~with~} \varphi_{x_0}(x_0) \neq 0,\, \forall N \in \mathbb N,
	\end{equation*}
	where $w = u$ or $v$, so $(x_0,\xi_0) \notin \wf (u + v)$. We can conclude (1).
	
	\smallskip
	
	For (2).
	Assume $(x_0,\xi_0) \notin \wf (u)$, then there exists a neighborhood $\omega$ of $x_0$, a function $\varphi \in \scrD(\Rn)$ with $\varphi(x_0) \neq 0$ and a cone neighborhood $V$ of $\xi_0$ such that for all $\xi \in V$,
	\begin{align*}
		|\widehat{\varphi a u} (\xi)|
		& = |(a \cdot \varphi u)^\wedge (\xi)|
		\simeq |\int \widehat{\varphi u}(\xi - \eta) \cdot \widehat{a}(\eta) \dif \eta| \\
		& \leq \int |\widehat{\varphi u}(\xi - \eta)| \cdot |\widehat{a}(\eta)| \dif \eta
		\lesssim \int \agl[\xi - \eta]^{-N} \cdot \agl[\eta]^{-N-n-1} \dif \eta \\
		& \leq  \agl[\xi]^{-N} \int \agl[\eta]^{N} \cdot \agl[\eta]^{-N-n-1} \dif \eta
		\lesssim \agl[\xi]^{-N}, \quad \forall N \in \mathbb N.
	\end{align*}
	Therefore $(x_0,\xi_0) \notin \wf (au)$.
	We can conclude (2).
	
	\smallskip
	
	For (3).
	Assume $(x_0,\xi_0) \notin \wf (u)$.
	For any $\varphi \in \scrD(\omega')$ where the $\omega'$ is as in Lemma \ref{lem:varphiRelax-PM2021}, we have
	\begin{align*}
		\widehat{\phi D^\alpha u} (\xi)
		& \simeq \int e^{-ix\cdot \xi} \phi(x) D^\alpha u(x) \dif x
		\simeq \int D^\alpha(e^{-ix\cdot \xi} \phi(x)) u(x) \dif x\\
		& = \int \sum_{|\beta| \leq |\alpha|} \binom{\alpha}{\beta} D^{\beta}(e^{-ix\cdot \xi}) D^{\alpha - \beta}\phi(x) u(x) \dif x \\
		& = \sum_{|\beta| \leq |\alpha|} \xi^\beta \binom{\alpha}{\beta} \int e^{-ix\cdot \xi} (D^{\alpha - \beta}\phi \cdot u)(x) \dif x \\
		& = \sum_{|\beta| \leq |\alpha|} \xi^\beta \binom{\alpha}{\beta} (D^{\alpha - \beta}\phi \cdot u)^\wedge(\xi).
	\end{align*}
	Thus, by Lemma \ref{lem:varphiRelax-PM2021},
	\begin{equation*}
		|\widehat{\phi D^\alpha u} (\xi)|
		\leq \sum_{|\beta| \leq |\alpha|} \xi^\beta \binom{\alpha}{\beta} |(D^{\alpha - \beta}\phi \cdot u)^\wedge(\xi)|
		\lesssim \sum_{|\beta| \leq |\alpha|} \binom{\alpha}{\beta} \agl[\xi]^{|\beta|} \agl[\xi]^{-N - |\alpha|}
		\lesssim \agl[\xi]^{-N},
	\end{equation*}
	for any $N \in \mathbb N$.
	Therefore $(x_0,\xi_0) \notin \wf (D^\alpha u)$.
	We can conclude (3).
	
	The proof is complete.
\end{proof}

\section{Wavefront set of product of distributions} \label{sec:wfPr-PM2021}

In this section we deal with some more sophisticated cases of the computations of the wavefront sets.

\subsection{Direct product} \label{subsec:Dipo-PM2021}

\index{direct product}
The first theorem is about the wavefront of the direct product $u \otimes v$.
For $u \colon \scrD(\Omega_x) \to \mathbb C$ and $v \colon \scrD(\Omega_y) \to \mathbb C$, we define the \emph{direct product} $u \otimes v$ of $u$ and $v$ as a distribution  on $\scrD(\Omega_x \times \Omega_y)$ that maps $\varphi(x,y) \in \scrD(\Omega_x \times \Omega_y)$ to $\agl[u,\langle v,\varphi(x,y) \rangle_y]_x$,
\begin{equation*}
	\agl[u \otimes v, \varphi(x,y)] := \agl[u,\langle v,\varphi(x,y) \rangle_y]_x.
\end{equation*}
\begin{thm} \label{thm:Compose-PM2021}
	For any given distributions $u \in \scrD'(\Omega_x)$ and $v \in \scrD'(\Omega_y)$, the wavefront set of the direct product $u \otimes v$ satisfies
	\begin{equation} \label{eq:Compose-PM2021}
		\boxed{\wf (u \otimes v) \subseteq \big( \wf (u) \times \wf (v) \big) \cup \big( \wf (u) \times {\rm supp}_0 v \big) \cup \big( {\rm supp}_0 u \times \wf (v) \big),}
	\end{equation}
	where ${\rm supp}_0 u := \{ (x,0) \,;\, x \in \supp u \}$, ${\rm supp}_0 v := \{ (y,0) \,;\, y \in \supp v \}$.
\end{thm}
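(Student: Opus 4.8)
The plan is to argue by contraposition. Fix $(x_0,y_0;\xi_0,\eta_0)\in T^*(\Omega_x\times\Omega_y)\backslash 0$ lying in none of the three conic sets on the right-hand side of \eqref{eq:Compose-PM2021}; I will exhibit a product cutoff $\varphi(x,y)=\varphi_1(x)\varphi_2(y)$, supported near $(x_0,y_0)$ with $\varphi_1(x_0)\varphi_2(y_0)\neq 0$, and a conic neighborhood $W$ of $(\xi_0,\eta_0)$ on which $\widehat{\varphi(u\otimes v)}$ decays rapidly; by Definition \ref{defn:wfset-PM2021} this yields $(x_0,y_0;\xi_0,\eta_0)\notin\wf(u\otimes v)$. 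Product cutoffs are natural because, applying the definition of the direct product to the test function $\varphi_1(x)\varphi_2(y)e^{-ix\cdot\xi-iy\cdot\eta}$, one obtains the factorization
\[
\widehat{\varphi(u\otimes v)}(\xi,\eta)=\widehat{\varphi_1 u}(\xi)\,\widehat{\varphi_2 v}(\eta).
\]
Two standard facts enter. First, the Paley--Wiener--Schwartz bound: any $w\in\scrE'(\Rn)$ of order $M$ satisfies $|\hat w(\zeta)|\lesssim\agl[\zeta]^{M}$, so $\widehat{\varphi_1 u}$ and $\widehat{\varphi_2 v}$ grow at most polynomially. Second, an elementary cone lemma: if $\xi_0\neq 0$ then for every conic neighborhood $V_1$ of $\xi_0$ there exist a conic neighborhood $W$ of $(\xi_0,\eta_0)$ and a constant $c\in(0,1]$ with $\xi\in V_1$ and $\agl[\xi]\ge c\agl[(\xi,\eta)]$ for all $(\xi,\eta)\in W$; this is immediate from continuity and $0$-homogeneity of $(\xi,\eta)\mapsto|\xi|/|(\xi,\eta)|$ on $\R^{2n}\backslash 0$, and symmetrically with $\xi,\eta$ interchanged.

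Next I split cases by which of $\xi_0,\eta_0$ vanishes (they cannot both vanish). If $\xi_0\neq 0$ and $\eta_0\neq 0$, the point cannot lie in the two mixed pieces (their covectors have a vanishing component), so it lies outside $\wf(u)\times\wf(v)$, forcing $(x_0,\xi_0)\notin\wf(u)$ or $(y_0,\eta_0)\notin\wf(v)$. If $\xi_0\neq 0$, $\eta_0=0$, the only relevant piece is $\wf(u)\times{\rm supp}_0 v$, so $(x_0,\xi_0)\notin\wf(u)$ or $y_0\notin\supp v$; the case $\xi_0=0$, $\eta_0\neq 0$ is symmetric via the piece ${\rm supp}_0 u\times\wf(v)$. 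In a ``support'' alternative, say $y_0\notin\supp v$, take $\varphi_2$ a bump at $y_0$ with compact support in the complement of the closed set $\supp v$; then $\varphi_2 v=0$, hence $\widehat{\varphi(u\otimes v)}\equiv 0$ and any conic neighborhood of $(\xi_0,0)$ works (and $x_0\notin\supp u$ is symmetric). In every remaining alternative one has, say, $(x_0,\xi_0)\notin\wf(u)$ --- necessarily with $\xi_0\neq 0$ since $\wf(u)$ omits the zero section --- and Definition \ref{defn:wfset-PM2021} supplies $\varphi_1$ with $\varphi_1(x_0)\neq 0$ and a conic neighborhood $V_1\ni\xi_0$ on which $|\widehat{\varphi_1 u}(\xi)|\le C_N\agl[\xi]^{-N}$ for all $N$; I then pick any bump $\varphi_2$ at $y_0$ and let $M$ be a polynomial order for $\widehat{\varphi_2 v}$.

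To conclude in such a case, apply the cone lemma with this $V_1$ to get $W$ with $\xi\in V_1$ and $\agl[\xi]\ge c\agl[(\xi,\eta)]$ on $W$; using also $\agl[\eta]\le\agl[(\xi,\eta)]$, on $W$ one has
\[
\big|\widehat{\varphi(u\otimes v)}(\xi,\eta)\big|=|\widehat{\varphi_1 u}(\xi)|\,|\widehat{\varphi_2 v}(\eta)|\lesssim_N\agl[\xi]^{-N}\agl[(\xi,\eta)]^{M}\lesssim_N c^{-N}\agl[(\xi,\eta)]^{M-N},
\]
which is rapidly decreasing because $N$ is arbitrary; with $\varphi(x_0,y_0)\neq 0$ this closes the contrapositive (the alternative $(y_0,\eta_0)\notin\wf(v)$ being the mirror image). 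I expect the main difficulty to be organizational rather than analytic: one must check carefully that the three-piece union in \eqref{eq:Compose-PM2021} matches the case split, i.e.\ that the two mixed pieces $\wf(u)\times{\rm supp}_0 v$ and ${\rm supp}_0 u\times\wf(v)$ are exactly what covers the degenerate directions $\eta_0=0$ and $\xi_0=0$, and that in those directions it is the support condition on the ``flat'' factor (together with the exclusion of the zero section from wavefront sets) that is genuinely available.
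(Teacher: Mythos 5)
Your proof is correct and follows essentially the same route as the paper's: argue by contraposition, use a product cutoff so that $\widehat{\varphi(u\otimes v)}(\xi,\eta)=\widehat{\varphi_1 u}(\xi)\,\widehat{\varphi_2 v}(\eta)$, combine rapid decay of one factor with the Paley--Wiener polynomial bound on the other, and compare $\agl[\xi]$ (or $\agl[\eta]$) with $\agl[(\xi,\eta)]$ in a suitable cone. If anything, you handle the case $\xi_0\neq0,\ \eta_0\neq0$ more carefully than the paper, which asserts both $(x_0,\xi_0)\notin\wf(u)$ and $(y_0,\eta_0)\notin\wf(v)$ when only the disjunction follows; your ``or'' split together with the cone lemma is the correct way to close that case.
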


\begin{proof}
	Assume that $(x_0,y_0;\xi_0,\eta_0)$ doesn't belong to the right-hand-side of \eqref{eq:Compose-PM2021}.
	
	For the case where $\xi_0 \neq 0$ and $\eta_0 \neq 0$, we know $(x_0;\xi_0) \notin \wf (u)$ and $(y_0;\eta_0) \notin \wf (v)$, so the Fourier transform $(\varphi_{(x_0,y_0)} u \otimes v)^\wedge(\xi_0,\eta_0)$ cannot have the decay of the order $\agl[(\xi,\eta)]^{-N}$ for any $N \in \mathbb N$. Therefore, $(x_0,y_0;\xi_0,\eta_0) \notin \wf (u \otimes v)$.
	
	For the case where $\xi_0 = 0$ and $\eta_0 \neq 0$, if $x_0 \notin \supp u$, obviously we can conclude $(x_0,y_0;\xi_0,\eta_0) \notin \wf (u \otimes v)$, so we suggest that $x_0 \in \supp u$, thus we must have $(y_0;\eta_0) \notin \wf (v)$. Choose $\varphi(x,y) = \varphi_1(x) \varphi_2(y)$ as the cutoff function where $\varphi_1 \in \scrD(\omega_1)$ and $\omega_1$ is some neighborhood of $x_0$. So does $\varphi_2$ accordingly. Thus we have
	\begin{equation*}
		(\varphi u \otimes v)^\wedge(\xi,\eta) = (\varphi_1 u)^\wedge(\xi) \cdot (\varphi_2 v)^\wedge(\eta).
	\end{equation*}
	We have that $(\varphi_2 v)^\wedge(\eta)$ is rapidly decaying and $(\varphi_1 u)^\wedge(\xi)$ grows in polynomial order of $\xi$ in a cone neighborhood of $(0,\eta)$. It's easy to check that, in such a cone neighborhood, we have $\agl[(\xi,\eta)] \lesssim \agl[\eta] \lesssim \agl[(\xi,\eta)]$. Therefore,
	\begin{align*}
		|(\varphi u \otimes v)^\wedge(\xi,\eta)|
		& = |(\varphi_1 u)^\wedge(\xi)| \cdot |(\varphi_2 v)^\wedge(\eta)|
		\lesssim \agl[\eta]^{-N+l} \cdot \agl[\xi]^{-l} \\
		& \lesssim \agl[(\xi,\eta)]^{-N+l} \cdot \agl[(\xi,\eta)]^{-l}
		= \agl[(\xi,\eta)]^{-N},
	\end{align*}
	for any $N \in \mathbb N$. 
	Therefore, $(x_0,y_0;\xi_0,\eta_0) \notin \wf (u \otimes v)$.
	
	The case where $\xi_0 \neq 0$ and $\eta_0 = 0$ is similar to the case where $\xi_0 = 0$ and $\eta_0 \neq 0$. 
	
	The proof is complete.
\end{proof}

\subsection{Product} \label{subsec:Pod-PM2021}

Next, we investigate the product of two distributions. 
In contrast to the product of functions, the product of two distributions\index{product of distributions} is not always well-defined. 
Under certain conditions, the product of two distributions can be defined, at least locally. 
We know that if $\varphi \in C_c^\infty(\Omega)$ and $u \in \scrD'(\Omega)$, 
we have $\varphi u \in \mathcal E'(\Omega)$ and thus the Fourier transform $\widehat{\varphi u}$ is well-defined and can be estimated of polynomial order at infinity. 
Thus we might have chance to define the product by using convolution,
\begin{equation} \label{eq:ProdDef-PM2021}
	(\varphi^2 uv)^\wedge(\xi) := (2\pi)^{-n/2} \int_{\Rn} (\varphi u)^\wedge(\xi - \eta) \cdot (\varphi v)^\wedge(\eta) \dif \eta,
\end{equation}
as long as the convolution \eqref{eq:ProdDef-PM2021} is integrable in the Lebesgue sense and grows under polynomial order in terms of $\agl[\xi]$ at infinity, which implies $\varphi^2 uv \in \scrE'(\Omega)$. 
This leads to the following result.

\begin{thm}[Product Theorem] \label{thm:Product-PM2021}
	For any given distributions $u$, $v \in \scrD'(\Omega)$, when
	\begin{equation} \label{eq:ProdCon-PM2021}
		\big( \wf (u) + \wf (v) \big) \cap O_x = \emptyset,
	\end{equation}
	where $\wf (u) + \wf (v) := \{(x,\xi_1 + \xi_2) \,;\, (x,\xi_1) \in \wf (u),\, (x,\xi_2) \in \wf (v) \}$, and $O_x := \{(x,0) \,;\, x \in \Omega\}$,
	the product ``$uv$'' can be well-defined in the sense of \eqref{eq:ProdDef-PM2021} and its wavefront set satisfies
	\begin{equation} \label{eq:Product-PM2021}
		\boxed{\wf (uv) \subseteq \big( \wf (u) + \wf (v) \big) \cup \wf (u) \cup \wf (v).}
	\end{equation}
\end{thm}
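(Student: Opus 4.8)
The plan is to prove the product theorem by localization followed by a careful estimate of the convolution integral in \eqref{eq:ProdDef-PM2021}, separated according to how the dual variables are distributed. First I would fix a point $(x_0,\xi_0) \notin \big( \wf(u) + \wf(v) \big) \cup \wf(u) \cup \wf(v)$ and reduce everything to a neighborhood of $x_0$: by the closedness of $\wf(u)$ and $\wf(v)$ and the hypothesis \eqref{eq:ProdCon-PM2021}, I can choose $\varphi \in C_c^\infty(\Omega)$ with $\varphi(x_0) \neq 0$, supported in a small neighborhood $\omega$ of $x_0$, together with finitely many closed cones so that, over $\omega$, $\widehat{\varphi u}(\eta)$ is rapidly decaying outside a cone $V_u$ around the $u$-directions at $x_0$ and $\widehat{\varphi v}(\eta)$ is rapidly decaying outside a cone $V_v$ around the $v$-directions at $x_0$. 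I would also shrink so that $\xi_0$ admits a conic neighborhood $\Gamma$ which is separated (by a positive angle) from $V_u$, from $V_v$, and from $V_u + V_v$; this is exactly where \eqref{eq:ProdCon-PM2021} and the exclusion of $\wf(u)\cup\wf(v)$ are used, and it is the arrangement that makes the angular separation inequality \eqref{eq:AngSep2-PM2021} applicable.

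Next I would establish that the convolution \eqref{eq:ProdDef-PM2021} is absolutely convergent and of polynomial growth, so that $\varphi^2 uv$ is a well-defined element of $\mathscr E'(\Omega)$; here I use that $\widehat{\varphi u}$ and $\widehat{\varphi v}$ are each bounded by some fixed power $\agl[\cdot]^{M}$ globally, and rapidly decaying off $V_u$ respectively $V_v$, so the only region contributing more than a rapidly-decaying amount is where $\eta \in V_v$ and $\xi - \eta \in V_u$ simultaneously, which forces $\xi$ to lie (up to a bounded angular fattening) in $V_u + V_v$; outside that set Peetre's inequality \eqref{eq:Peetre-PM2021} together with \eqref{eq:AngSep2-PM2021} gives an $\agl[\xi]^{-N}$ bound for every $N$, while inside it one still has a crude polynomial bound. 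This simultaneously shows the product is defined and that $\wf(uv)$ is contained in $(V_u+V_v)\cup V_u \cup V_v$ over $\omega$, which is the localized form of \eqref{eq:Product-PM2021}.

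Then comes the core estimate: for $\xi \in \Gamma$ I split the $\eta$-integral into the regions $\{|\eta| \le c|\xi|\}$ and $\{|\eta| \ge c|\xi|\}$ for a suitable small $c>0$. On the first region, $|\xi-\eta| \simeq |\xi|$ is large and lies away from $V_u$ (since $\Gamma$ is separated from $V_u$), so $\widehat{\varphi u}(\xi-\eta)$ is $O(\agl[\xi]^{-N})$ uniformly, and $\widehat{\varphi v}(\eta)$ contributes only an integrable polynomial factor. On the second region, either $\eta \notin V_v$ — giving rapid decay of $\widehat{\varphi v}(\eta)$ and hence, after using $\agl[\eta] \gtrsim \agl[\xi]$, an $\agl[\xi]^{-N}$ bound — or $\eta \in V_v$, in which case $\xi - \eta$ must lie away from $V_u$ (otherwise $\xi \in V_u + V_v$, contradicting the separation of $\Gamma$), so $\widehat{\varphi u}(\xi-\eta)$ is rapidly decaying; in both subcases \eqref{eq:Peetre-PM2021} and \eqref{eq:AngSep2-PM2021} convert the decay in one factor into decay in $\agl[\xi]$. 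Combining the pieces yields $|(\varphi^2 uv)^\wedge(\xi)| \lesssim_N \agl[\xi]^{-N}$ for all $\xi \in \Gamma$, which by Lemma \ref{lem:varphiRelax-PM2021} means $(x_0,\xi_0)\notin \wf(uv)$, proving \eqref{eq:Product-PM2021}.

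I expect the main obstacle to be the bookkeeping of the cone geometry: one must choose $\omega$, the cones $V_u, V_v$, and $\Gamma$ in the right order so that all three separations (from $V_u$, from $V_v$, from $V_u+V_v$) hold simultaneously, and one must verify that $V_u + V_v$ is again contained in a closed cone avoiding $\xi_0$, which rests on the hypothesis \eqref{eq:ProdCon-PM2021} read as a statement about \emph{all} pairs of directions over a whole neighborhood of $x_0$, not just at $x_0$ itself. Making that uniformity precise — extracting, from the fact that $(x,0) \notin \wf(u)+\wf(v)$ near $x_0$, a single pair of fattened cones valid throughout a fixed $\omega$ — is the delicate point; once it is in place, the integral estimates are a routine application of \eqref{eq:Peetre-PM2021} and \eqref{eq:AngSep2-PM2021}.
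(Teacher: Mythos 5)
Your overall strategy is the same as the paper's: localize at $x_0$, split the convolution \eqref{eq:ProdDef-PM2021} according to which of the two frequency arguments lies in a cone around the bad directions, and conclude via Peetre's inequality \eqref{eq:Peetre-PM2021} and the angular separation estimate \eqref{eq:AngSep2-PM2021}; your split into $|\eta|\le c|\xi|$ and $|\eta|\ge c|\xi|$ is only a cosmetic variant of the paper's four regions $I_1,\dots,I_4$, and your estimate for $\xi$ in the cone $\Gamma$ around $\xi_0$ (where the region $\{\eta\in V_v,\ \xi-\eta\in V_u\}$ is empty because $\Gamma\cap(V_u+V_v)=\emptyset$) correctly reproduces the paper's Step 2.

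The genuine gap is in the well-definedness half. You assert that on the region where $\eta\in V_v$ and $\xi-\eta\in V_u$ simultaneously ``one still has a crude polynomial bound'', but there both factors are merely polynomially bounded, so the integral $\int \agl[\xi-\eta]^{l'}\agl[\eta]^{l}\dif \eta$ over that region diverges unless the region is bounded in $\eta$ for each fixed $\xi$. Establishing that boundedness is exactly where \eqref{eq:ProdCon-PM2021} does its analytic work for defining $uv$ (the paper's estimate of $I_4$): the hypothesis, transferred to slightly fattened cones as in \eqref{eq:ProdV-PM2021}, forces $V_u$ and $-V_v$ to be angularly separated, so for $|\eta|\gg|\xi|$ with $\eta\in V_v$ the vector $\xi-\eta$ points into $-V_v$ and hence cannot lie in $V_u$; consequently $\{\eta \,;\, \eta\in V_v,\ \xi-\eta\in V_u\}\subset\{|\eta|\lesssim |\xi|\}$ and the integral is $O(\agl[\xi]^{l+l'+n})$. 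In your plan the hypothesis is invoked only to separate $\Gamma$ from $V_u+V_v$, which does not by itself make the convolution converge, so this boundedness argument must be added. A minor related point: your claim of an $\agl[\xi]^{-N}$ bound for every $\xi$ outside the fattened cone $V_u+V_v$ is too strong (e.g.\ for $\xi\in V_u$ only a polynomial bound holds, and that is all the well-definedness step needs); rapid decay should only be claimed on cones, like $\Gamma$, that are also separated from $V_u$ and $V_v$.
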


\begin{proof}
	We partially follow \cite[Proposition 11.2.3]{friedlander1998introduction}.
	The proof is divided into two parts:
	first, we show that under condition \eqref{eq:ProdCon-PM2021} the convolution \eqref{eq:ProdDef-PM2021} can be controlled at polynomial of $\xi$;
	second, we show the relation \eqref{eq:Product-PM2021}.

	{\bf Step 1.}
	For any open cone neighborhood $V_3$ of $\wf (u) + \wf (v)$, there exists open cone neighborhoods $V_1'$ and $V_2'$ of $\wf (u)$ and $\wf (v)$, respectively, such that $V_1' + V_2' \subset V_3$.
	Also, there must exists open cone neighborhoods $V_1$ and $V_2$ such that
	\begin{equation} \label{eq:ProdV-PM2021}
		\begin{cases}
			\wf (u) \subsetneqq V_1 \subsetneqq V_1'\\
			\wf (v) \subsetneqq V_2 \subsetneqq V_2'\\
			\wf (u) + \wf (v) \subsetneqq V_1 +  V_2 \subsetneqq V_1' + V_2' \subsetneqq V_3 \\
			\big( V_1 + V_2 \big) \cap O_x = \emptyset
		\end{cases}
	\end{equation}
	The $V_1'$ and $V_2'$ will be utilized in {\bf Step 2}. 
	
	Fix some $x_0 \in \Omega$, we can find some $\varphi \in \scrD(\Omega)$ with $\varphi(x_0) \neq 0$ and also $\varphi$ guarantees $\varphi u$ and $\varphi v$ that \eqref{eq:wfset-PM2021} hold.
	For any fixed $\xi_0 \in \Rn \backslash \{0\}$, the integral \eqref{eq:ProdDef-PM2021} can be divided into four parts,
	\begin{align}
		& \quad \int_{\Rn} (\varphi u)^\wedge(\xi_0 - \eta) \cdot (\varphi v)^\wedge(\eta) \dif \eta \nonumber\\
		& = \int_{\substack{\{\eta \,;\, (x_0,\xi_0 - \eta) \notin V_1\\(x_0,\eta) \notin V_2\}}} (\varphi u)^\wedge(\xi_0 - \eta) \cdot (\varphi v)^\wedge(\eta) \dif \eta
		+ \int_{\substack{\{\eta \,;\, (x_0,\xi_0 - \eta) \notin V_1\\(x_0,\eta) \in V_2\}}} (\varphi u)^\wedge(\xi_0 - \eta) \cdot (\varphi v)^\wedge(\eta) \dif \eta \nonumber\\
		& + \int_{\substack{\{\eta \,;\, (x_0,\xi_0 - \eta) \in V_1\\(x_0,\eta) \notin V_2\}}} (\varphi u)^\wedge(\xi_0 - \eta) \cdot (\varphi v)^\wedge(\eta) \dif \eta
		+ \int_{\substack{\{\eta \,;\, (x_0,\xi_0 - \eta) \in V_1\\(x_0,\eta) \in V_2\}}} (\varphi u)^\wedge(\xi_0 - \eta) \cdot (\varphi v)^\wedge(\eta) \dif \eta \nonumber\\
		& =: I_1 + I_2 + I_3 + I_4. \label{eq:ProInt0-PM2021}
	\end{align}
	The condition \eqref{eq:ProdCon-PM2021} will (only) be used to estimate $I_4$.
	
	According to Definition \ref{defn:wfset-PM2021} and Peetre's inequality, we can estimate $I_1$ as
	\begin{align}
		|I_1| 
		& \leq \int_{\substack{\{\eta \,;\, (x_0,\xi_0 - \eta) \notin V_1\\(x_0,\eta) \notin V_2\}}} |(\varphi u)^\wedge(\xi_0 - \eta)| \cdot |(\varphi v)^\wedge(\eta)| \dif \eta \nonumber\\
		& \lesssim \int_{\Rn} \agl[\xi_0 - \eta]^{-N} \cdot \agl[\eta]^{-N-n-1} \dif \eta
		\lesssim \agl[\xi_0]^{-N} \int_{\Rn} \agl[\eta]^{N} \cdot \agl[\eta]^{-N-n-1} \dif \eta \nonumber\\
		& \lesssim \agl[\xi_0]^{-N}, \quad \forall N \in \mathbb N. \label{eq:ProInt1-PM2021}
	\end{align}
	
	For $I_2$, we know that $\varphi v \in \scrE'(\Omega)$, so $|(\varphi v)^\wedge(\eta)|$ can be dominated by $\agl[\eta]^l$ for some $l \in \mathbb N$, thus
	\begin{align}
		|I_2| 
		& \leq \int_{\substack{\{\eta \,;\, (x_0,\xi_0 - \eta) \notin V_1\\(x_0,\eta) \in V_2\}}} |(\varphi u)^\wedge(\xi_0 - \eta)| \cdot |(\varphi v)^\wedge(\eta)| \dif \eta \nonumber\\
		& \lesssim \int_{\substack{\{\eta \,;\, (x_0,\xi_0 - \eta) \notin V_1\\(x_0,\eta) \in V_2\}}} \agl[\xi_0 - \eta]^{-l-n-1} \cdot \agl[\eta]^{l} \dif \eta \nonumber\\
		& \lesssim \agl[\xi_0]^{l+n+1} \int_{\Rn} \agl[\eta]^{-l-n-1} \cdot \agl[\eta]^{l} \dif \eta \quad (\text{Peetre's inequality}) \nonumber\\
		& \lesssim \agl[\xi_0]^{l+n+1}. \label{eq:ProInt2'-PM2021}
	\end{align}
	
	The estimation of $I_3$ is similar to that of $I_2$,
	\begin{align}
		|I_3| 
		& \leq \int_{\substack{\{\eta \,;\, (x_0,\xi_0 - \eta) \in V_1\\(x_0,\eta) \notin V_2\}}} |(\varphi u)^\wedge(\xi_0 - \eta)| \cdot |(\varphi v)^\wedge(\eta)| \dif \eta \nonumber\\
		& = \int_{\substack{\{\eta \,;\, (x_0,\gamma) \in V_1\\(x_0,\xi_0 - \gamma) \notin V_2\}}} |(\varphi u)^\wedge(\gamma)| \cdot |(\varphi v)^\wedge(\xi_0 - \gamma)| \dif \gamma \quad (\gamma = \xi_0 - \eta) \nonumber\\
		& \lesssim \int_{\substack{\{\eta \,;\, (x_0,\gamma) \in V_1\\(x_0,\xi_0 - \gamma) \notin V_2\}}} \agl[\gamma]^{l'} \cdot \agl[\xi_0 - \gamma]^{-l'-n-1} \dif \gamma \nonumber\\
		& \lesssim \agl[\xi_0]^{l'+n+1} \int_{\Rn} \agl[\gamma]^{l'} \cdot \agl[\gamma]^{-l'-n-1} \dif \gamma  \quad (\text{Peetre's inequality}) \nonumber\\
		& \lesssim \agl[\xi_0]^{l'+n+1}. \label{eq:ProInt3'-PM2021}
	\end{align}
	
	For $I_4$, we can show that the domain of integration $\{\eta \,;\, (x_0,\xi_0 - \eta) \in V_1,\, (x_0,\eta) \in V_2\}$ is bounded. 
	We temporarily use $\hat \eta$ to mean the direction of $\eta$, $\hat \eta = \eta/|\eta|$. 
	Therefore the direction of the vector $\xi_0 - \eta$ is parallel to $\xi_0/|\eta| - \hat \eta$, thus when $|\eta|$ is large enough, $(x_0,\xi_0 - \eta)$ will be in $-V_2 := \{(x,-\eta) \,;\, (x,\eta) \in V_2\}$. 
	We know $(x_0,\xi_0 - \eta) \in V_1$, so the set $\{ (x_0,\gamma) \,;\, (x_0,\gamma) \in V_1,\, (x_0,-\gamma) \in V_2 \}$ is not empty. 
	This contradict with $\big( V_1 + V_2 \big) \cap O_x = \emptyset$ in \eqref{eq:ProdV-PM2021}.
	Therefore, when $|\eta|$ is large enough, the conditions $(x_0,\xi_0 - \eta) \in V_1$ and $(x_0,\eta) \in V_2$ cannot be satisfies simultaneously, which implies the set $\{\eta \,;\, (x_0,\xi_0 - \eta) \in V_1,\, (x_0,\eta) \in V_2\}$ is bounded. Therefore,
	\begin{align}
		|I_4| 
		& \leq \int_{\substack{\{\eta \,;\, (x_0,\xi_0 - \eta) \notin V_1\\(x_0,\eta) \notin V_2\}}} |(\varphi u)^\wedge(\xi_0 - \eta)| \cdot |(\varphi v)^\wedge(\eta)| \dif \eta \nonumber\\
		& \lesssim \int_{\{\eta \,;\, |\eta| \text{~bounded}\}} \agl[\xi_0 - \eta]^{l'} \cdot \agl[\eta]^{l} \dif \eta \nonumber\\
		& \lesssim \agl[\xi_0]^{l'} \int_{\{\eta \,;\, |\eta| \text{~bounded}\}} \agl[\eta]^{|l'|} \cdot \agl[\eta]^{l} \dif \eta \quad (\text{Peetre's inequality}) \nonumber\\
		& \lesssim \agl[\xi_0]^{l'}. \label{eq:ProInt4'-PM2021}
	\end{align}
	
	From \eqref{eq:ProInt0-PM2021}-\eqref{eq:ProInt4'-PM2021}, we conclude that the convolution \eqref{eq:ProdDef-PM2021} is Lebesgue integrable and grows with polynomial order in terms of $\agl[\xi_0]$, thus	$\varphi^2 uv \in \scrE'(\Omega)$. Now $uv \in \scrD'(\Omega)$ is well-defined.

	{\bf Step 2.}
	Under condition \eqref{eq:ProdCon-PM2021}, we study the wavefront set of $uv$.
	Assume that 
	\begin{equation} \label{eq:Prodxi0-PM2021}
		(x_0,\xi_0) \notin V_3 \cup V_1' \cup V_2',
	\end{equation} 
	Again, the condition \eqref{eq:ProdCon-PM2021} will (only) be used to estimate $I_4$. Note the particular arrangements of the $V_1$, $V_1'$ and $V_2$, $V_2'$ in \eqref{eq:Prodxi0-PM2021} and \eqref{eq:ProInt0-PM2021}. We will utilize these arrangements combining with condition \eqref{eq:ProdV-PM2021} to estimates $I_2$ and $I_3$.
	
	We estimate $I_1$ the same way as in {\bf Step 1}, i.e.~as in \eqref{eq:ProInt1-PM2021}.
	
	For $I_2$, to get the rapid decay w.r.t.~$\xi_0$, we shall adapt different strategy. We know that $\varphi v \in \scrE'(\Omega)$, so $|(\varphi v)^\wedge(\eta)|$ can be dominated by $\agl[\eta]^l$ for some $l \in \mathbb N$. 
	Thanks to the condition \eqref{eq:Prodxi0-PM2021}, we know $(x_0,\xi_0) \notin V_2'$ and now $(x_0,\eta) \in V_2$. Because $V_2 \subsetneqq V_2'$, we know that $V_2$ and $V_2'$ are separated with a positive angle, so the inequality \eqref{eq:AngSep2-PM2021} can apply to $\agl[\xi_0 - \eta]$,
	\begin{align}
		|I_2| 
		& \leq \int_{\substack{\{\eta \,;\, (x_0,\xi_0 - \eta) \notin V_1\\(x_0,\eta) \in V_2\}}} |(\varphi u)^\wedge(\xi_0 - \eta)| \cdot |(\varphi v)^\wedge(\eta)| \dif \eta \nonumber\\
		& \lesssim \int_{\substack{\{\eta \,;\, (x_0,\xi_0 - \eta) \notin V_1\\(x_0,\eta) \in V_2\}}} \agl[\xi_0 - \eta]^{-N-l-n-1} \cdot \agl[\eta]^{l} \dif \eta \nonumber\\
		& \lesssim \agl[\xi_0]^{-N} \int_{\Rn} \agl[\eta]^{-l-n-1} \cdot \agl[\eta]^{l} \dif \eta \quad \big( \text{by~} \eqref{eq:AngSep2-PM2021} \big) \nonumber\\
		& \lesssim \agl[\xi_0]^{-N}, \quad \forall N \in \mathbb N. \label{eq:ProInt2-PM2021}
	\end{align}
	
	The estimation of $I_3$ is similar to \eqref{eq:ProInt2-PM2021},
	\begin{align}
		|I_3| 
		& \leq \int_{\substack{\{\eta \,;\, (x_0,\xi_0 - \eta) \in V_1\\(x_0,\eta) \notin V_2\}}} |(\varphi u)^\wedge(\xi_0 - \eta)| \cdot |(\varphi v)^\wedge(\eta)| \dif \eta \nonumber\\
		& = \int_{\substack{\{\eta \,;\, (x_0,\gamma) \in V_1\\(x_0,\xi_0 - \gamma) \notin V_2\}}} |(\varphi u)^\wedge(\gamma)| \cdot |(\varphi v)^\wedge(\xi_0 - \gamma)| \dif \gamma \quad (\gamma = \xi_0 - \eta) \nonumber\\
		& \lesssim \int_{\substack{\{\eta \,;\, (x_0,\gamma) \in V_1\\(x_0,\xi_0 - \gamma) \notin V_2\}}} \agl[\gamma]^{l'} \cdot \agl[\xi_0 - \gamma]^{-N-l'-n-1} \dif \gamma \nonumber\\
		& \lesssim \agl[\xi_0]^{-N} \int_{\Rn} \agl[\gamma]^{l'} \cdot \agl[\gamma]^{-l'-n-1} \dif \gamma \quad \big( \text{by~} \eqref{eq:AngSep2-PM2021} \big)\nonumber\\
		& \lesssim \agl[\xi_0]^{-N}, \quad \forall N \in \mathbb N. \label{eq:ProInt3-PM2021}
	\end{align}
	
	Now we work on $I_4$. From \eqref{eq:ProdCon-PM2021}, \eqref{eq:ProdV-PM2021} and \eqref{eq:Prodxi0-PM2021}, we know that $\xi_0 \notin V_1 + V_2$, thus the set $\{\eta \,;\, (x_0,\xi_0 - \eta) \in V_1,\, (x_0,\eta) \in V_2\}$ is empty. 
	Therefore $I_4 = 0$. Combining this fact with \eqref{eq:ProInt0-PM2021}, \eqref{eq:ProInt1-PM2021}, \eqref{eq:ProInt2-PM2021} and \eqref{eq:ProInt3-PM2021}, we arrive at
	\begin{equation*}
		|(\varphi^2 uv)^\wedge(\xi)| \leq C_N \agl[\xi_0]^{-N}, \quad \forall N \in \mathbb N,
	\end{equation*}
	for $(x_0,\xi_0) \notin  V_3 \cup V_1' \cup V_2'$. This implies $\wf (u+v) \subset V_3 \cup V_1' \cup V_2'$. The sets $V_3$, $V_1'$ and $V_2'$ can be close to $\wf (u) + \wf (v)$, $\wf (u)$ and $\wf (v)$, respectively, as close as possible, so we arrive at \eqref{eq:Product-PM2021}. 
	The proof is complete.
\end{proof}

\subsection{Convolution} \label{subsec:Cowf-PM2021}

We define
\begin{equation} \label{eq:wfxo-PM2021}
	\left\{\begin{aligned}
		\wf' (K) & := \{ (x,y; \xi, -\eta) \,;\, (x,y; \xi, \eta) \in \wf (K) \}, \\
		\wf_x (K) & := \{ (x; \xi) \,;\, \exists y \st (x,y; \xi, 0) \in \wf (K) \}, \\
		A \circ B & := \{ (x,\xi) \,;\, \exists (y,\eta) \in B \st (x,y;\xi,\eta) \in A \}, \\
		O_x & := \{(x,0) \,;\, x \in \Omega\}.
	\end{aligned}\right.
\end{equation}

We need the following lemma.

\begin{lem} \label{lem:iwff-PM2021}
	Assume $f \in \mathcal D'(\Omega \times \Omega)$, and there is a compact set $\mathcal K \subset \Omega$ such that $\supp f \subset \Omega \times \mathcal K$.
	Then
	\begin{equation} \label{eq:iwff-PM2021}
	\boxed{\wf \big( \int f(x,y) \dif y \big) = \wf_x (f).}
	\end{equation}
\end{lem}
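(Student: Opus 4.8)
The plan is to reduce the identity \eqref{eq:iwff-PM2021} to the statement that the map $f \mapsto \int f(x,y)\,\df y$ is, up to harmless factors, a partial Fourier transform evaluated at $\eta = 0$. First I would make sense of $g(x) := \int f(x,y)\,\df y$ as a distribution in $\mathcal D'(\Omega)$: because $\supp f \subset \Omega \times \mathcal K$ with $\mathcal K$ compact, for a test function $\psi(x) \in C_c^\infty(\Omega)$ one has $\agl[g,\psi] = \agl[f, \psi(x)\otimes 1]$ where the factor $1$ in the $y$-variable may be truncated by any $\chi \in C_c^\infty(\Omega)$ with $\chi \equiv 1$ on $\mathcal K$; this makes $\psi(x)\chi(y)$ a legitimate test function and shows $g$ is well-defined, with $\supp g \subset \{x : (x,y)\in\supp f \text{ for some } y\}$.

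Next I would set up the Fourier-side estimate. Fix $(x_0;\xi_0) \notin \wf_x(f)$; the goal is $(x_0;\xi_0) \notin \wf(g)$. By definition of $\wf_x$, the point $(x_0,y;\xi_0,0)$ lies outside $\wf(f)$ for every $y \in \mathcal K$ (and for $y \notin \mathcal K$ it lies outside $\supp f$, hence outside $\wf(f)$ trivially). Using the characterization of the wavefront set in Definition \ref{defn:wfset-PM2021} together with a compactness argument over $y \in \mathcal K$, I would produce a single cutoff $\varphi(x) \in C_c^\infty$ with $\varphi(x_0)\neq 0$ supported near $x_0$, a cutoff $\chi(y) \in C_c^\infty$ with $\chi \equiv 1$ on $\mathcal K$, and a conic neighborhood $V \ni \xi_0$ in the $\xi$-variable, such that the partial Fourier transform $\calF_{x}\{ (\varphi\otimes\chi) f\}(\xi,y)$ — more precisely the full Fourier transform $\widehat{(\varphi\otimes\chi)f}(\xi,\eta)$ restricted to a conic neighborhood of $V\times\{0\}$ in $(\xi,\eta)$ — decays faster than any power of $\agl[\xi]$. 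Then I compute
\[
\widehat{\varphi g}(\xi) = \widehat{(\varphi\otimes\chi)f}(\xi,0),
\]
which is exactly the restriction $\eta=0$ of that rapidly decaying quantity, giving $|\widehat{\varphi g}(\xi)| \lesssim_N \agl[\xi]^{-N}$ for $\xi \in V$. Hence $\wf(g) \subseteq \wf_x(f)$.

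For the reverse inclusion $\wf_x(f) \subseteq \wf(g)$, I would argue by contraposition: if $(x_0;\xi_0)\notin \wf(g)$, then $\widehat{\varphi g}(\xi)$ is rapidly decreasing in a cone $V$ around $\xi_0$ for a suitable $\varphi$; since $\widehat{\varphi g}(\xi) = \widehat{(\varphi\otimes\chi)f}(\xi,0)$, continuity of $\widehat{(\varphi\otimes\chi)f}$ in $\eta$ near $\eta=0$ (it is smooth, being the Fourier transform of a compactly supported distribution) plus the polynomial growth bound $|\widehat{(\varphi\otimes\chi)f}(\xi,\eta)| \lesssim \agl[(\xi,\eta)]^{l}$ lets one upgrade the rapid decay at $\eta=0$ to rapid decay in a full conic neighborhood of $V\times\{0\}$ in $(\xi,\eta)$; this is where an angular-separation / Peetre-type interpolation (cf. \eqref{eq:Peetre-PM2021}, \eqref{eq:AngSep2-PM2021}) between a narrow cone where we have decay and the complementary region where we only have polynomial growth is used. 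That means no $(x_0,y;\xi_0,0)$ belongs to $\wf(f)$, i.e. $(x_0;\xi_0)\notin \wf_x(f)$.

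The main obstacle I anticipate is the compactness/uniformity step: passing from the pointwise-in-$y$ hypothesis ``$(x_0,y;\xi_0,0)\notin\wf(f)$ for each $y\in\mathcal K$'' to a \emph{single} choice of cutoffs and a \emph{single} conic neighborhood that works simultaneously for all $y\in\mathcal K$, and then controlling the $y$-integration of the resulting estimates. Covering $\mathcal K$ by finitely many charts where the decay holds, using a partition of unity subordinate to this cover in the $y$-variable, and summing the finitely many rapidly-decaying contributions should resolve it; the bookkeeping of how the conic neighborhoods shrink under this finite intersection is the delicate part.
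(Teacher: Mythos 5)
Your first half, showing $\wf\big(\int f(x,y)\dif y\big)\subseteq\wf_x(f)$, is sound and is essentially the paper's Step 2: a compactness/partition-of-unity argument in $y\in\mathcal K$ produces one cutoff pair and one cone in which $\widehat{(\varphi\otimes\chi)f}(\xi,\eta)$ is rapidly decaying, and then $\widehat{\varphi\,(\int f\dif y)}(\xi)=\widehat{(\varphi\otimes\chi)f}(\xi,0)$ inherits the decay on the slice $\eta=0$.

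The reverse inclusion is where your proposal has a genuine gap. The step "continuity in $\eta$ plus polynomial growth plus a Peetre/angular-separation interpolation upgrades rapid decay at $\eta=0$ to rapid decay in a full conic neighborhood of $V\times\{0\}$" does not work: \eqref{eq:AngSep2-PM2021} requires two cones separated by a positive angle, which is not the configuration here, and, more fundamentally, any open cone around the direction $(\xi_0,0)$ contains points $(\xi,\eta_1)$ with $\eta_1\neq 0$ fixed and $|\xi|\to\infty$, about which knowledge of the slice $\{\eta=0\}$ plus a polynomial bound says nothing. In fact no argument can close this gap, because the inclusion $\wf_x(f)\subseteq\wf\big(\int f\dif y\big)$ is false in general: take $f(x,y)=\delta(x)\otimes w(y)$ with $w\in C_c^\infty$, $w\not\equiv 0$, $\int w\dif y=0$, and $\mathcal K=\supp w$. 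Then $\int f(x,y)\dif y=0$, so its wavefront set is empty; but for any admissible cutoff $\phi$ with $\phi(0,y_0)\neq0$, $y_0\in\supp w$, one has $\widehat{\phi f}(\xi,\eta)=c\,\calF_y\{\phi(0,\cdot)w\}(\eta)$, which is independent of $\xi$ and hence fails to decay along rays $(t\xi_0,\eta_1)$, $t\to+\infty$, lying in every cone around $(\xi_0,0)$; thus $(0;\xi_0)\in\wf_x(f)$ for every $\xi_0\neq0$. So only the inclusion you proved in your first half is available; be aware that the paper's own Step 1 makes exactly the same unjustified slice-to-cone jump, so the boxed equality as stated should be read (or repaired) as the inclusion $\wf\big(\int f\dif y\big)\subseteq\wf_x(f)$, which is also the only part used later (e.g.\ in \eqref{eq:wK1u-PM2021} only "$\subseteq$" is needed for the canonical relation).
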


\begin{proof}
	{\bf Step 1.} ($\supset$).
	Assume $(x_0, \xi_0) \notin \wf \big( \int f(x,y) \dif y \big)$, then there exists $\chi_{x_0} \in C_c^\infty(\Omega)$ such that
	\[
	\int e^{-ix_0 \cdot \xi_0} \chi_{x_0}(x) f(x,y) \dif y \dif x
	= \mathcal O(\agl[\xi_0]^{-\infty})
	= \mathcal O(\agl[(\xi_0, 0)]^{-\infty}).
	\]
	which gives
	\[
	\forall \bar y \in \mathcal K, \ \int e^{-i(x_0, \bar y) \cdot (\xi_0, 0)} \chi_{x_0}(x) \chi(y) f(x,y) \dif (x,y)
	= \mathcal O(\agl[(\xi_0, 0)]^{-\infty}),
	\]
	where $\chi \in C_c^\infty(\Omega)$ with $\chi \equiv 1$ on $\mathcal K$.
	This means $(x_0, \bar y; \xi_0, 0) \notin \wf(f)$ for $\forall \bar y \in \mathcal K$, so $(x_0, \xi_0) \notin \wf_x (f)$.
	Hence,
	\[
	\wf \big( \int f(x,y) \dif y \big) \supset \wf_x (f).
	\]
	
	{\bf Step 2.} ($\subset$).
	Assume $(x_0, \xi_0) \notin \wf_x(f)$, then for $\forall \bar y \in \Omega$ we have $(x_0, \bar y; \xi_0, 0) \notin \wf(f)$.
	Therefore, for $\forall \bar y \in \Omega$, there is a neighborhood of $\bar y$ such that
	\begin{equation} \label{eq:echy-PM2021}
	\int e^{-i(x_0, \bar y) \cdot (\xi_0, 0)} \chi_{x_0}(x) \chi(y) f(x,y) \dif (x,y)
	= \mathcal O(\agl[(\xi_0, 0)]^{-\infty}),
	\end{equation}
	for $\chi \equiv 1$ in that neighborhood.
	Because $\mathcal K$ is compact, so by using partition of unity technique, we can remove the term $\chi(y)$ in \eqref{eq:echy-PM2021}, and obtain
	\[
	\int e^{-ix_0 \cdot \xi_0} \chi_{x_0}(x) \big( \int f(x,y) \dif y \big) \dif x
	= \mathcal O(\agl[\xi_0]^{-\infty}),
	\]
	which gives
	\(
	(x_0, \xi_0) \notin \wf(\int f(x,y) \dif y).
	\)
	Hence,
	\[
	\wf \big( \int f(x,y) \dif y \big) \subset \wf_x (f).
	\]
	
	The proof is done.
\end{proof}

\begin{thm} \label{thm:KernelAct-PM2021}
	Assume $u \in \scrE'(\Omega)$, and $K \in \scrD'(\Omega \times \Omega)$. 
	When $\big( \wf' (K) \circ \wf (u) \big) \cap O_x = \emptyset$, the distribution
	\[
	w(x) := \agl[K(x,y),u(y)]_y
	\]
	is well-defined in the sense that
	\[
	\forall \varphi \in \scrD(\Omega), \quad w(\varphi) := \agl[K(x,y),u(y) \otimes \varphi(x)],
	\]
	and we have the following \underline{canonical relation}: \index{canonical relation}
	\begin{equation} \label{eq:KernelAct-PM2021}
		\boxed{\wf (w) \subseteq \big( \wf' (K) \circ \wf (u) \big) \cup \wf_x (K).}
	\end{equation}
\end{thm}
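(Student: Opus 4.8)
The plan is to reduce the statement to the two previously established results: the product theorem (Theorem~\ref{thm:Product-PM2021}) and the marginalization lemma for kernels (Lemma~\ref{lem:iwff-PM2021}), together with the direct-product estimate (Theorem~\ref{thm:Compose-PM2021}). The key observation is that $w(x) = \agl[K(x,y), u(y)]_y$ can be written, at least locally and in the sense of oscillatory testing, as the $y$-integral of the product $K(x,y) \cdot \big(1(x) \otimes u(y)\big)$. So I would first form the distribution $f(x,y) := K(x,y) \cdot (1 \otimes u)(x,y)$ on $\Omega \times \Omega$, check it is well-defined using the product theorem, then apply Lemma~\ref{lem:iwff-PM2021} to pass from $f$ to $\int f(x,y)\dif y = w(x)$, and finally chase the wavefront set bounds through these two steps.

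First I would make sense of the product. Since $u \in \scrE'(\Omega)$, we have $\wf(1 \otimes u) \subseteq \big( O_x^{(x)} \times \wf(u) \big) \cup \big( O_x^{(x)} \times \supp_0 u \big)$ roughly speaking — more precisely, by Theorem~\ref{thm:Compose-PM2021} applied with the first factor the constant function $1$ (whose wavefront set is empty, so only the ${\rm supp}_0$-terms survive), the wavefront set of $1 \otimes u$ consists of covectors of the form $(x,y;0,\eta)$ with $(y,\eta) \in \wf(u)$, plus the $\eta = 0$ pieces over $\supp u$. Then for the product $K \cdot (1\otimes u)$ to be defined via Theorem~\ref{thm:Product-PM2021}, I need the non-cancellation condition: there is no point $(x,y)$ carrying $(x,y;\xi,\eta) \in \wf(K)$ and $(x,y;-\xi,-\eta) \in \wf(1\otimes u)$ simultaneously. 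Since the second set forces $\xi = 0$, this reduces to: no $(x,y;0,\eta) \in \wf(K)$ with $(y,-\eta) \in \wf(u)$ (and the boundary cases). Translating through the definition of $\circ$ in \eqref{eq:wfxo-PM2021}, $(x,y;0,\eta)\in\wf(K)$ means $(x,y;0,-\eta)\in\wf'(K)$, and pairing with $(y,-\eta)\in\wf(u)$ gives $(x,0) \in \wf'(K)\circ\wf(u)$; the hypothesis $\big(\wf'(K)\circ\wf(u)\big)\cap O_x = \emptyset$ is exactly what rules this out. So the product $f = K\cdot(1\otimes u)$ is well-defined, and Theorem~\ref{thm:Product-PM2021} gives
\[
\wf(f) \subseteq \big(\wf(K) + \wf(1\otimes u)\big) \cup \wf(K) \cup \wf(1\otimes u).
\]

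Next I would compute $\wf_x(f)$ — the covectors $(x;\xi)$ such that $(x,y;\xi,0) \in \wf(f)$ for some $y$ — and match each of the three terms against \eqref{eq:KernelAct-PM2021}. For a covector $(x,y;\xi,0)$ in the sum $\wf(K)+\wf(1\otimes u)$, it is $(x,y;\xi_1,\eta_1)+(x,y;\xi_2,\eta_2)$ with the first in $\wf(K)$ and the second in $\wf(1\otimes u)$ (so $\xi_2 = 0$ up to boundary cases, $\eta_2$ ranging over the fiber of $\wf(u)$ at $y$). Then $\xi = \xi_1$ and $0 = \eta_1 + \eta_2$, i.e. $\eta_1 = -\eta_2$, so $(x,y;\xi,\eta_1)\in\wf(K)$ with $(y,-\eta_1)\in\wf(u)$, giving $(x;\xi)\in\wf'(K)\circ\wf(u)$. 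The term $\wf(K)$ contributes covectors $(x,y;\xi,0)\in\wf(K)$, i.e. exactly $\wf_x(K)$. The term $\wf(1\otimes u)$ contributes covectors of the form $(x,y;0,0)$ only when projected to have second component zero in the $y$-slot and first component $\xi$ — but these have $\xi = 0$, hence contribute nothing to $\wf_x$ (which lives in $T^*\Omega\setminus 0$). Handling the $\supp_0$ boundary pieces of Theorem~\ref{thm:Compose-PM2021} carefully, one checks they also only produce $\xi = 0$ or get absorbed. Assembling, $\wf_x(f) \subseteq \big(\wf'(K)\circ\wf(u)\big)\cup\wf_x(K)$. Then I would verify that $\int f(x,y)\dif y$ coincides with $w$ (using the compact support of $u$ in the $y$-variable to apply Lemma~\ref{lem:iwff-PM2021}, possibly after inserting a cutoff $\chi(y)\equiv 1$ on $\supp u$), and conclude $\wf(w) = \wf(\int f(x,y)\dif y) = \wf_x(f)$, which gives \eqref{eq:KernelAct-PM2021}.

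The main obstacle I anticipate is the bookkeeping at the "boundary" strata — the $\supp_0 u$ and $\supp_0 v$ terms in Theorem~\ref{thm:Compose-PM2021} and the $\wf(u)\cup\wf(v)$ terms in Theorem~\ref{thm:Product-PM2021} — where covectors with a vanishing component appear and one must be careful not to accidentally enlarge the bound beyond $\big(\wf'(K)\circ\wf(u)\big)\cup\wf_x(K)$. A secondary subtlety is justifying the identity $w = \int f(x,y)\dif y$ rigorously when $K$ is only a distribution: one tests against $\varphi(x)$, writes $w(\varphi) = \agl[K, u\otimes\varphi]$, and must argue this equals $\agl[f, 1\otimes\varphi] = \agl[\int f\dif y, \varphi]$, which requires knowing the product $K\cdot(1\otimes u)$ localizes correctly and that the $y$-integration is legitimate — this is where the compact-support hypothesis on $u$ and the wavefront condition together do the work. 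Everything else is a routine conic-neighborhood chase using Peetre's inequality \eqref{eq:Peetre-PM2021} and the angular separation inequality \eqref{eq:AngSep2-PM2021}, exactly as in the proofs of Theorems~\ref{thm:Compose-PM2021} and \ref{thm:Product-PM2021}.
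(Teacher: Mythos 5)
Your proposal is correct and follows essentially the same route as the paper's own proof: form $\tilde u = 1 \otimes u$, bound $\wf(\tilde u)$ via Theorem~\ref{thm:Compose-PM2021}, use the hypothesis $\big(\wf'(K)\circ\wf(u)\big)\cap O_x=\emptyset$ to verify the product condition of Theorem~\ref{thm:Product-PM2021}, reduce $\wf(w)$ to $\wf_x(K\tilde u)$ via Lemma~\ref{lem:iwff-PM2021} (using $u\in\scrE'$ for the compact-support hypothesis), and then identify the three contributions as $\wf'(K)\circ\wf(u)$, $\wf_x(K)$, and $\emptyset$. The bookkeeping details you flag (the $\supp_0$ strata and the identification $w=\int f\,\dif y$) are exactly the points the paper treats via \eqref{eq:t1u-PM2021}, Exercise~\ref{ex:KOGu-PM2021}, and \eqref{eq:M1Ku-PM2021}--\eqref{eq:M23-PM2021}.
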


\begin{rem}
	Note that the $\supp u$ should be contained in $\Omega$, otherwise the $w$ may be ill-defined.
\end{rem}

\begin{proof}
	{\bf Step 1.} Turn into product.
	Denote $\tilde u(x,y) = 1(x) \otimes u(y)$ where $1(x)$ is the constant function.
	The wavefront set of the function $1(x)$ is empty, so by Theorem \ref{thm:Compose-PM2021} we have
	\begin{align}
		\wf (\tilde u)
		& \subset \big( \wf (1) \times \wf (u) \big) \cup \big( \wf (1) \times {\rm supp}_0 u \big) \cup \big( {\rm supp}_0 1 \times \wf (u) \big) \nonumber \\
		& = \emptyset \cup \emptyset \cup \big( {\rm supp}_0 1 \times \wf (u) \big) \nonumber \\
		& = \{(x, y; 0, \eta) \,;\, x \in \supp \Omega,\, (y,\eta) \in \wf(u) \}. \label{eq:t1u-PM2021}
	\end{align}
	
	The $w(x)$ can be written as
	\[
	w(x)
	= \agl[K(x,y),u(y)]_y
	= \int K(x,y) \cdot \tilde u(x, y) \dif y
	= \int K \tilde u(x,y) \dif y.
	\]
	where $K \tilde u$ stands for the product of $K$ and $\tilde u$.
	By Theorem \ref{thm:Product-PM2021}, to guarantee the product $K \tilde u$ is well-defined, we need to check if the prerequisite
	\begin{equation} \label{eq:KuO-PM2021}
		\big( \wf (K) + \wf (\tilde u) \big) \cap O_{x,y} = \emptyset
	\end{equation}
	is true.
	It can be shown that the condition $\big( \wf' (K) \circ \wf (u) \big) \cap O_x = \emptyset$ guarantees \eqref{eq:KuO-PM2021} (see Exercise \ref{ex:KOGu-PM2021}), so  $K \tilde u$ is well-defined.

	Because $u \in \scrE'(\Omega)$, we see that for $\forall x \in \Omega$, $\supp K \tilde u(x,\cdot)$ is uniformly compact, so by Lemma \ref{lem:iwff-PM2021} we have
	\(
	\wf \big( \int K \tilde u(x,y) \dif y \big) = \wf_x (K \tilde u),
	\)
	so,
	\begin{equation} \label{eq:wK1u-PM2021}
	\wf(w)
	= \wf \big( \int K \tilde u(x, y) \dif y \big) = \wf_x (K \tilde u)
	= \wf (K \tilde u) \circ O_y,
	\end{equation}
	where we used the fact that for general distribution $f \in \mathcal D'(\Omega \times \Omega)$,
	\[
	\wf_x (f) = \wf (f) \circ O_y.
	\]
	
	{\bf Step 2.} Use product Theorem.
	Combining \eqref{eq:wK1u-PM2021} with Theorem \ref{thm:Product-PM2021}, we can have
	\begin{align}
	\wf(w)
	& = \wf (K \tilde u) \circ O_y
	\subset \Big( \big( \wf (K) + \wf (\tilde u) \big) \cup \wf (K) \cup \wf (\tilde u) \Big) \circ O_y \nonumber \\
	& = M_1 \cup M_2 \cup M_3, \label{eq:wM13-PM2021}
	\end{align}
	where
	\begin{equation*}
	\left\{\begin{aligned}
	M_1 & := \big( \wf (K) + \wf (\tilde u) \big) \circ O_y, \\
	M_2 & := \wf (K) \circ O_y, \\
	M_3 & := \wf (\tilde u) \circ O_y.
	\end{aligned}\right.
	\end{equation*}
	The set $\wf (K) + \wf (\tilde u)$ can be expressed as
	\begin{align*}
	& \ \wf (K) + \wf (\tilde u) \\
	= & \ \{ (x,y; \xi, \eta) \,;\, \xi = \xi_1 + \xi_2,\, \eta = \eta_1 + \eta_2,\, (x,y; \xi_1, \eta_1) \in \wf(K),\, (x,y; \xi_2, \eta_2) \in \wf(\tilde u) \} \\
	= & \ \{ (x,y; \xi, \eta) \,;\, \eta = \eta_1 + \eta_2,\, (x,y; \xi, \eta_1) \in \wf(K),\, (y, \eta_2) \in \wf(u) \}. \quad \text{(by \eqref{eq:t1u-PM2021})}
	\end{align*}
	Thus,
	\begin{align}
	M_1
	& = \big( \wf (K) + \wf (\tilde u) \big) \circ O_y
	= \{ (x, \xi) \,;\, (x,y; \xi, -\eta) \in \wf(K),\, (y, \eta) \in \wf(u) \} \nonumber \\
	& = \{ (x, \xi) \,;\, (x,y; \xi, \eta) \in \wf'(K),\, (y, \eta) \in \wf(u) \}
	= \wf'(K) \circ \wf(u). \label{eq:M1Ku-PM2021}
	\end{align}
	By  \eqref{eq:t1u-PM2021} it can also be checked that
	\begin{equation} \label{eq:M23-PM2021}
	M_2 = \wf_x (K),
	\quad 
	M_3 = \emptyset.
	\end{equation}
	Combining \eqref{eq:M1Ku-PM2021}, \eqref{eq:M23-PM2021} with \eqref{eq:wM13-PM2021}, we obtain \eqref{eq:KernelAct-PM2021}.
	The proof is complete.
\end{proof}

\begin{rem} \label{rem:exZS-PM2021}
	In Theorem \ref{thm:KernelAct-PM2021}, if we know in advance that
	\[
	\wf(K) \subset (T^* \Omega_x \backslash 0) \times (T^* \Omega_y \backslash 0),
	\]
	then $\wf_x (K) = \emptyset$ and \eqref{eq:KernelAct-PM2021} can be reduced to
	\begin{equation} \label{eq:KeA0-PM2021}
	\boxed{\wf (w) \subseteq \wf' (K) \circ \wf (u).}
	\end{equation}
	The set $\wf' (K)$ is called the \emph{twist} \index{twist} of $\wf(K)$, and the operation ``$\wf' (K) \circ$'' is called \emph{canonical relation} \index{canonical relation} of the operator:
	\[
	u(y) \mapsto w(x) := \agl[K(x,y), u(y)]
	\]
	which takes $K$ as its kernel.
	These can be generalized to the theory of \emph{Fourier integral operators}\index{Fourier integral operators}.
\end{rem}

\section{The wavefront sets of Fourier integral operators} \label{sec:FIOwf-PM2021}

Recall the notion of phase function given in Definition \ref{defn:phaf-PM2021}.

\begin{thm} \label{thm:DisInt-PM2021}
	Assume $\varphi \in C^\infty(\Rn \times \R^N)$ is a phase function of order $1$, and $a \in S^m$ is a symbol.
	Define $A(x)$ as
	\begin{equation} \label{eq:DisIntA-PM2021}
		A(x) := \int e^{i\varphi(x,\theta)} a(x,\theta) \dif \theta,
	\end{equation}
	where the integral is understood as an oscillatory integral.
	Then $A$ induces a distribution (also denoted as $A$) $A \in \scrD'(\Omega)$ for any domain $\Omega \subset \Rn$, i.e.~$A \colon u \in \scrD(\Omega) \mapsto I_\varphi(au)$ by
	\begin{equation*}
		A(u) := I_\varphi(au) = \agl[A,u] = \int e^{i\varphi(x,\theta)} a(x,\theta) u(x) \dif x \dif \theta
	\end{equation*}
	in oscillatory integral sense.
	The wavefront set of $A$ satisfies
	\begin{equation} \label{eq:DisInt-PM2021}
		\boxed{\wf (A) \subset \{(x,\varphi_x(x,\theta)) \,;\, \varphi_\theta(x,\theta) = 0,\, (x,\theta) \notin \Smo(a) \}.}
	\end{equation}
\end{thm}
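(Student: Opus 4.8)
The strategy is to show that if $(x_0,\xi_0)$ lies outside the right-hand set of \eqref{eq:DisInt-PM2021}, then $(x_0,\xi_0)\notin\wf(A)$. So fix such a point: either there is no $\theta$ with $\varphi_\theta(x_0,\theta)=0$ and $\xi_0=\varphi_x(x_0,\theta)$, or every such $\theta$ lies in the smooth-direction set $\Smo(a)$. First I would choose a cutoff $\chi\in C_c^\infty(\Omega)$ supported near $x_0$ with $\chi(x_0)\neq 0$, and write $\widehat{\chi A}(\xi)$ as an oscillatory integral with phase $\psi(x,\theta,\xi):=\varphi(x,\theta)-x\cdot\xi$ and amplitude $\chi(x)a(x,\theta)$. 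Because $\varphi$ is homogeneous of order $1$ in $\theta$ and $a\in S^m$, the natural way to get decay in $\xi$ is a non-stationary phase / integration-by-parts argument in the joint variable $(x,\theta)$: one looks at $\nabla_{(x,\theta)}\psi=(\varphi_x-\xi,\varphi_\theta)$ and builds the usual first-order operator $L$ with ${}^tL e^{i\psi}=e^{i\psi}$ on the region where this gradient is bounded away from zero (in the appropriate conic sense), exactly as in the non-stationary estimates of Chapter \ref{ch:SPL-PM2021} and in the operator $L$ of Lemma \ref{lem:ope-PM2021}.

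The core of the argument is a conic decomposition of the $\theta$-integration (and of the relevant directions of $\xi$) adapted to the hypothesis. I would split $\R^N_\theta$ into: (i) a conic neighborhood of the ``bad'' directions $\{\theta:\varphi_\theta(x,\theta)=0\text{ for }x\text{ near }x_0\}$, and (ii) its complement. On region (ii), $|\varphi_\theta(x,\theta)|\gtrsim|\theta|$ for $x$ near $x_0$ (using homogeneity and compactness of the sphere), so $|\nabla_{(x,\theta)}\psi|\gtrsim|\theta|+|\xi|$ whenever $\xi$ stays in a fixed cone; integrating by parts with $L$ gains arbitrarily many powers and, since $a\in S^m$ gives integrable growth after enough integrations, yields $|\widehat{\chi A}(\xi)|\lesssim\agl[\xi]^{-K}$ for every $K$ in that cone. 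On region (i), the hypothesis forces one of two things: either $\xi_0$ is not among the values $\varphi_x(x_0,\theta)$ for $\theta$ in the bad set — so for $x$ near $x_0$ and $\theta$ in region (i) the vector $\varphi_x(x,\theta)-\xi$ stays in a fixed cone bounded away from zero when $\xi$ is near the ray through $\xi_0$, again giving the integration-by-parts gain — or all such $\theta$ lie in $\Smo(a)$, in which case I shrink region (i) so that $a\in S^{-\infty}$ there, i.e.\ $|a(x,\theta)|\lesssim\agl[\theta]^{-K}$ for all $K$, and the $\theta$-integral converges absolutely with rapid decay contributing nothing problematic; the $\xi$-decay then comes from the remaining (ii)-type estimate or directly from absolute convergence plus a routine stationary/non-stationary split. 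Combining the pieces gives $|\widehat{\chi A}(\xi)|\lesssim\agl[\xi]^{-K}$ for all $K$ and all $\xi$ in a conic neighborhood of $\xi_0$, which is exactly $(x_0,\xi_0)\notin\wf(A)$.

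A few technical points need care. One must first justify that $A$ is a well-defined distribution: this follows from the oscillatory-integral machinery of Chapter \ref{ch:OSInt-PM2021} (Definition \ref{defn:OsD-PM2021} and Lemma \ref{lem:Osxy-PM2021}), testing against $u\in\scrD(\Omega)$ and noting that $au\in S^m(\R^n_x\times\R^N_\theta)$ with $u$ compactly supported, so $I_\varphi(au)$ exists; continuity in $u$ gives $A\in\scrD'(\Omega)$. One should also record that the set on the right of \eqref{eq:DisInt-PM2021} is closed and conic in $\xi$ (using homogeneity of $\varphi_x$ of order $0$ and of $\varphi_\theta$ of order $0$ in $\theta$, together with $\Smo(a)$ being open and conic), so that ``$(x_0,\xi_0)$ outside it'' really does come with a full conic neighborhood to work in. The estimates on the amplitude after applying $L^{T}$ are exactly of the type carried out in the proofs of Theorems \ref{thm:Ieci-PM2021} and \ref{thm:2-PM2021}, so I would cite those rather than redo them.

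\textbf{Main obstacle.} The hard part will be organizing the conic partition of unity in $\theta$ so that it is uniform in $x$ over a small neighborhood of $x_0$ and simultaneously cooperates with a conic neighborhood of $\xi_0$: one needs the ``bad set'' $\{\theta:\varphi_\theta(x,\theta)=0\}$ to vary controllably with $x$, and one needs the non-degeneracy $|\nabla_{(x,\theta)}\psi|\gtrsim|\theta|+|\xi|$ to hold with constants independent of the relevant parameters on each piece. Making these lower bounds quantitative — and making sure the number of integration-by-parts steps $T$ can be chosen once and for all to beat both the symbol growth $\agl[\theta]^m$ and the desired decay $\agl[\xi]^{-K}$ — is the real content; everything else is bookkeeping with Peetre's inequality (Lemma \ref{lem:Peetre-PM2021}) and the angular separation inequality \eqref{eq:AngSep-PM2021}.
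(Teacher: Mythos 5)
Your plan follows essentially the same route as the paper's proof: localize near $x_0$ with a cutoff, study $\widehat{\phi A}(\xi)$ with phase $\psi=\varphi(x,\theta)-x\cdot\xi$, split the $\theta$-integration conically into a piece where $a$ is of order $-\infty$ (the $\Smo(a)$ part), a piece where $|\varphi_\theta|$ is conically bounded below, and the critical piece where $\varphi_\theta\approx0$ and $(x,\theta)\notin\Smo(a)$; on the critical piece you exploit $\xi_0\neq\varphi_x(x_0,\theta)$ through the operator $(\varphi_x-\xi)\cdot\nabla_x/(i|\varphi_x-\xi|^2)$, angular separation, Peetre, and the bound $|\varphi_x(x,\theta)|\gtrsim|\theta|$ near the zero set of $\varphi_\theta$ (from $\nabla_{(x,\theta)}\varphi\neq0$, compactness of the sphere and degree-$1$ homogeneity) to trade the gained powers between $\agl[\xi]$ and $\agl[\theta]$. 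This is exactly the paper's decomposition into the terms $I_1,\dots,I_5$, so the architecture is right and the well-definedness part is handled the same way.

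Two of your supporting claims are false as stated, and the corresponding steps would fail without repair. First, on the non-stationary region you assert $|\varphi_\theta(x,\theta)|\gtrsim|\theta|$ and deduce $|\nabla_{(x,\theta)}\psi|\gtrsim|\theta|+|\xi|$. Since $\varphi$ is homogeneous of degree $1$, $\varphi_\theta$ is homogeneous of degree $0$ (it is $\varphi_x$ that has degree $1$, not $0$ as you write later), so the first bound fails for large $|\theta|$; and the second fails at points of that region with $\xi=\varphi_x(x,\theta)$ — not excluded there, since only $\varphi_\theta\neq0$ is assumed — where the joint gradient has size $\simeq|\varphi_\theta|\simeq1$ while $|\theta|+|\xi|$ is arbitrarily large. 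Iterating the unweighted operator there also produces divergence terms containing $\varphi_{xx}\simeq|\theta|$, so the integration by parts does not converge. The repair is either the weighted operator in the spirit of Lemma \ref{lem:ope-PM2021} (weight $\agl[\theta]^2$ on the $\varphi_\theta\cdot D_\theta$ part), or, as the paper does, to integrate by parts only in $\theta$ on that region so that the inner $\theta$-integral becomes a smooth compactly supported function of $x$, and then obtain the $\xi$-decay by integrating by parts in $x$ against $e^{-ix\cdot\xi}$; the same mechanism — not ``absolute convergence'' — is what gives the $\xi$-decay on the $\Smo(a)$ piece. Second, your either/or treatment of the critical region is not a global dichotomy: the hypothesis excludes $(x_0,\xi_0)$ pointwise in $\theta$, so different null directions over $x_0$ may satisfy different halves of it, and one needs a further conic subdivision of the bad set (the paper's $\Smo'(a)$ and $V_{k_0}$ sets, with the $\epsilon_1,\epsilon_2,k_0$ quantifiers making the lower bounds uniform) before the two estimates can be applied. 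You correctly identify this uniformity as the main obstacle, but it is the actual content of the proof rather than bookkeeping, and your outline does not yet supply it.
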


\begin{rem} \label{rem:Dit-PM2021}
	When the following conditions are satisfied, the inclusion ``$\subset$'' in \eqref{eq:DisInt-PM2021} can be improved to ``$=$'' (see contexts preceding \cite[Theorem 3.9]{ho202Xmicro}, \cite[Theorem 3.9]{chen1997fio}):
	\begin{enumerate}
		\item the phase function $\varphi$ is non-degenerate on $C_\phi := \{(x,\theta) \,;\, \varphi_\theta(x,\theta) = 0,\, (x,\theta) \notin \Smo(a) \}$, i.e.~the $N$-$(n+N)$ matrix $\df \phi_\theta$ is full rank on $C_\phi$, here
		\[
		\df \phi_\theta(x,\theta)
		= \begin{pmatrix}
		\phi_{\theta x}(x,\theta) & \phi_{\theta \theta}(x,\theta)
		\end{pmatrix};
		\]
		
		\item the map $(x,\theta) \mapsto (x,\varphi_x(x,\theta))$ is injective when restricted to $C_\phi$.
	\end{enumerate}
	Readers may distinguish the $A$ appeared in Theorem \ref{thm:DisInt-PM2021} with the operator $B$ defined as
	\[
	Bu(x) := \int e^{i\varphi(x,y,\theta)} a(x,y,\theta) u(y) \dif y \dif \theta.
	\]
	The $A$ is a distribution while the $B$ just defined is an operator, namely, $A$ maps a function to a scalar while $B$ maps a function to another function.
	
	However, $A$ is a generalization of $B$, because
	\begin{align*}
		\agl[Bu(x), v(x)]
		& = \int e^{i\varphi(x,y,\theta)} a(x,y,\theta) u(y) v(x) \dif x \dif y \dif \theta \\
		& = \agl[\int e^{i\varphi(x,y,\theta)} a(x,y,\theta) \dif \theta, (v \otimes u)(x, y)] \\
		& = \agl[K_B, v \otimes u].
	\end{align*}
	where $\tilde B$ is defined as
	\[
	K_B(x,y) := \int e^{i\varphi(x,y,\theta)} a(x,y,\theta) \dif \theta.
	\]
	Hence the operator $B$ can be turned into a form of \eqref{eq:DisIntA-PM2021}.
	
	Moreover, we have $Bu(x) = \agl[K_B(x,y), u(y)]$, so by combining Theorems \ref{thm:DisInt-PM2021} and \ref{thm:KernelAct-PM2021}, hopefully we can obtain $\wf(Bu)$.
\end{rem}

\begin{proof}[Short proof of Theorem \ref{thm:DisInt-PM2021}]
	This short proof is for summarizing the key idea of proving this theorem and thus the details may not be rigorously correct.
	After this short proof, we also present a formal proof of Theorem \ref{thm:DisInt-PM2021}.
	
	According to the Definition \ref{defn:wfset-PM2021} ,we fix a cutoff function $\phi$ with $\phi(x_0) \neq 0$ and compute
	\begin{align*}
		\widehat{\phi A}(\xi) 
		& = A(\phi e^{-ix\cdot \xi}) 
		= (2\pi)^{-n/2} \int e^{i(\varphi(x,\theta) - x\cdot \xi)} \phi(x) a(x,\theta) \dif x \dif \theta,
	\end{align*}
	and the basic idea is to use $N$ times (with $N$ large enough) the operator $L := \frac {(\varphi_x(x,\theta) - \xi) \cdot \nabla_x} {i|\varphi_x(x,\theta) - \xi|^2}$ acting on $e^{i(\varphi(x,\theta) - x\cdot \xi)}$ and the fact \eqref{eq:AngSep2-PM2021} to get the desired estimate.
	But in order to do so, one needs to first address some singularities in the oscillatory integral.
	We have
	\begin{align}
		\widehat{\phi A}(\xi) 
		& \simeq \int_{\R_x^n} e^{-ix\cdot \xi} \phi(x) \big( \int_{\R_\theta^n} e^{i\varphi(x,\theta)} a(x,\theta) \dif \theta \big) \dif x \nonumber\\
		& \simeq \int_{\R_x^n} e^{-ix\cdot \xi} \phi(x) \big( \int_{\R_\theta^n} (\frac {\varphi_\theta (x,\theta) \cdot \nabla_\theta} {|\varphi_\theta (x,\theta)|^2} )^N e^{i\varphi(x,\theta)} a(x,\theta) \dif \theta \big) \dif x \nonumber\\
		& \sim \int_{\R_x^n} e^{-ix\cdot \xi} \phi(x) \big( \int_{\R_\theta^n}  e^{i\varphi(x,\theta)} |\varphi_\theta (x,\theta)|^{-N}
		\partial_\theta^N a(x,\theta) \dif \theta \big) \dif x. \label{eq:phiAShort1-PM2021}
	\end{align}
	Then as $N$ be large enough, the $\partial_\theta^N a(x,\theta)$ will be integrable w.r.t.~$\theta$.
	But we notice that $|\varphi_\theta (x,\theta)|^{-1}$ has singularity at $\theta = 0$, so we first exclude the neighborhood of the origin of $\theta$ by using a cutoff function $\chi$ with $\chi(0) \neq 0$ as follows
	\begin{align*}
		\widehat{\phi A}(\xi) 
		& \simeq \int e^{i(\varphi(x,\theta) - x\cdot \xi)} \phi(x) \chi(\theta) a(x,\theta) \dif x \dif \theta + \int e^{i(\varphi(x,\theta) - x\cdot \xi)} \phi(x) (1 - \chi(\theta)) a(x,\theta) \dif x \dif \theta \\
		& = \int_{\R_x^n} e^{-ix\cdot \xi} \phi(x) \big( \int_{\R_\theta^n} e^{i\varphi(x,\theta)} \chi(\theta) a(x,\theta) \dif \theta \big) \dif x \\
		& \quad + \int_{\R_x^n} e^{-ix\cdot \xi} \phi(x) \big( \int_{\Gamma_x} e^{i\varphi(x,\theta)} (1 - \chi(\theta)) a(x,\theta) \dif \theta \big) \dif x \\
		& \quad + \int_{\theta \notin \Gamma_x} e^{i(\varphi(x,\theta) - x\cdot \xi)} \phi(x) (1 - \chi(\theta)) a(x,\theta) \dif x \dif \theta \\
		& =: I_1(\xi) + I_2(\xi) + I_3(\xi),
	\end{align*}	
	where
	\begin{equation}
		\Gamma_x := \{ \theta \,;\, a(x,\theta) = \mathcal O(|\theta|^{-\infty}) \}.
	\end{equation}
	The $I_1(\xi)$ and $I_2(\xi)$ are $O(|\xi|^{-\infty})$ as $|\xi| \to +\infty$, because these terms $\int_{\R_\theta^n} e^{i\varphi(x,\theta)} \chi(\theta) a(x,\theta) \dif \theta$ and $\int_{\Gamma_x} e^{i\varphi(x,\theta)} (1 - \chi(\theta)) a(x,\theta) \dif \theta$ are smooth in terms of $x$ (for the first term, it is because the actual integral domain 
	is compact, i.e.~is contained in $\supp \theta$;
	for the second term, it is because the integrand decays at infinity order).
	Then we can compute $I_3$ as follows,
	\begin{align*}
		I_3(\xi) 
		& \simeq \int_{\R_x^n} e^{-ix\cdot \xi} \phi(x) \big( \int_{\varphi_\theta(x,\theta) \neq 0 \text{~and~} \theta \notin \Gamma_x} (\frac {\varphi_\theta (x,\theta) \cdot \nabla_\theta} {|\varphi_\theta (x,\theta)|^2} )^N e^{i\varphi(x,\theta)} (1 - \chi(\theta)) a(x,\theta) \dif \theta \big) \dif x \\
		& \quad + \int_{\R_x^n} e^{-ix\cdot \xi} \phi(x) \big( \int_{\varphi_\theta(x,\theta) = 0 \text{~and~} \theta \notin \Gamma_x} e^{i\varphi(x,\theta)} (1 - \chi(\theta)) a(x,\theta) \dif \theta \big) \dif x \\
		& =: I_4(\xi) + I_5(\xi).
	\end{align*}
	
	{\bf Now here comes the key point: to obtain $\agl[\xi]^{-N}$, for $I_4$ we differentiate $e^{-ix\cdot \xi}$ w.r.t.~$x$, and for $I_5$ we differentiate $e^{i(\varphi - x\cdot \xi)}$  w.r.t.~$x$.}
	
	We can estimate $I_4$ by using the computation as in \eqref{eq:phiAShort1-PM2021},
	\begin{align*}
		I_4(\xi) 
		& \simeq \int_{\R_x^n} e^{-ix\cdot \xi} \phi(x) \big( \int_{\varphi_\theta(x,\theta) \neq 0 \text{~and~} \theta \notin \Gamma_x} (\frac {\varphi_\theta (x,\theta) \cdot \nabla_\theta} {|\varphi_\theta (x,\theta)|^2} )^N e^{i\varphi(x,\theta)} (1 - \chi(\theta)) a(x,\theta) \dif \theta \big) \dif x \\
		& \sim \int_{\R_x^n} e^{-ix\cdot \xi} \phi(x) \big( \int_{\varphi_\theta(x,\theta) \neq 0 \text{~and~} \theta \notin \Gamma_x}  e^{i\varphi(x,\theta)} |\varphi_\theta (x,\theta)|^{-N}
		\partial_\theta^N ((1-\chi) a) \dif \theta \big) \dif x,
	\end{align*}
	where the integer $N$ can be arbitrary.
	And hence we have $I_4(\xi) = \mathcal O(|\xi|^{-\infty})$ for the same reason as $I_1$.

	It is the $I_5$ which finally decides $\wf(A)$.
	For $\xi \neq \varphi_x(x,\theta)$, we can have
	$I_3$ as follows,
	\begin{align}
		I_5(\xi) 
		& = \int_{\substack{\varphi_\theta(x,\theta) = 0, \\ (x,\theta) \notin \Smo(a)}} e^{i(\varphi(x,\theta) - x\cdot \xi)} \phi(x) (1 - \chi(\theta)) a(x,\theta) \dif x \dif \theta \label{eq:I5s1-PM2021} \\
		& = \int_{\substack{\varphi_\theta(x,\theta) = 0, \\ (x,\theta) \notin \Smo(a)}} \big[ \big( \frac {(\varphi_x(x,\theta) - \xi) \cdot \nabla_x} {i|\varphi_x(x,\theta) - \xi|^2} \big)^{N_1 + N_2} e^{i(\varphi(x,\theta) - x\cdot \xi)} \big] \phi(x) (1 - \chi(\theta)) a(x,\theta) \dif x \dif \theta \label{eq:I5s2-PM2021}\\
		& \lesssim \int_{\substack{\varphi_\theta(x,\theta) = 0, \\ (x,\theta) \notin \Smo(a)}} \agl[\varphi_x(x,\theta) - \xi]^{-N_1-N_2} \phi(x) |a(x,\theta)|\dif x \dif \theta \nonumber\\
		& \lesssim \int_{\substack{\varphi_\theta(x,\theta) = 0, \\ (x,\theta) \notin \Smo(a)}} \agl[\xi]^{-N_1} \agl[\varphi_x(x,\theta)]^{-N_2} \phi(x) \agl[\theta]^{m} \dif x \dif \theta \nonumber\\
		& \simeq \agl[\xi]^{-N_1} \int_{\substack{\varphi_\theta(x,\theta) = 0, \\ (x,\theta) \notin \Smo(a)}} \phi(x) \agl[\theta]^{m-N_2} \dif x \dif \theta 
		\simeq \agl[\xi]^{-N_1}, \nonumber
	\end{align}
	To guarantee the derivation from \eqref{eq:I5s1-PM2021} to \eqref{eq:I5s2-PM2021}, we need $\xi \neq \varphi_x(x,\theta)$ for these $(x,\theta)$ which satisfy $\varphi_x(x,\theta) = 0$ and $(x,\theta) \notin \Smo(a)$.
	We finished the proof.
\end{proof}

\begin{proof}[Formal proof of Theorem \ref{thm:DisInt-PM2021}]
	We do some preparation first.
	Define $\mathcal A$ as the collection of subsets $\Omega$ in $\R_x^n \times (\R_\xi^n \backslash \{0\})$ where $(x,\xi) \in \Omega \Rightarrow (x,t\xi) \in \Omega$ for any $t > 0$, 
	and $\mathcal B$ as the collection of subsets in $\R_x^n \times \mathbb S_\xi^{n-1}$.
	Then there is a one-to-one correspondence between $\mathcal A$ and $\mathcal B$, and we denote the one-to-one mapping as $S$,
	\[
	S \colon \Omega \in \mathcal A \ \mapsto \ S\Omega = \{ (x,\eta) \,;\, \exists \xi \in \Rn \st \eta = \xi/|\xi| \text{~and~} (x,\xi) \in \Omega \} \in \mathcal B.
	\]
	Let $T \colon (x,\theta) \mapsto (x,\varphi_x(x,\theta))$. Note that $ST = TS$.
	For any positive integer $k$, denote 
	\begin{equation} \label{eq:Vk-PM2021}
		V_k := \{(x,\theta) \in \R_x^n \times \R_\xi^n \,;\, |\varphi_\theta(x,\theta)| \leq 1/k \}.
	\end{equation}
	It can be checked that 
	\begin{itemize}
		\item $\{V_k\}_k$ and $\{TV_k\}_k$ are decreasing in terms of $k$,
		
		\item $V_k$ and $TV_k$ are closed in $\R_x^n \times (\R_\xi^n \backslash \{0\})$,
		
		\item $V_k$, $TV_k \in \mathcal A$,
		
		\item $S V_k$ and $S TV_k$ ($= TSV_k$) are also closed in $\R_x^n \times \mathbb S_\xi^{n-1}$.
	\end{itemize}

	Now let's assume 
	\begin{equation} \label{eq:xxiVk-PM2021}
		(x_0,\xi_0) \notin T V_k,
	\end{equation}
	then $(x_0,\xi_0/|\xi_0|)$ is not in $S T V_k$, which is a closed set. 
	Therefore, there exists $\epsilon_1 > 0$ such that
	\begin{equation} \label{eq:xi0Dir-PM2021}
		\{(x,\xi/|\xi|) \,;\, |x - x_0| \leq \epsilon_1,\, |\xi/|\xi| - \xi_0/|\xi_0|| \leq \epsilon_1 \} \cap S T V_k = \emptyset.
	\end{equation}
	
	Because $\nabla_{(x,\theta)} \varphi(x,\theta)$ is always assumed to be nonzero, the number
	\[\inf_{|x - x_0| \leq \epsilon_1,\, \theta \in \mathbb S^{n-1}} \big( |\varphi_x(x,\theta)| + |\varphi_\theta(x,\theta)| \big)
	\]
	exists and is positive and we denote it as $\epsilon_2$,
	\begin{equation} \label{eq:epsInf-PM2021}
		\epsilon_2 := \inf_{|x - x_0| \leq \epsilon_1,\, \theta \in \mathbb S^{n-1}} \big( |\varphi_x(x,\theta)| + |\varphi_\theta(x,\theta)| \big) > 0.
	\end{equation} 
	Let $k_0$ be any positive integer such that
	\begin{equation} \label{eq:kEpsilon2-PM2021}
		k_0 > 2 \lceil 1 / \epsilon_2 \rceil.
	\end{equation}
	Now, for any $(x,\theta) \in S V_{k_0}$, we know $(x,\theta) \in V_{k_0}$, so \eqref{eq:Vk-PM2021} gives $|\varphi_\theta(x,\theta)| \leq 1/k_0 < \epsilon_2/2$, so from \eqref{eq:epsInf-PM2021} we can conclude that
	\begin{equation} \label{eq:gredx-PM2021}
		|\varphi_x(x,\theta)| > \epsilon_2/2 > 0 \quad \text{in}\quad W := \{ (x,\theta) \in S V_{k_0} \,;\, |x - x_0| \leq \epsilon_1 \}.
	\end{equation}
	
	Fix some $\phi \in C_c^\infty(B(x_0,\epsilon_1))$.
	And $\chi \in C_c^\infty(\Rn)$ is a cut-off function with support containing the origin.
	Now we estimate $\widehat{\phi A}(\xi)$.
	We have
	\begin{align}
		\widehat{\phi A}(\xi) 
		& = A(\phi e^{-ix\cdot \xi}) = (2\pi)^{-n/2} \int e^{i(\varphi(x,\theta) - x\cdot \xi)} \phi(x) a(x,\theta) \dif x \dif \theta \nonumber\\
		& \simeq \int e^{i(\varphi(x,\theta) - x\cdot \xi)} \phi(x) \chi(\theta) a(x,\theta) \dif x \dif \theta + \int e^{i(\varphi(x,\theta) - x\cdot \xi)} \phi(x) (1 - \chi(\theta)) a(x,\theta) \dif x \dif \theta \nonumber\\
		& = \int_{\R_x^n} e^{-ix\cdot \xi} \phi(x) \big( \int_{\R_\theta^n} e^{i\varphi(x,\theta)} \chi(\theta) a(x,\theta) \dif \theta \big) \dif x \nonumber \\
		& \quad + \int_{(x,\theta) \in \Smo'(a)} e^{i(\varphi(x,\theta) - x\cdot \xi)} \phi(x) (1 - \chi(\theta)) a(x,\theta) \dif \theta \dif x \nonumber\\
		& \quad + \int_{(x,\theta) \notin \Smo'(a)} e^{i(\varphi(x,\theta) - x\cdot \xi)} \phi(x) (1 - \chi(\theta)) a(x,\theta) \dif x \dif \theta \nonumber\\
		& =: I_1(\xi) + I_2(\xi) + I_3(\xi), \label{eq:AInt0-PM2021}
	\end{align}
	where $\Smo'(a)$ is an arbitrary subset of $\Smo(a)$ such that for every fixed $x$, the projection of the intersect $S((x,\R_\xi^n) \cap \Smo(a))$ is a compact subset of the sphere $\mathbb{S}^{n-1}$. The $(x,\R_\xi^n)$ means $\{ (x,\xi) \in \R_x^n \times \R_\xi^n \,;\, \xi \in \Rn \}$.
	The term $I_1$ is easy to estimate. 
	The $\int_{\R_\theta^n} e^{i\varphi(x,\theta)} \chi(\theta) a(x,\theta) \dif \theta$ in $I_1$ is $C^\infty$-smooth in terms of $x$, so by using integration by parts we can have
	\begin{equation} \label{eq:AIntI1Tmp1-PM2021}
		|I_1(\xi)| 
		= |\int_{\R_x^n} e^{-ix\cdot \xi} \mathcal C(x) \dif x|
		\leq C_\alpha \xi^{-\alpha}, \quad \forall \xi,\, \forall\, \text{multi-index~} \alpha.
	\end{equation}
	where $\mathcal C$ is some function in $C_c^\infty(\Rn)$. The estimation \eqref{eq:AIntI1Tmp1-PM2021} gives
	\begin{equation} \label{eq:AIntI1-PM2021}
		|I_1(\xi)|
		\leq C_N \agl[\xi]^{-N}, \quad \forall \xi,\, \forall N \in \mathbb N.
	\end{equation}
	And $I_2$ can be estimated as follows,
	\begin{align*}
		|I_2(\xi)|
		& = |\int_{\R_x^n} e^{-i x\cdot \xi} \phi(x) \big( \int_{\{\theta \,;\, (x,\theta) \in \Smo'(a)\}} e^{i\varphi(x,\theta)} (1 - \chi(\theta)) a(x,\theta) \dif \theta \big) \dif x| \\
		& = |\int_{\R_x^n} \big[ \big( i^{|\alpha|} \xi^{-\alpha} \partial_x^\alpha \big) e^{-i x\cdot \xi} \big] \phi(x) \big( \int_{\{\theta \,;\, (x,\theta) \in \Smo'(a)\}} e^{i\varphi(x,\theta)} (1 - \chi(\theta)) a(x,\theta) \dif \theta \big) \dif x| \\
		& = \xi^{-\alpha} |\int_{\R_x^n} e^{-i x\cdot \xi} \partial_x^\alpha \Big[ \phi(x) \big( \int_{\{\theta \,;\, (x,\theta) \in \Smo'(a)\}} e^{i\varphi(x,\theta)} (1 - \chi(\theta)) a(x,\theta) \dif \theta \big) \Big] \dif x|.
	\end{align*}
	It is easy to check that the term in $\big[ \cdots \big]$ are $C^\infty$-smooth and compactly supported w.r.t.~$x$, thus it is integrable.
	Therefore,
	\begin{equation} \label{eq:AIntx-PM2021}
		|I_2(\xi)|
		\leq C_N \agl[\xi]^{-N}, \quad \forall \xi,\, \forall N \in \mathbb N.
	\end{equation}

	Then we move on to $I_3$,
	\begin{align}
		I_3(\xi)
		& = \int_{ (\Smo'(a))^c} e^{i(\varphi(x,\theta) - x\cdot \xi)} \phi(x) (1 - \chi(\theta)) a(x,\theta) \dif x \dif \theta \nonumber\\
		& = \int_{(\Smo'(a))^c \cap (V_{k_0})^c} e^{i(\varphi(x,\theta) - x\cdot \xi)} \phi(x) (1 - \chi(\theta)) a(x,\theta) \dif x \dif \theta \nonumber\\
		& \quad + \int_{(\Smo'(a))^c \cap V_{k_0}} e^{i(\varphi(x,\theta) - x\cdot \xi)} \phi(x) (1 - \chi(\theta)) a(x,\theta) \dif x \dif \theta \nonumber\\
		& =: I_4(\xi) + I_5(\xi), \label{eq:AInt2-PM2021}
	\end{align}
	where $(\Smo'(a))^c$ signifies the complementary set of $\Smo'(a)$.
	Note that in $(V_{k_0})^c$, the $|\varphi_\theta(x,\theta)|$ is no less that $1/k_0$ (c.f.~\eqref{eq:Vk-PM2021}), thus no singularity will accrue when $|\varphi_\theta(x,\theta)|$ appears in the denominator.
	Hence, for $I_4$ we have
	\begin{align}
		I_4(\xi)
		& = \int_{(\Smo'(a))^c \cap (V_{k_0})^c} e^{i(\varphi(x,\theta) - x\cdot \xi)} \, \phi(x) (1 - \chi(\theta)) a(x,\theta) \dif x \dif \theta \nonumber\\
		& = \int \hspace*{32pt} e^{-ix\cdot \xi} \hspace*{32pt} \phi(x) \big( \int e^{i\varphi(x,\theta)} (1 - \chi(\theta)) a(x,\theta) \dif \theta \big) \dif x \nonumber\\
		& = \int \big[ ( i^{|\alpha|}\xi^{-\alpha} \partial_x^\alpha ) e^{-ix\cdot \xi} \big] \, \phi(x) \big( \int e^{i\varphi(x,\theta)} (1 - \chi(\theta)) a(x,\theta) \dif \theta \big) \dif x \nonumber\\
		& \simeq \xi^{-\alpha} \int e^{-ix\cdot \xi} \sum_{\beta \leq \alpha} \binom{\alpha}{\beta} \partial_x^{\alpha - \beta} \phi(x) \cdot \partial_x^{\beta} \big( \int e^{i\varphi(x,\theta)} (1 - \chi(\theta)) a(x,\theta) \dif \theta \big) \dif x \nonumber\\
		& \simeq \xi^{-\alpha} \int e^{-ix\cdot \xi} \sum_{\beta \leq \alpha} \binom{\alpha}{\beta} \partial_x^{\alpha - \beta} \phi(x) \nonumber \\
		& \quad \cdot \partial_x^{\beta} \big( \int (\frac {-i \varphi_\theta(x,\theta) \cdot \nabla_\theta} {|\varphi_\theta(x,\theta)|^2})^N (e^{i\varphi(x,\theta)}) (1 - \chi(\theta)) a(x,\theta) \dif \theta \big) \dif x \nonumber\\
		& \lesssim \xi^{-\alpha} \sum_{\beta \leq \alpha} \int e^{-ix\cdot \xi} C_{\alpha,\beta} \partial_x^{\alpha - \beta} \phi(x) \big( \int \agl[\theta]^{m+|\beta|-N} \dif \theta \big) \dif x \nonumber\\
		& \leq C_\alpha \xi^{-\alpha}, \quad \forall \xi,\, \forall\, \text{multi-index~} \alpha. \label{eq:AInt3-PM2021}
	\end{align}
	
	Now for the estimation of $I_5$, we need some constraints on the direction of $\xi$. 
	{\bf It is this term that determines $\wf(A)$}. 
	Because $(x_0,\xi_0) \notin T V_{k_0}$ (see \eqref{eq:xxiVk-PM2021}), according to \eqref{eq:xi0Dir-PM2021}, there is a cone $W \subset \R_{\xi}^n \backslash \{0\}$ such that
	\begin{equation} \label{eq:xiSep-PM2021}
		\xi_0 \in W
		\quad\text{and}\quad
		|\nabla_x \varphi(x,\theta) - \xi| \geq C(|\nabla_x \varphi(x,\theta)| + |\xi|), \forall \xi \in W.
	\end{equation}
	Define $L := \frac {-i (\partial_x \varphi(x,\theta) - \xi) \cdot \nabla_x} {|\nabla_x \varphi(x,\theta) - \xi|^2}$. 
	By using the fact that
	\[
	|\partial^\alpha L f(x)| \lesssim |\nabla_x \varphi(x,\theta) - \xi|^{-1} \sum_\beta |\partial^\beta f(x)|,
	\]
	we can have, for all $\xi \in W$,
	\begin{align}
		I_5(\xi)
		& = \int_{(\Smo'(a))^c \cap V_{k_0}} e^{i(\varphi(x,\theta) - x\cdot \xi)} \phi(x) (1 - \chi(\theta)) a(x,\theta) \dif x \dif \theta \nonumber\\
		& = \int_{(\Smo'(a))^c \cap V_{k_0}} L^N (e^{i(\varphi(x,\theta) - x\cdot \xi)}) \phi(x) (1 - \chi(\theta)) a(x,\theta) \dif x \dif \theta \nonumber\\
		& = \int_{(\Smo'(a))^c \cap V_{k_0}} e^{i(\varphi(x,\theta) - x\cdot \xi)} \,{}^t\!L^N \big( \phi(x) (1 - \chi(\theta)) a(x,\theta) \big) \dif x \dif \theta \nonumber\\
		& \lesssim \int_{\supp \phi \times \R_\theta^n} (1+|\varphi_x(x,\theta) - \xi|)^{-N} \agl[\theta]^{m} \dif x \dif \theta \nonumber\\
		( N_1 + N_2 = N )\quad
		& \lesssim \agl[\xi]^{-N_1} \int_{\supp \phi \times \R_\theta^n} \agl[|\theta| \cdot \varphi_x(x,\theta/|\theta|)]^{-N_2} \agl[\theta]^{m} \dif x \dif \theta \nonumber\\
		\big( \text{by~} \eqref{eq:gredx-PM2021} \big)\quad
		& \lesssim \agl[\xi]^{-N_1} \int_{\supp \phi \times \R_\theta^n} \agl[\theta]^{-N_2} \agl[\theta]^{m} \dif x \dif \theta \nonumber\\
		& \lesssim \agl[\xi]^{-N_1} \quad \forall N_1 \in \mathbb N.
		\label{eq:AInt4-PM2021}
	\end{align}
	
	Combining \eqref{eq:xxiVk-PM2021}, \eqref{eq:kEpsilon2-PM2021}, \eqref{eq:AInt0-PM2021},
	\eqref{eq:AIntI1-PM2021}, 
	\eqref{eq:AIntx-PM2021}, \eqref{eq:AInt2-PM2021}, \eqref{eq:AInt3-PM2021} and \eqref{eq:AInt4-PM2021}, we arrive at
	\begin{equation}
		|\widehat{\phi A}(\xi) | \lesssim \agl[\xi]^{-N}, \quad
		\xi \in W,~ \forall N \in \mathbb N,
	\end{equation}
	where the $W$ is a cone containing $\xi_0$, and $(x_0,\xi_0) \neq (x,\varphi_x(x,\theta))$ for these $(x,\theta)$ satisfying
	\[
	(x,\theta) \in (\Smo'(a))^c \cap V_{k_0}.
	\]
	Therefore, for any $k_0 > 2 \lceil 1 / \epsilon_2 \rceil$, there holds
	\[
	\Big( \{(x,\varphi_x(x,\theta)) \,;\, |\varphi_\theta(x,\theta)| \leq 1/k_0,\, (x,\theta) \in (\Smo'(a))^c \} \Big)^c \subset \Big( \wf (A) \Big)^c,
	\]
	thus
	\[
	\wf (A) \subset \{(x,\varphi_x(x,\theta)) \,;\, |\varphi_\theta(x,\theta)| \leq 1/k_0,\, (x,\theta) \notin \Smo'(a) \}.
	\]
	Finally, let $k_0$ goes to zero and choose $\Smo'(a)$ to be arbitrarily close to $\Smo(a)$, we arrive at the conclusion \eqref{eq:DisInt-PM2021}.	
\end{proof}

\section{Applications} \label{sec:wfAp-PM2021}

Now we are ready to apply those results.

\subsection{Microlocality of {\rm $\Psi$DOs}}
	
\begin{prop} \label{prop:PseuKer-PM2021}
	Assume $a \in S^{+\infty}(\R_x^n \times \R_y^n \times \R_\xi^n)$ is symbol and $K$ is the kernel of the corresponding {\rm $\Psi$DO} of $a$, then
	\begin{equation} \label{eq:PseuKer-PM2021}
		\boxed{\wf (K) \subset \{(x,x;\xi,-\xi) \,;\, (x,x,\xi) \notin \Smo(a) \}.}
	\end{equation}
\end{prop}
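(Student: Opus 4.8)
The plan is to recognize the kernel $K$ of the $\Psi$DO $T_a$ as a distribution of exactly the type handled by Theorem~\ref{thm:DisInt-PM2021}, and then simply read off the wavefront set from the conclusion \eqref{eq:DisInt-PM2021}. Recall from Definition~\ref{defn:kerP-PM2021} (and the reduction-of-variables machinery in Theorem~\ref{thm:ReV-PM2021}, which lets us pass from a symbol $a(x,y,\xi)$ to an $x$-only symbol) that
\[
K(x,y) = (2\pi)^{-n} \int e^{i(x-y) \cdot \xi} a(x,y,\xi) \dif \xi,
\]
understood as an oscillatory integral over the $\xi$ variable. This is precisely of the form \eqref{eq:DisIntA-PM2021} with the role of the ``$x$'' variable played by the pair $(x,y) \in \R^{2n}$, the role of the fibre variable $\theta$ played by $\xi \in \R^n$, the phase function being $\varphi(x,y,\xi) = (x-y)\cdot \xi$, and the symbol being $a(x,y,\xi) \in S^{+\infty}$.

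First I would verify that $\varphi(x,y,\xi) = (x-y)\cdot\xi$ is a legitimate phase function of order $1$ in the sense of Definition~\ref{defn:phaf-PM2021}: it is smooth and real-valued on $\R^{2n}_{(x,y)} \times (\R^n_\xi \setminus \{0\})$, homogeneous of degree $1$ in $\xi$, and its full gradient $\nabla_{(x,y,\xi)}\varphi = (\xi, -\xi, x-y)$ never vanishes when $\xi \neq 0$ (indeed the first block $\xi$ is already nonzero). Hence Theorem~\ref{thm:DisInt-PM2021} applies and gives
\[
\wf(K) \subset \{ ((x,y), \varphi_{(x,y)}(x,y,\xi)) \,;\, \varphi_\xi(x,y,\xi) = 0,\ (x,y,\xi) \notin \Smo(a) \}.
\]
Now I would compute the three pieces: $\varphi_\xi(x,y,\xi) = x - y$, so the stationarity condition $\varphi_\xi = 0$ forces $y = x$; next $\varphi_x(x,y,\xi) = \xi$ and $\varphi_y(x,y,\xi) = -\xi$, so the covector part is $(\xi, -\xi)$; and the smooth-direction exclusion $(x,y,\xi) \notin \Smo(a)$ becomes, after imposing $y = x$, the condition $(x,x,\xi) \notin \Smo(a)$. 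Substituting these in yields exactly
\[
\wf(K) \subset \{ (x,x;\xi,-\xi) \,;\, (x,x,\xi) \notin \Smo(a) \},
\]
which is \eqref{eq:PseuKer-PM2021}.

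The only point requiring a little care — and the main (minor) obstacle — is the bookkeeping of variable names and dimensions: Theorem~\ref{thm:DisInt-PM2021} is stated for a phase function on $\R^n_x \times \R^N_\theta$ producing a distribution on $\R^n_x$, whereas here the ``base'' variable is the $2n$-dimensional pair $(x,y)$ while the fibre variable $\xi$ is only $n$-dimensional, so one should note that $n \neq N$ is allowed (as remarked after Definition~\ref{defn:phaf-PM2021} and in the treatment of oscillatory integrals with parameters) and that the symbol class $S^{+\infty}(\R^n_x \times \R^n_y \times \R^n_\xi)$ is covered since $\wf$ and $\Smo$ extend to symbols in $S^m(\R^n_x \times \R^N_\xi)$ with mismatched dimensions (explicitly allowed in Definition~\ref{defn:smo-PM2021}). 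Apart from matching up these indices, the proof is a direct and essentially computational application of the already-established Theorem~\ref{thm:DisInt-PM2021}, with no new estimates needed.
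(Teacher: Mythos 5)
Your proposal is correct and follows essentially the same route as the paper: both write $K(x,y)=(2\pi)^{-n}\int e^{i(x-y)\cdot\xi}a(x,y,\xi)\,\mathrm d\xi$ as an oscillatory integral with phase $\varphi(x,y,\xi)=(x-y)\cdot\xi$ in the base variable $(x,y)$ and then apply Theorem \ref{thm:DisInt-PM2021}, computing $\varphi_\xi=x-y$ and $\varphi_{(x,y)}=(\xi,-\xi)$ to obtain \eqref{eq:PseuKer-PM2021}. The only cosmetic difference is that the paper derives the kernel representation by pairing $K$ against $u\otimes v$, whereas you cite the kernel definition and Theorem \ref{thm:ReV-PM2021}; the substance is identical.
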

\begin{proof}
	Denote the corresponding {\rm $\Psi$DO} as $A$, then
	\begin{align*}
		\agl[K(x,y), u \otimes v(x,y)]
		& = \agl[K(x,y), u(x) \otimes v(y)]
		= \langle \langle K(x,y), u(x) \rangle_x, v(y) \rangle_y \\
		& = \agl[A u(y), v(y)]
		= (2\pi)^{-n} \int e^{i(x-y) \cdot \xi} a(x,y,\xi) u(x) v(y) \dif x \dif y \dif \xi \\
		& = (2\pi)^{-n} \int e^{i(x-y) \cdot \xi} a(x,y,\xi) u \otimes v(x,y) \dif x \dif y \dif \xi.
	\end{align*}
	Therefore, in the oscillatory integral sense,
	\[
	K(x,y) = (2\pi)^{-n} \int e^{i(x-y) \cdot \xi} a(x,y,\xi) \dif \xi.
	\]
	According to Theorem \ref{thm:DisInt-PM2021}, we have
	\[
	\wf (K)
	\subset \{(x,y;\varphi_{x,y}(x,y,\xi)) \,;\, \varphi_\xi(x,y,\xi) = 0,\, (x,y,\xi) \notin \Smo(a) \},
	\]
	where $\varphi(x,y,\xi) = (x-y) \cdot \xi$.
	Hence,
	\begin{align*}
	\wf (K)
	& \subset \{(x,y; \xi,-\xi) \,;\, x - y = 0,\, (x,y,\xi) \notin \Smo(a) \} \\
	& = \{(x, x;\xi, -\xi) \,;\, (x,x,\xi) \notin \Smo(a)\}.
	\end{align*}
	The proof is complete.
\end{proof}

\begin{thm} \label{thm:MLpr-PM2021}
	Assume that $A$ is a {\rm $\Psi$DO}, then for $u \in \scrE'$
	we have
	\begin{equation} \label{eq:MLpr-PM2021}
	\text{microlocality:} \quad \boxed{\wf (Au) \subseteq \wf(u) \backslash \Smo(A).}
	\end{equation}
	Moreover, if $A$ is elliptic in the sense of Definition \ref{defn:elp-PM2021}, then $\Smo(A) = \emptyset$ and
	\begin{equation} \label{eq:elwf-PM2021}
		\boxed{\wf (Au) = \wf(u).}
	\end{equation}
\end{thm}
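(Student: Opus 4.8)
The plan is to obtain microlocality directly from the two structural facts already in hand: the bound on the wavefront set of the kernel (Proposition \ref{prop:PseuKer-PM2021}) and the propagation of wavefront sets under kernels (Theorem \ref{thm:KernelAct-PM2021}); the elliptic refinement will then follow from the existence of a parametrix (Theorem \ref{thm:para1-PM2021}).

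First I would fix $u \in \scrE'$, write $A = T_\sigma$ with $\sigma \in S^m$, and let $K \in \scrD'(\R^{2n})$ be its kernel, so that $Au(x) = \agl[K(x,y), u(y)]_y$. By Proposition \ref{prop:PseuKer-PM2021},
\[
\wf(K) \subset \{(x,x;\xi,-\xi) \,;\, (x,\xi) \notin \Smo(A)\}.
\]
Every such point has $\xi \neq 0$, hence also $-\xi \neq 0$, so $\wf(K) \subset (T^*\R^n \backslash 0) \times (T^*\R^n \backslash 0)$; by the discussion in Remark \ref{rem:exZS-PM2021} this forces $\wf_x(K) = \emptyset$ and makes the hypothesis $(\wf'(K)\circ\wf(u)) \cap O_x = \emptyset$ of Theorem \ref{thm:KernelAct-PM2021} automatic. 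A set-theoretic computation from \eqref{eq:wfxo-PM2021} then gives $\wf'(K) \subset \{(x,x;\xi,\xi) \,;\, (x,\xi)\notin\Smo(A)\}$ and therefore $\wf'(K)\circ\wf(u) \subset \wf(u)\backslash\Smo(A)$. Feeding this into the canonical relation \eqref{eq:KernelAct-PM2021} yields $\wf(Au) \subset \wf(u)\backslash\Smo(A)$, which is \eqref{eq:MLpr-PM2021}.

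For the elliptic case, ellipticity forces $|\sigma(x,\xi)| \geq C\agl[\xi]^m$ for $|\xi|$ large, so $\sigma$ cannot lie in $S^{-\infty}$ in any open cone, i.e.\ $\Smo(A) = \emptyset$; hence \eqref{eq:MLpr-PM2021} already gives $\wf(Au) \subseteq \wf(u)$. For the reverse inclusion I would first reduce to $A$ properly supported via Lemma \ref{lem:kedi-PM2021}: the smoothing part $A_2 \in \Psi^{-\infty}$ maps $\scrE'$ into $C^\infty$ and so does not alter the wavefront set, while the properly supported part $A_1$ is still elliptic (its symbol differs from $\sigma$ by an element of $S^{-\infty}$) and satisfies $\Smo(A_1) = \Smo(A)$. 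With $A$ properly supported, $Au \in \scrE'$. By Theorem \ref{thm:para1-PM2021}, $A$ has a parametrix, which (again by Lemma \ref{lem:kedi-PM2021}) we may take properly supported; thus there is $B$ with $BA = I + R$, $R \in \Psi^{-\infty}$, so $Ru \in C^\infty$. From $u = B(Au) - Ru$, Theorem \ref{eq:Sim3Pro-PM2021}(1), and the inclusion \eqref{eq:MLpr-PM2021} applied to $B$ acting on $Au \in \scrE'$,
\[
\wf(u) \subseteq \wf(B(Au)) \cup \wf(Ru) = \wf(B(Au)) \subseteq \wf(Au) \backslash \Smo(B) \subseteq \wf(Au),
\]
which together with $\wf(Au) \subseteq \wf(u)$ gives \eqref{eq:elwf-PM2021}.

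The main obstacle here is not a hard estimate but careful bookkeeping: one must verify that the exclusion of the zero section genuinely makes $\wf_x(K)$ empty and the product in Theorem \ref{thm:KernelAct-PM2021} admissible (so that no contribution on the diagonal is lost), and that the passage to properly supported operators preserves ellipticity and $\Smo$. All the analytic substance is already packaged in Proposition \ref{prop:PseuKer-PM2021}, Theorem \ref{thm:KernelAct-PM2021}, and Theorem \ref{thm:para1-PM2021}; the proof amounts to assembling them and checking the inclusions among $\wf$, $\wf'$, $\wf_x$, and $\circ$.
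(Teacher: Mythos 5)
Your proposal is correct and follows essentially the same route as the paper: \eqref{eq:MLpr-PM2021} is obtained by combining Proposition \ref{prop:PseuKer-PM2021} with the canonical relation of Theorem \ref{thm:KernelAct-PM2021}, and \eqref{eq:elwf-PM2021} by observing $\Smo(A)=\emptyset$ and then writing $u = B(Au) - Ru$ with a parametrix $B$ from Theorem \ref{thm:para1-PM2021}. Your additional reduction to properly supported operators via Lemma \ref{lem:kedi-PM2021} is a sensible extra precaution (the paper applies the first inclusion to $B(Au)$ without commenting that $Au$ need not be compactly supported), but it does not change the argument's structure.
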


\begin{proof}
	We have $Au(x) = \agl[K(x,y),u(y)]$ where $K$ is its kernel, so according to Theorem \ref{thm:KernelAct-PM2021} and Proposition \ref{prop:PseuKer-PM2021}, we can conclude
	\begin{align*}
		\wf (Au)
		& \subseteq \big( \wf' (K) \circ \wf (u) \big) \cup \wf_x (K) \\
		& = \big( \{(x,x;\xi,\xi) \,;\, (x,\xi) \notin \Smo(a) \} \circ \wf (u) \big) \cup \emptyset \\
		& = \wf (u) \backslash \Smo(a)
		= \wf (u) \backslash \Smo(A).
	\end{align*}
	
	If $A$ is elliptic, then according to the definition, we have
	\[
	|a(x,\xi)| \geq C\agl[\xi]^m, \quad \text{when~} x \in \Rn, \ |\xi| \geq R,
	\]
	for some constants $m \in \R$, $C > 0$ and $R > 0$, so it is obvious that $\Smo(A) = \emptyset$, thus by \eqref{eq:MLpr-PM2021},
	\[
	\wf (Au) \subseteq \wf u.
	\]
	Also, because $A$ is elliptic, then by Theorem \ref{thm:para1-PM2021} we know $A$ has a parametrix $B$ such that $R := BA - I \in \Psi^{-\infty}$, so
	\[
	\wf (u) = \wf (BAu - Ru)
	\subset \wf (BAu) \cup \wf (Ru)
	\subset \wf (Au).
	\]
	In total, $\wf (Au) = \wf u$.
	The proof is complete.
\end{proof}

\begin{lem} \label{lem:smoempty-PM2021}
	Assume $a$ is a symbol and $u \in \scrE'(\Omega)$. Denote the corresponding {\rm $\Psi$DO} of $a$ as $A$, then
	\begin{equation} \label{eq:smoempty-PM2021}
		\boxed{\wf (Au) \cap \Smo(a) = \emptyset.}
	\end{equation}
\end{lem}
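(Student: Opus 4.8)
The plan is to derive Lemma \ref{lem:smoempty-PM2021} as an immediate consequence of the microlocality estimate just established. By Definition \ref{defn:smo-PM2021} the smooth-direction set of a symbol coincides with that of its associated $\Psi$DO, so $\Smo(a) = \Smo(A)$ where $A$ denotes the $\Psi$DO of $a$. Theorem \ref{thm:MLpr-PM2021} then gives, for $u \in \scrE'$,
\[
\wf(Au) \subseteq \wf(u) \setminus \Smo(A) = \wf(u) \setminus \Smo(a) \subseteq \big(\Smo(a)\big)^c ,
\]
and intersecting both sides with $\Smo(a)$ yields $\wf(Au) \cap \Smo(a) = \emptyset$, i.e.~\eqref{eq:smoempty-PM2021}. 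At this point in the development the lemma therefore costs essentially one line, and I expect to present it exactly this way.

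For completeness I would also indicate the self-contained route, which is nothing but the argument inside the proof of Theorem \ref{thm:MLpr-PM2021}. Let $K$ be the kernel of $A$. By Proposition \ref{prop:PseuKer-PM2021},
\[
\wf(K) \subset \{(x,x;\xi,-\xi) \,;\, (x,\xi) \notin \Smo(a)\},
\]
hence the twist satisfies $\wf'(K) \subset \{(x,x;\xi,\xi) \,;\, (x,\xi) \notin \Smo(a)\}$, and $\wf_x(K) = \emptyset$ because every $y$-slot covector occurring in $\wf(K)$ has the form $-\xi$ with $\xi \neq 0$. Writing $Au(x) = \agl[K(x,y),u(y)]$ and applying Theorem \ref{thm:KernelAct-PM2021} gives
\[
\wf(Au) \subseteq \big(\wf'(K) \circ \wf(u)\big) \cup \wf_x(K) = \wf'(K) \circ \wf(u) .
\]
By the displayed inclusion for $\wf'(K)$ together with the definition of ``$\circ$'' in \eqref{eq:wfxo-PM2021}, any $(x_0,\xi_0) \in \wf'(K) \circ \wf(u)$ satisfies $(x_0,\xi_0) \notin \Smo(a)$; thus $\wf(Au) \subseteq \big(\Smo(a)\big)^c$, which again is \eqref{eq:smoempty-PM2021}.

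The only bookkeeping point in this second route is to verify the hypothesis of Theorem \ref{thm:KernelAct-PM2021}, namely $\big(\wf'(K) \circ \wf(u)\big) \cap O_x = \emptyset$. This is automatic: by the inclusion for $\wf'(K)$ recorded above, every element of $\wf'(K) \circ \wf(u)$ has nonzero fiber component, so it cannot belong to $O_x = \{(x,0)\}$; hence $Au$ is well-defined in the sense of Theorem \ref{thm:KernelAct-PM2021} and its canonical relation applies. I do not foresee any genuine difficulty here — the lemma is simply a convenient repackaging of microlocality, isolated for later reference.
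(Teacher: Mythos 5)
Your proposal is correct and follows the paper's own route: the paper also deduces the lemma as an immediate consequence of the microlocality statement \eqref{eq:MLpr-PM2021} in Theorem \ref{thm:MLpr-PM2021}. The additional self-contained argument via Proposition \ref{prop:PseuKer-PM2021} and Theorem \ref{thm:KernelAct-PM2021} is just an unpacking of that theorem's proof and is likewise fine.
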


\begin{proof}
	This is a straight forward outcome of \eqref{eq:MLpr-PM2021}.
\end{proof}

%
%
%
%
%

\subsection{Pull-back of distributions} \label{subsec:PBD-PM2021}

\begin{thm} \label{thm:PullBack-PM2021}
	Let $\Omega_1$ and $\Omega_2$ be two domain in $\Rn$, and $\psi \colon \Omega_1 \to \Omega_2$ is an diffeomorphism.
	Then for any $u \in \scrD'(\Omega_2)$, we have $\psi^* u \in \scrD'(\Omega_1)$, and
	\begin{equation} \label{eq:PullBack-PM2021}
		\boxed{\wf (\psi^* u) = \{(x, {}^t \psi'|_x \eta) \,;\, (\psi(x), \eta) \in \wf(u) \},}
	\end{equation}
	where $\psi'$ signifies the matrix whose ($i$-row, $j$-column) element is $\partial_{x_i} \psi^j$,
	and $({}^t \psi')^{-1}|_x$ is the inverse of transpose of the matrix $\psi'$ evaluated at $x$,
	and $({}^t \psi')^{-1}|_x \eta$ stands for the matrix multiplication of the matrix $({}^t \psi')^{-1}|_x$ and the vertical vector $\eta$.
\end{thm}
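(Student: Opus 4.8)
The plan is to realise the pull-back as an operator with an explicit oscillatory-integral kernel and then transport the wavefront set through Theorems \ref{thm:DisInt-PM2021} and \ref{thm:KernelAct-PM2021}, in exact analogy with the proofs of Proposition \ref{prop:PseuKer-PM2021} and Theorem \ref{thm:MLpr-PM2021}. Write
\[
\psi^* u(x) = u(\psi(x)) = \agl[K(x,\cdot),u], \qquad K(x,y) = \delta\big(\psi(x)-y\big) = (2\pi)^{-n}\int e^{i(\psi(x)-y)\cdot\theta}\dif\theta,
\]
the $\theta$-integral being understood as an oscillatory integral with the constant symbol $a\equiv(2\pi)^{-n}\in S^0$, so that $\Smo(a)=\emptyset$. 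First I would check that $\varphi(x,y,\theta):=(\psi(x)-y)\cdot\theta$ is a phase function of order $1$ on $(\Omega_1\times\Omega_2)\times(\R^n\setminus 0)$: it is real and homogeneous of degree $1$ in $\theta$, and $\nabla_y\varphi=-\theta\neq 0$ for $\theta\neq 0$, so its full gradient never vanishes. Here $K$ is viewed as a distribution in the $2n$ base variables $(x,y)$ with phase variable $\theta\in\R^n$; Theorem \ref{thm:DisInt-PM2021} permits the fibre dimension to differ from the base dimension, just as in Proposition \ref{prop:PseuKer-PM2021}.

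Next, applying Theorem \ref{thm:DisInt-PM2021}, with $\varphi_\theta=\psi(x)-y$, $\varphi_x={}^t\psi'|_x\,\theta$, $\varphi_y=-\theta$, and using $\Smo(a)=\emptyset$, yields
\[
\wf(K)\subset\{(x,\psi(x);\,{}^t\psi'|_x\,\theta,\,-\theta)\,;\,\theta\in\R^n\setminus 0\}.
\]
Since $\psi'(x)$ is invertible, both covector components are nonzero whenever $\theta\neq 0$, so this set — hence $\wf(K)$ — lies in $(T^*\Omega_1\setminus 0)\times(T^*\Omega_2\setminus 0)$ and $\wf_x(K)=\emptyset$. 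Consequently $\wf'(K)\subset\{(x,\psi(x);{}^t\psi'|_x\,\theta,\theta):\theta\neq 0\}$, and for $u\in\scrE'(\Omega_2)$ the composition satisfies $\wf'(K)\circ\wf(u)\subset\{(x,{}^t\psi'|_x\,\eta):(\psi(x),\eta)\in\wf(u)\}$, which never meets the zero section $O_x$ (again because ${}^t\psi'|_x$ is invertible). Thus Theorem \ref{thm:KernelAct-PM2021}, in the reduced form of Remark \ref{rem:exZS-PM2021}, simultaneously guarantees $\psi^*u$ is well-defined and gives
\[
\wf(\psi^*u)\subset\{(x,{}^t\psi'|_x\,\eta)\,;\,(\psi(x),\eta)\in\wf(u)\}.
\]

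For a general $u\in\scrD'(\Omega_2)$ I would reduce to the compactly supported case by localisation: given $x_0\in\Omega_1$, pick $\chi\in C_c^\infty(\Omega_1)$ with $\chi\equiv 1$ near $x_0$ and set $\tilde u:=(\chi\circ\psi^{-1})\,u\in\scrE'(\Omega_2)$; then $\psi^*\tilde u=\chi\cdot\psi^*u$, and since the wavefront set is a local notion (its definition only involves test functions supported near the base point, cf.\ Lemma \ref{lem:varphiRelax-PM2021}) the fibres of $\wf(\psi^*u)$ and $\wf(\psi^*\tilde u)$ over a neighbourhood of $x_0$ coincide, and likewise for $\wf(u)$ and $\wf(\tilde u)$ over the image neighbourhood. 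The reverse inclusion then comes for free by applying the inclusion just established to the diffeomorphism $\psi^{-1}\colon\Omega_2\to\Omega_1$ and the distribution $\psi^*u$, using $(\psi^{-1})^*(\psi^*u)=u$: this yields $\wf(u)\subset\{(y,{}^t(\psi^{-1})'|_y\,\theta):(\psi^{-1}(y),\theta)\in\wf(\psi^*u)\}$, and substituting $y=\psi(x)$ together with the chain rule $(\psi^{-1})'|_{\psi(x)}=(\psi'(x))^{-1}$ converts this into: $(\psi(x),\eta)\in\wf(u)$ forces $(x,{}^t\psi'|_x\,\eta)\in\wf(\psi^*u)$. Combining the two inclusions gives \eqref{eq:PullBack-PM2021}.

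The analytic content is entirely carried by Theorems \ref{thm:DisInt-PM2021} and \ref{thm:KernelAct-PM2021}; the real work is bookkeeping. I expect the main obstacle to be twofold: (i) keeping the transpose/inverse conventions for the Jacobian straight — both when reading off $\varphi_x$ in terms of the matrix $\psi'$ of the statement and when composing with $\psi^{-1}$ in the reverse step — so that the final formula lines up precisely with \eqref{eq:PullBack-PM2021}; and (ii) making the localisation clean enough that the $\scrE'$-hypothesis of Theorem \ref{thm:KernelAct-PM2021}, and the distributional identity $\psi^*\tilde u=\chi\,\psi^*u$, are genuinely justified rather than merely formal.
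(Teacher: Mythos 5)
Your proposal is correct and coincides essentially with the paper's second proof of Theorem \ref{thm:PullBack-PM2021}: realise $\psi^*$ through the kernel $K(x,y)=(2\pi)^{-n}\int e^{i(\psi(x)-y)\cdot\theta}\dif\theta$, get $\wf(K)\subset\{(x,\psi(x);{}^t\psi'|_x\theta,-\theta)\}$ from Theorem \ref{thm:DisInt-PM2021}, push it through Theorem \ref{thm:KernelAct-PM2021} (with $\wf_x(K)=\emptyset$, cf.\ Remark \ref{rem:exZS-PM2021}), and obtain the reverse inclusion by applying the same argument to $\psi^{-1}$ together with $({}^t\psi^{-1})'|_{\psi(x)}=({}^t\psi'(x))^{-1}$. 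Your extra localisation via $\chi\circ\psi^{-1}$ to meet the $\scrE'$-hypothesis of Theorem \ref{thm:KernelAct-PM2021} is a useful detail that the paper's second proof leaves implicit (its first proof instead estimates $\widehat{\varphi_{x_0}\psi^*u}$ directly), but it does not change the route.
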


The mapping in \eqref{eq:PullBack-PM2021},
\[
(x, {}^t \psi'|_x \eta) \ \mapsto \ (\psi(x), \eta)
\]
is invariant on the cotangent bundle (see \cite[for Theorem 18.1.17]{maLSNH2020}).

\begin{proof}[First proof of Theorem \ref{thm:PullBack-PM2021}]
	It is obvious that $\psi^* u \in \scrD'(\Omega_1)$.
	
	For $(y_0,\xi_0)$, because $\psi$ is a diffeomorphism, we can find $(x_0, \eta_0)$ such that $x_0 = \psi^{-1}(y_0)$ and $\xi_0 = (\psi^{-1})'|_{y_0} \cdot \eta_0$.
	Assume that $(y_0,\xi_0) \notin \wf (u)$.
	For a smooth cutoff function $\varphi_{x_0}$ satisfying $\varphi_{x_0}(x_0) \neq 1$, we have
	\begin{align*}
		& (\varphi_{x_0} \psi^* u)^\wedge(\eta)
		\simeq \int e^{-ix \cdot \eta} \varphi_{x_0}(x) \psi^* u(x) \dif x
		= \int e^{-ix \cdot \eta_0} \varphi_{x_0}(x) u(\psi(x)) \dif x \\
		= & \int e^{-i\psi^{-1}(y) \cdot \eta} \varphi_{x_0}(\psi^{-1}(y)) u(y) \dif \psi^{-1}(y) \quad \big( y = \psi(x) \big) \\
		= & \int e^{-i {}^t\psi^{-1}(y) \eta} \cdot \varphi_{x_0}(\psi^{-1}(y)) |\frac {\partial \psi^{-1}} {\partial y} (y)| \phi^{-1}(y) \cdot (\phi u)(y) \dif y \\
		= & \int e^{-i{}^t\psi^{-1}(y) \eta} \cdot \tilde \varphi_{y_0}(y) \cdot (\phi u)(y) \dif y
		= \int e^{-i({}^t\psi^{-1}(y) \eta - y \cdot \xi)} \cdot \tilde \varphi_{y_0}(y) \cdot \widehat{\phi u}(\xi) \dif \xi \dif y \\
		= & \int_{|\hat \xi_0 - \hat\xi| > 1} e^{-i({}^t\psi^{-1}(y) \eta - y \cdot \xi)} \cdot \tilde \varphi_{y_0}(y) \cdot \widehat{\phi u}(\xi) \dif \xi \dif y \\
		& + \int_{|\hat \xi_0 - \hat\xi| \leq 1} e^{-i{}^t\psi^{-1}(y) \eta} e^{iy \cdot \xi} \cdot \tilde \varphi_{y_0}(y) \cdot \widehat{\phi u}(\xi) \dif \xi \dif y \\
		= &\!\!:  I_1 + I_2,
	\end{align*}
	where $\hat \xi := \xi / |\xi|$ and the same for $\hat \xi_0$.
	For convenience we have written $\psi^{-1}(y) \cdot \eta$ as ${}^t\psi^{-1}(y) \eta$, where ${}^t M$ signifies the transpose operation for any matrix $M$.
	By doing so it will be more straightforward when we make derivatives.
	Here $\tilde \varphi_{y_0}(y)$ is a generic function which is $C^\infty$-smooth and is compactly supported w.r.t.~$y$ and whose  precise definition may varies from line to line.
	
	For $I_1$, because $|\hat \xi_0 - \hat\xi| > 1$ and $(y_0, \xi_0) \notin \wf(u)$, we have $\big| ({}^t\psi^{-1})'|_y \eta - \xi \big| \geq \frac 1 2$ and $|\widehat{\phi u}(\xi)| \lesssim \agl[\xi]^{-N_0}$ for some integer $N_0$.
	The number $N_0$ comes from the fact that $\phi u$ is a compactly supported distribution so its Fourier transform has (at most) polynomial growth.
	Hence,
	\begin{align*}
		I_1
		& = \int_{|\hat \xi_0 - \hat\xi| > 1} \Big( \frac {1 + (({}^t\psi^{-1})'|_y \eta - \xi) \cdot \nabla_y} {\agl[({}^t\psi^{-1})'|_y \eta - \xi]^2} \Big)^N \big( e^{-i(\psi^{-1}(y) \cdot \eta - y \cdot \xi)} \big) \cdot \tilde \varphi_{y_0}(y) \cdot \widehat{\phi u}(\xi) \dif \xi \dif y \\
		& \simeq \int_{|\hat \xi_0 - \hat\xi| > 1} \agl[({}^t \psi^{-1})'|_y \eta - \xi]^{-N} e^{-i(\psi^{-1}(y) \cdot \eta - y \cdot \xi)} \cdot \tilde \varphi_{y_0}(y) \cdot \widehat{\phi u}(\xi) \dif \xi \dif y \\
		& \lesssim \int_{|\hat \xi_0 - \hat\xi| > 1} \agl[({}^t \psi^{-1})'|_y \eta]^{-N_1} \agl[\xi]^{-N_2} \cdot |\tilde \varphi_{y_0}(y)| \cdot |\widehat{\phi u}(\xi)| \dif \xi \dif y \quad (N = N_1 + N_2) \\
		& \lesssim \agl[\eta]^{-N_1} \int_{|\hat \xi_0 - \hat\xi| > 1} \agl[\xi]^{-N_2} \cdot (\int |\tilde \varphi_{y_0}(y)| \dif y) \cdot |\widehat{\phi u}(\xi)| \dif \xi \\
		& \lesssim \agl[\eta]^{-N_1} \int_{|\hat \xi_0 - \hat\xi| > 1} \agl[\xi]^{-N_2} \agl[\xi]^{N_0} \dif \xi
		\lesssim \agl[\eta]^{-N_1},
	\end{align*}
	provided that $N_2 - N_0 >$ the dimension of $\xi$.

	For $I_2$, we have
	\begin{align*}
		I_2
		& = \int_{|\hat \xi_0 - \hat\xi| \leq 1} \Big( \frac {1 + (({}^t \psi^{-1})'|_y \eta) \cdot \nabla_y} {\agl[({}^t \psi^{-1})'|_y \eta]^2} \Big)^N \big(e^{-i\psi^{-1}(y) \cdot \eta} \big) e^{iy \cdot \xi} \cdot \tilde \varphi_{y_0}(y) \cdot \widehat{\phi u}(\xi) \dif \xi \dif y \\
		& \simeq \int_{|\hat \xi_0 - \hat\xi| \leq 1} \agl[({}^t \psi^{-1})'|_y \eta]^{-N} e^{-i\psi^{-1}(y) \cdot \eta} \cdot \big( \nabla_y \big)^N \big( e^{iy \cdot \xi} \cdot \tilde \varphi_{y_0}(y) \big) \cdot \widehat{\phi u}(\xi) \dif \xi \dif y \\
		& \simeq \int_{|\hat \xi_0 - \hat\xi| \leq 1} \agl[\eta]^{-N} e^{-i\psi^{-1}(y) \cdot \eta} \cdot \agl[\xi]^N \tilde \varphi_{y_0}(y) \cdot \widehat{\phi u}(\xi) \dif \xi \dif y \\
		& \lesssim \agl[\eta]^{-N} \int_{|\hat \xi_0 - \hat\xi| \leq 1} \agl[\xi]^N \cdot (\int |\tilde \varphi_{y_0}(y)| \dif y ) \cdot |\widehat{\phi u}(\xi)| \dif \xi \\
		& \lesssim \agl[\eta]^{-N} \int_{|\hat \xi_0 - \hat\xi| \leq 1} \agl[\xi]^N \agl[\xi]^{-N - n - 1} \dif \xi
		\lesssim \agl[\eta]^{-N}.
	\end{align*}
	In total, we have $|(\varphi_{x_0} \psi^* u)^\wedge(\eta)| \lesssim \agl[\eta]^{-N}$ for any integer $N$ if $\hat \eta$ is in a small neighborhood of $\hat \eta_0$
	where $(\psi(x_0), ({}^t \psi^{-1})'|_{y_0} \eta_0) \notin \wf(u)$, namely,
	\[
	(\psi(x_0), ({}^t \psi^{-1})'|_{\psi(x_0)} \eta_0) \notin \wf(u)
	\ \Rightarrow \
	(x_0,\eta_0) \notin \wf(\psi^* u).
	\]
	Therefore,
	\[
	(x_0,\eta_0) \in \wf(\psi^* u)
	\ \Rightarrow \
	(\psi(x_0), ({}^t \psi^{-1})'|_{\psi(x_0)} \eta_0) \in \wf(u),
	\]
	so
	\[
	\wf(\psi^* u) \subset \{(x,\eta) \,;\, (\psi(x), ({}^t \psi^{-1})'|_{\psi(x)} \eta) \in \wf (u) \}.
	\]
	Because $\psi$ is invertible, we can obtain the opposite inclusion by looking at $\psi^{-1*}(\psi^* u)$.
	
	It can be shown that $(\psi^{-1})'|_{\psi(x)} = (\psi'(x))^{-1}$.
	Indeed, by differentiating $x = \psi^{-1}(\psi(x))$ w.r.t.~$x$ we obtain
	\(
	I = (\psi^{-1})'|_{\psi(x)} \cdot \psi'(x),
	\)
	so
	\(
	(\psi^{-1})'|_{\psi(x)} = (\psi'(x))^{-1},
	\)
	and by taking transpose we obtain
	\[
	({}^t \psi^{-1})'|_{\psi(x)} = ({}^t \psi'(x))^{-1},
	\]
	so
	\begin{align*}
		\wf (\psi^* u)
		& = \{(x, \eta) \,;\, (\psi(x), ({}^t \psi')^{-1}|_x \eta) \in \wf(u) \} \\
		& = \{(x, {}^t \psi'|_x \xi) \,;\, (\psi(x), \xi) \in \wf(u) \}.
	\end{align*}
	The proof is complete.
\end{proof}

There is also another proof for Theorem \ref{thm:PullBack-PM2021}.
As in Remark \ref{rem:Dit-PM2021}, we can use Theorems \ref{thm:DisInt-PM2021} and \ref{thm:KernelAct-PM2021} to obtain $\wf(\psi^* u)$.

\begin{proof}[second proof of Theorem \ref{thm:PullBack-PM2021}]
	The pull-back $\psi^*$ has a kernel: for $f \in C^\infty(\Omega_2)$ and $g \in C^\infty(\Omega_1)$, we have
	\begin{align*}
		\agl[\psi^* f, g]
		& = \int \psi^* f(x) g(x) \dif x
		= (2\pi)^{-n} \int e^{i(\psi(x) - y) \cdot \eta} f(y) g(x) \dif x \dif y \dif \eta \\
		& \simeq \agl[\int e^{i(\psi(x) - y) \cdot \eta} \dif \eta, (g \otimes f)(x, y)] \\
		& = \agl[K, g \otimes f], 
		\quad \text{where} \quad K(x,y) = \int e^{i({}^t \psi(x) - {}^t y) \eta} \dif \eta.
	\end{align*}
	and
	\(
	\psi^* u(x) = \agl[K(x,y), u(y)].
	\)
	By Theorem \ref{thm:DisInt-PM2021} we have
	\[
	\wf(K) \subset \{(x, \psi(x); {}^t \psi'|_x \eta, - \eta) \},
	\]
	and then by Theorem \ref{thm:KernelAct-PM2021} we have
	\begin{align*}
		\wf (\psi^* u)
		& \subseteq \big( \wf' (K) \circ \wf (u) \big) \cup \wf_x (K)
		= \{(x, {}^t \psi'|_x \eta) \,;\, (\psi(x), \eta) \in \wf(u) \}.
	\end{align*}
	The opposite inclusion can be obtained by looking at $\psi^{-1*}(\psi^* u)$.
	We obtain \eqref{eq:PullBack-PM2021}.
\end{proof}

\section*{Exercise}

\begin{ex} \label{ex:smo-PM2021}
	Prove Lemme \ref{lem:smo-PM2021}.
\end{ex}

\begin{ex} \label{ex:KOGu-PM2021}
	Prove that the condition $\big( \wf' (K) \circ \wf (u) \big) \cap O_x = \emptyset$ guarantees \eqref{eq:KuO-PM2021}.
\end{ex}

\begin{ex} \label{ex:Sde-PM2021}
	Show the details in the computations \eqref{eq:M1Ku-PM2021} and \eqref{eq:M23-PM2021}.
\end{ex}

\chapter{Propagation of the singularities} \label{ch:ProSing-PM2021}

\section{Microlocal parametrix} \label{sec:mlpara-PM2021}

To study the microlocal parametrix, we recall notion of conic sets and the smooth direction ``$\Smo$'' given in Definitions \ref{defn:coni-PM2021} \& \ref{defn:smo-PM2021}, and \underline{$T^* \Rn \backslash 0$} stands for the cotangent bundle with the zero section excluded.
Now we generalize Definitions \ref{defn:para-PM2021} \& \ref{defn:elp-PM2021}, microlocally, as follows.

\begin{defn}[Microlocal parametrix\index{microlocal parametrix}] \label{defn:mpa-PM2021}
	Assume $m \in \R$ and $T \in \Psi^m$.
	We call a {\rm $\Psi$DO} $S$ a \emph{left (resp.~right) microlocal parametrix} of $T$ if there exists a nonempty open conic set $\Gamma \subset T^* \Rn \backslash 0$ such that
	\[
	\Smo(ST - I) = \Gamma \quad \text{(resp.~} \Smo(TS - I) = \Gamma \text{)}.
	\]
	We call $S$ a \emph{microlocal parametrix} of $T$ if it is both a left and a right microlocal parametrix under the same set $\Gamma$.
\end{defn}

\begin{defn}[Microlocal ellipticity\index{microlocal ellipticity}] \label{defn:melp-PM2021}
	Assume $m \in \R$ and $a \in S^m$, and $A$ is the {\rm $\Psi$DO} of $a$.
	Let $\Gamma \subset T^* \Rn \backslash 0$ be a open conic set.
	We say $a$ (and $A$) is \emph{microlocally elliptic in $\Gamma$} if for some constants $C_\Gamma > 0$, $R > 0$,
	\[
	\boxed{|a(x,\xi)| \geq C_\Gamma \agl[\xi]^m, \quad \forall (x, \xi) \in \Gamma, \ |\xi| \geq R.}
	\]
	We write $\boxed{\Char A := (\bigcup \mathscr F)^c}$, where $\mathscr F = \{\Gamma \,;\, A \text{~is microlocally elliptic in~} \Gamma \}$, and the notation $\Omega^c$ stands for the complement of $\Omega$ in $T^* \Rn \backslash 0$.
\end{defn}

From Definition \ref{defn:melp-PM2021}, it is obvious that $\Char A$ is always closed.
The following claim is trivial.

\begin{lem} \label{lem:elem-PM2021}
$\Char A = \emptyset$ if and only if $A$ is elliptic in the sense of Definition \ref{defn:elp-PM2021}.
\end{lem}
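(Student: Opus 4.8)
The statement to prove is Lemma~\ref{lem:elem-PM2021}: $\Char A = \emptyset$ if and only if $A$ is elliptic in the sense of Definition~\ref{defn:elp-PM2021}. The plan is to unwind both definitions and show the two conditions coincide, with the only real work being to upgrade a ``locally elliptic on a cover'' statement to a ``globally elliptic'' one via a compactness argument on the cosphere bundle.

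First I would handle the easy direction ($\Leftarrow$). Suppose $A$ is elliptic in the sense of Definition~\ref{defn:elp-PM2021}, so there exist $C > 0$, $R > 0$ with $|a(x,\xi)| \geq C\agl[\xi]^m$ for all $x \in \Rn$ and $|\xi| \geq R$. Then in particular $A$ is microlocally elliptic in the full set $\Gamma = T^*\Rn \backslash 0$ (just take $C_\Gamma = C$ and the same $R$ in Definition~\ref{defn:melp-PM2021}), so $T^*\Rn \backslash 0 \in \mathscr F$, hence $\bigcup \mathscr F = T^*\Rn \backslash 0$ and $\Char A = (T^*\Rn \backslash 0)^c = \emptyset$.

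Next the harder direction ($\Rightarrow$). Suppose $\Char A = \emptyset$, i.e.\ $\bigcup \mathscr F = T^*\Rn \backslash 0$: every point $(x,\xi)$ with $\xi \neq 0$ lies in some open conic set $\Gamma$ on which $A$ is microlocally elliptic. I would first reduce the estimate to the cosphere bundle by homogeneity: since the defining inequality of microlocal ellipticity only concerns large $|\xi|$, it suffices to produce a uniform lower bound near $|\xi| = 1$ after suitable rescaling. Concretely, for each point on $\Rn \times \mathbb{S}^{n-1}$ pick a microlocally elliptic conic neighborhood $\Gamma$; its intersection with $\Rn \times \mathbb{S}^{n-1}$ is open there. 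The obstruction is that $\Rn \times \mathbb{S}^{n-1}$ is not compact, so a naive finite subcover fails and the constants $C_\Gamma$ need not be bounded below globally. To get around this I would use that $a \in S^m$ gives the two-sided control $|a(x,\xi)| \leq C'\agl[\xi]^m$, and more importantly note that the symbol estimate $|D_x^\alpha D_\xi^\beta a| \leq C_{\alpha\beta}\agl[\xi]^{m-|\beta|}$ makes $\agl[\xi]^{-m} a(x,\xi)$ have derivatives bounded uniformly in $x$; combined with local non-vanishing on each $\Gamma$ this propagates the lower bound to a uniform-in-$x$ statement on each fixed compact set of $\xi$-directions. For the directions variable (which lives in the compact $\mathbb{S}^{n-1}$) a genuine finite subcover $\Gamma_1,\dots,\Gamma_k$ of $\mathbb{S}^{n-1}$ by direction-projections of microlocally elliptic cones then works, yielding $C := \min_j C_{\Gamma_j} > 0$ and a common $R := \max_j R_j$ with $|a(x,\xi)| \geq C\agl[\xi]^m$ for all $x$ and $|\xi| \geq R$, which is exactly Definition~\ref{defn:elp-PM2021}.

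The main obstacle is precisely this last compactness-plus-uniformity step: one must be careful that the ``for all $x \in \Rn$'' quantifier in Definition~\ref{defn:elp-PM2021} is not lost when passing from the pointwise/local microlocal ellipticity to a global statement. The cleanest route is to observe that microlocal ellipticity in a conic set $\Gamma = \omega \times V$ already carries the ``$\forall x$'' built in through the conic structure together with the uniform symbol bounds in $x$, so that after covering $\mathbb{S}^{n-1}$ (compact) by finitely many such $V$'s the uniformity in $x$ is automatic. If one wanted to be maximally careful about the $x$-uniformity, one could invoke Lemma~\ref{lem:aEv-PM2021} to rephrase ellipticity as the global inequality $|a(x,\xi)| \geq C\agl[\xi]^m - D\agl[\xi]^{m-1}$, which is manifestly a statement for all $(x,\xi)$ and slightly more convenient to assemble from finitely many local pieces.
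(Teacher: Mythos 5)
Your ($\Leftarrow$) direction is fine: $T^*\Rn\backslash 0=\Rn\times(\Rn\backslash 0)$ is itself an open conic set, so ellipticity puts it into $\mathscr F$ and $\Char A=\emptyset$. The genuine gap is in the ($\Rightarrow$) direction, exactly at the step you flag and then wave away. Knowing $\Char A=\emptyset$ only gives you, around each point of $T^*\Rn\backslash 0$, some conic set $\Gamma=\omega\times V$ in which the lower bound holds, where $\omega$ may be a small ball and $C_\Gamma$ depends on $\Gamma$; the conic structure lives only in the $\xi$-variable, so no ``$\forall x\in\Rn$'' is built in (your last paragraph asserts it is, which is simply not what Definition \ref{defn:coni-PM2021} says), and a finite subcover of the compact sphere $\mathbb S^{n-1}$ controls only the directions, not the $x$-variable. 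Your claim that the uniform symbol bounds on $\agl[\xi]^{-m}a$ ``propagate the lower bound to a uniform-in-$x$ statement'' is false: take $m=0$ and $a(x,\xi)=\agl[x]^{-1}$. This is a symbol in $S^0$ with all derivatives bounded uniformly in $x$, and it is microlocally elliptic in every cone $B(x_0,1)\times(\Rn\backslash 0)$ with $C_\Gamma=\inf_{B(x_0,1)}\agl[x]^{-1}>0$, so under Definition \ref{defn:melp-PM2021} as written $\Char T_a=\emptyset$; yet $\inf_{x\in\Rn}|a(x,\xi)|=0$, so $T_a$ is not elliptic in the uniform sense of Definition \ref{defn:elp-PM2021}. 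The appeal to Lemma \ref{lem:aEv-PM2021} does not repair this, since the constants $C,D$ there must still be uniform in $x$ and cannot be assembled from local pieces whose constants degenerate as $|x|\to\infty$.

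So the covering-plus-uniformity argument cannot be completed as written; the loss of the ``$\forall x$'' quantifier is the actual obstruction, not a bookkeeping issue. For comparison, the paper gives no proof at all (it declares the claim trivial), which is coherent only if one reads ``$\Char A=\emptyset$'' as being witnessed by the single cone $\Gamma=T^*\Rn\backslash 0$ lying in $\mathscr F$ --- in which case the statement is a direct restatement of Definition \ref{defn:elp-PM2021} --- or if the $x$-variable is implicitly confined to a compact set. A correct write-up of the forward direction must either adopt such a reading (e.g.\ require the elliptic cones to be of the form $\Rn\times V$, or work over a compact base), or concede that with the literal covering definition of $\Char A$ the equivalence fails, as the example above shows.
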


\begin{lem} \label{lem:pin0-PM2021}
	Assume $P$ is the {\rm $\Psi$DO} with principal symbol $p_m(x,\xi)$ homogeneous in $\xi$, then $\Char P = p_m^{-1}(0)$, where $p_m^{-1}(0)$ signifies the set $\{ (x,\xi) \in T^* \Rn \backslash 0 \,;\, p_m(x,\xi) = 0 \}$.
\end{lem}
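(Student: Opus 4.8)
The plan is to prove the two inclusions $\Char P \subseteq p_m^{-1}(0)$ and $p_m^{-1}(0) \subseteq \Char P$ separately, using the homogeneity of $p_m$ in an essential way to convert ellipticity estimates into pointwise conditions on $p_m$.

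First I would show $p_m^{-1}(0) \subseteq \Char P$, equivalently $(\Char P)^c \subseteq (p_m^{-1}(0))^c$. Suppose $(x_0, \xi_0) \notin \Char P$. By Definition~\ref{defn:melp-PM2021}, there is an open conic set $\Gamma$ containing $(x_0,\xi_0)$ in which $P$ is microlocally elliptic, so $|p(x,\xi)| \geq C_\Gamma \agl[\xi]^m$ for all $(x,\xi) \in \Gamma$ with $|\xi| \geq R$. Since $p \sim p_m + (\text{lower order})$, we have $p(x,\xi) = p_m(x,\xi) + r(x,\xi)$ with $r \in S^{m-1}$, hence $|p_m(x,\xi)| \geq C_\Gamma \agl[\xi]^m - C'\agl[\xi]^{m-1}$ on $\Gamma$; for $|\xi|$ large enough this forces $p_m(x,\xi) \neq 0$ on $\Gamma$. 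Now using that $\Gamma$ is conic and $p_m$ is homogeneous of degree $m$ in $\xi$, the sign/vanishing of $p_m(x, t\xi)$ for $t>0$ is governed by $p_m(x,\xi)$, so $p_m$ is nonzero on all of $\Gamma \cap \{\xi \neq 0\}$, in particular $p_m(x_0,\xi_0) \neq 0$. Thus $(x_0,\xi_0) \notin p_m^{-1}(0)$.

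Next I would show $\Char P \subseteq p_m^{-1}(0)$, equivalently: if $p_m(x_0,\xi_0) \neq 0$ then $(x_0,\xi_0) \notin \Char P$. Assume $p_m(x_0,\xi_0) \neq 0$. Since $p_m$ is continuous on $T^*\Rn \setminus 0$ and homogeneous of degree $m$, there is a conic neighborhood $\Gamma$ of $(x_0,\xi_0)$ and a constant $c > 0$ such that $|p_m(x,\xi)| \geq c\,|\xi|^m$ for $(x,\xi) \in \Gamma$ (first get a bound $|p_m| \geq c$ on a neighborhood on the sphere $|\xi| = 1$, then extend by homogeneity). Because $\agl[\xi] \simeq |\xi|$ for $|\xi| \geq 1$, and since $p = p_m + r$ with $r \in S^{m-1}$ so $|r(x,\xi)| \lesssim \agl[\xi]^{m-1}$, we obtain $|p(x,\xi)| \geq c\,|\xi|^m - C'\agl[\xi]^{m-1} \geq \tfrac c2 \agl[\xi]^m$ for $(x,\xi) \in \Gamma$ with $|\xi| \geq R$ taking $R$ large. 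This is exactly microlocal ellipticity of $P$ in $\Gamma$, so $(x_0,\xi_0) \notin \Char P$.

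I expect the main technical obstacle to be the bookkeeping in the second inclusion: passing cleanly from the pointwise nonvanishing $p_m(x_0,\xi_0)\neq 0$ to a uniform lower bound $|p_m| \geq c|\xi|^m$ on a full conic neighborhood, which requires a compactness argument on the cosphere bundle together with the homogeneity rescaling, and then absorbing the lower-order remainder $r \in S^{m-1}$ into the estimate for large $|\xi|$. Everything else is a direct unwinding of Definitions~\ref{defn:melp-PM2021} and~\ref{defn:Asy-PM2021} together with the equivalences $\agl[\xi] \simeq 1 + |\xi|$ already recorded in the preliminaries.
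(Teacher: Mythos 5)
Your proof is correct; note that the paper itself gives no proof of this lemma (it is left as an exercise), so there is nothing to compare against, and your two-inclusion argument is exactly the intended one: absorb the lower-order remainder $r = p - p_m \in S^{m-1}$ into the elliptic estimate for large $|\xi|$, and use conicity of $\Gamma$ together with the homogeneity $p_m(x,t\xi) = t^m p_m(x,\xi)$ to pass between pointwise nonvanishing and the uniform bound $|p_m(x,\xi)| \geq c\,|\xi|^m$ on a conic neighborhood (via continuity on the cosphere). The only cosmetic remark is that, per Definition \ref{defn:Asy-PM2021}, the remainder is merely in $S^{m'}$ for some $m' < m$ rather than necessarily $S^{m-1}$, and the homogeneous $p_m$ is only comparable to the full symbol for $|\xi|$ bounded away from $0$; your argument uses the decomposition only for large $|\xi|$, so both points are harmless.
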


The proof is left as an exercise.
The $\Char A$ and $\Smo(A)$ is closely related.
Results in \S \ref{sec:para-PM2021} can be modified to a microlocal version.

\begin{thm}[Microlocal ellipticity $\Leftrightarrow$ Microlocal parametrix] \label{thm:mpa1-PM2021}
	Let $m \in \R$ and $A \in \Psi^m$.
	Assume $A$ is microlocally elliptic in $\Gamma$, where $\Gamma = (\Char A)^c$ is non-empty.
	Then $A$ has a microlocal parametrix $B$.
	Moreover, they satisfies
	\begin{equation} \label{eq:mpa1-PM2021}
	\boxed{(\Char A)^c \subset \Smo(I - B A).}
	\end{equation}
	
	Conversely, if $A$ has either a right or left microlocal parametrix, then $A$ is microlocally elliptic.
\end{thm}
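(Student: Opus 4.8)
The plan is to adapt the two-directional argument of Theorems~\ref{thm:para1-PM2021} and~\ref{thm:para2-PM2021} to the microlocal setting, with the cutoff now living on a conic neighborhood rather than a ball in $\xi$. For the forward direction, first I would fix the open conic set $\Gamma = (\Char A)^c$ together with the constants $C_\Gamma, R$ from Definition~\ref{defn:melp-PM2021}, and choose a slightly smaller open conic set $\Gamma'$ with $\overline{\Gamma'} \subset \Gamma$ (away from the zero section), plus a symbol $\psi \in S^0$ with $\psi \equiv 1$ on $\Gamma' \cap \{|\xi| \geq 2R\}$ and $\supp \psi \subset \Gamma \cap \{|\xi| \geq R\}$, so that $a$ is bounded below on $\supp\psi$. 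Then I would set $b_0 := \psi/a$, which is a well-defined symbol of order $-m$ since the denominator never vanishes on $\supp\psi$, and run the same recursive construction as in Theorem~\ref{thm:para1-PM2021}: define $b_{j} := (\psi/a)\, r_j \in S^{-m-j}$ where $r_{j+1}$ is the order-$(-j-1)$ remainder arising from $\sigma(A(B_0+\cdots+B_j))$ via Theorem~\ref{thm:com-PM2021}. By Theorem~\ref{thm:Asy-PM2021} there is $b \in S^{-m}$ with $b \sim \sum_j b_j$; set $B := T_b$. The key computation, carried out exactly as in \eqref{eq:sAB-PM2021}, gives $\sigma(AB) = 1 - (1 + r_1 + \cdots + r_N)(1-\psi) - r_{N+1} + S^{-N-1}$ for every $N$; since $\psi \equiv 1$ on $\Gamma' \cap \{|\xi|\geq 2R\}$, the symbol of $AB - I$ decays rapidly in $\Gamma'$, i.e.\ $\Gamma' \subset \Smo(AB-I)$. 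Repeating on the left and invoking a microlocal analogue of Lemma~\ref{lem:lre-PM2021} (the $\Psi^{-\infty}$ errors there are replaced by $\Psi$DOs smoothing in $\Gamma'$, and the bookkeeping still closes) yields that $B$ is a microlocal parametrix with $\Gamma' \subset \Smo(I - BA)$; taking the union over all admissible $\Gamma'$ exhausting $\Gamma$ gives \eqref{eq:mpa1-PM2021}.

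For the converse, I would mimic Theorem~\ref{thm:para2-PM2021}. Suppose $B = T_b$ is, say, a right microlocal parametrix, so $\Smo(AB - I) = \Gamma$ is a nonempty open conic set; $b$ is necessarily a symbol of order $-m$ (this follows because $\sigma(AB)$ differs from $1$ only by something that decays in $\Gamma$, forcing the orders to match there, and by elliptic-type bounds elsewhere — this point needs a short argument). Then $\sigma(AB) = ab + S^{-1}$ by Theorem~\ref{thm:com-PM2021}, while $ab = \sigma(AB)$ decays rapidly in $\Gamma$ up to $S^{-1}$; hence on any conic subset $\Gamma'$ with $\overline{\Gamma'}\subset\Gamma$ we get $|a(x,\xi)\,b(x,\xi) - 1| \leq C\agl[\xi]^{-1}$ for $|\xi|$ large, so $|a(x,\xi)\,b(x,\xi)| \geq 1/2$, and therefore $|a(x,\xi)| \geq 1/(2|b(x,\xi)|) \geq c\agl[\xi]^m$ there. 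Thus $\Gamma' \subset (\Char A)^c$; letting $\Gamma'$ increase to $\Gamma$ shows $A$ is microlocally elliptic in $\Gamma$. The left-parametrix case is symmetric.

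The main obstacle I anticipate is the same one that is finessed at the level of $\Psi^{-\infty}$ in Lemmas~\ref{lem:lre-PM2021} and~\ref{lem:PNUi-PM2021}: in the microlocal world ``negligible'' means ``$S^{-\infty}$ in a cone $\Gamma$'', and these ideals are not two-sided in the naive way — composing a $\Psi$DO that is smoothing in $\Gamma$ with an arbitrary $\Psi$DO need not be smoothing in $\Gamma$ unless one controls how the second operator moves conic directions. To handle this cleanly I would use Lemma~\ref{lem:smo-PM2021} ($\Smo(A)\cup\Smo(B)\subset\Smo(A\circ B)$) to propagate the smooth directions through the compositions, and be careful to shrink $\Gamma$ a little at each step (hence the nested $\Gamma' \subset \Gamma$), so that finitely many shrinkings still leave a nonempty open cone; the final statement is then recovered by exhausting $\Gamma$. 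A secondary technical point is justifying that $b$ has order $-m$ in the converse: this is local in $\xi$-cones and can be obtained by testing $\sigma(AB) \in 1 + (\text{smoothing in }\Gamma)$ against the lower bound for $a$ coming from microlocal ellipticity on $\Gamma'$, exactly as in the non-microlocal proof but restricted to the cone.
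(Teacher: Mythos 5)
Your proposal is essentially the paper's own argument: both directions adapt Theorems \ref{thm:para1-PM2021} and \ref{thm:para2-PM2021} by replacing the radial cutoff with a conic cutoff supported where $A$ is microlocally elliptic, building $b \sim \sum_j b_j$ with $b_0$ equal to that cutoff over $a$ via Theorem \ref{thm:Asy-PM2021}, and for the converse restricting the $ab = 1 + S^{-1}$ computation to the cone to recover the lower bound $|a| \gtrsim \agl[\xi]^m$ there. The only cosmetic difference is that you fix one cutoff on a subcone $\overline{\Gamma'} \subset \Gamma$ and then exhaust $\Gamma$, while the paper localizes near each $(x_0,\xi_0) \in \Gamma$ and appeals to the arbitrariness of $\chi$; note that in both versions the final passage from the subcone (or point neighborhoods) to all of $(\Char A)^c$ for a \emph{single} $B$ is left equally informal, since the constructed $B$ depends on the cutoff.
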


\begin{proof}
	($\Rightarrow$)
	Fix $(x_0, \xi_0) \in \Gamma$.
	In the proof of Theorem \ref{thm:para1-PM2021}, we modify the function $\chi(\xi)$ to $\chi(x,\xi)$ which is given as
	\(
	\chi(x,\xi) := \chi(x - x_0) \chi ( \xi/|\xi| - \xi_0/ |\xi_0|).
	\)
	Define $b_j(x,\xi) := (1-\chi(x, \xi))/a(x,\xi) \cdot r_j(x,\xi)$ ($j \geq 0$) the same way, with $r_0 \equiv 1$, and follow the same steps as in the proof of Theorem \ref{thm:para1-PM2021} we can obtain $\forall N \in \N$,	
	\[
	\sigma(AB)
	= 1 - (1+r_1 + \cdots + r_N) \chi - r_{N+1} + S^{-N-1}
	= 1 + S^{-N-1} \ \text{in}
	\subset \Gamma,
	\]
	as in \eqref{eq:sAB-PM2021}, so
	\[
	\sigma(I - AB) \in S^{-\infty} \text{~in~}
	\subset \Gamma.
	\]
	Due to the arbitrary of $\chi$, we conclude
	\[
	\sigma(I - AB) \in S^{-\infty} \text{~in~} \Gamma.
	\]
	Hence, by Definition \ref{defn:smo-PM2021} we obtain
	\(
	\Gamma \subset \Smo(I - AB).
	\)

	($\Leftarrow$)
	Assume $B$ is the right parametrix of $A$, then there exists a nonempty conic open set $\Gamma \subset T^* \Rn \backslash 0$ such that
	\(
	\Smo(AB - I) \subset \Gamma,
	\)
	which means
	\(
	a \# b = 1 + S^{-\infty} \ \text{in} \ \Gamma.
	\)
	where $a$ and $b$ are symbols of $A$ and $B$, respectively.
	Similar to the proof of Theorem \ref{thm:para2-PM2021}, we can prove that
	\[
	|a(x,\xi)| \geq \agl[\xi]^{m}/2, \quad \text{when} \quad (x,\xi) \in \Gamma, \ \agl[\xi] \geq C/2.
	\]
	Therefore, $a$ is microlocally elliptic in $\Gamma$.
	The proof for the left-case is similar.
	
	The proof is complete.
\end{proof}

\begin{cor} \label{cor:wAuC-PM2021}
	For any $A \in \Psi^{+\infty}$ and any $u \in \scrS'$, there holds
	\[
	\boxed{\wf(Au) \subset \wf u \subset \wf(Au) \cup \Char A.}
	\]
\end{cor}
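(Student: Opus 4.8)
The statement breaks into two inclusions, and I would prove each separately using the machinery already developed. The first inclusion $\wf(Au) \subset \wf u$ is nothing but the microlocality of $\Psi$DOs, Theorem~\ref{thm:MLpr-PM2021} (its first conclusion \eqref{eq:MLpr-PM2021}), which gives the stronger statement $\wf(Au) \subset \wf(u) \backslash \Smo(A)$; in particular $\wf(Au) \subset \wf u$. Strictly speaking Theorem~\ref{thm:MLpr-PM2021} is stated for $u \in \scrE'$, so a preliminary remark is needed: since $\wf$ is defined locally (one tests against compactly supported cutoffs $\varphi$), and since a $\Psi$DO can be split via Lemma~\ref{lem:kedi-PM2021} into a properly supported part plus a smoothing part, the conclusion extends to $u \in \scrS'$. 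I would spell this out in one sentence: localize near a point $x_0$, replace $u$ by $\chi u \in \scrE'$ for a cutoff $\chi \equiv 1$ near $x_0$, apply the $\scrE'$ result to the properly supported piece, and absorb the smoothing remainder, which contributes nothing to the wavefront set.

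For the second inclusion $\wf u \subset \wf(Au) \cup \Char A$, the plan is to run a microlocal parametrix argument exactly parallel to the elliptic case inside the proof of Theorem~\ref{thm:MLpr-PM2021}. Fix $(x_0,\xi_0) \notin \wf(Au) \cup \Char A$. Since $(x_0,\xi_0) \notin \Char A$, by Definition~\ref{defn:melp-PM2021} the operator $A$ is microlocally elliptic in some open conic neighborhood $\Gamma$ of $(x_0,\xi_0)$. By Theorem~\ref{thm:mpa1-PM2021}, $A$ then has a microlocal parametrix $B$ with $(x_0,\xi_0) \in (\Char A)^c \subset \Smo(I - BA)$. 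Writing $R := BA - I$, this says $R \in \Psi^{-\infty}$ microlocally near $(x_0,\xi_0)$, i.e.\ its symbol is rapidly decreasing in a conic neighborhood of $(x_0,\xi_0)$; consequently $(x_0,\xi_0) \notin \wf(Ru)$ for any $u \in \scrS'$ (this follows from Lemma~\ref{lem:smoempty-PM2021}, or directly from the kernel estimate of Proposition~\ref{prop:PseuKer-PM2021} combined with Theorem~\ref{thm:KernelAct-PM2021}). Now
\[
u = BAu - Ru,
\]
so by the subadditivity of the wavefront set under sums (Theorem~\ref{eq:Sim3Pro-PM2021}(1)) we have $\wf u \subset \wf(BAu) \cup \wf(Ru)$ near $(x_0,\xi_0)$. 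The term $\wf(Ru)$ misses $(x_0,\xi_0)$ by the previous sentence, and $\wf(BAu) \subset \wf(Au)$ by the microlocality already established (first inclusion applied to the operator $B$ and the distribution $Au$). Since $(x_0,\xi_0) \notin \wf(Au)$ by assumption, we conclude $(x_0,\xi_0) \notin \wf u$, which is the desired inclusion.

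\textbf{Main obstacle.} The only delicate point is bookkeeping with the word ``microlocally'': Theorem~\ref{thm:mpa1-PM2021} produces a $B$ for which $\Smo(I-BA)$ \emph{contains} $(\Char A)^c$ but need not be all of $T^*\Rn \backslash 0$, so $R = BA - I$ is a genuine $\Psi$DO of finite order whose symbol is only known to be rapidly decaying in the cone $\Gamma$, not globally. One must therefore be careful that the wavefront-set containments above are read as statements near $(x_0,\xi_0)$, and that $(x_0,\xi_0) \notin \wf(Ru)$ really does follow from $(x_0,\xi_0) \in \Smo(R)$ — this is precisely the content of Lemma~\ref{lem:smoempty-PM2021}, whose own proof rests on the pseudolocal kernel bound. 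A secondary, purely technical, obstacle is the extension of Theorem~\ref{thm:MLpr-PM2021} from $\scrE'$ to $\scrS'$, handled by the cutoff-and-splitting remark above; since $\wf$ is a local notion and the smoothing part of a $\Psi$DO never creates singularities, this is routine. Apart from these, the argument is a direct transcription of the elliptic case $\Char A = \emptyset$ to an arbitrary conic direction.
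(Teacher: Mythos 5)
Your proposal is correct and follows essentially the same route as the paper: both inclusions come from the microlocality statement \eqref{eq:MLpr-PM2021} together with the microlocal parametrix $B$ of Theorem \ref{thm:mpa1-PM2021}, applied to the decomposition $u = BAu + (I-BA)u$, with $(\Char A)^c \subset \Smo(I-BA)$ killing the remainder's contribution. Your pointwise phrasing (fixing $(x_0,\xi_0)$ and invoking Lemma \ref{lem:smoempty-PM2021}) and your remark on extending Theorem \ref{thm:MLpr-PM2021} from $\scrE'$ to $\scrS'$ are only cosmetic refinements of the paper's set-inclusion chain.
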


\begin{proof}
	The ``$\wf(Au) \subset \wf u$'' is from Theorem Theorem \ref{thm:MLpr-PM2021}.
	
	When $\Char A = T^* \Rn \backslash 0$, the claim is trivial.
	When $\Char A \subsetneqq T^* \Rn \backslash 0$, $(\Char A)^c$ is non-empty, so by Theorem \ref{thm:mpa1-PM2021}, there exists a microlocal parametrix $B$ of $A$, so we can have
	\begin{align*}
	\wf u
	& = \wf(BAu + (I - BA)u) \\
	& \subset \wf(BAu) \cup \wf((I - BA)u) \\
	& \subset \wf(BAu) \cup \big( \wf u \backslash \Smo(I - BA) \big) \qquad (\text{by~} \eqref{eq:MLpr-PM2021}) \\
	& = \wf(BAu) \cup \big( \wf u \cap (\Smo(I - BA))^c \big) \\
	& \subset \wf(BAu) \cup \big( \wf u \cap \Char A \big)  \qquad (\text{by Theorem~} \ref{thm:mpa1-PM2021}) \\
	& = \big( \wf(BAu) \cup \wf u \big) \cap \big( \wf(BAu) \cup \Char A \big) \\
	& \subset \wf u \cap \big( \wf(BAu) \cup \Char A \big), \qquad (\text{by~} \eqref{eq:MLpr-PM2021})
	\end{align*}
	which gives
	\[
	\wf u
	\subset \wf(BAu) \cup \Char A
	\subset \wf(Au) \cup \Char A.
	\]
	The proof is complete.
\end{proof}

\begin{rem}
	Combining Corollary \ref{cor:wAuC-PM2021} and Lemma \ref{lem:elem-PM2021}, we have
	\begin{equation*}
	\left\{\begin{aligned}
	A \text{~is microlocally elliptic:~}& \wf(Au) \subset \wf u \subset \wf(Au) \cup \Char A, \\
	A \text{~is elliptic:~}& \wf(Au) \subset \wf u \subset \wf(Au) \ \Leftrightarrow \ \wf(Au) = \wf u.
	\end{aligned}\right.
	\end{equation*}
	Hence, Corollary \ref{cor:wAuC-PM2021} can be viewed as a generalization of \eqref{eq:elwf-PM2021}.
\end{rem}


%

The following result is important.

\begin{thm} \label{thm:wuCh-PM2021}
	Assume $u \in \scrS'$, then
	\[
	\boxed{\wf(u) = \bigcap_{A \in \Psi^{+\infty},\, Au \in C^\infty} \Char A.}
	\]
\end{thm}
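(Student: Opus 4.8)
The plan is to prove the two inclusions $\wf(u) \subset \bigcap \Char A$ and $\wf(u) \supset \bigcap \Char A$ separately, where the intersection runs over all $A \in \Psi^{+\infty}$ with $Au \in C^\infty$.

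For the inclusion $\wf(u) \subset \bigcap_{Au \in C^\infty} \Char A$, I would fix such an $A$ (so $Au \in C^\infty$, which means $\wf(Au) = \emptyset$) and show $\wf(u) \subset \Char A$. By Corollary \ref{cor:wAuC-PM2021} we have $\wf(u) \subset \wf(Au) \cup \Char A = \emptyset \cup \Char A = \Char A$. Taking the intersection over all admissible $A$ gives $\wf(u) \subset \bigcap_{Au \in C^\infty} \Char A$. This direction is essentially immediate from the machinery already developed.

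For the reverse inclusion $\bigcap_{Au \in C^\infty} \Char A \subset \wf(u)$, I would argue contrapositively: suppose $(x_0,\xi_0) \notin \wf(u)$, and produce a single $A \in \Psi^{+\infty}$ with $Au \in C^\infty$ and $(x_0,\xi_0) \notin \Char A$, i.e.\ with $A$ microlocally elliptic near $(x_0,\xi_0)$. The natural candidate is the operator $A = T_a$ whose symbol is a conic cutoff: pick $\varphi \in C_c^\infty(\Rn)$ with $\varphi(x_0) \neq 0$ supported in a small neighborhood $\omega$ of $x_0$ and a conic neighborhood $V$ of $\xi_0$ as in Definition \ref{defn:wfset-PM2021}, and set $a(x,\xi) := \varphi(x)\,\psi(\xi/|\xi|)\,\chi(\xi)$ where $\psi$ is a smooth cutoff on the sphere equal to $1$ near $\xi_0/|\xi_0|$ with support giving directions inside $V$, and $\chi(\xi)$ vanishes near $\xi = 0$ and equals $1$ for $|\xi|$ large. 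Then $a \in S^0$, and $a$ is microlocally elliptic on a conic neighborhood of $(x_0,\xi_0)$ (where $\varphi \neq 0$, $\psi = 1$, $\chi = 1$), so $(x_0,\xi_0) \notin \Char A$. It remains to check $Au \in C^\infty$. Since $A u(x) = \varphi(x) \cdot \calF^{-1}\{\psi(\cdot/|\cdot|)\chi(\cdot)\, \widehat{(\varphi u)}(\cdot)\}(x)$ — more precisely $Au = T_a u$ acts by $\calinF\{a(x,\cdot)\calF u\}$ when $a$ is $x$-independent, but here $a$ depends on $x$ only through the factor $\varphi$, so $Au(x) = \varphi(x)\, \calinF\{\psi(\xi/|\xi|)\chi(\xi)\widehat{\varphi u}(\xi)\}(x)$ — the Fourier transform $\widehat{\varphi u}$ decays rapidly on $\supp(\psi(\cdot/|\cdot|)\chi)$ by the defining property \eqref{eq:wfset-PM2021} of $(x_0,\xi_0) \notin \wf(u)$, hence $\psi(\xi/|\xi|)\chi(\xi)\widehat{\varphi u}(\xi)$ is a Schwartz function and its inverse Fourier transform is $C^\infty$ (indeed Schwartz); multiplying by $\varphi \in C_c^\infty$ keeps it smooth. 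Thus $Au \in C^\infty$, so $A$ is one of the operators in the intersection, and $(x_0,\xi_0) \notin \Char A$ shows $(x_0,\xi_0) \notin \bigcap_{Au\in C^\infty}\Char A$.

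The main obstacle is the careful verification in the second direction that the chosen pseudodifferential operator $A$ simultaneously satisfies (i) microlocal ellipticity at $(x_0,\xi_0)$ — this needs the conic cutoff $\psi(\xi/|\xi|)$ to be genuinely equal to $1$ on an open conic neighborhood and $\chi$ to be $1$ for large $|\xi|$ so that $|a(x,\xi)| = |\varphi(x)| \geq c > 0$ there — and (ii) $Au \in C^\infty$, which rests on transferring the local Fourier-decay hypothesis \eqref{eq:wfset-PM2021} at a point of $(\wf u)^c$ into rapid decay of $a(x,\xi)\widehat{\varphi u}(\xi)$ uniformly in $x$. One should also double-check that $a$ genuinely lies in $S^0$ (the homogeneous-of-degree-$0$ factor $\psi(\xi/|\xi|)$ is smooth away from the origin, and is multiplied by $\chi$ which kills the origin, so $|\partial_\xi^\beta(\psi(\xi/|\xi|)\chi(\xi))| \lesssim \agl[\xi]^{-|\beta|}$ by Lemma \ref{lem:chbdd-PM2021}-type estimates), and to invoke Lemma \ref{lem:varphiRelax-PM2021} if needed so that the cutoff $\varphi$ — which need not satisfy $\varphi(x_0)\neq 0$ in intermediate steps but here does — may be taken with the required support properties.
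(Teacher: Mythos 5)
Your first inclusion is fine and is exactly the paper's argument (Corollary \ref{cor:wAuC-PM2021} with $\wf(Au)=\emptyset$). The gap is in the second direction, in the verification that $Au\in C^\infty$. With the Kohn--Nirenberg quantization of Definition \ref{defn:PseudoDO-PM2021}, the symbol $a(x,\xi)=\varphi(x)\,\psi(\xi/|\xi|)\chi(\xi)$ gives
\[
T_a u(x)=\varphi(x)\,\calinF\bigl\{\psi(\cdot/|\cdot|)\chi(\cdot)\,\hat u(\cdot)\bigr\}(x),
\]
i.e.\ the multiplier acts on $\hat u$, \emph{not} on $\widehat{\varphi u}$: the $x$-dependent factor of the symbol multiplies on the left after the Fourier multiplier has been applied to $u$ itself. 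Your identity $Au(x)=\varphi(x)\,\calinF\{\psi(\xi/|\xi|)\chi(\xi)\widehat{\varphi u}(\xi)\}(x)$ is therefore false (this is precisely the trap flagged in Remark \ref{rem:FPw-PM2021}), and the smoothness argument collapses: the hypothesis $(x_0,\xi_0)\notin\wf(u)$ controls only $\widehat{\varphi u}$ in a cone, while $\hat u$ is merely a tempered distribution with no decay information there, so rapid decay of $a(x,\cdot)\hat u(\cdot)$ does not follow. (One could still try to show $\varphi\cdot b(D)u$ is smooth via microlocality of the multiplier $b(D)$ and the fact that a full conic neighborhood $\omega'\times V$ misses $\wf(u)$, but that is a different argument, needs Theorem \ref{thm:MLpr-PM2021}/Lemma \ref{lem:smoempty-PM2021} and some care since those are stated for $u\in\scrE'$, and is not what you wrote.)

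The repair is to place the spatial cutoff on the $y$-side: define
\[
Au(x):=(2\pi)^{-n}\int e^{i(x-y)\cdot\xi}\,\varphi(y)\,(1-\chi(\xi))\,\psi(\xi/|\xi|)\,u(y)\dif y\dif\xi,
\]
so that $\widehat{Au}(\xi)=(1-\chi(\xi))\psi(\xi/|\xi|)\,\widehat{\varphi u}(\xi)$ is rapidly decaying and $Au\in C^\infty$; then invoke Theorem \ref{thm:ReV-PM2021} to see that $A$ is a $\Psi$DO of order $0$ with symbol $\varphi(x)(1-\chi(\xi))\psi(\xi/|\xi|)+S^{-1}$, which is still non-characteristic at $(x_0,\xi_0)$. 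This is exactly the paper's second construction (its ``Method 2''); alternatively one can take a conic cutoff symbol supported in a neighborhood of $(x_0,\xi_0)$ disjoint from $\wf(u)$ and conclude $T_au\in C^\infty$ from the microlocality theorem (the paper's ``Method 1''), or sandwich $M_\varphi\, b(D)\, M_\varphi$ and use the composition theorem \ref{thm:com-PM2021}. Your $S^0$ and ellipticity checks are fine once the quantization issue is fixed.
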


\begin{proof}
	By Corollary \ref{cor:wAuC-PM2021}, we have
	$\wf u \subset \wf(Au) \cup \Char A$, so
	\[
	\wf(u) \subset \bigcap_{Au \in C^\infty} \Char A.
	\]
	For the another direction, assume $(x_0, \xi_0) \notin \wf(u)$, then we shall construct a suitable {\rm $\Psi$DO} $A$ such that
	\begin{equation} \label{eq:AuCh-PM2021}
	A u \in C^\infty,
	\quad \text{and} \quad
	(x_0, \xi_0) \notin \Char(A),
	\end{equation}
	which gives $(x_0, \xi_0) \notin \bigcap_{Au \in C^\infty} \Char A$, and so
	\[
	(\wf(u))^c \subset (\bigcap_{Au \in C^\infty} \Char A)^c \ \Rightarrow \
	\bigcap_{Au \in C^\infty} \Char A \subset \wf(u),
	\]
	and the proof will be finished.
	
	It remains to construct such an operator $A$, and we present two ways to do it.

	{\bf Method 1.}
	Because $\wf(u)$ is closed and $(x_0, \xi_0) \notin \wf(u)$, so there exist bounded open neighborhoods $\omega$, $\omega'$ of $x_0$ and conic open neighborhoods $V$, $V'$ of $\xi_0$ such that
	\begin{equation*}
		\left\{\begin{aligned}
			& \omega \subsetneqq \omega', \quad V \subsetneqq V', \\
			& \omega' \times V' \cap \wf(u) = \emptyset.
		\end{aligned}\right.
	\end{equation*}
	We denote $\Gamma := \omega \times V$ and $\Gamma' := \omega' \times V'$, then $\Gamma \subset \Gamma'$ and $\Gamma' \cap \wf(u) = \emptyset$, so
	\begin{equation} \label{eq:wfG-PM2021}
		\wf(u) \subset \Gamma^c.
	\end{equation}
	
	Choose $a \in C^\infty(\R^{2n})$ such that
	\begin{equation} \label{eq:aG0-PM2021}
		\left\{\begin{aligned}
			& \supp a \subset \Gamma,
			\quad \text{and} \quad
			a(x_0, \xi_0) = 1, \\
			& a(x,\xi) = a(x,\xi/|\xi|) \ \text{when} \ |\xi| \geq 1.
		\end{aligned}\right.
	\end{equation}
	It can be shown that $a \in S^0$ (see Exercise \ref{ex:aG0-PM2021}) and $\underline{(x_0, \xi_0) \notin \Char(T_a)}$.
	Moreover, because $a \equiv 0$ in $\Gamma^c$ and $\Gamma^c$ is a conic set, we can conclude
	\begin{equation} \label{eq:GmA-PM2021}
		\Gamma^c \subset \Smo(T_a) \quad \Rightarrow \quad (\Smo(T_a))^c \subset \Gamma.
	\end{equation}
	Hence by Theorem \ref{thm:MLpr-PM2021} we have
	\begin{align*}
		\wf (T_a u)
		& \subset \wf(u) \backslash \Smo(T_a)
		= \wf(u) \cap (\Smo(T_a))^c
		\subset \wf(u) \cap \Gamma \quad (\text{by~} \eqref{eq:GmA-PM2021}) \\
		& = \wf(u) \backslash \Gamma^c = \emptyset, \quad (\text{by~} \eqref{eq:wfG-PM2021})
	\end{align*}
	which implies $\underline{T_a u \in C^\infty}$.
	Condition \eqref{eq:AuCh-PM2021} is satisfied.

	{\bf Method 2.}
	Because $(x_0, \xi_0) \notin \wf(u)$, there exists $\phi \in C_c^\infty(\Rn)$ such that $\widehat{\phi u}(\xi)$ is rapidly decaying when $\xi/|\xi|$ and $\xi_0/|\xi_0|$ are close enough, say, $\big| \xi/|\xi| - \xi_0/|\xi_0| \big| \leq \epsilon$ for certain $\epsilon > 0$.
	Hence we choose $\psi \in C^\infty(\mathbb S^{n-1})$ such that $\psi(\xi_0/|\xi_0|) = 1$ and $\psi(\eta) = 0$ when $\big| \eta - \xi_0/|\xi_0| \big| > \epsilon$ where $\eta \in \mathbb S^{n-1}$.
	Choose $\chi \in C_c^\infty(\Rn)$ such that $\chi(\xi) = 1$ when $|\xi| \geq 1/10$ and $\chi(\xi) = 0$ when $|\xi| \geq 1/5$.	
	Now we define a operator $A$ as follows
	\[
	A \varphi(x) := (2\pi)^{-n} \int_{\R^{2n}} e^{i(x-y) \cdot \xi} \phi(y) (1 - \chi(\xi)) \psi(\xi/|\xi|) \varphi(y) \dif y \dif \xi, \quad \varphi \in \scrS(\Rn).
	\]
	The purpose of the term ``$1-\chi(\xi)$'' is to cutoff the singularity near $\xi = 0$.
	By Theorem \ref{thm:ReV-PM2021} we see $A$ is a {\rm $\Psi$DO} of order $0$ with symbol
	\begin{equation} \label{eq:ayp-PM2021}
	a(x,\xi) = \phi(x) (1 - \chi(\xi)) \psi(\xi/|\xi|) + S^{-1},
	\end{equation}
	By \eqref{eq:ayp-PM2021} we can show $\underline{(x_0, \xi_0) \notin \Char A}$, see Exercise \ref{ex:aCh-PM2021}.
	We can extend $A$ from $\scrS$ to $\scrS'$, and we have
	\[
	\widehat{Au}(\xi)
	= (1 - \chi(\xi)) \psi(\xi/|\xi|) \widehat{\phi u}(\xi),
	\]
	so $\widehat{Au}$ is rapidly decaying, which means $\underline{Au \in C^\infty}$.
	Condition \eqref{eq:AuCh-PM2021} is satisfied.
	
	The proof is complete.
\end{proof}

\section{Bicharacteristics} \label{sec:bichar-PM2021}

To prove a main result, we first introduce the notion of bicharacteristics.
The Hamiltonian $H_p$ of $p$ is defined as:
\begin{equation}
	H_p := \nabla_\xi p \cdot \nabla_x - \nabla_x p \cdot \nabla_\xi.
\end{equation}
By Theorem \ref{thm:com-PM2021} we can see
\begin{align*}
\sigma([P,Q])
& = \nabla_\xi p_m \cdot \nabla_x q_m - \nabla_x p_m \cdot \nabla_\xi q_m + S^{m_1 + m_2 - 2} \\
& = H_{p_m} q_m + S^{m_1 + m_2 - 2},
\end{align*}
where $m_1$, $m_2$ are the order of $P$ and $Q$, and $p_m$, $q_m$ are principal symbols of $P$ and $Q$, respectively.

In what follows we use the notation $T^*\Rn \backslash 0 := \{(x,\xi) \in T^*\Rn \,;\, \xi \neq 0 \}$.
We introduce the notion of bicharacteristic.
For more details on the Hamiltonian flows, see \cite[\S 2]{salo2006sta}.

\begin{defn}[Null bicharacteristic] \label{defn:nbi-PM2021}
	Assume $p \in C^1(T^*\Rn \backslash 0; \R)$, and $I~(\ni 0)$ is an open connected subset of $\R$. 
	Let a curve $\gamma_{x_0,\xi_0} \colon s \in I \mapsto (x(s),\xi(s)) \in T^* \Rn \backslash 0$ satisfies
	\begin{equation} \label{eq:Has-PM2021}
	\left\{\begin{aligned}
	& \dot x(s) = \nabla_\xi p(x(s),\xi(s)), \ \dot \xi(s) =- \nabla_x p(x(s),\xi(s)), \\
	& (x(0),\xi(0)) = (x_0,\xi_0) \in T^* \Rn \backslash 0.
	\end{aligned}\right.
	\end{equation}
	We call $\boxed{\gamma_{x_0,\xi_0}(t)}$ a \emph{bicharacteristic} of $p$. \index{bicharacteristic}
	Furthermore, if $p(x_0,\xi_0) = 0$, then we shall have $p(x(s),\xi(s)) = 0$ for $\forall s$ and call $\gamma_{x_0,\xi_0}(t) := (x(t), \xi(t))$ a \emph{null bicharacteristic} \index{null bicharacteristic} of $p$.
\end{defn}

Note that in Definition \ref{defn:nbi-PM2021}, the function $p$ is assumed to be real-valued.
Without this assumption, we cannot guarantee $(x(s),\xi(s))$ are coordinates.

\begin{lem} \label{lem:haE-PM2021}
	For small enough $\epsilon > 0$, there exists a unique solution $\gamma \colon (-\epsilon, \epsilon) \mapsto (x(s), \xi(s)) \in T^* \Rn \backslash 0$ for the Hamiltonian equation \eqref{eq:Has-PM2021}.
	Moreover, assume either
	\begin{enumerate}
	\item $\nabla_{(x,\xi)} p$ is uniformly bounded in $\{ (x(s), \xi(s)/|\xi(s)|) \,;\, s \in (-\epsilon, \epsilon) \}$, $p$ is homogeneous of order $1$;
	
	\item or $\nabla_{x} p$ is uniformly bounded in $\{ (x(s), \xi(s)) \,;\, s \in (-\epsilon, \epsilon) \}$, $p$ is homogeneous of order $\mu \geq 1$, and $\nabla_\xi p$ is bounded in uniformly bounded in $\{ (x(s), \xi(s)/|\xi(s)|) \,;\, s \in (-\epsilon, \epsilon) \}$;
	\end{enumerate}
	then the domain of definition of $\gamma$ can be extended from $(-\epsilon, \epsilon)$ to $\R$.
\end{lem}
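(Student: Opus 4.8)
The plan is to prove the local existence and uniqueness by the standard Picard--Lindel\"of theorem for ODEs, and then to upgrade to global existence by a continuation argument that uses homogeneity of $p$ to control the growth of $\gamma$. For the local part, I would write the Hamiltonian system \eqref{eq:Has-PM2021} as a first-order ODE $\dot z(s) = F(z(s))$ on $T^*\Rn \backslash 0$ with $z = (x,\xi)$ and $F = (\nabla_\xi p, -\nabla_x p)$. Since $p \in C^1$, the vector field $F$ is continuous, and in fact we need it to be locally Lipschitz; this holds on any compact subset of $T^*\Rn \backslash 0$ where $\nabla_{(x,\xi)} p$ is (locally) bounded, which is the situation near the initial point $(x_0,\xi_0)$ with $\xi_0 \neq 0$. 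Picard iteration then gives a unique $C^1$ solution on some interval $(-\epsilon, \epsilon)$, and one checks that $\xi(s) \neq 0$ for $|s|$ small by continuity, so the curve genuinely stays in $T^*\Rn \backslash 0$. I would also record the conservation fact: $\frac{d}{ds} p(x(s),\xi(s)) = \nabla_x p \cdot \dot x + \nabla_\xi p \cdot \dot\xi = \nabla_x p \cdot \nabla_\xi p - \nabla_\xi p \cdot \nabla_x p = 0$, so $p$ is constant along $\gamma$, which justifies the ``null'' terminology in Definition \ref{defn:nbi-PM2021}.

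For the global part, the strategy is the usual ODE alternative: a maximal solution either is defined on all of $\R$, or it leaves every compact subset of $T^*\Rn \backslash 0$ in finite time --- meaning either $|\xi(s)| \to 0$, or $|(x(s),\xi(s))| \to \infty$, as $s$ approaches a finite endpoint $s^*$. I would rule both out under either hypothesis (1) or (2) by deriving a differential inequality for $|\xi(s)|$ (and then for $|x(s)|$). Euler's identity for a function homogeneous of degree $\mu$ in $\xi$ gives $\xi \cdot \nabla_\xi p = \mu\, p$, and homogeneity also tells us $\nabla_x p(x,\xi) = |\xi|^\mu \nabla_x p(x, \xi/|\xi|)$ and $\nabla_\xi p(x,\xi) = |\xi|^{\mu-1} \nabla_\xi p(x, \xi/|\xi|)$ for $\xi \neq 0$. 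Using the boundedness assumptions on $\nabla_x p$ and $\nabla_\xi p$ along the (normalized) curve, we get $\big| \frac{d}{ds} |\xi(s)|^2 \big| = 2|\xi \cdot \dot\xi| = 2|\xi \cdot \nabla_x p| \lesssim |\xi(s)|^{1+\mu}$ in case (2) (and $\lesssim |\xi(s)|^2$ in case (1), since there $\nabla_x p$ evaluated along the curve contributes a factor $|\xi|$ by homogeneity of degree $1$ after normalizing, giving a linear bound). A Gr\"onwall-type estimate then shows $|\xi(s)|$ cannot blow up to $\infty$ in finite time in case (1), and in case (2) one gets a bound valid on any finite interval; symmetrically, $|\xi(s)|$ stays bounded away from $0$ on finite intervals because $\frac{d}{ds}\log|\xi(s)|^2$ is bounded. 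Once $|\xi(s)|$ is pinned between two positive constants on $[0,s^*)$, the bound $|\dot x(s)| = |\nabla_\xi p| \lesssim |\xi(s)|^{\mu-1} \le C$ gives $|x(s)| \le |x_0| + C s^*$, so $x$ stays in a compact set as well. Hence the solution does not escape every compact subset of $T^*\Rn\backslash 0$ in finite time, contradicting maximality unless $s^* = +\infty$; the same argument runs backward in $s$. Patching the local solutions along with uniqueness produces a single $\gamma$ defined on all of $\R$.

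The main obstacle I anticipate is organizing the homogeneity bookkeeping cleanly so that cases (1) and (2) are handled uniformly: the hypotheses are stated with the normalization $\xi/|\xi|$ in the arguments precisely because $p$ is homogeneous, and one must be careful to convert ``$\nabla_{(x,\xi)} p$ bounded on the normalized curve'' into a genuine pointwise bound $|\nabla_x p(x(s),\xi(s))| \lesssim |\xi(s)|^{\mu}$ and $|\nabla_\xi p(x(s),\xi(s))| \lesssim |\xi(s)|^{\mu - 1}$ --- the exponents differ by one because $\nabla_\xi$ lowers the degree in $\xi$. In case (1) the degree is $\mu = 1$, so these read $|\nabla_x p| \lesssim |\xi|$ and $|\nabla_\xi p| \lesssim 1$, which is exactly what makes $|\xi(s)|$ grow at most exponentially (linear ODE differential inequality) and $|x(s)|$ grow at most linearly; in case (2) the same structure persists with the additional input that $\nabla_x p$ is assumed bounded outright, not merely on the normalized curve, which slightly changes which estimate one invokes for the $|\xi|$-inequality. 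A secondary (routine) point to get right is that along the flow $p$ is constant, hence if $p(x_0,\xi_0)=0$ the curve stays in $p^{-1}(0)$, so the null bicharacteristic is well-defined globally once the bicharacteristic is. I would present the differential-inequality argument once, remarking afterward how each hypothesis feeds into it, rather than duplicating the computation.
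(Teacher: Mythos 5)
Your proposal follows essentially the same route as the paper: a Picard/Banach fixed-point argument for local existence and uniqueness, followed by homogeneity-driven estimates --- an exponential-type bound on $|\xi(s)|$ in case (1) via $\frac{d}{ds}|\xi|^2 = -2\xi\cdot\nabla_x p = m(s)|\xi|^2$, a direct linear bound $|\xi(s)|\leq|\xi(0)|+Ms$ in case (2) from the outright bound on $\nabla_x p$, and the bound $|\dot x(s)|=|\nabla_\xi p|\lesssim|\xi|^{\mu-1}$ for $x$ --- combined with a continuation (pasting) argument to extend to all of $\R$. The one wobble is your intermediate case-(2) inequality $\frac{d}{ds}|\xi|^2\lesssim|\xi|^{1+\mu}$, which for $\mu>1$ would not by itself rule out finite-time blow-up via Gr\"onwall; but as you note in your final paragraph, the correct move there (and the paper's) is to invoke the assumed uniform bound on $\nabla_x p$ along the un-normalized curve directly, so the intended argument matches the paper's.
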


\begin{proof}
	{\bf Part 1:} local solution.
	We use the Banach fixed-point theorem to show the existence of local solution.
	For simplicity denote $\eta_0 := (x_0, \xi_0)$ and $\eta(s) := (x(s), \xi(s))$ and $F(\eta(s)) := (\nabla_\xi p(\eta(s)), -\nabla_x p(\eta(s)))$, and we define a mapping $\mathscr F$:
	\[
	\mathscr F \colon \eta \in C(I, T^* \Rn \backslash 0) \ \mapsto \ \eta_0 + \int_0^s F(\eta(\tau)) \dif \tau \in C(I, T^* \Rn \backslash 0).
	\]
	Fix $\epsilon \leq (2\nrm{\nabla F})^{-1}$,  and let $I = (-\epsilon, \epsilon)$.
	Then for any $\eta_1$, $\eta_2 \in C(I, T^* \Rn \backslash 0)$, we have
	\begin{align*}
	\nrm[C(I)]{\mathscr F \eta_1 - \mathscr F \eta_2}
	& = \nrm[C(I)]{\int_0^s [F(\eta_1(\tau)) - F(\eta_2(\tau))] \dif \tau} \\
	& \leq \epsilon \nrm[C(I)]{F(\eta_1) - F(\eta_2)}
	\leq \epsilon \nrm{\nabla F} \nrm[C(I)]{\eta_1 - \eta_2} \\
	& \leq \frac 1 2 \nrm[C(I)]{\eta_1 - \eta_2}.
	\end{align*}
	The Banach fixed-point theorem can be applied, and we can find a fixed point $\eta$ of $\mathscr{F}$ such that
	\[
	\eta(s) = \eta_0 + \int_0^s F(\eta(\tau)) \dif \tau, \ \forall s \in I
	\quad \Rightarrow \quad
	\eta \text{~satisfies~} \eqref{eq:Has-PM2021}.
	\]
	We proved the existence.
	
	For the uniqueness, assume $\eta_1$, $\eta_2$ solve \eqref{eq:Has-PM2021}.
	Because $\eta_1(0) = \eta_2(0)$, if there are not equal, their derivatives must be differ at a point, but this violates the first two equations in \eqref{eq:Has-PM2021}.
	The first part of the claim is proven.

	{\bf Part 2:} global solution (cf \cite[\S 2]{salo2006sta}).
	To obtain the global solution, we can extend the local solution from $(-\epsilon, \epsilon)$ to $[-\epsilon, \epsilon]$, and then just paste local solutions on $[-\epsilon, \epsilon]$, $[\epsilon - \epsilon', \epsilon + \epsilon']$, $[\epsilon + \epsilon' - \epsilon'', \epsilon + \epsilon' + \epsilon'']$, etc.
	Now we show the endpoints extensions can be done.
	Assume $\gamma$ is a local solution on $I = (-\epsilon, \epsilon)$ as given in {\bf Part 1}.
	
	Assume $\nabla_{(x,\xi)} p$ is uniformly bounded in $\{ (x(s), \xi(s)/|\xi(s)|) \,;\, s \in (-\epsilon, \epsilon) \}$, and $p$ is homogeneous of order $1$.
	From \eqref{eq:Has-PM2021} we can have
	\begin{equation} \label{eq:xin2-PM2021}
	\frac{\df}{\df s} \big( |\xi(s)|^2 \big)
	= 2 \xi(s) \cdot \dot \xi(s)
	= -2 \xi(s) \cdot \nabla_x p(x(s),\xi(s))
	= m(s) |\xi(s)|^2,
	\end{equation}
	where $m(s) := -2 \hat \xi(s) \cdot \nabla_x p(x(s), \hat \xi(s))$ with $\hat \xi(s) := \xi(s)/|\xi(s)|$.
	Here because  $\xi(0) = \xi_0 \neq 0$, and $\xi(s)$ is continuous on $s$, so we can choose the interval $I = (-\epsilon, \epsilon)$ to be small enough such that $\xi(s) \neq 0$ for $\forall s \in I$, and this can make $\hat \xi(s)$ always well-defined.
	Solve \eqref{eq:xin2-PM2021} we obtain
	\[
	|\xi(s)| = e^{-\int_0^s \hat \xi(\tau) \cdot \nabla_x p(x(\tau), \hat \xi(\tau))\dif \tau} |\xi(0)|,
	\]
	so
	\[
	e^{-\epsilon M_1} |\xi(0)|
	\leq |\xi(s)|
	\leq e^{\epsilon M_1} |\xi(s)|
	\]
	where $M_1 = \sup_{\{ (x(s), \xi(s)/|\xi(s)|) \,;\, s \in (-\epsilon, \epsilon) \}} |\nabla_x p|$, so $\{\xi(s) \,;\, s \in (-\epsilon, \epsilon)\}$ is contained in a bounded domain.
	Similarly, for $x(s)$ we have
	\[
	|\dot x(s)|
	= |\nabla_\xi p(x(s),\xi(s))|
	= |\nabla_\xi p(x(s),\hat \xi(s))|
	\leq \sup_{\R_x^n \times \mathbb S^{n-1}} |\nabla_\xi p|
	\]
	where $M_2 = \sup_{\{ (x(s), \xi(s)/|\xi(s)|) \,;\, s \in (-\epsilon, \epsilon) \}} |\nabla_\xi p|$.
	Note that we used the homogeneity of $p$ again.
	This gives
	\[
	|x(s) - x(0)| \leq s n^{1/2} M_2.
	\]
	
	Or, if $\nabla_{x} p$ is uniformly bounded in $\{ (x(s), \xi(s)) \,;\, s \in (-\epsilon, \epsilon) \}$, $p$ is homogeneous of order $\mu \geq 1$, and $\nabla_\xi p$ is uniformly bounded in $\{ (x(s), \xi(s)/|\xi(s)|) \,;\, s \in (-\epsilon, \epsilon) \}$, by $|\dot \xi(s)| = |\nabla_x p(x(s), \xi(s))|$ we can have
	\[
	|\xi(s)|
	= |\xi(0) + \int_0^s \dot \xi(\tau) \dif \tau|
	\leq |\xi(0)| + \int_0^s |\nabla_x p| \dif \tau
	\leq |\xi(0)| + s M
	\leq |\xi(0)| + \epsilon M_1'.
	\]
	where $M_1' = \sup_{\{ (x(s), \xi(s)) \,;\, s \in (-\epsilon, \epsilon) \}} |\nabla_x p|$.
	And similarly, for $x(s)$ we have
	\[
	|\dot x(s)|
	= |\nabla_\xi p(x(s),\xi(s))|
	= |\xi(s)|^{\mu - 1} |\nabla_\xi p(x(s),\hat \xi(s))|
	\leq (|\xi(0)| + \epsilon M_1') M_2',
	\]
	where $M_2' = \sup_{\{ (x(s), \xi(s)/|\xi(s)|) \,;\, s \in (-\epsilon, \epsilon) \}} |\nabla_\xi p|$.

	Therefore, in both two cases the $(x(s), \xi(s))$ lives in a bounded domain when $s \in (-\epsilon, \epsilon)$, thus due to the continuity of $x(s)$ and $\xi(s)$ we can extend the domain of definition of $\gamma$ from $(-\epsilon, \epsilon)$ to $[-\epsilon, \epsilon]$.
	
	After extension, we set new initial value $(x_0, \xi_0)$ to be $(x(\epsilon), \xi(\epsilon))$ and by {\bf Part 1} we can get a local solution on $(\epsilon - \epsilon', \epsilon + \epsilon')$ for some small enough $\epsilon'$.
	By doing this repeatedly, we can obtain a solution defined in $\R$.
	The proof is complete.
\end{proof}

\begin{lem} \label{lem:pHom-PM2021}
	Let a symbol $p$ be homogeneous, i.e.~$p(x,\lambda \xi) = \lambda p(x,\xi)$ for $\lambda \in \R_+$.
	Then we have $\boxed{\gamma_{x_0,\lambda \xi_0}(t) = (x(t), \lambda \xi(t))}$ for $\forall \lambda \in \R_+$.
\end{lem}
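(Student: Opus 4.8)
The plan is to exploit the scaling behaviour of the Hamilton vector field under the homogeneity hypothesis and then invoke the uniqueness part of Lemma \ref{lem:haE-PM2021}. First I would record the two elementary consequences of $p(x,\lambda\xi) = \lambda p(x,\xi)$ for $\lambda > 0$: differentiating in $\xi$ gives $\nabla_\xi p(x,\lambda\xi) = \nabla_\xi p(x,\xi)$ (so $\nabla_\xi p$ is homogeneous of degree $0$ in $\xi$), while differentiating in $x$ gives $\nabla_x p(x,\lambda\xi) = \lambda\,\nabla_x p(x,\xi)$ (so $\nabla_x p$ is homogeneous of degree $1$ in $\xi$). These are precisely the scalings that make the Hamilton system \eqref{eq:Has-PM2021} compatible with multiplying the $\xi$-component of a solution by a constant \emph{without} reparametrising time.

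Next, let $(x(t),\xi(t)) = \gamma_{x_0,\xi_0}(t)$ be the bicharacteristic through $(x_0,\xi_0)$, defined on its interval $I$, and set $\Gamma(t) := (x(t),\lambda\xi(t))$. I would then check directly that $\Gamma$ solves \eqref{eq:Has-PM2021} with initial point $(x_0,\lambda\xi_0)$: the first component satisfies $\dot x(t) = \nabla_\xi p(x(t),\xi(t)) = \nabla_\xi p(x(t),\lambda\xi(t))$ by degree-$0$ homogeneity, and the second component satisfies $\lambda\dot\xi(t) = -\lambda\,\nabla_x p(x(t),\xi(t)) = -\nabla_x p(x(t),\lambda\xi(t))$ by degree-$1$ homogeneity; the initial condition $\Gamma(0) = (x_0,\lambda\xi_0)$ is immediate, and $\Gamma(t)$ remains in $T^*\Rn \setminus 0$ since $\xi(t) \neq 0 \Rightarrow \lambda\xi(t) \neq 0$. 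The uniqueness statement of Lemma \ref{lem:haE-PM2021}, applied to the initial data $(x_0,\lambda\xi_0)$, then forces $\gamma_{x_0,\lambda\xi_0}(t) = \Gamma(t) = (x(t),\lambda\xi(t))$ on $I$. To upgrade this to the full maximal interval, I would rerun the same computation with $\lambda$ replaced by $1/\lambda$ and starting point $(x_0,\lambda\xi_0)$, which shows the two maximal intervals of definition coincide.

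There is no serious obstacle here; the only point requiring a little care is the bookkeeping of the interval of definition — verifying that the scaled curve is a genuine solution on all of $I$ so that uniqueness (equivalently, maximality of the Hamiltonian flow) gives coincidence there, and then symmetrising in $\lambda \leftrightarrow 1/\lambda$ to conclude equality of the maximal intervals. Everything else reduces to the one-line homogeneity identities recorded above.
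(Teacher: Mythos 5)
Your proposal is correct and follows essentially the same route as the paper: both verify, via the degree-$0$ homogeneity of $\nabla_\xi p$ and degree-$1$ homogeneity of $\nabla_x p$, that $(x(t),\lambda\xi(t))$ solves the Hamiltonian system \eqref{eq:Has-PM2021} with initial data $(x_0,\lambda\xi_0)$, and then conclude by uniqueness. Your extra remarks on uniqueness via Lemma \ref{lem:haE-PM2021} and on matching the maximal intervals only make explicit what the paper leaves implicit.
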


\begin{proof}
	Because $p$ is homogeneous, from \eqref{eq:Has-PM2021} we have
	\begin{equation*}
	\left\{\begin{aligned}
	& \dot x(s)
	= \nabla_\xi p(x(s),\xi(s))
	= \nabla_\xi p(x(s), \lambda \xi(s)), \\
	& \lambda \dot \xi(s)
	= -\lambda \nabla_x p(x(s),\xi(s))
	= -\nabla_x p(x(s), \lambda \xi(s)), \\
	& p(x_0,\lambda \xi_0) = \lambda p(x_0, \xi_0),
	\end{aligned}\right.
	\end{equation*}
	so $(x(t), \lambda \xi(t))$ is also a solution of \eqref{eq:Has-PM2021}, with $(x(0), \lambda \xi(0)) = (x_0, \lambda \xi_0)$.
	The proof is done.
\end{proof}

%
%

\begin{lem} \label{lem:tHq-PM2021}
	Let $T > 0$.
	Assume a real-valued symbol $p \in S^1$ is homogeneous of order 1,
	and $F \in C^\infty([0,T] \times (T^* \Rn \backslash 0))$ and $\phi \in C^\infty(T^* \Rn \backslash 0)$.
	Then there exists a unique solution $q \in C^\infty(\R \times (T^* \Rn \backslash 0))$ satisfying
	\begin{equation*}
	\left\{\begin{aligned}
	(\partial_t + H_p) q(t,x,\xi) & = F(t,x,\xi), \\
	q(0,x,\xi) & = \phi(x,\xi),
	\end{aligned}\right.
	\end{equation*}
	where $H_p$ is the Hamiltonian of $p$.
	The solution is given by
	\begin{equation*}
	\forall (x_0,\xi_0) \in T^* \Rn, \quad q(t, \gamma_{x_0,\xi_0}(t))
	= \phi(x_0,\xi_0) + \int_0^t F(\tau, \gamma_{x_0,\xi_0}(\tau)) \dif \tau.
	\end{equation*}
	More, when $F$ and $\phi$ are homogeneous (with $\xi$) of order $m \in \R$, then $q$ is also homogeneous (with $\xi$) of order $m \in \R$.
\end{lem}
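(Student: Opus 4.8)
The plan is to solve this linear transport equation by the classical method of characteristics, the characteristic curves being exactly the bicharacteristics $\gamma_{x_0,\xi_0}$ of $p$. The key observation is that $\partial_t + H_p$ differentiates along the curves $t \mapsto (t,\gamma_{x_0,\xi_0}(t))$: writing $\gamma_{x_0,\xi_0}(t) = (x(t),\xi(t))$ and using Hamilton's equations \eqref{eq:Has-PM2021}, namely $\dot x = \nabla_\xi p$ and $\dot\xi = -\nabla_x p$, the chain rule gives, for any $q \in C^1$,
\[
\frac{\df}{\df t}\big( q(t,\gamma_{x_0,\xi_0}(t)) \big) = (\partial_t q) + \dot x \cdot \nabla_x q + \dot\xi \cdot \nabla_\xi q = \big( (\partial_t + H_p)q \big)(t,\gamma_{x_0,\xi_0}(t)).
\]
Since $p \in S^1$ is homogeneous of order $1$, $\nabla_{(x,\xi)}p$ is bounded on $\Rn \times \mathbb S^{n-1}$, so hypothesis (1) of Lemma \ref{lem:haE-PM2021} holds and $\gamma_{x_0,\xi_0}$ is defined for all $s \in \R$ and stays in $T^*\Rn \backslash 0$. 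Write $\Phi_t(x_0,\xi_0) := \gamma_{x_0,\xi_0}(t)$ for the time-$t$ flow; by standard smooth dependence of ODE solutions on initial data (applicable because $p \in C^\infty$), the map $(t,x_0,\xi_0) \mapsto \Phi_t(x_0,\xi_0)$ is $C^\infty$, and each $\Phi_t$ is a diffeomorphism of $T^*\Rn\backslash 0$ with inverse $\Phi_{-t}$.

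Next I would define the candidate solution by the stated formula, i.e.
\[
q(t,y,\eta) := \phi\big( \Phi_{-t}(y,\eta) \big) + \int_0^t F\big( \tau, \Phi_{\tau - t}(y,\eta) \big) \dif \tau,
\]
which is smooth in $(t,y,\eta)$ because $\Phi$, $F$ and $\phi$ are smooth and the integrand depends smoothly on all of its arguments. Evaluating along a bicharacteristic, with $(y,\eta) = \gamma_{x_0,\xi_0}(t)$, this is precisely $q(t,\gamma_{x_0,\xi_0}(t)) = \phi(x_0,\xi_0) + \int_0^t F(\tau,\gamma_{x_0,\xi_0}(\tau))\dif\tau$, so the stated representation holds. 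To check that $q$ solves the PDE, fix any $(t_0,y_0,\eta_0)$ and set $(x_0,\xi_0) := \Phi_{-t_0}(y_0,\eta_0)$, so that $\gamma_{x_0,\xi_0}(t_0) = (y_0,\eta_0)$; differentiating $t \mapsto q(t,\gamma_{x_0,\xi_0}(t))$ at $t_0$ by the fundamental theorem of calculus gives $F(t_0,\gamma_{x_0,\xi_0}(t_0))$, while the chain-rule identity above gives $\big((\partial_t + H_p)q\big)(t_0,y_0,\eta_0)$; hence $(\partial_t + H_p)q = F$ at every point, and $q(0,x,\xi) = \phi(x,\xi)$ since $\Phi_0 = \mathrm{id}$.

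For uniqueness, the difference $w$ of two solutions satisfies $(\partial_t + H_p)w = 0$ with $w|_{t=0} = 0$; by the chain-rule identity, $t \mapsto w(t,\gamma_{x_0,\xi_0}(t))$ is constant, equal to $w(0,x_0,\xi_0) = 0$, and since the bicharacteristics issued from $t = 0$ sweep out all of $\R \times (T^*\Rn\backslash 0)$ (every $(t_0,y_0,\eta_0)$ lies on the one through $\Phi_{-t_0}(y_0,\eta_0)$), we get $w \equiv 0$. For the homogeneity statement, suppose $F(t,x,\lambda\xi) = \lambda^m F(t,x,\xi)$ and $\phi(x,\lambda\xi) = \lambda^m \phi(x,\xi)$ for $\lambda > 0$. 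By Lemma \ref{lem:pHom-PM2021}, if $\gamma_{x_0,\xi_0}(t) = (x(t),\xi(t))$ then $\gamma_{x_0,\lambda\xi_0}(t) = (x(t),\lambda\xi(t))$, so the representation formula gives
\[
q(t,x(t),\lambda\xi(t)) = \phi(x_0,\lambda\xi_0) + \int_0^t F(\tau,x(\tau),\lambda\xi(\tau))\dif\tau = \lambda^m\Big( \phi(x_0,\xi_0) + \int_0^t F(\tau,x(\tau),\xi(\tau))\dif\tau \Big) = \lambda^m q(t,x(t),\xi(t)),
\]
and since $(x(t),\xi(t)) = \Phi_t(x_0,\xi_0)$ ranges over all of $T^*\Rn\backslash 0$ as $(x_0,\xi_0)$ does, this yields $q(t,y,\lambda\eta) = \lambda^m q(t,y,\eta)$.

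The only genuinely technical point is the passage from the local bicharacteristic of Lemma \ref{lem:haE-PM2021} to a \emph{global}, jointly $C^\infty$, invertible flow $\Phi$; once that is in hand, everything else is routine chain rule and the fundamental theorem of calculus. I do not expect a real obstacle here — global existence and the confinement to $T^*\Rn\backslash 0$ are exactly what Lemma \ref{lem:haE-PM2021} provides (its computation $|\xi(s)| = e^{\int \cdots}|\xi(0)|$ keeps $\xi$ away from $0$), and joint smoothness and invertibility of $\Phi$ are standard ODE regularity — so the main work is bookkeeping rather than a new idea.
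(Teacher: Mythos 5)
Your proof is correct and follows essentially the same route as the paper: differentiate along the bicharacteristics of $p$ (global existence from Lemma \ref{lem:haE-PM2021}), integrate to get the representation formula, and use Lemma \ref{lem:pHom-PM2021} for the homogeneity. You are in fact somewhat more complete than the paper's own argument, which only derives the formula along bicharacteristics and leaves the construction of the flow $\Phi_t$, the joint smoothness of $q$, and the uniqueness implicit.
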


\begin{proof}
	Let $\gamma_{x_0,\xi_0}(t) = (x(t), \xi(t))$ be the bicharacteristic of $p$ starting from $(x_0,\xi_0)$.
	The existence of $\gamma_{x_0,\xi_0}$ is guaranteed by Lemma \ref{lem:haE-PM2021}.
	Then we have
	\begin{align*}
	(\partial_t + H_p) q(t,x(t),\xi(t))
	& = (\partial_t + \nabla_\xi p \cdot \nabla_x - \nabla_x p \cdot \nabla_\xi) q(t,x(t),\xi(t)) \\
	& = (\partial_t + \dot x(t) \cdot \nabla_x + \dot \xi(t) \cdot \nabla_\xi) q(t, x(t),\xi(t)) \\
	& = \frac{\df} {\df t} \big( q(t, x(t), \xi(t)) \big),
	\end{align*}
	so
	\(
	\frac{\df} {\df t} \big( q(t, x(t), \xi(t)) \big)
	= F(t, x(t), \xi(t)),
	\)
	which gives
	\begin{align*}
	q(t, x(t), \xi(t))
	& = q(0, x(0),\xi(0)) + \int_0^t F(\tau, x(\tau),\xi(\tau)) \dif \tau \\
	& = \phi(x_0,\xi_0) + \int_0^t F(\tau, x(\tau),\xi(\tau)) \dif \tau.
	\end{align*}

	For the homogeneity, fix $(x,\xi) \in T^* \Rn \backslash 0$, we solve the Hamiltonian equation with initial point $(x,\xi)$ and we can obtain a bicharacteristic $\gamma_{x, \xi}$.
	Fix $t \in \R$, we set $(x_0, \xi_0) := \gamma_{x, \xi}(-t)$, so reversely we represent $(x, \xi)$ as $\gamma_{x_0, \xi_0}(t) = (x(t), \xi(t))$.
	Because $p$ is homogeneous of order 1, by Lemma \ref{lem:pHom-PM2021} we have $(x(t), \lambda \xi(t)) = \gamma_{x_0,\lambda \xi_0}(t)$, so
	\begin{align*}
	q(t, x, \lambda \xi)
	& = q(t, x(t), \lambda \xi(t))
	= q(t, \gamma_{x_0,\lambda \xi_0}(t))
	= \phi(x_0,\lambda \xi_0) + \int_0^t F(\tau, \gamma_{x_0,\lambda \xi_0}(\tau)) \dif \tau \\
	& = \phi(x_0,\lambda \xi_0) + \int_0^t F(\tau, (x(t), \lambda \xi(\tau))) \dif \tau \\
	& = \lambda^m [\phi(x_0, \xi_0) + \int_0^t F(\tau, x(\tau), \xi(\tau)) \dif \tau]
	= \lambda^m q(t, x(t), \xi(t)).
	\end{align*}
	The proof is done.
\end{proof}

\section{Propagation of singularities} \label{sec:ProSing-PM2021}

For other literature on the topic, \cite[\S 10]{jos99int} is a good reference for this section.
See \cite[A.1.3]{shu2001pse}, \cite[\S 8]{grigis94mic} for different proofs.
Now we are ready for the main result.

\begin{thm} \label{thm:ProSing-PM2021}
	\index{propagation of the singularities}
	Assume $m \in \R$ and $P \in \Psi^m$ is classical {\rm $\Psi$DO} of real principal type, and denote its principal symbol as $p_m(x,\xi)$.
	We assume either
	\begin{itemize}
		
	\item $u \in \scrS'(\Rn)$, or,
	
	\item $P$ is properly supported and $u \in \mathscr D'(\Rn)$.
	
	\end{itemize}
	Let $P u \in C^\infty$ and $p_m(x_0, \xi_0) = 0$.
	If $(x_0,\xi_0) \notin \wf(u)$, then $\gamma_{x_0, \xi_0} \cap \wf(u) = \emptyset$ where the $\gamma_{x_0, \xi_0}$ is a null bicharacteristic of $p_m$ defined in Definition \ref{defn:nbi-PM2021}.
	In other words, for a null bicharacteristic $\gamma$, it holds either $\gamma \subset \wf(u)$ or $\gamma \cap \wf(u) = \emptyset$.
\end{thm}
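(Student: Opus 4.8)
The plan is to prove the dichotomy ``$\gamma\subset\wf(u)$ or $\gamma\cap\wf(u)=\emptyset$'' by a positive--commutator (energy) argument, using the Hamilton--flow transport lemma (Lemma~\ref{lem:tHq-PM2021}) to manufacture an amplifying symbol along the bicharacteristic and the characterisation $\wf(u)=\bigcap_{Au\in C^\infty}\Char A$ of Theorem~\ref{thm:wuCh-PM2021} to convert the resulting energy bound into microlocal smoothness at the far endpoint. First I would reduce: composing $P$ with an elliptic operator $Q$ of order $1-m$ changes neither $\wf(u)$ (Theorem~\ref{thm:MLpr-PM2021}) nor the $C^\infty$ regularity of $Pu$ (pseudolocality, Theorem~\ref{thm:pslo-PM2021}), and scales $\Char P$ and the Hamilton field by the positive elliptic factor $q_{1-m}$, so it only reparametrises the null bicharacteristics; hence I may assume $m=1$, writing $p:=p_1$, real-valued, with $H_p$ non-vanishing and non-radial on $\Char P=p^{-1}(0)$ (real principal type). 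By Lemma~\ref{lem:haE-PM2021} the null bicharacteristic $\gamma=\gamma_{x_0,\xi_0}\colon\R\to T^*\Rn\setminus0$ exists for all $t$ and stays off the zero section. Since $\wf(u)\cap\gamma$ is closed in the connected curve $\gamma$, it suffices to show it is also open, and for this it suffices to prove the \emph{one--step propagation estimate}: there is $\delta>0$, locally uniform in the parameter, such that $\gamma(t_*)\notin\wf(u)$ implies $\gamma(t_*+\delta)\notin\wf(u)$; applying this to $P$ and to $-P$ (whose principal symbol $-p$ has the same characteristic set and the reversed flow, and with $(-P)u\in C^\infty$) gives openness in both directions, and the hypothesis $\gamma(0)\notin\wf(u)$ then forces $\wf(u)\cap\gamma=\emptyset$.

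For the one--step estimate, fix $t_*$ with $\gamma(t_*)\notin\wf(u)$; since $\wf(u)$ is closed there is an open conic neighbourhood $V_0$ of $\gamma(t_*)$ disjoint from $\wf(u)$. Because $H_p$ is a non-vanishing non-radial vector field, I would straighten it to $\partial_t$ in flow-box coordinates on a thin conic neighbourhood of $\gamma([t_*-\epsilon,t_*+\delta+\epsilon])$ and, by Lemma~\ref{lem:tHq-PM2021} applied to $\partial_t+H_p$ (adding the zeroth-order term $\tfrac1{2i}(p_0-\bar p_0)$, where $p_0$ is the order-$0$ term of the classical expansion of the symbol of $P$, so as to absorb the contribution of $P-P^*\in\Psi^{0}$), construct a symbol $a\in S^0$, homogeneous of degree $0$ by the homogeneity clause of Lemma~\ref{lem:tHq-PM2021}, with $a\ge0$, $\supp a$ in that conic neighbourhood, $a$ microlocally elliptic at $\gamma(t_*+\delta)$, and
\[
H_p a \;+\; \tfrac1{2i}(p_0-\bar p_0)\,a \;=\; -\,b^2 \;+\; c,
\]
where $b\in S^0$ is real and microlocally elliptic at $\gamma(t_*+\delta)$, and $c\in S^0$ has $\supp c\subset V_0$ (the term $c$ accounting for the short piece of $\gamma$ near $\gamma(t_*)$ where the cutoff increases along the flow). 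Concretely $a=f(t)\psi(y)$ with $f\ge0$ a $t$-cutoff that increases near $t_*$ and then decreases before leaving its support (so $f'<0$ through $t_*+\delta$), and $\psi\ge0$ a thin transverse cutoff.

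Quantising, set $A:=T_a\in\Psi^0$, chosen self-adjoint. Computing $2\Re\, i(APu,u)$ two ways: on the one hand it is $\lesssim\|APu\|\,\|u\|<\infty$ since $Pu\in C^\infty$ and $A\in\Psi^0$; on the other, by Theorems~\ref{thm:com-PM2021} and~\ref{thm:AdT-PM2021} the operator $i(AP-P^*A)$ has, modulo $\Psi^{-1}$, principal symbol a nonzero constant times the left-hand side of the displayed identity, so
\[
2\Re\, i(APu,u) \;=\; -\,c_0\|T_b u\|^2 \;+\; c_0\,(T_c u,u) \;+\; (Ru,u),\qquad c_0>0,
\]
with $R$ of order $\le-1$ microsupported in $\supp a$. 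Here $(T_c u,u)$ is finite because $\supp c\subset V_0$ avoids $\wf(u)$, while the remainder $(Ru,u)$ is absorbed: by Hörmander's regularisation (replace $A$ by $A_\varepsilon\in\Psi^{-\infty}$ with symbols bounded uniformly in $S^0$, and let $\varepsilon\to0$) all pairings are legitimate for $\varepsilon>0$, and the order-$(-1)$ error is controlled, via the interpolation inequality of Theorem~\ref{thm:SNIE-PM2021}, by a fraction of $\|T_b u\|^2$ plus a strictly lower Sobolev norm of $u$ microlocalised along the segment, handled by induction on the Sobolev order (the base case being $u\in H^{-M}$ for some $M$, since $u\in\scrS'$, resp.\ $\scrE'$). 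Rearranging yields $\|T_b u\|_{H^N}<\infty$ for all $N$, and truncating the $t$-cutoff at intermediate times gives the same at every point of $\gamma([t_*,t_*+\delta])$; since $T_b$ is microlocally elliptic at $\gamma(t_*+\delta)$, Theorem~\ref{thm:wuCh-PM2021} gives $\gamma(t_*+\delta)\notin\wf(u)$.

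I expect the main obstacle to be making this commutator estimate genuinely rigorous rather than formal: the a priori regularity needed to even write $\|T_b u\|_{H^N}$, which forces the mollified operators $A_\varepsilon$ with uniform $\Psi^0$ bounds and the ``gain one derivative at a time'' induction, together with careful bookkeeping so that the order-$(-1)$ remainder $R$ is truly absorbed by the left-hand side and not merely by a term of the same strength. The symbol construction is routine once $H_p$ is straightened, the reductions to $m=1$ and to a local statement are formal, and the open--closed argument on the connected curve $\gamma$ is immediate.
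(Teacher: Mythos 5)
Your proposal is correct in outline, but it is a genuinely different proof from the one in the paper. The paper reduces to order one exactly as you do, but then constructs a $t$-dependent operator $Q(t,x,D)\sim\sum_j Q_j$ whose \emph{full} symbol solves the transport equations \eqref{eq:HQj-PM2021} along the Hamiltonian flow (via Lemma \ref{lem:tHq-PM2021}), so that $[D_t+P,Q]\in\Psi^{-\infty}$ and $Qu|_{t=0}\in C^\infty$; the analytic content is then delegated wholesale to the hyperbolic Cauchy-problem result (Lemma \ref{lem:DtPsm-PM2021}, resting on the energy estimate of Lemma \ref{lem:DtPs-PM2021}), and the conclusion follows from ellipticity of $Q|_t$ along $\gamma$ together with Corollary \ref{cor:wAuC-PM2021}. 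You instead run the classical positive-commutator argument directly on $u$: an escape function built only at the principal level in a flow box, the identity $H_pa=-b^2+c$ with $c$ microsupported where $u$ is already known to be smooth, the computation of $2\Re\,i(APu,u)$ two ways, regularization, and induction on Sobolev order, concluding with Theorem \ref{thm:wuCh-PM2021}/Corollary \ref{cor:wAuC-PM2021}; the open-closed argument on $\gamma$ and the use of $\pm P$ for the two directions are sound. The paper's route buys a clean separation of concerns (the symbol work is purely algebraic, all estimates live in one well-posedness lemma); your route avoids the auxiliary time variable and the full asymptotic series, at the price of exactly the bookkeeping you flag as the main obstacle.

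A few statements in your sketch do need repair before the argument is rigorous, though none is fatal. First, ``$u\in H^{-M}$ for some $M$ since $u\in\scrS'$'' is false (e.g.\ $u\equiv 1$ lies in no $H^{-M}$); the correct base case is local: your commutant is compactly supported in $x$, so only $\phi u$ with $\phi\in C_c^\infty$ enters the pairings, and $\phi u\in\mathcal E'\subset H^{-M_\phi}$. Relatedly, the bound ``$2\Re\,i(APu,u)\lesssim \|APu\|\,\|u\|$'' is meaningless for $u\in\scrS'$; in the regularized computation both sides must be measured in dual Sobolev norms of the localized $u$, which is what your induction has to supply. Second, once the zeroth-order correction $\Im p_0$ is included, mere decrease of the $t$-cutoff $f$ does not yield $-b^2$: you need $f'+(\Im p_0)f\le -c_1<0$ along the segment, e.g.\ $f(t)=e^{-\lambda t}\chi(t)$ with $\lambda$ large; this also spares you from computing the subprincipal term exactly. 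Third, with $a,b\in S^0$ the estimate gives an $L^2$-type bound on $T_bu$, not $\|T_bu\|_{H^N}$ for all $N$; to reach $C^\infty$ you must raise the order of the commutant (or conjugate by $\agl[D]^{s}$) at each step of the induction, which is presumably what you intend by ``induction on the Sobolev order.''
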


\begin{proof}
	{\bf Step 1:} change to $\Psi^1$.
	Choose an elliptic $T_a \in \Psi^{1-m}$ with $a(x,\xi) > 0$ and $a(x,\xi)$ be real-valued, then
	\[
	\wf (T_a Pu) = \wf(Pu) \quad \Rightarrow \quad
	T_a Pu \in C^\infty \ \text{if and only if} \ Pu \in C^\infty,
	\]
	namely, $T_a$ doesn't change the wavefront set.
	
	Also, we can show $T_a$ doesn't change null bicharacteristics of the principal symbols as follows.
	Assume $(x(s),\xi(s))$ solves \eqref{eq:Has-PM2021} with $p_m(x(0), \xi(0)) = 0$.
	\sq{Let's assume we can find a function $f(s)$ such that
	\[
	f(s) := \int_0^s a(x(f(r)), \xi(f(r))) \dif r.
	\]
	This is possible because it amounts to find a fix point of the transform $\mathcal I \circ a \circ \gamma \colon C^\infty(\R; \R) \to C^\infty(\R; \R)$ where $\mathcal I g(s) := \int_0^s g(r) \dif r$ and $\gamma(s) := (x(s), \xi(s))$.}
	
	After obtained such an $f$, we can see $f$ is a bijection because $f' = a > 0$.
	Denote
	\begin{equation*}
	\tilde x(s) := x(f(s)), \quad \tilde \xi(s) := \xi(f(s)).
	\end{equation*}
	If $(x(s),\xi(s))$ is defined on a interval $I$, then we say $(\tilde x(s), \tilde \xi(s))$ is defined on a interval $I' := f^{-1}(I)$,
	so $p_m(\tilde x(s), \tilde \xi(s)) = 0$ for $s \in I'$, and we can have
	\begin{align*}
	\dot{\tilde x}(s)
	& = f'(s) \dot x(f(s))
	= a(x(f(s)), \xi(f(s))) \nabla_\xi p_m(x(f(s)), \xi(f(s))) \\
	& = a(\tilde x(s), \tilde \xi(s)) \nabla_\xi p_m(\tilde x(s), \tilde \xi(s)) \\
	& = a(\tilde x(s), \tilde \xi(s)) \nabla_\xi p_m(\tilde x(s), \tilde \xi(s)) + p_m(\tilde x(s), \tilde \xi(s)) \nabla_\xi a(\tilde x(s), \tilde \xi(s)) \\
	& = \nabla_\xi (ap_m)(\tilde x(s), \tilde \xi(s)).
	\end{align*}
	Similarly, we have
	\[
	\dot{\tilde \xi}(s) = -\nabla_x (ap_m)(\tilde x(s), \tilde \xi(s)).
	\]
	These mean the null bicharacteristic $(x(s),\xi(s))$ of $p_m$, after a reparametrization, is also a null bicharacteristic of $ap_m$.
	Note that $ap_m$ is the principal symbol of $T_a P$.
	Hence, to prove the claim for $P \in \Psi^m$ is equivalent to prove the claim for $P \in \Psi^1$,
	so, in the rest of the proof we assume $P \in \Psi^1$ of real principal type.
	
	{\bf Step 2:} find a $t$-dependent $Q = Q(t,x,D)$ such that
	\begin{equation} \label{eq:Qut0-PM2021}
	Qu|_{t= 0} \in C^\infty.
	\end{equation}
	
	Our plan is to construct a sequence of $t$-dependent {\rm $\Psi$DOs} $Q_j = Q_j(t,x,D) \in \Psi^{-j}$ ($j \geq 0$) having classical symbol $q_j$, and set $Q \sim \sum_j Q_j$.
	Here $Q_j(t,x,D) \in \Psi^{-j}$ means its symbol $q_j(t,x,\xi)$ is in $S^{-j}([0,T] \times \R_x^n \times \R_\xi^n)$, i.e.,
	\[
	|\partial_t^{\alpha'} \partial_x^{\alpha''} \partial_\xi^{\beta} q_j(t,x,\xi)| \lesssim \agl[\xi]^{-j-|\beta|},
	\]
	see Definition \ref{defn:symbolx-PM2021}.
	
	Because $(x_0,\xi_0) \notin \wf(u)$, we have $(x_0,t\xi_0) \notin \wf(u)$ for $\forall t > 0$, and we can find a open conic neighborhood $\omega$ of $(x_0,\xi_0)$ such that $\omega \cap \wf(u) = \emptyset$.
	Choose a function $\chi(x,\xi) \in C^\infty$ satisfying
	\begin{equation} \label{eq:chho-PM2021}
	\left\{\begin{aligned}
	& \chi(x,\xi) \in C^\infty, \ \supp \chi \subset \omega, \ \chi(x,\lambda \xi) = \chi(x,\xi)~(\forall \lambda > 0), \\
	& \chi \equiv 1 \text{~in a sufficiently small open conic neighborhood~} \tilde \omega \text{~of~} (x_0,\xi_0).
	\end{aligned}\right.
	\end{equation}
	Set $q_0(0,x, \xi) := \chi(x,\xi)$, then $\omega^c \subset \Smo (Q_0|_{t=0})$ where $\omega^c$ signifies the complement of the set $\omega$ in $T^* \Rn$, so by Theorem \ref{thm:MLpr-PM2021},
	\begin{align*}
	\wf(Q_0 u |_{t=0})
	& = \wf((Q_0 |_{t=0}) u) 
	\subset \wf(u) \backslash \Smo (Q_0|_{t=0}) \\
	& = \wf(u) \cap \big( \Smo (Q_0|_{t=0}) \big)^c \\
	& \subset \wf(u) \cap \omega
	= \emptyset,
	\end{align*}
	so \underline{$Q_0 u |_{t=0} \in C^\infty$}.
	For $Q_j~(j \geq 1)$, we set their symbol at $t = 0$ as zero, i.e.,
	\begin{equation} \label{eq:iniQj-PM2021}
	q_0(0,x,\xi) := \chi(x,\xi), \quad
	q_j(0,\cdot,\cdot) := 0~(j \geq 1),
	\end{equation}
	then $Q_j |_{t = 0} \in \Psi^{-\infty}$, so \underline{$Q_j u |_{t=0} \in C^\infty$ for $j \geq 1$}.
	By Theorem \ref{thm:Asy-PM2021} we can find a $Q$ satisfying $Q \sim \sum_j Q_j$ (thus $Q$ is also $t$-dependent)\footnote{Note that such $Q$ is not unique.}.
	$Q$ is of order $0$.
	We can conclude \eqref{eq:Qut0-PM2021}.
	
	{\bf Step 3:} to make $Q$ satisfy
	\begin{equation} \label{eq:PQuC-PM2021}
	(D_t + P)(Q u) \in C^\infty. 
	\end{equation}
	To achieve \eqref{eq:PQuC-PM2021} is equivalent to achieve
	\begin{equation} \label{eq:DtPQ-PM2021}
	[D_t + P,Q] \in \Psi^{-\infty}
	\end{equation}
	because
	\begin{align*}
	(D_t + P)(Q u)
	& = [D_t + P,Q] u + Q(D_t + P)u \\
	& = [D_t + P,Q] u + QPu
	= [P,Q] u + C^\infty.
	\end{align*}
	Here we used $\wf(QPu) \subset \wf(Pu) = \emptyset$ so $QPu \in C^\infty$
	The fact $Q D_t u = 0$ is because $u$ is independent of $t$.
	Readers may note that in {\bf Step 2} we only determined $q_j$ on $\{t = 0\}$, while $q_j$ on $\{t > 0 \}$ hasn't been fixed yet.
	Here we design $q_j |_{t > 0}$ to achieve \eqref{eq:DtPQ-PM2021}.

	We use the notation $\sigma(A)$ to signify the symbol of $A$.
	Because $P$ is classical, we can expand $\sigma(P)$ as $\sum_k p_k$ for some homogeneous symbols $p_k \in S^{1-k}$.
	Recall {\bf Step 1}, we see the integral curve of $H_{p_m}$ is the same as $H_{p_1}$.
	Then by Theorem \ref{thm:com-PM2021} and Remark \ref{rem:com2-PM2021}, we have
	\begin{align}
	\sigma([D_t,Q])
	& \sim \sum_\alpha \frac {(-i)^{|\alpha|}} {\alpha!} (\partial_\tau^\alpha \tau) \partial_t^\alpha (\sum_{j \geq 0} q_j) - \sum_\alpha \frac {(-i)^{|\alpha|}} {\alpha!} (\partial_t^\alpha \tau) \partial_\tau^\alpha (\sum_{j \geq 0} q_j) \nonumber \\
	& = \tau \sum_{j \geq 0} q_j + (-i) \partial_t \sum_{j \geq 0} q_j -  \tau \sum_{j \geq 0} q_j
	= \frac 1 i \sum_{j \geq 0} \partial_t q_j, \label{eq:DtQ-PM2021}
	\end{align}
	and
	\begin{align*}
	\sigma([P,Q])
	& \sim \sum_\alpha \frac {(-i)^{|\alpha|}} {\alpha!} \partial_\xi^\alpha (\sum_{k \geq 0} p_k) \partial_x^\alpha (\sum_{j \geq 0} q_j) - \sum_\alpha \frac {(-i)^{|\alpha|}} {\alpha!} \partial_x^\alpha (\sum_{k \geq 0} p_k) \partial_\xi^\alpha (\sum_{j \geq 0} q_j) \nonumber \\
	& = \sum_{k \geq 0} \sum_{j \geq 0} \sum_\alpha \frac {(-i)^{|\alpha|}} {\alpha!} \big( (\partial_\xi^\alpha p_k) \partial_x^\alpha - (\partial_x^\alpha p_k) \partial_\xi^\alpha \big) q_j \nonumber \\
	& = \sum_{\ell \geq 0} \sum_{j+k+|\alpha| = \ell} \frac {(-i)^{|\alpha|}} {\alpha!} \big( (\partial_\xi^\alpha p_k) \partial_x^\alpha - (\partial_x^\alpha p_k) \partial_\xi^\alpha \big) q_j \nonumber \\
	& = \sum_{\color{red}\ell \geq 1} \sum_{\substack{ j+k+|\alpha| = \ell \\ {\color{red}|\alpha| \geq 1}}} L_{j,k,\alpha} q_j, \quad (\text{it can be checked that~} L_{j,k,\alpha} q_j \in S^{1-\ell}) \nonumber
	\end{align*}
	where the linear differential operator $L_{j,k,\alpha} := \frac {(-i)^{|\alpha|}} {\alpha!} (\partial_\xi^\alpha p_k) \partial_x^\alpha - (\partial_x^\alpha p_k) \partial_\xi^\alpha$.
	Note that $p_0$ is the principal symbol of $P$ so \underline{$p_0$ is real-valued}.
	Also note that the restriction $\ell \geq 1$ and $|\alpha| \geq 1$ come from the fact that when $|\alpha| = 0$, $L_{j,k,\alpha} = p_k - p_k = 0$.
	It can be checked
	\begin{equation} \label{eq:aLH-PM2021}
	|\alpha| = 1 \ \Rightarrow \ \sum_{|\alpha| = 1} L_{j,k,\alpha} q_j = \frac 1 i H_{p_k} q_j.
	\end{equation}
	We can further compute $\sigma([P,Q])$ as
	\begin{align}
	\sigma([P,Q])
	& \sim \frac 1 i H_{p_0} q_0 + \sum_{\ell \geq 2} \sum_{\substack{ j+k+|\alpha| = \ell \\ |\alpha| \geq 1}} L_{j,k,\alpha} q_j \nonumber \\
	& = \frac 1 i H_{p_0} q_0 + \sum_{\ell \geq 2} \big( \sum_{\substack{ j+k+|\alpha| = \ell \\ |\alpha| \geq 1,\, {\color{red}j = \ell-1}}} L_{j,k,\alpha} q_j + \sum_{\substack{ j+k+|\alpha| = \ell \\ |\alpha| \geq 1,\, {\color{red}j < \ell-1}}} L_{j,k,\alpha} q_j \big) \nonumber \\
	& = \frac 1 i H_{p_0} q_0 + \sum_{\ell \geq 2} \big( \sum_{|\alpha| = 1} L_{j=\ell-1,k=0,\alpha} q_j + \sum_{\substack{ j+k+|\alpha| = \ell \\ |\alpha| \geq 1,\, j < \ell-1}} L_{j,k,\alpha} q_j \big) \nonumber \\
	& = \frac 1 i H_{p_0} q_0 + \sum_{\ell \geq 2} \big( \frac 1 i H_{p_0} q_{\ell-1} + \sum_{\substack{ j+k+|\alpha| = \ell \\ |\alpha| \geq 1,\, j < \ell-1}} L_{j,k,\alpha} q_j \big) \qquad (\text{by~} \eqref{eq:aLH-PM2021}) \nonumber \\
	& = \frac 1 i H_{p_0} q_0 + \sum_{\ell \geq 1} \big( \frac 1 i H_{p_0} q_{\ell} + \sum_{\substack{ j+k+|\alpha| = \ell+1 \\ |\alpha| \geq 1,\, j < \ell}} L_{j,k,\alpha} q_j \big) \qquad (\ell \to \ell-1). \label{eq:PQ-PM2021}
	\end{align}
	Combining \eqref{eq:DtQ-PM2021} with \eqref{eq:PQ-PM2021}, we obtain
	\begin{align}
	\sigma([D_t + P,Q])
	& \sim \frac 1 i (\partial_t + H_{p_0}) q_0 + \sum_{\ell \geq 1} \big( \frac 1 i (\partial_t + H_{p_0}) q_{\ell} + \sum_{\substack{ j+k+|\alpha| = \ell+1 \\ |\alpha| \geq 1,\, j < \ell}} L_{j,k,\alpha} q_j \big). \label{eq:PDtQ-PM2021}
	\end{align}
	The requirement \eqref{eq:DtPQ-PM2021} thus amounts to require $\sigma([D_t + P,Q]) \in S^{-\infty}$, namely,
	\begin{equation} \label{eq:HQj-PM2021}
	\left\{\begin{aligned}
	(\partial_t + H_{p_0}) q_0 & = 0, \\
	\frac 1 i (\partial_t + H_{p_0}) q_{\ell} & = - \sum_{\substack{ j+k+|\alpha| = \ell+1 \\ |\alpha| \geq 1,\, j < \ell}} L_{j,k,\alpha} q_j, \quad \ell \geq 1.
	\end{aligned}\right.
	\end{equation}
	Combining \eqref{eq:HQj-PM2021} with initial condition \eqref{eq:iniQj-PM2021}, these $q_j~(j \geq 0)$ can be solved iteratively in $[0,T] \times T^* \Rn$ by using see Lemma \ref{lem:tHq-PM2021} (recall that $p_0$ is real-valued), and gives, $\forall (x,\xi) \in T^* \Rn \backslash 0$,
	\begin{equation} \label{eq:HQjs-PM2021}
	\left\{\begin{aligned}
	q_0(t, \gamma_{x,\xi}(t)) & = \chi(x,\xi), \\
	q_{\ell}(t, \gamma_{x,\xi}(t)) & = -i \int_0^t \sum_{\substack{ j+k+|\alpha| = \ell+1 \\ |\alpha| \geq 1,\, j < \ell}} L_{j,k,\alpha} q_j(\tau, \gamma_{x,\xi}(\tau)) \dif \tau, \quad \ell \geq 1.
	\end{aligned}\right.
	\end{equation}
	And they guarantee $\sigma([D_t + P,Q]) \in S^{-\infty}$, so \eqref{eq:DtPQ-PM2021} is achieved, thus \eqref{eq:PQuC-PM2021} is satisfied.
	
	By iteration we can show the RHS of \eqref{eq:HQjs-PM2021} is of order $-\ell$, so the second conclusion in Lemma \ref{lem:tHq-PM2021} implies $q_\ell$ is homogeneous of order $-\ell$, so they are all classical symbols.

	{\bf Step 4:} apply a hyperbolic PDE result.
	Combining \eqref{eq:Qut0-PM2021} and \eqref{eq:PQuC-PM2021}, we can conclude
	\begin{equation}
	\left\{\begin{aligned}
	(D_t + P)(Q u) & = F \ \text{in} \ \R_+ \times \Rn, \\
	Q u |_{t = 0} & = \varphi \ \text{on} \ \Rn,
	\end{aligned}\right.
	\end{equation}
	for some $F \in C^\infty(\R_+ \times \Rn)$ and $ \varphi \in C^\infty(\Rn)$.
	Now we use Lemma \ref{lem:DtPsm-PM2021} in advance to conclude $Q u \in C([0,T], C^\infty(\Rn))$.

	{\bf Step 5:} conclusion.
	From $Q u \in C([0,T], C^\infty(\Rn))$ we see $\underline{Q|_t u \in C^\infty}$ for each $t \in [0,T]$, where $Q|_t$ is an abbreviation of $Q(t,x,D)$.
	From \eqref{eq:HQjs-PM2021} we see
	\(
	q_0(t,\gamma_{x_0, \xi_0}(t)) = \chi(x_0, \xi_0).
	\)
	By Lemma \ref{lem:pHom-PM2021}, the homogeneity of $p_0$ gives $(x(t), \lambda \xi(t)) = \gamma_{x_0,\lambda \xi_0}(t)$, so
	\begin{align}
	\forall \lambda > 0, \ q_0(t,x(t), \lambda \xi(t))
	& = q_0(t,\gamma_{x_0,\lambda \xi_0}(t)) \qquad \text{(by Lemma \ref{lem:pHom-PM2021})} \label{eq:pqh-PM2021} \\
	& = \chi(x_0, \lambda \xi_0)
	= \chi(x_0, \xi_0) \qquad \text{(by \eqref{eq:HQjs-PM2021}, \eqref{eq:chho-PM2021})} \nonumber \\
	& \neq 0. \qquad \text{(by \eqref{eq:chho-PM2021})} \nonumber
	\end{align}
	This means $Q$ is elliptic at $(x(t), \xi(t)) = \gamma_{x_0, \xi_0}(t)$, i.e.~
	\(
	\gamma_{x_0,\xi_0}(t) \notin \Char (Q|_t).
	\)
	Therefore, by Corollary \ref{cor:wAuC-PM2021},
	\begin{align*}
	\wf(u)
	& \subset \wf(Q|_t u) \cup \Char (Q|_t)
	= \Char (Q|_t),
	\end{align*}
	so
	\[
	\gamma_{x_0, \xi_0}(t) \notin \wf(u), \text{~for any~} t \in [0,T].
	\]
	which means $\gamma_{x_0, \xi_0} \cap \wf(u) = \emptyset$.
	The proof is complete.
\end{proof}

\begin{rem} \label{rem:ProSing-PM2021}
	The condition that $P$ is of real principal type is used in the following ways:
	\begin{itemize}
	
	\item real-valued: in {\bf Step 3}, in order to use Lemma \ref{lem:tHq-PM2021}, $p_m$ has to be real-valued;
	Also, when $p_m$ is real-valued, then $R := iP + (iP)^*$ is of order $0$.
	This is used in {\bf Step 4} which calls for Lemma \ref{lem:DtPsm-PM2021};
	
	\item $|\nabla_\xi p_m(x,\xi)| \neq 0$ when $p(x,\xi) = 0$: \sq{related to the solvability of \eqref{eq:HQjs-PM2021}? Every $(x,\xi) \in T^* \Rn \backslash 0$ shall be reachable};
	
	\item homogeneity: the condition ``$p_m(x,\lambda \xi) = \lambda p_m(x,\xi)$'' is used at \eqref{eq:pqh-PM2021} to guarantee $(x(t), \lambda \xi(t)) = \gamma_{x_0,\lambda \xi_0}(t)$.
	
	\end{itemize}
\end{rem}

Theorem \ref{thm:ProSing-PM2021} can be interpreted by the following claim.

\begin{cor}
	Assume $m \in \R$ and $P \in \Psi^m$ is classical {\rm $\Psi$DO} of real principal type, and denote its symbol as $p(x,\xi)$.
	Assume $Pu$ is well-defined and $P u \in C^\infty$.
	Then $\wf(u)$ is made of null bicharacteristic curves $\gamma_{x,\xi}$ for some $(x,\xi) \in p_m^{-1}(0)$.
\end{cor}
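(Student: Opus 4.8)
The plan is to read this off from the machinery already assembled, principally Theorem \ref{thm:ProSing-PM2021} in its dichotomy form, together with Corollary \ref{cor:wAuC-PM2021} and Lemma \ref{lem:pin0-PM2021}. The statement that $\wf(u)$ ``is made of'' null bicharacteristics means that $\wf(u)$ is a union of curves $\gamma_{x,\xi}$, each issuing from a point of $p_m^{-1}(0)$; so I must show (a) every covector in $\wf(u)$ lies over $p_m^{-1}(0)$, and (b) the whole null bicharacteristic through such a covector is contained in $\wf(u)$.

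First I would establish the inclusion $\wf(u) \subseteq p_m^{-1}(0)$. Since $Pu \in C^\infty$ we have $\wf(Pu) = \emptyset$, and since $Pu$ is well-defined (either $u \in \scrS'$, or $P$ is properly supported and $u \in \scrD'$) Corollary \ref{cor:wAuC-PM2021} applies and gives $\wf(u) \subset \wf(Pu) \cup \Char P = \Char P$. As $P$ is classical with homogeneous principal symbol $p_m$, Lemma \ref{lem:pin0-PM2021} identifies $\Char P = p_m^{-1}(0)$, so $p_m(x_0,\xi_0) = 0$ for every $(x_0,\xi_0) \in \wf(u)$. Next, fix such a point $(x_0,\xi_0)$. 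Because $p_m$ is the principal symbol of an operator of real principal type it is real-valued and homogeneous of degree $m$, so the null bicharacteristic $\gamma_{x_0,\xi_0}$ of $p_m$ through $(x_0,\xi_0)$ exists and is globally defined (Definition \ref{defn:nbi-PM2021}, Lemma \ref{lem:haE-PM2021}). Theorem \ref{thm:ProSing-PM2021}, applied with $Pu \in C^\infty$, states that any such null bicharacteristic $\gamma$ satisfies either $\gamma \subset \wf(u)$ or $\gamma \cap \wf(u) = \emptyset$; since $(x_0,\xi_0) \in \gamma_{x_0,\xi_0} \cap \wf(u)$, the second alternative is impossible, hence $\gamma_{x_0,\xi_0} \subset \wf(u)$. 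Finally one assembles $\wf(u) = \bigcup_{(x_0,\xi_0) \in \wf(u)} \gamma_{x_0,\xi_0}$, a union of null bicharacteristics of $p_m$ each passing through a point of $p_m^{-1}(0)$, which is exactly the claim (the case $\wf(u) = \emptyset$ being vacuous).

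I do not expect a genuine analytic obstacle here: the entire content sits inside Theorem \ref{thm:ProSing-PM2021}, and the corollary is essentially a repackaging of its dichotomy together with the localization $\wf(u) \subseteq \Char P$. The only points that need attention are bookkeeping ones — checking that the hypotheses of Theorem \ref{thm:ProSing-PM2021} and Corollary \ref{cor:wAuC-PM2021} are indeed in force (well-definedness of $Pu$, and that ``real principal type'' supplies simultaneously the real-valuedness needed to run the Hamilton flow and the homogeneity of $p_m$ needed for $\Char P = p_m^{-1}(0)$ and for the flow to be globally defined), and passing from the full symbol $p$ named in the statement to its principal part $p_m$.
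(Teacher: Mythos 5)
Your proposal is correct and follows essentially the same route as the paper: first $\wf(u) \subset \Char P = p_m^{-1}(0)$ via Corollary \ref{cor:wAuC-PM2021} and Lemma \ref{lem:pin0-PM2021}, then the dichotomy of Theorem \ref{thm:ProSing-PM2021} forces the whole null bicharacteristic through any point of $\wf(u)$ to lie in $\wf(u)$, and the union over all such points gives the claim. The only cosmetic difference is that you spell out the hypothesis bookkeeping (well-definedness of $Pu$, real-valuedness and homogeneity of $p_m$) a bit more explicitly than the paper does.
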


\begin{proof}
We see that $P$ is a {\rm $\Psi$DO} with principal symbol $p_m(x,\xi)$ homogeneous in $\xi$, so we can apply Lemma \ref{lem:pin0-PM2021} to conclude $\Char P = p_m^{-1}(0)$.
Also, when $Pu \in C^\infty$, by Corollary \ref{cor:wAuC-PM2021} we have
$\wf(u) \subset \Char P$, so
\begin{equation} \label{eq:wum0-PM2021}
\wf(u) \subset p_m^{-1}(0).
\end{equation}

For any $(x,\xi) \in \wf(u)$, by \eqref{eq:wum0-PM2021} we know $(x,\xi) \in p_m^{-1}(0)$.
Denote as $\gamma_{x,\xi}$ the null bicharacteristic of $p_m$ passing through $(x,\xi)$, then $\gamma_{x,\xi} \subset p_m^{-1}(0)$ because the value of $p_m$ is constant in bicharacteristics.
According to Theorem \ref{thm:ProSing-PM2021}, we can conclude $\gamma_{x,\xi} \subset \wf(u)$.
In summary, for every $(x,\xi) \in \wf(u)$ we have $\gamma_{x,\xi} \subset \wf(u)$ and $\gamma_{x,\xi}$ is a null bicharacteristic, so $\wf(u)$ is made of null bicharacteristic curves.
\end{proof}

\section{Cauchy problems of hyperbolic PDEs} \label{sec:CaHy-PM2021}

\begin{lem} \label{lem:DtPs-PM2021}
	Assume $T > 0$ and $s \in \R$, $P \in \Psi^1$ has a real-valued principal symbol.
	Denote $L = D_t + P$.
	There exists a constant $\lambda_0 > 0$ such that for any
	\[
	u \in C^1([0,T], H^s(\Rn)) \cap C([0,T], H^{s+1}(\Rn)),
	\]
	we have
	\begin{equation} \label{eq:DtPs-PM2021}
	\sup_{t \in [0,T]} e^{-\lambda t} \nrm[H^s]{u(t,\cdot)}
	\leq \nrm[H^s]{u(0,\cdot)} + 2 \int_0^T e^{-\lambda t} \nrm[H^s]{Lu(t,\cdot)} \dif t.
	\end{equation}
\end{lem}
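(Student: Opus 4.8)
The plan is to reduce to the case $s=0$ by conjugation, prove an $L^2$ energy identity, and close it with an integrating factor. Set $v:=J^s u$ with $J^s:=(I-\Delta)^{s/2}$. Since $J^s\colon H^s\to L^2$ and $J^s\colon H^{s+1}\to H^1$ are bounded and commute with $\partial_t$, the hypotheses give $v\in C^1([0,T],L^2(\Rn))\cap C([0,T],H^1(\Rn))$; moreover $J^sLJ^{-s}=D_t+\widetilde P$ with $\widetilde P:=J^sPJ^{-s}\in\Psi^1$ having the same (real-valued) principal symbol $p_1$, because the principal symbols $\agl[\xi]^{\pm s}$ of $J^{\pm s}$ multiply to $1$ (cf.\ Theorem \ref{thm:com-PM2021}). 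As $\nrm[H^s]{u(t,\cdot)}=\nrm[L^2]{v(t,\cdot)}$ and $\nrm[H^s]{Lu(t,\cdot)}=\nrm[L^2]{(D_t+\widetilde P)v(t,\cdot)}$, it suffices to prove \eqref{eq:DtPs-PM2021} for $s=0$. So from now on I take $P\in\Psi^1$ with real principal symbol, $L=D_t+P$, and $u\in C^1([0,T],L^2)\cap C([0,T],H^1)$.

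Next I set up the energy identity. Using $\partial_t u=iD_tu=i(Lu-Pu)$, the fact that $t\mapsto\nrm[L^2]{u(t,\cdot)}^2=(u(t,\cdot),u(t,\cdot))$ is $C^1$ (here $u\in C^1([0,T],L^2)$ and $Pu\in C([0,T],L^2)$, the latter thanks to $u\in C([0,T],H^1)$), I compute
\begin{align*}
\frac{\df}{\df t}\nrm[L^2]{u}^2
&= 2\Re(\partial_t u,u)
= 2\Re(iLu,u)-2\Re(iPu,u) \\
&= 2\Re(iLu,u)-(Ru,u),\qquad R:=i(P-P^*),
\end{align*}
where $2\Re(iPu,u)=(iPu,u)+((iP)^*u,u)=((iP-iP^*)u,u)$. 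Since $p_1$ is real-valued, the symbol of $P-P^*$ has order $0$ by Theorem \ref{thm:AdT-PM2021}, so $R\in\Psi^0$; by the $L^2$-boundedness of order-zero $\Psi$DOs (Theorem \ref{thm:T2B-PM2021}) there is $C_0>0$ with $|(Ru,u)|\le\nrm[L^2]{Ru}\nrm[L^2]{u}\le C_0\nrm[L^2]{u}^2$. Hence
\[
\frac{\df}{\df t}\nrm[L^2]{u}^2\le 2\nrm[L^2]{Lu}\,\nrm[L^2]{u}+C_0\nrm[L^2]{u}^2.
\]

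Finally, I insert an integrating factor. Put $\lambda_0:=C_0/2$, let $\lambda\ge\lambda_0$, and set $h(t):=e^{-\lambda t}\nrm[L^2]{u(t,\cdot)}$, so $h^2$ is $C^1$ and
\[
\frac{\df}{\df t}h(t)^2=e^{-2\lambda t}\Big(\frac{\df}{\df t}\nrm[L^2]{u}^2-2\lambda\nrm[L^2]{u}^2\Big)\le 2e^{-\lambda t}\nrm[L^2]{Lu(t,\cdot)}\,h(t).
\]
Integrating from $0$ to $t$ and writing $M:=\sup_{[0,T]}h$ and $N:=\int_0^T e^{-\lambda t}\nrm[L^2]{Lu(t,\cdot)}\dif t$ gives $h(t)^2\le h(0)^2+2MN$ for all $t$, hence $M^2\le h(0)^2+2MN$; solving this quadratic inequality yields $M\le N+\sqrt{N^2+h(0)^2}\le h(0)+2N$, which is exactly \eqref{eq:DtPs-PM2021} for $s=0$ (note $h(0)=\nrm[L^2]{u(0,\cdot)}$), and translating back through $v=J^s u$ finishes the general case. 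The main point requiring care is this reduction together with the $C^1$-regularity of $t\mapsto\nrm[L^2]{u(t,\cdot)}^2$ — justifying the differentiation under the inner product and that $Pu(t,\cdot)\in L^2$, which is precisely why both regularity hypotheses on $u$ are imposed; the factor $2$ on the right of \eqref{eq:DtPs-PM2021} is produced by the quadratic-inequality step rather than by Grönwall directly, and the rest is routine energy bookkeeping.
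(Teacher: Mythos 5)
Your proof is correct and follows essentially the same route as the paper: conjugate by $J^s$ to reduce to $s=0$, note $R=i(P-P^*)\in\Psi^0$ is $L^2$-bounded because the principal symbol is real, differentiate $e^{-2\lambda t}\nrm[L^2]{u}^2$ and absorb the $R$-term by taking $\lambda\geq\nrm{R}/2$, then close the estimate with $M:=\sup_t e^{-\lambda t}\nrm[L^2]{u(t,\cdot)}$. The only differences are cosmetic: you make the $J^s$-reduction explicit (the paper merely refers to the conjugation step of Theorem \ref{thm:GarIne-PM2021}) and you finish by solving the quadratic inequality $M^2\le h(0)^2+2MN$ instead of bounding $h(0)^2\le Mh(0)$ and dividing by $M$, yielding the same constant $2$.
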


\begin{proof}
	Denote $Q = iP$ and $L' = iL = \partial_t + Q$.
	Then
	\begin{align*}
	\sigma(Q + Q^*)
	& = \sigma(iP + (iP)^*)
	= i \sigma(P - P^*)
	= i [\sigma(P) + S^0 - \overline{\sigma(P)} - S^0]
	\in S^0,
	\end{align*}
	because the principal symbol of $P$ is real-valued.
	We denote $R = Q + Q^*$, then $R \in \Psi^0$ and thus is bounded in $L^2$.
	
	We prove the case $s = 0$ first.
	Denote $f(t) := \nrm[L^2]{e^{-\lambda t} u(t,\cdot)}^2$, then
	\begin{align*}
	f'(t)
	& = 2e^{-2\lambda t} \Re (\partial_t u, u) - 2\lambda f(t)
	= 2e^{-2\lambda t} \Re ((L' - Q) u, u) - 2\lambda f(t) \\
	& = 2e^{-2\lambda t} \Re (L'u, u) + e^{-2\lambda t} (-R u, u) - 2\lambda f(t) \\
	& \leq 2e^{-2\lambda t} \nrm[L^2]{Lu(t,\cdot)} \nrm[L^2]{u(t,\cdot)} + e^{-2\lambda t} \nrm[L^2]{Ru(t,\cdot)} \nrm[L^2]{u(t,\cdot)} - 2\lambda f(t) \\
	& \leq 2e^{-2\lambda t} \nrm[L^2]{Lu(t,\cdot)} \nrm[L^2]{u(t,\cdot)} - (2\lambda - \nrm{R}) \nrm[L^2]{e^{-\lambda t} u(t,\cdot)}^2 \\
	& \leq 2e^{-2\lambda t} \nrm[L^2]{Lu(t,\cdot)} \nrm[L^2]{u(t,\cdot)}, \qquad (\text{when~} \lambda > \nrm{R}/2),
	\end{align*}
	where $\nrm{R}$ is the $L^2$ operator norm.
	Hence, for any $t \in [0,T]$,
	\begin{align*}
	e^{-2\lambda t} \nrm[L^2]{u(t,\cdot)}^2
	& \leq \nrm[L^2]{u(0,\cdot)}^2 + 2 \int_0^t e^{-2\lambda s} \nrm[L^2]{Lu(s,\cdot)} \nrm[L^2]{u(s,\cdot)} \dif s \\
	& \leq \nrm[L^2]{u(0,\cdot)}^2 + 2 \int_0^T e^{-2\lambda t} \nrm[L^2]{Lu(t,\cdot)} \nrm[L^2]{u(t,\cdot)} \dif t.
	\end{align*}
	By denoting $M := \sup_{t \in [0,T]} e^{-\lambda t} \nrm[H^s]{u(t,\cdot)}$, we can continue
	\begin{align*}
	M^2
	& \leq \nrm[L^2]{u(0,\cdot)}^2 + 2 \int_0^T e^{-2\lambda t} \nrm[L^2]{Lu(t,\cdot)} \nrm[L^2]{u(t,\cdot)} \dif t \\
	& \leq M \nrm[L^2]{u(0,\cdot)} + 2 \int_0^T e^{-\lambda t} \nrm[L^2]{Lu(t,\cdot)} M \dif t \\
	& \leq M (\nrm[L^2]{u(0,\cdot)} + 2 \int_0^T e^{-\lambda t} \nrm[L^2]{Lu(t,\cdot)} \dif t).
	\end{align*}
	We arrive at the conclusion for $s = 0$.
	
	For $s \neq 0$, we can do something similar as in {\bf Step 2} of the proof of Theorem \ref{thm:GarIne-PM2021}.
	This completes the proof.
\end{proof}

Based on the energy estimate in Lemma \ref{lem:DtPs-PM2021}, we can obtain the following result.

\begin{lem} \label{lem:DtPsm-PM2021}
Assume $T > 0$ and $s \in \R$, $P \in \Psi^1$ has a real-valued principal symbol.
Let $f \in L^1((0,T), H^s(\Rn))$ and $\phi \in H^s(\Rn)$.
Then there is a unique solution $u \in C([0,T], H^s(\Rn))$ of the PDE
\begin{equation} \label{eq:DtPsm-PM2021}
\left\{\begin{aligned}
(D_t + P)u & = f \ \text{in} \ (0,T) \times \Rn, \\
u |_{t = 0} & = \phi \ \text{on} \ \Rn,
\end{aligned}\right.
\end{equation}
\end{lem}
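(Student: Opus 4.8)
The plan is to obtain existence by a duality (Hahn--Banach) argument built on an energy estimate for the \emph{adjoint} operator, and then to upgrade continuity in $t$ and establish uniqueness by a density argument using Lemma \ref{lem:DtPs-PM2021}.

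First I would set up the adjoint. Writing $L = D_t + P$, integration by parts in $t$ and $x$ gives, for suitably regular $u$ and for $v$ with $v(T) = 0$,
\[
\int_0^T \langle Lu, v\rangle\, dt = \int_0^T \langle u, L^* v\rangle\, dt - \langle u(0), v(0)\rangle ,
\]
where $L^* = D_t + P^*$ and, by Theorem \ref{thm:AdT-PM2021}, $P^* \in \Psi^1$ has the same (real-valued) principal symbol as $P$. Hence the time-reversed version of Lemma \ref{lem:DtPs-PM2021}, applied to $L^*$ with data prescribed at $t=T$, yields a constant $C$ (depending on $T$) with
\[
\sup_{t\in[0,T]} \|v(t)\|_{H^{-s}} \le C \int_0^T \|L^* v(t)\|_{H^{-s}}\, dt
\]
for all sufficiently regular $v$ satisfying $v(T) = 0$. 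Now define a linear functional $\ell$ on the subspace $\{L^* v : v(T) = 0,\ v \text{ regular}\} \subset L^1((0,T);H^{-s})$ by $\ell(L^* v) := \langle \phi, v(0)\rangle + \int_0^T \langle f, v\rangle\, dt$. The energy estimate shows $\ell$ is well defined (if $L^* v_1 = L^* v_2$ then $v_1 = v_2$) and bounded, $|\ell(L^* v)| \le C(\|\phi\|_{H^s} + \|f\|_{L^1((0,T);H^s)})\,\|L^* v\|_{L^1((0,T);H^{-s})}$. Extending $\ell$ by Hahn--Banach and using the identification $\big(L^1((0,T);H^{-s})\big)^* = L^\infty((0,T);H^s)$ (valid since $H^{-s}$ is reflexive and separable), we obtain $u \in L^\infty((0,T);H^s)$ with $\int_0^T \langle u, L^* v\rangle = \langle \phi, v(0)\rangle + \int_0^T \langle f, v\rangle$ for all admissible $v$; undoing the integration by parts, this says $Lu = f$ in $\mathscr{D}'$, and since then $D_t u = f - Pu \in L^1((0,T);H^{s-1})$ (using Corollary \ref{cor:HmB-PM2021}) we get $u \in C([0,T];H^{s-1})$, so $u(0)$ makes sense and equals $\phi$.

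Next I would bootstrap and then use density. When $\phi$ and $f$ are smooth with enough decay for the pairings, running the construction at every Sobolev level gives $u \in \bigcap_\sigma L^\infty((0,T);H^\sigma)$; then $D_t u = f - Pu$ together with Corollary \ref{cor:HmB-PM2021} puts $\partial_t u$, and inductively all $\partial_t^k u$, in $\bigcap_\sigma L^\infty((0,T);H^\sigma)$, so $u$ is smooth and in particular $u \in C^1([0,T];H^s)\cap C([0,T];H^{s+1})$, the regularity required by Lemma \ref{lem:DtPs-PM2021}. For general $\phi \in H^s$ and $f \in L^1((0,T);H^s)$, approximate by smooth $\phi_k \to \phi$ in $H^s$ and $f_k \to f$ in $L^1((0,T);H^s)$, let $u_k$ be the smooth solutions, and apply Lemma \ref{lem:DtPs-PM2021} to $u_k - u_j$ to get $\sup_t \|u_k(t)-u_j(t)\|_{H^s} \lesssim \|\phi_k - \phi_j\|_{H^s} + \|f_k - f_j\|_{L^1((0,T);H^s)} \to 0$. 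Thus $u_k \to u$ in $C([0,T];H^s)$; passing to the limit in $D_t u_k + P u_k = f_k$ (with $Pu_k \to Pu$ in $C([0,T];H^{s-1})$ by Corollary \ref{cor:HmB-PM2021}) gives $D_t u + Pu = f$ and $u(0) = \phi$. Uniqueness follows from the same energy estimate applied to the difference of two solutions: after a routine mollification in $t$ and $x$, whose commutator with $L$ is of lower order and harmless, the difference meets the hypotheses of Lemma \ref{lem:DtPs-PM2021} and must vanish.

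The main obstacle is the duality step: one has to check that the formal adjoint $L^*$ is again of the type covered by Lemma \ref{lem:DtPs-PM2021} --- this is exactly where the real-principal-symbol hypothesis is used, through Theorem \ref{thm:AdT-PM2021} --- that the time-reversed energy estimate holds with the boundary term appearing at $t=T$, and that the functional $\ell(L^*v) = \langle \phi, v(0)\rangle + \int_0^T\langle f, v\rangle$ is controlled by $\|L^*v\|_{L^1((0,T);H^{-s})}$ alone, so that Hahn--Banach produces an $L^\infty_t H^s$ representative; the subsequent bootstrap to smoothness for smooth data and the density passage are standard.
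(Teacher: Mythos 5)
Your proposal is correct and follows essentially the same route as the paper: a Hahn--Banach duality argument based on the time-reversed energy estimate of Lemma \ref{lem:DtPs-PM2021} applied to $D_t+P^*$ (legitimate since $P^*$ again has real principal symbol), then a bootstrap to the regularity needed by the energy estimate for smooth data, a density passage for general $f,\phi$, and uniqueness from the same estimate. Your extra remark on mollifying a general $C([0,T],H^s)$ solution before applying the energy estimate for uniqueness is a slightly more careful treatment of a point the paper leaves implicit, but the argument is otherwise the paper's proof.
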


\begin{proof}
	{\bf Step 1:} variational formulation.
	Denote
	\begin{equation*}
	\left\{\begin{aligned}
	& X := \{ \varphi \in C^\infty([0,T] \times \Rn) \,;\, \varphi(T, \cdot) \equiv 0 \} \\
	& \ell(\varphi) := \int_0^T (f, \varphi) \dif t + \frac 1 i (\phi, \varphi).
	\end{aligned}\right.
	\end{equation*}
	We say $u \in \scrS'([0,T] \times \Rn)$ is a weak solution of \eqref{eq:DtPsm-PM2021} if $u$ satisfies
	\begin{equation} \label{eq:DtPv-PM2021}
	\int_0^T (u, (D_t + P^*) \varphi) \dif t = \ell(\varphi), \quad \forall \varphi \in X.
	\end{equation}
	To find a $u \in L^{\infty}((0,T), H^s)$ satisfying \eqref{eq:DtPv-PM2021}, we are to show $|\ell(\varphi)| \leq \nrm[L^1 H^{-s}]{(D_t + P^*) \varphi}$, and the call for the Hahn-Banach theorem.
	Here $\nrm[L^1 H^{-s}]{f}$ is a shorthand for $\int_0^T \nrm[H^{-s}]{f} \dif t$.

	{\bf Step 2:} energy estimate.
	Because $P \in \Psi^1$ has a real-valued principal symbol, we see $-P^* \in \Psi^1$ and $-P^*$ also has a real-valued principal symbol.
	Apply Lemma \ref{lem:DtPs-PM2021} to $D_t + (-P^*)$ and $\varphi(T - t,x)$ we obtain
	\[
	\sup_{t \in [0,T]} e^{-\lambda t} \nrm[H^{-s}]{\varphi(T-t,\cdot)}
	\leq \nrm[H^{-s}]{\varphi(T,\cdot)} + 2 \int_0^T e^{-\lambda t} \nrm[H^{-s}]{(D_t + (-P^*))(\varphi(T-t,\cdot))} \dif t,
	\]
	which gives
	\[
	\sup_{t \in [0,T]} e^{\lambda t} \nrm[H^{-s}]{\varphi(t,\cdot)}
	\leq 2 \int_0^T e^{\lambda t} \nrm[H^{-s}]{(D_t + P^*) \varphi(t,\cdot)} \dif t.
	\]
	so
	\begin{equation} \label{eq:Tms-PM2021}
	\forall s \in \R, \quad \sup_{t \in [0,T]} \nrm[H^{-s}]{\varphi(t,\cdot)}
	\leq 2 e^{\lambda T} \nrm[L^1H^{-s}]{(D_t + P^*) \varphi}.
	\end{equation}
	This means the map $\varphi \mapsto (D_t + P^*) \varphi$ is injective.
	\eqref{eq:Tms-PM2021} can be understood as a coercive condition.
	
	{\bf Step 3:} Hahn-Banach theorem.
	By using \eqref{eq:Tms-PM2021}, we can estimate $\ell$ as follows,
	\begin{align*}
	|\ell(\varphi)| 
	& \leq \int_0^T |(f, \varphi)| \dif t + |(\phi, \varphi)|
	\leq \int_0^T \nrm[H^s]{f} \nrm[H^{-s}]{\varphi} \dif t + \nrm[H^s]{\phi} \nrm[H^{-s}]{\varphi} \\
	& \leq \big( \int_0^T \nrm[H^s]{f} \dif t + \nrm[H^s]{\phi} \big) \sup_{t \in [0,T]} \nrm[H^{-s}]{\varphi(t,\cdot)} \\
	& \leq C \big( \int_0^T \nrm[H^s]{f} \dif t + \nrm[H^s]{\phi} \big) \nrm[L^1 H^{-s}]{(D_t + P^*) \varphi}.
	\end{align*}
	Therefore, the linear functional $\ell(\varphi)$ is also a linear functional for $(D_t + P^*) \varphi \in X$ under the norm $L^1((0,T), H^{-s})$.
	Because the dual space of $L^1((0,T), H^{-s})$ is $L^{\infty}((0,T), H^s)$, by the Hahn-Banach theorem, there exists a $u \in L^{\infty}((0,T), H^s)$ such that
	\[
	\ell(\varphi) = (u, (D_t + P^*) \varphi)_{t,x}, \quad \forall \varphi \in X,
	\]
	which is \eqref{eq:DtPv-PM2021}.
	This $u$ is a weak solution.
	
	{\bf Step 4:} weak to strong solution.
	Because $u$ is a distribution, on $(0,T)$ we have
	\[
	D_t u + P u = f.
	\]
	Because $u \in L^{\infty}((0,T), H^s)$, $P u \in L^\infty((0,T), H^{s-1})$.
	
	Let $f$, $\phi$ be Schwartz, then $f \in L^\infty([0,T], H^s)$, so $D_t u = f - Pu \in L^\infty((0,T), H^{s-1})$, which implies
	\[
	u \in C([0,T], H^{s-1}).
	\]
	Again, $f \in C([0,T], H^{s-2})$ and $P u \in C([0,T], H^{s-2})$, so $D_t u = f - Pu \in C((0,T), H^{s-2})$, which implies
	\[
	u \in C^1([0,T], H^{s-2}) \cap C([0,T], H^{s-1}) \ \text{with} \ u(0) = \phi.
	\]
	Due to the arbitrary of $s$, we can conclude
	\begin{equation} \label{eq:uC1-PM2021}
	u \in C^1([0,T], H^s) \cap C([0,T], H^{s+1}) \ \text{with} \ u(0) = \phi.
	\end{equation}
	Therefore, $(u, (D_t + P^*) \varphi)_{t,x}$ can be legally write as $((D_t + P)u, \varphi)_{t,x}$, which implies $u$ is a strong solution of \eqref{eq:DtPsm-PM2021}.
	
	{\bf Step 5:} density arguments for $f$, $\phi$.
	\eqref{eq:uC1-PM2021} is true when $f$ and $\phi$ are Schwartz.
	For general $f \in L^1((0,T), H^s(\Rn))$ and $\phi \in H^s(\Rn)$, due to the density, we can find $\{f_k\} \subset \scrS$ and $\{\phi_k\} \subset \scrS$ such that
	\[
	f_k \to f \ \text{in} \ L^1((0,T), H^s(\Rn)), \
	\phi_k \to \phi \ \text{in} \ H^s(\Rn),
	\quad \text{and} \quad 
	\]
	\begin{equation} \label{eq:DPuk-PM2021}
	(D_t + P) u_k = f_k, \
	u |_{t = 0} = \phi_k, \
	u_k \in C^1([0,T], H^s) \cap C([0,T], H^{s+1}).
	\end{equation}
	From \eqref{eq:DPuk-PM2021} and Lemma \ref{lem:DtPs-PM2021} we can obtain
	\begin{equation*}
		e^{-\lambda T} \nrm[C([0,T{]},H^s)]{u_k - u_{k'}}
		\leq \nrm[H^s]{\phi_{k} - \phi_{k'}} + 2 \nrm[L^1((0,T),H^s)]{(f_{k} - f_{k'})(t,\cdot)},
	\end{equation*}
	so $\{u_k\}$ is Cauchy in $C([0,T{]},H^s)$ and the limit $u \in C([0,T{]},H^s)$ is a desired solution.

	{\bf Step 6:} uniqueness.
	By the energy estimate \eqref{eq:DtPs-PM2021} it is easy to show the uniqueness of $u$.
	
	The proof is complete.
\end{proof}

\section*{Exercise}

\begin{ex}
	Proof Lemma \ref{lem:pin0-PM2021}.
\end{ex}

\begin{ex} \label{ex:aG0-PM2021}
	Prove the function $a$ constructed in \eqref{eq:aG0-PM2021} is in $S^0$.
\end{ex}

\begin{ex} \label{ex:aCh-PM2021}
	Show that $a$ defined in \eqref{eq:ayp-PM2021} gives $(x_0, \xi_0) \notin \Char T_a$.
	Hint: to borrow ideas from Lemma \ref{lem:GEv-PM2021}.
\end{ex}


{
\begin{bibdiv}
	\begin{biblist}
		
		\bib{alinhac2007pseudo}{book}{
			author={Alinhac, Serge},
			author={G\'{e}rard, Patrick},
			title={Pseudo-differential operators and the {N}ash-{M}oser theorem},
			series={Graduate Studies in Mathematics},
			publisher={American Mathematical Society, Providence, RI},
			date={2007},
			volume={82},
			ISBN={978-0-8218-3454-1},
			url={https://doi.org/10.1090/gsm/082},
			note={Translated from the 1991 French original by Stephen S. Wilson},
			review={\MR{2304160}},
		}
		
		\bib{Cal71Bdd}{article}{
			author={Calder\'{o}n, Alberto~P.},
			author={Vaillancourt, R\'{e}mi},
			title={On the boundedness of pseudo-differential operators},
			date={1971},
			ISSN={0025-5645},
			journal={J. Math. Soc. Japan},
			volume={23},
			pages={374\ndash 378},
			url={https://doi.org/10.2969/jmsj/02320374},
			review={\MR{284872}},
		}
		
		\bib{cava72ac}{article}{
			author={Calder\'{o}n, Alberto~P.},
			author={Vaillancourt, R\'{e}mi},
			title={A class of bounded pseudo-differential operators},
			date={1972},
			ISSN={0027-8424},
			journal={Proc. Nat. Acad. Sci. U.S.A.},
			volume={69},
			pages={1185\ndash 1187},
			url={https://doi.org/10.1073/pnas.69.5.1185},
			review={\MR{298480}},
		}
		
		\bib{chen2006pseudodifferential}{book}{
			author={Chen, Shuxing},
			title={Pseudodifferential operators {(Chinese)}},
			publisher={Higher Education Press},
			date={2017},
			ISBN={9787040186758},
			note={ISBN: 9787040186758},
		}
		
		\bib{chen1997fio}{book}{
			author={Chou, Qingjiu},
			author={Chen, Shuxing},
			author={Shi, Jiahong},
			author={Liu, Jinglin},
			author={Jiang, Lumin},
			title={Fourier integral operators theory and its applications
				{(Chinese)}},
			publisher={Science Press},
			date={1997},
			ISBN={9787030059925},
			note={ISBN: 9787030059925},
		}
		
		\bib{Coi78au}{book}{
			author={Coifman, Ronald~R.},
			author={Meyer, Yves},
			title={Au del\`a des op\'{e}rateurs pseudo-diff\'{e}rentiels},
			series={Ast\'{e}risque},
			publisher={Soci\'{e}t\'{e} Math\'{e}matique de France, Paris},
			date={1978},
			volume={57},
			note={With an English summary},
			review={\MR{518170}},
		}
		
		\bib{ho202Xmicro}{book}{
			author={de~Hoop, Maarten~V.},
			author={Holman, Sean},
			author={Uhlmann, Gunther},
			title={Microlocal analysis of seismic body waves and linearized inverse
				problems},
			date={202X},
			url={http://maartendehoop.rice.edu/microlocal-analysis-inverse-problems/},
			note={in preparation},
		}
		
		\bib{dim1999spe}{book}{
			author={Dimassi, Mouez},
			author={Sj\"{o}strand, Johannes},
			title={Spectral asymptotics in the semi-classical limit},
			series={London Mathematical Society Lecture Note Series},
			publisher={Cambridge University Press, Cambridge},
			date={1999},
			volume={268},
			ISBN={0-521-66544-2},
			url={https://doi.org/10.1017/CBO9780511662195},
			review={\MR{1735654}},
		}
		
		\bib{eskin2011lectures}{book}{
			author={Eskin, Gregory},
			title={Lectures on linear partial differential equations},
			series={Graduate Studies in Mathematics},
			publisher={American Mathematical Society, Providence, RI},
			date={2011},
			volume={123},
			ISBN={978-0-8218-5284-2},
			url={https://doi.org/10.1090/gsm/123},
			review={\MR{2809923}},
		}
		
		\bib{friedlander1998introduction}{book}{
			author={Friedlander, F.~G.},
			title={Introduction to the theory of distributions},
			edition={Second},
			publisher={Cambridge University Press, Cambridge},
			date={1998},
			ISBN={0-521-64015-6; 0-521-64971-4},
			note={With additional material by M. Joshi},
			review={\MR{1721032}},
		}
		
		\bib{grigis94mic}{book}{
			author={Grigis, Alain},
			author={Sj\"{o}strand, Johannes},
			title={Microlocal analysis for differential operators},
			series={London Mathematical Society Lecture Note Series},
			publisher={Cambridge University Press, Cambridge},
			date={1994},
			volume={196},
			ISBN={0-521-44986-3},
			url={https://doi.org/10.1017/CBO9780511721441},
			note={An introduction},
			review={\MR{1269107}},
		}
		
		\bib{hor71con}{article}{
			author={H\"{o}rmander, Lars},
			title={On the {$L^{2}$} continuity of pseudo-differential operators},
			date={1971},
			ISSN={0010-3640},
			journal={Comm. Pure Appl. Math.},
			volume={24},
			pages={529\ndash 535},
			url={https://doi.org/10.1002/cpa.3160240406},
			review={\MR{281060}},
		}
		
		\bib{horm2003IIV}{book}{
			author={H\"{o}rmander, Lars},
			title={The analysis of linear partial differential operators. {I-V}},
			series={Classics in Mathematics},
			publisher={Springer-Verlag, Berlin},
			date={2003},
			ISBN={3-540-00662-1},
			url={https://doi.org/10.1007/978-3-642-61497-2},
			review={\MR{1996773}},
		}
		
		\bib{hw87th}{article}{
			author={Hwang, I.~L.},
			title={The {$L^2$}-boundedness of pseudodifferential operators},
			date={1987},
			ISSN={0002-9947},
			journal={Trans. Amer. Math. Soc.},
			volume={302},
			number={1},
			pages={55\ndash 76},
			url={https://doi.org/10.2307/2000896},
			review={\MR{887496}},
		}
		
		\bib{hwa94Lp}{article}{
			author={Hwang, I.~L.},
			author={Lee, R.~B.},
			title={{$L^p$}-boundedness of pseudo-differential operators of class
				{$S_{0,0}$}},
			date={1994},
			ISSN={0002-9947},
			journal={Trans. Amer. Math. Soc.},
			volume={346},
			number={2},
			pages={489\ndash 510},
			url={https://doi.org/10.2307/2154857},
			review={\MR{1264147}},
		}
		
		\bib{JiangFuncBook}{book}{
			author={Jiang, Zejian},
			author={Sun, Shanli},
			title={Functional analysis},
			publisher={China Higher Education Press},
			date={2006},
			ISBN={9787040166194},
		}
		
		\bib{jos99int}{article}{
			author={Joshi, MS},
			title={Introduction to pseudo-differential operators},
			date={1999},
			journal={arXiv preprint math.AP},
			ISSN={9906155/},
		}
		
		\bib{MR1029119}{article}{
			author={Lee, John~M.},
			author={Uhlmann, Gunther},
			title={Determining anisotropic real-analytic conductivities by boundary
				measurements},
			date={1989},
			ISSN={0010-3640},
			journal={Comm. Pure Appl. Math.},
			volume={42},
			number={8},
			pages={1097\ndash 1112},
			url={https://doi.org/10.1002/cpa.3160420804},
			review={\MR{1029119}},
		}
		
		\bib{maLSNH2020}{article}{
			author={Ma, Shiqi},
			title={Study note to {H}ormander's book},
			date={2020July},
			note={in preparation},
		}
		
		\bib{masuma2020}{article}{
			author={Ma, Shiqi},
			title={Supplementary materials},
			date={2020July},
			note={in preparation},
		}
		
		\bib{mart02Anin}{book}{
			author={Martinez, Andr\'{e}},
			title={An introduction to semiclassical and microlocal analysis},
			series={Universitext},
			publisher={Springer-Verlag, New York},
			date={2002},
			ISBN={0-387-95344-2},
			url={https://doi.org/10.1007/978-1-4757-4495-8},
			review={\MR{1872698}},
		}
		
		\bib{salo2006sta}{article}{
			author={Salo, Mikko},
			title={Stability for solutions of wave equations with {$C^{1,1}$}
				coefficients},
			date={2007},
			ISSN={1930-8337},
			journal={Inverse Probl. Imaging},
			volume={1},
			number={3},
			pages={537\ndash 556},
			url={https://doi.org/10.3934/ipi.2007.1.537},
			review={\MR{2308978}},
		}
		
		\bib{shu2001pse}{book}{
			author={Shubin, M.~A.},
			title={Pseudodifferential operators and spectral theory},
			edition={Second},
			publisher={Springer-Verlag, Berlin},
			date={2001},
			ISBN={3-540-41195-X},
			url={https://doi.org/10.1007/978-3-642-56579-3},
			note={Translated from the 1978 Russian original by Stig I. Andersson},
			review={\MR{1852334}},
		}
		
		\bib{so17fo}{book}{
			author={Sogge, Christopher~D.},
			title={Fourier integrals in classical analysis},
			edition={Second},
			series={Cambridge Tracts in Mathematics},
			publisher={Cambridge University Press, Cambridge},
			date={2017},
			volume={210},
			ISBN={978-1-107-12007-5},
			url={https://doi.org/10.1017/9781316341186},
			review={\MR{3645429}},
		}
		
		\bib{stein2016singular}{book}{
			author={Stein, Elias~M.},
			title={Singular integrals and differentiability properties of
				functions},
			series={Princeton Mathematical Series, No. 30},
			publisher={Princeton University Press, Princeton, N.J.},
			date={1970},
			review={\MR{0290095}},
		}
		
		\bib{wong2014introduction}{book}{
			author={Wong, M.~W.},
			title={An introduction to pseudo-differential operators},
			edition={Third},
			series={Series on Analysis, Applications and Computation},
			publisher={World Scientific Publishing Co. Pte. Ltd., Hackensack, NJ},
			date={2014},
			volume={6},
			ISBN={978-981-4583-08-4},
			url={https://doi.org/10.1142/9074},
			review={\MR{3222682}},
		}
		
		\bib{won89asy}{book}{
			author={Wong, R.},
			title={Asymptotic approximations of integrals},
			series={Computer Science and Scientific Computing},
			publisher={Academic Press, Inc., Boston, MA},
			date={1989},
			ISBN={0-12-762535-6},
			review={\MR{1016818}},
		}
		
		\bib{zw2012semi}{book}{
			author={Zworski, Maciej},
			title={Semiclassical analysis},
			series={Graduate Studies in Mathematics},
			publisher={American Mathematical Society, Providence, RI},
			date={2012},
			volume={138},
			ISBN={978-0-8218-8320-4},
			url={https://doi.org/10.1090/gsm/138},
			review={\MR{2952218}},
		}
		
	\end{biblist}
\end{bibdiv}

}
\printindex

\end{document}